\title[Dolbeault geometric Langlands conjecture]{The Dolbeault geometric Langlands conjecture via limit categories}
\author{Tudor P\u adurariu and Yukinobu Toda}
\newtheorem{thm}{Theorem}[section]
\newtheorem{cor}[thm]{Corollary}
\newtheorem{prop}[thm]{Proposition}
\newtheorem{propdef}[thm]{Proposition-Definition}
\newtheorem{consdef}[thm]{Construction-Definition}
\newtheorem{conj}[thm]{Conjecture}
\newtheorem{lemma}[thm]{Lemma}
\theoremstyle{definition}
\newtheorem{defn}[thm]{Definition}
\newtheorem{thm*}[thm]{Theorem$^*$}
\newtheorem{remark}[thm]{Remark}
\newtheorem{example}[thm]{Example}
\newtheorem{quest}[thm]{Question}
\newtheorem{step}{Step}
\newcommand{\comment}[1]{}
\renewcommand{\leq}{\leqslant}
\renewcommand{\geq}{\geqslant}
\newcommand{\PP}{\operatorname{\mathbb P}}
\newcommand{\E}{\mathcal{E}}
\newcommand{\Y}{\mathcal{Y}}
\newcommand{\sS}{\mathcal{S}}
\newcommand{\zZ}{\mathcal{Z}}
\newcommand{\U}{\mathcal{U}}
\newcommand{\V}{\mathcal{V}}
\newcommand{\LL}{\operatorname{L}}
\newcommand{\MM}{\operatorname{M}}
\newcommand{\ox}{\overline{x}}
\newcommand{\QCoh}{\operatorname{QCoh}}
\newcommand{\X}{\mathcal{X}}
\newcommand{\bgm}{B\mathbb{G}_m}
\newcommand{\fg}{\mathfrak{g}}
\newcommand{\fp}{\mathfrak{p}}
\newcommand{\fm}{\mathfrak{m}}
\newcommand{\diasquare}{\ar@{}[rd]|\square}
\newcommand{\rank}{\operatorname{rank}}
\newcommand{\Coh}{\operatorname{Coh}}
\newcommand{\Aut}{\operatorname{Aut}}
\newcommand{\Ker}{\operatorname{Ker}}
\newcommand{\id}{\operatorname{id}}
\newcommand{\Ind}{\operatorname{Ind}}
\newcommand{\IndCoh}{\operatorname{IndCoh}}
\newcommand{\IndL}{\operatorname{IndL}}
\newcommand{\Hom}{\operatorname{Hom}}
\newcommand{\Spec}{\operatorname{Spec}}
\newcommand{\GL}{\operatorname{GL}}
\newcommand{\ol}{\overline}
\renewcommand{\Im}{\operatorname{Im}}
\newcommand{\Ad}{\mathrm{Ad}}
\newcommand{\shear}{{\mathbin{\mkern-6mu\fatslash}}}
\newcommand{\inclusion}{\ar@<-0.3ex>@{^{(}->}[r]}
\newcommand{\iinclusion}{\ar@<-0.3ex>@{^{(}->}[rr]}
\newcommand{\linclusion}{\ar@<0.3ex>@{_{(}->}[l]}
\newcommand{\dinclusion}{\ar@<-0.3ex>@{^{(}->}[d]}
\newcommand{\uinclusion}{\ar@<-0.3ex>@{^{(}->}[u]}
\newcommand{\ssslash}{/\!\!/}
\newcommand{\wt}{\mathrm{wt}}
\newcommand{\Hig}{\mathrm{Higgs}}
\newcommand{\Bun}{\mathrm{Bun}}
\newcommand{\colim@}[2]{%
  \vtop{\m@th\ialign{##\cr
    \hfil$#1\operator@font colim$\hfil\cr
    \noalign{\nointerlineskip\kern1.5\ex@}#2\cr
    \noalign{\nointerlineskip\kern-\ex@}\cr}}%
}
\newcommand{\colim}{%
  \mathop{\mathpalette\colim@{}}\nmlimits@
}
\tikzstyle{block}=[draw=black, width=1cm, minimum height=2cm, align=center] 
\tikzstyle{block2}=[draw=black, text width=2cm, minimum height=1cm, align=center] 
\tikzstyle{block3}=[draw=black, text width=2cm, minimum height=1cm, align=center] 
\begin{document}

\begin{abstract}
We introduce limit categories for cotangent stacks of smooth stacks as an 
effective version of classical limits of categories of D-modules on them. 
We develop their general theory and pursue their relation with categories of D-modules. 
In particular, we establish the functorial properties of limit categories such as the smooth 
pull-back and projective push-forward. 

Using the notion of limit categories, we propose a precise formulation of the Dolbeault geometric Langlands conjecture, proposed by Donagi–Pantev as the classical limit of the de Rham geometric Langlands equivalence. It states an equivalence between the derived categories of moduli stacks of semistable Higgs bundles and 
limit categories of moduli stacks of all Higgs bundles.

We prove the existence of a semiorthogonal decomposition of the limit category into quasi-BPS categories, which are categorical versions of BPS invariants on a non-compact Calabi–Yau 3-fold.
This semiorthogonal decomposition is interpreted as a Langlands dual to the semiorthogonal decomposition constructed in our previous work on the category of coherent sheaves on the moduli stack of semistable Higgs bundles.

We also construct Hecke operators on limit categories for Higgs bundles. They are expected to be compatible 
with Wilson operators under our formulation of Dolbeault geometric Langlands conjecture. 

The conjectured equivalence implies an equivalence between BPS categories for semistable Higgs bundles, which we expect to be a categorical version of the topological mirror symmetry conjecture for Higgs bundles by Hausel--Thaddeus. 

\end{abstract}

\maketitle

 \setcounter{tocdepth}{2}
\tableofcontents

\section{Introduction}
\subsection{Overview}
In this paper, we propose a new and precise formulation of the \textit{Dolbeault geometric Langlands conjecture} proposed by Donagi--Pantev~\cite{DoPa}; see also the work of Kapustin--Witten~\cite{KapWit}. It asserts an equivalence between certain derived categories of coherent sheaves on moduli stacks of Higgs bundles on smooth projective curves.

We prove several foundational results supporting our conjecture, including compact generation and the construction of Hecke operators. We construct semiorthogonal decompositions of both sides into \textit{quasi-BPS categories}, linking the geometric Langlands correspondence with categorical Donaldson--Thomas theory~\cite{T}. 
In the subsequent papers~\cite{TodaGL2, TodaA}, the second author proves our formulation of the Dolbeault geometric Langlands correspondence for type $A$ groups over an open locus of the Hitchin base beyond the elliptic locus; see Theorem~\ref{thm:BA}. This overcomes some of the main difficulties in extending the Dolbeault geometric Langlands correspondence beyond the elliptic locus, and the formulation and results developed in this paper are essential for those works.

In what follows, we work over an algebraically closed field $k$ of characteristic zero.

On the automorphic side, we introduce and investigate novel categories, called \textit{limit categories}. They may be regarded as classical limits of the categories of D-modules on moduli stacks of vector bundles over curves. 
Our formulation, Conjecture~\ref{conj:intro:higgs}, states an equivalence between the derived categories of moduli stacks of semistable Higgs bundles and the limit category associated with moduli stacks of Higgs bundles \textit{without stability condition}.
\begin{conj}\emph{(Conjecture~\ref{conj:DL0})}\label{conj:intro:higgs}
    Let $C$ be a smooth projective curve, $G$ a reductive group and $^{L}G$ its Langlands dual. 
    Let $\Hig_G$ be the derived moduli stack of $G$-Higgs bundles. 
    For $(\chi, w) \in \pi_1(G) \times Z(G)^{\vee}$, there is an equivalence
    \begin{align}\label{intro:cohL}
        \IndCoh(\Hig_{^{L}G}(w)^{\mathrm{ss}})_{-\chi}\simeq \IndL(\Hig_{G}(\chi))_w.
    \end{align}
\end{conj}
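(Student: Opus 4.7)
The plan is to match the two sides piece by piece via semiorthogonal decompositions, thereby reducing the full equivalence to an equivalence of BPS building blocks, and then to impose Hecke/Wilson compatibility as a rigidifying constraint.

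First I would exploit the semiorthogonal decomposition advertised in the abstract: the limit category $\IndL(\Hig_G(\chi))_w$ admits an SOD whose components are quasi-BPS categories $\mathbb{T}(d,w)$ indexed by Levi subgroups and characters, mirroring the usual Harder--Narasimhan / $\Theta$-stratification on $\Hig_G$. On the spectral side, the category $\IndCoh(\Hig_{^LG}(w)^{\mathrm{ss}})_{-\chi}$ carries the dual SOD into BPS categories for the Langlands dual group that was constructed in the authors' previous work on semistable Higgs moduli. Both decompositions are indexed by the same combinatorial data, namely conjugacy classes of Levi subgroups together with a compatible central character; the matching of indices is governed by Langlands duality for Levis and the identification $\pi_1(L) \leftrightarrow Z({}^{L}L)^{\vee}$.

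Second, I would attempt to define, for each matching pair of strata, an equivalence between the corresponding BPS categories on the two sides. The natural candidate is provided by Hitchin's system: fibrewise Fourier--Mukai along the SYZ dual Hitchin fibrations of $\Hig_G$ and $\Hig_{^{L}G}$, extended via the quasi-BPS formalism across the discriminant locus where the fibres become singular. The input here is essentially a categorification of the Hausel--Thaddeus topological mirror symmetry, which the abstract explicitly identifies as a consequence (and thus an expected ingredient) of the conjecture for the stable locus; on the unstable strata one reduces to lower-rank cases by Levi-induction, so the equivalence is built recursively on the rank of the reductive group.

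Third, I would check that the equivalences glue across strata in a way compatible with the SOD structure, and verify compatibility of the assembled equivalence with the Hecke action on the limit side and the tensor (Wilson) action by $\IndCoh(\mathrm{LocSys}_{^{L}G})$ on the spectral side, using the Hecke operators on limit categories constructed in this paper. The main obstacle, I expect, lies precisely in this last step: matching the quasi-BPS pieces for \emph{unstable} Higgs strata on the automorphic side with the corresponding pieces on the spectral side demands a fibrewise Fourier--Mukai duality at the boundary of the Hitchin base, where the dual abelian fibres degenerate and singular support conditions — the very reason limit categories, rather than plain coherent sheaves, appear on the automorphic side — must be carefully tracked through the duality. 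Controlling these singular-support conditions under Fourier--Mukai, and showing they precisely cut out the factor $\IndL$ (rather than all of $\IndCoh$) on the automorphic side, is where I would expect the real work to be concentrated.
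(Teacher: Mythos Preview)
The statement is a \emph{conjecture}, not a theorem: the paper does not prove it and explicitly remarks that even the case $G=\GL_2$ is open. There is no proof in the paper to compare your proposal against.

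Your outline is broadly consonant with the paper's viewpoint. The paper establishes the matching semiorthogonal decompositions you describe (Theorem~\ref{conj:intro} on the automorphic side, Theorem~\ref{thm:PThiggs} on the spectral side), formulates their compatibility as Conjecture~\ref{conj:Higgs}, and isolates the equivalence of individual quasi-BPS summands as Conjecture~\ref{intro:conj:qbps}. But your second step --- constructing equivalences $\mathbb{T}_{^{L}G}(w)_{-\chi}\simeq \mathbb{T}_G(\chi)_w$ by extending fibrewise Fourier--Mukai along the Hitchin fibration --- is itself the core open problem, not an available ingredient. The Fourier--Mukai equivalence is known only over the smooth locus of the Hitchin base (Donagi--Pantev) and, for $\GL_r$, over the elliptic locus where spectral curves are irreducible (Arinkin); no extension across the full base is known for any non-abelian $G$. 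Your Levi-induction does not reduce the difficulty: the quasi-BPS pieces for proper Levi subgroups are products of quasi-BPS categories for smaller general linear groups, whose pairwise equivalence is equally open. The paper's actual evidence (Corollary~\ref{intro:corK}) is at the level of topological K-theory, not categorical equivalence. A secondary misconception: the limit category $\IndL$ is cut out by \emph{weight} conditions for maps from $B\mathbb{G}_m$, not by singular support conditions; the nilpotent singular support refinement is a separate layer treated in Conjecture~\ref{conj:Higgs2}.
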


As we mention later, the above equivalence is expected to preserve the subcategories with nilpotent singular supports, giving a classical limit of the geometric Langlands correspondence formulated by Beilinson--Drinfeld~\cite{BD0} and Arinkin--Gaitsgory~\cite{AG}, and proved in~\cite{GLC1, GLC2, GLC3, GLC4, GLC5}. It is further expected to be compatible with parabolic induction, with the action of Wilson/Hecke operators, Whittaker normalized, and be linear over the Hitchin base. 

The construction of the right-hand side in (\ref{intro:cohL}), the (ind-)limit category, 
is one of the key ingredients of this paper. Note that in the left-hand side
of (\ref{intro:cohL}) we consider the moduli stack of \textit{semistable} $^{L}G$-Higgs bundles (which is quasi-compact), 
while in the right-hand side we consider the moduli stack of \textit{all} $G$-Higgs bundles (which is not quasi-compact in general). The main contributions in this paper are summarized as follows: 

\vspace{2mm}

{\bf (i) Definition of limit categories.} For a large class of smooth stacks $\X$ and $\delta \in \mathrm{Pic}(\X)_{\mathbb{R}}$, we define the dg-category $\IndL(\Omega_{\X})_{\delta}$ and its subcategory of compact objects $\LL(\Omega_{\X})_{\delta}$ (Proposition-Definition~\ref{intro:propdef}). 
Particular examples of limit categories are the `quasi-BPS categories' of~\cite{PThiggs}, and some of the `non-commutative resolutions' of~\cite{SvdB},
the `magic window subcategories' of~\cite{hls}, and the `intrinsic window subcategories' of~\cite{T}.
Our new perspective is the conjectural interpretation of limit categories as 
an effective version of the classical limit of the categories of D-modules on $\X$ (Subsection~\ref{subsec:limit} and Section~\ref{sec:Dmod}). 

    \vspace{2mm}

{\bf (ii) Functorial properties of limit categories.}
We show that limit categories satisfy functorial properties with 
respect to Lagrangian correspondences associated with smooth pull-backs and projective push-forwards
(Theorem~\ref{intro:thmfunct}). Moreover, limit categories admit functors which exist for D-modules, 
but do not exist for ind-coherent sheaves or quasi-coherent sheaves (e.g. ${!}$-push-forward for some open immersions). These properties allow us to import ideas from algebraic analysis to commutative algebraic geometry.

\vspace{2mm}

   {\bf (iii) Compact generation.} We prove that $\IndL(\Hig_G(\chi))_w$ is compactly generated (Theorem~\ref{intro:thm:gen}), whereas $\IndCoh(\Hig_G(\chi))_w$ is not (Proposition~\ref{intro:prop:cpt}). This result is regarded as an analogue of compact generation of $\text{D-mod}(\Bun_G)$, proved by Drinfeld-Gaitsgory~\cite{DGbun}. Thus, when working with the limit category, one avoids
   the technical subtleties 
   present when working with categories of coherent sheaves on non-quasi-compact stacks. 

    \vspace{2mm}

{\bf (iv) Semiorthogonal decomposition.} We show that $\LL(\Hig_G(\chi))_w$ admits a semiorthogonal decomposition
    into \textit{quasi-BPS categories} introduced and studied in~\cite{PThiggs} (Theorem~\ref{conj:intro}), interpreted as the Langlands-dual counterpart of the semiorthogonal decomposition for semistable Higgs bundles in~\cite{PThiggs} (proved there for $G \in \{\GL_r, \mathrm{PGL}_r, \mathrm{SL}_r\}$, but expected to hold in general), and implying the K-theoretic version of~\eqref{intro:cohL} for genus-zero curves (Corollary~\ref{intro:corK}). This result gives a 
    close relationship between the (Dolbeault) geometric Langlands equivalence and categorical 
    Donaldson-Thomas theory. 

\vspace{2mm}
    
    {\bf (v) Hecke operators on limit categories.} We construct Hecke operators on $\LL(\Hig_G(\chi))_w$ (Theorem~\ref{intro:cor2}):
    for $G=\GL_r$, it is an action of quasi-BPS categories of zero-dimensional sheaves;
    for general $G$, it is given by Hecke correspondence corresponding to a minuscule cocharacter\footnote{the miniscule condition is needed at this moment due to singularities of correspondences, see Remark~\ref{rmk:mini}.}. They also induce Hecke operators on BPS categories, which resolve 
    a long-standing obstacle in the Higgs-bundle setting due to the non-preservation of (semi)stability under Hecke correspondences. The Hecke operators do not preserve the semiorthogonal decomposition in (iv), but we can control 
    their amplitude (Proposition~\ref{prop:Hecke:sod}). 
    
\vspace{2mm}

Let us mention two ways in which the equivalence~\eqref{conj:intro:higgs} is analogous to the de Rham Langlands equivalence~\cite{BD0, AG} and different from the Dolbeault equivalence of Donagi--Pantev~\cite{DoPa}: both categories of \eqref{conj:intro:higgs} are compactly generated, and we expect that the topological K-theory (and also periodic cyclic homology) of their categories of compact objects are isomorphic to their counterparts in the de Rham Langlands equivalence.

\subsection{Semiorthogonal decomposition of the limit category}
We first explain one of our main results, the existence of semiorthogonal decomposition of
the limit category, which gives a strong motivation toward Conjecture~\ref{conj:intro:higgs}. We postpone the definition of the limit category to Proposition-Definition~\ref{intro:propdef}, since it is fairly technical.

Let $C$ be a smooth projective curve over an algebraically closed field 
$k$ of characteristic zero. Let $G$ be a reductive algebraic group, and 
$\mathfrak{g}$ its Lie algebra. 
For $\chi \in \pi_1(G)$, let $\Bun_G(\chi)$ be the 
corresponding connected component of the moduli stack of $G$-bundles on $C$. 
Its cotangent stack
\begin{align}\label{intro:higgs}
    \Hig_G(\chi)=\Omega_{\Bun_G(\chi)}
\end{align}
is a connected component of the derived moduli stack of $G$-Higgs bundles
\begin{align*}
(E_G, \phi_G), \ \phi_G \in H^0(C, \Ad_{\fg}(E_G)\otimes \Omega_C)
\end{align*}
where $E_G$ is a $G$-bundle. 
Consider the quasi-compact open substack of semistable $G$-Higgs bundles 
\begin{align*}
    \Hig_G(\chi)^{\mathrm{ss}} \subset \Hig_G(\chi). 
\end{align*}
Note that much of this paper is concerned with the moduli stack of Higgs bundles (\ref{intro:higgs})
\textit{without imposing the stability condition}, which fails to be quasi-compact in most cases, e.g. when $G=\mathrm{GL}_r$ with $r\geq 2$.

Let $Z(G)$ be the center of $G$ and $Z(G)^{\vee}$ its character group. 
For $w\in Z(G)^{\vee}$, we will define and investigate a subcategory (Proposition-Definition~\ref{intro:propdef})
\begin{align}\notag
    \IndL(\Hig_G(\chi))_w \subset \IndCoh(\Hig_G(\chi))_w.
\end{align}
Its subcategory of compact objects 
\begin{align}\label{intro:limit}
\LL(\Hig_G(\chi))_w
:=\IndL(\Hig_G(\chi))_w^{\mathrm{cp}}\subset \Coh(\Hig_G(\chi))_w
\end{align}
is called \textit{the limit category}.

We regard the limit category (\ref{intro:limit}) as the classical limit of the (twisted) D-module category on the stack $\Bun_G(\chi)$, which plays a central role in the geometric Langlands correspondence~\cite{BD0, AG}.
One of the main results of this paper is the following:
\begin{thm}\emph{(Theorem~\ref{thm:mainLG})}\label{conj:intro}
There exist subcategories (which we refer to as quasi-BPS categories)
\begin{align}\label{intro:qbps0}
    \mathbb{T}_G(\chi)_w \subset \Coh(\Hig_G(\chi)^{\mathrm{ss}})_w
\end{align}
such that there is a semiorthogonal decomposition 
\begin{align}\label{intro:sod:L}
    \LL(\Hig_G(\chi))_w=\left\langle \mathbb{T}_{M}(\chi_{M})_{w_{M}} : \begin{array}{ll}
     \mu_M(\chi_{M})\in N(T)_{\mathbb{Q}+}^{W_M} \\ \mu_M(w_M)=\mu_G(w) \end{array}\right\rangle.
\end{align}
    Here, the right-hand side is after all standard parabolic subgroups $P\subset G$ with Levi 
    quotient $M$, 
    satisfying
    \begin{align}\label{intro:chiw}
    (\chi_{M}, w_{M}) \in \pi_1(M) \times Z(M)^{\vee}, \mathbb{}a_{M}(\chi_{M}, w_{M})=(\chi, w).
    \end{align}
\end{thm}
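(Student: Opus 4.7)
The plan is to derive the semiorthogonal decomposition from the Harder--Narasimhan $\Theta$-stratification of $\Hig_G(\chi)$. Even though $\Hig_G(\chi)$ is not quasi-compact, it admits a $\Theta$-stratification whose open stratum is the semistable locus $\Hig_G(\chi)^{\mathrm{ss}}$ and whose other locally closed strata $\sS_{M,\chi_M}$ are labelled by standard parabolics $P \subset G$ with Levi quotient $M$ together with classes $\chi_M \in \pi_1(M)$ mapping to $\chi$ under $a_M$ and satisfying the HN dominance condition $\mu_M(\chi_M) \in N(T)_{\mathbb{Q}+}^{W_M}$. Each stratum fits in the usual attractor--centre diagram, with a retraction
\begin{align*}
q_{M,\chi_M} \colon \sS_{M,\chi_M} \lra \Hig_M(\chi_M)^{\mathrm{ss}}
\end{align*}
sending a parabolic Higgs reduction to its associated graded, and a locally closed immersion $p_{M,\chi_M} \colon \sS_{M,\chi_M} \hookrightarrow \Hig_G(\chi)$.

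Granting the functorial properties of Theorem~\ref{intro:thmfunct}, each such correspondence yields, for every allowed $w_M$, a push/pull functor
\begin{align*}
p_{M,\chi_M,\ast}\,q_{M,\chi_M}^{\ast} \colon \LL(\Hig_M(\chi_M)^{\mathrm{ss}})_{w_M} \lra \LL(\Hig_G(\chi))_w.
\end{align*}
I would then show that its restriction to the quasi-BPS subcategory $\mathbb{T}_M(\chi_M)_{w_M}$ of~\eqref{intro:qbps0} is fully faithful, and that its essential image, as $(M,\chi_M,w_M)$ varies over the indexing set, is semiorthogonal in the order prescribed by the Harder--Narasimhan ordering on slope vectors. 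The constraint $\mu_M(w_M)=\mu_G(w)$ together with $a_M(\chi_M,w_M)=(\chi,w)$ arises from tracking how $Z(M)$ acts on the fibres of the retraction and which lifts of $w$ to $Z(M)^\vee$ are compatible with the limit-category weight condition at a generic point of $\sS_{M,\chi_M}$.

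The technical heart of the argument, and I expect the main obstacle, is the precise matching between the intrinsic window condition defining the limit category $\LL$ and the intrinsic window condition defining the quasi-BPS categories of~\cite{PThiggs, hls, T}. This reduces to a weight-theoretic computation on the conormal of each stratum: one must check that the $\mathbb{G}_m$-weights contributed by the parabolic nilpotent directions, together with their Higgs-dual repeller directions (forced to appear by the shifted symplectic structure on $\Hig_G(\chi)$), carve out exactly the weight window defining $\mathbb{T}_M(\chi_M)_{w_M}$, and no strictly larger subcategory of $\Coh(\Hig_M(\chi_M)^{\mathrm{ss}})_{w_M}$. A secondary but necessary point is that the HN stratification has infinitely many strata, so one must verify that the resulting decomposition is well-defined on compact objects; this should follow because a compact object of $\LL(\Hig_G(\chi))_w$ has proper support over the Hitchin base and hence meets only finitely many HN strata of bounded instability type. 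The proof is then completed by induction on the HN order, with the semistable stratum $P=G$, $M=G$ giving the base case supplied by the quasi-BPS definition on $\Hig_G(\chi)^{\mathrm{ss}}$ itself.
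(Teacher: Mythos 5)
Your overall skeleton (Harder--Narasimhan $\Theta$-stratification, induction on the HN order, fully faithful functors $p_{*}q^{*}$ from the quasi-BPS categories of the centers $\Hig_M(\chi_M)^{\mathrm{ss}}$) matches the shape of the paper's argument, but the two places where you defer the work are exactly where the proof lives, and one of your claimed steps is false. First, the finiteness/compactness issue: compact generation of $\IndL(\Hig_G(\chi))_w$ is part of the theorem and cannot be obtained from a support argument. It is not true that a compact object of the limit category has support meeting only finitely many HN strata; already for $\X=\mathbb{A}^1/\mathbb{G}_m$ the object $j_!\mathcal{O}_{\Omega_{\mathrm{pt}}}\simeq\mathcal{O}_{\Omega_\X}(-1)$ has full support, and the global nilpotent cone (which lies over a single point of the Hitchin base) meets every HN stratum, so "proper over $\mathrm{B}$ implies bounded instability type" fails. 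The actual mechanism in the paper is the construction, for each truncation $\Hig_G(\chi)_{\preceq\mu}$ and each $\Theta$-stratum, of a fully faithful \emph{left} adjoint $j_!$ of the restriction to the complement (Proposition~\ref{prop:sod:KZ}); this turns the limit $\IndL(\Hig_G(\chi))_w=\lim_\mu\IndL(\Hig_G(\chi)_{\preceq\mu})_w$ into a colimit via the Drinfeld--Gaitsgory formalism (Proposition~\ref{prop:dgcat}), which simultaneously yields compact generation, identifies the compact objects as $j_{\mu!}\mathcal{E}_\mu$, and assembles the stratum-wise decompositions of Corollary~\ref{cor:higgsc} into~\eqref{intro:sod:L}. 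Your proposal contains no substitute for the existence of $j_!$, which is precisely the feature that fails for $\Coh/\IndCoh$ and is responsible for Proposition~\ref{intro:prop:cpt}.

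Second, the "weight-theoretic computation on the conormal of each stratum" is not carried out directly on $\Hig_G(\chi)$ in the paper, and it is unclear how it could be: the stack is only quasi-smooth, has no convenient global quotient or good moduli presentation, and the limit category is defined by weight conditions over \emph{all} maps from $B\mathbb{G}_m$ (after regularization), not relative to a chosen stratification. The paper instead embeds each truncation $\Hig_G(\chi)_{\preceq\mu}$ as the derived zero locus of a section of a vector bundle $\mathcal{V}^L$ on the \emph{smooth} stack $\Hig^L_G(\chi)_{\preceq\mu}$ of $L$-twisted Higgs bundles with $\deg L\gg 0$, passes through the Koszul equivalence to graded matrix factorizations on $(\mathcal{V}^L)^{\vee}$, and proves the needed semiorthogonal decomposition for the \emph{magic category} of the smooth symmetric stack $(\mathcal{V}^L)^{\vee}$ (Proposition~\ref{prop:sod:V}); that statement is itself reduced, via formal slices and a nontrivial self-dualization of the local model (Lemmas~\ref{lem:symW}--\ref{lem:pullback}), to the GIT results of Section~\ref{sec:magic}. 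Both the fully faithfulness of $\Upsilon_{\mathcal{U}}=p_*q^*$ on $\LL(\mathcal{Z})_{\delta'}$ and the identification of its image with the correct quasi-BPS summand are obtained on the matrix-factorization side, together with a comparison of twists ($\delta'$ versus $\delta''$, Lemma~\ref{lem:delta2}). Without this smooth-ambient/Koszul detour, or an argument replacing it, your "technical heart" remains an unproved assertion rather than a reduction.
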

We refer to Subsection~\ref{subsec:SOD0} for the notation in Theorem~\ref{conj:intro}. 
In the case of $G=\mathrm{GL}_r$, then $(\chi, w)\in \mathbb{Z}^2$, and 
the semiorthogonal decomposition 
in Theorem~\ref{conj:intro} becomes 
\begin{align}\label{sod:intro1}
   \LL(\Hig_{\mathrm{GL}_r}(\chi))_w=\left\langle \bigotimes_{i=1}^k \mathbb{T}_{\mathrm{GL}_{r_i}}(\chi_i)_{w_i} : \frac{\chi_1}{r_1}>\cdots>\frac{\chi_k}{r_k}, \frac{w_i}{r_i}=\frac{w}{r}  \right\rangle,  
\end{align}
where the right-hand side is after all partitions 
\begin{align}\label{intro:part}
    (r, \chi, w)=(r_1, \chi_1, w_1)+\cdots+(r_k, \chi_k, w_k).
\end{align}
The functor from summands of the right hand side to the left hand side is given by parabolic induction, and it is fully faithful. The order of the summands is induced by a natural order on Harder-Narasimhan strata. 

The geometric explanation of the semiorthogonal decomposition (\ref{intro:sod:L}) is the Harder-Narasimhan stratification of the stack $\Hig_G(\chi)$, and each stratum 
contributes to a semiorthogonal summand exactly once. 
Indeed, the inequalities which appear in the right-hand side 
of (\ref{sod:intro1}) are nothing but the inequalities of the Harder-Narasimhan filtration 
of (Higgs) bundles. Such a 
semiorthogonal decomposition is natural in the setting of D-modules or constructible sheaves 
(for example, see the work of McGerty--Nevins~\cite{McNe} for D-modules), but it is novel in the setting of coherent sheaves
and the proof of Theorem~\ref{conj:intro} is quite intricate. 
We will first prove a version of Theorem~\ref{conj:intro} in a local model in Section~\ref{sec:magic} involving \textit{magic categories} of smooth stacks, and in Section~\ref{sec:sodlim} we will reduce the problem to the local setting, see also the discussion in Subsection \ref{subsec:compactgeneration}. 

Note that there are semiorthogonal decompositions for certain smooth or quasi-smooth stacks due to Halpern--Leistner~\cite{halp, HalpTheta, HalpK32}, but those involve certain choices of weights and stability conditions, whereas (\ref{intro:sod:L}) and the decomposition from~\cite{McNe} do not, and each unstable stratum appears infinitely many times as opposed to once in~\eqref{sod:intro1}.

\subsection{The Dolbeault geometric Langlands conjecture via limit categories}
In our earlier work~\cite{PThiggs}, we studied the dg-category of coherent sheaves on 
the quasi-compact stack $\Hig_G(\chi)^{\mathrm{ss}}$ for $G\in \{\GL_r, \mathrm{PGL}_r, \mathrm{SL}_r\}$, and obtained the following semiorthogonal decomposition, see Theorem~\ref{thm:PThiggs}:
\begin{align}\label{intro:sod:general}
    \Coh(\Hig_G(\chi)^{\mathrm{ss}})_w=
    \left\langle \mathbb{T}_{M}(\chi_{M})_{w_{M}} : \begin{array}{ll}\mu_M(\chi_M)=\mu_G(\chi) \\
     \mu_M(w_{M}) \in 
   -M(T)_{\mathbb{Q}+}^{W_{M}}\end{array}
    \right\rangle. 
\end{align}
Here $(\chi_M, w_M)$ satisfies (\ref{intro:chiw}). 
In the case of $G=\GL_r$, it says that 
\begin{align}\label{sod:intro2}
   \Coh( \Hig_{\mathrm{GL}_r}(\chi)^{\mathrm{ss}})_w=\left\langle \bigotimes_{i=1}^k \mathbb{T}_{\mathrm{GL}_{r_i}}( \chi_i)_{w_i} : \frac{w_1}{r_1}<\cdots<\frac{w_k}{r_k}, \frac{\chi_i}{r_i}=\frac{\chi}{r}  \right\rangle,  
\end{align}
where the right-hand side is after all partitions (\ref{intro:part}). The functor from the summands of the right hand side to the left hand side is induced by parabolic induction (in other words, the Hall product~\cite{PoSa, PThiggs}), and it is fully faithful. 
We expect that the semiorthogonal decomposition~\eqref{intro:sod:general} holds for general reductive groups $G$, and we plan to prove it in the work in progress~\cite{BPT}.

The semiorthogonal decompositions (\ref{sod:intro1}) and (\ref{sod:intro2}) 
are of similar forms, but the roles of the characters/cocharacters are exchanged. 
However, whereas (\ref{sod:intro1}) has a clear geometric explanation, namely the Harder-Narasimhan stratification of $\Hig_G$, the semiorthogonal decomposition (\ref{intro:sod:general}) does not. The semiorthogonal decomposition (\ref{intro:sod:general}) can be explained based on combinatorics of weight 
polytopes of some (self-dual) representations of reductive groups, see also~\cite{SpVdB}, and is motivated by categorification of the PBW/cohomological integrality theorem in cohomological DT theory, see Subsection~\ref{subsec:intro:catDT}. 

We expect that the semiorthogonal decompositions in (\ref{sod:intro1})
and (\ref{sod:intro2}) are Langlands dual to each other. 
Based on the above observations, we propose the following formulation of the \textit{Dolbeault geometric Langlands conjecture (DGLC)}, which, given the decompositions (\ref{sod:intro1}) and (\ref{sod:intro2}), is a weak form of compatibility of the equivalence~\eqref{intro:cohL} with parabolic induction\footnote{At this moment, parabolic inductions for limit categories are only defined partially, due to the non-properness of the maps $\Hig_P \to \Hig_G$. The result of Theorem~\ref{conj:intro} implies the existence of parabolic induction in the case of a Harder-Narasimhan stratum.}: 
\begin{conj}\label{conj:Higgs}
There is an equivalence 
\begin{align}\label{intro:eq:CohL}
    \Coh(\Hig_{^{L}G}(w)^{\mathrm{ss}})_{-\chi} \simeq \LL(\Hig_{G}(\chi))_{w}
\end{align}
under which the semiorthogonal decompositions (\ref{sod:intro1}) and (\ref{sod:intro2}) into quasi-BPS categories are 
compatible.
\end{conj}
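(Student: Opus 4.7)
The plan is to reduce the conjectured equivalence to a piecewise comparison of the two semiorthogonal decompositions~\eqref{intro:sod:L} and~\eqref{intro:sod:general}. Granted a matching of summands
\begin{align*}
\Phi_M \colon \mathbb{T}_{{}^{L}M}(w_M)_{-\chi_M}\;\xrightarrow{\ \sim\ }\;\mathbb{T}_M(\chi_M)_{w_M}
\end{align*}
for each standard Levi $M$, the desired functor~\eqref{intro:eq:CohL} will be assembled by composing, on each summand, the Hall-product parabolic induction on the coherent ${}^{L}G$-side with $\Phi_M$ and the parabolic induction into $\LL(\Hig_G(\chi))_w$ established in Theorem~\ref{conj:intro}. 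Thus the conjecture splits naturally into two tasks: (a) organizing the indexing sets and orderings compatibly under Langlands duality, and (b) constructing the term-by-term equivalences $\Phi_M$ between quasi-BPS categories.

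Step (a) proceeds by identifying Levis $M\subset G$ with their Langlands duals ${}^{L}M\subset {}^{L}G$, using the canonical isomorphisms $\pi_1(M)\simeq Z({}^{L}M)^{\vee}$ and $Z(M)^{\vee}\simeq \pi_1({}^{L}M)$. Translating the constraints of~\eqref{intro:sod:general} applied to $({}^{L}G,w,-\chi)$ into $G$-side data produces exactly the constraints of~\eqref{intro:sod:L} applied to $(G,\chi,w)$: the sign in $-\chi_M$ is absorbed into the choice of positive chamber, and the two fixed-slope conditions for $\chi$ and $w$ swap roles. The orderings also dualize; in the $\mathrm{GL}_r$-case,~\eqref{sod:intro2} is ordered by increasing $w_i/r_i$ while~\eqref{sod:intro1} is ordered by decreasing $\chi_i/r_i$, and the two match once slopes are swapped. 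This generalizes Weyl-invariantly through the ordering on $N(T)_{\mathbb{Q}+}^{W_M}$, and one needs to observe that the induced partial order on Levis is the same poset structure that drives the Harder--Narasimhan filtration on the limit side and the weight-polytope filtration on the coherent side.

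The main obstacle is step (b), the construction of Langlands-dual equivalences between quasi-BPS categories themselves. For $M=G$ this is precisely the categorical version of the Hausel--Thaddeus topological mirror symmetry flagged at the end of the abstract, and one expects it to be realized by a Fourier--Mukai transform along the Hitchin fibration using the Poincar\'e sheaf between the generic, dual abelian-variety Hitchin fibers. A reasonable route is: first treat $G=\mathrm{GL}_r$ with coprime $(r,\chi,w)$, where $\mathbb{T}_G$ reduces to $\Coh$ of the smooth moduli space and the problem becomes a derived equivalence between dual Hitchin systems; then descend to $\mathrm{SL}_r$ and $\mathrm{PGL}_r$ using $Z(G)$-equivariance and the gerbes present in Hausel--Thaddeus; finally bootstrap to general reductive $G$ and arbitrary Levis $M$ via the inductive structure of the SOD itself. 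A weaker but informative intermediate target is to match only the topological K-theories and periodic cyclic homologies of the two sides, which already refines Corollary~\ref{intro:corK} beyond genus zero.

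Finally, assuming the term-by-term equivalences from step (b), one must check that they glue into an equivalence of SOD-ed categories, i.e.\ that they intertwine the Hall-product semiorthogonal structure on the coherent ${}^{L}G$-side with the Harder--Narasimhan semiorthogonal structure on the limit $G$-side. Since both SODs arise from fully faithful parabolic induction functors with explicit semiorthogonal complements (as produced in the proof of Theorem~\ref{conj:intro} and in~\cite{PThiggs}), this compatibility reduces to a combinatorial check for each pair $(M,M')$ of adjacent Levis, together with verification that adjacent mutations correspond under $\Phi_M$ and $\Phi_{M'}$. The compact generation of $\IndL(\Hig_G(\chi))_w$ granted by Theorem~\ref{intro:thm:gen} is what makes this assembly into a genuine dg-equivalence technically tractable, since it allows us to test equivalences on the compact objects $\LL(\Hig_G(\chi))_w$ and then ind-extend.
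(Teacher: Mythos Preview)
The statement you are addressing is a \emph{conjecture} in the paper, not a theorem; the paper offers no proof, only evidence (Corollary~\ref{intro:corK}) and compatibilities. So there is nothing to compare your proposal against, and your write-up should be read as a proof \emph{strategy}, not a proof.

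As a strategy, your reduction has two genuine gaps. First, your step~(b) is precisely Conjecture~\ref{intro:conj:qbps} (restated later as Conjecture~\ref{conj:G}), which the paper presents as an \emph{open} consequence of Conjecture~\ref{conj:Higgs}, not as an input to it. You acknowledge this is ``the main obstacle'' but do not resolve it; the Fourier--Mukai route you sketch is exactly what is known over the elliptic locus~\cite{Ardual} and is not known to extend over the full Hitchin base, even for $\mathrm{GL}_r$ with coprime invariants. So at best your argument shows Conjecture~\ref{intro:conj:qbps} (for all Levis) plus gluing implies Conjecture~\ref{conj:Higgs}, which is a reasonable observation but not a proof of either.

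Second, and more structurally, your gluing step is incomplete even granting step~(b). Two dg-categories equipped with semiorthogonal decompositions whose pieces are termwise equivalent need not be equivalent: one must match the gluing data, i.e.\ the bimodules $\Hom(\mathcal{A}_i,\mathcal{A}_j)$ between adjacent pieces together with their $A_\infty$-composition. Your claim that this ``reduces to a combinatorial check for each pair $(M,M')$'' is not justified; what is needed is that the equivalences $\Phi_M$ intertwine the \emph{functors} of parabolic induction on both sides, not merely their essential images. On the limit side these are the functors $\Upsilon_{\mathcal{U}}$ of Proposition~\ref{prop:sod:KZ}, and on the coherent side the Hall products of~\cite{PThiggs}; showing $\Phi_G\circ\mathrm{Ind}^{{}^LG}_{{}^LM}\simeq \mathrm{Ind}^G_M\circ\Phi_M$ is a substantial compatibility (essentially the ``compatibility with parabolic induction'' the paper lists as an expected property of~\eqref{intro:cohL}) and does not follow from the combinatorics of step~(a) alone.
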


Later we discuss further compatibilities of the above equivalence. 
Note that Conjecture~\ref{conj:Higgs} obviously holds when $T$ is a torus, see~Remark~\ref{rmk:torus}. It is more interesting and difficult for other cases. 
At this moment, even the case of $G=\GL_2$ is open. 

\medskip

For a dg-category $\mathscr{D}$, we denote by $K^{\text{top}}(\mathscr{D})$ the topological K-theoretic spectrum defined by Blanc~\cite{Blanc}. For the relation between Blanc's topological K-theory and the usual (equivariant) topological K-theory, see~\cite{Blanc, HLP}.
We will use topological K-theory to get an indication of the `size' of the dg-categories discussed in this paper. These computations will be expressed (sometimes conjecturally) in terms of BPS cohomology, which we mention in Subsection~\ref{subsec:catDT}. We prefer using topological K-theory instead of periodic cyclic homology because of the computations with the former in~\cite{PTtop, PThiggs2}.

By combining Theorem~\ref{conj:intro} and the results on (topological) K-theories of quasi-BPS 
categories from~\cite{PThiggs2}, we obtain a version of Conjecture~\ref{conj:Higgs} 
in K-theory. It gives evidence toward Conjecture~\ref{conj:Higgs}, see 
Subsection~\ref{subsec:SODLang}.

\begin{cor}\label{intro:corK}
If $G \in \{\GL_r, \mathrm{PGL}_r, \mathrm{SL}_r\}$,  
there is an isomorphism of rational topological K-theories respecting the filtration induced by the semiorthogonal decompositions~(\ref{sod:intro1}) and (\ref{sod:intro2}): 
\begin{align*}
    K^{\mathrm{top}}(\Coh(\Hig_{^{L}G}(w)^{\mathrm{ss}})_{-\chi})_{\mathbb{Q}} \cong K^{\mathrm{top}}(\LL(\Hig_{G}(\chi))_w)_{\mathbb{Q}}
\end{align*}
in the following cases: 
(i) $g(C)=0$; (ii) $r=2$; (iii) $(r, w)$ are coprime. 

In the case of (i) $g(C)=0$, the above isomorphism holds with integral coefficients, 
and also holds for algebraic K-theory. 
\end{cor}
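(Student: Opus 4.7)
The plan is to apply topological K-theory to both semiorthogonal decompositions and reduce the corollary to a summand-by-summand comparison. Since $K^{\mathrm{top}}$ is an additive invariant on dg-categories, any semiorthogonal decomposition induces a direct-sum splitting equipped with the ascending filtration by the ordered pieces. Applying Theorem~\ref{conj:intro} to the right-hand side and~(\ref{intro:sod:general}) (which is the main result of~\cite{PThiggs} for $G \in \{\GL_{r}, \mathrm{SL}_{r}, \mathrm{PGL}_{r}\}$) to the left-hand side yields decompositions
\begin{align*}
K^{\mathrm{top}}(\Coh(\Hig_{{}^{L}G}(w)^{\mathrm{ss}})_{-\chi})_{\QQ} &\cong \bigoplus K^{\mathrm{top}}(\mathbb{T}_{{}^{L}M}(w_{M})_{-\chi_{M}})_{\QQ}, \\
K^{\mathrm{top}}(\LL(\Hig_{G}(\chi))_{w})_{\QQ} &\cong \bigoplus K^{\mathrm{top}}(\mathbb{T}_{M}(\chi_{M})_{w_{M}})_{\QQ},
\end{align*}
and it remains to identify the indexing sets and to match the summands.

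The first step is the bijection between indexing sets, which I would check case-by-case in $G$. For $G = \GL_{r}$ both sums range over partitions $(r, \chi, w) = \sum_{i}(r_{i}, \chi_{i}, w_{i})$ satisfying $\chi_{1}/r_{1} > \cdots > \chi_{k}/r_{k}$ and $w_{i}/r_{i} = w/r$: under the Langlands exchange $(\chi, w) \mapsto (w, -\chi)$ governing~(\ref{sod:intro2}) on the semistable side, the strict slope inequality and the equal-slope condition interchange and produce precisely the data indexing~(\ref{sod:intro1}). The cases $\mathrm{SL}_{r}$ and $\mathrm{PGL}_{r}$ reduce to $\GL_{r}$ via the natural constraints on $\pi_{1}(G)$ and $Z(G)^{\vee}$, using that $\mathrm{SL}_{r}$ and $\mathrm{PGL}_{r}$ are Langlands dual. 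Moreover, this bijection carries the Harder-Narasimhan order on the limit side to the opposite of the slope-of-weight order on the semistable side, so any summand-wise isomorphism is automatically filtration-preserving.

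The remaining and main step is the identification of summands
\begin{align*}
K^{\mathrm{top}}(\mathbb{T}_{{}^{L}M}(w_{M})_{-\chi_{M}})_{\QQ} \cong K^{\mathrm{top}}(\mathbb{T}_{M}(\chi_{M})_{w_{M}})_{\QQ}
\end{align*}
for every Levi $M$ and every admissible pair, and this is where I expect the main obstacle to lie. The required isomorphism is supplied by the topological K-theory calculations of quasi-BPS categories in~\cite{PThiggs2}, and it is precisely these calculations that force the hypotheses (i)-(iii). In case~(iii), $\gcd(r,w) = 1$ forces $k=1$ and the unique quasi-BPS summand is $\Coh$ of the smooth projective Hitchin moduli space, whose $K^{\mathrm{top}}$ agrees with the Langlands-dual side by the K-theoretic form of Hausel-Thaddeus topological mirror symmetry. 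In case~(ii), only an abelian Levi contributes in the nontrivial partitions, for which both sides are computed explicitly. In case~(i), the moduli stack is a twisted cotangent of $\Bun_{G}$ over $\mathbb{P}^{1}$, and equivariant localization upgrades the isomorphism both to integral coefficients and to algebraic K-theory. Once this input from~\cite{PThiggs2} is granted, the corollary becomes a formal consequence of the two semiorthogonal decompositions together with the indexing bijection of the previous step.
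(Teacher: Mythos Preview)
Your overall strategy matches the paper's: decompose both sides via the semiorthogonal decompositions of Theorems~\ref{conj:intro} and~\ref{thm:PThiggs}, identify the indexing sets under the Langlands exchange $(\chi,w)\leftrightarrow(w,-\chi)$, and then invoke the summand-by-summand comparison from~\cite{PThiggs2}. The paper carries this out in Subsection~\ref{subsec:SODLang}, and the logic is the same as yours.

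There are, however, a few imprecisions in your case analysis. In case~(iii), the single summand $\mathbb{T}_{\GL_r}(\chi)_w$ on the limit side is \emph{not} in general $\Coh$ of a smooth Hitchin moduli space (only $(r,w)$ is assumed coprime, not $(r,\chi)$); it is a genuine BPS category in the sense of Remark~\ref{rmk:GLr}. The correct input is the isomorphism~(\ref{isom:topK}) from~\cite{PThiggs2}, valid because $(r,\chi,w)$ is automatically primitive. In case~(ii), you omit the $\GL_2$ summand itself: when both $\chi$ and $w$ are even the triple $(2,\chi,w)$ is not primitive, and~\cite{PThiggs2} does not directly apply; the paper uses~\cite[Lemma~3.8]{PThiggs} to reduce both sides to $\mathbb{T}_{\GL_2}(0)_0$. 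In case~(i), the paper does not use equivariant localization but rather computes the quasi-BPS categories explicitly as $\Coh^{\shear}(\mathfrak{h}_r\ssslash W_r)$ (or zero), see~(\ref{qbps:g=0}); this yields an equivalence of categories, hence integral and algebraic K-theory isomorphisms. With these corrections, your argument coincides with the paper's.
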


\subsection{Relation to topological mirror symmetry}\label{subsec:intro:catDT}

In the case of $G=\mathrm{GL}_r$, the quasi-BPS category (\ref{intro:qbps0})
\begin{align}\label{intro:qbps}
    \mathbb{T}_{\mathrm{GL}_r}(\chi)_w \subset \Coh(\Hig_{\mathrm{GL}_r}(\chi)^{\mathrm{ss}})_w
\end{align}
was introduced and studied in~\cite{PThiggs}. As we mention in the next subsection, the above subcategory 
is motivated by categorical Donaldson-Thomas theory. 

If $(r, \chi)$ are coprime, then (\ref{intro:qbps}) is nothing but the dg-category of coherent sheaves 
on the (classical) moduli space of stable Higgs bundles $\text{H}_{\GL_r}(\chi)^{\text{ss}}$, which is a holomorphic symplectic manifold. 

The case that $(r, \chi)$ are non-coprime, but $\gcd(r, \chi, w)=1$, is more interesting. 
In this case, the category (\ref{intro:qbps})\footnote{More precisely, we need to take its reduced version by removing a trivial derived structure.} is smooth, proper, and Calabi–Yau over the Hitchin 
base, and recovers the 
BPS invariants of~\eqref{def:Xtotal}, see~\cite{PThiggs}.
It shares the same categorical properties with the category of coherent sheaves on 
moduli spaces of stable Higgs bundles, and therefore we regard it as a `\textit{non-commutative Hitchin moduli space}' (in this case, it is
\textit{a BPS category}). In particular, it is indecomposable. 
Recall that, for general $(r,\chi)$, the good moduli space $\text{H}_{\GL_r}(\chi)^{\text{ss}}$ of the classical truncation of $\Hig_{\mathrm{GL}_r}(\chi)^{\text{ss}}$ does not admit crepant resolutions~\cite{KaLeSo}, alternatively a Calabi--Yau resolution of singularities over the Hitchin base. A BPS category may be regarded as a twisted non-commutative crepant resolution of singularities of $\text{H}_{\GL_r}(\chi)^{\text{ss}}$.

A consequence of the equivalence from Conjecture~\ref{conj:Higgs} is the following conjectural equivalence of quasi-BPS categories of \textit{semistable} Higgs bundles:
\begin{conj}\label{intro:conj:qbps}
For $(\chi, w)\in \pi_1(G)\times Z(G)^{\vee}$, there is an equivalence 
\begin{align}\label{intro:equiv:T}
    \mathbb{T}_{^{L}G}(w)_{-\chi} \simeq \mathbb{T}_{G}(\chi)_{w}.
\end{align}
\end{conj}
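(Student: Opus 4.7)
The plan is to derive Conjecture~\ref{intro:conj:qbps} as a corollary of Conjecture~\ref{conj:Higgs}, by combining the equivalence $\Coh(\Hig_{^{L}G}(w)^{\mathrm{ss}})_{-\chi} \simeq \LL(\Hig_G(\chi))_w$ with its compatibility with the two semiorthogonal decompositions~(\ref{intro:sod:L}) and~(\ref{intro:sod:general}) into quasi-BPS categories.

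First, I would isolate the distinguished summand on each side corresponding to the trivial parabolic $P = G$. In the decomposition~(\ref{intro:sod:L}) of $\LL(\Hig_G(\chi))_w$, the choice $M = G$ gives $(\chi_M, w_M) = (\chi, w)$, and the condition $\mu_M(\chi_M) \in N(T)_{\mathbb{Q}+}^{W_M}$ becomes vacuous (the $W_G$-invariance is automatic), so the corresponding summand is exactly $\mathbb{T}_G(\chi)_w$. Dually, applying~(\ref{intro:sod:general}) to ${}^{L}G$ with parameters $(w, -\chi)$ and the trivial parabolic $P^{L} = {}^{L}G$ singles out the summand $\mathbb{T}_{{}^{L}G}(w)_{-\chi}$.

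Next, I would match the indexing sets via Langlands duality: standard parabolics $P \subset G$ correspond bijectively to standard parabolics $P^{L} \subset {}^{L}G$ with Langlands dual Levi quotients $M \leftrightarrow M^{L}$; under this correspondence the roles of $\pi_1(M)$ and $Z(M^{L})^{\vee}$ are exchanged, and one checks that the constraints appearing in~(\ref{intro:sod:L}) map to those in~(\ref{intro:sod:general}) (both being encoded by weight polytope combinatorics for the same dual pair of Levis). In particular, the trivial parabolic on either side is self-dual. The required compatibility of~(\ref{intro:eq:CohL}) with the two semiorthogonal decompositions then forces the corresponding summands to match, and specializing to the trivial parabolic yields the desired equivalence~$\mathbb{T}_{{}^{L}G}(w)_{-\chi} \simeq \mathbb{T}_G(\chi)_w$. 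One would further expect this equivalence to be linear over the Hitchin base, consistent with the categorical Hausel-Thaddeus topological mirror symmetry advertised in the abstract.

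The main obstacle is of course Conjecture~\ref{conj:Higgs} itself: even the case $G = \GL_2$ is open, and the compatibility with the semiorthogonal decompositions is a strong form of compatibility with parabolic induction, which is presently only partially defined on the limit side due to the non-properness of $\Hig_P \to \Hig_G$. A more direct route to Conjecture~\ref{intro:conj:qbps} would be to construct the equivalence~(\ref{intro:equiv:T}) by hand via a Fourier-Mukai kernel on an appropriate fiber product of Hitchin moduli spaces over the Hitchin base, in the spirit of the Donagi-Pantev SYZ picture for the coprime case but lifted to the quasi-BPS (twisted non-commutative resolution) setting. Such a construction would then feed back into Conjecture~\ref{conj:Higgs} through the decompositions~(\ref{intro:sod:L}) and~(\ref{intro:sod:general}), as it would identify the `indecomposable building blocks' on both sides and the full equivalence would be assembled by Harder-Narasimhan induction.
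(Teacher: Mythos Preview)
The statement is a conjecture, not a theorem; the paper does not prove it but presents it in one sentence as a consequence of Conjecture~\ref{conj:Higgs}, exactly as you do. Your explanation of how the $M=G$ summands in the two semiorthogonal decompositions match under Langlands duality, and your assessment of the obstacles, make explicit what the paper leaves as a one-line remark.
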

An equivalence (\ref{intro:equiv:T}) is the main conjecture in~\cite{PThiggs} when $G \in \{\GL_r, \mathrm{PGL}_r, \mathrm{SL}_r\}$, and the above conjecture extends it 
to arbitrary $G$.
In type A, 
the equivalence (\ref{intro:equiv:T}) is a categorical version of the topological mirror symmetry for
Higgs bundles proposed by Hausel–Thaddeus~\cite{HauTha}, and proved in~\cite{GrWy, MSendscopic}. 
The above conjecture is also motivated by the D-equivalence conjecture~\cite{B-O2, MR1949787} and SYZ mirror symmetry~\cite{SYZ}. 
Indeed, it states an equivalence between two non-commutative Calabi--Yau compactifications (over the Hitchin base) of dual abelian fibrations, namely the relative Picard schemes of smooth spectral curves~\cite[Theorem A]{DoPa}. For more details, see the introduction of~\cite{PThiggs}. 

For general groups $G$, a conjectural topological mirror symmetry was proposed in~\cite[Conjecture~10.3.18]{BDIKP}, formulated using BPS cohomology. We expect the topological K-theoretic version of (\ref{intro:equiv:T}) to be closely related to the 2-periodic version of this conjecture.

For $G\in \{\GL_r, \mathrm{PGL}_r, \mathrm{SL}_r\}$, Conjecture~\ref{intro:conj:qbps} is 
known to be true for $g=0$ (obvious), should follow from the work in progress~\cite{CauPT} for $g=1$, and is
true (for any $g$) at the level of rational topological K-theories by~\cite{PThiggs2} if $(r, \chi, w)$ is primitive.

\subsection{Relation to categorical Donaldson-Thomas theory}\label{subsec:catDT}
Let us first say a few words about Donaldson-Thomas (DT) theory. For introductions to the subject, see~\cite{MR3221298, Sz, TodaSpringer}.

\medskip

Initially, DT theory studied integer invariants of complex threefolds~\cite{Thom} associated to moduli of ideal sheaves, giving a mathematical definition of BPS invariants (Bogomol'nyi-Prasad-Sommerfield). Later, for Calabi--Yau $3$-folds, it was realized that these invariants can be defined more generally, for moduli of semistable sheaves~\cite{K-S, JS}, or even more for $(-1)$-shifted symplectic stacks (with orientation)~\cite{PTVV, BDIKP}. 

Furthermore they can be refined, for example to a perverse sheaf~\cite{Beh, MR2851153, BBJ, BBBJ} such that the Euler characteristic of its hypercohomology equals the integer valued BPS invariant. These sheaves are the main object of study in \textit{cohomological DT theory}. 
A moduli stack of semistable sheaves on a Calabi--Yau $3$-fold is locally the critical locus of a regular function $f\colon \Y\to\mathbb{A}^1_\mathbb{C}$ on a smooth stack with a good moduli space $\Y\to Y$. In such a local model, the BPS perverse sheaf is the vanishing cycle sheaf $\varphi_f(\text{IC}_Y)$, see~\cite{DM, BDIKP}. It is a direct summand of the Joyce perverse sheaf $\varphi_{f}(\text{IC}_{\Y})$, after pushing forward to $Y$. 

We now mention two results in cohomological DT theory, relevant to the present paper.
The first one is the PBW/cohomological integrality theorem of Davison--Meinhardt~\cite{DM, BDIKP}, which says that the cohomology of the Joyce sheaf decomposes into cohomologies of BPS sheaves, using the cohomological Hall products.  

The second is the \textit{$\chi$-independence}~\cite[Conjecture 1.2]{TodGV} phenomenon. It says that, for any Calabi--Yau $3$-fold $X$, the BPS invariants of sheaves supported on curves in $Y$ do not depend on the Euler characteristic $\chi$ of the parametrized sheaves. The $\chi$-independence was proved for numerical DT invariants of local K3 surfaces~\cite{MTK3}, and for the cohomological DT spaces of the local curve~$X=\text{Tot}_C(N)$ in~\cite{KinjoKoseki, DMJS}, where $N$ is a rank two vector bundle on 
$C$ with $\det N=\omega_C$.

To summarize these two results, one decomposes a possibly infinite dimensional space (the cohomology of the Joyce sheaf) into finite dimensional ones (the cohomology of the BPS sheaf), and the latter has surprising (numerical) symmetries ($\chi$-independence). 

\medskip

In \textit{categorical DT theory}~\cite{T, T4, HHR1, HHR2}, one studies a further refinement of the Joyce perverse sheaf, which is a dg-category locally described using categories of matrix factorizations. Here recall that the periodic cyclic homology of a dg-category of matrix factorizations is a 2-periodic version of the cohomology of the vanishing cycle sheaf~\cite{Eff, MR3877165}. For an overview of the scope to the subject, see the introduction of~\cite{T}. In our previous work, see for example~\cite{PTquiver, PTtop, PThiggs}, we studied a categorical analogue of BPS invariants, called \textit{quasi-BPS categories}, and we constructed semiorthogonal decompositions of categories of coherent sheaves or matrix factorizations on a stack where the summands are quasi-BPS categories.  We regard such decompositions as a categorical version of the PBW theorem in cohomological DT theory.

Note that quasi-BPS categories are not yet defined for compact Calabi--Yau $3$-folds. Nevertheless, they can be indirectly defined~\cite{T} (using the expected dimensional reduction property~\cite{I, Hirano, Kinjo}) for certain non-compact Calabi--Yau $3$-folds, including the one relevant to the current paper:
\begin{equation}\label{def:Xtotal}
    X:=\mathrm{Tot}_C(\mathcal{O}_C\oplus \Omega_C). 
\end{equation}

The quasi-BPS category (\ref{intro:qbps}) is regarded 
as a categorical version of the BPS invariants of $X$. 
In this context, the semiorthogonal decomposition (\ref{sod:intro2}) is regarded as 
 a categorical analogue of 
the PBW/cohomological integrality theorem in DT theory. 
The equivalence (\ref{intro:equiv:T}) is a categorical version of the $\chi$-independence phenomenon for 
the Calabi--Yau 3-fold (\ref{def:Xtotal}), as explained in~\cite{PThiggs}.

This paper may be viewed as establishing a connection between Donaldson–Thomas theory on the non-compact Calabi–Yau 3-fold $X$ and the geometric Langlands correspondence through categorical DT theory,
see the following diagram. 
\begin{figure}[H]
\caption{Relation between DT theory and geometric Langlands}
\begin{align*}
\xymatrix{
\mbox{Donaldson-Thomas theory } \ar[rrr]^-{{\mbox{\tiny{categorification}}}} & & &\mbox{Categorical DT theory} \ar@<1ex>[d]^-{{\mbox{\tiny{formulation}}}} \\
\mbox{Geometric Langlands } \ar[rrr]_-{{\mbox{\tiny{classical limit}}}} & & &\mbox{Dolbeault Langlands} \ar@<1ex>[u]^-{{\mbox{\tiny{symmetry}}}}
}  
\end{align*}
\end{figure}

There are several follow-up questions inspired by this connection, which we do not pursue in this paper. First, one may wonder whether quasi-BPS categories have natural deformation quantizations, or, related, whether there are natural categorifications of BPS invariants using categories of D-modules or constructible sheaves.
Second, it is interesting to investigate a categorical version of the $\chi$-independence phenomenon for Calabi--Yau $3$-folds beyond the Calabi--Yau 3-fold (\ref{def:Xtotal}), for example for other local curves or for local K3 surfaces~\cite{PTK3}.

\subsection{Classical limit of the (de Rham) geometric Langlands equivalence}

We now recall the proposal of Donagi--Pantev~\cite{DoPa} for a Dolbeault version of the geometric Langlands equivalence.

Let $C$ be a smooth projective curve over $k$, and let $G$ be a reductive algebraic group. The (de Rham) geometric Langlands correspondence is formulated as an equivalence of dg-categories~\cite{BD0, AG}: 
\footnote{
More precisely, the right-hand side should be the category of $1/2$-twisted D-modules.
It is equivalent to the untwisted one uncanonically, which depends on a choice of $\omega_C^{1/2}\in \mathrm{Pic}(C)$.}
\begin{align}\label{intro:GLC} \IndCoh_{\mathcal{N}}(\mathrm{Locsys}_{^{L}G}) \simeq \mathrm{D\text{-}mod}(\Bun_G). \end{align} 
On the left-hand side (the spectral side), $\mathrm{Locsys}_{^{L}G}$ denotes the moduli stack of $^{L}G$-bundles with flat connections, and $\IndCoh_{\mathcal{N}}(-)$ is the dg-category of ind-coherent sheaves with nilpotent singular supports~\cite{AG}.
On the right-hand side (the automorphic side), $\Bun_G$ denotes the moduli stack of $G$-bundles, and $\mathrm{D\text{-}mod}(-)$ is the dg-category of D-modules~\cite{GaiCry} on it.
The equivalence~\eqref{intro:GLC} has been established in a series of papers~\cite{GLC1, GLC2, GLC3, GLC4, GLC5}, and satisfies several compatibilities, for example it respects parabolic induction, the action of Wilson/Hecke operators, and Whittaker normalization.

Before the precise formulation~\eqref{intro:GLC} of the GLC appeared in~\cite{AG}, Donagi–Pantev~\cite{DoPa} proposed a classical limit version for Higgs bundles. The stack $\mathrm{Locsys}_{^{L}G}$ admits a degeneration to the stack of $^{L}G$-Higgs bundles, \begin{align}\notag \mathrm{Locsys}_{^{L}G} \leadsto \Hig_{^{L}G}, \end{align} via the moduli stack of Deligne $\lambda$-connections.
On the automorphic side, the sheaf of differential operators may be regarded as a deformation quantization of the cotangent bundle. Thus, there is a degeneration of categories \begin{align*} \mathrm{D\text{-}mod}(\Bun_G) \leadsto \mathrm{QCoh}(\Hig_G).
\end{align*} Based on these observations, Donagi–Pantev~\cite{DoPa} (also see~\cite[Conjecture~1.3]{ZN}) conjectured an equivalence, often called the \emph{Dolbeault geometric Langlands conjecture} (DGLC)\footnote{This version of conjectural equivalence (\ref{intro:DL}) is due to~\cite[Conjecture~1.3]{ZN}.}: \begin{align}\label{intro:DL} \mathrm{QCoh}(\Hig_{^{L}G}) \simeq \mathrm{QCoh}(\Hig_{G}). \end{align} 
One may then attempt to deform it into the geometric Langlands equivalence~\eqref{intro:GLC}, see~\cite{BezBra}.

The two moduli stacks of Higgs bundles in~\eqref{intro:DL} admit Hitchin maps \begin{align*} \xymatrix{ \Hig_{^{L}G}\ar[rd] & & \Hig_{G} \ar[ld] \\ & \mathrm{B}. & } \end{align*} 
Generically over the Hitchin base $\mathrm{B}$, they are dual abelian fibrations ~\cite[Theorem~A]{DoPa}.
It is expected that the Fourier–Mukai equivalence~\cite{Mu1} over the generic point of $\mathrm{B}$ extends to an equivalence~\eqref{intro:DL} over the entire base $\mathrm{B}$. Both equivalences~\eqref{intro:equiv:T} and~\eqref{intro:cohL} are such conjectural extensions.

A remarkable feature of the DGLC is that both sides 
are mathematical objects in commutative algebraic geometry, and thus is an interesting duality in algebraic geometry. 
There have been several efforts proving an equivalence (\ref{intro:DL}), especially over some 
dense open subset of the Hitchin base $\mathrm{B}$, see~\cite{Ardual, ArFe, MRVF2, MLi, PThiggs}. Among them, in Arinkin's remarkable paper~\cite{Ardual},
an equivalence (\ref{intro:DL}) for $G=\GL_r$ is proved over the elliptic locus in $\mathrm{B}$, where the spectral curves of Higgs bundles are irreducible.

\subsection{Local systems and semistable Higgs bundles}\label{subsec:Dolb}
However, so far there is no known example for $G$ non-abelian where an equivalence (\ref{intro:DL}) holds over the full base $\mathrm{B}$. Indeed, it seems not even clear whether an equivalence (\ref{intro:DL}) is a correct formulation. Due to the non-quasi-compactness of $\Hig_G$, there 
are several obstruction and difficulities in dealing with (quasi)coherent sheaves on it. 
For example, in~\cite[Remark~1.4]{ZN}, it is mentioned that the formulation DGLC needs 
to be modified. 

Since the stack $\Hig_G$ is singular, one might consider replacing $\mathrm{QCoh}$ with $\IndCoh$ or $\IndCoh_{\mathcal{N}}$ in the formulation, for example inquire whether:
\begin{equation}\label{equiv:dolb3}
    \IndCoh(\Hig_{^LG})\cong \IndCoh(\Hig_{G}).
\end{equation}
However, the following issue highlights a significant difference between the categories in~\eqref{intro:DL} and those in~\eqref{intro:GLC}:

\begin{prop}\emph{(Proposition~\ref{prop:cgen}, Remark~\ref{rmk:compact})}\label{intro:prop:cpt} For $G = \mathrm{GL}_r$ with $r \geq 2$, the dg-categories
$\mathrm{QCoh}(\Hig_G(\chi))$, $\IndCoh(\Hig_G(\chi))$, and
$\IndCoh_{\mathcal{N}}(\Hig_G(\chi))$ are not compactly generated. \end{prop}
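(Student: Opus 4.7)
My plan is to leverage the non-quasi-compactness of $\Hig_{\GL_r}(\chi)$ for $r\geq 2$, together with the general principle that compact objects in the coherent-sheaf world have quasi-compact support. The stack $\Hig_{\GL_r}(\chi)$ admits an exhaustion $\Hig_{\GL_r}(\chi) = \bigcup_{n\geq 1} \U_n$ by a strictly increasing chain of quasi-compact open substacks (e.g.\ given by truncation of the Harder-Narasimhan polygon), and this chain does not stabilize: for $r\geq 2$ one can construct Higgs bundles with arbitrarily unstable underlying bundles via destabilizing sub-bundles of rank $<r$.

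First I would unpack the definitions for the non-quasi-compact case. Following the standard conventions (Gaitsgory-Rozenblyum), the three categories $\mathrm{QCoh}$, $\IndCoh$, and $\IndCoh_\mathcal{N}$ on the non-quasi-compact stack are obtained by right Kan extension from the quasi-compact case, so that as dg-categories one has $\IndCoh(\Hig_{\GL_r}(\chi)) \simeq \lim_n \IndCoh(\U_n)$ along the $!$-pullbacks (which agree with $*$-pullbacks for open immersions), and similarly for $\mathrm{QCoh}$ and $\IndCoh_\mathcal{N}$.

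Next, I would observe that compact objects in each of these three categories have quasi-compact support. A coherent sheaf on $\Hig_{\GL_r}(\chi)$ is locally of finite presentation, hence has closed support contained in some $\U_N$; likewise, perfect complexes in $\mathrm{QCoh}$ are pseudo-coherent with quasi-compact support, and the nilpotent singular support restriction does not relax this. Therefore, if one were to have a set of compact generators $\{C_\alpha\}$, then any nonzero object would have to pair nontrivially with some $C_\alpha$, whose support lies in a fixed $\U_N$. To violate this, I would exhibit a nonzero object $\F$ orthogonal to all such compacts. Using the exhaustion, the candidate is an ``infinite product at infinity'': choose coherent sheaves $\mathcal{G}_n$ supported on the HN stratum appearing in $\U_n\setminus \U_{n-1}$, and assemble the compatible system $\F_n := \bigoplus_{m \leq n} \mathcal{G}_m|_{\U_n}$ into an object $\F$ of the limit category. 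The successive pushforwards accumulate an unbounded tail that no coherent compact can reach, so $\F$ cannot be written as a filtered colimit of coherents — equivalently, the natural functor $\Ind(\Coh(\Hig_{\GL_r}(\chi))) \to \IndCoh(\Hig_{\GL_r}(\chi))$ fails to be essentially surjective for this non-quasi-compact stack.

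The main obstacle is making the above construction rigorous, i.e.\ verifying that the compatible system $(\F_n)$ genuinely represents a nonzero object of the limit category and is not in the image of $\Ind(\Coh)$. This is a delicate ind-completion/limit interchange issue that does not arise in the quasi-compact case. For the singular-support variant $\IndCoh_\mathcal{N}$, one must additionally ensure the obstruction survives the nilpotent-singular-support restriction; this can be arranged by picking each $\mathcal{G}_n$ supported on the zero section $\Bun_{\GL_r}(\chi) \subset \Hig_{\GL_r}(\chi)$, which has (zero, hence) nilpotent singular support, and which is itself non-quasi-compact so that the inductive construction still yields a genuinely infinite tail.
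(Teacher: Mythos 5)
There is a genuine gap, and it lies at the heart of your argument. Knowing that every individual compact object has quasi-compact support does not contradict compact generation: a generating set may consist of infinitely many compact objects whose (quasi-compact) supports exhaust the stack, so there is no ``fixed $\U_N$'' — your sentence ``any nonzero object would have to pair nontrivially with some $C_\alpha$, whose support lies in a fixed $\U_N$'' conflates the two. Indeed $\mathrm{D}\text{-}\mathrm{mod}(\Bun_G)$ is compactly generated even though $\Bun_G$ is not quasi-compact and its compact objects have quasi-compact support, and $\QCoh$ of an infinite disjoint union of affine schemes is compactly generated by structure sheaves of components. So non-quasi-compactness of $\Hig_{\GL_r}(\chi)$ alone cannot be the reason, and your candidate object $\F=\bigoplus_m j_{m*}\mathcal{G}_m$ is not orthogonal to all compacts (a compact supported in $\U_N$ can map nontrivially to the summands with $m\leq N$), nor do you give any argument that it fails to lie in the image of $\Ind(\Coh)\to\IndCoh$; you acknowledge this yourself, but it is not a technical loose end — it is the missing proof. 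A subsidiary error: ``a coherent sheaf \ldots is locally of finite presentation, hence has closed support contained in some $\U_N$'' is false (the structure sheaf is coherent with non-quasi-compact support); the statement that \emph{compact} objects are coherent with quasi-compact support is true but itself requires the compactness argument the paper runs with a locally finite direct sum $F=\bigoplus_{m}j_{m*}F_m$, using that $\Hom(\mathcal{E},F)$ splits as a direct sum for compact $\mathcal{E}$.

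The key idea you are missing is geometric, and it is what makes $r\geq 2$ enter. The paper first shows (by the locally-finite-direct-sum trick above) that a compact object $\mathcal{E}$ is coherent with support contained in some $\U_{m_0}$, hence closed and quasi-compact. It then argues that such a support must be disjoint from the global nilpotent cone $\mathcal{N}_{\mathrm{glob}}$: a point $(F,\theta)$ with $\theta$ nilpotent degenerates isotrivially to $(F,0)$, and since $\rank F=r\geq 2$ one can choose a quotient $0\to F'\to F\to L\to 0$ with $\deg L\gg 0$, so $(F,0)$ degenerates isotrivially to $(F'\oplus L,0)$, which leaves any fixed $\U_{m_0}$; closedness of the support then contradicts quasi-compactness. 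Consequently any nonzero object supported on $\mathcal{N}_{\mathrm{glob}}$ (e.g.\ the pushforward of the structure sheaf of a point there, which also has nilpotent singular support, covering the $\IndCoh_{\mathcal{N}}$ and $\QCoh$ variants) is right-orthogonal to \emph{all} compact objects, and this is what rules out compact generation. Your proposal never produces such an orthogonal object, so the contradiction you aim for does not materialize.
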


In contrast, both sides of~\eqref{intro:GLC} are known to be compactly generated~\cite{MR3037900, DGbun}, and this property plays a crucial role in the proof of~\eqref{intro:GLC} in~\cite{GLC1, GLC2, GLC3, GLC4, GLC5}.
It is therefore desirable to formulate a version of the DGLC in terms of compactly generated dg-categories, which is what we pursue in this paper. 

\begin{remark}\label{remark:newDolb}
Note, however, that there are conjectural versions of the Dolbeault geometric Satake analogous to~\eqref{equiv:dolb3}, for example~\cite[Section 7.9]{BFM} and~\cite[Conjecture 1.6]{CW}. 
The Wilson/Hecke operators we discuss later are different slope categories of the monoidal category in~\cite[Conjecture 1.6]{CW}.
One may then hope for a Dolbeault equivalence compatible with these equivalences, see~\cite{KapWit}\footnote{We thank Sabin Cautis and Eric Vasserot for discussions about this.}. We do not pursue such a version in the current paper, but, as we note in Subsection \ref{subsec:KHA}, the equivalence~\eqref{intro:cohL} may be regarded as an equivalence between two slope categories of a modified equivalence~\eqref{equiv:dolb3}.\end{remark}

The failure of compact generation in Proposition~\ref{intro:prop:cpt} is a consequence of the fact that the connected components of $\Hig_G$ are not quasi-compact.
However, in the spectral side, there exists an alternative (and `smaller') degeneration in the case $G = \mathrm{GL}_r$ due to Simpson~\cite[Proposition~4.1]{Simp},~\cite[Theorem~2.5]{Simp2}: \begin{align}\label{intro:degss} \mathrm{Locsys}_{\mathrm{GL}_r} \leadsto \Hig_{\GL_r}(0)^{\mathrm{ss}}, \end{align} where $\Hig_{\GL_r}(0)^{\mathrm{ss}}$ denotes the moduli stack of semistable Higgs bundles of degree zero.
This stack is quasi-compact, and moreover, we have an identity of motivic classes~\cite[Theorem~1.2.1]{FeSoSo} in the Grothendieck ring of Artin stacks:
\begin{align*} [\mathrm{Locsys}_{\mathrm{GL}_r}] = [\Hig_{\GL_r}(0)^{\mathrm{ss}}]. \end{align*} 
There is also an isomorphism of the corresponding cohomological Hall algebras, see~\cite[Theorem~1.8]{Henn}. We expect the analogous isomorphism holds for topological K-theoretic Hall algebras.
Hence, it is natural to replace the left-hand side of~\eqref{intro:DL} with a suitable dg-category of coherent sheaves on $\Hig^{\mathrm{ss}}_{^{L}G}$.
This leads to the following natural question:

\begin{quest}\label{question} What is the counterpart of $\Hig^{\mathrm{ss}}_{^{L}G}$ on the automorphic side under Langlands duality? \end{quest}

In particular, such a category is expected to have the same topological K-theory as the category of compact objects of $\text{D-mod}(\Bun_G)$.

\subsection{Classical limits of the categories of D-modules}\label{subsec:limit}
In this paper, we propose an ansatz for Question~\ref{question} via a limit category (\ref{intro:limit}), which we regard as a version of the classical limit 
of the category of coherent D-modules. 

For a smooth variety $X$, let $\text{D-mod}(X)$ be the dg-category of 
D-modules on $X$ and $\text{D-mod}_{\mathrm{coh}}(X)$ its subcategory of 
coherent D-modules. It is well-known that it admits a degeneration 
to the dg-category of coherent sheaves of the cotangent space $\Omega_X$
\begin{align*}
    \text{D-mod}_{\mathrm{coh}}(X) \leadsto \Coh(\Omega_X). 
\end{align*}
In other words, the sheaf of differential operators $D_X$ on $X$ is a non-commutative 
deformation of $\mathcal{O}_{\Omega_X}$. Accordingly, the category $\text{D-mod}_{\mathrm{coh}}(X)$ may be viewed as a deformation of the category $\Coh(\Omega_X)$.

We will explore the above picture in the case of a smooth 
stack $\X$. In this case, we still have a degeneration of 
$\text{D-mod}_{\mathrm{coh}}(\X)$ into $\Coh(\Omega_{\X})$, 
although the latter is often significantly larger than the former.
For example, we have the following (see Subsection~\ref{subsec:ExD:red} for details):
\begin{example}
Let $\X=BG$ for a reductive group $G$
and let $\Omega_{\X} \to \X$ be its cotangent stack. Let $0\colon \X \to \Omega_{\X}$ be the 
zero section. Then 
$\Coh(\Omega_{\X})$ is generated by objects $0_{*}(\Gamma\otimes \mathcal{O}_{\X})$ for irreducible 
$G$-representations $\Gamma$. On the other hand, one can show an equivalence 
\begin{align}\label{intro:equivD}
    \text{D-mod}_{\mathrm{coh}}(\X)\simeq \langle 0_{*}\mathcal{O}_{\X} \rangle \subset 
    \Coh(\Omega_{\X})
\end{align}
where $\langle 0_{*}\mathcal{O}_{\X}\rangle$ is the subcategory generated by $0_{*}\mathcal{O}_{\X}$. 
In particular, $\text{D-mod}_{\mathrm{coh}}(\X)$ is much smaller than $\Coh(\Omega_{\X})$.
\end{example}

For a smooth stack $\X$, 
instead of $\Coh(\Omega_{\X})$, we will consider subcategories 
which generalize the subcategory $\langle 0_{*}\mathcal{O}_{\X}\rangle$ in (\ref{intro:equivD}), 
and behave similarly to categories of coherent D-modules, called \textit{limit categories}. 
Briefly, limit categories and categories of coherent D-modules have the following in common:
\begin{itemize}
    \item existence of projective pushforward and smooth pullback (Theorem~\ref{intro:thmfunct}),
    \item existence of $j_!$ for \textit{certain} open immersions $j$ (Equation~\eqref{eq:jlowershriek}),
    \item conjecturally, the singular support map has image in the K-theory of the limit category, and further the induced map is an isomorphism in topological K-theory (Conjecture~\ref{conj:L2} and Remark~\ref{rem:singularsupport}).
\end{itemize}

The results of our construction of limit categories are summarized as follows: 
\begin{propdef}\emph{(Definition~\ref{def:Lcat}, Lemma~\ref{lem:emb})}\label{intro:propdef}
For a pair $(\mathfrak{M}, \delta)$ of a quasi-smooth QCA derived stack $\mathfrak{M}$
with $\mathbb{L}_{\mathfrak{M}}\simeq \mathbb{T}_{\mathfrak{M}}$ and $\delta \in \mathrm{Pic}(\mathfrak{M})_{\mathbb{R}}$, 
there exists a subcategory 
\begin{align*}
     \LL(\mathfrak{M})_{\delta} \subset \Coh(\mathfrak{M})
\end{align*}
    satisfying the following. Suppose that there is a diagram 
   \begin{align*}
\xymatrix{
& \mathcal{V} \ar[d] \\
\mathfrak{M}\simeq s^{-1}(0) \inclusion^-{i} & \mathcal{Y} \ar@/_18pt/[u]_-{s}
}
\end{align*}
where $\mathcal{Y}$ is a smooth stack, $\mathcal{V} \to \mathcal{Y}$ is a vector bundle 
with a section $s$, and 
$s^{-1}(0)$ is the derived zero locus of $s$. Then 
an object $\mathcal{E} \in \Coh(\mathfrak{M})$ lies in 
$\LL(\mathfrak{M})_{\delta}$ if and only if, for any field extension $k'/k$ and 
map $\nu \colon (\bgm)_{k'} \to \mathfrak{M}$, we have 
\begin{align}\notag
    \wt(\nu^{\ast}i^{\ast}i_{\ast}\mathcal{E})
    \subset \left[\frac{1}{2} c_1 (\nu^{\ast}\mathbb{L}_{\mathcal{V}}^{<0}), 
    \frac{1}{2} c_1 (\nu^{\ast}\mathbb{L}_{\mathcal{V}}^{>0})
    \right]+c_1(\nu^{\ast}\delta). 
\end{align} 
Here, $\nu^* \mathbb{L}_{\mathcal{V}}^{>0}$ (resp.~$\nu^* \mathbb{L}_{\mathcal{V}}^{<0}$)
is the sum of positive (resp.~negative) $\mathbb{G}_m$-weights, see Subsection~\ref{subsec:Gmwt}. Further for $\mathcal{F}\in\mathrm{Coh}(\bgm)$, the set $\wt(\mathcal{F}) \subset \mathbb{Z}$ is defined to be 
$w\in \mathbb{Z}$ with $\mathcal{F}_w \neq 0$, where $\mathcal{F}_w$ is the direct summand of $\mathcal{F}$ with $\mathbb{G}_m$-weight $w$. 

For a general (not necessarily quasi-compact) quasi-smooth derived stack 
$\mathfrak{M}$ with $\mathbb{L}_{\mathfrak{M}}\simeq \mathbb{T}_{\mathfrak{M}}$, we define 
\begin{align*}
    \IndL(\mathfrak{M})_{\delta}:=\lim_{\mathcal{U}\subset \mathfrak{M}} \Ind(\LL(\mathcal{U})_{\delta})
\end{align*}
where the limit is after all the QCA open substacks $\mathcal{U}\subset \mathfrak{M}$, and define 
\begin{align*}
    \LL(\mathfrak{M})_{\delta} \subset \IndL(\mathfrak{M})_{\delta}
\end{align*}
to be the subcategory of compact objects. 
For a smooth stack $\mathcal{X}$, we 
call $\LL(\Omega_{\X})_{\delta}$ the limit category. 
\end{propdef}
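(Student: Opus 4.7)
The plan is to take the weight condition as the working definition when a local presentation exists, verify it defines a triangulated subcategory of $\Coh(\mathfrak{M})$, then prove independence of the chosen presentation, and finally globalize by étale descent before passing to the non-QCA setting via a limit of ind-completions.

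First, assume $\mathfrak{M}$ admits a presentation $(\mathcal{Y},\mathcal{V},s)$ as in the statement. Define $\LL(\mathfrak{M})_\delta\subset\Coh(\mathfrak{M})$ to consist of those $\mathcal{E}$ satisfying the displayed weight condition for every field extension $k'/k$ and every $\nu\colon(\bgm)_{k'}\to\mathfrak{M}$. Since $\nu^{\ast}i^{\ast}i_{\ast}$ is a triangulated functor $\Coh(\mathfrak{M})\to\Coh((\bgm)_{k'})$ and weights of cones are contained in the union of weights of the constituents, this subcategory is automatically closed under shifts, cones, and summands. By standard Koszul calculus $\nu^{\ast}i^{\ast}i_{\ast}\mathcal{E}$ can be computed as $\nu^{\ast}\mathcal{E}\otimes\mathrm{Sym}(\nu^{\ast}\mathcal{V}^{\vee}|_{\mathfrak{M}}[1])$, which grounds the computation at $\mathbb{G}_m$-fixed loci in terms of the weight decomposition of $\nu^{\ast}\mathbb{L}_{\mathcal{V}}$.

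The main obstacle is showing independence of the presentation $(\mathcal{Y},\mathcal{V},s)$. Given two such local models, the derived zero loci fit into fiber sequences $\mathcal{V}_j^{\vee}|_{\mathfrak{M}}\to\mathbb{L}_{\mathcal{Y}_j}|_{\mathfrak{M}}\to\mathbb{L}_{\mathfrak{M}}$. Pulling these back by $\nu$, the weight-graded pieces of $\nu^{\ast}\mathbb{L}_{\mathcal{V}_j}$ differ from those of $\nu^{\ast}\mathbb{L}_{\mathfrak{M}}$ by the weight-graded pieces of $\nu^{\ast}\mathbb{L}_{\mathcal{Y}_j}$, and the contributions from the smooth piece cancel between $\mathcal{V}_j$ and $\mathcal{Y}_j$ when the interval endpoints are summed. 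The crucial input is the hypothesis $\mathbb{L}_{\mathfrak{M}}\simeq\mathbb{T}_{\mathfrak{M}}$, which forces the midpoint $\tfrac{1}{2}(c_1(\nu^{\ast}\mathbb{L}_{\mathcal{V}}^{<0})+c_1(\nu^{\ast}\mathbb{L}_{\mathcal{V}}^{>0}))$ to be intrinsic (it equals the weight of a natural line bundle determined by $\nu^{\ast}\mathbb{L}_{\mathfrak{M}}$), while the length of the interval coincides on the two presentations. One also needs that $i^{\ast}i_{\ast}$ is compatible with changes of presentation; this follows from the standard derived base-change for Koszul duality, comparing the two Koszul resolutions via a common refinement.

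For a QCA stack $\mathfrak{M}$ without a single global presentation, one covers it by smooth charts each admitting a model $(\mathcal{Y}_\alpha,\mathcal{V}_\alpha,s_\alpha)$ — such presentations exist étale locally for quasi-smooth stacks with self-dual cotangent complex — and observes that the weight condition is smooth-local on $\mathfrak{M}$ since all operations involved commute with smooth pullback. Independence of the presentation, established in the previous step, supplies the cocycle condition needed for descent, and yields the globally defined $\LL(\mathfrak{M})_\delta$.

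Finally, for general (not necessarily QCA) $\mathfrak{M}$, the $!$-restriction functors preserve the weight condition (as $\nu$ factors through every open substack containing its image), so ind-completion produces a filtered diagram $\mathcal{U}\mapsto\Ind(\LL(\mathcal{U})_\delta)$ indexed by QCA opens. Setting $\IndL(\mathfrak{M})_\delta:=\lim_{\mathcal{U}}\Ind(\LL(\mathcal{U})_\delta)$ gives a well-defined presentable dg-category whose compact objects recover the original $\LL(\mathcal{U})_\delta$ for QCA $\mathfrak{M}$ and define $\LL(\mathfrak{M})_\delta$ in general. The factor $\tfrac{1}{2}$ in the interval and the use of $i^{\ast}i_{\ast}$ rather than $i^{\ast}$ are precisely what renders the condition intrinsic under $\mathbb{L}_{\mathfrak{M}}\simeq\mathbb{T}_{\mathfrak{M}}$, and this is the point on which the entire construction rests.
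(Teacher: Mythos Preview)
Your approach is genuinely different from the paper's, and the difference is instructive. The paper does \emph{not} take the local-model weight condition as the primitive definition and then attempt to prove presentation-independence. Instead, Definition~\ref{def:Lcat} gives an intrinsic definition using the \emph{regularization} construction of Halpern-Leistner (Construction-Definition~\ref{def:construction}): for each $\nu\colon(\bgm)_{k'}\to\mathfrak{M}$ with $\nu(0)=x$, one builds a quasi-smooth map $\nu^{\mathrm{reg}}\colon\mathfrak{g}_x^{\vee}[-1]/(\mathbb{G}_m)_{k'}\to\mathfrak{M}$, and imposes the condition
\[
\wt(\iota_{\ast}\nu^{\mathrm{reg}\ast}\mathcal{E})\subset\left[\tfrac{1}{2}c_1(T_x^{<0}),\tfrac{1}{2}c_1(T_x^{>0})\right]+\tfrac{1}{2}c_1(\mathfrak{g}_x)+c_1(\nu^{\ast}\delta),
\]
where $T_x=\mathcal{H}^0(\mathbb{T}_{\mathfrak{M}}|_x)$ and $\mathfrak{g}_x=\mathcal{H}^{-1}(\mathbb{T}_{\mathfrak{M}}|_x)$. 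This condition depends only on $\mathfrak{M}$, so presentation-independence is automatic. Lemma~\ref{lem:emb} is then a direct weight computation: given a local model, one factors $\nu^{\ast}i^{\ast}i_{\ast}\mathcal{E}$ through the regularization via an auxiliary space $\mathcal{V}_x[-1]/(\mathbb{G}_m)_{k'}$, obtaining $\iota_{\ast}\nu^{\mathrm{reg}\ast}\mathcal{E}\otimes\bigwedge^{\bullet}W$ for an explicit $W$, and matches the bounds.

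Your sketch of presentation-independence has a gap. You claim ``the length of the interval coincides on the two presentations,'' but both the interval $[\tfrac{1}{2}c_1(\nu^{\ast}\mathbb{L}_{\mathcal{V}}^{<0}),\tfrac{1}{2}c_1(\nu^{\ast}\mathbb{L}_{\mathcal{V}}^{>0})]$ \emph{and} the weights of $\nu^{\ast}i^{\ast}i_{\ast}\mathcal{E}$ depend on the presentation: the latter acquires the weights of $\bigwedge^{\bullet}\nu^{\ast}\mathcal{V}^{\vee}$. These dependencies cancel, but isolating the cancellation amounts to extracting an intrinsic intermediary---which is exactly what the regularization provides. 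Your proposed ``common refinement'' of two Koszul resolutions is not developed, and the assertion that the midpoint equals ``the weight of a natural line bundle determined by $\nu^{\ast}\mathbb{L}_{\mathfrak{M}}$'' is precisely what needs proof. Moreover, your \'etale descent step is unnecessary once the definition is intrinsic, and your claim that local models exist \'etale-locally is an extra burden the paper avoids entirely.
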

\begin{remark}
The construction of categories of coherent sheaves on (certain smooth or quasi-smooth) stacks $\mathcal{X}$ using weight conditions for maps $\bgm\to\mathcal{X}$ was previously used by Halpern-Leistner~\cite{HalpTheta, HalpK32}.
In the case of affine GIT stack $\X=\Spec R/G$ for a reductive group $G$, the construction 
of the limit category essentially appeared in~\cite{hls, PTquiver} with different names, and a (very) closely related construction is that of non-commutative resolutions of singularities for $\text{Spec}(R^G)$ of \v{S}penko--Van den Bergh~\cite{SvdB}. 
In this paper, we extend this construction to an arbitrary smooth stack $\X$ that does not necessarily admit 
good moduli space~\cite{MR3237451}, using the weight conditions with respect to 
all maps $
    \nu \colon \bgm \to \Omega_{\X}$. 
    We refer to Definition~\ref{def:Lcat} for its precise definition. 
    \end{remark}
    
    When $\X$ is a smooth QCA stack and $\delta=\omega_{\X}^{1/2}$, we 
expect that $\LL(\Omega_{\X})_{\delta}$ provides a better degeneration of 
$\text{D-mod}_{\mathrm{coh}}(\X)$ than $\Coh(\Omega_\X)$:
\begin{align*}
    \text{D-mod}_{\mathrm{coh}}(\X) \stackrel{?}{\leadsto} \LL(\Omega_{\X})_{\frac{1}{2}}:=\LL(\Omega_{\X})_{\delta=\omega_{\X}^{1/2}}. 
\end{align*}
In Section~\ref{sec:Dmod}, we discuss a precise formulation of this expectation 
as follows: 
\begin{conj}\emph{(Conjecture~\ref{conj:L})}\label{conj:L2}
    For any $\mathcal{E}\in \mathrm{D}\text{-}\mathrm{mod}_{\mathrm{coh}}(\X)$, there is a good filtration on 
    it whose associated graded lies in $\LL(\Omega_{\X})_{\frac{1}{2}}$.
\end{conj}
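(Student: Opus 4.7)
The plan is to reduce the statement to a local affine model and then produce the required filtration using the Rees construction, mimicking the weight-shift strategy developed for magic categories in Section~\ref{sec:magic}.

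First, by the smooth-pullback compatibility of limit categories from Theorem~\ref{intro:thmfunct} and the local structure theorem for smooth stacks with linearly reductive stabilisers, the assertion is étale-local on $\X$, so I would reduce to $\X = [X/G]$ with $X$ a smooth affine scheme and $G$ reductive. In this model $\Omega_\X \simeq [\Omega_X/G]$, and a coherent D-module on $\X$ is a $G$-equivariant coherent $D_X$-module $\mathcal{E}$.

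Second, I would start from any initial good filtration $F_\bullet \mathcal{E}$ and consider the Rees sheaf $\mathcal{R} = \bigoplus_i F_i\mathcal{E}\, t^i$, a $G \times \mathbb{G}_m$-equivariant flat family over $\mathbb{A}^1$ whose central fibre is $\mathrm{gr}^F\mathcal{E}$ on $\Omega_X$; by Gabber's involutivity theorem, its support is coisotropic. Maps $\nu\colon (\bgm)_{k'} \to \Omega_\X$ correspond (up to base change) to pairs of a one-parameter subgroup $\lambda\colon \mathbb{G}_m \to G \times \mathbb{G}_m^{\mathrm{scale}}$ and a $\lambda$-fixed point of $\Omega_X$; for such $\nu$, the weight condition of Proposition-Definition~\ref{intro:propdef} becomes an explicit bound on the $\lambda$-weights of the fibre of $\mathrm{gr}^F\mathcal{E}$ at the fixed point, shifted by $c_1(\nu^*\omega_\X^{1/2})$.

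Third, the key technical step is a filtration-correction procedure: for each Hilbert--Mumford--Kempf stratum of $\Omega_X$ one applies a carefully chosen element of $D_X$ to shift the generators of $F_\bullet \mathcal{E}$ so that the $\lambda$-weights at that stratum fall inside the symmetric window dictated by $\omega_\X^{1/2}$. The half-twist plays exactly the role it does in the construction of magic windows: it centres the window so that the bounds become achievable, mirroring the normalisation that makes adjunctions for D-modules match those for coherent sheaves.

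The main obstacle, and the reason for the conjectural status of the statement, is the simultaneous attainability of the window bounds across all $\nu$: a shift correcting the weights at one stratum typically disturbs them at another. A plausible route is to build the filtration canonically via a localisation-and-induction construction (modelled on Beilinson--Bernstein localisation in the case $X = G/B$), producing a distinguished good filtration whose Rees degeneration is uniformly compatible with the full Kempf--Ness stratification of $\Omega_X$. Verifying this uniform compatibility, and handling cocharacters $\nu$ with non-trivial scaling component in $\mathbb{G}_m^{\mathrm{scale}}$, is the heart of the difficulty.
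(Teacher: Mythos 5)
You should first be clear about the status of this statement: the paper does not prove it. This is Conjecture~\ref{conj:L} (restated as Conjecture~\ref{conj:L2}), and the paper's support for it consists of explicit verifications in the examples of Subsection~\ref{subsec:exam} (see Remark~\ref{rmk:check}), the functoriality statements of Corollary~\ref{cor:funcL} (smooth pull-back and projective push-forward preserve the property), and a proposed strategy in Subsection~\ref{subsection:DLL}: construct, via Koszul duality for the completed loop stack, an $\mathbb{A}^1/\mathbb{G}_m$-linear category whose generic fiber is $\mathrm{D}\text{-}\mathrm{mod}_{\mathrm{coh}}(\X)$ and whose special fiber is a $\mathbb{G}_m$-equivariant version of $\LL(\Omega_{\X})_{\frac{1}{2}}$, and deduce the conjecture from a (still expected) surjectivity of a restriction map. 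Your Rees-construction-plus-stratumwise-weight-correction plan is a genuinely different route from that strategy, but, as you say yourself, it leaves its essential step (simultaneous attainability of the window bounds for all test maps $\nu$) open, so it is not a proof either; at best it is an alternative heuristic for why the statement should hold.

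Beyond that, several concrete steps in your sketch would not go through as written. The reduction to $\X=[X/G]$ affine is not justified: the paper's functoriality (Proposition~\ref{prop:pback}) says that the property is \emph{preserved} under smooth pull-back, which is the wrong direction for descent, and a good filtration produced étale-locally on a cover has no reason to glue to a global good filtration satisfying the weight condition. Second, the membership test for $\LL(\Omega_{\X})_{\frac{1}{2}}$ is not a bound on the $\lambda$-weights of the fibre of $\mathrm{gr}^F\mathcal{E}$ at a fixed point: by Lemma~\ref{lem:emb} and Lemma~\ref{lem:char2} it is a bound on $\wt(\nu^{*}j^{*}j_{*}\mathrm{gr}^F\mathcal{E})$, i.e.\ a derived restriction through the Koszul complex of the moment map, which differs from the naive fibre by exterior-algebra contributions that enter the window bounds; your translation of the condition is therefore off. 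Third, the test maps $\nu\colon (\bgm)_{k'}\to\Omega_{\X}$ are given by points of $\mu^{-1}(0)$ together with cocharacters of their stabilizers in $G$ (Example~\ref{exam:reg}); there is no scaling factor $\mathbb{G}_m^{\mathrm{scale}}$ in the definition of $\LL(\Omega_{\X})_{\frac{1}{2}}$, so the case you flag of ``cocharacters with non-trivial scaling component'' is not part of the condition. Finally, the ``apply a carefully chosen element of $D_X$ to shift the generators'' step has no mechanism behind it: modifying a good filtration changes $\mathrm{gr}^F\mathcal{E}$ only within its class in K-theory of the singular support, and you give no argument that the symmetric window centred by $\omega_{\X}^{1/2}$ is achievable even at a single Kempf--Ness stratum, let alone compatibly across all of them — which is precisely the open content of the conjecture.
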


\begin{remark}\label{rem:singularsupport}
    In particular, the above conjecture implies that the singular support map on algebraic K-theory
    \[\text{ss}\colon K(\mathrm{D}\text{-}\mathrm{mod}_{\mathrm{coh}}(\X))\to K\left(\Coh(\Omega_\X)\right)\] factors through the image of the limit category
    \[\text{ss}'\colon K(\mathrm{D}\text{-}\mathrm{mod}_{\mathrm{coh}}(\X))\to K\left(\LL(\Omega_{\X})_{\frac{1}{2}}\right).\]
    We expect that $\text{ss}'$ is an isomorphism for the stacks of representations of an arbitrary quiver. Further, we expect that the topological K-theory version of $\text{ss}'$ is an isomorphism for a large class of stacks, including $\Bun_G(\chi)$. 
    
    As mentioned in Subsection~\ref{subsection:DLL}, we expect that Conjecture~\ref{conj:L2} and the above claims may be proved, at least for some smooth QCA stacks, by developing the theory of limit categories for the loop stack $\mathcal{L}\X$, and even more for a category of loop-equivariant coherent sheaves on the formal completion $\hat{\mathcal{L}}\X$ of $\mathcal{L}\X$ along the constant loops $\X\hookrightarrow\mathscr{L}\X$. One expects to construct an $\mathbb{A}^1/\mathbb{G}_m$-linear category with generic fiber $\text{D-mod}_{\mathrm{coh}}(\X)$ (using the Koszul duality from~\cite{HChen}) and special fiber a $\mathbb{G}_m$-equivariant version of $\text{L}(\Omega_{\X})_{\frac{1}{2}}$. 
    We plan to come back to these problems in future work.  
\end{remark}

As another relation to D-modules, we show the following functorial properties of limit 
categories. They are regarded as classical limits of pull-back/push-forward 
functors for D-modules: 
\begin{thm}\emph{(Proposition~\ref{prop:pback}, Theorem~\ref{thm:proj})}\label{intro:thmfunct}
For a morphism $f\colon \X \to \Y$ of smooth QCA stacks
and the associated Lagrangian correspondence
\begin{align}\label{intro:lag}
\Omega_{\X} \stackrel{\beta}{\leftarrow} f^{*}\Omega_{\Y} \stackrel{\alpha}{\to} \Omega_{\Y}
\end{align}
we have the following: 
\begin{enumerate}
    \item If $f$ is smooth, the Lagrangian correspondence (\ref{intro:lag})
induces the functor 
\begin{align}\label{intro:pback}
    f^{\Omega!}=\beta_{*}\alpha^! \colon \LL(\Omega_{\Y})_{\delta \otimes \omega_{\Y}^{1/2}}
    \to \LL(\Omega_{\X})_{f^*\delta \otimes \omega_{\X}^{1/2}}.
\end{align}
\item If $f$ is projective, the Lagrangian 
correspondence (\ref{intro:lag}) induces the functor
\begin{align}\label{intro:push}
    f^{\Omega}_{*}=\alpha_{*}\beta^* \colon \LL(\Omega_{\X})_{f^*\delta \otimes \omega_{\X}^{1/2}}
    \to \LL(\Omega_{\Y})_{\delta \otimes \omega_{\Y}^{1/2}}.
\end{align}
\end{enumerate}
\end{thm}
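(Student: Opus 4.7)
The plan is to verify that the candidate functors $\beta_*\alpha^!$ and $\alpha_*\beta^*$ are well-defined at the level of coherent sheaves, and then show that they preserve the weight condition of Proposition-Definition~\ref{intro:propdef}, which for $\mathfrak{M}=\Omega_\bullet$ is intrinsic with $\mathcal{V}=\mathcal{Y}=\Omega_\bullet$ and $s=0$.

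First, I would analyze the Lagrangian correspondence geometrically. The map $\alpha$ is the derived base change of $f$ along the projection $\Omega_\mathcal{Y}\to\mathcal{Y}$, so it inherits smoothness (resp.\ projectivity) from $f$. The map $\beta$ is induced by the canonical morphism $f^*\mathbb{L}_\mathcal{Y}\to\mathbb{L}_\mathcal{X}$: when $f$ is smooth this is a fiberwise inclusion of vector bundles over $\mathcal{X}$, so $\beta$ is a closed embedding of smooth stacks with conormal bundle $\pi^*\mathbb{L}_{\mathcal{X}/\mathcal{Y}}$; when $f$ is projective, $\beta$ is simply a map of smooth stacks and hence automatically perfect. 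These properties imply that $\alpha^!$ and $\beta_*$ preserve coherence in the smooth case, while $\alpha_*$ and $\beta^*$ preserve coherence in the projective case, so both~\eqref{intro:pback} and~\eqref{intro:push} are well-defined on $\Coh$.

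Next, I would verify the weight condition. Given $\nu\colon(\bgm)_{k'}\to\Omega_\mathcal{X}$ in case~(i) or $\nu\colon(\bgm)_{k'}\to\Omega_\mathcal{Y}$ in case~(ii), perform derived base change along $\nu$ to reduce the weight check to a computation on the $\mathbb{G}_m$-equivariant fiber of the relevant leg of the correspondence. Using the distinguished triangles relating $\mathbb{L}_{f^*\Omega_\mathcal{Y}}$, $\mathbb{L}_{\Omega_\mathcal{X}}$ and $\mathbb{L}_{\Omega_\mathcal{Y}}$ via $\beta$ and $\alpha$, together with the self-duality $\mathbb{L}_{\Omega_\bullet}\simeq\mathbb{T}_{\Omega_\bullet}$ coming from the symplectic structure, one sees that the change of the weight endpoints $\tfrac12 c_1(\mathbb{L}^{<0})$ and $\tfrac12 c_1(\mathbb{L}^{>0})$ is symmetric about the center and equal to a shift by $\tfrac12 c_1(\nu^*\pi^*\mathbb{L}_{\mathcal{X}/\mathcal{Y}})$. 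This shift is precisely matched by the change of the twist from $\delta\otimes\omega_\mathcal{Y}^{1/2}$ to $f^*\delta\otimes\omega_\mathcal{X}^{1/2}$, via $\omega_\mathcal{X}\otimes f^*\omega_\mathcal{Y}^{-1}\simeq\det\mathbb{L}_{\mathcal{X}/\mathcal{Y}}$ (with the appropriate degree shift in the non-smooth case).

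To make the pointwise comparison concrete, I would reduce to a local model on a smooth atlas where $f$ factors as a closed embedding followed by either an affine bundle projection (smooth case) or a projective bundle projection (projective case). For each factor, the weight bound is verified directly using the Koszul resolution of $\beta$ and Grothendieck duality along $\alpha$, together with the projection formula. The main obstacle will be the projective pushforward, where $\beta$ is not smooth and the weight condition must be checked for \emph{every} $\nu\colon\bgm\to\Omega_\mathcal{Y}$, not only those lifting through $\alpha$; the derived fiber of $\alpha$ at such a $\nu$ may be singular, so one has to carefully track the spread of $\mathbb{G}_m$-weights on it using self-duality of $\mathbb{L}_\Omega$ to control both endpoints of the interval symmetrically, and use the compatibility of the center of the weight interval with the twist $\omega_\mathcal{Y}^{1/2}$ to close the argument.
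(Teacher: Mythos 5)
Your part (i) is in substance the paper's own argument (Proposition~\ref{prop:pback} via the weight computation of Lemma~\ref{lem:pull}): base change along $\nu$, the triangle $\mathbb{T}_f\to\mathbb{T}_\X\to f^*\mathbb{T}_\Y$, and the identification of the shift of the window with the twist by $\omega_f$ — this part is fine, modulo the slip that $\Omega_\X$ and $f^*\Omega_\Y$ are quasi-smooth rather than smooth (what one actually needs is that $\beta$ is a closed immersion when $f$ is smooth, resp.\ quasi-smooth when $f$ is schematic, as in Lemma~\ref{lem:beta}, to know coherence is preserved).

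For part (ii), however, there is a genuine gap. You correctly identify the difficulty — the window condition must be verified for \emph{every} $\nu\colon\bgm\to\Omega_\Y$, which need not lift through the correspondence — but your proposed resolution ("Koszul resolution of $\beta$, Grothendieck duality along $\alpha$, projection formula, and carefully track the spread of weights using self-duality") names the problem rather than solving it. The heart of the paper's proof of Theorem~\ref{thm:proj} is a genuinely nontrivial weight bound for projective fibers: for a linear $\mathbb{G}_m$-action on $\mathbb{P}^n$, an equivariant coherent sheaf whose weights at every fixed point $p$ lie in $\left[c_1(\Omega_{\mathbb{P}^n}|_p^{<0}),\,c_1(\Omega_{\mathbb{P}^n}|_p^{>0})\right]+[m_1,m_2]$ has global sections with weights in $[m_1,m_2]$ (Lemmas~\ref{lem:Pn} and~\ref{lem:wtbound}). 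This is proved by induction on $n$ using the attracting-locus stratification of $\mathbb{P}^n$, the semiorthogonal decomposition of the complement of the closed stratum, a local-cohomology estimate along that stratum, and only then Serre duality to convert the upper bound into the lower one. Duality plus the projection formula do not by themselves yield this: the statement is exactly where the specific shape of the window (endpoints given by halves of the positive/negative parts of the cotangent complex) interacts with the geometry of the fiber, and without an argument of this type the proof does not close. In addition, your reduction "to a local model on a smooth atlas where $f$ factors through a projective bundle" is underspecified: since the window is tested at $\mathbb{G}_m$-fixed points, the chart must be equivariant for the given cocharacter. The paper achieves this by the \'etale slice theorem of~\cite{AHRLuna}, producing a smooth map $Y/\mathbb{G}_m\to\Y$ through which $\nu$ factors, then invokes the already-established smooth pullback and base change (Lemma~\ref{lem:funct2}) to replace $(\X,\Y)$ by $(X/\mathbb{G}_m,Y/\mathbb{G}_m)$, and factors the resulting projective morphism equivariantly through $\mathbb{P}^n_Y/\mathbb{G}_m$ using a $\mathbb{G}_m$-linearized relatively ample line bundle~\cite{Brion0}; the closed-immersion factor is then handled by the direct computation of Proposition~\ref{prop:push}, and the projection factor by the $\mathbb{P}^n$ weight bound above. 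These two ingredients — the equivariant reduction and, above all, the cohomological weight bound on $\mathbb{P}^n$ — are missing from your proposal.
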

The functors (\ref{intro:pback}), (\ref{intro:push}) are regarded 
as classical limits of the corresponding functors 
for D-modules 
\begin{align*}
  &f^! \colon \text{D-mod}_{\mathrm{coh}}(\Y) \to \text{D-mod}_{\mathrm{coh}}(\X), \\ 
   & f_{\ast} \colon \text{D-mod}_{\mathrm{coh}}(\X) \to \text{D-mod}_{\mathrm{coh}}(\Y).
\end{align*}
In Section~\ref{sec:Dmod}, we will discuss more about the relationship between limit categories 
and D-modules.

\subsection{Compact generation of the limit category}\label{subsec:compactgeneration}
We will apply the construction of the limit category 
for the smooth (not quasi-compact in general) stack 
\begin{align*}\X=\Bun_G(\chi).
\end{align*}
For $w\in Z(G)^{\vee}$, we construct an element $\delta_w \in \mathrm{Pic}(\Bun_G(\chi))_{\mathbb{R}}$ with $Z(G)$-weight $w$, and define the category
\footnote{The subscript $(\delta=\delta_w, w)$ means taking a direct summand corresponding to $Z(G)$-weights $w$. See Definition~\ref{def:indlim:Higgs} for details.} \begin{align*}
\IndL(\Hig_G(\chi))_w := \lim_{U \subset \Bun_{G}(\chi)} \Ind(\LL(\Omega_{U})_{\delta = \delta_w, w}), \end{align*} where the limit is taken over all quasi-compact open substacks $U \subset \Bun_G(\chi)$.
Despite the failure of compact generation in Proposition~\ref{intro:prop:cpt}, we have the following result
which provides an analogue of the compact generation of $\text{D-mod}(\Bun_G)$ established in~\cite{DGbun}.

\begin{thm}\label{intro:thm:gen}\emph{(Theorem~\ref{thm:mainLG})}
The dg-category $\IndL(\Hig_G(\chi))_w$ is compactly generated
by the subcategory of compact objects $\LL(\Hig_G(\chi))_w$. 
\end{thm}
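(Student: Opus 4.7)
The plan is to combine the semiorthogonal decomposition of Theorem~\ref{conj:intro} with local compact generation to exhibit an explicit set of compact generators for $\IndL(\Hig_G(\chi))_w = \lim_{U\subset \Bun_G(\chi)} \Ind(\LL(\Omega_U)_{\delta_w,w})$. The essential point is that compact generation of an inverse limit of compactly generated categories is not automatic; the HN-type stratification of $\Bun_G(\chi)$, via Theorem~\ref{conj:intro}, provides a semiorthogonal decomposition of $\LL(\Hig_G(\chi))_w$ whose summands are quasi-BPS categories $\mathbb{T}_M(\chi_M)_{w_M} \subset \Coh(\Hig_M(\chi_M)^{\mathrm{ss}})_{w_M}$ supported on quasi-compact stacks, and these supply the compact generators.

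First, I would show that for each quasi-compact open $U \subset \Bun_G(\chi)$ the local category $\LL(\Omega_U)_{\delta_w,w}$ is compactly generated, reducing via the local model in Proposition-Definition~\ref{intro:propdef} to known compact generation statements for window-type subcategories of $\Coh(\Omega_{\mathcal{Y}})$ on smooth QCA stacks, as in the magic window formalism. Next, I would verify that for an inclusion $U \subset U'$ the restriction functor $\LL(\Omega_{U'})_{\delta_w,w} \to \LL(\Omega_U)_{\delta_w,w}$ admits a left adjoint $j_!$ preserving compact objects --- one of the distinctive features of limit categories highlighted in Subsection~\ref{subsec:limit}. These $j_!$'s promote compact objects of each local $\LL(\Omega_U)_{\delta_w,w}$ to compact objects of $\IndL(\Hig_G(\chi))_w$. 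Invoking Theorem~\ref{conj:intro}, parabolic induction sends compact generators of each $\mathbb{T}_M(\chi_M)_{w_M}$ into $\LL(\Hig_G(\chi))_w$, and their union exhausts this subcategory via the SOD; hence they compactly generate $\IndL(\Hig_G(\chi))_w$, and the compact objects of the latter are exactly $\LL(\Hig_G(\chi))_w$.

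The main obstacle is the compatibility between the SOD of $\LL(\Hig_G(\chi))_w$ from Theorem~\ref{conj:intro} and the inverse limit defining $\IndL(\Hig_G(\chi))_w$. One must show that this SOD extends coherently to an SOD of the presentable category $\IndL(\Hig_G(\chi))_w$ whose summands are exactly $\Ind(\mathbb{T}_M(\chi_M)_{w_M})$ rather than some larger ind-category. Concretely, this requires that parabolic induction admits continuous right adjoints compatible with the transition functors of the limit, and that these adjoints preserve compact objects; this in turn relies on the functorial framework of Theorem~\ref{intro:thmfunct} together with stability of the weight conditions in Proposition-Definition~\ref{intro:propdef} under open restriction. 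In particular, one has to verify that enlarging $U$ to $U'$ introduces precisely the new semiorthogonal summands corresponding to HN strata newly contained in $U' \setminus U$, with no interaction between old and new summands --- essentially the statement that the $j_!$ functors align with the semiorthogonal filtration induced by HN type.
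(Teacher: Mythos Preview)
Your proposal has the right ingredients but the logic is inverted, and there is a genuine gap. You want to use Theorem~\ref{conj:intro} (the global SOD of $\LL(\Hig_G(\chi))_w$) as input to prove compact generation, but in the paper these are proved simultaneously as Theorem~\ref{thm:mainLG}, and in fact the flow goes the other way: one first establishes, for each quasi-compact open $\mathcal{U}=\Hig_G(\chi)_{\preceq\mu}$, a semiorthogonal decomposition of $\LL(\mathcal{U})_w$ (Corollary~\ref{cor:higgsc}, via Proposition~\ref{prop:sod:KZ}); this local SOD is what produces the fully faithful left adjoint $j_!$ for the inclusion $\Hig_G(\chi)_{\preceq\mu'}\hookrightarrow\Hig_G(\chi)_{\preceq\mu}$; then Proposition~\ref{prop:dgcat} converts the inverse limit defining $\IndL$ into a colimit along these $j_!$, yielding compact generation and the identification $\LL(\Hig_G(\chi))_w\simeq\colim_\mu\LL(\Hig_G(\chi)_{\preceq\mu})_w$. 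Only then does the global SOD follow. Since $\LL(\Hig_G(\chi))_w$ is by definition the compact objects of $\IndL$, assuming its SOD before knowing compact generation is circular.

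Two further issues. First, you work with opens of the form $\Omega_U$ for $U\subset\Bun_G(\chi)$, but the Harder--Narasimhan opens $\Hig_G(\chi)_{\preceq\mu}$ depend on the Higgs field and are not of this form; the cofinal system used in the proof lives in $\Hig_G(\chi)$, not in $\Bun_G(\chi)$. Second, and most importantly, you assert that $j_!$ exists for arbitrary open inclusions, but this is precisely the hard theorem: it is established only for complements of $\Theta$-strata (Proposition~\ref{prop:sod:KZ}), and the proof is substantial---one embeds $\Hig_G(\chi)_{\preceq\mu}$ into the smooth stack of $L$-twisted Higgs bundles for $\deg L\gg0$, passes through the Koszul equivalence to matrix factorizations on a symmetric stack $(\mathcal{V}^L)^\vee$, and there reduces (Proposition~\ref{prop:sod:V}) to the SOD of magic categories for quotient stacks proved in Section~\ref{sec:magic}. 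Your sketch ``reduce via the local model to known compact generation for window-type subcategories'' elides this chain, which is the technical heart of the argument.
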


One of the important properties of the limit categories is that, 
for some class of open immersions $j \colon U \subset \X$,\footnote{More generally, we expect the existence of such a left adjoint functor for the complement of a
$\Theta$-stratum (\cite{HalpTheta}) in $\Omega_{\X}$. See the proof of Proposition~\ref{prop:sod:KZ}. We also expect the existence of $j_!$ for co-truncative open immersions in~\cite{DGbun}.} 
the pull-back $j^*$ admits a left adjoint
\begin{align}\label{eq:jlowershriek}
    j_{!} \colon \LL(\Omega_{\mathcal{U}})_{\delta} \to \LL(\Omega_{\X})_{\delta}.
\end{align}
For example, see the case of $\X=\mathbb{A}^1/\mathbb{G}_m$ in Subsection~\ref{subsub:A1}.
Note that $j_{!}$ does not exist for (quasi-)coherent sheaves, which is a long-standing issue in algebraic geometry. For D-modules, however, $j_{!}$ exists for co-truncative open 
immersions~\cite{DGbun}; this fact is crucial in proving the compact generation of $\text{D-mod}(\Bun_G)$. The same feature of limit categories is essential to Theorem~\ref{intro:thm:gen}; conversely, the lack of a functor $j_{!}$ for (quasi-)coherent sheaves is a source of the failure of the compact generation in Proposition~\ref{intro:prop:cpt}. 

The existence of~\eqref{eq:jlowershriek} is also needed to formulate a version of Whittaker normalization for the equivalence~\eqref{intro:cohL} when $\chi=0$, namely that the structure sheaf of $\Hig_{{}^LG}^{\text{ss}}$ is sent by~\eqref{intro:cohL} to $j_!s_*\mathcal{O}_{\mathrm{B}}$, where $s\colon \mathrm{B}\hookrightarrow \Hig_{G}^{\mathrm{ss}}$ is the Hitchin section and $j\colon \Hig_{G}^{\mathrm{ss}}\hookrightarrow \Hig_{G}$ is the natural open immersion, see Remark~\ref{rmk:Hitchin}.

As a corollary of Theorem~\ref{intro:thm:gen}, we have the following: 
\begin{cor}\label{intro:cor:eq}
There is an equivalence (\ref{intro:cohL}) if and only if there is an equivalence (\ref{intro:eq:CohL}). 
\end{cor}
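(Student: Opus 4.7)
The proof plan is to reduce the equivalence of statements to the elementary fact that, for compactly generated dg-categories, equivalences correspond bijectively to equivalences of their subcategories of compact objects, via restriction and Ind-completion.

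First, I would verify that each side of the equivalence (\ref{intro:cohL}) is compactly generated, with subcategory of compact objects identified with the corresponding side of (\ref{intro:eq:CohL}). On the spectral side, since $\Hig_{{}^LG}(w)^{\mathrm{ss}}$ is quasi-compact with affine stabilizers, the category $\IndCoh(\Hig_{{}^LG}(w)^{\mathrm{ss}})_{-\chi}$ is compactly generated, and its subcategory of compact objects is canonically $\Coh(\Hig_{{}^LG}(w)^{\mathrm{ss}})_{-\chi}$; hence there is a natural equivalence
\begin{align*}
\IndCoh(\Hig_{{}^LG}(w)^{\mathrm{ss}})_{-\chi} \simeq \Ind\bigl(\Coh(\Hig_{{}^LG}(w)^{\mathrm{ss}})_{-\chi}\bigr).
\end{align*}
On the automorphic side, Theorem~\ref{intro:thm:gen} directly asserts that $\IndL(\Hig_G(\chi))_w$ is compactly generated by $\LL(\Hig_G(\chi))_w$, and $\LL(\Hig_G(\chi))_w$ is by definition the subcategory of compact objects of $\IndL(\Hig_G(\chi))_w$, so
\begin{align*}
\IndL(\Hig_G(\chi))_w \simeq \Ind\bigl(\LL(\Hig_G(\chi))_w\bigr).
\end{align*}

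Given these two identifications, I would then argue both directions of the ``if and only if''. For the implication $(\ref{intro:cohL}) \Rightarrow (\ref{intro:eq:CohL})$, any dg-equivalence between compactly generated dg-categories preserves compact objects (since compactness is an intrinsic categorical notion), and hence restricts to an equivalence of their full subcategories of compact objects, yielding exactly (\ref{intro:eq:CohL}). For the converse implication $(\ref{intro:eq:CohL}) \Rightarrow (\ref{intro:cohL})$, I would apply the Ind-completion functor to the equivalence (\ref{intro:eq:CohL}): since Ind is functorial and sends dg-equivalences of small dg-categories to equivalences of their ind-completions, combining with the two displayed identifications above recovers (\ref{intro:cohL}).

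There is no real obstacle here; the content of the corollary is entirely absorbed by Theorem~\ref{intro:thm:gen}. The only point requiring a small verification is that the definition of $\LL(\Hig_G(\chi))_w$ for the (non-quasi-compact) stack $\Hig_G(\chi)$ really agrees with the compact objects of $\IndL(\Hig_G(\chi))_w$ as produced in Proposition-Definition~\ref{intro:propdef}; this is precisely how $\LL$ is defined in the non-quasi-compact setting, and compact generation by Theorem~\ref{intro:thm:gen} ensures no further compact objects are introduced after passing to the ind-limit.
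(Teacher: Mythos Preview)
Your proposal is correct and matches the paper's intent: the corollary is stated in the introduction without proof, as an immediate consequence of Theorem~\ref{intro:thm:gen}, and your argument spells out exactly the standard reasoning (compact generation on both sides, then restrict/Ind-complete) that the paper leaves implicit. Your final caveat is unnecessary, since $\LL(\Hig_G(\chi))_w$ is \emph{defined} to be the compact objects of $\IndL(\Hig_G(\chi))_w$ in the non-quasi-compact case, so there is nothing further to verify once Theorem~\ref{intro:thm:gen} supplies compact generation.
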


Note that we prove both Theorem~\ref{conj:intro} and Theorem~\ref{intro:thm:gen} at the same time, as Theorem~\ref{thm:mainLG}. Its proof is given in Section~\ref{sec:sodlim}. We include in the beginning of Section~\ref{sec:sodlim} a flowchart for the proof, and now we only mention some important steps.
As mentioned above, the crucial step is the existence of $j_!$ for the open immersion of the complement of a Harder-Narasimhan (alternatively, $\Theta$-)stratum, where $\mu$ is a dominant coweight:
\[j\colon \Hig_G(\chi)_{\preceq \mu}\setminus \Hig_G(\chi)_{(\mu)}\hookrightarrow \Hig_G(\chi)_{\preceq \mu}.\]
The stack $\Hig_G(\chi)_{\preceq \mu}$ may be written as the zero locus of a section of a vector bundle on the \textit{smooth} stack 
$\Hig^L_G(\chi)_{\preceq \mu}$ of $L$-twisted Higgs bundles, where $\deg(L)\gg 0$. The existence of $j_!$ 
for limit categories of $\Hig_G(\chi)_{\preceq \mu}$ follows, using matrix factorizations and the Koszul equivalence, from an analogous statement about \textit{magic categories} of the smooth stack $\Hig^L_G(\chi)_{\preceq \mu}$. Magic categories, which we discuss in Section~\ref{sec:magic}, are also defined using weight conditions for maps from the stack $B\mathbb{G}_m$, but they are considered for \textit{smooth} stacks, and thus are more tractable than limit categories.

\subsection{Limit categories with nilpotent singular supports}
We will also consider the subcategory of the limit category with nilpotent singular 
supports 
\begin{align*}
    \IndL_{\mathcal{N}}(\Hig_G(\chi))_w \subset \IndL(\Hig_G(\chi))_w.
\end{align*}
For $w=0$, it is expected to be a suitable classical limit of the category of 
$1/2$-twisted dg-category of D-modules on $\Bun_G(\chi)$, 
\begin{align*}
    \text{D-mod}(\Bun_G(\chi))_{\frac{1}{2}}\leadsto \IndL_{\mathcal{N}}(\Hig_G(\chi))_0.
\end{align*}
We expect that an equivalence in
Conjecture~\ref{conj:intro:higgs} restricts to an equivalence of subcategories with nilpotent 
singular supports: 

\begin{conj}\label{conj:Higgs2} 
An equivalence (\ref{intro:cohL}) 
restricts to an equivalence:
\begin{align}\label{intro:equiv:ind2} \IndCoh_{\mathcal{N}}(\Hig_{^{L}G}(w)^{\mathrm{ss}})_{-\chi} \simeq \IndL_{\mathcal{N}}(\Hig_{G}(\chi))_{w}. 
\end{align} 
\end{conj}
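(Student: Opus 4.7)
The plan is to deduce~\eqref{intro:equiv:ind2} from the unrefined equivalence~\eqref{intro:cohL} of Conjecture~\ref{conj:intro:higgs} by exhibiting the nilpotent singular support condition on both sides as intrinsic to structures that any such equivalence must preserve. On the spectral side, the condition cutting out $\IndCoh_{\mathcal{N}}(\Hig_{^{L}G}(w)^{\mathrm{ss}})_{-\chi}$ requires the singular support to lie in the global nilpotent cone, i.e.\ in the preimage of $0\in \mathrm{B}$ under the Hitchin map $h\colon \Hig_{^{L}G}(w)^{\mathrm{ss}}\to \mathrm{B}$; an analogous description is expected on the automorphic side, using the canonical identification of Hitchin bases for $G$ and $^{L}G$.

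First I would establish that the equivalence~\eqref{intro:cohL} is linear over the Hitchin base $\mathrm{B}$. This is natural given the expected compatibility of~\eqref{intro:cohL} with Wilson/Hecke operators (some of which act via multiplication by functions on $\mathrm{B}$) as mentioned in the overview. Once $\mathrm{B}$-linearity is in hand, each subcategory with nilpotent singular support can be described as the essential image of pushforward from the formal completion of the respective moduli stack along $h^{-1}(0)$, and then the equivalence automatically restricts. Next, I would reduce to the pieces of the semiorthogonal decompositions from Theorem~\ref{conj:intro} and~\eqref{intro:sod:general}: parabolic induction preserves nilpotent singular supports (as is standard in the geometric Langlands setting and compatible with the Hitchin base for Levi subgroups), so both decompositions restrict termwise to the nilpotent subcategories. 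Compatibility on each quasi-BPS summand then follows from a refinement of Conjecture~\ref{intro:conj:qbps}, asking the equivalence $\mathbb{T}_{^{L}G}(w)_{-\chi}\simeq \mathbb{T}_{G}(\chi)_w$ to preserve the nilpotent-singular-support pieces.

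The hard part will be to show that the notion of nilpotent singular support which naturally cuts out $\IndL_{\mathcal{N}}\subset \IndL$ on the automorphic side matches what the equivalence produces from the spectral side; in particular, one must verify that the weight conditions defining the limit category (Proposition-Definition~\ref{intro:propdef}) interact correctly with the $\mathrm{B}$-linear structure and with the definition of singular support on a quasi-smooth stack. I would attempt to reduce this to a local computation using the presentation $\mathfrak{M}\simeq s^{-1}(0)$ of Proposition-Definition~\ref{intro:propdef} together with the smooth pull-back and projective push-forward of Theorem~\ref{intro:thmfunct}, so that singular support on the limit-category side can be verified stratum-by-stratum along the Harder-Narasimhan stratification. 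An alternative, more robust route is to characterize nilpotent singular support intrinsically as the locally finite part under the Hecke operators of Theorem~\ref{intro:cor2}; such a characterization would make compatibility automatic for any equivalence commuting with Hecke/Wilson operators, but would require proving Hecke-compatibility of~\eqref{intro:cohL} alongside, itself a substantial part of the expected structure.
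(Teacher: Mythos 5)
Your main route rests on a misidentification of the nilpotent singular support condition. On the spectral side, $\IndCoh_{\mathcal{N}}(\Hig_{^{L}G}(w)^{\mathrm{ss}})_{-\chi}$ is cut out by the Arinkin--Gaitsgory singular support, which lives in the $(-1)$-shifted cotangent $\Omega_{\Hig^{\mathrm{ss}}_{^{L}G}}[-1]$; its fiber at a Higgs bundle $(E,\phi)$ is the Lie algebra of $\mathrm{Aut}(E,\phi)$, and nilpotence means nilpotence of these infinitesimal automorphisms (see Subsection~\ref{subsec:singular}). This has nothing to do with the Hitchin fiber $h^{-1}(0)$: for example, a skyscraper sheaf at a stable Higgs bundle with arbitrary Hitchin image has nilpotent (essentially zero) singular support, since its automorphisms are central. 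The global nilpotent cone $h^{-1}(0)\subset\Hig_{G}$ is where singular supports of D-modules on $\Bun_G$ live in the de Rham setting; it enters the definition of neither $\IndCoh_{\mathcal{N}}$ nor $\IndL_{\mathcal{N}}$ here. Consequently the mechanism you propose --- $\mathrm{B}$-linearity of the equivalence plus pushforward from the formal completion along $h^{-1}(0)$ --- would single out the subcategory of objects set-theoretically supported on the global nilpotent cone, which is a different (much smaller) subcategory than the ones appearing in~\eqref{intro:equiv:ind2}, so it cannot establish the asserted restriction; the stratum-by-stratum verification you build on top of it inherits the same defect.

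The mechanism the paper has in mind (the statement is a conjecture, so no proof is given, but see Remark~\ref{rmk:nilpeq} together with Lemma~\ref{lem:ssupport} and Corollary~\ref{cor:nilp}) is in fact close to the ``alternative, more robust route'' you sketch at the end, except that it does not require the full Hecke compatibility and the relevant characterization is proved unconditionally. Both sides of~\eqref{intro:cohL} carry a monoidal action of $\QCoh^{\shear}(\fg\ssslash G)$, induced by restricting the universal bundle to a point $c\in C$ (a small piece of the Wilson/Hecke structure), and on each side the nilpotent-singular-support subcategory is exactly the full subcategory of objects supported at $0\in\fg\ssslash G$ for this action. Hence if an equivalence~\eqref{intro:cohL} is linear over $\QCoh^{\shear}(\fg\ssslash G)$ --- a weak consequence of the expected Wilson/Hecke compatibility --- it restricts to the nilpotent subcategories simply by tensoring with $\mathrm{QCoh}_0(\fg\ssslash G)$, the subcategory of objects supported at $0$. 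No passage through the semiorthogonal decompositions or a nilpotent refinement of the quasi-BPS equivalence is needed for this step (though the paper does verify separately, in Theorems~\ref{thm:nilp0} and~\ref{thm:nilp}, that the decompositions restrict to the nilpotent pieces). If you reorganize your argument around this $\fg\ssslash G$-linearity instead of $\mathrm{B}$-linearity, the remaining content is exactly the linearity hypothesis, which is where the genuine difficulty of the conjecture lies.
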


\begin{remark}\label{rmk:conj} 
The nilpotent singular support condition for limit categories is discussed in Subsection~\ref{subsec:ssupport}.
Note that the corresponding condition for D-modules is unconditionally 
satisfied. 
As we will mention in Remark~\ref{rmk:limit2}, an equivalence (\ref{intro:equiv:ind2}) is expected
to be a genuine classical limit of GLC~(\ref{intro:GLC}). 

On the other hand, a version of GLC without the nilpotent singular support condition is briefly discussed in~\cite[Remark~1.6.9]{GLC1}, under the name of the \textit{renormalized} geometric Langlands conjecture:
\begin{align*}
    \IndCoh(\mathrm{Locsys}_{^LG}) \simeq \text{D-mod}(\Bun_G)^{\mathrm{ren}}
\end{align*}
where the compact objects of the right hand side are locally compact D-modules on $\Bun_G$, denoted by 
\begin{align*}
    \text{D-mod}(\Bun_G)^{\rm{lcp}} \subset \text{D-mod}(\Bun_G).
\end{align*}
An equivalence (\ref{intro:cohL}) is regarded as a classical limit of renormalized GLC. 
A proof of the renormalized version is not currently available in the literature.
\end{remark}

\subsection{Hecke operators on limit categories for Higgs bundles}\label{subsec:Heckeoperators}
Recall that the GLC is an equivalence 
\begin{align}\label{intro:GL}
    \IndCoh_{\mathcal{N}}(\mathrm{Locsys}_{^{L}G}) \simeq \text{D-mod}(\Bun_{G}). 
\end{align}
On the right-hand side, there are Hecke operators defined 
using the stack of Hecke modifications at points in $C$. Hecke operators are a main ingredient in defining the functor realizing the equivalence~\eqref{intro:GL}, see~\cite{GLC1}. 

Under the equivalence (\ref{intro:GL}), the Hecke operators are compatible with Wilson operators 
in the left-hand side. 
Using functorial properties of limit categories, we will construct Hecke operators on limit categories
via an action of 
quasi-BPS categories of zero-dimensional sheaves on the local surface $S=\mathrm{Tot}(\Omega_C)$. 

Let $\mathcal{S}(m)$ be the derived moduli stack of coherent sheaves $Q\in \Coh^{\heartsuit}(S)$
such that $\dim Q=0$ and $\chi(Q)=m$. The quasi-BPS category for zero-dimensional sheaves
\begin{align*}
    \mathbb{T}(m)_{w} \subset \Coh(\mathcal{S}(m))
\end{align*}
is considered in~\cite{P2, PT0, PT2}, and appears as a wall-crossing term in the 
categorical DT/PT correspondence for local Calabi–Yau 3-folds~\cite{PT2}. 
The direct sum
\begin{align*}
    \mathbb{H}:=\bigoplus_{m\in \mathbb{Z}_{\geq 0}}\mathbb{T}(m)_0
\end{align*}
admits a monoidal structure via categorical Hall products, see
Subsection~\ref{subsec:qbps:zero} and also~\cite{P2, PT0, PT2}.
Using the functors (\ref{intro:pback}), (\ref{intro:push}), we will 
construct the Hecke operators via the action of $\mathbb{H}$ on 
limit categories. 
\begin{thm}\emph{(Corollary~\ref{cor:1}, Corollary~\ref{cor:1prime})}\label{intro:cor2}
For a fixed $(r, w)\in \mathbb{Z}^2$, there exist right and left monoidal actions on 
     the following direct sum 
    \begin{align*}
   \mathbb{H} \curvearrowright     \bigoplus_{\chi \in \mathbb{Z}}\LL(\Hig_{\mathrm{GL}_r}
   \mathbb(\chi))_w \curvearrowleft\mathbb{H}.
    \end{align*}
\end{thm}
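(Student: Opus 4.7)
The plan is to realize the Hecke operators on limit categories as classical limits of the Hecke correspondence between moduli stacks of vector bundles, and then apply the functorial properties of limit categories from Theorem~\ref{intro:thmfunct}.

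First I would introduce the Hecke stack $\mathcal{B}_m$ parametrizing short exact sequences $0 \to E_1 \to E_2 \to Q \to 0$ of coherent sheaves on $C$, where $E_1, E_2$ are vector bundles of rank $r$ and $Q$ is a $0$-dimensional sheaf of length $m$. It sits in a diagram of smooth stacks
\[
\Bun_{\GL_r}(\chi) \times \Coh_0^m(C) \xleftarrow{\;q\;} \mathcal{B}_m \xrightarrow{\;p\;} \Bun_{\GL_r}(\chi+m),
\]
where $q(E_1 \subset E_2) = (E_1, E_2/E_1)$ and $p(E_1 \subset E_2) = E_2$. The map $q$ is smooth since its fibers are the affine spaces $\Ext^1(Q, E_1) \cong k^{rm}$, with no obstruction because $\Ext^2$ vanishes on a curve; the map $p$ is projective since its fibers are the Quot schemes $\Quot_m(E_2)$ of length-$m$ $0$-dimensional quotients. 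Passing to cotangent stacks, and using the identification $\mathcal{S}(m) \simeq \Omega_{\Coh_0^m(C)}$ obtained from dimensional reduction for $0$-dimensional sheaves on $S = \mathrm{Tot}(\Omega_C)$, yields a Lagrangian correspondence between $\Hig_{\GL_r}(\chi) \times \mathcal{S}(m)$ and $\Hig_{\GL_r}(\chi+m)$.

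Next, by Theorem~\ref{intro:thmfunct}, the smooth map $q$ induces a pullback $q^{\Omega!}$ and the projective map $p$ induces a pushforward $p^{\Omega}_*$ on the corresponding limit categories. With a careful choice of the $\delta$-twists, namely $\delta_w$ on $\Bun_{\GL_r}(\chi)$ and $\Bun_{\GL_r}(\chi+m)$ together with the trivial $0$-weight twist on $\Coh_0^m(C)$, arranged so that $q^*(\delta_w \boxtimes 1) \simeq p^*\delta_w$ on $\mathcal{B}_m$, I would define the right Hecke action by
\[
\mathcal{E} \star \mathcal{F} := p^{\Omega}_* \, q^{\Omega!}(\mathcal{E} \boxtimes \mathcal{F}) \in \LL(\Hig_{\GL_r}(\chi+m))_w
\]
for $\mathcal{E} \in \LL(\Hig_{\GL_r}(\chi))_w$ and $\mathcal{F} \in \mathbb{T}(m)_0$. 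The $Z(\GL_r)$-weight $w$ is preserved because the central $\mathbb{G}_m$ acts uniformly by the same scalar on $E_1, E_2, Q$, giving total weight $w+0=w$. The left action is constructed from the same correspondence read with the opposite orientation.

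To verify monoidality, namely an isomorphism $(\mathcal{E} \star \mathcal{F}_1) \star \mathcal{F}_2 \simeq \mathcal{E} \star (\mathcal{F}_1 \ast \mathcal{F}_2)$ where $\ast$ is the Hall product on $\mathbb{H}$, I would use the double Hecke stack $\mathcal{B}_{m_1, m_2}$ parametrizing two-step filtrations $E_0 \subset E_1 \subset E_2$ with $0$-dimensional successive quotients of lengths $m_1, m_2$. Its natural smooth and projective maps reproduce either side of the associativity constraint, via base change combined with the composition properties of $q^{\Omega!}$ and $p^{\Omega}_*$ from Theorem~\ref{intro:thmfunct}. The main obstacle is the careful tracking of the $\delta$-twists: computing the relative canonical bundles for the Quot scheme fibers of $p$ and the extension bundle fibers of $q$, and confirming that the composition $p^{\Omega}_* q^{\Omega!}$ preserves the $\delta_w$-twist rather than shifting it. A secondary subtlety is the derived base change identifying $\mathcal{B}_{m_1} \times_{\Bun_{\GL_r}(\chi+m_1)} \mathcal{B}_{m_2} \simeq \mathcal{B}_{m_1, m_2}$ compatibly with the Lagrangian structures on the respective cotangent stacks.
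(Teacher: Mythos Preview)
Your overall strategy is the same as the paper's: use the Hecke correspondence $\mathcal{B}_m$ with its smooth projection $q$ and projective projection $p$, and define the action via $p^{\Omega}_{*}q^{\Omega!}$ on limit categories. The $\delta$-twist computation you flag is indeed necessary and is carried out in Lemma~\ref{lem:hecke}; the monoidality via the double Hecke stack is standard base change as you say.

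However, there is a genuine gap. Theorem~\ref{intro:thmfunct} (Proposition~\ref{prop:pback} and Theorem~\ref{thm:proj}) is proved only for smooth \emph{QCA} stacks, and $\Bun_{\GL_r}(\chi)$ is not quasi-compact for $r\geq 2$. What your argument actually produces, working open-by-open, is a functor
\[
p^{\Omega}_{*}q^{\Omega!}\colon \widetilde{\LL}(\Hig_{\GL_r}(\chi))_{w}\otimes \mathbb{T}(m)_0 \to \widetilde{\LL}(\Hig_{\GL_r}(\chi+m))_{w}
\]
on the categories $\widetilde{\LL}=\lim_{\mathcal{U}}\LL(\mathcal{U})$ of Remark~\ref{rmk:Ltilde}. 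This is exactly Lemma~\ref{lem:hecke}. But $\LL(\Hig_{\GL_r}(\chi))_w$ is the subcategory of \emph{compact} objects in $\IndL$, which is strictly smaller than $\widetilde{\LL}$ in general. You have not shown that $p^{\Omega}_{*}q^{\Omega!}$ sends compact objects to compact objects.

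The paper addresses this in Proposition~\ref{prop:hecke2}, and the argument is not formal. One rewrites the functor via the Higgs Hecke correspondence (Lemma~\ref{lem:HeckeS}), uses the characterization of compact objects from Corollary~\ref{cor:Ltilde} (an object is compact iff it is right-orthogonal to everything supported on deep Harder--Narasimhan strata), and then proves the required vanishing by an adjunction computation together with the support estimate of Lemma~\ref{lem:support}: a Hecke modification can shift $\mu^{\max}$ by at most $m$. Without this step your action lives only on $\widetilde{\LL}$, not on $\LL$.

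A smaller point: the left action is not literally ``the same correspondence read with the opposite orientation.'' One must check that the other pair of projections (to $F$ and to $(Q,F')$) is again smooth/projective; the paper does this in Lemma~\ref{lem:smooth:hecke} via the tilted heart $\Coh^{\spadesuit}(C)$ and duality $F\mapsto F^{\vee}\otimes\omega_C$.
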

As a corollary, we have the following:
\begin{cor}\emph{(Corollary~\ref{cor:1}, Corollary~\ref{cor:1prime})}\label{intro:cor3}
For coprime $(r, w)$, there exist right and left monoidal actions on 
     the following direct sum of BPS categories 
    \begin{align*}
 \mathbb{H} \curvearrowright     \bigoplus_{\chi \in \mathbb{Z}}
 \mathbb{T}_{\GL_r}(\chi)_w \curvearrowleft\mathbb{H}.
    \end{align*} 
\end{cor}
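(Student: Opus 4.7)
The plan is to deduce Corollary~\ref{intro:cor3} from Theorem~\ref{intro:cor2} by showing that, under the coprimality hypothesis $\gcd(r,w)=1$, each limit category in the direct sum coincides with the corresponding quasi-BPS category, namely
\begin{align*}
\LL(\Hig_{\GL_r}(\chi))_w \simeq \mathbb{T}_{\GL_r}(\chi)_w \quad \text{for every } \chi \in \mathbb{Z}.
\end{align*}
Granting this identification, the right and left monoidal actions of $\mathbb{H}$ on $\bigoplus_\chi \LL(\Hig_{\GL_r}(\chi))_w$ provided by Theorem~\ref{intro:cor2} transport immediately to the asserted monoidal actions on $\bigoplus_\chi \mathbb{T}_{\GL_r}(\chi)_w$, since we are merely relabeling each component of the target.

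To establish the identification, I would invoke the semiorthogonal decomposition~\eqref{sod:intro1} from Theorem~\ref{conj:intro}, which expresses $\LL(\Hig_{\GL_r}(\chi))_w$ via summands indexed by partitions $(r,\chi,w) = \sum_{i=1}^k (r_i,\chi_i,w_i)$ with $r_i \geq 1$, strictly decreasing slopes $\chi_1/r_1 > \cdots > \chi_k/r_k$, and the constraint $w_i/r_i = w/r$. The key combinatorial observation is that $w_i r = w r_i$ combined with $\gcd(r,w)=1$ forces $r \mid r_i$; since $r_i \leq r$, this gives $r_i = r$ and thus $k=1$. The only surviving summand is therefore $\mathbb{T}_{\GL_r}(\chi)_w$ itself, yielding the desired equivalence. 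Note this collapse requires no further geometric input once Theorem~\ref{conj:intro} is in hand.

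Finally, I would verify that the Hecke operator associated with $[Q] \in \mathbb{T}(m)_0 \subset \mathbb{H}$ shifts $\chi$ by $\pm m$ while preserving both the rank $r$ and the central weight $w$; consequently $(r,w)$ stays coprime under the action, and the identification $\LL = \mathbb{T}$ applies uniformly to every target component appearing in Theorem~\ref{intro:cor2}. No substantial obstacle is anticipated: the argument is essentially formal once the combinatorial collapse $k=1$ is observed. The only delicate point to track is the monoidal compatibility, but since the categories $\LL(\Hig_{\GL_r}(\chi))_w$ and $\mathbb{T}_{\GL_r}(\chi)_w$ are literally equal under the coprime assumption (rather than one being a proper subcategory of the other), both the bimodule structure and the monoidality of $\mathbb{H}$ transport without any modification.
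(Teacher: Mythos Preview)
Your proposal is correct and follows essentially the same route as the paper: the paper establishes the monoidal actions on $\bigoplus_\chi \LL(\Hig_{\GL_r}(\chi))_w$ (Corollaries~\ref{cor:1} and~\ref{cor:1prime}) and observes, via Corollary~\ref{cor:coprime} and Example~\ref{exam:coprime}, that the coprimality hypothesis collapses the semiorthogonal decomposition to the single summand $\mathbb{T}_{\GL_r}(\chi)_w$, exactly by your divisibility argument $w_i r = w r_i \Rightarrow r \mid r_i \Rightarrow k=1$. One small caveat: the identification $\LL(\Hig_{\GL_r}(\chi))_w \simeq \mathbb{T}_{\GL_r}(\chi)_w$ is a natural equivalence (via restriction to the semistable locus) rather than a literal equality of subcategories, but this does not affect the transport of the monoidal structure.
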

If $(r, \chi)$ are coprime, then $\mathbb{T}_{\GL_r}(\chi)$ is equivalent to the derived 
category of moduli space of stable Higgs bundles of rank $r$ and degree $\chi$. 
Note that the construction of Hecke operators on moduli spaces of stable Higgs bundles 
have suffered from an issue with stability conditions: a Hecke 
modification of a (Higgs) bundle does not preserve the (semi)stability in general
\footnote{In references such as~\cite[Section~3.4]{HauICM}, the Hecke operators have been discussed by ignoring 
this issue.}\footnote{This issue has been resolved in an alternative way in cohomology~\cite{HMMS, MMSV}, but the techniques of loc.cit. do not apply in our context.}.
The result of Corollary~\ref{intro:cor3} resolves this issue, and 
considering the limit category for the full stack of 
Higgs bundles (without stability) is essential in constructing the Hecke operator. 

For a general reductive group $G$, we construct a Hecke functor associated with a minuscule coweight $\mu$:\footnote{As we discuss in Subsection~\ref{subsec:heckeG}, there are still obstructions to define a Hecke functor if $\mu$ is not minuscule, either due to infinite-dimensionality of Hecke correspondences or to singularities of Schubert cycles in the affine Grassmannians.}
\begin{align*}
    H_x^{\mu} \colon \LL(\Hig_G(\chi))_w \to \LL(\Hig_G(\chi+\overline{\mu}))_{w},
\end{align*}
see Subsection~\ref{subsec:heckeG}. 

Using the above Hecke functors, we will also formulate a compatibility of an equivalence 
in Conjecture~\ref{conj:Higgs} with Wilson/Hecke operators, see Conjecture~\ref{conj:Hecke}. For previous results related to the Wilson/Hecke compatibility in the DGLC (for complexes supported in the locus of stable Higgs bundles), see~\cite{HauICM, HauHit, HaMePe, DFang}. 

We also remark that these operators do not preserve the semiorthogonal decompositions (\ref{intro:sod:general}), (\ref{intro:sod:L}). However, we can control the amplitude,
see Proposition~\ref{prop:Hecke:sod}. 
Also, see Example~\ref{exam:Hecke} for the case of $G=\GL_2$ and $g=0$ and its comparison 
with the Wilson operator in the spectral side.

\subsection{Limit categories and Hall algebras}\label{subsec:KHA}

The purpose of this subsection is two-fold. We make some remarks on a (possibly modified) equivalence~\eqref{equiv:dolb3} compatible with the action of the Dolbeault version of geometric Satake~\cite{BFM, CW}, that is, with actions of both Hecke and Wilson operators on both sides. At the same time, we explain the connection (conjectural at this point) between the main equivalence~\eqref{intro:cohL} and Hall algebras~\cite{MR2944034, OSc, PoSa}. Indeed, Hall algebras capture several striking numerical consequences of the GLC~\cite{MR2944034, SV, OSc, KSV}, and it is instructive to look at the consequences of Conjecture~\ref{conj:intro:higgs} for K-theoretic Hall algebras. Details will appear elsewhere. For simplicity, we will assume in this subsection that $G=\GL_r$ and $\chi=0$.

A brief explanation is as follows. A (possibly modified) equivalence~\eqref{equiv:dolb3} which satisfies certain natural conditions may be alternatively regarded as a categorical Hall algebra $\text{HA}(C)$ with an $\text{SL}(2,\mathbb{Z})$-symmetry. Further, the Grothendieck group of $\text{HA}(C)$ is a \textit{toroidal} algebra of a Lie algebra $\mathfrak{g}$ (the BPS Lie algebra of semistable degree zero Higgs bundles). Even more, $\text{HA}(C)$ has an action of the double $\text{DHA}_0(C)$ of the categorical Hall algebra of points on $\Omega_C$, which factors through the action of Hecke and Wilson operators. 
Passing to two (equivalent) slope subcategories (whose Grothendieck groups are \textit{affine} algebras of $\mathfrak{g}$), we obtain the equivalence~\eqref{intro:cohL} for $\chi=0$. It is compatible with the action of Wilson and Hecke operators, each corresponding to two (equivalent) slope subcategories of $\text{DHA}_0(C)$, and compatible with parabolic induction (called, in the case $G=\GL_r$, the Hall product)\footnote{It is natural to inquire for a $\mathbb{G}_m$-equivariant version of the above, where $\mathbb{G}_m$ scales the Higgs field, but we do not discuss it in this paper.}.

\medskip



Going back to Remark~\ref{remark:newDolb}, we may inquire for an equivalence~\eqref{equiv:dolb3}:
 \begin{equation}\label{equiv:dolb2}
  \Coh(\Hig_{^LG})^\dagger\simeq \Coh(\Hig_{G})^\dagger,
  \end{equation} 
  which is a modification of~\eqref{equiv:dolb3}. 
  Here, the category $\Coh(-)^{\dag}$ is a (yet to be defined) modification of $\Coh(-)$, which keeps track of 
 the non-quasi-compactness of $\Hig_G$. The `best case scenario' is that this equivalence is:
 \begin{itemize}
     \item compatible with parabolic induction,
     \item compatible with the action of the `double' of the categorical Hall algebra of points on the local surface $S=\Omega_C$, which factors through the action of Hecke and Wilson operators~\cite{CW}, 
     \item is an \textit{affinization} of the de Rham Langlands equivalence (which is a statement about the dimension of its topological K-theory).
 \end{itemize}
For $G=\GL_r$, the equivalence~\eqref{equiv:dolb2}, together with the equivalence obtained by tensoring the determinant line bundle, generates an $\text{SL}(2,\mathbb{Z})$-action on the $\mathbb{Z}^{\oplus 2}$-graded category: \[\text{HA}_r(C):=\bigoplus_{\chi,w\in\mathbb{Z}} \mathrm{Coh}(\Hig_{\mathrm{GL}_r}(\chi))^\dagger_w.\] 

In what follows, we assume that the equivalence~\eqref{equiv:dolb2} satisfies all the properties listed above. We further assume that some (topological) K-theoretic Hall algebras satisfy PBW-type theorems. 
Partial progress towards such results were obtained in~\cite{PT1, PTtop, PThiggs2}.
We also assume the existence of a `double' of the categorical Hall algebra with an action of $\text{SL}(2,\mathbb{Z})$-action, for which the results in~\cite{CauPT} provide partial evidence.
We finally assume a comparison between slope subcategories and the categories in~\eqref{conj:DLequiv}.
 
\medskip


 Above, \textit{affinization} refers to a comparison between the topological K-theories of $\text{D-mod}(\Bun_G(\chi))^{\text{lcp}}$ and $\mathrm{Coh}(\Hig_G(\chi))^\dagger$, see the isomorphism~\eqref{iso:KHAC}, and it is motivated by the following observation due to Grojnowski~\cite{GrojnowskiU}.

Let $Q$ be a finite type quiver with set of vertices $I$ and corresponding nilpotent Lie algebra $\mathfrak{n}$. For a dimension vector $d\in\mathbb{N}^I$, let $\X(d)$ be the stack of representations of dimension $d$ of $Q$. There are PBW isomorphisms for $\mathbb{N}^I$-graded and $\mathbb{N}^I\times\mathbb{Z}$-graded vector spaces, respectively, where $q$ has degree $(0,1)$:
\begin{align}\label{intro:isom:PBW}
\bigoplus_{d\in\mathbb{N}^I}K_0\left(\text{D-mod}_{\mathrm{coh}}(\X(d))\right)_\mathbb{Q}&\cong \mathrm{Sym}\left(\mathfrak{n}\right),\\
\notag
\bigoplus_{d\in\mathbb{N}^I}K_0\left(\mathrm{Coh}(\Omega_{\X(d)})\right)_\mathbb{Q}&\cong \operatorname{Sym}\left(\mathfrak{n}\otimes \mathbb{Q}[q^{\pm 1}]\right).
\end{align}
The details of the isomorphisms (\ref{intro:isom:PBW}), and a discussion of more general quivers and of Remark~\ref{rem:singularsupport}, will appear elsewhere. We remark that the two left-hand sides are Hall algebras, the first one analogous to the Ringel--Lusztig construction, while the second one is the K-theoretic Hall algebra~\cite{VaVa}. 

\medskip

For a dg-category $\mathscr{D}$, denote by $K^{\text{top}}_\bullet(\mathscr{D})_\mathbb{Q}$ the (rational) homotopy groups of the topological K-theory~\cite{Blanc} of $\mathscr{D}$, which is a 2-periodic $\mathbb{Q}$-vector space. 
We denote by $\mathrm{BPS}(r)$ the 2-periodic version of the BPS cohomology~\cite{DM, KinjoKoseki} of $\Hig_{\GL_r}(\chi)^{\text{ss}}$, which is independent of $\chi\in \mathbb{Z}$ by~\cite{KinjoKoseki}. In particular, there are isomorphisms of $2$-periodic $\mathbb{Q}$-vector spaces:
\[\text{BPS}(r)\cong K^{\text{top}}_\bullet(\text{H}_{\mathrm{GL}_r}(1)^{\text{ss}})_\mathbb{Q},\] where $\text{H}_{\mathrm{GL}_r}(1)^{\text{ss}}$ is the usual (classical, smooth, proper over the Hitchin base) moduli space of semistable Higgs bundles of degree one. 
We expect that there is a PBW isomorphism for $\mathbb{N}\times\mathbb{Z}$-graded vector spaces:
\[\bigoplus_{r\in \mathbb{N}, w\in\mathbb{Z}}K_\bullet^{\text{top}}\left(\mathrm{Coh}(\Hig_{\GL_r}(0)^{\text{ss}})_w\right)_\mathbb{Q}\cong \mathrm{Sym}\left(\bigoplus_{r\in\mathbb{N}}\mathrm{BPS}(r)\otimes \mathbb{Q}[q^{\pm 1}]\right),\] where $q$ has degree $(0,1)$. For partial progress towards such an isomorphism, see~\cite{PThiggs2}.
A similar PBW isomorphisms is expected to hold for the stack of local systems of rank $r$, whose ($2$-periodic) BPS cohomology is isomorphic to the BPS cohomology of $\Hig_{\mathrm{GL}_r}(0)^{\text{ss}}$, see~\cite{Henn}. 
We also expect the following Langlands-dual PBW isomorphism for the category of locally compact D-modules on $\Bun_{\mathrm{GL}_r}$:
\[\bigoplus_{r\in \mathbb{N}, \chi\in\mathbb{Z}}K_\bullet^{\text{top}}\left(\text{D-mod}(\Bun_{\mathrm{GL}_r}(\chi)\right)^{\mathrm{lcp}})_\mathbb{Q}\cong \mathrm{Sym}\left(\bigoplus_{r\in\mathbb{N}}\mathrm{BPS}(r)\otimes \mathbb{Q}[z^{\pm 1}]\right),\] where $z$ has degree $(0,1)$. See Remark~\ref{rmk:conj} for the definition of the left hand side. 

Recall that we assume that the equivalences~\eqref{equiv:dolb2}
 are compatible with parabolic induction. 
  Consider the monoidal category 
  \begin{equation}\label{def:Hall}
  \text{HA}(C):=\bigoplus_{r\in\mathbb{N}} \text{HA}_r(C)
  \end{equation}
  and its corresponding K-theoretic Hall algebra:
  \[\mathrm{KHA}(C):=K^{\text{top}}_\bullet(\mathrm{HA}(C)).\] 
  Following Grojnowski's observation for quivers, we assume that the following PBW isomorphism of  $\mathbb{N}\times\mathbb{Z}^{\oplus 2}$-graded vector spaces holds, where $z$ has degree $(0,1,0)$ and $q$ has degree $(0,0,1)$:
  \begin{equation}\label{iso:KHAC}
  \mathrm{KHA}(C)_\mathbb{Q}\cong \mathrm{Sym}\left(\bigoplus_{r\in\mathbb{N}}\mathrm{BPS}(r)\otimes\mathbb{Q}[z^{\pm 1}, q^{\pm 1}]\right).
  \end{equation}


\medskip

We next discuss the action of Hecke and Wilson operators via the `double' $\mathrm{DHA}_0(C)$ of the categorical Hall algebra $\mathrm{HA}_0(C)$ of points on $T^*C$. We postpone precise conjectures about the construction of $\mathrm{DHA}_0(C)$ to future work. 
The category $\mathrm{DHA}_0(C)$ is $\mathbb{Z}^{\oplus 2}$-graded, has an action of $\text{SL}(2,\mathbb{Z})$-action (for which we provide partial evidence in~\cite{CauPT}), and acts on the $\mathbb{Z}^{\oplus 2}$-graded category $\text{HA}_r(C)$ compatibly with the $\text{SL}(2,\mathbb{Z})$-actions. Further, the action of $\text{DHA}_0(C)$ on $\text{HA}_r(C)$ factors through the action of Hecke and Wilson operators.

Recall from Subsection~\ref{subsec:Heckeoperators} that $\mathcal{S}(\chi)$ is the stack of length $\chi$ sheaves on $S=\Omega_C$. 
 Further, $\text{DHA}_0(C)$ contains the categorical Hall algebra of points on $C$:
  \[\text{DHA}_0(C)\supset \text{HA}_0(C):=\bigoplus_{\chi>0,w\in\mathbb{Z}}\Coh(\mathcal{S}(\chi))_w,\] which acts by correspondences on $\text{HA}_r(C)$. 
We expect that there is an isomorphism of $\mathbb{Z}^{\oplus 2}$-graded vector spaces, where $z$ has degree $(1,0)$ and $q$ has degree $(0,1)$:
\[K^{\text{top}}_\bullet(\text{DHA}_0(C))_\mathbb{Q}\cong \text{Sym}\left(K_\bullet(S)\otimes \mathbb{Q}[z^{\pm 1}, q^{\pm 1}]\right).\]

  Recall the slope subalgebras of quantum affine algebras~\cite{OkSm, Negutthesis}. We expect natural constructions of slope subcategories of categorical Hall algebras of quivers and curves (and their doubles, when defined); for an example, see~\cite{PT1}. 
Consider slope subcategories of $\text{HA}(C)$ and $\text{DHA}_0(C)$ for the $\text{slope}:=w/\chi$. 
  We expect the action of $\text{DHA}_0(C)$ on $\text{HA}_r(C)$ to specialize, when $\text{slope}=\infty$, to the action of the Wilson operators:
  \[\text{DHA}_0(C)_{\text{slope}=\infty}\curvearrowright \text{HA}_r(C)_{\text{slope}=\infty}\simeq \bigoplus_{w\in\mathbb{Z}}\mathrm{Coh}(\Hig_{\mathrm{GL}_r}(0)^{\text{ss}})_w. \] 
  When $\text{slope}=0$, we expect the action of $\text{DHA}_0(C)$ on $\text{HA}_r(C)$ to specialize to the action of Hecke operators from Subsection~\ref{subsec:Heckeoperators}:
  \[\text{DHA}_0(C)_{\text{slope}=0}\curvearrowright \text{HA}_r(C)_{\text{slope}=0}\simeq \bigoplus_{\chi\in\mathbb{Z}}\text{L}(\Hig_{\mathrm{GL}_r}(\chi))_0. \]
The $\mathrm{SL}(2,\mathbb{Z})$-action induces a commutative diagram:
\begin{align*} \xymatrix{ \text{DHA}_0(C)_{\text{slope}=0} \ar[d]^{\simeq} & \curvearrowright & \text{HA}_r(C)_{\text{slope}=0} \ar[d]_{\Phi}^{\simeq} \\ 
\text{DHA}_0(C)_{\text{slope}=\infty} & \curvearrowright & \text{HA}_r(C)_{\text{slope}=\infty}. } \end{align*} 
      The equivalence $\Phi\colon \text{HA}_r(C)_{\text{slope}=0}\simeq \text{HA}_r(C)_{\text{slope}=\infty}$ is then the same as the equivalences~\eqref{intro:cohL}, compatible with parabolic induction, for $G=\mathrm{GL}_r$ and $\chi=0$ and for all $w\in \mathbb{Z}$.

\subsection{Future directions}

Besides the conjectures already made in the introduction, we already mentioned some future directions of research, see Subsections~\ref{subsec:catDT} and~\ref{subsec:KHA} and Remark~\ref{rem:singularsupport}. Let us mention a few more natural questions which we do not address in this paper, but which we hope to discuss in future work. 

\begin{enumerate}[leftmargin=*]
\item One may inquire for a $\mathbb{G}_m$-equivariant version of the equivalence~\eqref{intro:cohL}. Note that such an action is also natural from the perspective of Hall algebras discussed in Subsection~\ref{subsec:KHA}. A feature of such an action is the existence of further semiorthogonal decomposition of $\mathbb{G}_m$-equivariant BPS categories. Here, note that the BPS categories~\eqref{intro:qbps} (so those with $\gcd(r,\chi,w)=1$) are indecomposable~\cite{PThiggs}. 

\item It is interesting to pursue a Dolbeault version of the spectral decomposition from~\cite{AGKRRV}. For irreducible local systems, the corresponding summand on the automorphic (limit category) side is a classical limit of the Hecke eigensheaves~\cite{FGV}. For the trivial local system, one expects the summand to be a category whose Grothendieck group is the top nilpotent cohomological Hall algebra of the curve~\cite{SalaSchif}. Langlands duality (as in~\cite{AGKRRV}) implies the equivalence between such a category and a categorical preprojective (nilpotent) Hall algebra of the $g$-loop quiver, where $g$ is the genus of the curve. These categories are further expected to be related to a categorification of the spherical (Ringel) Hall algebra of the curve~\cite{MR2944034, OSc}\footnote{We thank Olivier Schiffmann for discussions about this.}.

\item One may inquire for the limit category analogue of the category of tempered D-modules, and for a description of it in terms of the singular support analogous to that of Fægerman--Raskin for categories of sheaves with nilpotent singular support~\cite{FaeRas}.  

\item It is interesting to construct semiorthogonal decompositions (analogous to the one from Theorem~\ref{conj:intro}) directly for the automorphic category in both the de Rham and Betti~\cite{ZN} settings. In the de Rham setting, such a decomposition was announced in~\cite{McNe}.
As a consequence, we obtain equivalences between the quotient of the automorphic category by the subcategory of object with singular support in the unstable locus $\Hig^{\text{uns}}_{G}:=\Hig_{G}\setminus \Hig^{\text{ss}}_{G}$ and quasi-BPS categories of the spectral categories.
\end{enumerate}

\subsection{Updates on recent developments}

Since the appearance of the first version of this paper on arXiv, 
there have been several further developments concerning the Dolbeault geometric Langlands conjecture proposed in this paper. 
Here we describe some of them. 

First, the second author proved Conjecture~\ref{conj:intro:higgs} (with nilpotent singular support) for $G=\GL_r, \mathrm{SL}_r, \mathrm{PGL}_r$ over the locus $\mathrm{B}^{A} \subset \mathrm{B}$ of the Hitchin base where the spectral curves have at worst type A singularities, i.e. formally locally of the form 
$\{y^2=x^m\}$ for some $m$. 
More precisely, we have the following: 
\begin{thm}\emph{(\cite{TodaGL2, TodaA})}\label{thm:BA}
For $G=\GL_r, \mathrm{SL}_r$ or $\mathrm{PGL}_r$, 
there is a $\mathrm{B}^{A}$-linear equivalence 
\begin{align*}
    \IndCoh_{\mathcal{N}}(\Hig_{\GL_2}(w)^{\mathrm{ss}}\times_{\mathrm{B}}\mathrm{B}^{A})_{-\chi} \stackrel{\sim}{\to}
    \IndL_{\mathcal{N}}(\Hig_{\GL_2}(\chi)\times_{\mathrm{B}}\mathrm{B}^{A})_{w}
\end{align*}
which is compatible with Wilson/Hecke operators. 
\end{thm}
In the case of $G=\GL_2, \mathrm{SL}_2$ or $\mathrm{PGL}_2$, we have $\mathrm{B}^A=\mathrm{B}^{\mathrm{red}}$, where
$\mathrm{B}^{\mathrm{red}} \subset \mathrm{B}$ corresponds to reduced spectral curves. The result of Theorem~\ref{thm:BA}
was first proved for $G=\GL_2$ in~\cite{TodaGL2}, and is then proved for $G=\GL_r, \mathrm{SL}_r, \mathrm{PGL}_r$
by applying and extending the general strategy of~\cite{TodaGL2}.

Note that such an equivalence had previously been known only over the elliptic locus 
$\mathrm{B}^{\mathrm{ell}} \subset \mathrm{B}^{\mathrm{red}}$, where the spectral curves are irreducible, by Arinkin~\cite{Ardual}. 
Over the elliptic locus, the stack 
$\Hig_G(\chi)$ is quasi-compact and consists of stable Higgs bundles, so there is no need to use limit categories. 
However, over $\mathrm{B}^{A}$, the stack $\Hig_G(\chi)$ is not 
quasi-compact, and there are strictly semistable Higgs bundles on the 
left-hand side of the correspondence. This is the first case 
in which the theory of limit categories developed in this paper is essential both for 
the formulation and for the proof. 

Secondly, in the recent joint work with Chenjing Bu, we extend the 
semiorthogonal decomposition (\ref{intro:sod:general})
to any reductive group $G$. 
\begin{thm}\emph{(\cite{BPT})}
For any reductive group $G$, there is a semiorthogonal decomposition (\ref{intro:sod:general}). 
\end{thm}
The main result of~\cite{BPT} is more general, and applies to arbitrary 
quasi-smooth derived stacks with good moduli spaces. It recovers the semiorthogonal decompositions in~\cite{PTK3, PThiggs}. 
In this generality, the semiorthogonal decomposition is described 
in terms of component lattices, which are a central object in ``the intrinsic DT theory" started in~\cite{bhik}.

\subsection{Acknowledgements}
We would like to thank Sabin Cautis, Eric Chen, Tasuki Kinjo, Davesh Maulik, Olivier Schiffmann, Francesco Sala, Junliang Shen, Philsang Yoo and Eric Vasserot for discussions related to this work. 
T.P. thanks CNRS (grant PEPS JCJC) and FSI Sorbonne Université (grant Tremplins) for providing financial support to visit IPMU between 14-25 October 2024 and 24 February-7 March 2025. T.P. thanks IPMU for its hospitality and excellent working conditions during these visits. 
Y.~T.~is supported by World Premier International Research Center
	Initiative (WPI initiative), MEXT, Japan, 
    and Inamori Research Institute for Science, 
    and JSPS KAKENHI Grant Number JP24H00180.

\subsection{Materials and Methods}
The AI tool (ChatGPT) is only used to check typo and English corrections.

\begin{figure}
	\centering
\scalebox{0.7}{
	\begin{tabular}{|l|l|l|}
		\hline
	Notation & Description & Location defined \\\hline
 $\LL(\mathfrak{M})_{\delta}, \LL(\Omega_{\mathcal{X}})_{\delta}$ & limit categories  & Subsection~\ref{subsec:deflim}
\\ \hline
 $\IndL(\mathfrak{M})_{\delta}, \IndL(\Omega_{\mathcal{X}})_{\delta}$ & ind-limit categories  & Subsection~\ref{subsec:deflim}
\\ \hline
$\fg_x$, $T_x$, $\mathfrak{o}_x$ & cohomologies of tangent complex &
Subsection~\ref{subsec:deflim}
\\ \hline
$\nu$, $\nu^{\mathrm{reg}}$ & map from $\bgm$ and its regularization &
Subsection \ref{subsec:deflim}
\\ \hline
$\mu$ & moment map &
Subsection \ref{subsec:deflim}
\\ \hline 
$\delta$ & $\mathbb{R}$-line bundle on a stack &
Subsection \ref{subsec:deflim}
\\ \hline
$\mathrm{Supp}^{\mathrm{AG}}, \mathrm{Supp}^L$ & singular supports &
Subsection~\ref{subsec:singular}
\\ \hline
$(-)_{\mathcal{N}}$ & nilpotent singular supports &
Subsection \ref{subsec:limsupp}
\\ \hline
$f^{\Omega!}, f_{*}^{\Omega}$ & induced functors on cotangents &
Subsection \ref{subsec:funccotan}
\\ \hline
$\MM(\Y)_{\delta}$ & magic category & Subsection~\ref{subsec:magic}
\\ \hline
$n_{\Y, \nu}$ & weight of positive part of cotangent complex & Definition~\ref{defn:QBY}
\\ \hline
$\Theta$ & $\Theta$-stack $\Theta=\mathbb{A}^1/\mathbb{G}_m$ & Subsection~\ref{subsec:thetast}
\\ \hline
$\rho$ & half sum of positive weights & Proposition~\ref{prop:QB:weight}
\\ \hline
$\beta_{>i}, \beta_{\leq i}$ & weight truncation & Equation (\ref{triangle:beta})
\\ \hline
$\Delta_i$ & the functor between magic categories & Lemma~\ref{lem:delta}
\\ \hline
$\mathscr{E}_G$ & universal $G$-bundle & Equation (\ref{univ:E})
\\ \hline
$\Ad_{\fg}(E_G)$ & adjoint vector bundle & Subsection~\ref{subsec:limHiggs}
\\ \hline
$Z(G)$ & center of $G$ & Subsection~\ref{subsec:limHiggs}
\\ \hline
$\delta_w$ & $\mathbb{Q}$-line bundle associated with $w$ & Equation (\ref{deltaw})
\\ \hline
$\Hig_G$, $\Hig_G^{L}$ & moduli stack of ($L$-twisted) $G$-Higgs bundles & Subsection~\ref{subsec:sHiggs}
\\ \hline
$\Hig_G(\chi)^{\mathrm{ss}}$, $\Hig_G^{L}(\chi)^{\mathrm{ss}}$ & moduli stack of ($L$-twisted) semistable $G$-Higgs bundles & Subsection~\ref{subsec:ssH}
\\ \hline
$(E_G, \phi_G), (E_G, \phi_G^{L})$ & ($L$-twisted) $G$-Higgs bundle & Subsection~\ref{subsec:sHiggs}
\\ \hline
$(\mathscr{F}_G^{L}, \varphi_G^{L})$ & universal ($L$-twisted) $G$-Higgs bundle & Subsection~\ref{subsec:sHiggs}
\\ \hline
$\IndL(\Hig_G(\chi))_w$, $\LL(\Hig_G(\chi))_w$ & (ind)-limit category for Higgs bundles & Subsection~\ref{subsec:limHiggs}
\\ \hline
$N(T), M(T)$ &cocharacter/character lattice & Subsection~\ref{subsec:SOD0}
\\ \hline
$N(T)_{+}, M(T)_{+}$ & dominant chambers & Subsection~\ref{subsec:SOD0}
\\ \hline
$N(T)_{\mathbb{Q}+}^{W_M}, M(T)_{\mathbb{Q}+}^{W_M}$ & strictly $W$-dominant chambers & Subsection~\ref{subsec:SOD0}
\\ \hline
$\mathbb{T}_{G}(\chi)_w$ & quasi-BPS category & Definition~\ref{def:qbpsG}
\\ \hline
$\mu_M$ & slope maps & Equation (\ref{def:slopemaps})
\\ \hline
$a_M$ & an induced map on $\pi_1(-)\times Z(-)^{\vee}$ & Equation (\ref{def:aM})
\\ \hline
$\mathbb{T}^i_{(E_G, \phi_G^{L})}$ & cohomology of cotangent complex & Equation (\ref{def:Ti})
\\ \hline
$N(T)_{\mathrm{pos}}, N(T)_{\mathbb{Q}, \mathrm{pos}}$ & positive cones & Equation (\ref{pcones})
\\ \hline
$\leq_G$ & partial order on $N(T)_{\mathbb{Q}}$ & Subsection~\ref{subsec:ssH}
\\ \hline
$\Hig_G^{L}(\chi)_{(\mu)}$ & Harder-Narasimhan stratum & Subsection~\ref{subsec:HNhiggs}
\\ \hline
$\Hig_G^{L}(\chi)_{\preceq \mu}$ & union of HN strata & Subsection~\ref{subsec:HNhiggs}
\\ \hline
$\preceq$ & total order on $N(T)_{\mathbb{Q}+}$ & Subsection~\ref{subsec:HNhiggs}
\\ \hline
$\iota_D$ & closed embedding to smooth stack & Equation (\ref{iota:DC})
\\ \hline
$l(\mu)$ & lower bound of $\deg D$ & Lemma~\ref{lem:smooth}
\\ \hline
$\mathcal{V}^L$ & vector bundle on $\Hig_G^{L}(\chi)_{\preceq \mu}$ & Diagram (\ref{section:sL})
\\ \hline
$s^L$ & section of $\mathcal{V}^L$ & Diagram (\ref{section:sL})
\\ \hline
$\mathcal{S}_{\mathcal{V}}, \mathcal{Z}_{\mathcal{V}}$ & $\Theta$-stratum of $(\mathcal{V}^L)^{\vee}$ and its center & Subsection~\ref{subsec:thetashift}
\\ \hline
$\mathcal{C}(m)$ & moduli stack of zero-dimensional sheaves on curve & Subsection~\ref{subsec:qbps:zero}
\\ \hline
$\mathcal{S}(m)$ & moduli stack of zero-dimensional sheaves on surface & Subsection~\ref{subsec:qbps:zero}
\\ \hline $\mathbb{T}(m)_w$ & quasi-BPS category of zero-dimensional sheaves & Subsection~\ref{subsec:qbps:zero}
\\ \hline
$\mathcal{C}(m_1, m_2)$ & stack of exact sequences of zero-dimensional sheaves & Subsection~\ref{subsec:qbps:zero}
\\ \hline
$\mathcal{S}(m_1, m_2)$ & stack of exact sequences of zero-dimensional sheaves & Subsection~\ref{subsec:qbps:zero}
\\ \hline
$\mathbb{H}, \Ind\mathbb{H}$ & direct sum of (ind-)quasi-BPS categories & Subsection~\ref{subsec:qbps:zero}
\\ \hline $\mathrm{Hecke}_{\GL_r}$ & Hecke stack & Subsection~\ref{subsec:Hecke}
\\ \hline
$\mathrm{Hecke}_{\GL_r}^{\mathrm{Hig}}$ & Hecke stack for Higgs bundles& Subsection~\ref{subsec:Hecke}
\\ \hline
$\mathrm{H}_x^{\mu}$ & Hecke functor & Equation (\ref{funct:Hmu})
\\ \hline
	\end{tabular}
}
	\vspace{.5cm}
	 \caption{Notation used in the paper}
	 \notag
\end{figure}

\section{Preliminaries}\label{sec:QBcat}
In this section, we discuss the preliminary background necessary for this paper. 

\subsection{Conventions on stacks}
We work over an algebraically closed field $k$ of characteristic $0$.
Unless stated otherwise, all (derived) stacks are locally Noetherian and quasi-separated over $k$.
For a stack $\X$ over $k$, we write $\X(k)$ for the set of $k$-valued points of $\X$.
For a field extension $k'/k$, we denote by $\X_{k'}$ the base change to $k'$.

A stack is called \textit{QCA} if it is quasi-compact with affine geometric stabilizer groups.

For a scheme $Y$ with an action of an algebraic group $G$, we write $Y/G$ for the associated quotient stack.
We also use the notion of good moduli spaces of stacks, see~\cite{MR3237451}.

For a stack $\X$, let $\mathbb{L}_{\X}$ and $\mathbb{T}_{\X}$ denote its cotangent and tangent complexes.
For $m \ge 0$, its $m$-shifted cotangent stack is
\[
  \Omega_{\X}[m] := \Spec_{\X}\,\mathrm{Sym}\big(\mathbb{T}_{\X}[-m]\big) \longrightarrow \X .
\]
For a finite-dimensional $k$-vector space $V$ and $m \ge 0$, set
\[
  V[-m] := \Spec \mathrm{Sym}\big(V^{\vee}[m]\big),
\]
where $\mathrm{Sym}\big(V^{\vee}[m]\big)$ has zero differentials.

\subsection{Notations for dg-categories}
For a dg-category $\mathcal{C}$, denote by $\mathcal{C}^{\mathrm{cp}} \subset \mathcal{C}$ the subcategory of compact objects. The dg-category $\mathcal{C}$ is called \textit{compactly generated} if $\mathcal{C}=\Ind(\mathcal{C}^{\mathrm{cp}})$, where $\Ind(-)$ is the ind-completion. 

A dg-functor $F \colon \mathcal{C}_1 \to \mathcal{C}_2$ is called \textit{continuous} 
if it commutes with taking direct sums. 

For compactly generated dg-categories $\mathcal{C}_1$ and $\mathcal{C}_2$, their 
tensor product is defined by $\mathcal{C}_1 \otimes \mathcal{C}_2=\Ind(\mathcal{C}_1^{\mathrm{cp}}\boxtimes \mathcal{C}_2^{\mathrm{cp}})$, 
where $\mathcal{C}_1^{\mathrm{cp}}\boxtimes \mathcal{C}_2^{\mathrm{cp}}$ consist
of objects $A_1\boxtimes A_2$ for $A_i \in \mathcal{C}_i^{\mathrm{cp}}$ with 
$\Hom(A_1\boxtimes A_2, B_1\boxtimes B_2)=\Hom(A_1, B_1)\otimes \Hom(A_2, B_2)$.
We denote by $\mathcal{C}_1^{\mathrm{cp}}\otimes \mathcal{C}_2^{\mathrm{cp}}$ the subcategory 
of $\mathcal{C}_1\otimes \mathcal{C}_2$ split generated by $\mathcal{C}_1^{\mathrm{cp}}\boxtimes \mathcal{C}_2^{\mathrm{cp}}$. It equals to the subcategory of compact objects in $\mathcal{C}_1 \otimes \mathcal{C}_2$.

\subsection{Notations for (ind)coherent sheaves}
We use the notation of~\cite{MR3701352} for (ind)coherent sheaves. 
For a derived stack $\X$, we denote by 
$\Coh(\X)$ the dg-category of coherent sheaves,
by $\QCoh(\X)$ the dg-category of quasi-coherent sheaves,
and by $\IndCoh(\X)$ the dg-category of ind-coherent sheaves. 
For an affine derived scheme $\X=\Spec R$, we have that 
$\IndCoh(\X)=\Ind(\Coh(\X))$. For a stack $\X$, it is defined by 
\begin{align*}
    \IndCoh(\X)=\lim_{U\to \X}\IndCoh(U)
\end{align*}
for affine derived schemes $U \to \X$. If $\X$ is QCA, it is compactly generated with 
compact objects $\Coh(\X)$ by~\cite{MR3037900}.
Moreover, for QCA stacks $\mathcal{X}$ and $\mathcal{Y}$, there is an equivalence, 
see~\cite[Corollary~4.2.3]{MR3037900}:
\begin{align}\label{indtensor}
    \IndCoh(\mathcal{X})\otimes \IndCoh(\mathcal{Y}) \stackrel{\sim}{\to}\IndCoh(\mathcal{X}\times \mathcal{Y}).
\end{align}
We also refer to~\cite{MR3037900} for functorial properties of ind-coherent sheaves, e.g. 
for a morphism $f\colon \X \to \Y$, there are functors 
\begin{align*}
    f_{*}\colon \IndCoh(\X) \to \IndCoh(\Y), \ f^! \colon \IndCoh(\Y) \to \IndCoh(\X).
\end{align*}
When $\Y=\Spec k$, the functor $f_{*}$ is the (derived) global section functor 
$\Gamma$, and we write $H^i(-):=\mathcal{H}^i(\Gamma(-))$. 

If $\X=\Spec R$, we often denote $\Coh(\X)$ by $\Coh(R)$ (and so on) for simplicity. 
The heart of a standard t-structure on $\Coh(\X)$ is denoted by 
$\Coh^{\heartsuit}(\X)$. 
We also denote by $K(\mathcal{X})$ the Grothendieck group of 
perfect complexes on $\mathcal{X}$. 

For a morphism $f\colon \X \to \Y$, we denote by $\mathbb{L}_f$ the relative 
cotangent complex. If $\mathbb{L}_f$ is perfect, denote by $\omega_f:=\det \mathbb{L}_f$ the 
relative canonical line bundle. 
When $\Y=\Spec k$, denote $\mathbb{L}_f=:\mathbb{L}_{\X}$ and $\omega_f=:\omega_{\X}$. 

A morphism $f$ is called \textit{quasi-smooth} 
if $\mathbb{L}_f$ is perfect of cohomological amplitude $[-1, 1]$, and $\X$ is called 
\textit{quasi-smooth} if $\X \to \Spec k$ is quasi-smooth. 
If $f$ is quasi-smooth, we also have the $*$-pull-back for coherent sheaves, 
see~\cite[Section~4.2]{PoSa}
\begin{align*}
    f^* \colon \Coh(\Y) \to \Coh(\X).
\end{align*}
If $\mathcal{X}$ is quasi-smooth, then 
there are duality equivalences: 
\begin{align*}
\mathbb{D}_{\mathcal{X}} \colon \Coh(\mathcal{X})^{\mathrm{op}} \stackrel{\sim}{\to}
\Coh(\mathcal{X}), \ (-)\mapsto \mathcal{H}om(-, \omega_{\mathcal{X}}[\dim \mathcal{X}]).
\end{align*}
We often simply write $\mathbb{D}=\mathbb{D}_{\mathcal{X}}$ when $\mathcal{X}$ is 
obvious in the context.

\subsection{\texorpdfstring{Notations for $\mathbb{G}_m$-weights}{Notation on Gm-weights}}\label{subsec:Gmwt}
For a $\mathbb{G}_m$-gerbe $\X \to X$, there is an orthogonal weight 
decomposition 
\begin{align*}
    \Coh(\X)=\bigoplus_{m\in \mathbb{Z}}\Coh(\X)_m,
\end{align*}
where $\Coh(\X)_m$ is the subcategory of $\mathbb{G}_m$-weights $m$. 

For an object $A \in \Coh(\bgm)$,
let $A=\oplus_m A_m$ 
be its weight decomposition. We write $A^{>0}=\oplus_{m>0} A_m$. 
We denote by $\wt(A)\subset \mathbb{Z}$ the set of $m\in \mathbb{Z}$ such that $A_m\neq 0$, 
and $\wt^{\mathrm{max}}(A)\in \mathbb{Z}$ is the maximal element in $\wt(A)$. 
We will use the following map 
\begin{align*}
c_1 \colon K(\bgm)_{\mathbb{Q}} \to H^2(\bgm, \mathbb{Q})=\mathbb{Q}, 
\end{align*}
which equals $\sum_m m \cdot \dim A_m$. 

For a stack $\mathcal{X}$, a map $\nu \colon \bgm \to \mathcal{X}$, and an object
$A \in \mathrm{Perf}(\mathcal{X})$, write 
$\langle \nu, A \rangle:=c_1 (\nu^{\ast}A)$ and 
$\langle \nu, A^{\nu>0}\rangle :=c_1 (\nu^{\ast}A^{>0})$. 
The notations for negative weights $A^{<0}$ and $c_1(\nu^* A^{<0})$ are similarly defined. 

\subsection{Attracting and fixed loci}\label{notation:attracting}
For a reductive group $G$, a $G$-representation $Y$, and a cocharacter $\lambda \colon \mathbb{G}_m \to G$, denote by $Y^{\lambda \geq 0} \subset Y$ the subspace 
of $y\in Y$ such that $\lim_{t\to 0} \lambda(t)(y)$ exists (or equivalently the 
direct sum of $\mathbb{G}_m$-subrepresentations whose weights pair non-negatively with 
$\lambda$). Denote by $Y^{\lambda} \subset Y$ the $\lambda$-fixed subspace. 
Further denote by $G^{\lambda}\subset G^{\lambda\geq 0} \subset G$ the 
associated Levi and parabolic subgroups. 
Denote by $G^{\vee}=\Hom(G, \mathbb{G}_m)$ the character group. 

For the quotient stack $\Y=Y/G$ and a cocharacter $\lambda \colon \mathbb{G}_m \to G$, we write \[\Y^{\lambda \geq 0}:=Y^{\lambda \geq 0}/G^{\lambda \geq 0}\text{ and }\mathcal{Y}^{\lambda}:=Y^{\lambda}/G^{\lambda}.\] 
Let $\mathcal{V} \to \mathcal{Y}$ be a vector bundle with a section $s$, 
and $\mathfrak{M}=s^{-1}(0)$ the derived zero locus of $s$. 
It induces a section $s^{\lambda \geq 0}$ on $\mathcal{V}^{\lambda \geq 0} \to \Y^{\lambda \geq 0}$, and we denote by $\mathfrak{M}^{\lambda \geq 0}$ the derived zero locus of 
$s^{\lambda \geq 0}$.

\subsection{Notations for reductive groups}\label{subsec:notation:reductive}
For a reductive group $G$, fix a Borel subgroup $B\subset G$
and a maximal torus $T\subset B$.  
We denote by $\fg$ the Lie algebra of $G$ and $\mathfrak{h} \subset \fg$ the Cartan 
subalgebra. We denote by $W$ the Weyl group. 
We denote by $G^{\vee}:=\Hom(G, \mathbb{G}_m)$ the character group. 

We also use the notation $M(T)=T^{\vee}$ for the character 
lattice of $T$ and $N(T)$ the cocharacter lattice. 
A choice of $B$ determines dominant chambers 
$M(T)_{+}\subset M(T)$ and $N(T)_{+}\subset N(T)$.
We set $M(T)_{\mathbb{R}}:=M(T)\otimes_{\mathbb{Z}}\mathbb{R}$ etc. 
There is a perfect pairing 
\begin{align*}
    \langle -, - \rangle \colon M(T)_{\mathbb{R}}\times N(T)_{\mathbb{R}} \to \mathbb{R}. 
\end{align*}
For a $T$-representation $Y$, we often write $Y \in M(T)$ for the sum of its 
$T$-weights (equivalently, it is the $T$-character $\det Y$).

\subsection{The Koszul equivalence}\label{subsec:Koszuleq}
Consider a quotient stack $\Y=Y/G$, where $Y$ is a smooth quasi-projective scheme with 
an action of a reductive algebraic group $G$. 
Let $\mathcal{V} \to \mathcal{Y}$ be a vector 
bundle with a section $s$, and 
consider its derived zero locus
\begin{align}\label{funct:fV}
\mathfrak{M}=s^{-1}(0) \hookrightarrow \mathcal{Y}.
\end{align}
The section $s$ also induces the regular function \begin{equation}\notag
f\colon \mathcal{V}^{\vee} \to \mathbb{A}^1, \ 
(y, v)\mapsto \langle s(y), v\rangle
\end{equation}
for $y\in \mathcal{Y}$ and $v\in \mathcal{V}^{\vee}|_y$.
Consider the category of graded matrix factorizations \begin{align*}\text{MF}^{\text{gr}}\left(\mathscr{V}^{\vee}, f\right)
\end{align*}
with respect to the weight two $\mathbb{G}_m$-action on the fibers of $\mathcal{V}^{\vee} \to \mathcal{Y}$. 
It has objects pairs $(P, d_P)$, where $P$ is a $\mathbb{G}_m$-equivariant 
coherent sheaf on $\mathcal{V}^{\vee}$ and $d_P \colon P \to P(1)$ is a $\mathbb{G}_m$-equivariant morphism of weight one, satisfying $d_P \circ d_P=f$.
We refer to~\cite[Section~2.6.2]{PT0} for more details on the definition of graded matrix factorizations. 
The \textit{Koszul equivalence}, also called dimensional reduction in the literature, says the following:

\begin{thm}\emph{(\cite{I, Hirano, T})}\label{thm:Kduality}
There is an equivalence 
\begin{align}\notag
	\Psi \colon \Coh(\mathfrak{M})\stackrel{\sim}{\to}
	\mathrm{MF}^{\mathrm{gr}}(\mathcal{V}^{\vee}, f) 
	\end{align}
	given by $\Psi(-)=\mathcal{K}\otimes_{\mathcal{O}_{\mathfrak{M}}}(-)$. Here $\mathcal{K}$ is the following Koszul factorization, see~\cite[Theorem~2.3.3]{T}
    \begin{align}\notag
        \mathcal{K}=(\mathcal{O}_{\V^{\vee}}\otimes_{\mathcal{O}_{\Y}}\mathcal{O}_{\mathfrak{M}}, d_{\mathcal{K}}).
    \end{align}
	\end{thm}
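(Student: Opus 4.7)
The plan is to follow the now-standard Koszul/dimensional reduction argument (as in Isik, Hirano, Toda), organized in four steps: (i) construct $\Psi$ globally using a Koszul-type factorization; (ii) reduce to a local affine model where $\mathcal{V}$ is trivial; (iii) exhibit an explicit quasi-inverse in the local model; (iv) descend.

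First I would set up the functor. Write $p\colon \mathcal{V}^{\vee}\to \mathcal{Y}$ with tautological section $\tau\in \Gamma(\mathcal{V}^{\vee}, p^{*}\mathcal{V})$, and observe $\langle p^{*}s,\tau\rangle=f$. On $\mathcal{V}^{\vee}$ we can build the graded Koszul factorization
\[
\mathcal{K}=\bigl(p^{*}\mathrm{Sym}(\mathcal{V}^{\vee}[1])\,,\,d_{\mathcal{K}}=\iota_{\tau}+\iota_{p^{*}s}\bigr),
\]
where $\iota_{\tau}$ (resp.\ $\iota_{p^{*}s}$) is contraction by $\tau$ (resp.\ $p^{*}s$), and the weight-two $\mathbb{G}_m$-action on fibers makes $d_{\mathcal{K}}$ of weight one; a direct computation gives $d_{\mathcal{K}}^{2}=f$. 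Since $\mathcal{O}_{\mathfrak{M}}$ is quasi-isomorphic to the Koszul cdga $\mathrm{Sym}_{\mathcal{O}_{\mathcal{Y}}}(\mathcal{V}^{\vee}[1];s)$ with differential $\iota_{s}$, the object $\mathcal{K}$ is naturally an $\mathcal{O}_{\mathfrak{M}}$-module (via the inclusion $\mathcal{O}_{\mathcal{Y}}\hookrightarrow p_{*}\mathcal{O}_{\mathcal{V}^{\vee}}$ and matching the Koszul differentials), so $\Psi(-)=\mathcal{K}\otimes_{\mathcal{O}_{\mathfrak{M}}}(-)$ is well-defined and continuous.

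Second, I reduce to the local case. Using smooth descent for $\Coh(-)$ and for graded matrix factorizations (both satisfy smooth descent because $\mathrm{MF}^{\mathrm{gr}}$ is a sheaf of dg-categories in the smooth topology, and Koszul factorizations are compatible with smooth base change), it suffices to treat $\mathcal{Y}=\Spec R$ trivial $G$-action and $\mathcal{V}=\mathcal{Y}\times V$ trivial vector bundle with section determined by a tuple $(s_{1},\dots,s_{n})\in R^{n}$. Then $\mathcal{V}^{\vee}=\Spec R[x_{1},\dots,x_{n}]$ with $\deg x_{i}=2$, and $f=\sum s_{i}x_{i}$. In this affine picture, $\mathcal{K}$ is the classical Koszul matrix factorization of $f$, and $\Psi$ tensors the Koszul cdga model of $\mathcal{O}_{\mathfrak{M}}$ with it.

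Third, construct a quasi-inverse. Define
\[
\Phi\colon \mathrm{MF}^{\mathrm{gr}}(\mathcal{V}^{\vee},f)\longrightarrow \Coh(\mathfrak{M})
\]
by $\Phi(P,d_{P})=\bigl(i^{*}P\bigr)^{\mathbb{G}_m\text{-wt } 0}$, where $i\colon \mathcal{Y}\hookrightarrow \mathcal{V}^{\vee}$ is the zero section; more intrinsically, $\Phi=i^{*}(-)^{\mathbb{G}_m\text{-inv}}$, using that $i^{*}f=0$ so $(i^{*}P,i^{*}d_{P})$ becomes a weight-graded 2-periodic complex whose invariant part is a genuine chain complex over $\mathcal{O}_{\mathcal{Y}}$, inheriting the structure of an $\mathcal{O}_{\mathfrak{M}}$-module through the Koszul model. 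One checks $\Phi\circ\Psi\simeq\mathrm{id}$ by verifying it on the generator $\mathcal{O}_{\mathfrak{M}}$ (where it amounts to the standard fact $\Phi(\mathcal{K})=\mathcal{O}_{\mathfrak{M}}$, i.e.\ the Koszul complex resolves the zero section), and $\Psi\circ\Phi\simeq\mathrm{id}$ by checking on a set of compact generators of $\mathrm{MF}^{\mathrm{gr}}(\mathcal{V}^{\vee},f)$ coming from the $\mathbb{G}_m$-equivariant structure. Fully faithfulness of $\Psi$ is then verified by a $\mathrm{Hom}$-computation: for $\mathcal{E},\mathcal{F}\in \Coh(\mathfrak{M})$,
\[
\mathrm{Hom}_{\mathrm{MF}}(\Psi\mathcal{E},\Psi\mathcal{F})\simeq \mathrm{Hom}_{\mathcal{V}^{\vee}}(\mathcal{K}\otimes\mathcal{E},\mathcal{K}\otimes\mathcal{F})^{\mathbb{G}_m\text{-inv}}\simeq \mathrm{Hom}_{\mathfrak{M}}(\mathcal{E},\mathcal{F}),
\]
where the last isomorphism uses self-duality of the Koszul complex and that the $\mathbb{G}_m$-invariants collapse the $2$-periodization to the usual $\mathrm{Hom}$.

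The main obstacle I expect is the bookkeeping of the $\mathbb{G}_m$-grading together with the derived structure on $\mathfrak{M}$: one must carefully match the internal Koszul grading of $\mathrm{Sym}(\mathcal{V}^{\vee}[1])$ with the external $\mathbb{G}_m$-weight on $\mathcal{V}^{\vee}$ so that the differentials interact correctly and the $2$-periodic folding in $\mathrm{MF}^{\mathrm{gr}}$ is compatible with the cohomological grading on $\Coh(\mathfrak{M})$. A secondary subtlety is the descent step: although both sides are Zariski (and smooth) sheaves, one should ensure that $\mathcal{K}$ glues as a factorization when passing from the local trivialization of $\mathcal{V}$ to the global stack $\mathcal{Y}$, which is straightforward but requires verifying cocycle conditions for the Koszul differential under transition maps of $\mathcal{V}$.
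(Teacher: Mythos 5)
The paper does not prove this theorem: it is quoted from Isik, Hirano, and Toda's book (the reference given for the factorization $\mathcal{K}$ is \cite[Theorem~2.3.3]{T}), so the only meaningful comparison is with the standard dimensional-reduction proofs in those references, and your outline (define $\Psi$ by tensoring with a Koszul factorization, reduce to a local model, exhibit a quasi-inverse, descend) is indeed that standard route.

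However, two steps as you wrote them are wrong, not just under-detailed. First, the tautological section on the total space of $\mathcal{V}^{\vee}$ lies in $p^{*}\mathcal{V}^{\vee}$, not $p^{*}\mathcal{V}$, and your differential $d_{\mathcal{K}}=\iota_{\tau}+\iota_{p^{*}s}$ is a sum of two contractions: these anticommute, so $d_{\mathcal{K}}^{2}=0$, not $f$. To get $d_{\mathcal{K}}^{2}=f=\langle p^{*}s,\tau\rangle$ one operator must be exterior multiplication by $\tau$ and the other contraction by $p^{*}s$ (Cartan's identity); this is exactly how the curvature appears in the cited construction, and it also forces the weight bookkeeping (generators of $\mathcal{V}^{\vee}[1]$ in weight $1$, fiber coordinates in weight $2$) that you flag as the "main obstacle" but never pin down. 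Second, the proposed quasi-inverse $\Phi(P)=(i^{*}P)^{\mathbb{G}_m\text{-wt }0}$ is not correct, and your own consistency check fails for it: with the weights forced by $d_{\mathcal{K}}$ having weight one, $\wedge^{k}\mathcal{V}^{\vee}\subset i^{*}\mathcal{K}$ sits in weight $-k$, so the weight-zero part of $i^{*}\mathcal{K}$ is just $\mathcal{O}_{\mathcal{Y}}$, not the Koszul complex representing $\mathcal{O}_{\mathfrak{M}}$; already for $\mathcal{Y}=\mathrm{pt}$, $\mathcal{V}=k$, $s=0$ your $\Phi\Psi$ sends the two-dimensional module $k[\epsilon]$ to a one-dimensional object. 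The inverse must retain all weights and reassemble weight and cohomological degree (the shearing), e.g.\ via the adjoint $p_{*}\mathcal{H}om(\mathcal{K},-)$-type functor used in the references, or one avoids an explicit inverse by proving full faithfulness and essential surjectivity directly. Relatedly, in the stacky setting of the theorem ($\mathcal{Y}=Y/G$) the single object $\mathcal{O}_{\mathfrak{M}}$ does not generate $\Coh(\mathfrak{M})$; one must work with the generators $\mathcal{E}\otimes\mathcal{O}_{\mathfrak{M}}$ for vector bundles $\mathcal{E}$ on $\mathcal{Y}$ (equivalently, use $\mathrm{Perf}(\mathcal{Y})$-linearity of $\Psi$), and correspondingly the reduction step is usually done $G$-equivariantly rather than by descent to a trivialization.
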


Let $\Coh_{\mathbb{G}_m}(\V^{\vee})$ be the dg-category of $\mathbb{G}_m$-equivariant coherent 
sheaves on $\V^{\vee}$. 
    For a full subcategory $\mathbb{M} \subset \Coh(\V^{\vee})$, we denote by 
    $\mathbb{M}_{\mathbb{G}_m}$ the subcategory of $\Coh_{\mathbb{G}_m}(\V^{\vee})$ 
    consisting of objects which lie in $\mathbb{M}$ after forgetting the $\mathbb{G}_m$-action. 
    We denote by 
    \begin{align*}
        \mathrm{MF}^{\mathrm{gr}}(\mathbb{M}, f) \subset \mathrm{MF}^{\mathrm{gr}}(\V^{\vee}, f)
    \end{align*}
    the subcategory of matrix factorizations with factors in $\mathbb{M}_{\mathbb{G}_m}$, 
    see~\cite[Subsection~2.6.2]{PT0}. 

When $s=0$, we have $f=0$ and we use the notation 
\begin{align*}
    \Coh^{\shear}(\mathcal{V}^{\vee}):=
    \mathrm{MF}^{\mathrm{gr}}(\mathcal{V}^{\vee}, 0). 
\end{align*}
The notations $\mathrm{Perf}^{\shear}(\mathcal{V}^{\vee})$, 
$\mathrm{QCoh}^{\shear}(\mathcal{V}^{\vee})$ are similarly used. 
For a full subcategory $\mathbb{M}\subset \Coh(\mathcal{V}^{\vee})$, 
we also denote by $\Coh^{\shear}(\mathbb{M}):=\mathrm{MF}^{\mathrm{gr}}(\mathbb{M}, 0)$.

If $\mathcal{V}^{\vee} \to M$ is a good moduli space, we have the 
induced $\mathbb{G}_m$-action on $M$ 
by the universal property of good moduli spaces, see~\cite[Main properties (4)]{MR3237451}, and we also 
write $\Coh^{\shear}(M):=\mathrm{MF}^{\mathrm{gr}}(M, 0)$ and so on. 

\subsection{Limits and colimits of dg-categories}
    Here we review the relation of limits and colimits of dg-categories 
    following~\cite[Section~1.7]{DGbun}. 
Let 
\begin{align*}
    i \mapsto \mathcal{C}_i, \ (i\to j) \mapsto (\phi_{ij} \colon \mathcal{C}_i \to \mathcal{C}_j)
\end{align*}
be a diagram of dg-categories indexed by a category $I$, 
where each $\mathcal{C}_i=\Ind(\mathcal{C}_i^{\mathrm{cp}})$ is compactly generated 
by the subcategory of compact objects $\mathcal{C}_i^{\mathrm{cp}} \subset \mathcal{C}_i$, 
and $\phi_{ij}$ is a continuous functor. 
We have the limit of dg-categories $\mathcal{C}_i$ together with
the evaluation functor 
\begin{align}\label{C:lim}
    \mathcal{C}^{\mathrm{lim}}=\lim_{i\in I} \mathcal{C}_i \stackrel{\mathrm{ev}_i}{\to} \mathcal{C}_i. 
\end{align}
Informally, an object of $\mathcal{C}^{\mathrm{lim}}$ consists of 
\begin{align*}
    \{c_i, \alpha_{ij}\}, \ c_i \in \mathcal{C}_i, \ \alpha_{\phi_{ij}} \colon 
    \phi_{ij}(c_i) \stackrel{\sim}{\to} c_j
\end{align*}
with additional higher homotopical data, and $\mathrm{ev}_i$ sends 
the above data to $c_i$. 

Suppose that each $\phi_{ij}$ admits a continuous left adjoint 
\begin{align}\label{funct:phij}
\psi_{ji} \colon \mathcal{C}_j \to \mathcal{C}_i.     
\end{align}
Note that $\psi_{ji}$ must satisfy $\psi_{ji}(\mathcal{C}_j^{\mathrm{cp}}) \subset 
\mathcal{C}_i^{\mathrm{cp}}$
since it admits a continuous right adjoint $\phi_{ij}$. In this 
situation, by~\cite[Proposition~1.7.5]{DGbun}
there is a colimit 
\begin{align*}
    \mathcal{C}^{\mathrm{colim}}:=\colim_{i\in I^{\mathrm{op}}}\mathcal{C}_i
\end{align*}
with respect to functors (\ref{funct:phij}), and 
there are functors
\begin{align}\notag
\psi_i \colon \mathcal{C}_i \to \mathcal{C}^{\mathrm{colim}}
\end{align}
which satisfy
$\psi_i \circ \psi_{ji} \cong \psi_j$. 
The following proposition is given in~\cite{DGbun}: 
\begin{prop}\emph{(\cite{DGbun})}\label{prop:dgcat}
 (i) There is a canonical equivalence $\mathcal{C}^{\mathrm{lim}} \simeq \mathcal{C}^{\mathrm{colim}}$. 

 (ii) Under the equivalence in (i), the functor $\psi_i$ is a left adjoint of $\mathrm{ev}_i$. 

 (iii) We have 
 $\mathcal{C}^{\mathrm{colim}}=\Ind(\colim \mathcal{C}_i^{\mathrm{cp}})$. In particular 
 an object $\mathcal{E} \in \mathcal{C}^{\mathrm{colim}}$ is compact if and only 
 if it is isomorphic to $\psi_i(\mathcal{E}_i)$ for some $i\in I$ and 
 $\mathcal{E}_i \in \mathcal{C}_i^{\mathrm{cp}}$. 

 (iv) If each $\psi_{ji}$ is fully-faithful, then $\psi_i$ is also fully-faithful. 
 In this case, for $i \to j$ in $I$ 
 the natural morphism $\psi_{ji} \to \mathrm{ev}_i \psi_j$ is an isomorphism. \end{prop}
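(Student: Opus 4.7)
The plan is to reduce everything to the general $\infty$-categorical principle that, for a diagram in the category $\mathrm{Pr}^L$ of presentable dg-categories with continuous functors, the limit along a family of continuous functors is canonically equivalent to the colimit along their left adjoints, whenever the latter exist. Concretely, the assignment $F \mapsto F^L$ induces an anti-equivalence between $\mathrm{Pr}^R$ and $\mathrm{Pr}^L$, and under this duality limits and colimits interchange. The strategy is to invoke this to establish the equivalence in (i), and then to extract the remaining statements from the explicit form of the resulting adjunctions and from the behavior of $\Ind$ under colimits.

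For (i) and (ii), beyond citing the general principle, I would write down the equivalence explicitly. In one direction, the universal property of $\mathcal{C}^{\mathrm{colim}}$ applied to the family of continuous right adjoints of the desired evaluation functors furnishes a functor $\mathcal{C}^{\mathrm{colim}} \to \mathcal{C}^{\mathrm{lim}}$; in the other direction, one uses that the data $\{c_i, \alpha_{ij}\}$ of an object of $\mathcal{C}^{\mathrm{lim}}$ can be transposed, via the $(\psi_{ji}, \phi_{ij})$-adjunctions, into a compatible family of morphisms with source the colimit. Verifying mutual inverseness amounts to checking that the units and counits of the various adjunctions assemble coherently, which is precisely the content of the abstract result. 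Statement (ii) then falls out by construction: the structure map $\psi_i$ of the colimit corresponds under the equivalence to a left adjoint of $\mathrm{ev}_i$.

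For (iii), the main input is that the functor $\Ind \colon \mathrm{dgCat} \to \mathrm{Pr}^L$ preserves colimits when restricted to compactly generated dg-categories and functors that preserve compact objects; this applies here since each $\psi_{ji}$, being a left adjoint of a continuous functor $\phi_{ij}$, preserves compacts. Consequently $\mathcal{C}^{\mathrm{colim}} \simeq \Ind(\colim_{I^{\mathrm{op}}} \mathcal{C}_i^{\mathrm{cp}})$, the inner colimit taken in small dg-categories. The description of compact objects then follows from the standard fact that compact objects in such a small colimit are, up to retracts, images of objects from the individual $\mathcal{C}_i^{\mathrm{cp}}$ under the structure maps; a retract argument inside $\mathcal{C}^{\mathrm{colim}}$ promotes ``retract of $\psi_i(\mathcal{E}_i)$'' to ``$\psi_i(\mathcal{E}_i')$'' for a possibly different compact $\mathcal{E}_i'$.

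For (iv), if each $\psi_{ji}$ is fully faithful, then the Hom-spaces in $\colim_{I^{\mathrm{op}}} \mathcal{C}_i^{\mathrm{cp}}$, being built as colimits of Hom-spaces in the $\mathcal{C}_i^{\mathrm{cp}}$ along $\psi_{ji}$, inherit the fully faithful property for the structure maps $\mathcal{C}_i^{\mathrm{cp}} \to \colim \mathcal{C}_j^{\mathrm{cp}}$; applying $\Ind$, which preserves fully faithful functors, yields full faithfulness of $\psi_i$. The last claim then follows from a triangle-identity argument: from $\psi_j \simeq \psi_i \circ \psi_{ji}$ together with the counit isomorphism $\mathrm{ev}_i \circ \psi_i \simeq \mathrm{id}$ (equivalent to full faithfulness of $\psi_i$), one obtains $\mathrm{ev}_i \circ \psi_j \simeq \mathrm{ev}_i \circ \psi_i \circ \psi_{ji} \simeq \psi_{ji}$. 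The main technical obstacle is the coherent assembly of adjunction data in (i); I would handle this not by hand but by appealing to the presentable-category machinery, which packages precisely this coherence. Everything after (i) is essentially a formal consequence of that equivalence together with the compact-preserving nature of the left adjoints $\psi_{ji}$.
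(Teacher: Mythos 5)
The paper's own proof of this proposition is almost entirely a citation: parts (i) and (ii) are quoted from \cite[Proposition~1.7.5]{DGbun}, part (iii) from \cite[Corollary~1.9.4]{DGbun}, and the first half of (iv) from \cite[Remark~1.7.6]{DGbun}; the only argument actually written out is the second half of (iv), where the natural map $\psi_{ji}\to \mathrm{ev}_i\psi_j$ is shown to be an isomorphism using $\psi_i\circ\psi_{ji}\cong\psi_j$ and full faithfulness of $\psi_i$. Your treatment of that last point is the same argument as the paper's (your ``counit isomorphism $\mathrm{ev}_i\circ\psi_i\simeq\mathrm{id}$'' is really the unit of the adjunction $\psi_i\dashv\mathrm{ev}_i$ being invertible, but the content and the resulting chain $\mathrm{ev}_i\psi_j\simeq\mathrm{ev}_i\psi_i\psi_{ji}\simeq\psi_{ji}$ are identical). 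For the remaining parts you re-derive the cited inputs from the standard machinery (the $\mathrm{Pr}^L$/$\mathrm{Pr}^R$ duality exchanging limits along right adjoints with colimits along left adjoints, and $\Ind$ commuting with colimits along compact-preserving functors), which is exactly the route behind the Drinfeld--Gaitsgory statements, so your approach is the same one, unpacked rather than cited.

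The one place where your sketch under-delivers is the second sentence of (iii). The general machinery gives directly that a compact object of $\mathcal{C}^{\mathrm{colim}}=\Ind(\colim\mathcal{C}_i^{\mathrm{cp}})$ is a \emph{retract} of some $\psi_i(\mathcal{E}_i)$ (compact objects of $\Ind$ of a small stable category form its idempotent completion); the assertion that it is actually \emph{isomorphic} to some $\psi_j(\mathcal{E}_j')$ is the real content of \cite[Corollary~1.9.4]{DGbun}, and ``a retract argument inside $\mathcal{C}^{\mathrm{colim}}$'' is not yet an argument, since a coherent idempotent at the colimit level does not formally live at a finite stage. The missing mechanism is: filteredness of the index category gives $\Hom_{\mathcal{C}^{\mathrm{colim}}}(\psi_i x,\psi_i y)\simeq\colim_{i\to j}\Hom_{\mathcal{C}_j}(\psi_{ji}x,\psi_{ji}y)$, so the idempotent on $\psi_i(\mathcal{E}_i)$ descends to a homotopy idempotent on $\psi_{ji}(\mathcal{E}_i)$ at some stage $j$; one then splits it inside the cocomplete category $\mathcal{C}_j$ (telescope argument), notes that the splitting is a retract of a compact object and hence lies in $\mathcal{C}_j^{\mathrm{cp}}$, and pushes forward by $\psi_j$, using uniqueness of idempotent splittings to identify the result with the original compact object. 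The same filteredness is implicitly used in the Hom-space formula you invoke for the first half of (iv); the proposition as stated does not impose it, but the cited statements and all applications in the paper (where $I$ is a poset of quasi-compact open substacks) are in that setting. With these points made explicit, your proof is a correct expansion of the results the paper outsources to \cite{DGbun}.
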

\begin{proof}
    (i), (ii) are given in~\cite[Proposition~1.7.5]{DGbun}. 
    (iii) is given in~\cite[Corollary~1.9.4]{DGbun}. The first part of (iv) is given in~\cite[Remark~1.7.6]{DGbun}. For the second part, 
    for $x \in \mathcal{C}_j$ there is a natural morphism 
    $\psi_{ji}(x) \to \mathrm{ev}_i\psi_j(x)$ by the adjunction 
    and $\psi_i \circ \psi_{ji} \cong \psi_j$. 
    For any $y \in \mathcal{C}_j$, the induced map 
    \begin{align*}
        \Hom(y, \psi_{ji}(x)) \to \Hom(y, \mathrm{ev}_i \psi_j(x))=\Hom(\psi_i(y), \psi_i \circ \psi_{ji}(x))
    \end{align*}
    is an isomorphism as $\psi_i$ is fully-faithful. Therefore 
    $\psi_{ji} \stackrel{\cong}{\to} \mathrm{ev}_i \psi_j$.     
\end{proof}

\section{Limit categories}
In this section, we introduce limit categories for smooth stacks.
For a smooth stack $\X$, limit categories are subcategories of $\Coh(\Omega_\X)$ defined in terms of
weight conditions with respect to all maps $\bgm \to \Omega_\X$. 
Limit categories include many of the `quasi-BPS categories' studied in our previous work~\cite{PTquiver, PThiggs}, and they are closely related to the `non-commutative resolutions' of~\cite{SvdB},
the `magic windows' of~\cite{hls}, and the `intrinsic windows' of~\cite{T}. 

We discussed the comparison between limit categories and these previous definitions in the introduction. Let us emphasize that the use of weights conditions with respect to maps $\bgm\to\X$ was previously used by Halpern-Leistner~\cite{HalpTheta, HalpK32}, and categories defined using weights conditions for \textit{all} such maps were used previously by \v{S}penko--Van den Bergh~\cite{SvdB}. 
Limit categories have properties analogous to categories of coherent D-modules, for example semiorthogonal decompositions where each unstable stratum appears once, as opposed to infinitely many, see Theorem~\ref{conj:intro}; for more such similarities, see Subsection~\ref{subsec:limit}.

We also note that an advantage of limit categories is that they are defined 
for stacks $\X$ which may not have good moduli spaces, for example they may not be quasi-compact.

\subsection{Motivation toward limit categories}
Let $\X$ be a smooth stack, let $\delta \in \mathrm{Pic}(\X)_{\mathbb{R}}$, and let $\Omega_{\X}$ be
its cotangent stack. We define a subcategory
\[
  \LL(\Omega_{\X})_\delta \subset \Coh(\Omega_{\X}),
\]
which is meant to play the role of a classical limit of the
category of coherent D-modules on $\X$.

In particular, for a suitable class of open immersions $j\colon U \hookrightarrow \X$,
we require an adjoint pair
\[
  j_! \colon \LL(\Omega_U)_\delta \;\rightleftarrows\; \LL(\Omega_{\X})_\delta \colon j^* .
\]
In the above, $j_!$ is a left adjoint of $j^*$. 
Note that an adjoint pair as above exists for $\text{D-mod}(\X)$ for 
co-truncative open substacks in~\cite{DGbun}, but it does not exists for $\QCoh(\X)$ or $\Coh(\X)$, unless
$U$ is open and closed. 

\medskip

We explain the idea of the construction in the following simpler situation, of \textit{a magic category}, see Definition~\ref{defn:QBY}. 
Magic categories have a similar definition to limit categories, but they are defined for smooth stacks, as opposed to cotangent of smooth stacks.

Let $Y$ be a finite-dimensional $\mathbb{G}_m$-representation, and set $\Y=Y/\mathbb{G}_m$. 
Consider the following diagram for the fixed and attracting stacks 
\begin{align*}
    \mathcal{Z}=Y^0/\mathbb{G}_m \leftarrow \mathcal{S}=Y^{\geq 0}/\mathbb{G}_m \hookrightarrow \mathcal{Y}.
\end{align*}
As we will review in Theorem~\ref{thm:window}, the window theorem~\cite{HL} implies that there is a 
semiorthogonal decomposition 
\begin{align*}
    \Coh(\Y)=\langle \cdots, \Coh(\mathcal{Z})_{0}, \mathcal{W}, \Coh(\mathcal{Z})_1, \cdots\rangle.
\end{align*}
Here, the subcategory $\mathcal{W}$
is generated by $\mathcal{O}_{\Y}(a)$ for $a\in (0, m]$, where 
$m$ is the $\mathbb{G}_m$-weight of $\det (Y^{<0})^{\vee}$. The subcategory $\mathcal{W}$ is an example of a \textit{window subcategory}, and there is an equivalence~\cite{HL}: 
\begin{align*}
    j^* \colon \mathcal{W} \stackrel{\sim}{\to} \Coh(\mathcal{Y}^{\circ}), \ 
    \mathcal{Y}^{\circ}:=\mathcal{Y}\setminus \mathcal{S}. 
\end{align*}

The embedding $\mathcal{W} \subset \Coh(\Y)$ is neither the left nor the right adjoint of 
the pullback $j^* \colon \Coh(\Y) \to \Coh(\Y^{\circ})$. However, if we define the following subcategory slightly larger than $\mathcal{W}$:
\begin{align}\label{sod:LY}
    \MM(\mathcal{Y}) \subset \Coh(\mathcal{Y})
\end{align}
generated by $\mathcal{O}_{\mathcal{Y}}(a)$
with $a\in [0, m]$ (i.e. including weights of the left boundary), then 
there is a semiorthogonal decomposition 
\begin{align*}
    \MM(\mathcal{Y})=\langle \Coh(\mathcal{Z})_0, \mathcal{W} \rangle. 
\end{align*}
Then the embedding $\mathcal{W} \subset \MM(\mathcal{Y})$ gives a fully-faithful left adjoint of the pullback
$j^* \colon \MM(\mathcal{Y}) \to \Coh(\Y^{\circ})$: 
\begin{align*}
    j_!\colon \Coh(\mathcal{Y}^{\circ})\hookrightarrow \MM(\mathcal{Y}).
\end{align*}

We are going to construct an analogue of the subcategory (\ref{sod:LY}) for cotangent 
stacks, or, more generally, for quasi-smooth derived stacks with self-dual cotangent complexes. 
Since we do not assume that they are global quotient stacks, we construct such subcategories using weight conditions for all maps from $\bgm$.

\subsection{Definition of limit categories}\label{subsec:deflim}
Let $\mathfrak{M}$ be a QCA (quasi-compact and with affine geometric stabilizers) and quasi-smooth 
derived stack over $k$, 
satisfying 
\begin{align}\label{sym:L}
    \mathbb{L}_{\mathfrak{M}} \simeq \mathbb{T}_{\mathfrak{M}},
\end{align}
e.g. a zero-shifted symplectic stack~\cite{PTVV}. 
For each field extension $k'/k$ and $x \in \mathfrak{M}(k')$, 
we set 
\begin{align*}
\mathfrak{o}_x:=
\mathcal{H}^{1}(\mathbb{T}_{\mathfrak{M}}|_{x}), \ 
T_x:=
\mathcal{H}^{0}(\mathbb{T}_{\mathfrak{M}}|_{x}), \ 
\mathfrak{g}_x:=
\mathcal{H}^{-1}(\mathbb{T}_{\mathfrak{M}}|_{x}).
\end{align*}
Note that $\mathfrak{g}_x$ is the Lie algebra of 
$G_x:=\mathrm{Aut}(x)$. By the self-duality (\ref{sym:L}), 
we have 
\begin{align*}
T_x \cong T_x^{\vee}, \ \mathfrak{o}_x \cong \mathfrak{g}_x^{\vee}
    \end{align*}
    as $\mathrm{Aut}(x)$-representations and 
    \begin{align*}
\mathcal{H}^{-1}(\mathbb{L}_{\mathfrak{M}}|_{x})=\mathfrak{g}_x, \ 
\mathcal{H}^0(\mathbb{L}_{\mathfrak{M}}|_{x})=T_x, \ 
\mathcal{H}^1(\mathbb{L}_{\mathfrak{M}}|_{x})=\mathfrak{o}_x. 
\end{align*}

Given a map 
\begin{align}\label{map:nu}\nu \colon (\bgm)_{k'} \to \mathfrak{M}, \ 
\nu(0)=x\in \mathfrak{M}(k'),
\end{align}
we have the corresponding cocharacter 
$(\mathbb{G}_m)_{k'} \to G_x$. 
We let $(\mathbb{G}_m)_{k'}$ act on $\mathfrak{g}_x, T_x$
through the above cocharacter, and regard them as elements of $K((\bgm)_{k'})$. 
The map (\ref{map:nu}) may not be quasi-smooth, but we can modifiy 
it to a quasi-smooth map in the following way:
\begin{consdef}\emph{(\cite[Construction~3.2.2]{HalpK32})}\label{def:construction}
A map $\nu$ extends to a map 
\begin{align}\label{nu:reg}
\nu^{\mathrm{reg}} \colon \mathfrak{g}_x^{\vee}[-1]/(\mathbb{G}_m)_{k'} \to \mathfrak{M}
\end{align}
called \textit{regularization}. It satisfies that the map 
\begin{align*}
    \mathbb{L}_{\mathfrak{M}}|_{x} \to 
    \mathbb{L}_{\mathfrak{g}_x^{\vee}[-1]}|_{0}=\mathfrak{g}_x[1]
\end{align*}
induces an isomorphism on $\mathcal{H}^{-1}(-)$. 

The regularization map is quasi-smooth, unique up to 2-isomorphism, 
and determined by a choice of splitting of 
$\mathcal{H}^{-1}(\mathbb{L}_{\mathfrak{M}}|_{x})[1] 
\to \mathbb{L}_{\mathfrak{M}}|_{x}$.
\end{consdef}

We also denote by $\iota$ the natural 
morphism 
\begin{align*}
    \iota \colon \mathfrak{g}_x^{\vee}[-1]/(\mathbb{G}_m)_{k'} \to 
    (\bgm)_{k'}.
\end{align*}
When it is necessary to indicate $x\in \X(k')$, we often write 
$(\nu, \nu^{\mathrm{reg}}, \iota)$ with $(\nu_x, \nu_x^{\mathrm{reg}}, \iota_x)$.
We refer to~\cite[Construction~3.2.2]{HalpK32} for a general construction of a regularization (\ref{nu:reg}). 
Here we only describe its construction in the case of cotangent 
of quotient stacks. 

\begin{example}\label{exam:reg}
Let an algebraic group $G$ act on a smooth variety $X$ 
and consider the moment map 
$\mu \colon \Omega_X \to \mathfrak{g}^{\vee}$, where $\mathfrak{g}$ is the Lie algebra of $G$. 
Let $\mathfrak{M}$ be given by 
\begin{align*}
    \mathfrak{M}=\mu^{-1}(0)/G =\Omega_{X/G}. 
\end{align*}
    Then a map \[\nu \colon (\bgm)_{k'} \to \mathfrak{M}\] is given by 
    $x\in \Omega_X(k')$ with $\mu(x)=0$ together with 
    a cocharacter $\lambda \colon \mathbb{G}_m \to G_x$,
    where $G_x \subset G_{k'}$ is the stabilizer subgroup of $x$. 
    Let $\mathfrak{g}_x \subset \mathfrak{g}$ be the Lie algebra of $G_x$ 
    and choose a splitting $\mathfrak{g}\twoheadrightarrow \mathfrak{g}_x$ 
    as $(\mathbb{G}_m)_{k'}$-representations. 
    Let $\tilde{\nu}=a\circ\nu$, where $a$ is the natural inclusion $a\colon \mathfrak{M}\hookrightarrow \Omega_X/G$.
    There is a commutative diagram 
    \begin{align*}
        \xymatrix{
        (\bgm)_{k'} \ar[r]^-{0} \ar[d]_-{\tilde{\nu}} & \mathfrak{g}_x^{\vee}/(\mathbb{G}_m)_{k'} \ar[d] \\
        \Omega_X/G \ar[r]^-{\mu} & \mathfrak{g}^{\vee}/G.
        }
    \end{align*}
    The induced map on fibers of horizontal arrows give a regularization (\ref{nu:reg}).
\end{example}

Let $\delta \in \mathrm{Pic}(\mathfrak{M})_{\mathbb{R}}$. 
Recall the notations about $\mathbb{G}_m$-weights in Subsection~\ref{subsec:Gmwt}.
\begin{defn}\label{def:Lcat}
We define the subcategory 
\begin{align}\label{def:L}
    \mathrm{L}(\mathfrak{M})_{\delta} \subset \Coh(\mathfrak{M})
\end{align}
to be consisting of objects $\mathcal{E}$ 
such that, for any field extension $k'/k$ and a map
$\nu \colon (\bgm)_{k'} \to \mathfrak{M}$ 
with $\nu(0)=x$ and regularization (\ref{nu:reg}),
we have 
\begin{align}\label{wt:nu}
\wt(\iota_{\ast}\nu^{\mathrm{reg}\ast}(\mathcal{E}))
\subset \left[\frac{1}{2} c_1 (T_x^{<0}),
\frac{1}{2} c_1 (T_x^{>0}) \right]+
\frac{1}{2}c_1(\mathfrak{g}_x)+c_1(\nu^*\delta).
\end{align}
\end{defn}
\begin{remark}\label{rmk:reg}
The definition of $\LL(\mathfrak{M})_{\delta}$ is independent of a choice of a regularization, 
since it is unique up to 2-isomorphisms. 
\end{remark}
\begin{remark}\label{rmk:nureg}
    When $\mathfrak{M}$ admits a good moduli space, we can show that 
    (using~\cite[Corollary~3.53]{PTquiver} and Lemma~\ref{lem:emb} below)
    that the subcategory (\ref{def:L}) equals to the subcategory 
    defined in~\cite[Definition~4.3.5]{HalpK32}.  
    However, note that the definition above (\ref{def:L}) applies to stack without 
    good moduli spaces.
\end{remark}

\begin{remark}\label{rmk:k'}
    By taking an algebraic closure $\overline{k}'$ of $k'$ and 
    composing the map $\nu$ with $(\bgm)_{\overline{k}'}\to (\bgm)_{k'}$, it is enough 
    to check the condition for $k'/k$ such that $k'$ is algebraically closed. Moreover, if 
    $\mathfrak{M}$ is locally of finite presentation over $k$, it is enough to check 
    the condition for $k'=k$. In this case, we omit $k'$ and just write a map $\nu$ as 
    $\bgm\to \mathfrak{M}$.
\end{remark}

Since $\mathfrak{M}$ is QCA, by~\cite{MR3037900} 
the category $\IndCoh(\mathfrak{M})$ is compactly 
generated with compact objects $\Coh(\mathfrak{M})$, i.e. 
\begin{align*}
    \IndCoh(\mathfrak{M})=\Ind(\Coh(\mathfrak{M})). 
\end{align*}
We then define 
\begin{align*}
    \IndL(\mathfrak{M})_{\delta} :=\Ind(\LL(\mathfrak{M})_{\delta}) \subset \IndCoh(\mathfrak{M}). 
\end{align*}

\subsection{Characterization in a local model}
Suppose that there is a
following diagram 
\begin{align}\label{dia:zeros}
\xymatrix{
& \mathcal{V} \ar[d] \\
\mathfrak{M} \simeq s^{-1}(0) \inclusion^-{i} & \mathcal{Y} \ar@/_18pt/[u]_-{s}
}
\end{align}
where $\mathcal{Y}$ is a smooth QCA stack, $\mathcal{V} \to \mathcal{Y}$ is a vector bundle 
with a section $s$, and 
$s^{-1}(0)$ is the derived zero locus of $s$.
In this case, we have another description of $\LL(\mathfrak{M})_{\delta}$.
    \begin{lemma}\label{lem:emb}
An object $\mathcal{E} \in \Coh(\mathfrak{M})$ lies in 
$\LL(\mathfrak{M})_{\delta}$ if and only if, for any map $\nu \colon (\bgm)_{k'} \to \mathfrak{M}$, we have 
\begin{align}\notag
    \wt(\nu^{\ast}i^{\ast}i_{\ast}\mathcal{E})
    \subset \left[\frac{1}{2} c_1 (\nu^{\ast}\mathbb{L}_{\mathcal{V}}^{<0}), 
    \frac{1}{2} c_1 (\nu^{\ast}\mathbb{L}_{\mathcal{V}}^{>0})
    \right]+c_1(\nu^{\ast}\delta). 
\end{align}
    \end{lemma}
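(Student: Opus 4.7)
The strategy is to reduce both the condition from Definition~\ref{def:Lcat} and the condition stated in the lemma to the same pair of inequalities on the maximum and minimum weights of $\nu^{\ast}\mathcal{E}$. The first step is to unwind each pushforward/pullback expression. Since $\nu^{\mathrm{reg}} \simeq \nu \circ \iota$ by construction and $\iota\colon \mathfrak{g}_x^{\vee}[-1]/(\mathbb{G}_m)_{k'} \to (\mathrm{B}\mathbb{G}_m)_{k'}$ is affine with $\iota_{\ast}\mathcal{O} \simeq \mathrm{Sym}(\mathfrak{g}_x[1])$, the projection formula gives
\[
\iota_{\ast}\nu^{\mathrm{reg}\ast}\mathcal{E} \;\simeq\; \nu^{\ast}\mathcal{E} \otimes \mathrm{Sym}(\mathfrak{g}_x[1]).
\]
On the lemma side, $i\colon \mathfrak{M} = s^{-1}(0) \hookrightarrow \mathcal{Y}$ is a quasi-smooth closed immersion with $\mathbb{L}_i \simeq \mathcal{V}^{\vee}|_{\mathfrak{M}}[1]$, so the derived Koszul resolution yields $i^{\ast}i_{\ast}\mathcal{O}_{\mathfrak{M}} \simeq \mathrm{Sym}(\mathcal{V}^{\vee}|_{\mathfrak{M}}[1])$, whence
\[
\nu^{\ast}i^{\ast}i_{\ast}\mathcal{E} \;\simeq\; \nu^{\ast}\mathcal{E} \otimes \mathrm{Sym}(\nu^{\ast}\mathcal{V}^{\vee}[1]).
\]

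For any $\mathbb{G}_m$-representation $V$, the weights of $\mathrm{Sym}(V[1]) \simeq \bigoplus_k \wedge^k V[k]$ are the subset-sums of the weight-multiset of $V$, with extremes $c_1(V^{<0})$ and $c_1(V^{>0})$ both attained. Since tensor product on $(\mathrm{B}\mathbb{G}_m)_{k'}$ gives the Minkowski sum of weight sets, each weight containment is equivalent to a pair of extremal inequalities on the maximum and minimum weights of $\nu^{\ast}\mathcal{E}$, shifted by $c_1(V^{>0})$ and $c_1(V^{<0})$ respectively; here $V = \mathfrak{g}_x$ on the Definition side and $V = \nu^{\ast}\mathcal{V}^{\vee}$ on the lemma side.

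It then remains to show that these two pairs of bounds agree, which reduces to an identity in $K((\mathrm{B}\mathbb{G}_m)_{k'})_{\mathbb{Q}}$. Along the zero section of $\mathcal{V} \to \mathcal{Y}$ there is a canonical splitting $\mathbb{L}_{\mathcal{V}}|_{\mathcal{Y}} \simeq \mathbb{L}_{\mathcal{Y}} \oplus \mathcal{V}^{\vee}$, and the smoothness of $\mathcal{Y}$ gives $[\mathbb{L}_{\mathcal{Y}}|_{x}] = [(T_{\mathcal{Y}}|_x)^{\vee}] - [\mathfrak{g}_x^{\vee}]$ in K-theory. Taking cohomology of the tangent triangle $\mathbb{T}_{\mathfrak{M}} \to \mathbb{T}_{\mathcal{Y}}|_{\mathfrak{M}} \to \mathcal{V}|_{\mathfrak{M}}$ at $x$ yields the four-term exact sequence $0 \to T_x \to T_{\mathcal{Y}}|_x \xrightarrow{ds} \nu^{\ast}\mathcal{V} \to \mathfrak{o}_x \to 0$, so that $[(T_{\mathcal{Y}}|_x)^{\vee,>0}] - [\nu^{\ast}\mathcal{V}^{\vee,>0}] = [T_x^{\vee,>0}] - [\mathfrak{o}_x^{\vee,>0}]$. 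Combined with the identities $c_1(T_x^{\vee,>0}) = c_1(T_x^{>0})$ and $c_1(\mathfrak{o}_x^{\vee,>0}) = c_1(\mathfrak{g}_x^{>0})$ implied by the self-duality $T_x \simeq T_x^{\vee}$ and $\mathfrak{o}_x \simeq \mathfrak{g}_x^{\vee}$ from (\ref{sym:L}), a short algebraic manipulation yields
\[
\tfrac{1}{2}c_1(\nu^{\ast}\mathbb{L}_{\mathcal{V}}^{>0}) - c_1(\nu^{\ast}\mathcal{V}^{\vee,>0}) \;=\; \tfrac{1}{2}c_1(T_x^{>0}) + \tfrac{1}{2}c_1(\mathfrak{g}_x) - c_1(\mathfrak{g}_x^{>0}),
\]
together with the symmetric identity for the lower bound; substituting into the extremal inequalities shows the two conditions on $\wt(\nu^{\ast}\mathcal{E})$ coincide. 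The main difficulty is the careful bookkeeping of $\mathbb{G}_m$-weight decompositions through these dualizations and exact-triangle manipulations; no additional geometric input beyond the local model (\ref{dia:zeros}) and the self-duality (\ref{sym:L}) is required.
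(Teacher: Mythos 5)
Your argument breaks at the first step: the claim that $\nu^{\mathrm{reg}} \simeq \nu \circ \iota$ ``by construction'' is false. By Construction-Definition~\ref{def:construction}, the regularization is characterized by the property that the induced map $\mathbb{L}_{\mathfrak{M}}|_{x} \to \mathbb{L}_{\mathfrak{g}_x^{\vee}[-1]}|_{0}=\mathfrak{g}_x[1]$ is an isomorphism on $\mathcal{H}^{-1}$, whereas the composite $\nu \circ \iota$ factors through $(\bgm)_{k'}$, whose cotangent complex is concentrated in degree $1$, so it induces the zero map on $\mathcal{H}^{-1}$; the two maps agree only when $\mathfrak{g}_x=0$. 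Hence your formula $\iota_{\ast}\nu^{\mathrm{reg}\ast}\mathcal{E}\simeq\nu^{\ast}\mathcal{E}\otimes \mathrm{Sym}(\mathfrak{g}_x[1])$ fails, and with it the reduction of Definition~\ref{def:Lcat} to bounds on $\wt(\nu^{\ast}\mathcal{E})$. Worse, $\nu$ is not quasi-smooth (this is exactly why the regularization is introduced), so $\nu^{\ast}\mathcal{E}$ is in general an unbounded complex and $\wt(\nu^{\ast}\mathcal{E})$ an infinite set: take $\mathfrak{M}=\Omega_{BG}=\mathfrak{g}^{\vee}[-1]/G$, $\mathcal{E}=0_{\ast}\mathcal{O}_{BG}$, and $\nu$ corresponding to a nontrivial cocharacter at the closed point; then $\nu^{\ast}\mathcal{E}$ contains a factor $\mathrm{Sym}(\mathfrak{g}[2])$, whose weights are unbounded in both directions, so your criterion would exclude $\mathcal{E}$ for every $\delta$, contradicting Example~\ref{exam:O} and the computation in Subsection~\ref{subsub:BG} that $0_{\ast}\mathcal{O}_{BG}$ generates $\LL(\Omega_{BG})_{\frac{1}{2}}$. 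The lemma side has the same defect: $\nu^{\ast}i^{\ast}i_{\ast}\mathcal{E}=(i\circ\nu)^{\ast}i_{\ast}\mathcal{E}$ is coherent because $i\circ\nu$ lands in the smooth stack $\mathcal{Y}$, so it cannot equal $\nu^{\ast}\mathcal{E}\otimes\mathrm{Sym}(\nu^{\ast}\mathcal{V}^{\vee}[1])$ in general; moreover the splitting $i^{\ast}i_{\ast}\mathcal{E}\simeq\mathcal{E}\otimes\bigwedge^{\bullet}\mathcal{V}^{\vee}|_{\mathfrak{M}}$ is not justified (one only has a filtration with that associated graded).

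The mechanism that actually works, and that the paper uses, is to base-change $i_{\ast}\mathcal{E}$ along $i\circ\nu$ to get $\nu^{\ast}i^{\ast}i_{\ast}\mathcal{E}\cong\widetilde{\iota}_{\ast}\widetilde{\nu}^{\ast}\mathcal{E}$, where $\widetilde{\nu}\colon \mathcal{V}_x[-1]/(\mathbb{G}_m)_{k'}\to\mathfrak{M}$ is quasi-smooth (so $\widetilde{\nu}^{\ast}\mathcal{E}$ is coherent), and then to compare $\widetilde{\nu}$ with the regularization through the map $\eta$ induced by the inclusion $\mathfrak{g}_x\hookrightarrow\mathcal{V}_x^{\vee}$, which produces $\widetilde{\iota}_{\ast}\widetilde{\nu}^{\ast}\mathcal{E}\cong\iota_{\ast}\nu^{\mathrm{reg}\ast}\mathcal{E}\otimes\bigwedge^{\bullet}W$ for the finite-dimensional representation $W=\mathcal{V}_x^{\vee}/\mathfrak{g}_x$. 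The numerical identity you derive at the end, relating $\frac{1}{2}c_1(\nu^{\ast}\mathbb{L}_{\mathcal{V}}^{>0})$ to $\frac{1}{2}c_1(T_x^{>0})+\frac{1}{2}c_1(\mathfrak{g}_x)$ via the four-term sequence, is essentially the correct bookkeeping and agrees with the paper's, but it must be applied to the pair $\iota_{\ast}\nu^{\mathrm{reg}\ast}\mathcal{E}$ and $\widetilde{\iota}_{\ast}\widetilde{\nu}^{\ast}\mathcal{E}$ (with the shift given by $W$), not to the ill-behaved object $\nu^{\ast}\mathcal{E}$.
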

\begin{proof}
Let $\nu \colon (\bgm)_{k'}\to \mathfrak{M}$
be a map such that $\nu(0)=x$. 
Let $\mathcal{V}_x$ be the fiber of $\mathcal{V} \to \mathcal{Y}$
at $j(x)$, which is a $G_x=\mathrm{Aut}(x)$-representation. 
Below we regard the $G_x$-representation 
as a $\mathbb{G}_m$-representation 
via the cocharacter 
$(\mathbb{G}_m)_{k'} \to G_x$
induced by $\nu$. 
We have the Cartesian square 
\begin{align*}
\xymatrix{
\mathfrak{M} \inclusion^-{i}\diasquare & \mathcal{Y} \\
\mathcal{V}_x[-1]/(\mathbb{G}_m)_{k'}
\ar[u]^-{\widetilde{\nu}} 
\ar[r]^-{\widetilde{\iota}} & (\bgm)_{k'} \ar[u]_-{j \circ \nu} 
}
\end{align*}

Then we have the isomorphisms
\begin{align}\notag
    \nu^{\ast}i^{\ast}i_{\ast}\mathcal{E} \cong (i\circ \nu)^{*}i_{*}\mathcal{E}\cong \widetilde{\iota}_{\ast}\widetilde{\nu}^{\ast}\mathcal{E}. 
\end{align}
From the distinguished triangle 
\begin{align*}
    \mathcal{V}^{\vee}|_{\mathfrak{M}} \to \mathbb{L}_{\mathcal{Y}}|_{\mathfrak{M}} \to \mathbb{L}_{\mathfrak{M}}
\end{align*}
we have the exact sequence 
of $G_x$-representations
\begin{align}\label{exact:Vx}
    0 \to \mathfrak{g}_x \to \mathcal{V}_x^{\vee}
    \to \mathcal{H}^0(\mathbb{L}_{\mathcal{Y}}|_{x})
    \to T_x \to 0
\end{align}
and $\mathcal{H}^{1}(\mathbb{L}_{\mathcal{Y}}|_{x}) \stackrel{\cong}{\to} \mathfrak{g}_x^{\vee}$. 
The first inclusion in (\ref{exact:Vx}) determines a
morphism 
\begin{align*}
    \eta \colon \mathcal{V}_x[-1]/(\mathbb{G}_m)_{k'} \to \mathfrak{g}_x^{\vee}[-1]/(\mathbb{G}_m)_{k'}.
\end{align*}
We have the commutative diagram 
\begin{align*}
    \xymatrix{
(\bgm)_{k'} & \mathfrak{g}_x^{\vee}[-1]/(\mathbb{G}_m)_{k'} \ar[r]^-{\nu^{\mathrm{reg}}} \ar[l]_-{\iota} 
& \mathfrak{M} \\
& \mathcal{V}_x[-1]/(\mathbb{G}_m)_{k'}
\ar[lu]^-{\widetilde{\iota}} \ar[ru]_-{\widetilde{\nu}} \ar[u]_-{\eta}&    
    }
\end{align*}
Then we have 
\begin{align*}
    \widetilde{\iota}_{\ast}\widetilde{\nu}^{\ast}\mathcal{E}
\cong \iota_{\ast}\eta_{\ast}\eta^{\ast}\nu^{\mathrm{reg}\ast}\mathcal{E} \cong \iota_{\ast}(\nu^{\mathrm{reg}\ast}\mathcal{E}\otimes 
\eta_{\ast}\mathcal{O}_{\mathcal{V}_x[-1]}).
\end{align*}
We divide the exact sequence (\ref{exact:Vx}) into 
two short exact sequences
\begin{align}\label{exact:gW}
    0 \to \mathfrak{g}_x \to \mathcal{V}_x^{\vee} \to 
    W \to 0, \ 
    0 \to W \to \mathcal{H}^0(\mathbb{L}_{\mathcal{Y}}|_{x})
    \to T_x \to 0. 
\end{align}

Then we have 
\begin{align*}
    \eta_{\ast}\mathcal{O}_{\mathcal{V}_x[-1]}
    \cong \left(\bigwedge^{\bullet}W\right) \otimes 
    \mathcal{O}_{\mathfrak{g}_x^{\vee}[-1]}
\end{align*}
where the complex in the right-hand side has 
zero differentials. 
Therefore 
\begin{align}\notag
    \widetilde{\iota}_{\ast}\widetilde{\nu}^{\ast}\mathcal{E}
    \cong \iota_{\ast}\nu^{\mathrm{reg}\ast}\mathcal{E} \otimes\bigwedge^{\bullet}W.
\end{align}

It follows that 
$\wt(\iota_{\ast}\nu^{\mathrm{reg}\ast}\mathcal{E})$
lies in $[a, b]$ if and only if 
\begin{align*}
\wt(\nu^{\ast}i^{\ast}i_{\ast}\mathcal{E})=
\wt(\widetilde{\iota}_{\ast}\widetilde{\nu}^{\ast}\mathcal{E})
\subset [a+c_1 (W^{<0}), b+c_1 (W^{>0})].
\end{align*}
Using the exact sequences (\ref{exact:gW}), we have 
\begin{align*}
    \frac{1}{2} c_1 (\mathbb{L}_{\mathcal{V}}|_{x}^{>0}) &=\frac{1}{2} c_1(\mathcal{V}_x^{\vee >0})+
    \frac{1}{2}c_1(\mathbb{L}_{\mathcal{Y}}|_{x}^{>0}) \\
    &=\frac{1}{2} c_1(\mathcal{V}_x^{\vee>0})+
    \frac{1}{2}c_1(\mathcal{H}^0(\mathbb{L}_{\mathcal{Y}}|_{x})^{>0})
    -\frac{1}{2} c_1((\mathfrak{g}_x^{\vee})^{>0}) \\
    &=\frac{1}{2}c_1(\mathfrak{g}_x^{>0})+\frac{1}{2}c_1(T_x^{>0})+\frac{1}{2}c_1(\mathfrak{g}_x^{<0})
    +c_1 (W^{>0}) \\
    &=\frac{1}{2}c_1(T_x^{>0})+\frac{1}{2}c_1(\mathfrak{g}_x)+c_1 (W^{>0}). 
\end{align*}
Similarly we have 
\begin{align*}
 \frac{1}{2} c_1 (\mathbb{L}_{\mathcal{V}}|_{x}^{<0})
 =\frac{1}{2}c_1(T_x^{<0})+\frac{1}{2}c_1(\mathfrak{g}_x)+c_1 (W^{<0}).
\end{align*}
Therefore the lemma holds. 
\end{proof}

\subsection{Dualities}
Since $\mathfrak{M}$ is quasi-smooth satisfying~\eqref{sym:L}, there is a dualizing equivalence
\begin{align}\label{dual:M}
    \mathbb{D}:=\mathcal{H}om(-, \omega_{\mathfrak{M}}[\dim \mathfrak{M}]) \colon \Coh(\mathfrak{M}) \stackrel{\sim}{\to} \Coh(\mathfrak{M})^{\mathrm{op}}
\end{align}
where $\omega_{\mathfrak{M}}=\det \mathbb{L}_{\mathfrak{M}}$. 
Note that $\omega_{\mathfrak{M}}^{\otimes 2} \cong \mathcal{O}_{\mathfrak{M}}$ 
by the self-duality (\ref{sym:L}). 
\begin{lemma}\label{lem:duality}
The functor (\ref{dual:M}) restricts to an equivalence 
\begin{align*}
\mathbb{D} \colon \LL(\mathfrak{M})_{\delta}^{\mathrm{op}} \stackrel{\sim}{\to} \LL(\mathfrak{M})_{\delta^{-1}}. 
\end{align*}
\end{lemma}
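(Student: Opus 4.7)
Since $\mathbb{D}$ is already an involutive self-equivalence of $\Coh(\mathfrak{M})$ with $\mathbb{D}^2\simeq \id$, it suffices to show that $\mathbb{D}(\LL(\mathfrak{M})_{\delta})\subset \LL(\mathfrak{M})_{\delta^{-1}}$; the reverse inclusion then follows upon applying $\mathbb{D}$ once more. Fix $\mathcal{E}\in \LL(\mathfrak{M})_{\delta}$, a field extension $k'/k$ and a map $\nu\colon (\bgm)_{k'}\to\mathfrak{M}$ with $\nu(0)=x$, regularization $\nu^{\mathrm{reg}}\colon Z:=\fg_x^\vee[-1]/(\mathbb{G}_m)_{k'}\to \mathfrak{M}$ and $\iota\colon Z\to (\bgm)_{k'}$. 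The task is to verify the weight bound of Definition~\ref{def:Lcat} for $\mathbb{D}(\mathcal{E})$ with $\delta$ replaced by $\delta^{-1}$.

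The core step is to rewrite $\iota_*\nu^{\mathrm{reg},*}\mathbb{D}(\mathcal{E})$ in terms of the dual of $\iota_*\nu^{\mathrm{reg},*}\mathcal{E}$ twisted by a line bundle on $(\bgm)_{k'}$. The ingredients are: the intertwining $\nu^{\mathrm{reg},*}\mathbb{D}_{\mathfrak{M}}\simeq \mathbb{D}_{Z}\nu^{\mathrm{reg},!}$ coming from Serre duality on $\mathfrak{M}$ and $Z$; the identification $\nu^{\mathrm{reg},!}(-)\simeq \nu^{\mathrm{reg},*}(-)\otimes \omega_{\nu^{\mathrm{reg}}}[\dim \nu^{\mathrm{reg}}]$, valid since $\nu^{\mathrm{reg}}$ is quasi-smooth; Grothendieck duality $\iota_*\mathbb{D}_{Z}\simeq \mathbb{D}_{(\bgm)_{k'}}\iota_*$ for the proper derived closed immersion $\iota$; and the projection formula for $\iota$ applied to $\omega_{\nu^{\mathrm{reg}}}\simeq \iota^*L_w$ (such a presentation exists because the classical truncation of $Z$ is $(\bgm)_{k'}$, whence $\mathrm{Pic}(Z)=\mathbb{Z}$). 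These assemble into
\begin{equation*}
\iota_*\nu^{\mathrm{reg},*}\mathbb{D}(\mathcal{E})\;\simeq\;\mathbb{D}_{(\bgm)_{k'}}\bigl(\iota_*\nu^{\mathrm{reg},*}\mathcal{E}\bigr)\otimes L_w^{-1}[-\dim \nu^{\mathrm{reg}}],
\end{equation*}
where $L_w$ denotes the weight $w:=c_1(\omega_{\nu^{\mathrm{reg}}}|_0)$ line bundle on $(\bgm)_{k'}$.

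To conclude, I compute $w$ from the fiber sequence $\nu^{\mathrm{reg},*}\mathbb{L}_{\mathfrak{M}}\to \mathbb{L}_{Z}\to \mathbb{L}_{\nu^{\mathrm{reg}}}$ evaluated at the origin. The cohomologies of $\mathbb{L}_{\mathfrak{M}}|_x$ (namely $\fg_x,T_x,\fg_x^\vee$ in degrees $-1,0,1$) and of $\mathbb{L}_{Z}|_0$ (namely $\fg_x$ in degree $-1$ and $\mathrm{Lie}(\mathbb{G}_m)^\vee$ in degree $1$), combined with the $K$-theoretic sign convention $[\mathcal{F}[n]]=(-1)^n[\mathcal{F}]$, yield $w=-c_1(\fg_x)-c_1(T_x)$. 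The crucial input is then the self-duality $\mathbb{L}_{\mathfrak{M}}\simeq \mathbb{T}_{\mathfrak{M}}$: restricted to $\mathcal{H}^0$ at $x$, it forces $T_x\simeq T_x^\vee$ as $G_x$-representations (hence as $\mathbb{G}_m$-representations via the cocharacter), so $c_1(T_x)=0$ and $c_1(T_x^{<0})=-c_1(T_x^{>0})$. Substituting $w=-c_1(\fg_x)$ and using that duality on $(\bgm)_{k'}$ negates weights, I obtain $\wt(\iota_*\nu^{\mathrm{reg},*}\mathbb{D}(\mathcal{E}))=-\wt(\iota_*\nu^{\mathrm{reg},*}\mathcal{E})+c_1(\fg_x)$. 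A direct substitution shows that this range lies in $[\tfrac12 c_1(T_x^{<0}),\tfrac12 c_1(T_x^{>0})]+\tfrac12 c_1(\fg_x)-c_1(\nu^*\delta)$, which is precisely the weight bound characterising $\LL(\mathfrak{M})_{\delta^{-1}}$. The main technical point is justifying the $(!,\ast)$-compatibility, Grothendieck duality and the projection formula for the quasi-smooth morphism $\nu^{\mathrm{reg}}$ and the derived closed immersion $\iota$; I take these from the standard duality theory on QCA derived stacks.
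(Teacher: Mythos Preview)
Your proof is correct and follows essentially the same approach as the paper: both reduce to the identity $\wt(\iota_*\nu^{\mathrm{reg}*}\mathbb{D}(\mathcal{E}))=-\wt(\iota_*\nu^{\mathrm{reg}*}\mathcal{E})+c_1(\fg_x)$ via Grothendieck duality and the self-duality $T_x\cong T_x^\vee$. The paper compresses this into two lines by writing $\wt(\iota_*\nu^{\mathrm{reg}*}\mathbb{D}(\mathcal{E}))=\wt(\iota_*\mathbb{D}(\nu^{\mathrm{reg}*}\mathcal{E}))=\wt((\iota_*\nu^{\mathrm{reg}*}\mathcal{E})^\vee)+c_1(\fg_x)$, invoking Grothendieck duality for $\fg_x^\vee[-1]\hookrightarrow \Spec k'$ directly; you instead track the twist by $\omega_{\nu^{\mathrm{reg}}}$ explicitly and compute its weight from the cotangent fiber sequence, which is a more transparent bookkeeping of the same identity.
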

\begin{proof}
    For $\mathcal{E} \in \Coh(\mathfrak{M})$, we have 
    \begin{align*}
        \wt(\iota_{\ast}\nu^{\mathrm{reg}\ast}\mathbb{D}(\mathcal{E}))
        =\wt(\iota_{\ast}\mathbb{D}(\nu^{\mathrm{reg}\ast}\mathcal{E})) 
        =\wt((\iota_{\ast}\nu^{\mathrm{reg}\ast}(\mathcal{E}))^{\vee})+c_1 (\mathfrak{g}_x). 
    \end{align*}
    Here, the second equality follows from the Grothendieck duality for the closed immersion 
    $\mathfrak{g}_x^{\vee}[-1] \hookrightarrow \Spec k'$. 
    The lemma follows from the above identities. 
\end{proof}

The following lemma is obvious from the definition of $\LL(\mathfrak{M})_{\delta}$:
\begin{lemma}\label{lem:tensor}
For $\mathcal{L} \in \mathrm{Pic}(\mathfrak{M})$, there is an equivalence 
\begin{align*}
    \otimes \mathcal{L} \colon \LL(\mathfrak{M})_{\delta} \stackrel{\sim}{\to}
    \LL(\mathfrak{M})_{\delta \otimes \mathcal{L}}.
\end{align*}
\end{lemma}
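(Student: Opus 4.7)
The plan is to observe that tensoring with $\mathcal{L}$ shifts $\mathbb{G}_m$-weights in the defining condition (\ref{wt:nu}) by precisely $c_1(\nu^{*}\mathcal{L})$, so that the weight window for $\mathcal{E}\otimes \mathcal{L}$ is exactly the window for the twist $\delta\otimes \mathcal{L}$. Of course the inverse functor is tensoring with $\mathcal{L}^{-1}$, so once one direction is established the equivalence follows formally.

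In more detail, fix a field extension $k'/k$, a point $x\in \mathfrak{M}(k')$, a map $\nu\colon (\bgm)_{k'}\to \mathfrak{M}$ with $\nu(0)=x$, and a regularization $\nu^{\mathrm{reg}}\colon \mathfrak{g}_x^\vee[-1]/(\mathbb{G}_m)_{k'}\to \mathfrak{M}$ as in Construction-Definition~\ref{def:construction}. Since $\nu^{\mathrm{reg}}$ restricted along the zero section is $\nu$ and since line bundles on $\mathfrak{g}_x^\vee[-1]/(\mathbb{G}_m)_{k'}$ are classified by their $\mathbb{G}_m$-weight, we have a canonical isomorphism $\nu^{\mathrm{reg}*}\mathcal{L}\simeq \iota^{*}\mathcal{L}_w$, where $w=c_1(\nu^{*}\mathcal{L})$ and $\mathcal{L}_w$ is the character line bundle of weight $w$ on $(\bgm)_{k'}$.

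The projection formula for $\iota$ then gives
\begin{align*}
\iota_{*}\nu^{\mathrm{reg}*}(\mathcal{E}\otimes \mathcal{L})
\;\simeq\; \iota_{*}\bigl(\nu^{\mathrm{reg}*}\mathcal{E}\otimes \iota^{*}\mathcal{L}_w\bigr)
\;\simeq\; \iota_{*}\nu^{\mathrm{reg}*}\mathcal{E}\otimes \mathcal{L}_w,
\end{align*}
so $\wt\bigl(\iota_{*}\nu^{\mathrm{reg}*}(\mathcal{E}\otimes \mathcal{L})\bigr)=\wt\bigl(\iota_{*}\nu^{\mathrm{reg}*}\mathcal{E}\bigr)+w$. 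Comparing with (\ref{wt:nu}) and using $c_1(\nu^{*}(\delta\otimes \mathcal{L}))=c_1(\nu^{*}\delta)+c_1(\nu^{*}\mathcal{L})$, one sees that $\mathcal{E}$ satisfies the weight condition for $\delta$ if and only if $\mathcal{E}\otimes \mathcal{L}$ satisfies the weight condition for $\delta\otimes \mathcal{L}$. Thus $\otimes \mathcal{L}$ restricts to a functor $\LL(\mathfrak{M})_{\delta}\to \LL(\mathfrak{M})_{\delta\otimes \mathcal{L}}$ with inverse $\otimes \mathcal{L}^{-1}$.

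There is no real obstacle; the only point worth checking carefully is the identification $\nu^{\mathrm{reg}*}\mathcal{L}\simeq \iota^{*}\mathcal{L}_w$, which is a consequence of the fact that the regularization is unique up to $2$-isomorphism (Remark~\ref{rmk:reg}) and that line bundles on $\mathfrak{g}_x^\vee[-1]/(\mathbb{G}_m)_{k'}$ pull back from $(\bgm)_{k'}$.
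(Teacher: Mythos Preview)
Your proof is correct and is exactly the unpacking of what the paper means when it states that the lemma is ``obvious from the definition of $\LL(\mathfrak{M})_{\delta}$'' with no further argument given. You have simply made explicit the weight-shift computation that the authors leave to the reader.
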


\subsection{Relation with magic categories for matrix factorizations}\label{subsec:magic}
In the case that $\mathfrak{M}$ is a global zero locus as in the diagram (\ref{dia:zeros}), 
we can relate the category $\LL(\mathfrak{M})_{\delta}$ with a subcategory 
of graded matrix factorizations under Koszul equivalence in Theorem~\ref{thm:Kduality}.

Let $\mathcal{Y}$ be a smooth stack over $k$, and $K(\mathcal{Y})$ the Grothendieck group of 
$\Coh(\Y)$. 
For purposes of this paper, we use the following notion of a symmetric stack:
\begin{defn}\label{def:sstack}
A smooth stack $\Y$ is called 
\textit{symmetric} 
if we have 
\begin{align*}
	[\mathbb{L}_{\mathcal{Y}}]=[\mathbb{L}_{\mathcal{Y}}^{\vee}] \in K(\Y).
	\end{align*}
    \end{defn}
\begin{example}\label{exam:quiver}
	Let $G$ be a reductive algebraic group and let $Y$ be a self-dual $G$-representation. 
	Then any open substack of $Y/G$ is symmetric. 
	\end{example}

 We define the following \textit{magic category} as a subcategory of $\Coh(\mathcal{Y})$:
 \begin{defn}\label{defn:QBY}
For a smooth symmetric stack $\mathcal{Y}$ and $\delta \in \mathrm{Pic}(\mathcal{Y})_{\mathbb{R}}$, we 
define 
the subcategory 
\begin{align}\label{MY:delta}
\MM(\mathcal{Y})_{\delta} \subset \Coh(\mathcal{Y})
\end{align}
to be consisting of objects $\mathcal{E} \in \Coh(\mathcal{Y})$ 
such that, for any field extension $k'/k$ and a
map $\nu \colon (\bgm)_{k'} \to \mathcal{Y}$, the following holds: 
\begin{align*}
    \mathrm{wt}(\nu^{\ast}\mathcal{E}) \subset 
    \left[-\frac{1}{2}n_{\mathcal{Y}, \nu}, \frac{1}{2}n_{\mathcal{Y}, \nu}   \right]
    +\langle \delta, \nu\rangle. 
\end{align*}
Here, $n_{\mathcal{Y}, \nu}$ is defined by 
\begin{align}\label{def:nV}
	n_{\mathcal{Y}, \nu}:=c_1(\nu^{\ast}\mathbb{L}_{\mathcal{Y}}^{>0})=-c_1(\nu^{\ast}\mathbb{L}_{\mathcal{Y}}^{<0}).
	\end{align}
The second equality follows from the definition of a symmetric stack. 
\end{defn}
\begin{remark}
In the case of $\Y=Y/G$ for a self-dual representation $Y$ of a reductive group $G$, 
the subcategory (\ref{MY:delta}) is also defined by some explicit generators, 
see Proposition~\ref{prop:QB:weight}. 
In this case, the category (\ref{MY:delta}) was 
essentially first considered by \v{S}penko--Van den Bergh~\cite{SvdB} to construct 
 non-commutative crepant resolutions, 
 and was used by Halpern-Leistner--Sam~\cite{hls} to define the \textit{magic window 
 subcategory} and prove the D-equivalence/K-equivalence conjecture for some 
 variations of GIT quotients.\footnote{Let us explain the use of the name ``magic categories" by Halpern-Leistner--Sam~\cite{hls}. In the setting of GIT, there are \textit{window categories} defined in terms of weight conditions for \textit{certain} maps cocharacters corresponding to the Kempf-Ness (unstable) strata. In general, it is hard to find explicit generators of window categories. Also, it is not straightforward to compare window categories for two different stability conditions. However, for self-dual (or, more generally, quasi-symmetric) representations, window categories are equivalent to \textit{magic categories}, defined in terms of weight conditions for \textit{all} cocharacters. Such categories have explicit generators and they are independent of generic stability conditions.}
\end{remark}

Let $\mathfrak{M}$ be a quasi-smooth derived stack satisfying (\ref{sym:L}), given as in 
Subsection~\ref{subsec:Koszuleq}.
Let $f\colon \mathcal{V}^{\vee} \to \mathbb{A}^1$ be the function as in (\ref{funct:fV}). 
Recall the Koszul equivalence from Theorem~\ref{thm:Kduality}:
\begin{align}\label{Kdual:magic}
    \Psi \colon \Coh(\mathfrak{M}) \stackrel{\sim}{\to} \mathrm{MF}^{\mathrm{gr}}(\mathcal{V}^{\vee}, f). 
\end{align}
Suppose that $\mathcal{V}^{\vee}$ is symmetric. 
Let $\delta \in \mathrm{Pic}(\Y)_{\mathbb{R}}$. We use the same symbol $\delta$ 
for its pull-back to $\mathfrak{M}$ and $\mathcal{V}^{\vee}$. 
The following proposition compares the category $\LL(\mathfrak{M})_{\delta}$ with 
the magic category for graded matrix factorizations. 
\begin{prop}\label{prop:comparemagic}
    The equivalence (\ref{Kdual:magic}) restricts to an equivalence 
    \begin{align*}
        \Psi \colon \LL(\mathfrak{M})_{\delta} \stackrel{\sim}{\to} \mathrm{MF}^{\mathrm{gr}}(\MM(\mathcal{V}^{\vee})_{\delta \otimes \omega_{\Y}^{1/2}}, f).
    \end{align*}
\end{prop}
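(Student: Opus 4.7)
The plan is to prove the equivalence by comparing, on both sides, the explicit weight conditions that characterize membership: on the limit side, the weight condition from Lemma~\ref{lem:emb}; on the magic side, the weight condition from Definition~\ref{defn:QBY} applied to the underlying $\mathbb{Z}/2$-graded coherent sheaf of $\Psi(\mathcal{E})$. Concretely, I would first observe that any map $\tilde{\nu}\colon (B\mathbb{G}_m)_{k'}\to\mathcal{V}^{\vee}$ factors as its base component $\nu':=q\circ\tilde{\nu}\colon (B\mathbb{G}_m)_{k'}\to\mathcal{Y}$ together with a $\mathbb{G}_m$-invariant fiber vector $v\in(\mathcal{V}^{\vee}|_{\nu'(0)})^{\mathbb{G}_m}$ (with $v=0$ always admissible). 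Since $\mathbb{L}_{\mathcal{V}^{\vee}}\simeq q^{*}\mathbb{L}_{\mathcal{Y}}\oplus q^{*}\mathcal{V}$ is pulled back from $\mathcal{Y}$, the quantity $n_{\mathcal{V}^{\vee},\tilde{\nu}}=c_{1}(\nu'^{*}\mathbb{L}_{\mathcal{Y}}^{>0})+c_{1}(\nu'^{*}\mathcal{V}^{>0})$ depends only on $\nu'$, as does the twist $c_{1}(\tilde{\nu}^{*}(\delta\otimes\omega_{\mathcal{Y}}^{1/2}))=c_{1}(\nu'^{*}\delta)+\tfrac{1}{2}c_{1}(\nu'^{*}\omega_{\mathcal{Y}})$.

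Next, using the explicit form $\mathcal{K}=(\mathcal{O}_{\mathcal{V}^{\vee}}\otimes_{\mathcal{O}_{\mathcal{Y}}}\mathcal{O}_{\mathfrak{M}},d_{\mathcal{K}})$, I would identify $\Psi(\mathcal{E})=\mathcal{K}\otimes_{\mathcal{O}_{\mathfrak{M}}}\mathcal{E}$ with the pullback $q^{*}i_{*}\mathcal{E}$ as an ungraded coherent complex on $\mathcal{V}^{\vee}$, the $\mathbb{Z}/2$-grading being induced by collapsing cohomological degrees modulo $2$. It follows that for each $\tilde{\nu}$, the weight set of $\tilde{\nu}^{*}\Psi(\mathcal{E})$ equals that of $\nu'^{*}i_{*}\mathcal{E}$, and that this is independent of $v$. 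Since $\MM(\mathcal{V}^{\vee})_{\delta'}$ is cut out by interval conditions on weights, the requirement on both $\mathbb{Z}/2$-summands is equivalent to the single interval condition on the total pullback. For $\nu'$ not factoring through $\mathfrak{M}^{\mathrm{cl}}$ we have $\nu'^{*}i_{*}\mathcal{E}=0$ by support, rendering the magic condition vacuous there; for $\nu'=i\nu$ with $\nu\colon B\mathbb{G}_{m}\to\mathfrak{M}$, we have $\nu'^{*}i_{*}\mathcal{E}=\nu^{*}i^{*}i_{*}\mathcal{E}$, matching the object in Lemma~\ref{lem:emb}.

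The crucial arithmetic step is the match of the two intervals. Using $\mathbb{L}_{\mathcal{V}}\simeq q_{\mathcal{V}}^{*}\mathbb{L}_{\mathcal{Y}}\oplus q_{\mathcal{V}}^{*}\mathcal{V}^{\vee}$, the limit interval reads $[\tfrac{1}{2}c_{1}(\nu'^{*}\mathbb{L}_{\mathcal{Y}}^{<0})-\tfrac{1}{2}c_{1}(\nu'^{*}\mathcal{V}^{>0}),\ \tfrac{1}{2}c_{1}(\nu'^{*}\mathbb{L}_{\mathcal{Y}}^{>0})-\tfrac{1}{2}c_{1}(\nu'^{*}\mathcal{V}^{<0})]+c_{1}(\nu'^{*}\delta)$, while the magic interval, after combining $[-\tfrac{1}{2}n,\tfrac{1}{2}n]$ with the $\omega_{\mathcal{Y}}^{1/2}$-twist and using $c_{1}(\nu'^{*}\omega_{\mathcal{Y}})=c_{1}(\nu'^{*}\mathbb{L}_{\mathcal{Y}}^{>0})+c_{1}(\nu'^{*}\mathbb{L}_{\mathcal{Y}}^{<0})$, reads $[\tfrac{1}{2}c_{1}(\nu'^{*}\mathbb{L}_{\mathcal{Y}}^{<0})-\tfrac{1}{2}c_{1}(\nu'^{*}\mathcal{V}^{>0}),\ c_{1}(\nu'^{*}\mathbb{L}_{\mathcal{Y}}^{>0})+\tfrac{1}{2}c_{1}(\nu'^{*}\mathbb{L}_{\mathcal{Y}}^{<0})+\tfrac{1}{2}c_{1}(\nu'^{*}\mathcal{V}^{>0})]+c_{1}(\nu'^{*}\delta)$. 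The lower endpoints agree on the nose; the two upper endpoints differ by $\tfrac{1}{2}c_{1}(\nu'^{*}(\omega_{\mathcal{Y}}\otimes\det\mathcal{V}))$.

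\textbf{Main obstacle.} Closing the remaining gap requires the symmetry hypothesis on $\mathcal{V}^{\vee}$. Since $\omega_{\mathcal{V}^{\vee}}=\det\mathbb{L}_{\mathcal{V}^{\vee}}=q^{*}(\omega_{\mathcal{Y}}\otimes\det\mathcal{V})$, and Definition~\ref{def:sstack} gives $[\mathbb{L}_{\mathcal{V}^{\vee}}]=[\mathbb{L}_{\mathcal{V}^{\vee}}^{\vee}]$ in $K(\mathcal{V}^{\vee})$, taking determinants yields $\omega_{\mathcal{V}^{\vee}}^{\otimes 2}\cong\mathcal{O}$ rationally; as $q^{*}\colon\mathrm{Pic}(\mathcal{Y})_{\mathbb{Q}}\hookrightarrow\mathrm{Pic}(\mathcal{V}^{\vee})_{\mathbb{Q}}$ is injective, this forces $c_{1}(\omega_{\mathcal{Y}}\otimes\det\mathcal{V})=0$ in $\mathrm{Pic}(\mathcal{Y})_{\mathbb{Q}}$, eliminating the discrepancy. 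I expect the most delicate part to be the rigorous identification in the second paragraph of the underlying ungraded sheaf of $\Psi(\mathcal{E})$ with $q^{*}i_{*}\mathcal{E}$, together with the verification that the $\mathbb{G}_{m}$-grading used for graded matrix factorizations (weight two on fibers of $\mathcal{V}^{\vee}\to\mathcal{Y}$) does not contribute to the weights picked up by $\tilde{\nu}$, since these two $\mathbb{G}_{m}$-actions are logically independent.
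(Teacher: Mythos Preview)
Your approach is correct and coincides with the paper's: both reduce to comparing the weight interval of Lemma~\ref{lem:emb} with the magic-category interval of Definition~\ref{defn:QBY}, using the explicit Koszul kernel from Theorem~\ref{thm:Kduality}. The paper's proof is a one-line pointer to Lemma~\ref{lem:emb} and to \cite[Lemma~4.5]{PT0}, \cite[Lemma~2.6]{PTquiver}; you have spelled out the arithmetic that those references contain, and your endpoint computation is correct. One cosmetic point: your ``taking determinants rationally'' detour is unnecessary---the symmetry hypothesis on $\mathcal{V}^{\vee}$ says directly (Definition~\ref{defn:QBY}, equation~(\ref{def:nV})) that $c_{1}(\nu'^{*}\mathbb{L}_{\mathcal{V}^{\vee}}^{>0})=-c_{1}(\nu'^{*}\mathbb{L}_{\mathcal{V}^{\vee}}^{<0})$, which is exactly the vanishing $c_{1}(\nu'^{*}(\omega_{\mathcal{Y}}\otimes\det\mathcal{V}))=0$ you need, at the level of each individual $\nu'$. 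The two technical points you flag (the identification of the underlying factors of $\Psi(\mathcal{E})$ with $q^{*}i_{*}\mathcal{E}$, and the independence of the auxiliary grading $\mathbb{G}_{m}$ from the test map $\tilde{\nu}$) are precisely what the cited lemmas in \cite{PT0,PTquiver} establish.
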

\begin{proof}
The proposition follows from the characterization of the category $\LL(\mathfrak{M})_{\delta}$ in Lemma~\ref{lem:emb} and the construction 
of Koszul equivalence in Theorem~\ref{thm:Kduality}. See 
the arguments of~\cite[Lemma~4.5]{PT0}, ~\cite[Lemma~2.6]{PTquiver}
\end{proof}

\subsection{The case of cotangent stacks}\label{subsec:cotan}
Let $\X$ be a smooth and QCA stack and consider 
its cotangent stack $\mathfrak{M}=\Omega_{\X}$, where recall that:
\begin{align*}
\Omega_{\X} :=
\Spec_{\X} \mathrm{Sym}(\mathbb{L}_{\X}). 
\end{align*}
It is a quasi-smooth derived stack with zero-shifted
symplectic structure, in particular it satisfies (\ref{sym:L}). 
Let $\delta \in \mathrm{Pic}(\X)_{\mathbb{R}}$, and 
regard it as $\mathbb{R}$-line bundle on $\Omega_{\X}$ 
via pull-back of the projection 
\begin{align*}
\pi \colon \Omega_{\X} \to \X.
\end{align*}
We consider 
the categories 
\begin{align*}
\LL(\Omega_{\X})_{\delta}
\subset \Coh(\Omega_{\X}), \ \IndL(\Omega_{\X})_{\delta}
\subset \IndCoh(\Omega_{\X}).     
\end{align*}
Let $\omega_{\X}$ be the canonical line bundle on 
$\X$. 
We are particularly interested in 
\begin{align}\label{L:half}
    \LL(\Omega_{\X})_{\pm \frac{1}{2}} :=
    \LL(\Omega_{\X})_{\delta=\omega_{\X}^{\pm 1/2}}, \ 
    \IndL(\Omega_{\X})_{\pm \frac{1}{2}}:=
    \IndL(\Omega_{\X})_{\delta=\omega_{\X}^{\pm 1/2}}.
\end{align}
There are equivalences, see Lemma~\ref{lem:duality} and Lemma~\ref{lem:tensor}
\begin{align}\label{equiv:Domega}
    \mathbb{D} \colon \LL(\Omega_{\X})^{\mathrm{op}}_{-\frac{1}{2}}
    \stackrel{\sim}{\to} \LL(\Omega_{\X})_{\frac{1}{2}}, \ 
    \otimes \omega_{\X} \colon 
 \LL(\Omega_{\X})_{-\frac{1}{2}}
    \stackrel{\sim}{\to} \LL(\Omega_{\X})_{\frac{1}{2}}.   
\end{align}

In the case of cotangents, we give another useful criterion for objects in 
$\Coh(\Omega_{\X})$ to lie in $\LL(\Omega_{\X})_{\delta}$.
Let $\nu \colon (\bgm)_{k'} \to \Omega_{\X}$ be a map with 
$\nu(0)=\overline{x}$, and set $x=\pi(\overline{x})$. 
By abuse of notation, we set 
\begin{align*}
    &T_{\X, x}:=\mathcal{H}^0(\mathbb{T}_{\X}|_{x}), \ 
    \mathfrak{g}_x :=\mathcal{H}^{-1}(\mathbb{T}_{\X}|_{x}), \\ 
    &T_{\Omega, \overline{x}}:=\mathcal{H}^0(\mathbb{T}_{\Omega_{\X}}|_{\overline{x}}), \ 
    \mathfrak{g}_{\overline{x}}:=\mathcal{H}^{-1}(\mathbb{T}_{\Omega_{\X}}|_{\overline{x}}). 
\end{align*}

Since $\pi$ is representable, we have 
$\mathfrak{g}_{\overline{x}} \subset \mathfrak{g}_x$,
and this induces a quasi-smooth map 
\begin{align*}
    \eta \colon \mathfrak{g}_x^{\vee}[-1]/(\mathbb{G}_m)_{k'} \to 
    \mathfrak{g}_{\ox}^{\vee}[-1]/(\mathbb{G}_m)_{k'}
\end{align*}
where the $(\mathbb{G}_m)_{k'}$-actions are given by
$\nu \colon (\mathbb{G}_m)_{k'} \to \Aut(\ox) \subset \Aut(x)$. 
We have the following diagram 
\begin{align}\label{dia:nuxreg}
    \xymatrix{
\Omega_{\X} \ar[d]_-{\pi} & \mathfrak{g}_{\ox}^{\vee}[-1]/(\mathbb{G}_m)_{k'} 
\ar[l]_-{\nu_{\ox}^{\mathrm{reg}}} \ar[r]^-{\iota_{\ox}} & (\bgm)_{k'} \\
\X & \ar[lu]^-{\overline{\nu}_x^{\mathrm{reg}}} \mathfrak{g}_x^{\vee}[-1]/(\mathbb{G}_m)_{k'}.
\ar[u]_-{\eta}  \ar[ru]_-{\iota_{x}}&  
    }
\end{align}
Here $\nu_{\ox}^{\mathrm{reg}}$ is a regularization of $\nu$ and $\overline{\nu}_x^{\mathrm{reg}}$ is 
defined by $\overline{\nu}_x^{\mathrm{reg}}:=\nu_{\ox}^{\mathrm{reg}} \circ \eta$. 
\begin{lemma}\label{lem:wtOmega}
For an object $\mathcal{E} \in \Coh(\Omega_{\X})$, we have the condition 
\begin{align*}
   \wt( \iota_{\ox\ast}\nu_{\ox}^{\mathrm{reg}\ast}(\mathcal{E})) \subset 
    \left[\frac{1}{2}c_1(T_{\Omega, \ox}^{<0}), \frac{1}{2} c_1(T_{\Omega, \ox}^{>0})  \right]
    +\frac{1}{2}c_1(\mathfrak{g}_{\overline{x}})+c_1(\nu^*\delta)
\end{align*}
if and only if 
\begin{align*}
   \wt( \iota_{x\ast}\overline{\nu}_x^{\mathrm{reg}\ast}(\mathcal{E}))
    \subset 
    \left[\frac{1}{2} c_1(T_{\X, x}^{<0}-T_{\X, x}^{>0}), 
    \frac{1}{2} c_1(T_{\X, x}^{>0}-T_{\X, x}^{<0})\right]
    +\frac{1}{2}c_1(\mathfrak{g}_x)+c_1(\nu^*\delta). 
\end{align*}
\end{lemma}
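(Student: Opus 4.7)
The strategy is to reduce the claim to the pushforward computation already carried out in the proof of Lemma~\ref{lem:emb}, combined with a $K$-theoretic comparison of $T_{\Omega,\ox}$ and $T_{\X,x}$ extracted from the distinguished triangle for the cotangent stack.

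First, using $\mathbb{T}_{\Omega_{\X}/\X}\simeq \pi^{*}\mathbb{L}_{\X}$, I would take the long exact sequence of cohomology at $\ox$ of the distinguished triangle
\[
\pi^{*}\mathbb{L}_{\X}\longrightarrow \mathbb{T}_{\Omega_{\X}}\longrightarrow \pi^{*}\mathbb{T}_{\X}.
\]
This gives
\[
0\to \mathfrak{g}_{\ox}\to \mathfrak{g}_{x}\to T_{\X,x}^{\vee}\to T_{\Omega,\ox}\to T_{\X,x}\to \mathfrak{g}_{x}^{\vee}\to \mathfrak{o}_{\ox}\to 0,
\]
which in particular shows that the inclusion $\mathfrak{g}_{\ox}\hookrightarrow \mathfrak{g}_{x}$ is the kernel of the moment map. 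Combining the resulting alternating-sum identity in $K((\bgm)_{k'})$ with the self-duality isomorphism $\mathfrak{o}_{\ox}\cong \mathfrak{g}_{\ox}^{\vee}$ (available because $\Omega_{\X}$ has self-dual cotangent), one finds
\[
T_{\Omega,\ox}\oplus W\oplus W^{\vee}\;\simeq\;T_{\X,x}\oplus T_{\X,x}^{\vee}
\]
as $(\mathbb{G}_{m})_{k'}$-representations, where $W:=\mathfrak{g}_{x}/\mathfrak{g}_{\ox}$. Taking $c_{1}$ of the weight $>0$ parts then yields the key identity
\[
c_{1}(T_{\Omega,\ox}^{>0})\;=\;c_{1}(T_{\X,x}^{>0}-T_{\X,x}^{<0})-c_{1}(W^{>0})+c_{1}(W^{<0}),
\]
and the analogous identity with $>$ and $<$ swapped.

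Second, choosing a $(\mathbb{G}_{m})_{k'}$-equivariant splitting $\mathfrak{g}_{x}\cong \mathfrak{g}_{\ox}\oplus W$, the map $\eta$ is identified with the projection of graded derived affine schemes
\[
\mathfrak{g}_{\ox}^{\vee}[-1]\times W^{\vee}[-1]\longrightarrow \mathfrak{g}_{\ox}^{\vee}[-1],
\]
and, exactly as in the proof of Lemma~\ref{lem:emb}, one computes
\[
\eta_{*}\mathcal{O}_{\mathfrak{g}_{x}^{\vee}[-1]/(\mathbb{G}_{m})_{k'}}\;\simeq\;\Lambda^{\bullet}W\otimes \mathcal{O}_{\mathfrak{g}_{\ox}^{\vee}[-1]/(\mathbb{G}_{m})_{k'}}.
\]
Together with $\overline{\nu}_{x}^{\mathrm{reg}}=\nu_{\ox}^{\mathrm{reg}}\circ \eta$, $\iota_{x}=\iota_{\ox}\circ \eta$, and the projection formula, this gives
\[
\iota_{x\,*}\overline{\nu}_{x}^{\mathrm{reg}\,*}(\mathcal{E})\;\simeq\;\iota_{\ox\,*}\nu_{\ox}^{\mathrm{reg}\,*}(\mathcal{E})\otimes \Lambda^{\bullet}W.
\]

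Finally, the weights of $\Lambda^{\bullet}W$ are precisely the subset-sums of the weights of $W$, with minimum $c_{1}(W^{<0})$ and maximum $c_{1}(W^{>0})$, both attained. Hence $\wt(\iota_{\ox\,*}\nu_{\ox}^{\mathrm{reg}\,*}\mathcal{E})\subset [a,b]$ is equivalent to $\wt(\iota_{x\,*}\overline{\nu}_{x}^{\mathrm{reg}\,*}\mathcal{E})\subset [a+c_{1}(W^{<0}),\,b+c_{1}(W^{>0})]$. Plugging in the endpoints $a,b$ from the first condition of the lemma, substituting the $K$-theoretic identity from step one, and simplifying with $c_{1}(W)=c_{1}(\mathfrak{g}_{x})-c_{1}(\mathfrak{g}_{\ox})$, the shifted endpoints become exactly those of the second condition (the term $c_{1}(\nu^{*}\delta)$ is unchanged on both sides). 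The only non-routine step is keeping track of signs when converting between $T_{\X,x}$ and $T_{\X,x}^{\vee}$ in the first step; everything else is direct bookkeeping.
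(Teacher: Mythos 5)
Your proof is correct and follows essentially the same route as the paper: the identification $\iota_{x\ast}\overline{\nu}_x^{\mathrm{reg}\ast}\mathcal{E}\cong \iota_{\ox\ast}\nu_{\ox}^{\mathrm{reg}\ast}\mathcal{E}\otimes\bigwedge^{\bullet}W$ with $W=\mathfrak{g}_x/\mathfrak{g}_{\ox}$ via the projection formula, followed by a weight-endpoint comparison extracted from the triangle $\pi^{\ast}\mathbb{L}_{\X}\to\mathbb{T}_{\Omega_{\X}}\to\pi^{\ast}\mathbb{T}_{\X}$ together with the self-duality of $\Omega_{\X}$. The only (immaterial) difference is bookkeeping: the paper packages the long exact sequence as two short exact sequences $0\to W\to T_{\X,x}^{\vee}\to V\to 0$ and $0\to V\to T_{\Omega,\ox}\to V^{\vee}\to 0$ and computes $c_1$ from those, whereas you record the same information as the identity $[T_{\Omega,\ox}]+[W]+[W^{\vee}]=[T_{\X,x}]+[T_{\X,x}^{\vee}]$ in $K((\bgm)_{k'})$, which for $\mathbb{G}_m$-representations yields the same conclusion.
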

\begin{proof}
    We have the isomorphisms 
    \begin{align*}
\iota_{x\ast}\overline{\nu}_x^{\mathrm{reg}\ast}(\mathcal{E}) \cong 
\iota_{\overline{x}\ast}\eta_{\ast}\eta^{\ast}\nu_{\ox}^{\mathrm{reg}\ast}(\mathcal{E}) 
\cong \iota_{\overline{x}\ast}\nu_{\ox}^{\mathrm{reg}\ast}(\mathcal{E}) \otimes \bigwedge^{\bullet}W
    \end{align*}
    where $W=\mathfrak{g}_{x}/\mathfrak{g}_{\ox}$. 
    From the distinguished triangle 
    \begin{align*}
        \pi^{\ast}\mathbb{L}_{\X} \to \mathbb{T}_{\Omega_{\X}} \to \pi^{\ast}
        \mathbb{T}_{\X}
    \end{align*}
    we have the short exact sequences of $\mathrm{Aut}(\ox)$-representations
    \begin{align*}
    0 \to W \to T_{\X, x}^{\vee} \to V \to 0, \ 
    0 \to V \to T_{\Omega, \ox} \to V^{\vee} \to 0
    \end{align*}
    for some $\mathrm{Aut}(\ox)$-representation $V$. 
    
    Then we have the following identities
    \begin{align*}
        c_1\left(\frac{1}{2}T_{\Omega, \ox}^{>0}+\frac{1}{2}\mathfrak{g}_{\ox}+W^{>0}\right) &=c_1\left(
        \frac{1}{2}V^{>0}-\frac{1}{2}V^{<0}+\frac{1}{2}W^{>0}-\frac{1}{2}W^{<0}+\frac{1}{2}\mathfrak{g}_x\right) \\
        &=c_1\left(\frac{1}{2}(T_{\X, x}^{\vee})^{>0}-\frac{1}{2}(T_{\X, x}^{\vee})^{<0}+\frac{1}{2}\mathfrak{g}_x\right) \\
        &=c_1\left(-\frac{1}{2}T_{\X, x}^{<0}+\frac{1}{2}T_{\X, x}^{>0}+\frac{1}{2}\mathfrak{g}_x\right). 
    \end{align*}
    
    Similarly we have 
    \begin{align*}
        c_1\left(\frac{1}{2}T_{\Omega, \ox}^{<0}+\frac{1}{2}\mathfrak{g}_{\ox}+W^{<0}\right)
        =c_1\left(-\frac{1}{2}T_{\X, x}^{>0}+\frac{1}{2}T_{\X, x}^{<0}+\frac{1}{2}\mathfrak{g}_x\right).
    \end{align*}
    Therefore the lemma holds. 
\end{proof}

\begin{example}\label{exam:O}
Let $0 \colon \X \to \Omega_{\X}$ be the zero section of the projection 
$\Omega_{\mathcal{X}}\to \mathcal{X}$. 
Then we have 
\begin{align*}
    0_{\ast}\mathcal{O}_{\mathcal{X}} \in \LL(\Omega_{\mathcal{X}})_{-\frac{1}{2}}, \ 
   0_{\ast}\omega_{\mathcal{X}} \in \LL(\Omega_{\mathcal{X}})_{\frac{1}{2}}.
\end{align*}
Indeed for a map $\nu \colon (\bgm)_{k'} \to \Omega_{\mathcal{X}}$
with $x=\nu(0) \in 0(\mathcal{X})$, we have the Cartesian square 
\begin{align*}
    \xymatrix{
\mathfrak{g}_x^{\vee}[-1]/(\mathbb{G}_m)_{k'} \ar[r]^-{\overline{\nu}_x^{\mathrm{reg}}} \diasquare& \Omega_{\mathcal{X}} \\
T_{\mathcal{X}, x}^{\vee}[-1]/(\mathbb{G}_m)_{k'} \ar[r] \ar[u] & \mathcal{X} \ar[u]_-{0}.  
}
\end{align*}
It follows that $\iota_{x\ast}\overline{\nu}_x^{\mathrm{reg}\ast}0_{\ast}\mathcal{O}_{\mathcal{X}}=
\wedge^{\bullet}T_{\mathcal{X}, x}$, and the latter condition 
in Lemma~\ref{lem:wtOmega} is satisfied. 
\end{example}

As in Example~\ref{exam:reg}, let $\X=X/G$ 
for a smooth scheme $X$ with an action of an affine algebraic 
group $G$ and consider its cotangent 
\begin{align*}
    \Omega_{\X} \simeq \mu^{-1}(0)/G
    \stackrel{j}{\hookrightarrow} \Omega_X/G,
\end{align*}
where $\mu \colon \Omega_X \to \fg^{\vee}$ is the moment map. 
In this case, we have the following characterization of 
$\LL(\Omega_{\X})_{\delta}$. 
\begin{lemma}\label{lem:char2}
    An object $\mathcal{E} \in \Coh(\Omega_{\X})$ lies in 
    $\LL(\Omega_{\X})_{\delta}$
    if and only if, for any $\nu \colon (\bgm)_{k'} \to \Omega_{\X}$ such that 
    $\pi \circ \nu(0)=x \in X/G$, 
    we have
    \begin{align*}
        &\wt(\nu^{\ast}j^{\ast}j_{\ast}\mathcal{E})
        \subset \\
        &\left[ -\frac{1}{2}c_1((T_X|_{x} \oplus \Omega_X|_{x})^{>0}), 
        \frac{1}{2}c_1((T_X|_{x} \oplus \Omega_X|_{x})^{>0})
        \right]+\frac{1}{2}c_1(\mathfrak{g}_x)+c_1(\nu^*\delta).
    \end{align*}
    In particular, we have 
    \begin{enumerate}
\item for $\delta=\omega_{\X}^{1/2}$, the above condition is 
\begin{align*}
        \wt(\nu^{\ast}j^{\ast}j_{\ast}\mathcal{E})
        \subset \left[c_1 (\Omega_X|_{x}^{<0}), c_1(\Omega_X|_{x}^{>0}) \right]
        +c_1(\nu^{*}\mathfrak{g}). 
        \end{align*}
\item for $\delta=\omega_{\X}^{-1/2}$, the above condition is 
 \begin{align*}
        \wt(\nu^{\ast}j^{\ast}j_{\ast}\mathcal{E})
        \subset \left[c_1 (T_X|_{x}^{<0}), c_1(T_X|_{x}^{>0}) \right].
        \end{align*}    
        \end{enumerate}
    \end{lemma}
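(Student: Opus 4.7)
The strategy is to apply Lemma~\ref{lem:emb} to a global presentation of $\Omega_\X$ as a derived zero locus and then translate the abstract weight bound into the explicit one stated. One realizes $\Omega_\X \simeq \mu^{-1}(0)/G$ as the derived zero locus of the moment map $s = \mu \colon \Omega_X \to \fg^\vee$, viewed as a $G$-equivariant section of the trivial vector bundle $\V = (\Omega_X \times \fg^\vee)/G \to \Y := \Omega_X/G$ over a smooth stack. This matches the local-model diagram~(\ref{dia:zeros}) used in Lemma~\ref{lem:emb}, with the closed immersion $i$ there playing the role of our $j$; the lemma then characterizes membership in $\LL(\Omega_\X)_\delta$ by the condition
\begin{align*}
\wt(\nu^* j^* j_* \mathcal{E}) \subset \left[\tfrac{1}{2} c_1(\nu^* \mathbb{L}_\V^{<0}),\; \tfrac{1}{2} c_1(\nu^* \mathbb{L}_\V^{>0})\right] + c_1(\nu^*\delta).
\end{align*}

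The next step is to identify $\nu^* \mathbb{L}_\V$ as a $\mathbb{G}_m$-representation. Since $\ox := \nu(0)$ lies in the zero section of $\V \to \Y$, the relative-tangent splitting gives $\mathbb{L}_\V|_{\ox} \simeq \mathbb{L}_\Y|_{\ox} \oplus \fg$, while the presentation $\Y = \Omega_X/G$ combined with the identification $\Omega_{\Omega_X}|_{\tilde x} \cong \Omega_X|_x \oplus T_X|_x$ (cotangent of the total space of the cotangent bundle, evaluated at a preimage $\tilde x$ of $\ox$) gives $\mathbb{L}_\Y|_{\ox} \simeq [\Omega_X|_x \oplus T_X|_x \to \fg^\vee]$ in cohomological degrees $[0,1]$. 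Because the functional $c_1((-)^{>0}) = \sum_{w>0} w\,\dim(-)_w$ is additive on K-theory classes of $\mathbb{G}_m$-representations, and $c_1((\fg^\vee)^{>0}) = -c_1(\fg^{<0})$, a short bookkeeping yields
\begin{align*}
c_1(\nu^* \mathbb{L}_\V^{>0}) = c_1\!\left((T_X|_x \oplus \Omega_X|_x)^{>0}\right) + c_1(\nu^*\fg),
\end{align*}
and the analogous identity with $>0$ replaced by $<0$. Since $\Omega_X|_x$ is the $\mathbb{G}_m$-dual (via $\nu$) of $T_X|_x$, the sum $T_X|_x \oplus \Omega_X|_x$ is self-dual as a $\mathbb{G}_m$-representation, so its negative-weight part has $c_1$ equal to minus that of its positive-weight part. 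Substituting back into the bound produces precisely the interval asserted in the main statement.

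For the two special cases, one computes $c_1(\nu^* \omega_\X)$ from $\omega_\X = \det \mathbb{L}_\X$ using $\mathbb{L}_\X|_x \simeq [\Omega_X|_x \to \fg^\vee]$ in degrees $[0,1]$, which gives $c_1(\nu^* \omega_\X) = c_1(\nu^* \Omega_X|_x) + c_1(\nu^*\fg)$. Plugging $\delta = \omega_\X^{\pm 1/2}$ into the main formula and simplifying via $c_1(T_X|_x^{>0}) = -c_1(\Omega_X|_x^{<0})$ collapses the symmetric interval into the claimed asymmetric ones $[c_1(\Omega_X|_x^{<0}), c_1(\Omega_X|_x^{>0})] + c_1(\nu^*\fg)$ in case (i) and $[c_1(T_X|_x^{<0}), c_1(T_X|_x^{>0})]$ in case (ii); in the latter case the two $\fg$-contributions cancel exactly. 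The one technical point requiring care—and the main obstacle—is the additivity of $c_1((-)^{>0})$ together with the sign conventions under dualization and cohomological shift in degree $1$; this is routine once the K-theoretic decomposition of $\mathbb{L}_\V|_{\ox}$ is in hand, and no further input beyond Lemma~\ref{lem:emb} is required.
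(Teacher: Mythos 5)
Your proof is correct and follows the paper's own route: the paper likewise deduces the lemma directly from Lemma~\ref{lem:emb} applied with $\mathfrak{M}=\Omega_{\X}$, $\mathcal{Y}=\Omega_X/G$, $\mathcal{V}=(\Omega_X\times\mathfrak{g}^{\vee})/G$ and $s=\mu$, the only difference being that you write out the weight bookkeeping (which the paper leaves implicit) and your identities check out.
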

    \begin{proof}
        The lemma immediately follows from Lemma~\ref{lem:emb}
        by setting 
        \begin{align*}\mathfrak{M}=\Omega_{\X},     \mathcal{Y}=\Omega_X/G, \ \mathcal{V}=(\Omega_X \times \mathfrak{g}^{\vee})/G, \ s=\mu. 
        \end{align*}
            \end{proof}

\subsection{The non-quasi-compact case}\label{subsec:nonqcp}
Now suppose that $\mathfrak{M}$ is a quasi-smooth derived stack satisfying (\ref{C:lim}), which is 
not necessarily quasi-compact, but admits a Zariski open covering 
\begin{align*}
    \mathfrak{M}=\bigcup_{\mathcal{U}\subset \mathfrak{M}}\mathcal{U},
\end{align*}
where each $\mathcal{U}\subset \mathfrak{M}$ is a QCA stack. 
By the Zariski descent of ind-coherent sheaves~\cite[Proposition~4.2.1]{MR3136100}, 
we have 
\begin{align*}
    \IndCoh(\mathfrak{M})=\lim_{\mathcal{U}\subset \mathfrak{M}}\IndCoh(\mathcal{U})
\end{align*}
where the limit is after quasi-compact open substacks $\mathcal{U}\subset \mathfrak{M}$ 
and pull-backs
\begin{align}\label{pull:j0}
j^{*}\colon 
    \IndCoh(\mathcal{U}_2) \to \IndCoh(\mathcal{U}_1)
\end{align}
for open immersions $j \colon \mathcal{U}_1 \hookrightarrow \mathcal{U}_2$. 
The above functor restricts to the functor of compact objects
\begin{align}\notag
    j^* \colon \Coh(\mathcal{U}_2) \to \Coh(\mathcal{U}_1)
\end{align}
which, by Definition~\ref{def:Lcat}, 
restricts to the functor 
\begin{align}\label{pback:L0}
    j^* \colon \LL(\mathcal{U}_2)_{\delta}\to \LL(\mathcal{U}_1)_{\delta}. 
\end{align}
By taking the ind-completion, we obtain the functor 
\begin{align}\label{pback:L}
    j^* \colon \IndL(\mathcal{U}_2)_{\delta} \to \IndL(\mathcal{U}_1)_{\delta},
\end{align}
which is the restriction of the functor (\ref{pull:j0}). 
We then define 
\begin{align}\label{def:Ltilde0}
    \IndL(\mathfrak{M})_{\delta}:=\lim_{\mathcal{U}\subset \mathfrak{M}}\IndL(\mathcal{U})_{\delta}. 
\end{align}
Here the limit is after quasi-compact open substacks $\mathcal{U}\subset \mathfrak{M}$ and 
pull-backs (\ref{pback:L}). 
We define the category $\LL(\mathfrak{M})_{\delta}$ to be 
the subcategory of compact objects in $\IndL(\mathfrak{M})_{\delta}$
\begin{align*}
    \LL(\mathfrak{M})_{\delta}:=(\IndL(\mathfrak{M})_{\delta})^{\mathrm{cp}}. 
\end{align*}

\begin{remark}\label{rmk:Ltilde}
We can also consider the limit 
\begin{align}\label{def:Ltilde}
\widetilde{\LL}(\mathfrak{M})_{\delta}:=\lim_{\mathcal{U}\subset \mathfrak{M}}\LL(\mathcal{U})_{\delta} 
\end{align}
where the limit is taken with respect to pull-backs (\ref{pback:L0}). 
We have $\LL(\mathfrak{M})_{\delta}=\widetilde{\LL}(\mathfrak{M})_{\delta}$ if $\mathfrak{M}$ is 
QCA, but this is not necessarily the case if $\mathfrak{M}$ is not QCA. 
The category (\ref{def:Ltilde}) will appear in Section~\ref{sec:hecke}. 
\end{remark}

\subsection{Singular supports of ind-coherent sheaves}\label{subsec:singular}
Let $\mathfrak{M}$ be as in the previous subsection. 
Let $\Omega_{\mathfrak{M}}[-1]$ be its $(-1)$-shifted cotangent 
\begin{align}\label{-1:shift}
    \Omega_{\mathfrak{M}}[-1]:=\Spec_{\mathfrak{M}} \mathrm{Sym}(\mathbb{T}_{\mathfrak{M}}[1]) \to \mathfrak{M}. 
\end{align}
For each $x\in \mathfrak{M}(k)$, a fiber of the above 
map is 
\begin{align*}
    \mathcal{H}^1(\mathbb{T}_{\mathfrak{M}}|_x)^{\vee} \cong 
    \mathcal{H}^{-1}(\mathbb{T}_{\mathfrak{M}}|_{x})=\mathfrak{g}_x
\end{align*}
where $\mathfrak{g}_x$ is the Lie algebra of $\mathrm{Aut}(x)$. We denote by 
\begin{align}\label{nilp:N}
    \mathcal{N}\subset \Omega_{\mathfrak{M}}[-1]
\end{align}
the closed substack consisting of nilpotent elements on fibers of (\ref{-1:shift}). 

For an object $\mathcal{E} \in \IndCoh(\mathfrak{M})$, Arinkin-Gaitsgory~\cite{AG} 
associate the conical closed subset, called \textit{singular support}
\begin{align}\notag
    \mathrm{Supp}^{\mathrm{AG}}(\mathcal{E}) \subset \Omega_{\mathfrak{M}}[-1]. 
\end{align}
Locally, it equals to the support of the matrix factorization under Koszul 
equivalence in Theorem~\ref{thm:Kduality}, see~\cite[Proposition~2.3.9]{T}.  
As in~\cite{AG}, we define the subcategory of nilpotent singular supports
\begin{align*}
    \IndCoh_{\mathcal{N}}(\mathfrak{M}) \subset \IndCoh(\mathfrak{M})
\end{align*}
to be consisting of objects whose singular supports are contained in $\mathcal{N}$. 
We define 
\begin{align}\label{nilp:L}
\LL_{\mathcal{N}}(\mathfrak{M})_{\delta}\subset \LL(\mathfrak{M})_{\delta}, \ 
\IndL_{\mathcal{N}}(\mathfrak{M})_{\delta} \subset \IndL(\mathfrak{M})_{\delta}
\end{align}
as the subcategories whose objects have nilpotent singular supports. 

\subsection{Limit categories for cotangent stacks}\label{subsec:limcot}
Let $\X$ be a smooth stack, which is
not necessarily quasi-compact, but admits a Zariski open covering 
\begin{align*}
    \X=\bigcup_{U\subset \X} U
\end{align*}
where each $U$ is QCA. 
Then we have the open covering
\begin{align}\label{open:cot}
    \Omega_{\X}=\bigcup_{U\subset \mathcal{X}} \Omega_{U}.
\end{align}
Note that each $\Omega_U$ is also QCA. 
We take 
$\delta \in \mathrm{Pic}(\X)_{\mathbb{R}}$, 
and regard it as an element of $\mathrm{Pic}(\Omega_{\X})_{\mathbb{R}}$ 
via pull-back the projection 
$\Omega_{\X} \to \X$. 
By applying the above construction for the cotangent stack
$\mathfrak{M}=\Omega_{\X}$, 
we define the \textit{ind-limit category} 
\begin{align*} \IndL(\Omega_{\X})_{\delta} \subset 
    \IndCoh(\Omega_{\X}). 
\end{align*}
By the open covering (\ref{open:cot}), it is also written as 
\begin{align*}
    \IndL(\Omega_{\X})_{\delta}=\lim_{U\subset \X} \IndL(\Omega_U)_{\delta}. 
\end{align*}

The subcategory of compact objects 
\begin{align*}
    \LL(\Omega_{\X})_{\delta}:=\IndL(\Omega_{\X})_{\delta}^{\mathrm{cp}}
\end{align*}
is called \textit{limit category}. The (ind-)limit categories of nilpotent singular supports 
\begin{align}\label{limit:nilp}
    \LL_{\mathcal{N}}(\Omega_{\X})_{\delta} \subset \LL(\Omega_{\X})_{\delta}, \ 
    \IndL_{\mathcal{N}}(\Omega_{\X})_{\delta} \subset \IndL(\Omega_{\X})
\end{align}
are also defined similarly to (\ref{nilp:L}). As before we use the subscript $(-)_{\frac{1}{2}}$ for $\delta=\omega_{\X}^{1/2}$.
\begin{remark}\label{rmk:limit}
As we will discuss in Section~\ref{sec:Dmod}, 
the category $\LL(\Omega_{\X})_{\frac{1}{2}}$ should be related to
the classical limit of the category of coherent D-modules 
$\text{D-mod}_{\mathrm{coh}}(\X)$:
\begin{align*}
    \text{D-mod}_{\mathrm{coh}}(\X) \leadsto \LL(\Omega_{\X})_{\frac{1}{2}}.
\end{align*}
For example, when $\X=BG$ for a reductive group $G$, the category $\LL(\Omega_{\X})_{\frac{1}{2}}$ is equivalent to 
$\text{D-mod}_{\mathrm{coh}}(\X)$, see Subsection~\ref{subsec:exam}. 
Similarly, as we discuss in Subsection~\ref{subsec:limsupp}, we expect $\LL_{\mathcal{N}}(\Omega_{\X})_{\frac{1}{2}}$ to be related to the classical limit of the category of compact D-modules: 
\begin{align*}
    \text{D-mod}(\X)^{\mathrm{cp}} \leadsto \LL_{\mathcal{N}}(\Omega_{\X})_{\frac{1}{2}}.
\end{align*}
\end{remark}

\subsection{Examples of limit categories}\label{subsec:Exam}
\subsubsection{$\mathcal{X}$ a smooth Deligne-Mumford stack}
If $\X$ is a smooth Deligne-Mumford stack, we have 
\begin{align*}
\LL_{\mathcal{N}}(\Omega_{\X})_{\delta}=
    \LL(\Omega_{\X})_{\delta}=\Coh(\Omega_{\X}). 
\end{align*}
Indeed, there is no map $\nu \colon \bgm \to \Omega_{\X}$ 
which is non-trivial on stabilizer groups, so the condition (\ref{wt:nu}) 
is vacuous. Moreover, the cotangent $\Omega_{\X}$ is smooth, 
so the nilpotent singular support condition is also vacuous. 

\subsubsection{$\mathcal{X}=BG$ for a reductive $G$}\label{subsub:BG}
Suppose that $\X=BG$ for an affine algebraic group $G$ with Lie algebra $\mathfrak{g}$.
Then we have
\begin{align*}
    \Omega_{\X}=\mathfrak{g}^{\vee}[-1]/G\stackrel{j}{\hookrightarrow}BG 
\end{align*}
where $j$ is the closed immersion. 
Suppose that $G$ is reductive. Then, 
for any $\mathcal{E}\in \Coh(\Omega_{\X})$, 
we have the decomposition \begin{align*}j_{*}\mathcal{E}=\bigoplus V(\chi)\otimes \mathcal{O}_{BG},\end{align*}
where $V(\chi)$ is a direct sum of 
irreducible $G$-representations with highest weight $\chi$. 
By Lemma~\ref{lem:emb},
we have $\mathcal{E} \in \LL(\Omega_{\X})_{\frac{1}{2}}$ if and only if 
$V(\chi) =0$ for $\chi \neq 0$, or, equivalently, 
$j_{*}\mathcal{E}$ is a direct sum of $\mathcal{O}_{BG}$. 

By Koszul equivalence in Theorem~\ref{thm:Kduality}, there is 
an equivalence
\begin{align}\label{equiv:BG}
    \Coh(\Omega_{\X})\simeq\Coh^{\shear}(\mathfrak{g}/G).
\end{align}
From the construction of Koszul equivalence in Theorem~\ref{thm:Kduality}, 
the equivalence (\ref{equiv:BG}) restricts to an equivalence 
\begin{align*}
    \LL(\Omega_{\X})_{\frac{1}{2}}=\langle 0_{*}\mathcal{O}_{BG}\rangle \simeq \langle \mathcal{O}_{\mathfrak{g}/G}\rangle
    \subset \Coh^{\shear}(\mathfrak{g}/G),
\end{align*}
where the left-hand side is the subcategory 
of $\Coh(\Omega_{\X})$ generated by $0_{*}\mathcal{O}_{\X}$ for the 
zero section $0 \colon \X \to \Omega_{\X}$ 
, and the 
right-hand side is the subcategory of $\Coh^{\shear}(\fg/G)$
generated by $\mathcal{O}_{\mathfrak{g}/G}$. 
We have the equivalence 
\begin{align*}
\langle \mathcal{O}_{\mathfrak{g}/G}\rangle    \stackrel{\sim}{\leftarrow} \Coh^{\shear}(\mathfrak{g}\ssslash G)=\Coh^{\shear}(\mathfrak{h}\ssslash W),
\end{align*}
where $\mathfrak{g}/G \to \mathfrak{g}\ssslash G=\mathfrak{h}\ssslash W$ is the good moduli space morphism,
$\mathfrak{h} \subset \fg$ is the Cartan subalgebra, and $W$ is the Weyl group.

It is known that 
\begin{align}\label{cartan}
k[\mathfrak{h}[-2]]^W=k[y_2, y_4, \ldots, y_{2n}], \ \deg y_{2i}=2m_i, \ n=\dim \mathfrak{h}
\end{align}
for some $m_i\in \mathbb{Z}_{>0}$. 
From the above argument, we have an equivalence 
\begin{align*}
    \LL(\Omega_{BG})_{\frac{1}{2}} \simeq \Coh(k[y_2, y_4, \ldots, y_{2n}]). 
\end{align*}
The subcategory $\LL_{\mathcal{N}}(\Omega_{BG})_{\frac{1}{2}}$ in the left-hand side 
corresponds to nilpotent modules in the right-hand side. By Koszul duality, we have an equivalence
\begin{align*}
    \LL_{\mathcal{N}}(\Omega_{BG})_{\frac{1}{2}} \simeq \mathrm{Perf}(k[x_{-1}, x_{-3}, \ldots, x_{1-2n}]), 
\end{align*}
where $\deg x_{1-2i}=1-2m_i$.


\subsubsection{$\mathcal{X}=\mathbb{A}^1/\mathbb{G}_m$}\label{subsub:A1}
Let $\mathbb{G}_m$ act on $\mathbb{A}^1$ by weight one, 
and consider $\X=\mathbb{A}^1/\mathbb{G}_m$. 
It consists of two points 
\begin{align}\label{2points}
    \bgm \stackrel{i}{\hookrightarrow} \X \stackrel{j}{\hookleftarrow} \mathrm{pt}
\end{align}
where $i$ is a closed immersion and $j$ is an open immersion. 
Its cotangent is 
\begin{align*}
    \Omega_{\X} = \{xy=0\}/\mathbb{G}_m \subset \mathbb{A}^2/\mathbb{G}_m
\end{align*}
where $\mathbb{G}_m$ acts on $\mathbb{A}^2$ by weight $(1, -1)$. 
Let $\lambda \colon \mathbb{G}_m \to \mathbb{G}_m$ be given by $z\mapsto z^{-1}$. 
Then we have the diagram, see Subsection~\ref{notation:attracting} for the notation  
\begin{align*}
    \xymatrix{
    \Omega_{\X}^{\lambda \geq 0}=\Spec k[y, \epsilon]/\mathbb{G}_m \inclusion^-{p} \ar[d]^-{q} & \Omega_{\X} \\
    \Omega_{\X}^{\lambda}=\Spec k[\epsilon]/\mathbb{G}_m &
    }
\end{align*}
where $\deg \epsilon =-1$ with $\mathbb{G}_m$-weight zero. 

By~\cite[Theorem~3.3.1]{HalpK32}, 
there is a semiorthogonal decomposition 
\begin{align*}
    \Coh(\Omega_{\X})=\langle \ldots, \Coh(k[\epsilon])_{\lambda\text{-}\wt=-1}, \langle \mathcal{O}_{\Omega_{\X}} \rangle,  \Coh(k[\epsilon])_{\lambda\text{-}\wt=0}, \Coh(k[\epsilon])_{\lambda\text{-}\wt=1}, \ldots\rangle. 
\end{align*}
Here $\langle\mathcal{O}_{\Omega_{\X}} \rangle$ is the subcategory generated by $\mathcal{O}_{\Omega_{\X}}$, which 
is equivalent to 
\begin{align*}
\langle\mathcal{O}_{\Omega_{\X}}\rangle \stackrel{\sim}{\to} \Coh(\Omega_{\X} \setminus \mathrm{Im}(p)) 
=\Coh(\mathrm{pt})
\end{align*}
and each other component is
\begin{align*}
    \Coh(k[\epsilon])_{\lambda\text{-}\wt=i}=p_{*}q^* \Coh(k[\epsilon])_{\lambda\text{-}\wt=i}
    \simeq \Coh(k[\epsilon]). 
\end{align*}
Note that $\omega_{\X}$ has $\lambda$-weight one. 
Using Lemma~\ref{lem:char2}, 
one can check that 
\begin{align}\label{check:L}
&\LL(\Omega_{\X})_{-\frac{1}{2}}
    =\langle \Coh(k[\epsilon])_{\lambda\text{-}\wt=-1}, 
    \langle\mathcal{O}_{\Omega_{\X}} \rangle \rangle, \\ \notag
&\LL(\Omega_{\X})_{\frac{1}{2}}
    =\langle \langle \mathcal{O}_{\Omega_{\X}} \rangle,  \Coh(k[\epsilon])_{\lambda\text{-}\wt=0} \rangle.
\end{align}
Using the second equivalence in (\ref{equiv:Domega}), we have 
\begin{align}
    \label{sod:LA1}
    \LL(\Omega_{\X})_{\frac{1}{2}}&=\langle \langle \mathcal{O}_{\Omega_{\X}}\rangle, \Coh(k[\epsilon])_{\lambda\text{-}\wt=0}\rangle \\
   \notag &=\langle \Coh(k[\epsilon])_{\lambda\text{-}\wt=0}, \langle \mathcal{O}_{\Omega_{\X}}(-1)\rangle\rangle.
\end{align}
Note that $\LL(\Omega_{\mathrm{pt}})_{\frac{1}{2}}=\Coh(\mathrm{pt})$
and $\LL(\Omega_{\bgm})_{\frac{1}{2}}=\Coh(k[\epsilon])$. 

From the semiorthogonal decompositions (\ref{check:L})
and the second equivalence in (\ref{L:half}), we 
have the following recollement diagram 
\begin{align}\label{dia:recoll}
\xymatrix{
\LL(\Omega_{\bgm})_{\frac{1}{2}} \ar[r]^-{i_{*}^{\Omega}} & \LL(\Omega_{\X})_{\frac{1}{2}} \ar[r]^-{j^*} \ar@/^18pt/[l]_-{i^!} \ar@/_18pt/[l]_-{i^*}& 
\LL(\Omega_{\mathrm{pt}})_{\frac{1}{2}}. \ar@/^18pt/[l]_-{j_*} \ar@/_18pt/[l]_-{j_!}
}
\end{align}
That is, there are exact triangles of functors
\begin{align*}
   i_{\ast}^{\Omega} i^! \to \mathrm{id}_{\LL(\Omega_{\X})_{1/2}} \to j_{*}j^*, \ 
    j_{!}j^* \to \mathrm{id}_{\LL(\Omega_{\X})_{1/2}} \to i_{\ast}^{\Omega}i^*
\end{align*}
so that $j_{*} \circ i_{\ast}^{\Omega}=0$ and the functors $j_!$, $j_*$ and $i_{*}^{\Omega}$ are 
fully-faithful. 
The functor $i_{*}^{\Omega}$ is the functor induced by $i\colon \bgm \hookrightarrow \X$. It will be constructed in general in Proposition~\ref{prop:push}, but see below for this particular example. The semiorthogonal decompositions in (\ref{sod:LA1}) are 
\begin{align*}
\LL(\Omega_{\X})_{\frac{1}{2}}&=\langle j_{*} \LL(\Omega_{\mathrm{pt}})_{\frac{1}{2}},i_{\ast}^{\Omega}\LL(\Omega_{\bgm})_{\frac{1}{2}}\rangle \\
\notag&=\langle i_{\ast}^{\Omega}\LL(\Omega_{\bgm})_{\frac{1}{2}},  j_{!}\LL(\Omega_{\mathrm{pt}})_{\frac{1}{2}}\rangle.
\end{align*}

The recollement diagram (\ref{dia:recoll}) restricts to the recollement diagram for limit 
categories of nilpotent singular supports. In particular, there are semiorthogonal decompositions
\begin{align}\notag
\LL_{\mathcal{N}}(\Omega_{\X})_{\frac{1}{2}}&=\langle j_{*} \LL_{\mathcal{N}}(\Omega_{\mathrm{pt}})_{\frac{1}{2}},i_{\ast}^{\Omega}\LL_{\mathcal{N}}(\Omega_{\bgm})_{\frac{1}{2}}\rangle\\
\notag&=\langle i_{\ast}^{\Omega}\LL_{\mathcal{N}}(\Omega_{\bgm})_{\frac{1}{2}},  j_{!}\LL_{\mathcal{N}}(\Omega_{\mathrm{pt}})_{\frac{1}{2}}\rangle.
\end{align}
Note that $\LL_{\mathcal{N}}(\Omega_{\mathrm{pt}})_{\frac{1}{2}}=\LL(\Omega_{\mathrm{pt}})_{\frac{1}{2}}\simeq\Coh(\mathrm{pt})$ and $\LL_{\mathcal{N}}(\Omega_{\bgm})_{\frac{1}{2}}\simeq\mathrm{Perf}(k[\epsilon])$.


\subsubsection{$\mathcal{X}=BP$ for a parabolic $P$}\label{subsub:GA1}
Let $G$ be a reductive group and $\lambda \colon \mathbb{G}_m \to G$ a cocharacter. 
Let $P=G^{\lambda \geq 0}$ be the parabolic subgroup and $M=G^{\lambda}$ the associated Levi
quotient, with Lie algebras $\fp$, $\fm$ respectively. 
Let $\X$ be the stack $\X=BP$. Note that $\X$ does not admit 
a good moduli space. 
We have the Koszul equivalence 
\begin{align}\notag
    \Coh(\Omega_{\X}) \simeq \Coh^{\shear}(\mathfrak{p}/P). 
\end{align}
The diagram of attacting loci
\begin{align*}
\fg^{\lambda}/G^{\lambda} \leftarrow \fg^{\lambda \geq 0}/G^{\lambda \geq 0} \to \fg/G
\end{align*}
is identified with 
\begin{align*}
    \fm/M \stackrel{\pi}{\leftarrow} \fp/P \to \fg/G.
\end{align*}

We have the semiorthogonal decomposition, see~\cite[Amplification~3.18]{halp}
\begin{align*}
    \Coh(\mathfrak{p}/P)=\langle \pi^{*}\Coh(\fm/M)_{\lambda\text{-}\wt=j} : j\in \mathbb{Z}\rangle. 
\end{align*}
Then one can show that there is an equivalence
\begin{align*}
\pi^*(-)\otimes \omega_{BP} \colon \MM(\fm/M)_0 \stackrel{\sim}{\to}
    \MM(\fp/P)_{\omega_{BP}}. 
\end{align*}
By Proposition~\ref{prop:comparemagic} and the computation in Subsection~\ref{subsub:BG}, we have equivalences 
\begin{align*}
    &\LL(\Omega_{BP})_{\frac{1}{2}} \simeq \Coh^{\shear}(\MM(\fp/P)_{\omega_{BP}}) \simeq 
    \mathrm{Perf}^{\shear}(\mathfrak{h}_M\ssslash W_M), \\
    &\LL_{\mathcal{N}}(\Omega_{BP})_{\frac{1}{2}} \simeq \Coh^{\shear}_{\mathrm{nilp}}(\MM(\fp/P)_{\omega_{BP}}) \simeq 
    \mathrm{Perf}^{\shear}_{\mathrm{nilp}}(\mathfrak{h}_M\ssslash W_M).
\end{align*}
Here $\mathfrak{h}_M \subset \fm$ is the Cartan subalgebra and $W_M$ the Weyl group of $M$.

\section{Limit categories and categories of D-modules}\label{sec:Dmod}
In this section, we discuss the relationship between limit categories 
and categories of D-modules on stacks. Most of the relations discussed in this section 
are still conjectural.

\subsection{The category of D-modules}\label{subsec:catD}
For a smooth stack $\mathcal{X}$, let $\text{D-mod}(\mathcal{X})$
be the dg-category of (right) D-modules on $\mathcal{X}$. 
It is defined by 
\begin{align*}
\text{D-mod}(\mathcal{X}):=
\text{D-mod}^r(\mathcal{X}):=\IndCoh(\mathcal{X}_{\mathrm{dR}})
\end{align*}
see~\cite{MR3701353}. 
Here $\mathcal{X}_{\mathrm{dR}}$ is the de-Rham stack of $\mathcal{X}$,
whose $S$-valued points for an affine derived scheme $S$ is 
given by 
\begin{align*}
\mathrm{Map}(X, \mathcal{X}_{\mathrm{dR}})=\mathrm{Map}(S^{cl, red}, \mathcal{X}). 
\end{align*}
Here, $S^{cl, red}$ is the reduced scheme of the classical truncation of $S$.
We also denote by 
\begin{align*}
    \text{D-mod}_{\mathrm{coh}}(\mathcal{X}):=\Coh(\X_{\mathrm{dR}}) \subset \text{D-mod}(\mathcal{X})
\end{align*}
the subcategory of coherent D-modules. 
\begin{remark}\label{rmk:D-mod1}
In the case that $X$ is a smooth scheme, 
the category $\text{D-mod}(X)$ is the usual (dg) category 
of quasi-coherent right $\mathrm{D}_{X}$-modules, where 
$\mathrm{D}_{X}$ is the sheaf of differential operators on $X$, and $\text{D-mod}_{\mathrm{coh}}(\mathcal{X})$ is the category of coherent right $\mathrm{D}_{X}$-modules.
\end{remark}
\begin{remark}\label{rmk:D-mod2}
In the case that $\mathcal{X}$ is a smooth quotient stack 
$\mathcal{X}=X/G$, the category $\text{D-mod}(\mathcal{X})$ is 
the category of (strongly) $G$-equivariant derived category of $\mathrm{D}_X$-modules 
as in~\cite[Section~5]{McNe}.  In general, one can equivalently 
define $\text{D-mod}(\mathcal{X})$ as 
\begin{align*}
    \text{D-mod}(\mathcal{X})=\lim_{U \to \mathcal{X}}\text{D-mod}(U)
\end{align*}
where $U$ is a scheme with smooth morphism $U\to \mathcal{X}$, and further one has that 
\[\text{D-mod}_{\mathrm{coh}}(\mathcal{X})=\lim_{U \to \mathcal{X}}\text{D-mod}_{\mathrm{coh}}(U).\]
\end{remark}

\begin{remark}\label{rmk:D-mod3}
There is also a category of left D-modules, defined by 
\begin{align*}
    \text{D-mod}^l(\mathcal{X}):=\QCoh(\mathcal{X}_{\mathrm{dR}}). 
\end{align*}
There is a canonical equivalence, see~\cite{MR3701353}:
\begin{align}\label{equiv:rl}
    \Upsilon \colon \text{D-mod}^l(\mathcal{X}) \stackrel{\sim}{\to} 
    \text{D-mod}(\mathcal{X}), \ \mathcal{E} \mapsto \mathcal{E}\otimes \omega_{\mathcal{X}_{\mathrm{dR}}}. 
\end{align}
\end{remark}
By~\cite[Theorem~8.1]{MR3037900}, if $\X$ is a QCA stack, then $\text{D-mod}(\X)$ is compactly 
generated by the subcategory of compact objects $\text{D-mod}(\X)^{\mathrm{cp}}$. 
We have an inclusion
\begin{align*}
    \text{D-mod}(\X)^{\mathrm{cp}} \subset \text{D-mod}_{\mathrm{coh}}(\X)
\end{align*}
which is an equality if $\X$ is a scheme, but it is strict in general. 

\subsection{Examples of comparisons between limit categories and categories of D-modules}\label{subsec:exam}

Here we discuss the relationship between limit categories 
$\LL(\Omega_{\X})_{\frac{1}{2}}$
and 
the categories of coherent D-modules in the examples of Subsection~\ref{subsec:Exam}. 
\subsubsection{$\X=BG$ for a finite group $G$}
Let $\X=BG$ for a finite group $G$, which is a Deligne-Mumford stack. 
Then we have $\Omega_{\X}=\X=BG$ and 
\begin{align*}
\LL_{\mathcal{N}}(\Omega_{\X})_{\frac{1}{2}}=
    \LL(\Omega_{\X})_{\frac{1}{2}}=\Coh(\Omega_{\X})=\mathrm{Rep}(G)=
    \text{D-mod}_{\mathrm{coh}}(BG)=\text{D-mod}(BG)^{\mathrm{cp}}.
\end{align*}
Here $\mathrm{Rep}(G)$ is the dg-category of finite-dimensional $G$-representations. 

\subsubsection{$\X=BG$ for a reductive $G$}\label{subsec:ExD:red}
Let $\X=BG$ for a reductive algebraic group $G$. 
By~\cite[Section~7.2]{MR3037900}, we have
\begin{align*}
    \text{D-mod}(BG) \simeq \QCoh(A)
\end{align*}
where $A=\Gamma_{\mathrm{dR}}(G)^{\vee}$. 
It is known that 
\begin{align*}
A=k[x_{-1}, x_{-3}, \ldots, x_{1-2n}], \ \deg x_{1-2i}=1-2m_i
\end{align*}
for $n=\dim \mathfrak{h}$. By (\ref{cartan}), $A$ is Koszul dual to 
$k[\mathfrak{h}[-2]]^W$, hence we have equivalences
\begin{align*}
   & \text{D-mod}_{\mathrm{coh}}(BG) \simeq \Coh(A)
    \simeq \Coh^{\shear}(\mathfrak{h}\ssslash W), \\ 
    & \text{D-mod}(BG)^{\mathrm{cp}}\simeq \mathrm{Perf}(A)\simeq \Coh^{\shear}_{\mathrm{nilp}}(\mathfrak{h}\ssslash W).
\end{align*}
By the computation in Subsection~\ref{subsub:BG}
we have equivalences 
\begin{align*}
    \LL(\Omega_{BG})_{\frac{1}{2}} \simeq \text{D-mod}_{\mathrm{coh}}(BG), \ 
    \LL_{\mathcal{N}}(\Omega_{BG})_{\frac{1}{2}} \simeq \text{D-mod}(BG)^{\mathrm{cp}}.
\end{align*}

\subsubsection{$\X=\mathbb{A}^1/\mathbb{G}_m$}
Let $\X=\mathbb{A}^1/\mathbb{G}_m$, where $\mathbb{G}_m$ acts on 
$\mathbb{A}^1$ by weight one, and let $i, j$ be as in (\ref{2points}). 
We have the recollements diagram
\begin{align}\notag
\xymatrix{\text{D-mod}(\bgm) \ar[r]^-{i_{*}} & \text{D-mod}(\X) \ar[r]^-{j^*} \ar@/^18pt/[l]_-{i^!} \ar@/_18pt/[l]_-{i^*}& 
\text{D-mod}(\mathrm{pt}). \ar@/^18pt/[l]_-{j_*} \ar@/_18pt/[l]_-{j_!}
}
\end{align}
By~\cite[Theorem~1.1]{McNe}, the above recollement diagram restricts to the 
recollement diagram
\begin{align}\label{dia:recoll3}
\xymatrix{\text{D-mod}_{\mathrm{coh}}(\bgm) \ar[r]^-{i_{*}} & \text{D-mod}_{\mathrm{coh}}(\X) \ar[r]^-{j^*} \ar@/^18pt/[l]_-{i^!} \ar@/_18pt/[l]_-{i^*}& 
\text{D-mod}_{\mathrm{coh}}(\mathrm{pt}). \ar@/^18pt/[l]_-{j_*} \ar@/_18pt/[l]_-{j_!}
}
\end{align}
Note that we have 
$\LL(\Omega_{\bgm})_{\frac{1}{2}}=\text{D-mod}_{\mathrm{coh}}(\bgm)$, 
$\LL(\Omega_{\mathrm{pt}})_{\frac{1}{2}}=\text{D-mod}_{\mathrm{coh}}(\mathrm{pt})$. 
One can check that 
\begin{align*}
    \LL(\Omega_{\mathbb{A}^1/\mathbb{G}_m})_{\frac{1}{2}} \simeq \text{D-mod}_{\mathrm{coh}}(\mathbb{A}^1/\mathbb{G}_m)
\end{align*}
and the recollement (\ref{dia:recoll3}) for D-modules is identified with the recollement
(\ref{dia:recoll}) for limit categories. 

The recollement (\ref{dia:recoll3}) also restricts to the subcategories of compact objects. 
One can also check that 
\begin{align*}
    \LL_{\mathcal{N}}(\Omega_{\mathbb{A}^1/\mathbb{G}_m})_{\frac{1}{2}} \simeq \text{D-mod}(\mathbb{A}^1/\mathbb{G}_m)^{\mathrm{cp}}.
\end{align*}

\subsubsection{$\mathcal{X}=BP$ for a parabolic $P$}
Let $\X=BP$ for a parabolic subgroup $P\subset G$ as in Subsection~\ref{subsub:GA1}.
The quotient $P\twoheadrightarrow M$ induces the morphism
$\pi \colon \X \to BM$, 
which is a $\mathbb{G}_a^n$-gerbe for some $n$. By~\cite[Lemma~10.3.6]{MR3037900}, 
we have the equivalence
\begin{align*}
    \pi^! \colon \text{D-mod}(BM) \stackrel{\sim}{\to} \text{D-mod}(\X),
\end{align*}
which restricts to an equivalences of coherent D-modules from the 
proof of~\cite[Lemma~10.3.6]{MR3037900}. 
Therefore, from the computation in Subsection~\ref{subsub:GA1}, we have equivalences
\begin{align*}
&\text{D-mod}_{\mathrm{coh}}(\X)\simeq \text{D-mod}_{\mathrm{coh}}(BM)
\simeq\mathrm{Perf}^{\shear}(\mathfrak{h}_M\ssslash W_M) \simeq
    \LL(\Omega_{\X})_{\frac{1}{2}}, \\
& \text{D-mod}(\X)^{\mathrm{cp}} \simeq \text{D-mod}(BM)^{\mathrm{cp}}
\simeq \mathrm{Perf}^{\shear}_{\mathrm{nilp}}(\mathfrak{h}_M\ssslash M) \simeq
    \LL_{\mathcal{N}}(\Omega_{\X})_{\frac{1}{2}}.
\end{align*}

\begin{remark}\label{rmk:eqD}
In the examples in Subsection~\ref{subsec:exam},
we have that the cardinality of  $\X(k)$ is finite, and there are equivalences
\begin{align*}
    \LL(\Omega_{\X})_{\frac{1}{2}} \simeq \text{D-mod}_{\mathrm{coh}}(\X), \ 
     \LL_{\mathcal{N}}(\Omega_{\X})_{\frac{1}{2}} \simeq \text{D-mod}(\X)^{\mathrm{cp}}.
\end{align*}
By taking the ind-completion, the latter in particular implies that 
\begin{align*}
    \IndL_{\mathcal{N}}(\Omega_{\X})_{\frac{1}{2}} \simeq \text{D-mod}(\X).
\end{align*}
The above equivalences are not always true for smooth QCA stacks with finite $\X(k)$, for example, for
$\X=B\mathbb{G}_a$, we have:
\begin{align*}
    \text{D-mod}(B\mathbb{G}_a)\simeq \mathrm{QCoh}(\mathrm{pt}), \ \IndL_{\mathcal{N}}(\Omega_{B\mathbb{G}_a})_{\frac{1}{2}}\simeq \mathrm{QCoh}_{\mathbb{G}_a}(k[\epsilon]).
\end{align*}
\end{remark}

\subsection{The category of filtered D-modules}\label{subsec:filt}
For a stack $\mathcal{X}$, let
$\text{D-mod}^{\mathrm{filt}}(\mathcal{X})$ be 
the dg-category of filtered (right) D-modules on 
$\mathcal{X}$. It is given by 
\begin{align}\label{def:Dfilt}
    \text{D-mod}^{\mathrm{filt}}(\X):=\IndCoh(\mathcal{X}_{\mathrm{Hod}}).
\end{align}
Here $\mathcal{X}_{\mathrm{Hod}}$ is the Hodge stack, 
given by the formal deformation to the normal cone
with respect to $\mathcal{X}\to \mathcal{X}_{\mathrm{dR}}$ as in~\cite[Section~9.2.4]{MR3701353}. 
It is a 
prestack over $\mathbb{A}^1/\mathbb{G}_m$
\begin{align}\notag
    \mathcal{X}_{\mathrm{Hod}} \to \mathbb{A}^1/\mathbb{G}_m.
\end{align}
Its generic fiber is $\mathcal{X}_{\mathrm{dR}}$. Its special fiber is
$\mathcal{X}_{\mathrm{Dol}}/\mathbb{G}_m$, where $\mathcal{X}_{\mathrm{Dol}}$
is the Dolbeault stack 
\begin{align*}
    \mathcal{X}_{\mathrm{Dol}}=\mathcal{X}/\widehat{T}_{\mathcal{X}}.
\end{align*}
Here, $\widehat{T}_{\mathcal{X}}$ is the formal completion 
of the zero section of the tangent stack $T_{\mathcal{X}} \to \mathcal{X}$. 
The functor of points of $\X_{\mathrm{Hod}}$ is described 
in~\cite[Definition~2.3.5]{Bhattnote}).

\begin{remark}\label{rmk:hodge}
In~\cite{HChen}, the category $\text{D-mod}^{\mathrm{filt}}(\X)$ is as 
the limit 
\begin{align*}
    \text{D-mod}^{\mathrm{filt}}(\X)=\lim_{U\to \X}\IndCoh(U_{\mathrm{Hod}}),
\end{align*}
where the limit is after all the schemes $U$ with smooth maps $U\to \X$. Tasuki Kinjo~\cite{Kinjonote} informed us that the above definition is equivalent to (\ref{def:Dfilt}). 
\end{remark}

By a version of Koszul duality, 
we have an equivalence, see~\cite[Theorem~2.19]{HChen}
\begin{align}\label{kequ:dol}
\IndCoh_{\mathbb{G}_m}(\mathcal{X}_{\mathrm{Dol}}) \stackrel{\sim}{\to}
\QCoh_{\mathbb{G}_m}(\Omega_{\mathcal{X}}). 
\end{align}
We have the following diagram 
\begin{align}\notag
\xymatrix{
\mathcal{X}_{\mathrm{Dol}}/\mathbb{G}_m \inclusion^-{i} \ar[d]\diasquare & 
\mathcal{X}_{\mathrm{Hod}} \ar[d] \diasquare& \mathcal{X}_{\mathrm{dR}} \ar[d] \linclusion_-{j} \\
\bgm \inclusion & \mathbb{A}^1/\mathbb{G}_m & \mathrm{pt}. \linclusion. 
}
\end{align}
Here the left arrows are closed immersions and the right arrows are 
open immersions. 
We have the diagram 
\begin{align*}
    \IndCoh(\mathcal{X}_{\mathrm{dR}}) \stackrel{j^*}{\leftarrow} \IndCoh(\mathcal{X}_{\mathrm{Hod}})
    \stackrel{i^!}{\to} \IndCoh_{\mathbb{G}_m}(\X_{\mathrm{Dol}}). 
\end{align*}
By the equivalence (\ref{kequ:dol}), the above diagram 
is identified with 
\begin{align}\label{funct:filt}
    \text{D-mod}(\mathcal{X}) \stackrel{j^{\ast}}{\leftarrow} 
    \text{D-mod}^{\mathrm{filt}}(\mathcal{X}) \stackrel{i^!}{\to}
    \QCoh_{\mathbb{G}_m}(\Omega_{\mathcal{X}}). 
\end{align}

\begin{remark}\label{rmk:filtD}
If $X$ is a smooth scheme, the category 
$\text{D-mod}^{\mathrm{filt}}(X)$ is the derived category of filtered
$D_X$-modules, or equivalently quasi-coherent sheaves over the 
Rees algebra 
\begin{align*}
\widetilde{\mathrm{D}}_X=\bigoplus_{k\geq 0}\mathrm{D}_X^{\leq k} t^k,
\end{align*}
where $\mathrm{D}_X^{\leq \bullet} \subset \mathrm{D}_X$ is the order filtration, 
see~\cite[Proposition~3.3.14]{HChen}. 
The functor $j^{*}$ forgets the filtration and the functor $i^!$
is taking the associated graded, see~\cite{HChen}. 
\end{remark}

We denote by 
\begin{align*}
    \text{D-mod}^{\mathrm{filt}}_{\mathrm{coh}}(\mathcal{X}) \subset \text{D-mod}^{\mathrm{filt}}(\mathcal{X})
\end{align*}
the subcategory of coherent filtered D-modules. The functors (\ref{funct:filt})
restrict to the functors
\begin{align}\notag
    \text{D-mod}_{\mathrm{coh}}(\mathcal{X}) \stackrel{j^*}{\leftarrow} \text{D-mod}^{\mathrm{filt}}_{\mathrm{coh}}(\mathcal{X}) \stackrel{i^!}{\to} \Coh_{\mathbb{G}_m}(\Omega_{\mathcal{X}}). 
\end{align}
We also define 
\begin{align*}
    \mathrm{gr} \colon \text{D-mod}^{\mathrm{filt}}(\mathcal{X})
    \stackrel{i^!}{\to} \Coh_{\mathbb{G}_m}(\Omega_{\mathcal{X}}) \to
    \Coh(\Omega_{\mathcal{X}})
\end{align*}
where the last functor is forgetting the grading. 

We formulate a conjecture which provides evidence that the limit category $\LL(\Omega_{\mathcal{X}})_{\frac{1}{2}}$
may be regarded as a classical limit of coherent D-modules. We discuss a strategy of proving it in Subsection~\ref{subsection:DLL}.
\begin{conj}\label{conj:L}
For any $\mathcal{E} \in \mathrm{D}\text{-}\mathrm{mod}_{\mathrm{coh}}(\mathcal{X})$, there is 
an object $\widetilde{\mathcal{E}} \in \mathrm{D}\text{-}\mathrm{mod}^{\mathrm{filt}}_{\mathrm{coh}}(\mathcal{X})$
(i.e. good filtration of $\mathcal{E}$) such that 
\begin{align*}
    \mathrm{gr}(\widetilde{\mathcal{E}}) \in \LL(\Omega_{\mathcal{X}})_{\frac{1}{2}}. 
\end{align*}
\end{conj}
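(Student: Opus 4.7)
The plan is to construct the good filtration via the Hodge stack $\mathcal{X}_{\mathrm{Hod}}$ appearing in~\eqref{def:Dfilt}, and then adjust it so that its associated graded becomes Verdier self-dual in a sense compatible with the twist $\omega_{\mathcal{X}}^{1/2}$; such self-duality combined with a coarse bound on the weight support should then yield the precise interval defining $\LL(\Omega_{\mathcal{X}})_{\frac{1}{2}}$.

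The first step is existence of a good filtration, i.e., a lift $\widetilde{\mathcal{E}} \in \Coh(\mathcal{X}_{\mathrm{Hod}})$ of $\mathcal{E}$ along the open immersion $j\colon \mathcal{X}_{\mathrm{dR}} \hookrightarrow \mathcal{X}_{\mathrm{Hod}}$. In a local smooth chart $\mathcal{X}=X/G$ with $X$ smooth and $G$ affine, such a lift is the classical equivariant good filtration: a coherent $G$-equivariant $\mathrm{D}_X$-module admits a finite-dimensional $G$-stable generating subspace, giving a Rees module over the order-filtered Rees algebra $\widetilde{\mathrm{D}}_X$ (cf.\ Remark~\ref{rmk:filtD}). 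Globalization uses smooth descent for $\Coh(\mathcal{X}_{\mathrm{Hod}})$, and in fact may be bypassed since the defining condition for $\LL(\Omega_{\mathcal{X}})_{\frac{1}{2}}$ is local on $\mathcal{X}$ by construction~\eqref{def:Ltilde0}.

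The second, and central, step is to select the filtration so that its associated graded lands in $\LL(\Omega_{\mathcal{X}})_{\frac{1}{2}}$. Verdier duality induces an anti-involution on $\text{D-mod}^{\mathrm{filt}}_{\mathrm{coh}}(\mathcal{X})$ sending a lift $\widetilde{\mathcal{E}}$ of $\mathcal{E}$ to a lift of $\mathbb{D}\mathcal{E}$; under the Koszul equivalence~\eqref{kequ:dol}, the functor $\mathrm{gr}$ intertwines this operation with the duality $\mathbb{D}$ of~\eqref{dual:M}, up to the twist by $\omega_{\mathcal{X}}$ inherent in Verdier duality. Given an arbitrary initial lift, taking a Baer-type average of $\widetilde{\mathcal{E}}$ and $\mathbb{D}(\widetilde{\mathbb{D}\mathcal{E}}) \otimes \omega_{\mathcal{X}}^{1/2}[-\dim\mathcal{X}]$ produces a new good filtration whose associated graded is self-dual under the composition of the two equivalences in~\eqref{equiv:Domega}, i.e., fixed by an involution preserving $\LL(\Omega_{\mathcal{X}})_{\frac{1}{2}}$. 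Along any map $\nu\colon (\bgm)_{k'}\to \Omega_{\mathcal{X}}$, such self-duality forces the weights of $\iota_{\ast}\nu^{\mathrm{reg}\ast}\mathrm{gr}(\widetilde{\mathcal{E}})$ to be symmetric about $c_1(\nu^*\omega_{\mathcal{X}}^{1/2})+\tfrac{1}{2}c_1(\mathfrak{g}_{\ox})$, which is exactly the centering prescribed by Definition~\ref{def:Lcat} with $\delta=\omega_{\mathcal{X}}^{1/2}$. The amplitude of the interval is obtained from the fact that $\mathrm{gr}(\widetilde{\mathcal{E}})$, locally on $\mathcal{X}=X/G$ as in Example~\ref{exam:reg}, is a coherent $\mathrm{Sym}^\bullet(\mathbb{T}_X)$-module on $\mu^{-1}(0)/G$, so the weight spread of $\nu^*j^*j_*\mathrm{gr}(\widetilde{\mathcal{E}})$ along any cocharacter of $G_x$ is bounded by that of $\mathrm{Sym}^\bullet(T_X|_x\oplus\Omega_X|_x)$, matching the bound in Lemma~\ref{lem:char2}(i).

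The main obstacle is ensuring that the symmetrization step in the second ingredient does not enlarge the weight spread beyond what the $\mathrm{Sym}^\bullet(\mathbb{T}_X|_x)$-module structure tolerates; concretely, one must check that averaging with the Verdier dual filtration respects the order filtration on the nose rather than up to an unbounded shift, and that this is compatible with the stabilizer actions at all points. As noted in Subsection~\ref{subsection:DLL}, we expect a more structural proof via a $\mathbb{G}_m$-equivariant theory of limit categories for the loop stack $\mathcal{L}\mathcal{X}$: this would realize $\text{D-mod}_{\mathrm{coh}}(\mathcal{X})$ as the generic fiber and a graded version of $\LL(\Omega_{\mathcal{X}})_{\frac{1}{2}}$ as the special fiber of a single $\mathbb{A}^1/\mathbb{G}_m$-linear family (using the Koszul duality of~\cite{HChen}), making the conjecture a direct specialization along $t\to 0$ rather than requiring the ad hoc self-dualization sketched above.
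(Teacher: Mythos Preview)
This statement is a conjecture in the paper, not a theorem; the paper does not prove it but only outlines a strategy in Subsection~\ref{subsection:DLL} via limit categories for loop stacks, which you mention at the end. Your main proposal is the self-dualization argument, and it has two genuine gaps.

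First, the ``Baer-type average'' of two good filtrations is not a well-defined operation. Two lifts $\widetilde{\mathcal{E}}_1, \widetilde{\mathcal{E}}_2 \in \Coh(\mathcal{X}_{\mathrm{Hod}})$ of the same $\mathcal{E}$ do not sit inside any convex or additive structure that would allow you to average them into a third lift; one can take intersections or sums of filtrations on a single complex, but neither produces self-duality of the associated graded. Moreover, $\omega_{\mathcal{X}}^{1/2}$ is only an element of $\mathrm{Pic}(\mathcal{X})_{\mathbb{R}}$ and need not exist as a genuine line bundle, so tensoring by it is not available at the level of objects.

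Second, and more seriously, even granting a filtration whose associated graded has weights symmetric about the correct center, the amplitude bound is not established. Your claim that the weight spread of $\nu^*j^*j_*\mathrm{gr}(\widetilde{\mathcal{E}})$ is ``bounded by that of $\mathrm{Sym}^\bullet(T_X|_x \oplus \Omega_X|_x)$'' is vacuous: the symmetric algebra has unbounded weights whenever the cocharacter acts nontrivially, whereas the interval in Lemma~\ref{lem:char2}(i) has width $c_1((T_X|_x \oplus \Omega_X|_x)^{>0})$, a specific finite number. Symmetry of weights about the center does not imply containment in the limit category: for instance on $\Omega_{BG}$, the object $0_*(V \otimes \mathcal{O}_{BG})$ for a nontrivial self-dual irreducible $G$-representation $V$ has symmetric weights but lies outside $\LL(\Omega_{BG})_{\frac{1}{2}}$ by the description in Subsection~\ref{subsub:BG}. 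The whole content of the conjecture is that one can choose the filtration to control the amplitude, and this is precisely what your argument does not address. The paper's proposed route through the loop stack and an $\mathbb{A}^1/\mathbb{G}_m$-family is designed to avoid exactly this difficulty by building the constraint into the category from the start.
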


\begin{remark}\label{rmk:check}
It is straightforward to check that Conjecture~\ref{conj:L} holds for the examples 
in Subsection~\ref{subsec:exam}. For example, in the case of $\X=BG$ for reductive $G$, 
the category $\text{D-mod}_{\mathrm{coh}}(BG)$ is generated by a trivial one-dimensional $G$-representation 
$k \in \text{D-mod}_{\mathrm{coh}}(BG)$. For a trivial choice of filtration, we have 
\begin{align*}
    \mathrm{gr}(\widetilde{k})=0_{*}\mathcal{O}_{BG} \in \LL(\Omega_{BG})_{\frac{1}{2}}.
\end{align*}
\end{remark}
\begin{remark}\label{rmk:Kth}
We have the following exact sequence 
\begin{align}\notag
\xymatrix{
    G_{\mathbb{G}_m}(\Omega_{\mathcal{X}}) \ar[r]^-{i_*} &
    K(\text{D-mod}^{\mathrm{filt}}_{\mathrm{coh}}(\mathcal{X})) \ar[d]_-{\mathrm{gr}} \ar[r]^-{j^*} &
    K(\text{D-mod}_{\mathrm{coh}}(\mathcal{X})) \ar[r] & 0 \\
    &  K(\Coh(\Omega_{\mathcal{X}})). & &
    }
\end{align}
Since $\mathrm{gr}\circ i_{\ast}=0$, 
the map $j^*$ factors through the singular support map
\begin{align}\label{ss:map}
    \mathrm{ss} \colon K(\text{D-mod}_{\mathrm{coh}}(\mathcal{X})) \to 
    K(\Coh(\Omega_{\mathcal{X}})). 
\end{align}
Conjecture~\ref{conj:L} in particular implies that 
the map (\ref{ss:map}) factors through 
    \begin{align*}
        \mathrm{ss} \colon K(\mathrm{D}\text{-}\mathrm{mod}_{\mathrm{coh}}(\mathcal{X})) \to 
        K\left(\LL(\Omega_{\mathcal{X}})_{\frac{1}{2}}\right).
    \end{align*}
\end{remark}

\subsection{Functoriality of categories of D-modules}
We recall functorial properties of the categories of 
filtered D-modules and its relation with taking the associated 
graded. These relations were proved by Laumon~\cite{Laufilt}, also see~\cite{CDK}. 

Let $f\colon \mathcal{X}\to \mathcal{Y}$ be a morphism of 
smooth stacks.
We have the induced functors, see~\cite[Section~6]{MR3037900}
\begin{align}\label{dmod:funct}
 \xymatrix{
    \text{D-mod}(\mathcal{Y}) \ar@<0.5ex>[r]^-{f^{!}} & \ar@<0.5ex>[l]^-{f_{*}} 
    \text{D-mod}(\mathcal{X}).   }
\end{align}
The functor $f^!$ preserves coherence if $f$ is smooth, and $f_{*}$ preserves coherence if $f$ is proper.
The map $f$ induces 
the map 
$\mathcal{X}_{\mathrm{Hod}}\to \mathcal{Y}_{\mathrm{Hod}}$, hence 
we have 
the functors
\begin{align*}
 \xymatrix{
    \text{D-mod}^{\mathrm{filt}}(\mathcal{Y}) \ar@<0.5ex>[r]^-{f^{!}} & \ar@<0.5ex>[l]^-{f_{*}} 
    \text{D-mod}^{\mathrm{filt}}(\mathcal{X}).   }
\end{align*}
We also have the induced functors on cotangent stacks
\begin{align*}
    \xymatrix{
    \IndCoh(\Omega_{\Y}) \ar@<0.5ex>[r]^-{f^{\Omega!}} & \ar@<0.5ex>[l]^-{f^{\Omega}_{*}} \IndCoh(\Omega_{\X})
    }
\end{align*}
which are defined using the pull-back/push-forward for the Lagrangian correspondence
$\Omega_{\X} \leftarrow f^{*}\Omega_{\Y} \to \Omega_{\Y}$, see Subsection~\ref{subsec:funccotan}.

\begin{lemma}\label{lem:pullD}
(i) 
    If $f$ is smooth, we have the commutative diagram 
      \begin{align*}
\xymatrix{\mathrm{D}\text{-}\mathrm{mod}_{\mathrm{coh}}(\mathcal{Y}) \ar[d]_-{f^!}&
\mathrm{D}\text{-}\mathrm{mod}_{\mathrm{coh}}^{\mathrm{filt}}(\mathcal{Y}) \ar[l] \ar[r]^-{\mathrm{gr}} \ar[d]_-{f^!}
&  \Coh(\Omega_{\mathcal{Y}}) \ar[d]_-{f^{\Omega !}} \\
\mathrm{D}\text{-}\mathrm{mod}_{\mathrm{coh}}(\mathcal{X}) & \ar[l] \mathrm{D}\text{-}\mathrm{mod}_{\mathrm{coh}}^{\mathrm{filt}}(\mathcal{X})\ar[r]^{\mathrm{gr}} &
\Coh(\Omega_{\mathcal{X}}). 
}
\end{align*}

(ii)
If $f$ is proper, we have the commutative diagram 
    \begin{align*}
\xymatrix{\mathrm{D}\text{-}\mathrm{mod}_{\mathrm{coh}}(\mathcal{X}) \ar[d]_-{f_{*}}&
\mathrm{D}\text{-}\mathrm{mod}_{\mathrm{coh}}^{\mathrm{filt}}(\mathcal{X}) \ar[l] \ar[r]^-{\mathrm{gr}} \ar[d]_-{f_{*}}
&  \Coh(\Omega_{\mathcal{X}}) \ar[d]_-{f_{*}} \\
\mathrm{D}\text{-}\mathrm{mod}_{\mathrm{coh}}(\mathcal{Y}) & \ar[l] \mathrm{D}\text{-}\mathrm{mod}_{\mathrm{coh}}^{\mathrm{filt}}(\mathcal{Y})\ar[r]^{\mathrm{gr}} &
\Coh(\Omega_{\mathcal{Y}}). 
}
\end{align*}
\end{lemma}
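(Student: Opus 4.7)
My plan is to realize all three columns of each diagram as (ind-)coherent sheaves on stacks fibered over $\mathbb{A}^1/\mathbb{G}_m$ and then deduce the commutativity from base change. Under the tautological identifications $\text{D-mod}_{\mathrm{coh}}(\mathcal{X})=\Coh(\mathcal{X}_{\mathrm{dR}})$ and $\text{D-mod}^{\mathrm{filt}}_{\mathrm{coh}}(\mathcal{X})=\Coh(\mathcal{X}_{\mathrm{Hod}})$, together with the Koszul equivalence $\IndCoh_{\mathbb{G}_m}(\mathcal{X}_{\mathrm{Dol}})\simeq \QCoh_{\mathbb{G}_m}(\Omega_{\mathcal{X}})$ from~\eqref{kequ:dol}, the top horizontal arrows are exactly $j^*$ (restriction to the open fiber $\mathcal{X}_{\mathrm{dR}}$ over $1\in\mathbb{A}^1/\mathbb{G}_m$) and $\mathrm{gr}=i^!$ followed by Koszul duality and forgetting of the $\mathbb{G}_m$-grading (restriction to the closed fiber $\mathcal{X}_{\mathrm{Dol}}/\mathbb{G}_m$ over $0$). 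Thus the whole diagram lives inside the single Cartesian triple of stacks
\[
\mathcal{X}_{\mathrm{dR}} \;\hookrightarrow\; \mathcal{X}_{\mathrm{Hod}} \;\hookleftarrow\; \mathcal{X}_{\mathrm{Dol}}/\mathbb{G}_m,
\]
and analogously for $\mathcal{Y}$.

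The morphism $f\colon\mathcal{X}\to\mathcal{Y}$ is functorial through the Hodge construction, producing $f_{\mathrm{Hod}}\colon\mathcal{X}_{\mathrm{Hod}}\to\mathcal{Y}_{\mathrm{Hod}}$ over $\mathbb{A}^1/\mathbb{G}_m$. The first technical step is to verify the two Cartesian squares
\[
\mathcal{X}_{\mathrm{dR}}\simeq \mathcal{X}_{\mathrm{Hod}}\times_{\mathcal{Y}_{\mathrm{Hod}}}\mathcal{Y}_{\mathrm{dR}},
\qquad
\mathcal{X}_{\mathrm{Dol}}/\mathbb{G}_m \simeq \mathcal{X}_{\mathrm{Hod}}\times_{\mathcal{Y}_{\mathrm{Hod}}} (\mathcal{Y}_{\mathrm{Dol}}/\mathbb{G}_m),
\]
which follow at the level of prestacks since each of the de Rham, Hodge, and Dolbeault constructions commutes with limits in the source stack. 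Given these, part (i) is the combination of $!$-pullback base change along the open immersion $j$ (for the left square) and along the closed immersion $i$ (for the right square); smoothness of $f$ ensures $f_{\mathrm{Hod}}^!$, $f_{\mathrm{dR}}^!$, and $f_{\mathrm{Dol}}^!$ all preserve coherence. Part (ii) is obtained in exactly the same way, replacing $!$-pullbacks with $*$-pushforwards and invoking proper base change, using properness of $f$ (hence of $f_{\mathrm{Hod}}$ and its fiberwise restrictions) to guarantee preservation of coherence and commutation with open/closed restriction.

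The main obstacle I expect is the verification that under the Koszul equivalence~\eqref{kequ:dol} the Dolbeault functor $f_{\mathrm{Dol}}^!$ (respectively $(f_{\mathrm{Dol}})_*$) matches the Lagrangian correspondence functor $f^{\Omega!}=\beta_*\alpha^!$ (respectively $f^{\Omega}_*=\alpha_*\beta^*$) of Subsection~\ref{subsec:funccotan}, after forgetting the residual $\mathbb{G}_m$-grading. This should follow by unpacking the Koszul equivalence in Theorem~\ref{thm:Kduality} applied to the presentations of $\mathcal{X}_{\mathrm{Dol}}=\mathcal{X}/\widehat{T}_{\mathcal{X}}$ and of $f_{\mathrm{Dol}}$, together with the observation that the self-intersection $f^*\Omega_{\mathcal{Y}}$ is precisely the stack controlling the Koszul-dual side of $f_{\mathrm{Dol}}$; but one must be careful with the $\omega_f$-twist which distinguishes left and right D-modules via~\eqref{equiv:rl}. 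Modulo this comparison (which, as remarked in the excerpt, was worked out classically by Laumon~\cite{Laufilt}), the remainder of the argument is a routine application of base change for $\IndCoh$ on QCA stacks in the sense of~\cite{MR3037900}.
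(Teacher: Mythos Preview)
Your proposal is correct and in fact more detailed than what the paper does: the paper does not give a proof of this lemma at all, but simply records in the remark immediately following it that the statement is the right D-module version of the classical compatibility in Laumon~\cite{Laufilt} (see also~\cite{CDK}), obtained from the left D-module statement there by applying the equivalence $\Upsilon$ of~\eqref{equiv:rl}, which accounts for the $\omega_{\mathcal{X}/\mathcal{Y}}$-twist.

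Your Hodge-stack reformulation is a legitimate modern repackaging of the same content. The Cartesian squares you write down are correct (both sides are fibered over $\mathbb{A}^1/\mathbb{G}_m$, so base change to the open and closed points gives exactly what you claim), and the reduction to base change for $\IndCoh$ is the right organizing principle. You have also correctly isolated the one nontrivial step: that under the Koszul equivalence~\eqref{kequ:dol} the functors $f_{\mathrm{Dol}}^!$ and $(f_{\mathrm{Dol}})_*$ go over to $f^{\Omega!}$ and $f^{\Omega}_*$. This is precisely the computation carried out in~\cite{Laufilt, CDK} (in classical language, comparing the Rees-algebra transfer bimodule with the conormal description of the associated graded), so your deferral to Laumon at that point coincides with the paper's. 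The only thing to keep in mind is the $\omega_f$-twist you flag: in the paper's conventions the right-hand vertical arrow in part~(ii) should really be read as $f^{\Omega}_*$, and the match with Laumon's left D-module formulas goes through~\eqref{equiv:rl} exactly as the paper's remark says.
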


\begin{remark}
    The commutative diagrams in Lemma~\ref{lem:pullD} 
    differ from those in~\cite{Laufilt, CDK} by the tensor product with $\omega_{\mathcal{X}/\mathcal{Y}}$. 
    This is because we consider right D-modules, while the 
    diagrams in~\cite{Laufilt, CDK} are for left D-modules. By applying the 
    equivalence (\ref{equiv:rl}), we obtain the diagrams in Lemma~\ref{lem:pullD}. 
\end{remark}

In Section~\ref{sec:functL}, we show that the functors $f^{\Omega!}$, $f_{*}^{\Omega}$ preserve limit 
categories under the assumption of Lemma~\ref{lem:pullD}. 
By Proposition~\ref{prop:pback} and Theorem~\ref{thm:proj}, 
we have the following: 
\begin{cor}\label{cor:funcL}
(i) Suppose that $f\colon \mathcal{X} \to \mathcal{Y}$ is smooth. 
Then, if $\mathcal{E} \in \mathrm{D}\text{-}\mathrm{mod}_{\mathrm{coh}}(\mathcal{Y})$
satisfies Conjecture~\ref{conj:L}, then 
$f^! \mathcal{E} \in \mathrm{D}\text{-}\mathrm{mod}_{\mathrm{coh}}(\mathcal{X})$ satisfies 
Conjecture~\ref{conj:L}. 

(ii) Suppose that $f\colon \mathcal{X} \to \mathcal{Y}$ is projective. 
Then, if $\mathcal{E} \in \mathrm{D}\text{-}\mathrm{mod}_{\mathrm{coh}}(\mathcal{X})$
satisfies Conjecture~\ref{conj:L}, then 
$f_{\ast} \mathcal{E} \in \mathrm{D}\text{-}\mathrm{mod}_{\mathrm{coh}}(\mathcal{Y})$ satisfies 
Conjecture~\ref{conj:L}. 
\end{cor}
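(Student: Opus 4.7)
The plan is to combine the two ingredients already stated: the commutative diagrams of Lemma~\ref{lem:pullD} relating $f^!$, $f_*$ with their cotangent counterparts via $\mathrm{gr}$, and the statement of Theorem~\ref{intro:thmfunct} (namely Proposition~\ref{prop:pback} and Theorem~\ref{thm:proj}) asserting that $f^{\Omega!}$ and $f_*^{\Omega}$ preserve limit categories with the $\omega^{1/2}$-twist. Both corollaries then reduce to chasing a single diagram.

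For part (i), suppose $\mathcal{E}\in \text{D-mod}_{\mathrm{coh}}(\mathcal{Y})$ satisfies Conjecture~\ref{conj:L} and fix a good filtration $\widetilde{\mathcal{E}}\in \text{D-mod}_{\mathrm{coh}}^{\mathrm{filt}}(\mathcal{Y})$ with $j^*\widetilde{\mathcal{E}}\simeq \mathcal{E}$ and $\mathrm{gr}(\widetilde{\mathcal{E}})\in \LL(\Omega_{\mathcal{Y}})_{\frac{1}{2}}$. Since $f$ is smooth, I would set $\widetilde{\mathcal{F}} := f^!\widetilde{\mathcal{E}}$; this lies in $\text{D-mod}_{\mathrm{coh}}^{\mathrm{filt}}(\mathcal{X})$ by the smooth-coherence-preservation observed in Subsection~\ref{subsec:catD}. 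The left square of Lemma~\ref{lem:pullD}(i) then gives $j^*\widetilde{\mathcal{F}}\simeq f^!\mathcal{E}$, so $\widetilde{\mathcal{F}}$ is a good filtration of $f^!\mathcal{E}$, while the right square identifies
\[
\mathrm{gr}(\widetilde{\mathcal{F}})\;=\;\mathrm{gr}(f^!\widetilde{\mathcal{E}})\;\simeq\; f^{\Omega!}\,\mathrm{gr}(\widetilde{\mathcal{E}}).
\]
Specializing $\delta = \mathcal{O}_{\mathcal{Y}}$ in the functor (\ref{intro:pback}) of Theorem~\ref{intro:thmfunct}(i) yields
\[
f^{\Omega!}\colon \LL(\Omega_{\mathcal{Y}})_{\frac{1}{2}} \longrightarrow \LL(\Omega_{\mathcal{X}})_{\frac{1}{2}},
\]
and therefore $\mathrm{gr}(\widetilde{\mathcal{F}})\in \LL(\Omega_{\mathcal{X}})_{\frac{1}{2}}$, which is exactly the conclusion required by Conjecture~\ref{conj:L} for $f^!\mathcal{E}$.

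Part (ii) follows by an identical argument with $f^!$ replaced by $f_*$: projectivity of $f$ ensures that $f_*\widetilde{\mathcal{E}}\in \text{D-mod}_{\mathrm{coh}}^{\mathrm{filt}}(\mathcal{Y})$ is a good filtration on $f_*\mathcal{E}$ by Lemma~\ref{lem:pullD}(ii), and the corresponding right square identifies $\mathrm{gr}(f_*\widetilde{\mathcal{E}})$ with $f_*^{\Omega}\,\mathrm{gr}(\widetilde{\mathcal{E}})$; specializing $\delta=\mathcal{O}_{\mathcal{Y}}$ in (\ref{intro:push}) of Theorem~\ref{intro:thmfunct}(ii) gives $f_*^{\Omega}\colon \LL(\Omega_{\mathcal{X}})_{\frac{1}{2}}\to \LL(\Omega_{\mathcal{Y}})_{\frac{1}{2}}$, which closes the argument. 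The only point to verify carefully is that the half-canonical twist built into (\ref{intro:pback})--(\ref{intro:push}) aligns with the $\frac{1}{2}$-subscript appearing in Conjecture~\ref{conj:L} once $\delta$ is taken trivial; this is immediate from the definitions, so there is no substantive obstacle beyond the inputs already assumed.
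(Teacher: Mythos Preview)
Your proposal is correct and matches the paper's approach: the corollary is stated as an immediate consequence of Proposition~\ref{prop:pback} and Theorem~\ref{thm:proj}, combined with the commutative diagrams of Lemma~\ref{lem:pullD}, which is precisely the diagram chase you wrote out.
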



\subsection{Singular supports for D-modules}\label{sec:ssupport}
In what follows, we explain how the nilpotent singular support condition (\ref{limit:nilp}) appears 
for the classical limit of the dg-categories of compact D-modules. 

We first discuss the absence of a nilpotent singular support condition on the automorphic side of the de Rham Langlands equivalence. 
Let $C$ be a smooth projective curve over an algebraically closed field of characteristic zero and let $G$ be a reductive algebraic 
group with Langlands dual $^{L}G$. 
The GLC is an equivalence 
\begin{align*}
\IndCoh_{\mathcal{N}}(\mathrm{Locsys}_{^{L}G}(C)) \simeq 
\text{D-mod}(\Bun_{G}). 
\end{align*}
In the left-hand side, we impose nilpotent singular 
support condition in the sense of Arinkin-Gaitsgory~\cite{AG}. 
As explained in~\cite[Remark 12.8.8]{AG}, the corresponding property for 
$\text{D-mod}(\Bun_{G})$ is \textit{unconditionally} satisfied. Namely, if 
we consider $\text{D-mod}(\Bun_{G})$ linear over 
\begin{align*}
    \IndCoh((^{L}\mathfrak{g})^{\vee}[-1]/^{L}G) \simeq \QCoh^{\shear}(^{L}\mathfrak{g}/^{L}G), 
\end{align*}
it is supported over 
\begin{align}\label{nil:supp}
    \mathrm{Nilp}(^{L}\mathfrak{g})/^{L}G \subset ^{L}\mathfrak{g}/^{L}G. 
\end{align}
Equivalently, if we consider $\text{D-mod}(\Bun_{G})$ linear over 
\begin{align*}
\QCoh^{\shear}(^{L}\mathfrak{g}\ssslash ^{L}G)=\QCoh^{\shear}(\mathfrak{g}\ssslash G)
\end{align*}
via pull-back $^{L}\mathfrak{g}/^{L}G \to ^{L}\mathfrak{g}\ssslash ^{L}G$ and the identification 
$^{L}\mathfrak{g}\ssslash ^{L}G=\mathfrak{g}\ssslash G$, it is supported over 
$0 \in \mathfrak{g}\ssslash G$. 
Here the monoidal structure on $\QCoh^{\shear}(\mathfrak{g}\ssslash G)$
is given by the usual tensor product. 

Below we explain how the above property is unconditionally satisfied 
in a more general setting. For a smooth stack $\mathcal{X}$ with a map 
$f\colon \mathcal{X}\to BG$, we have the monoidal action of
$\QCoh^{\shear}(\mathfrak{g}\ssslash G)$ on $\text{D-mod}(\X)$ in the following way. 
We have the following functor 
\begin{align}\label{f!}
    f^! \colon \text{D-mod}_{\mathrm{coh}}(BG) \hookrightarrow \text{D-mod}(BG)
    \stackrel{f^!}{\to} \text{D-mod}(\mathcal{X}). 
\end{align}
We have the monoidal structure on 
$\text{D-mod}(\mathcal{X})$ by the $\otimes^!$-tensor product, i.e. 
\begin{align*}
    A \otimes^! B=\Delta^!(A\boxtimes B)
\end{align*}
where $\Delta \colon \X\to \mathcal{X} \times \mathcal{X}$ is the diagonal, 
see~\cite[Section~5.1.5]{MR3037900}. 
The above monoidal structure induces the monoidal structure on 
$\text{D-mod}_{\mathrm{coh}}(BG)$, and the functor (\ref{f!}) is a monoidal functor. 

By Subsection~\ref{subsec:ExD:red}, we have
\begin{align*}
    \text{D-mod}(BG) \simeq \QCoh(A)
\end{align*}
where $A=\Gamma_{\mathrm{dR}}(G)^{\vee}$, which is given by 
\begin{align*}
A=k[x_{-1}, x_{-3}, \ldots, x_{1-2n}], \ \deg x_i=i. 
\end{align*}
The Koszul equivalence
\begin{align*}
    \text{D-mod}_{\mathrm{coh}}(BG) \simeq \Coh(A)
    \simeq \mathrm{Perf}^{\shear}(\mathfrak{g}\ssslash G)
\end{align*}
is a monoidal equivalence, where 
the monoidal structure on the right-hand side is the usual tensor product, and 
\begin{align*}
    \mathcal{O}_{\mathfrak{g}\ssslash G}=\mathcal{O}_{\mathfrak{h}\ssslash W}=k[y_2, y_4, \ldots, y_{2n}]=H^{*}_{\mathrm{dR}}(BG), \ \deg y_i=i. 
\end{align*}
By taking the ind-completion of the functor (\ref{f!}), we obtain 
the monoidal functor 
\begin{align*}
f^! \colon
    \QCoh^{\shear}(\mathfrak{g}\ssslash G) \to \text{D-mod}(\mathcal{X})
\end{align*}
which gives the monoidal action of $\QCoh^{\shear}(\mathfrak{g}\ssslash G)$
on $\text{D-mod}(\mathcal{X})$, i.e. 
\begin{align}\label{act:Qcoh}
   \QCoh^{\shear}(\mathfrak{g}\ssslash G) \otimes 
 \text{D-mod}(\mathcal{X}) \to   \text{D-mod}(\mathcal{X}), \ 
 (A, B)\mapsto f^! A\otimes^! B. 
\end{align}
\begin{lemma}\label{lem:supp:dmod}
Any compact object $M\in \mathrm{D}\text{-}\mathrm{mod}(\mathcal{X})^{\mathrm{cp}}$ has 
support $0\in \mathfrak{g}\ssslash G$ with respect to the monoidal action (\ref{act:Qcoh}). 
\end{lemma}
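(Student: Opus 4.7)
The plan is to derive the support condition from a graded ring homomorphism induced by the monoidal action, then invoke boundedness of self-Ext for compact objects to force nilpotency of the generators.

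First I would unpack the monoidal action. Under the equivalences of Subsection~\ref{subsec:ExD:red}, the unit of $(\text{D-mod}(BG), \otimes^!)$ is $\omega_{BG}$, and corresponds on the spectral side to the structure sheaf $\mathcal{O}_{\mathfrak{g}\ssslash G} \in \QCoh^{\shear}(\mathfrak{g}\ssslash G)$. Since $f^!$ is monoidal for $\otimes^!$, the action of the unit on any $M$ is identified with $M$, and every $M \in \text{D-mod}(\mathcal{X})$ receives a graded ring homomorphism
\[
\phi_M \colon \mathcal{O}_{\mathfrak{g}\ssslash G}=\operatorname{End}^{\ast}_{\QCoh^{\shear}(\mathfrak{g}\ssslash G)}(\mathcal{O}_{\mathfrak{g}\ssslash G}) = k[y_2, y_4, \ldots, y_{2n}] \longrightarrow \operatorname{End}^{\ast}_{\text{D-mod}(\mathcal{X})}(M),
\]
under which the generator $y_{2i}$ is sent to a class in cohomological degree $2i > 0$. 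By the definition of support, $M$ is set-theoretically supported at $0 \in \mathfrak{g}\ssslash G$ if and only if each $y_{2i}$ acts nilpotently, i.e.\ $\phi_M(y_{2i})^N = 0$ in $\operatorname{Hom}(M,M[2iN])$ for some $N \gg 0$.

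Next I would invoke boundedness of self-Ext: for $M \in \text{D-mod}(\mathcal{X})^{\mathrm{cp}}$, the graded vector space $\operatorname{End}^{\ast}(M)$ is concentrated in a bounded range of degrees, so $\operatorname{Hom}_{\text{D-mod}(\mathcal{X})}(M,M[j]) = 0$ for $|j| > N_0$. When $\mathcal{X}$ is QCA, this follows from the identification of compact D-modules with ``safe'' coherent D-modules in~\cite[Sections~8-9]{MR3037900}; more generally, one can express $M$ as a finite iterated cone on compact generators of bounded self-Ext. Combining the two inputs concludes the proof: since $\phi_M$ is a ring map, $\phi_M(y_{2i})^N = \phi_M(y_{2i}^N) \in \operatorname{Hom}(M,M[2iN])$ vanishes as soon as $2iN > N_0$, so each $y_{2i}$ is nilpotent in $\operatorname{End}^{\ast}(M)$ and $\phi_M$ factors through a quotient supported at the origin.

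The hard part will be the boundedness claim. Note that this is \emph{false} if one replaces ``compact'' with ``coherent'': already in the explicit example $\mathcal{X} = BG$, the coherent unit $\omega_{BG}$ has unbounded self-Ext $k[y_2, \ldots, y_{2n}]$. So any proof must genuinely exploit the compactness of $M$ rather than only its coherence. For QCA stacks this is settled by the work of Drinfeld--Gaitsgory. Beyond the QCA setting a robust argument would likely require either restricting to a QCA open substack carrying the singular support of $M$, or a direct Koszul-duality argument: the equivalence $\text{D-mod}(BG)^{\mathrm{cp}} \simeq \mathrm{Perf}^{\shear}_{\mathrm{nilp}}(\mathfrak{h}\ssslash W)$ realizes compactness on the D-module side as nilpotency on the spectral side, and one transports this identification across the $(\QCoh^{\shear}(\mathfrak{g}\ssslash G))$-linear structure on $\text{D-mod}(\mathcal{X})$.
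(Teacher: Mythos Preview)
Your approach is essentially the same as the paper's: both reduce to showing that the positive-degree generators $y_{2i}$ of $\mathcal{O}_{\fg\ssslash G}$ act nilpotently on a compact $M$, which comes down to a boundedness statement for compact D-modules. The paper packages this slightly differently: it factors the action through the ring map $H^*_{\mathrm{dR}}(BG) \to H^*_{\mathrm{dR}}(\X)$ induced by $f^!$, and then invokes \cite[Remark~12.8.8]{AG} for the blanket statement that the action of $H^*_{\mathrm{dR}}(\X)$ on any compact $M$ vanishes on a high power of the augmentation ideal, whereas you argue directly via boundedness of $\operatorname{End}^*(M)$. The paper's route has the advantage that it sidesteps your ``hard part'' entirely by outsourcing it to the cited remark, which is stated without a QCA hypothesis; your version makes the mechanism more transparent but, as you correctly flag, requires care beyond the QCA case.
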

\begin{proof}
As explained in~\cite[Remark~12.8.8]{AG}, for a compact object 
$M\in \text{D-mod}(\X)^{\mathrm{cp}}$ the natural map 
\begin{align*}
    H_{\mathrm{dR}}^{*}(\X)\otimes M \to M
\end{align*}
vanishes on a sufficiently high power of the augmentation ideal of 
of $H_{\mathrm{dR}}^{*}(\X)$. Therefore, the composition 
\begin{align*}
    \mathcal{O}_{\mathfrak{g}\ssslash G}\otimes M=H_{\mathrm{dR}}^{*}(BG) \otimes M
    \stackrel{f^! \otimes \id}{\to} H_{\mathrm{dR}}^{*}(\X)\otimes M \to M
\end{align*}
vanishes on a sufficiently high power of the augmentation ideal of 
$\mathcal{O}_{\mathfrak{g}\ssslash G}$. It follows that the inner homomorphism 
$\mathcal{H}om(M, M) \in \QCoh^{\shear}(\mathfrak{g}\ssslash G)$
is supported over $0 \in \mathfrak{g}\ssslash G$. 
\end{proof}

For $\mathcal{X}=\Bun_G$, 
we take a point $x\in C$ and consider the map 
\begin{align*}
    i_x \colon \Bun_G \to BG, \ \mathscr{P}\mapsto \mathscr{P}|_{x}. 
\end{align*}
By Lemma~\ref{lem:supp:dmod} and the compact generation of 
$\text{D-mod}(\Bun_G)$ proved in~\cite{DGbun}, objects in 
$\text{D-mod}(\Bun_G)$ has nilpotent 
supports (\ref{nil:supp}) unconditionally. 

\begin{remark}\label{rmk:action}
The dg-category $\QCoh^{\shear}(\mathfrak{g}\ssslash G)$
is generated by $\mathcal{O}_{\mathfrak{g}\ssslash G}$ and the action
of $\mathcal{O}_{\mathfrak{g}\ssslash G}$
on $\text{D-mod}(\mathcal{X})$ is trivial on the isomorphism classes of objects, 
since $f^!(\mathcal{O}_{\mathfrak{g}\ssslash G})=\omega_{\mathcal{X}_{\mathrm{dR}}}$ is a
monoidal unit. 
In particular, the monoidal action (\ref{act:Qcoh}) restricts to the monoidal action 
\begin{align}\label{act:perfD}
     \mathrm{Perf}^{\shear}(\mathfrak{g}\ssslash G) \otimes 
 \text{D-mod}_{\mathrm{coh}}(\mathcal{X}) \to   \text{D-mod}_{\mathrm{coh}}(\mathcal{X}). 
\end{align}
\end{remark}

\subsection{Singular supports for limit categories}\label{subsec:ssupport}
Let $\X=X/G$ be a quotient stack where $X$ is a smooth  
scheme and $G$ reductive. Let $f$ be the map 
\begin{align}\label{map:f}
    f\colon \mathcal{X} \to BG
\end{align}
given by the $G$-torsor $X\to \mathcal{X}$. Below
we construct a monoidal action of $\mathrm{Perf}^{\shear}(\mathfrak{g}\ssslash G)$
on $\LL(\Omega_{\mathcal{X}})_{\frac{1}{2}}$ which we regard as a classical limit 
of the monoidal action on $\text{D-mod}_{\mathrm{coh}}(\mathcal{X})$ 
as in Remark~\ref{rmk:action}. 

We first construct a monoidal structure on $\IndCoh(\Omega_{\mathcal{X}})$ 
as a classical limit of the tensor monoidal structure on $\text{D-mod}(\mathcal{X})$, 
which is different from the usual monoidal structure on ind-coherent sheaves for derived stacks. 
\begin{defn}\label{def:tensor}
We define the following functor 
\begin{align}\label{funct:delta}
\otimes^{\Omega!}:=
    \Delta^{\Omega !} \colon \IndCoh(\Omega_{\mathcal{X}})\otimes 
 \IndCoh(\Omega_{\mathcal{X}}) \to  \IndCoh(\Omega_{\mathcal{X}})  
\end{align}
where $\Delta \colon \mathcal{X} \to \mathcal{X} \times \mathcal{X}$ is the diagonal. 
\end{defn}
Namely, it is given by $\Delta^{\Omega!}=\alpha_{*}\beta^!$ in the diagram 
\begin{align*}
    \xymatrix{
    \Omega_{\X} \ar[d] & \Delta^{\ast}(\Omega_{\X} \times \Omega_{\X})
    \ar[l]_-{\beta} \ar[r]^-{\alpha} \ar[d]\diasquare & \Omega_{\X}\times \Omega_{\X} \ar[d] \\
    \X \ar@{=}[r] & \X \ar[r]^-{\Delta} & \X \times \X.  
    }
\end{align*}
\begin{lemma}\label{lem:monoidal}
The functor (\ref{funct:delta}) gives a monoidal structure on 
$\IndCoh(\Omega_{\mathcal{X}})$ with monoidal unit $0_{*}\omega_{\mathcal{X}}[\dim \X]$. 
Here $0 \colon \mathcal{X} \to \Omega_{\X}$ is the zero section. 
\end{lemma}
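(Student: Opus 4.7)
The plan is to verify the two axioms of a monoidal structure on $\IndCoh(\Omega_{\X})$ with respect to $\otimes^{\Omega!} = \Delta^{\Omega!}$: associativity, and that $\mathbf{1} := 0_{*}\omega_{\mathcal{X}}[\dim \X]$ is a two-sided unit.

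\textbf{Associativity.} The two iterated products $(- \otimes^{\Omega!} -) \otimes^{\Omega!} -$ and $- \otimes^{\Omega!} (- \otimes^{\Omega!} -)$ both agree with the functor $(\Delta^{(3)})^{\Omega!}$ associated with the Lagrangian correspondence of the triple diagonal $\Delta^{(3)} \colon \X \to \X \times \X \times \X$:
\[
\Omega_{\X} \xleftarrow{\beta^{(3)}} \Omega_{\X} \oplus_{\X} \Omega_{\X} \oplus_{\X} \Omega_{\X} \xrightarrow{\alpha^{(3)}} \Omega_{\X} \times \Omega_{\X} \times \Omega_{\X},
\]
where $\beta^{(3)}$ is the total addition and $\alpha^{(3)}$ is the natural inclusion. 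This identification follows from the two factorizations $\Delta^{(3)} = (\Delta \times \id) \circ \Delta = (\id \times \Delta) \circ \Delta$ by base change in the Cartesian diagrams governing the concatenation of Lagrangian correspondences, i.e. the compatibility of $f^{\Omega!}$ and $f^{\Omega}_{*}$ with composition (cf.~Subsection~\ref{subsec:funccotan}). The symmetry of $\otimes^{\Omega!}$ follows from that of the diagonal.

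\textbf{Unitality.} I verify $A \otimes^{\Omega!} \mathbf{1} \simeq A$ functorially in $A$; the other side follows similarly by symmetry. Write $A \boxtimes \mathbf{1} \simeq (\id \times 0)_{*}(A \boxtimes \omega_{\X}[\dim \X])$, with $A \boxtimes \omega_{\X}[\dim \X] \in \IndCoh(\Omega_{\X} \times \X)$. Base change in the Cartesian square
\[
\begin{tikzcd}
\Omega_{\X} \ar[r, "\tilde{\alpha}"] \ar[d, "\iota"'] & \Omega_{\X} \times \X \ar[d, "\id \times 0"] \\
\Omega_{\X} \oplus_{\X} \Omega_{\X} \ar[r, "\alpha"] & \Omega_{\X} \times \Omega_{\X}
\end{tikzcd}
\]
(where $\iota$ includes $\Omega_{\X}$ as the first summand and $\tilde{\alpha}$ is the graph of $\pi \colon \Omega_{\X} \to \X$) gives
\[
\alpha^!(A \boxtimes \mathbf{1}) \simeq \iota_{*}\,\tilde{\alpha}^!(A \boxtimes \omega_{\X}[\dim \X]).
\]
Since $\beta \circ \iota = \id_{\Omega_{\X}}$ (addition with zero is the identity), it suffices to check $\tilde{\alpha}^!(A \boxtimes \omega_{\X}[\dim \X]) \simeq A$. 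Factor $\tilde{\alpha}$ as
\[
\Omega_{\X} \xrightarrow{\Delta_{\Omega_{\X}}} \Omega_{\X} \times \Omega_{\X} \xrightarrow{\id \times \pi} \Omega_{\X} \times \X.
\]
Using Künneth for $!$-external products in $\IndCoh$ of QCA stacks, together with the identity $\pi^!(\omega_{\X}[\dim \X]) \simeq \omega_{\Omega_{\X}}[\dim \Omega_{\X}]$ (both sides equal $p_{\Omega_{\X}}^{!}k$ since $p_{\Omega_{\X}} = p_{\X} \circ \pi$), one gets $(\id \times \pi)^!(A \boxtimes \omega_{\X}[\dim \X]) \simeq A \boxtimes \omega_{\Omega_{\X}}[\dim \Omega_{\X}]$. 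Finally, the standard identity $\Delta_{\Omega_{\X}}^!(A \boxtimes p_{\Omega_{\X}}^{!}k) \simeq (p_1 \circ \Delta_{\Omega_{\X}})^!A \simeq A$ closes the argument.

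\textbf{Main obstacle.} The main technical subtlety is handling the derived-stack structure of $\Omega_{\X}$ for $\X = X/G$ (where $\Omega_{\X}$ is typically only quasi-smooth, not smooth), and ensuring that the Künneth formula, base change, and the dualizing-complex identities on which the above computations rest hold at the level of $\IndCoh$ on these QCA (derived) stacks with the correct shifts. Once this formalism is in place, both verifications are structural.
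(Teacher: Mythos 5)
Your proof is correct and is essentially the paper's own argument written out in full: the paper disposes of the lemma in one line by appealing to the base change formula for ind-coherent sheaves (\cite[Corollary~3.7.14]{MR3037900}), which is exactly what underlies your identification of both bracketings with the triple-diagonal correspondence (via the composition compatibility of Lemma~\ref{lem:funct}) and your unit computation via the Cartesian square and the identity $\Delta_{\Omega_{\X}}^!(A\boxtimes\omega_{\Omega_{\X}}[\dim\Omega_{\X}])\simeq A$. You also read the product correctly as $\beta_{*}\alpha^{!}$ (pushforward along the addition map of the $!$-pullback from $\Omega_{\X}\times\Omega_{\X}$), consistent with the paper's general convention~(\ref{fomega}) and its diagram after Definition~\ref{def:tensor}.
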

\begin{proof}
    The lemma is easily proved by the base change formula for ind-coherent sheaves
    in~\cite[Corollary~3.7.14]{MR3037900}. 
\end{proof}

We have the functor 
\begin{align}\label{funct:f2}
    f^{\Omega!} \colon \IndCoh(\Omega_{BG}) \to \IndCoh(\Omega_{\mathcal{X}})
\end{align}
which is a monoidal equivalence with respect to the monoidal structure in Lemma~\ref{lem:monoidal}.
In particular, we have the monoidal action 
\begin{align}\label{maction:lim}
    \IndCoh(\Omega_{BG})\otimes \IndCoh(\Omega_{\X}) \to \IndCoh(\Omega_{\X}), \ 
    (A, B)\mapsto f^{\Omega!}(A)\otimes^{\Omega!}B.
\end{align}

Note that $\Omega_{BG}\simeq \mathfrak{g}^{\vee}[-1]/G$
and we have the Koszul equivalence 
\begin{align*}
    \IndCoh(\Omega_{BG}) \simeq \QCoh^{\shear}(\mathfrak{g}/G)
\end{align*}
which is a monoidal equivalence where the monoidal structure on the right-hand side is 
the usual tensor product. 
In particular, $\IndCoh(\Omega_{\mathcal{X}})$ is linear over $\QCoh^{\shear}(\mathfrak{g}/G)$
and $\Coh(\Omega_{\mathcal{X}})$ is linear over $\mathrm{Perf}^{\shear}(\mathfrak{g}/G)$. 
For $\mathcal{E} \in \IndCoh(\Omega_{\X})$, we define 
\begin{align*}
    \mathrm{Supp}^L(\mathcal{E}) \subset \mathfrak{g}/G
\end{align*}
to be the support of $\mathcal{E}$ with respect to the above 
$\QCoh^{\shear}(\mathfrak{g}/G)$-action, i.e. 
the support of the inner homomorphism 
\begin{align*}\mathcal{H}om(\mathcal{E}, \mathcal{E})\in \QCoh^{\shear}(\mathfrak{g}/G).
\end{align*}

On the other hand, recall the 
subcategory of nilpotent Arinkin-Gaitsgory singular supports in Subsection~\ref{subsec:singular}:
\begin{align*}
    \IndCoh_{\mathcal{N}}(\Omega_{\X}) \subset \IndCoh(\Omega_{\X}). 
\end{align*}
\begin{lemma}\label{lem:ssupport}
For an object $\mathcal{E}\in \IndCoh(\Omega_{\X})$, we have 
\begin{align*}
    \mathrm{Supp}^L(\mathcal{E}) \subset \mathrm{Nilp}(\mathfrak{g})/G
\end{align*}
if and only if $\mathcal{E}\in \IndCoh_{\mathcal{N}}(\Omega_{\X})$. 
Here $\mathrm{Nilp}(\fg)\subset \fg$ is the nilpotent cone.
\end{lemma}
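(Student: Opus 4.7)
\begin{pf}
The plan is to reduce both conditions to a transparent description via the Koszul equivalence of Theorem~\ref{thm:Kduality}, and then match them. Since $\X=X/G$ is a smooth global quotient, the moment map $\mu\colon \Omega_X\to \fg^{\vee}$ realises
\[
\Omega_{\X}\simeq \mu^{-1}(0)/G\hookrightarrow \Y:=\Omega_X/G
\]
as the derived zero locus of a section $s=\mu$ of the vector bundle $\mathcal{V}=(\Omega_X\times \fg^{\vee})/G\to \Y$. Theorem~\ref{thm:Kduality} then gives an equivalence
\[
\Psi\colon \Coh(\Omega_{\X})\stackrel{\sim}{\to} \mathrm{MF}^{\mathrm{gr}}(\mathcal{V}^{\vee},f),\qquad \mathcal{V}^{\vee}=(\Omega_X\times \fg)/G,\ f(x,\xi,\eta)=\langle \mu(x,\xi),\eta\rangle,
\]
which extends by ind-completion to an equivalence $\IndCoh(\Omega_{\X})\simeq \mathrm{Ind}\,\mathrm{MF}^{\mathrm{gr}}(\mathcal{V}^{\vee},f)$.

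Under $\Psi$, the Arinkin--Gaitsgory singular support of $\mathcal{E}$ equals (locally) the support of the matrix factorisation $\Psi(\mathcal{E})$ inside $\mathrm{Crit}(f)\subset \mathcal{V}^{\vee}$ (this is the content of \cite[Proposition 2.3.9]{T}, invoked in Subsection~\ref{subsec:singular}). A direct differentiation of $f$ identifies
\[
\mathrm{Crit}(f)=\bigl\{(x,\xi,\eta)\in \mathcal{V}^{\vee}\colon \mu(x,\xi)=0,\ \eta\in \fg_{(x,\xi)}\bigr\},
\]
where $\fg_{(x,\xi)}\subset \fg$ is the stabiliser Lie algebra; this is exactly the classical realisation of $\Omega_{\Omega_{\X}}[-1]$ inside $\mathcal{V}^{\vee}$, and the nilpotent substack $\mathcal{N}$ of Subsection~\ref{subsec:singular} corresponds to imposing further $\eta\in \mathrm{Nilp}(\fg)$ (the condition is the same whether we ask for nilpotency in $\fg_{(x,\xi)}$ or in $\fg$, since $\fg_{(x,\xi)}\subset \fg$).

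Next I will identify the $\QCoh^{\shear}(\fg/G)$-action. The second projection $\pi\colon \mathcal{V}^{\vee}=(\Omega_X\times \fg)/G\to \fg/G$ is symmetric monoidal and $\pi^{\ast}(-)$ of any object has zero potential, so tensoring along $\pi^{\ast}$ descends to an action on $\mathrm{MF}^{\mathrm{gr}}(\mathcal{V}^{\vee},f)$. I will then trace through the construction of the monoidal structure $\otimes^{\Omega!}$ (Definition~\ref{def:tensor}) and the functor $f^{\Omega!}$ in \eqref{funct:f2} via the base-change formula for $\IndCoh$ \cite[Corollary~3.7.14]{MR3037900} and the naturality of the Koszul equivalence with respect to pull-backs of vector bundles. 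The outcome is a canonical identification of the action of $\QCoh^{\shear}(\fg/G)$ on $\IndCoh(\Omega_{\X})$ from \eqref{maction:lim} with the tensor action of $\QCoh^{\shear}(\fg/G)$ on $\mathrm{Ind}\,\mathrm{MF}^{\mathrm{gr}}(\mathcal{V}^{\vee},f)$ through $\pi^{\ast}$. Under this identification,
\[
\mathrm{Supp}^{L}(\mathcal{E})=\overline{\pi\bigl(\mathrm{supp}\,\Psi(\mathcal{E})\bigr)}\subset \fg/G.
\]
Since $\mathrm{supp}\,\Psi(\mathcal{E})\subset \mathrm{Crit}(f)$ automatically has $\eta$-component in the stabiliser, the inclusion $\mathrm{Supp}^{L}(\mathcal{E})\subset \mathrm{Nilp}(\fg)/G$ is equivalent to asserting that every such $\eta$ is nilpotent in $\fg$, which is exactly $\mathrm{Supp}^{\mathrm{AG}}(\mathcal{E})\subset \mathcal{N}$. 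The argument is Zariski local on $\X$, so the restriction to quotient stacks is harmless.

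The main obstacle is the compatibility step in the previous paragraph: matching the intrinsic monoidal action of $\QCoh^{\shear}(\fg/G)$, which is built from the Lagrangian correspondence defining $f^{\Omega!}$ and from $\Delta^{\Omega!}$, with the manifestly transparent $\pi^{\ast}$-action on matrix factorisations. Everything else — the Koszul equivalence, the description of $\mathrm{Crit}(f)$, and the comparison of nilpotencies — is essentially a direct unravelling of definitions.
\end{pf}
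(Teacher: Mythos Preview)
Your approach via Koszul duality is different from the paper's, and the gap you yourself identify is real. You transport both sides to $\mathrm{MF}^{\mathrm{gr}}(\mathcal{V}^{\vee},f)$ and then need the $\QCoh^{\shear}(\fg/G)$-action built from $f^{\Omega!}$ and $\otimes^{\Omega!}$ to match the naive $\pi^*$-tensor action along $\pi\colon (\Omega_X\times\fg)/G\to \fg/G$. You do not carry this out; the phrase ``I will then trace through the construction'' is a placeholder. This compatibility is not automatic: the monoidal structure $\otimes^{\Omega!}$ has unit $0_*\omega_{\X}[\dim\X]$, not $\mathcal{O}_{\Omega_{\X}}$, and $f^{\Omega!}=\beta_*\alpha^!$ involves a push-forward along a closed immersion, so matching it with a purely $\pi^*$-tensor action on matrix factorisations requires a genuine argument (of the sort indicated around diagram~\eqref{dia:koszul}, but adapted to this situation). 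Without it, the identity $\mathrm{Supp}^L(\mathcal{E})=\overline{\pi(\mathrm{supp}\,\Psi(\mathcal{E}))}$ is unjustified.

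The paper sidesteps the Koszul compatibility entirely. It stays inside $\IndCoh$ and rewrites the action $(A,B)\mapsto f^{\Omega!}(A)\otimes^{\Omega!}B$ as $g^{\Omega!}(A\boxtimes B)$ for $g=(f,\id)\colon \X\to BG\times\X$, then uses a chain of fibre-product identifications (starting from $\Omega_{BG}\simeq 0/G\times_{\fg^{\vee}/G}0/G$) to show that the correspondence computing $g^{\Omega!}$ is literally the self-intersection correspondence $\Omega_{\X}\leftarrow \Omega_{\X}\times_{\Omega_X/G}\Omega_{\X}\to \Omega_{\X}$. This is exactly the correspondence used in \cite[Section~9.2]{AG} to characterise $\mathrm{Supp}^{\mathrm{AG}}$, so \cite[Lemma~9.2.6]{AG} finishes the proof at once. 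The paper's route is shorter and leans on the fact that \cite{AG} already packages the singular-support condition in terms of an $\IndCoh(\Omega_{BG})$-action; your route would give a more hands-on picture via matrix factorisations, but only after closing the gap.
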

\begin{proof}
The monoidal action (\ref{maction:lim}) is given by     
\begin{align*}
    f^{\Omega!}(A)\otimes^{\Omega !} B &= \Delta^{\Omega !}(f^{\Omega!}(A) \boxtimes B) \\
    &=\Delta^{\Omega!}(f\times \id)^{\Omega !}(A\boxtimes B) \\
    &=g^{\Omega!}(A\boxtimes B)
\end{align*}
where $g=(f, \id) \colon \X\to BG\times \X$. 
It is then given by $\beta_* \alpha^!$ for the diagram 
\begin{align}\label{dia:mon1}
    \xymatrix{
    \Omega_{\X} \ar[d] & \Y
    \ar[l]_-{\beta} \ar[r]^-{\alpha} \ar[d] \diasquare& \Omega_{BG}\times \Omega_{\X} \ar[d] \\
    \X \ar@{=}[r] & \X \ar[r]^-{g} & BG \times \X.  
    }
\end{align}
Here, the right square is Cartesian. 

Note that we have 
\begin{align*}
    \Omega_{BG} \simeq \mathfrak{g}^{\vee}[-1]/G \simeq 0/G \times_{\mathfrak{g}^{\vee}/G}0/G
\end{align*}
where $0\in \mathfrak{g}^{\vee}$ is the origin. We also have equivalences
\begin{align*}
    \Y &\simeq  \Omega_{\X}\times_{BG}\Omega_{BG} \\
    &\simeq \Omega_{\X}\times_{BG}(0/G \times_{\mathfrak{g}^{\vee}/G}0/G) \\
    &\simeq \Omega_{\X}\times_{\mathfrak{g}^{\vee}/G}\times 0/G \\
    &\simeq \Omega_{\X}\times_{\Omega_{X/G}}(\Omega_{X/G}\times_{\mathfrak{g}^{\vee}/G}0/G)\\
    &\simeq \Omega_{\X}\times_{\Omega_{X/G}}\times \Omega_{\X}.
\end{align*}

Under the above equivalences, the monoidal action $g^{\Omega!}=\beta_{*}\alpha^!$
in the diagram (\ref{dia:mon1}) is identified with the monoidal action 
induced by the diagram 
\begin{align}\label{dia:mon2}
    \xymatrix{
    \Omega_{\X} \ar[d] & \Omega_{\X}\times_{\Omega_X/G} \Omega_{\X}
    \ar[l] \ar[r] \ar[d] &  \Omega_{\X} \ar[d] \\
    BG & \ar[l] 0/G\times_{\mathfrak{g}^{\vee}/G}0/G \ar[r] & BG.  
    }
\end{align}
Here the horizontal arrows are projections onto the corresponding 
factors. The monoidal action of $\IndCoh(\Omega_{BG})$ induced by 
the diagram (\ref{dia:mon2}) is used to characterize Arinkin-Gaitsgory singular 
supports in~\cite[Section~9.2]{AG}. The lemma now follows 
from~\cite[Lemma~9.2.6]{AG}. 
\end{proof}

\subsection{Nilpotent singular supports at classical limits}\label{subsec:limsupp}
Since the map $f$ in (\ref{map:f}) is smooth, by Proposition~\ref{prop:pback}
the functor (\ref{funct:f2}) restricts to the functor 
\begin{align}\label{ind:fL}
    f^{\Omega!}\colon \LL(\Omega_{BG})_{\frac{1}{2}} \to \LL(\Omega_{\X})_{\frac{1}{2}}. 
\end{align}
From the computation in Subsection~\ref{subsub:BG}, the 
subcategory 
\begin{align*}
    \LL(\Omega_{BG})_{\frac{1}{2}} \subset \Coh(\Omega_{\mathcal{X}})
\end{align*}
is identified with the monoidal subcategory 
\begin{align*}
    \pi^{*}\colon \mathrm{Perf}^{\shear}(\mathfrak{g}\ssslash G) 
    \hookrightarrow \mathrm{Perf}^{\shear}(\mathfrak{g}/ G). 
\end{align*}
Since $\mathcal{O}_{\mathfrak{g}\ssslash G}$ acts on the isomorphism classes of objects in 
$\IndCoh(\Omega_{\X})$ trivially, 
the functor (\ref{ind:fL})
induces the monoidal action of $\mathrm{Perf}^{\shear}(\mathfrak{g}\ssslash G)$
\begin{align}\label{action:L}
    \mathrm{Perf}^{\shear}(\mathfrak{g}\ssslash G) \otimes \LL(\Omega_{\X})_{\frac{1} {2}} \to \LL(\Omega_{\X})_{\frac{1}{2}}.
\end{align}
It extends to the monoidal action 
\begin{align}\label{action:L2}
\QCoh^{\shear}(\mathfrak{g}\ssslash G) \otimes 
\IndL(\Omega_{\X})_{\frac{1}{2}} \to 
\IndL(\Omega_{\X})_{\frac{1}{2}}.
\end{align}
The result of Lemma~\ref{lem:ssupport} immediately implies the following: 
\begin{cor}\label{cor:nilp}
The subcategories 
\begin{align*}
    \LL_{\mathcal{N}}(\Omega_{\X})_{\frac{1}{2}} \subset \LL(\Omega_{\X})_{\frac{1}{2}}, \ 
      \IndL_{\mathcal{N}}(\Omega_{\X})_{\frac{1}{2}} \subset \IndL(\Omega_{\X})_{\frac{1}{2}}
\end{align*}
are exactly those having supports $0\in \mathfrak{g}\ssslash G$ with respect
to the monoidal action (\ref{action:L}), (\ref{action:L2}) respectively. 
\end{cor}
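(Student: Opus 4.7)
The plan is to deduce Corollary~\ref{cor:nilp} essentially as a formal consequence of Lemma~\ref{lem:ssupport} together with the compatibility between the monoidal actions of $\QCoh^{\shear}(\mathfrak{g}/G)$ and $\QCoh^{\shear}(\mathfrak{g}\ssslash G)$. The overall strategy has two pieces: identify the two notions of support via the good moduli space morphism $\pi\colon \mathfrak{g}/G \to \mathfrak{g}\ssslash G$, and then observe that the $\mathfrak{g}\ssslash G$-support of an object in the (ind-)limit category agrees with its $\mathfrak{g}\ssslash G$-support when viewed inside $\IndCoh(\Omega_{\X})$.

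First I would observe that the action~\eqref{action:L2} of $\QCoh^{\shear}(\mathfrak{g}\ssslash G)$ on $\IndL(\Omega_{\X})_{\frac{1}{2}}$ is the restriction, via $\pi^{\ast}\colon \QCoh^{\shear}(\mathfrak{g}\ssslash G)\hookrightarrow \QCoh^{\shear}(\mathfrak{g}/G)$, of the action~\eqref{maction:lim} of $\QCoh^{\shear}(\mathfrak{g}/G)$ on $\IndCoh(\Omega_{\X})$. This is because, by the discussion of Subsection~\ref{subsub:BG}, $\LL(\Omega_{BG})_{\frac{1}{2}} \hookrightarrow \IndCoh(\Omega_{BG})\simeq \QCoh^{\shear}(\fg/G)$ corresponds under Koszul duality to $\pi^{\ast}\mathrm{Perf}^{\shear}(\fg\ssslash G)$, and the monoidal functor $f^{\Omega!}$ in~\eqref{ind:fL} restricts the monoidal functor $f^{\Omega!}$ in~\eqref{funct:f2}. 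Consequently, for any $\mathcal{E}\in \IndL(\Omega_{\X})_{\frac{1}{2}}$, its support with respect to~\eqref{action:L2} equals $\pi(\mathrm{Supp}^L(\mathcal{E}))\subset \mathfrak{g}\ssslash G$.

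Next, I would use the elementary fact that $\pi^{-1}(0) = \mathrm{Nilp}(\mathfrak{g})/G$ as closed substacks of $\mathfrak{g}/G$, since the nilpotent cone is cut out by the augmentation ideal of $k[\mathfrak{g}]^G = k[\fg\ssslash G]$. Because $\mathrm{Supp}^L(\mathcal{E})$ is a closed conical substack of $\fg/G$, the inclusion $\mathrm{Supp}^L(\mathcal{E})\subset \mathrm{Nilp}(\fg)/G$ is equivalent to $\pi(\mathrm{Supp}^L(\mathcal{E}))=\{0\}$. Combined with Lemma~\ref{lem:ssupport}, this yields, for any $\mathcal{E}\in \IndL(\Omega_{\X})_{\frac{1}{2}}$, the equivalence
\[
\mathcal{E}\in \IndCoh_{\mathcal{N}}(\Omega_{\X}) \iff \mathrm{Supp}^L(\mathcal{E})\subset \mathrm{Nilp}(\fg)/G \iff \text{$\mathfrak{g}\ssslash G$-support of $\mathcal{E}$ is $\{0\}$.}
\]
Intersecting with $\IndL(\Omega_{\X})_{\frac{1}{2}}$ gives the statement for ind-limit categories, and then passing to compact objects gives it for $\LL_{\mathcal{N}}(\Omega_{\X})_{\frac{1}{2}} \subset \LL(\Omega_{\X})_{\frac{1}{2}}$ (noting that nilpotent singular support and the $\fg\ssslash G$-support condition are both preserved by compact generation, since they are cut out by an action of a commutative ring on inner homomorphisms).

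The main obstacle, which is really a matter of bookkeeping rather than a deep point, is verifying the first step: that the $\QCoh^{\shear}(\fg\ssslash G)$-action of~\eqref{action:L2} genuinely factors through the $\QCoh^{\shear}(\fg/G)$-action~\eqref{maction:lim} used in Lemma~\ref{lem:ssupport}. This reduces to checking that the Koszul-equivalent statement at the level of $BG$ (namely $\pi^{\ast}$ identifies $\LL(\Omega_{BG})_{\frac{1}{2}}$ with a full monoidal subcategory of $\IndCoh(\Omega_{BG})$) is compatible, via $f^{\Omega!}$, with the monoidal structures on $\IndCoh(\Omega_{\X})$, which is immediate from base change and Lemma~\ref{lem:monoidal}. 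Once this compatibility is in hand, the corollary is essentially a formal consequence of Lemma~\ref{lem:ssupport}.
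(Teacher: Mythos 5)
Your proposal is correct and follows essentially the same route as the paper: the paper also deduces Corollary~\ref{cor:nilp} directly from Lemma~\ref{lem:ssupport}, after noting (as set up in Subsection~\ref{subsec:limsupp}) that the actions (\ref{action:L}), (\ref{action:L2}) are the restrictions via $\pi^{\ast}\colon \mathrm{Perf}^{\shear}(\mathfrak{g}\ssslash G)\hookrightarrow \mathrm{Perf}^{\shear}(\mathfrak{g}/G)$ of the action used there, so that having $\mathfrak{g}\ssslash G$-support $\{0\}$ is equivalent to $\mathrm{Supp}^L\subset \mathrm{Nilp}(\mathfrak{g})/G=\pi^{-1}(0)$. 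The compatibility check you flag as the "main obstacle" is exactly the content the paper records before stating the corollary, so nothing further is needed.
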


Since the action (\ref{action:L}) is regarded as a classical limit analogue of the
action (\ref{act:perfD}) for D-modules, and compact D-modules
are supported over $0\in \fg\ssslash G$ by Lemma~\ref{lem:supp:dmod},
we expect that $\LL_{\mathcal{N}}(\Omega_{\X})_{\frac{1}{2}}$ may be regarded as the classical 
limit of $\text{D-mod}(\X)^{\mathrm{cp}}$. 
By taking the ind-completion, we expect the following degeneraton of categories 
\begin{align*}
    \text{D-mod}(\X)  \leadsto \IndL_{\mathcal{N}}(\Omega_{\X})_{\frac{1}{2}}.
\end{align*}
More precisely, we 
expect the following analogue of Conjecture~\ref{conj:L}:
\begin{conj}\label{conj:limitLN}
For any compact object 
$\mathcal{E} \in \mathrm{D}\text{-}\mathrm{mod}(\mathcal{X})^{\mathrm{cp}}$, there is 
an object $\widetilde{\mathcal{E}} \in \mathrm{D}\text{-}\mathrm{mod}^{\mathrm{filt}}_{\mathrm{coh}}(\mathcal{X})$
(i.e. good filtration of $\mathcal{E}$) such that 
\begin{align*}
    \mathrm{gr}(\widetilde{\mathcal{E}}) \in \LL_{\mathcal{N}}(\Omega_{\mathcal{X}})_{\frac{1}{2}}. 
\end{align*}
    \end{conj}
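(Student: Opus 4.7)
The plan is to deduce Conjecture~\ref{conj:limitLN} from Conjecture~\ref{conj:L} together with a filtered-compatibility of the monoidal actions appearing in Subsections~\ref{sec:ssupport} and~\ref{subsec:limsupp}. Given a compact $\mathcal{E} \in \text{D-mod}(\X)^{\mathrm{cp}}$, Conjecture~\ref{conj:L} supplies a good filtration $\widetilde{\mathcal{E}} \in \text{D-mod}^{\mathrm{filt}}_{\mathrm{coh}}(\X)$ with $\mathrm{gr}(\widetilde{\mathcal{E}}) \in \LL(\Omega_{\X})_{\frac{1}{2}}$. By Corollary~\ref{cor:nilp}, the upgrade to $\LL_{\mathcal{N}}(\Omega_{\X})_{\frac{1}{2}}$ is equivalent to showing that $\mathrm{gr}(\widetilde{\mathcal{E}})$ is set-theoretically supported over $0 \in \mathfrak{g}\ssslash G$ with respect to the $\mathrm{Perf}^{\shear}(\mathfrak{g}\ssslash G)$-action (\ref{action:L}). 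Thus the task reduces to transporting the nilpotency statement of Lemma~\ref{lem:supp:dmod} from the de Rham side through the associated graded functor.

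First, I would construct a filtered analogue of the monoidal action (\ref{act:perfD}) by applying the functoriality of the Hodge stack to $f \colon \X \to BG$: this gives a filtered pullback $f^{!,\,\mathrm{filt}} \colon \text{D-mod}^{\mathrm{filt}}(BG) \to \text{D-mod}^{\mathrm{filt}}(\X)$ and, together with the $\otimes^!$-product on $\text{D-mod}^{\mathrm{filt}}(\X)$ coming from the diagonal, a filtered monoidal action of $\text{D-mod}^{\mathrm{filt}}_{\mathrm{coh}}(BG)$ on $\text{D-mod}^{\mathrm{filt}}_{\mathrm{coh}}(\X)$. By (a filtered refinement of) Lemma~\ref{lem:pullD} applied both to $f$ and to the diagonal, the generic fiber of this action recovers (\ref{act:perfD}), while the $\mathrm{gr}$-operation identifies the special fiber, via the Koszul equivalence (\ref{kequ:dol}) and the computation of Subsection~\ref{subsec:ExD:red}, with the action (\ref{action:L}) of $\mathrm{Perf}^{\shear}(\mathfrak{g}\ssslash G) \simeq \mathrm{gr}\bigl(\text{D-mod}^{\mathrm{filt}}_{\mathrm{coh}}(BG)\bigr)$ on $\LL(\Omega_{\X})_{\frac{1}{2}}$.

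With this filtered action in hand, the argument of Lemma~\ref{lem:supp:dmod} produces an integer $N \gg 0$ such that the composition $I^N \otimes \mathcal{E} \to H^\ast_{\mathrm{dR}}(BG) \otimes \mathcal{E} \to \mathcal{E}$ vanishes, where $I$ is the augmentation ideal of $H^\ast_{\mathrm{dR}}(BG)$. The goal is to lift this vanishing to the filtered setting: to find a good filtration $\widetilde{\mathcal{E}}$ (refining, or chosen compatibly with, the one supplied by Conjecture~\ref{conj:L}) for which the analogous composition involving the Rees algebra $\widetilde{H^\ast_{\mathrm{dR}}(BG)}$ vanishes on $\widetilde{I}^N$. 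Upon applying $\mathrm{gr}$, this yields the vanishing of $(\mathcal{O}_{\mathfrak{g}\ssslash G})^{\geq N}_{+} \otimes \mathrm{gr}(\widetilde{\mathcal{E}}) \to \mathrm{gr}(\widetilde{\mathcal{E}})$ with respect to (\ref{action:L}), i.e.\ support at $0 \in \mathfrak{g}\ssslash G$, which is the desired conclusion by Corollary~\ref{cor:nilp}.

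The main obstacle is the lifting step in the previous paragraph: starting from the nilpotency witness at the de Rham generic fiber, producing a coherent filtered refinement compatible with $\widetilde{\mathcal{E}}$. A clean way to achieve this would be to strengthen Conjecture~\ref{conj:L} so that the good filtration on $\mathcal{E}$ can be chosen together with a good filtration on a finite generating set of $\mathrm{End}(\mathcal{E})$ as a module over $H^\ast_{\mathrm{dR}}(BG)$; the compactness of $\mathcal{E}$ ensures that this module is finitely generated, hence annihilated by a power of $I$, and once lifted to the Rees algebra $\widetilde{H^\ast_{\mathrm{dR}}(BG)}$, the filtered annihilation is preserved by $\mathrm{gr}$. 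An alternative route would be to develop the theory of limit categories for the loop stack $\mathcal{L}\X$, as sketched in Remark~\ref{rem:singularsupport}, and to prove Conjectures~\ref{conj:L} and~\ref{conj:limitLN} simultaneously by constructing an $\mathbb{A}^1/\mathbb{G}_m$-linear family whose generic fiber is $\text{D-mod}(\X)^{\mathrm{cp}}$ and whose special fiber is a $\mathbb{G}_m$-equivariant enhancement of $\LL_{\mathcal{N}}(\Omega_{\X})_{\frac{1}{2}}$; the nilpotency of the singular support would then be a structural consequence of the equivariant geometry of $\widehat{\mathcal{L}}\X$.
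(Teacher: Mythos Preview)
This statement is a \emph{conjecture} in the paper; there is no proof, only a sketched strategy in Subsection~\ref{subsection:DLL}. That strategy is exactly your ``alternative route'' at the end: construct an $\mathbb{A}^1/\mathbb{G}_m$-linear category $\hat{\mathrm{L}}(\mathbb{T}_{\X}[-1])^{B\mathbb{G}_a\rtimes\mathbb{G}_m}_{\frac{1}{2}}$ via loop stacks whose generic fiber is $\mathrm{D}\text{-}\mathrm{mod}_{\mathrm{coh}}(\X)$ and whose special fiber is a $\mathbb{G}_m$-equivariant version of $\LL(\Omega_{\X})_{\frac{1}{2}}$, and prove Conjectures~\ref{conj:L} and~\ref{conj:limitLN} simultaneously. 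The paper does not attempt to deduce~\ref{conj:limitLN} from~\ref{conj:L}.

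Your primary route---reduce to Conjecture~\ref{conj:L} and then transport the nilpotency witness of Lemma~\ref{lem:supp:dmod} through $\mathrm{gr}$---has the gap you yourself flag, and it is a real one. Even granting a filtered monoidal action interpolating between (\ref{act:perfD}) and (\ref{action:L}), vanishing of a morphism at the generic fiber of $\mathbb{A}^1/\mathbb{G}_m$ does not descend to the special fiber: if $\alpha$ is a filtered map with $j^*\alpha=0$, you only get that $\alpha$ factors through $i_*(-)$, not that $\mathrm{gr}(\alpha)=0$. So knowing $I^N\cdot\mathcal{E}\to\mathcal{E}$ vanishes in $\mathrm{D}\text{-}\mathrm{mod}$ gives no control on the associated graded unless the good filtration $\widetilde{\mathcal{E}}$ was chosen compatibly with that specific null-homotopy. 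Conjecture~\ref{conj:L} as stated asserts only the \emph{existence} of some good filtration landing in $\LL(\Omega_{\X})_{\frac{1}{2}}$; it gives no handle on making it compatible with a prescribed endomorphism. Your proposed strengthening (filter $\mathcal{E}$ together with a generating set of $\mathrm{End}(\mathcal{E})$ as an $H^*_{\mathrm{dR}}(BG)$-module) is a genuinely stronger hypothesis than~\ref{conj:L}, and neither you nor the paper offers a mechanism for it. This is presumably why the paper opts for the loop-stack approach, where the nilpotent support is expected to be built into the construction rather than imposed after the fact.
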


\begin{remark}\label{rmk:left}
In this section, we discussed the classical limit of the category of right D-modules. 
If instead we consider left D-modules $\text{D-mod}^l(\X)$, we expect 
that its classical limit is $\IndL_{\mathcal{N}}(\Omega_{\X})_{-\frac{1}{2}}$. 
The equivalence 
\begin{align*}
\otimes \omega_{\X} \colon 
    \IndL_{\mathcal{N}}(\Omega_{\X})_{-\frac{1}{2}}\stackrel{\sim}{\to}\IndL_{\mathcal{N}}(\Omega_{\X})_{\frac{1}{2}}
\end{align*}
may be regarded as a classical limit of the equivalence (\ref{equiv:rl}). 
\end{remark}

\subsection{Limit categories and categories of D-modules for loop stacks}\label{subsection:DLL}
We next mention a strategy for proving Conjectures~\ref{conj:L} and \ref{conj:limitLN} and the claims made in Remark~\ref{rem:singularsupport}. 
More precisely, for $\X$ a smooth QCA stack, 
we claim that there exists an $\mathbb{A}^1/\mathbb{G}_m$-linear category \[\hat{\text{L}}(\mathbb{T}_\X[-1])^{B\mathbb{G}_a\rtimes\mathbb{G}_m}_{\frac{1}{2}}\] whose fiber over $1$ is equivalent to $\text{D-mod}_{\mathrm{coh}}(\X)$ and whose fiber over $0/\mathbb{G}_m$ is a $\mathbb{G}_m$-equivariant version of the limit category $\text{L}_{\mathbb{G}_m}(\Omega_\X)_{\frac{1}{2}}$. 
Conjecture~\ref{conj:L} would then follow from the (expected) surjectivity of the restriction map:
\[\text{res}\colon K(\hat{\text{L}}(\mathbb{T}_\X[-1])^{B\mathbb{G}_a\rtimes\mathbb{G}_m}_{\frac{1}{2}})\to K(\text{D-mod}_{\mathrm{coh}}(\X)).\]
The construction of $\hat{\text{L}}(\mathbb{T}_\X[-1])^{B\mathbb{G}_a\rtimes\mathbb{G}_m}_{\frac{1}{2}}$ involves the following:
\begin{enumerate}
    \item a version of Koszul duality which describes D-modules as loop-equivariant sheaves on the formal completion $\hat{\mathscr{L}}\X$ of the loop stack of $\mathscr{L}\X$ along the constant loops $\X\hookrightarrow \mathscr{L}\X$, see~\cite{BZNa, HChen},
    \item a construction analogous to limit categories for the dg-category $\Coh(\mathscr{L}\X)$ and other related categories (namely loop-equivariant \textit{continuous ind-coherent sheaves} on $\hat{\mathscr{L}}\X$, see~\cite{HChen, ChenDhillon}), and 
    \item a semiorthogonal decomposition for $\Coh(\mathscr{L}\X)$ and related categories analogous to the semiorthogonal decomposition~\eqref{intro:sod:general}.
\end{enumerate}
We postpone discussing (ii) and (iii) for future work, and we briefly explain the strategy for completing these steps in the next two subsections. 
 We expect that, for general smooth stacks $\X$, the semiorthogonal decompositions mentioned above should use the `component lattice of a stack' introduced in~\cite{BHLINK}, and thus the combinatorics involved is more involved than in the case of~\eqref{intro:sod:L} or~\eqref{intro:sod:general}.

\subsection{D-modules via Koszul duality}

Let $\X$ be a smooth QCA stack. We recall a description of the category of D-modules using a category of loop-equivariant coherent sheaves on $\hat{\mathscr{L}}\X$, see~\cite[Theorem A]{HChen}. 
First, consider the loop stack \[\mathscr{L}\X:=\text{Map}(S^1,\X):=\X\times_{\X\times\X}\X.\]
Let $\X\hookrightarrow\mathscr{L}\X$ be the closed substack of constant loops, and consider its normal bundle
\[N_{\X|\mathscr{L}\X}=\mathbb{T}_{\X}[-1].\]
Next, consider the completion of $\mathbb{T}_\X[-1]$ along the zero section $0\colon \X\hookrightarrow \mathbb{T}_\X[-1]$: \[\hat{\mathbb{T}}_\X[-1].\] 
For definition and examples of $S^1$-actions, see~\cite{Preygel},~\cite{HChen}, and~\cite[Appendix A]{BZCHN}.
There is an $S^1$-action on $\mathscr{L}\X$, inducing an $S^1$-action on $\hat{\mathbb{T}}_\X[-1]$, which factors through an action of $B\mathbb{G}_a$ (the affinization of $S^1$). 
Further, both $\hat{\mathbb{T}}_\X[-1]$ and $B\mathbb{G}_a$ admit compatible scaling $\mathbb{G}_m$-actions, and we thus obtain an action  
\[B\mathbb{G}_a\rtimes \mathbb{G}_m \curvearrowright \hat{\mathbb{T}}_\X[-1].\] 
The category \[\text{IndCoh}(\hat{\mathbb{T}}_\X[-1])^{\omega B\mathbb{G}_a\rtimes\mathbb{G}_m}\] is $\mathbb{A}^1/\mathbb{G}_m$-linear, where $\omega$ denotes a certain renormalization of the category~\cite[Subsection 3.1.8]{HChen}. The fiber of $\text{IndCoh}(\hat{\mathbb{T}}_\X[-1])^{\omega B\mathbb{G}_a\rtimes\mathbb{G}_m}$ over $1$ is the graded Tate category $\text{IndCoh}(\hat{\mathbb{T}}_\X[-1])^{\text{grTate}}$ and the fiber over $0$ is $\text{IndCoh}_{\mathbb{G}_m}(\hat{\mathbb{T}}_\X[-1])$.
There are compatible Koszul equivalences, see~\cite[Theorem A]{HChen}:
\begin{equation*}
    \begin{tikzcd}
        \text{IndCoh}(\hat{\mathbb{T}}_\X[-1])^{\text{grTate}} 
        \arrow[d, "\sim" {anchor=south, rotate=90}]& 
        \text{IndCoh}(\hat{\mathbb{T}}_\X[-1])^{\omega B\mathbb{G}_a\rtimes\mathbb{G}_m} 
        \arrow[r]\arrow[l]\arrow[d, "\sim" {anchor=south, rotate=90}]& 
        \text{IndCoh}_{\mathbb{G}_m}(\hat{\mathbb{T}}_\X[-1]) \arrow[d, "\sim" {anchor=south, rotate=90}]\\
        \text{D-mod}(\X)& 
        \text{D-mod}^{\text{filt}}(\X) \arrow[r, "\text{gr}"]\arrow[l]& \text{QCoh}_{\mathbb{G}_m}(\Omega_{\X}).
    \end{tikzcd}
\end{equation*}

We next state an analogous result for the category of coherent D-modules, using the category of \textit{continuous ind-coherent sheaves}. For a stack $\mathcal{Y}$ with a closed substack $\mathcal{Z}\hookrightarrow \mathcal{Y}$, Chen--Dhillon~\cite{ChenDhillon} consider
the category of ind-coherent sheaves \[\hat{\Coh}(\Y_\mathscr{Z})\subset \text{IndCoh}(\hat{\Y}_\mathscr{Z}),\] which  is an enlargment of the category $\Coh(\hat{\Y}_\mathscr{Z})$ of coherent sheaves on the formal completion of $\Y$ along the closed substack $\mathscr{Z}$. In our context, we consider
\[\hat{\Coh}(\mathbb{T}_\X[-1])^{\omega B\mathbb{G}_a\rtimes\mathbb{G}_m}\subset  \text{IndCoh}(\hat{\mathbb{T}}_\X[-1])^{\omega B\mathbb{G}_a\rtimes\mathbb{G}_m}.\]
There are compatible Koszul equivalences, see~\cite[Theorem A]{HChen}:
\begin{equation*}
    \begin{tikzcd}
        \hat{\Coh}(\mathbb{T}_\X[-1])^{\text{grTate}} 
        \arrow[d, "\sim" {anchor=south, rotate=90}]& 
        \hat{\Coh}(\mathbb{T}_\X[-1])^{ B\mathbb{G}_a\rtimes\mathbb{G}_m} 
        \arrow[r]\arrow[l]\arrow[d, "\sim" {anchor=south, rotate=90}]& 
        \hat{\Coh}_{\mathbb{G}_m}(\mathbb{T}_\X[-1]) \arrow[d, "\sim" {anchor=south, rotate=90}]\\
        \text{D-mod}_{\mathrm{coh}}(\X)& 
        \text{D-mod}^{\text{filt}}_{\Coh}(\X) \arrow[r, "\text{gr}"]\arrow[l]& \Coh_{\mathbb{G}_m}(\Omega_{\X}).
    \end{tikzcd}
\end{equation*}

\subsection{Limit categories for loop stacks}

We next discuss the construction of the category 
\begin{equation}\label{def:Lhat}
\hat{\text{L}}(\mathbb{T}_{\X}[-1])^{B\mathbb{G}_a\rtimes\mathbb{G}_m}_{\frac{1}{2}},
\end{equation} which is an $\mathbb{A}^1/\mathbb{G}_m$-linear category whose fiber over $1$ is denoted by $\hat{\text{L}}(\mathbb{T}_{\X}[-1])^{\text{grTate}}$ and whose fiber over $0/\mathbb{G}_m$ is equivalent, under the Koszul equivalence, to $\LL_{\mathbb{G}_m}(\Omega_\X)_{\frac{1}{2}}$.

We first define a category analogous to the limit category for the loop stack
\[\text{L}(\mathscr{L}\X)_\delta\subset \Coh(\mathscr{L}\X),\]
where $\delta\in\text{Pic}(\X)_\mathbb{R}$. As before, when $\delta:=\omega_{\X}^{1/2}$, we denote this category by $\text{L}(\mathscr{L}\X)_{\frac{1}{2}}$. 

The definition is completely analogous to Definition~\ref{def:L}, with the only exception being the replacement of $T_x$ in~\eqref{wt:nu} by $T'_x$, which we now define. Consider a field extension $k'/k$ and a map $\nu\colon (\bgm)_{k'}\to\mathcal{L}\X$ be a map with $\nu(0)=x\in \mathcal{L}\X(k')$. 
We define $T_x'$ to be \begin{align*}T'_x:=\mathcal{H}^{1}(\mathbb{T}_{\mathcal{L}\X}|_x)\oplus \mathcal{H}^{1}(\mathbb{T}_{\mathcal{L}\X}|_x)^\vee.\end{align*}
We have that $\mathfrak{g}_x:=\mathcal{H}^{-1}(\mathbb{T}_{\mathcal{L}\X}|_x)$ is the Lie algebra of the stabilizer group of $x$ in $\mathcal{L}\X$. There is a (non-canonical) splitting 
\[\mathcal{H}^0(\mathbb{T}_{\mathcal{L}\X}|_x)\cong \mathcal{H}^{-1}(\mathbb{T}_{\mathcal{L}\X}|_x)\oplus \mathcal{H}^{1}(\mathbb{T}_{\mathcal{L}\X}|_x).\]

There exists a regularization $\nu^{\text{reg}}\colon \mathfrak{g}_x^\vee[-1]/(\mathbb{G}_m)_{k'}\to \mathcal{L}\X$ of $\nu$ as in Construction-Definition~\ref{def:construction}, and a natural map $\iota\colon \mathfrak{g}_x^{\vee}[-1]/(\mathbb{G}_m)_{k'} \to 
    (\bgm)_{k'}$. 

\begin{defn}

We define the subcategory \[\text{L}(\mathcal{L}\X)_\delta\subset \Coh(\mathcal{L}\X)\] to be consisting of objects $\mathcal{E}$ 
such that for, any field extension $k'/k$ and a map
$\nu \colon (\bgm)_{k'} \to \mathcal{L}\X$ 
with $\nu(0)=x$ and regularization (\ref{nu:reg}),
we have:
\[\wt(\iota_{\ast}\nu^{\mathrm{reg}\ast}(\mathcal{E}))
\subset \left[\frac{1}{2} c_1 (T_x'^{<0}),
\frac{1}{2} c_1 (T_x'^{>0}) \right]+
\frac{1}{2}c_1(\mathfrak{g}_x)+c_1(\nu^*\delta).\]

\end{defn}

There is also a version $\text{L}(\mathbb{T}_\X[-1])_\delta\subset \hat{\Coh}(\mathbb{T}_\X[-1])$ with an action of $B\mathbb{G}_a\rtimes \mathbb{G}_m$. There are forgetful functors \[\text{forg}\colon \hat{\Coh}(\mathbb{T}_\X[-1])^{B\mathbb{G}_a\times\mathbb{G}_m}\to \hat{\Coh}(\mathbb{T}_\X[-1]).\]
Then~\eqref{def:Lhat} is the full subcategory of $\hat{\Coh}(\mathbb{T}_\X[-1])^{B\mathbb{G}_a\times\mathbb{G}_m}$ consisting of objects $\mathcal{E}$ such that \begin{align*}\text{forg}(\mathcal{E})\in \text{L}(\mathbb{T}_\X[-1])\subset \hat{\Coh}(\mathbb{T}_\X[-1]).
\end{align*}
We expect the category~\eqref{def:Lhat} to be linear over $\mathbb{A}^1/\mathbb{G}_m$ and 
with fiber over $0/\mathbb{G}_m$
equivalent, under the Koszul equivalence, to $\LL_{\mathbb{G}_m}(\Omega_\X)_{\frac{1}{2}}$. 
Its fiber over $1$ is $\hat{\text{L}}(\mathbb{T}_\X[-1])^{\text{grTate}}_{\frac{1}{2}}$. 

\begin{conj}
    The inclusion $\hat{\mathrm{L}}(\mathbb{T}_\X[-1])^{B\mathbb{G}_a\times\mathbb{G}_m}_{\frac{1}{2}}\subset \hat{\mathrm{Coh}}(\mathbb{T}_\X[-1])^{B\mathbb{G}_a\times\mathbb{G}_m}$ induces an equivalence 
    \[\hat{\mathrm{L}}(\mathbb{T}_\X[-1])^{\mathrm{grTate}}_{\frac{1}{2}}
    \xrightarrow{\sim} \hat{\mathrm{Coh}}(\mathbb{T}_\X[-1])^{\mathrm{grTate}}\xrightarrow{\sim} 
    \mathrm{D}\text{-}\mathrm{mod}_{\mathrm{coh}}(\X).\]
\end{conj}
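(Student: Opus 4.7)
The plan is to exploit the $\mathbb{A}^1/\mathbb{G}_m$-linear structure on $\hat{\LL}(\mathbb{T}_\X[-1])^{B\mathbb{G}_a\rtimes\mathbb{G}_m}_{\frac{1}{2}}$ and compare its fibers over $0/\mathbb{G}_m$ and $1$, using the preceding Koszul-dual description due to H.~Chen together with Conjecture~\ref{conj:L}. First, I would verify that the category $\hat{\LL}(\mathbb{T}_\X[-1])^{B\mathbb{G}_a\rtimes\mathbb{G}_m}_{\frac{1}{2}}$ is genuinely $\mathbb{A}^1/\mathbb{G}_m$-linear. The weight conditions defining $\LL(\mathcal{L}\X)_{\delta}$ depend only on pointwise $\bgm$-actions and are visibly preserved under tensoring with pullbacks of coherent sheaves from $\mathbb{A}^1/\mathbb{G}_m$ (these pullbacks carry the trivial loop-stack weights). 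I would then identify the special fiber over $0/\mathbb{G}_m$ with $\LL_{\mathbb{G}_m}(\Omega_{\X})_{\frac{1}{2}}$ via the degeneration from $\hat{\mathbb{T}}_{\X}[-1]$ to its normal cone, compatibly with the Koszul equivalence $\hat{\mathrm{Coh}}_{\mathbb{G}_m}(\mathbb{T}_\X[-1])\simeq \mathrm{Coh}_{\mathbb{G}_m}(\Omega_\X)$.

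Second, for fully faithfulness of the induced functor on the $\text{grTate}$-fiber, I would use a Nakayama-type argument on Hom complexes. For compact objects $A,B\in \hat{\LL}(\mathbb{T}_\X[-1])^{B\mathbb{G}_a\rtimes\mathbb{G}_m}_{\frac{1}{2}}$, the complex $\mathrm{RHom}(A,B)$ is a module over $k[t]$ with $t$ of cohomological degree $0$ and $\mathbb{G}_m$-weight $1$, and it sits in an $\mathbb{A}^1/\mathbb{G}_m$-linear triangle relating its fiber at $0/\mathbb{G}_m$ to its fiber at $1$. Since the embedding $\LL_{\mathbb{G}_m}(\Omega_\X)_{\frac{1}{2}}\hookrightarrow \mathrm{Coh}_{\mathbb{G}_m}(\Omega_\X)$ is tautologically fully faithful, the Hom complex computed in $\hat{\LL}(\mathbb{T}_\X[-1])^{B\mathbb{G}_a\rtimes\mathbb{G}_m}_{\frac{1}{2}}$ agrees with the one computed in $\hat{\mathrm{Coh}}(\mathbb{T}_\X[-1])^{B\mathbb{G}_a\rtimes\mathbb{G}_m}$ on the special fiber, and the conservativity of the associated graded functor on appropriately filtered Hom complexes yields the same statement on the generic fiber.

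Third, essential surjectivity is the substantive content and is precisely where Conjecture~\ref{conj:L} enters. Given $\mathcal{E}\in \mathrm{D}\text{-}\mathrm{mod}_{\mathrm{coh}}(\X)$, the conjecture provides a good filtration $\widetilde{\mathcal{E}}\in \mathrm{D}\text{-}\mathrm{mod}^{\mathrm{filt}}_{\mathrm{coh}}(\X)$ with $\mathrm{gr}(\widetilde{\mathcal{E}})\in \LL(\Omega_\X)_{\frac{1}{2}}$. Under the Koszul equivalence $\mathrm{D}\text{-}\mathrm{mod}^{\mathrm{filt}}_{\mathrm{coh}}(\X)\simeq \hat{\mathrm{Coh}}(\mathbb{T}_\X[-1])^{B\mathbb{G}_a\rtimes\mathbb{G}_m}$, the filtered object $\widetilde{\mathcal{E}}$ corresponds to an object whose restriction to the special fiber $0/\mathbb{G}_m$ is $\mathrm{gr}(\widetilde{\mathcal{E}})$ equipped with its natural $\mathbb{G}_m$-grading. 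Since the weight conditions defining the limit category are closed (in the sense that they pull back along the zero section $0/\mathbb{G}_m \hookrightarrow \mathbb{A}^1/\mathbb{G}_m$ of the base and only constrain behaviour at constant loops, which are preserved by the degeneration), membership of the lift in $\hat{\LL}(\mathbb{T}_\X[-1])^{B\mathbb{G}_a\rtimes\mathbb{G}_m}_{\frac{1}{2}}$ can be checked on the special fiber, giving the desired preimage of $\mathcal{E}$.

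The main obstacle is, unsurprisingly, Conjecture~\ref{conj:L} itself: producing, for every coherent D-module, a good filtration whose associated graded satisfies the sharp half-weight bounds of Definition~\ref{def:Lcat} is a genuinely nontrivial existence statement. A secondary technical difficulty is controlling the passage between $\hat{\mathrm{Coh}}$ and its $B\mathbb{G}_a\rtimes \mathbb{G}_m$-equivariant $\mathbb{A}^1/\mathbb{G}_m$-linear version at the level of the limit subcategory, in particular verifying that the proposed weight conditions on the loop stack $\mathcal{L}\X$ degenerate (along the formal deformation to the normal cone) precisely to those cutting out $\LL_{\mathbb{G}_m}(\Omega_\X)_{\frac{1}{2}}\subset \mathrm{Coh}_{\mathbb{G}_m}(\Omega_\X)$. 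I would first check the proof in the toy cases $\X=BG$ for reductive $G$ and $\X=\mathbb{A}^1/\mathbb{G}_m$ of Subsection~\ref{subsec:exam}, where all the input ingredients are available by explicit computation, before attempting the general case.
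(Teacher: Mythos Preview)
Your proposal reduces this conjecture to Conjecture~\ref{conj:L}, but in the paper's architecture the dependence runs the other way: Subsection~\ref{subsection:DLL} explains that the present conjecture is the intended \emph{mechanism} for proving Conjecture~\ref{conj:L} (via surjectivity of the restriction map on K-theory). Taking Conjecture~\ref{conj:L} as input here therefore makes the overall scheme circular.

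The paper's proposed route is structurally different and does not invoke Conjecture~\ref{conj:L} at all. It aims to build a semiorthogonal decomposition of the ambient category $\hat{\Coh}(\mathbb{T}_\X[-1])^{B\mathbb{G}_a\rtimes\mathbb{G}_m}$ indexed by connected components $\mathcal{Z}$ of $\mathrm{Map}(\bgm,\X)$ together with twists $\varepsilon\in\mathrm{Pic}(\mathcal{Z})_{\mathbb{R}}$, in which one summand is $\hat{\LL}(\mathbb{T}_\X[-1])^{B\mathbb{G}_a\rtimes\mathbb{G}_m}_{\frac{1}{2}}$ itself. The conjecture then becomes the assertion that every \emph{other} summand vanishes on the $\mathrm{grTate}$ fiber, i.e.\ $\hat{\Coh}(\mathbb{T}_{\mathcal{Z}}[-1])^{\mathrm{grTate}}_\varepsilon=0$ whenever $\varepsilon$ is not pulled back from $\X$. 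Via an \'etale cover $\mathcal{Z}\to\mathcal{Z}'\times\bgm$, this is expected to reduce to the vanishing $\hat{\Coh}(\mathbb{T}_{\bgm}[-1])^{\mathrm{grTate}}_a=0$ for $a\neq 0$, which is established in~\cite[Subsection~3.5.6]{HChen}. This approach, if carried out, proves both the present conjecture and Conjecture~\ref{conj:L} at once.

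A minor point: your Step~2 is superfluous. Since $\hat{\LL}(\mathbb{T}_\X[-1])^{B\mathbb{G}_a\rtimes\mathbb{G}_m}_{\frac{1}{2}}$ is by definition a full subcategory of $\hat{\Coh}(\mathbb{T}_\X[-1])^{B\mathbb{G}_a\rtimes\mathbb{G}_m}$ and both are $\mathbb{A}^1/\mathbb{G}_m$-linear, base-changing to the generic fiber preserves full faithfulness tautologically; no Nakayama argument is required. All the content is in essential surjectivity, which in the paper's strategy comes from the vanishing of the complementary semiorthogonal pieces rather than from lifting individual D-modules.
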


As mentioned above, the plan of the proof of the above claim is through a semiorthogonal decomposition of $\hat{\mathrm{Coh}}(\mathbb{T}_\X[-1])^{B\mathbb{G}_a\times\mathbb{G}_m}$, where one summand is the category $\hat{\mathrm{L}}(\mathbb{T}_\X[-1])^{B\mathbb{G}_a\times\mathbb{G}_m}_{\frac{1}{2}}$. 
Indeed, we may attempt to construct a semiorthogonal decomposition \[\Coh(\mathcal{L}\X)=\big\langle \mathrm{L}(\mathcal{L}\mathcal{Z})_\varepsilon, \mathrm{L}(\mathcal{L}\X)_{\frac{1}{2}}\big\rangle\] for certain pairs $(\mathcal{Z}, \varepsilon)$,  where $\mathcal{Z}$ is a connected component of $\text{Map}(\bgm, \mathcal{X})$ and, if $\mathcal{Z}$ is different from $\mathcal{X}$, then $\varepsilon\in \text{Pic}(\mathcal{Z})_\mathbb{R}$ is not in the image of the pullback map $\text{Pic}(\mathcal{X})_\mathbb{R}\to \text{Pic}(\mathcal{Z})_\mathbb{R}$. 
One needs to argue that there are analogous semiorthogonal decompositions for $\hat{\Coh}(\mathbb{T}_\X[-1])$ and for \[\hat{\Coh}(\mathbb{T}_\X[-1])^{B\mathbb{G}_a\rtimes\mathbb{G}_m}=\big\langle \hat{\text{L}}(\mathbb{T}_\mathcal{Z}[-1])_\varepsilon^{B\mathbb{G}_a\rtimes\mathbb{G}_m}, \hat{\mathrm{L}}(\mathbb{T}_\X[-1])^{B\mathbb{G}_a\times\mathbb{G}_m}_{\frac{1}{2}}\big\rangle ,\] for certain pairs $(\mathcal{Z},\varepsilon)$ as above. 
It thus suffices to show that 
\[\hat{\text{L}}(\mathbb{T}_\mathcal{Z}[-1])_\varepsilon^{\text{grTate}}=0\] for $\varepsilon$ as above. Alternatively, it suffices to show that 
\begin{equation}\label{zerovarepsilon}
\hat{\Coh}(\mathbb{T}_\mathcal{Z}[-1])_\varepsilon^{\text{grTate}}=0.
\end{equation}
The stack $\mathcal{Z}$ has an étale cover $\mathcal{Z}\to\mathcal{Z}'\times \bgm$. The claim~\eqref{zerovarepsilon} is expected to be a corollary of the following vanishing for the stack $\bgm$:
\[\hat{\Coh}(\mathbb{T}_{\bgm}[-1])^{\text{grTate}}_a=0\] for $a\neq 0$. 
This is proved in~\cite[Subsection 3.5.6]{HChen}.

\section{Functorial properties of limit categories}\label{sec:functL}
In this section, we prove the functorial properties of limit categories. 
Namely, for a morphism of smooth stacks, we show that the associated lagrangian 
correspondence induces functors between limit categories as stated in Theorem~\ref{intro:thmfunct}. 
The result of this section will be used to construct Hecke operators on limit 
categories in Section~\ref{sec:hecke}.
\subsection{Functors for cotangent stacks}\label{subsec:funccotan}
Below we use the notation in Subsection~\ref{subsec:cotan}. 
Let $f \colon \X \to \Y$ be a morphism of smooth QCA
stacks. We have the following 
commutative diagram 
\begin{align}\label{dia:alphabeta}
    \xymatrix{
    \Omega_{\X} \ar[d]_{\pi_{\X}} & f^{\ast}\Omega_{\Y} 
    \ar[l]_-{\beta} \ar[r]^-{\alpha} \ar[d]\diasquare & \Omega_{\Y} \ar[d]^-{\pi_{\Y}} \\
    \X \ar@{=}[r] & \X \ar[r]^-{f} & \Y.  
    }
\end{align}
Here $f^{\ast}\Omega_{\Y}:=\Omega_{\Y}\times_{\Y}\X$
and the right square is Cartesian. 
\begin{lemma}\label{lem:beta}
We have the following:
\begin{enumerate}
    \item if $f$ is schematic, then $\beta$ is quasi-smooth; 
    \item if $f$ is smooth, then $\beta$ is a closed immersion. 
\end{enumerate}
\end{lemma}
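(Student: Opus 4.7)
The plan is to compute the relative cotangent complex $\mathbb{L}_{\beta}$ of $\beta$, from which both statements follow by inspecting its amplitude.

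I would first observe that $\Omega_{\X}=\Spec_{\X}\mathrm{Sym}(\mathbb{T}_{\X})$ is a linear derived stack over $\X$, so the relative cotangent $\mathbb{L}_{\pi_{\X}}$ of $\pi_{\X}\colon \Omega_{\X}\to\X$ equals $\pi_{\X}^{*}\mathbb{T}_{\X}$, and analogously $\mathbb{L}_{\pi}=\pi^{*}f^{*}\mathbb{T}_{\Y}$ for the projection $\pi\colon f^{*}\Omega_{\Y}\to\X$. Combining the cotangent triangles of these two projections with the cotangent triangle for $\beta$, and using that the map $\beta^{*}\mathbb{L}_{\Omega_{\X}}\to\mathbb{L}_{f^{*}\Omega_{\Y}}$ is the identity on the horizontal direction $\pi^{*}\mathbb{L}_{\X}$ and is induced by the differential $df\colon\mathbb{T}_{\X}\to f^{*}\mathbb{T}_{\Y}$ on the vertical direction, I would conclude from the fiber sequence $\mathbb{T}_{\X/\Y}\to\mathbb{T}_{\X}\to f^{*}\mathbb{T}_{\Y}$ that
\[\mathbb{L}_{\beta}\simeq \pi^{*}\mathbb{T}_{\X/\Y}[1].\]

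For (i), $f$ schematic is equivalent to injectivity of stabilizer Lie algebras $\mathfrak{g}_{x}\hookrightarrow\mathfrak{g}_{f(x)}$, i.e. $\mathcal{H}^{-1}(\mathbb{T}_{\X/\Y})=0$. Since $\X$ and $\Y$ are smooth, $\mathbb{T}_{\X}$ and $f^{*}\mathbb{T}_{\Y}$ are perfect of amplitude $[-1,0]$, and the long exact sequence then forces $\mathbb{T}_{\X/\Y}$ to be perfect of amplitude $[0,1]$. Thus $\mathbb{L}_{\beta}$ has amplitude $[-1,0]\subset [-1,1]$, so $\beta$ is quasi-smooth.

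For (ii), $f$ smooth implies $\mathbb{L}_{\X/\Y}$, and hence dually $\mathbb{T}_{\X/\Y}$, is a vector bundle concentrated in degree $0$. Therefore $\mathbb{L}_{\beta}$ is a locally free sheaf in degree $-1$; in particular $\beta$ is quasi-smooth and representable. It remains to check that the classical truncation $\beta^{\mathrm{cl}}$ is a classical closed immersion: the differential $\mathcal{H}^{0}(\mathbb{T}_{\X})\twoheadrightarrow\mathcal{H}^{0}(f^{*}\mathbb{T}_{\Y})$ is surjective (its cokernel is $\mathcal{H}^{1}(\mathbb{T}_{\X/\Y})=0$), inducing a surjection $\mathrm{Sym}(\mathcal{H}^{0}(\mathbb{T}_{\X}))\twoheadrightarrow\mathrm{Sym}(\mathcal{H}^{0}(f^{*}\mathbb{T}_{\Y}))$ of classical structure sheaves, i.e. a closed immersion $\beta^{\mathrm{cl}}$.

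The main obstacle is the careful execution of (ii) in the stacky setting, since the classical truncation of $\Omega_{\X}$ collapses the $\mathcal{H}^{-1}$-part of $\mathbb{T}_{\X}$ coming from stabilizers, and one has to track this collapse. An alternative streamlined approach is to work smooth-locally on $\X$: étale-locally on $\X$ a smooth $f$ factors as an étale map followed by a projection $\mathbb{A}^{n}_{\Y}\to\Y$, for which $\beta$ is manifestly the inclusion of the zero section $\Omega_{\Y}\times\mathbb{A}^{n}\hookrightarrow\Omega_{\Y}\times\Omega_{\mathbb{A}^{n}}$, and then descend since being a closed immersion is smooth-local.
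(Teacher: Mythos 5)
Your reduction to the amplitude of $\mathbb{L}_{f}$, via $\mathbb{L}_{\beta}\simeq \pi^{*}\mathbb{T}_{\X/\Y}[1]$, is exactly the paper's argument (the paper's proof is precisely the statement that $\mathbb{L}_f$ has amplitude $[-1,0]$ in the schematic case and $[0,1]$ in the smooth case), and your treatment of (i) is correct. One small overstatement there: schematic is not \emph{equivalent} to injectivity of stabilizer Lie algebras, but only the implication you use is needed.

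In (ii), however, the assertion that smoothness of $f$ makes $\mathbb{L}_{\X/\Y}$ a vector bundle in degree $0$ is false in the stacky setting, which is the setting that matters here: for a smooth but non-representable morphism of Artin stacks, $\mathbb{L}_f$ has amplitude $[0,1]$ with $\mathcal{H}^1(\mathbb{L}_f)$ recording the relative stabilizers (e.g.\ $f\colon BG\to \mathrm{pt}$ has $\mathbb{L}_f=\fg^{\vee}[-1]$), and such $f$ are precisely among those to which the lemma is later applied, for instance the smooth non-representable evaluation maps $q_{\mathrm{B}}$ in the Hecke correspondences of Section~\ref{sec:hecke}, fed into Proposition~\ref{prop:pback}. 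Consequently $\mathbb{L}_{\beta}$ has amplitude $[-2,-1]$ in general, so your intermediate conclusions that $\mathbb{L}_{\beta}$ is locally free in degree $-1$ and that $\beta$ is quasi-smooth in case (ii) are both wrong (for $BG\to\mathrm{pt}$ one gets $\mathbb{L}_{\beta}\simeq \fg[2]$). The part of your argument that actually establishes the closed immersion survives, because it only uses $\mathcal{H}^{-1}(\mathbb{L}_f)=0$, i.e.\ the surjectivity of $\mathcal{H}^0(\mathbb{T}_{\X})\to\mathcal{H}^0(f^{*}\mathbb{T}_{\Y})$ and hence of the $\mathcal{H}^0$'s of the relative $\mathrm{Sym}$-algebras; this vanishing does hold for any smooth morphism of Artin stacks, so the fix is simply to replace ``vector bundle in degree $0$'' by ``perfect of amplitude $[0,1]$''. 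Note finally that your proposed fallback, factoring $f$ \'etale-locally as an \'etale map followed by $\mathbb{A}^{n}_{\Y}\to\Y$, is again only available for representable $f$, so it does not repair the stacky case either.
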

\begin{proof}
The lemma follows since $\mathbb{L}_f$ is perfect of cohomological amplitude $[-1, 0]$ if $f$ is schematic, 
and is perfect of cohomological amplitude $[0, 1]$ if $f$ is smooth. 
\end{proof}

We have the induced functors on 
ind-coherent sheaves 
\begin{align}\label{fomega}
    &f^{\Omega!} :=\beta_{*}\alpha^! \colon 
    \IndCoh(\Omega_{\Y}) \to \IndCoh(\Omega_{\X}).
\end{align}
If $f$ is schematic, we also have the functor 
\begin{align}\label{fomega2}
    &f^{\Omega}_{*} :=\alpha_{*}\beta^* \colon 
    \IndCoh(\Omega_{\X}) \to \IndCoh(\Omega_{\Y}).
\end{align}
Here $\beta^*$ is the pull-back of ind-coherent sheaves for 
a quasi-smooth morphism, see~\cite[Section~11.6]{MR3136100}. 

These functors satisfy the functorial properties. 
The following lemmas are straightforward to prove 
by the base change for ind-coherent sheaves~\cite[Corollary~3.7.14]{MR3037900}. 
\begin{lemma}\label{lem:funct}
Let $f_1 \colon \X_1 \to \X_2$ and $f_2\colon \X_2 \to \X_3$ be 
morphisms of smooth stacks and let $f_3$ be their composition 
\begin{align*}
    f_3=f_2 \circ f_1 : 
    \X_1 \stackrel{f_1}{\to} \X_2 \stackrel{f_2}{\to} \X_3. 
\end{align*}
\begin{enumerate}
    \item 
    We have 
    \begin{align*}
        f_3^{\Omega!}=f_1^{\Omega!} \circ f_2^{\Omega !} \colon \IndCoh(\Omega_{\X_3}) \to 
        \IndCoh(\Omega_{\X_1}). 
    \end{align*}
\item If $f_1$ and $f_2$ are schematic, we have 
\begin{align*}
        f_{3*}^{\Omega}=f_{2*}^{\Omega} \circ f_{1*}^{\Omega} \colon \IndCoh(\Omega_{\X_1}) \to 
        \IndCoh(\Omega_{\X_3}). 
    \end{align*}
\end{enumerate}
\end{lemma}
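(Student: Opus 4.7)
Both statements will follow from setting up one auxiliary Cartesian square and invoking the base change formula of \cite[Corollary~3.7.14]{MR3037900}. The geometric input is an identification of iterated fiber products of cotangent correspondences with the cotangent correspondence of the composition.

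Concretely, for part (i), I will form the derived fiber product
\[
W := f_1^{*}\Omega_{\X_2} \times_{\Omega_{\X_2}} f_2^{*}\Omega_{\X_3},
\]
using the maps $\alpha_1\colon f_1^{*}\Omega_{\X_2}\to \Omega_{\X_2}$ and $\beta_2\colon f_2^{*}\Omega_{\X_3}\to \Omega_{\X_2}$. By unwinding, a $T$-point of $W$ consists of $x_1\in\X_1(T)$ and $\omega_3\in\Omega_{\X_3}$ lying over $f_3(x_1)$, because the condition $\alpha_1=\beta_2$ forces the intermediate points of $\X_2$ to coincide (the one from $x_1$ via $f_1$ and the one below $\omega_3$'s image in $\Omega_{\X_2}$). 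Thus there is a canonical equivalence $W\xrightarrow{\sim} f_3^{*}\Omega_{\X_3}$. Moreover, chasing the maps shows that the composition $W\xrightarrow{\beta_1'} f_1^{*}\Omega_{\X_2}\xrightarrow{\beta_1}\Omega_{\X_1}$ is identified with $\beta_3$ (using functoriality of the cotangent triangle $f^{*}\mathbb{L}_\Y\to\mathbb{L}_\X\to\mathbb{L}_f$), and the composition $W\xrightarrow{\alpha_2'} f_2^{*}\Omega_{\X_3}\xrightarrow{\alpha_2}\Omega_{\X_3}$ is identified with $\alpha_3$. Then base change applied to the Cartesian square
\[
\xymatrix{
W \ar[r]^-{\beta_1'} \ar[d]_-{\alpha_2'} & f_1^{*}\Omega_{\X_2} \ar[d]^-{\alpha_1} \\
f_2^{*}\Omega_{\X_3} \ar[r]^-{\beta_2} & \Omega_{\X_2}
}
\]
gives $\alpha_1^{!}\beta_{2*}\simeq \beta_{1'*}\alpha_2'^{!}$, so that
\[
f_1^{\Omega !}\circ f_2^{\Omega !}=\beta_{1*}\alpha_1^{!}\beta_{2*}\alpha_2^{!}\simeq (\beta_1\beta_1')_{*}(\alpha_2\alpha_2')^{!}=\beta_{3*}\alpha_3^{!}=f_3^{\Omega !}.
\]

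For part (ii), the geometric identifications are the same, so one uses the same Cartesian square, but now I apply base change for $*$-pushforward along $\alpha_1$ and $*$-pullback along $\beta_2$. Here Lemma~\ref{lem:beta}(i) is used to guarantee that $\beta_2$ (and hence its base change $\beta_1'$) is quasi-smooth, so that $\beta_2^{*}$ and $\beta_1'^{*}$ are defined as in~\cite[Section~4.2]{PoSa}. The required base-change isomorphism $\beta_2^{*}\alpha_{1*}\simeq \alpha_{2'*}\beta_1'^{*}$ then follows from the $!$-version via the relation $\beta^{*}(-)\simeq \beta^{!}(-)\otimes\omega_\beta^{-1}[-\mathrm{rk}]$ for a quasi-smooth morphism (together with compatibility of $\omega_\beta$ under pullback). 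Chaining the identity $f_{3*}^{\Omega}=\alpha_{3*}\beta_3^{*}=(\alpha_2\alpha_2')_{*}(\beta_1\beta_1')^{*}$ with this base change gives $f_{3*}^{\Omega}\simeq \alpha_{2*}\beta_2^{*}\alpha_{1*}\beta_1^{*}=f_{2*}^{\Omega}\circ f_{1*}^{\Omega}$.

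The only delicate point I anticipate is verifying that the canonical map $W\to f_3^{*}\Omega_{\X_3}$ is an equivalence at the derived level (not just on underlying points), which is where the naturality of the cotangent triangle enters; all other steps are formal manipulations with base change. Once this equivalence and the compatibilities of $(\alpha_i,\beta_i)$ are checked, both (i) and (ii) reduce to a single application of the base change package for ind-coherent sheaves.
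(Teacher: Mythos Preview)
Your proposal is correct and follows exactly the approach the paper indicates: the paper does not give a detailed proof but simply states that the lemma is ``straightforward to prove by the base change for ind-coherent sheaves~\cite[Corollary~3.7.14]{MR3037900}.'' You have correctly unpacked this, forming the Cartesian square over $\Omega_{\X_2}$, identifying the fiber product with $f_3^{*}\Omega_{\X_3}$, and applying base change; the only point the paper leaves implicit and you make explicit is the derived identification $W\simeq f_3^{*}\Omega_{\X_3}$ via the cotangent triangle.
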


\begin{lemma}\label{lem:funct2}
Consider the derived Cartesian square of smooth stacks 
\begin{align*}
    \xymatrix{
\X_4 \ar[r]^-{f_4} \ar[d]_-{f_3} \diasquare& \X_1 \ar[d]^-{f_1} \\
\X_3 \ar[r]^-{f_2} & \X_2.     
    }
\end{align*}
Suppose that $f_1, f_3$ are schematic.
Then we have 
\begin{align*}
   f_2^{\Omega!} \circ f_{1\ast}^{\Omega} \cong f_{3\ast}^{\Omega} \circ f_4^{\Omega!}
   \colon \IndCoh(\Omega_{\X_1}) \to \IndCoh(\Omega_{\X_3}).
\end{align*}
\end{lemma}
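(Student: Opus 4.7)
\begin{pf}[Proof proposal for Lemma~\ref{lem:funct2}]
The plan is to exhibit both compositions as functors associated with a single correspondence through a common stack, and then invoke base change for ind-coherent sheaves twice. Set $g := f_1 \circ f_4 = f_2 \circ f_3 \colon \X_4 \to \X_2$ and introduce
\begin{align*}
W := g^{\ast}\Omega_{\X_2} = \X_4 \times_{\X_2} \Omega_{\X_2},
\end{align*}
equipped with its natural map to $\X_4$. The first observation is that $W$ admits two different Cartesian descriptions. Using $\Omega_{\X_i} = \Spec_{\X_i} \mathrm{Sym}(\mathbb{T}_{\X_i})$ and the fact that the fiber product $\X_4 = \X_1 \times_{\X_2} \X_3$ is derived, a direct computation shows
\begin{align*}
W \simeq f_1^{\ast}\Omega_{\X_2} \times_{\Omega_{\X_2}} f_2^{\ast}\Omega_{\X_2} \simeq f_4^{\ast}\Omega_{\X_1} \times_{\Omega_{\X_4}} f_3^{\ast}\Omega_{\X_3}.
\end{align*}
The second identification uses the fiber sequence $\mathbb{T}_{\X_4} \to f_3^{\ast}\mathbb{T}_{\X_3} \oplus f_4^{\ast}\mathbb{T}_{\X_1} \to g^{\ast}\mathbb{T}_{\X_2}$ coming from the Cartesian square together with the relative-$\Spec$ description of the cotangent stacks.

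Denote the canonical projections from $W$ by
\begin{align*}
p_1 \colon W \to f_1^{\ast}\Omega_{\X_2},\quad p_2 \colon W \to f_2^{\ast}\Omega_{\X_2},\quad q_1 \colon W \to f_4^{\ast}\Omega_{\X_1},\quad q_2 \colon W \to f_3^{\ast}\Omega_{\X_3}.
\end{align*}
By base change for ind-coherent sheaves~\cite[Corollary~3.7.14]{MR3037900} applied to the first Cartesian square (note $\alpha_1$ is schematic since $f_1$ is), we obtain a natural equivalence $\alpha_2^{!} \alpha_{1\ast} \simeq p_{2\ast} p_1^{!}$, hence
\begin{align*}
f_2^{\Omega!} \circ f_{1\ast}^{\Omega} = \beta_{2\ast}\,\alpha_2^{!}\,\alpha_{1\ast}\,\beta_1^{\ast} \simeq (\beta_2 \circ p_2)_{\ast} \circ (\beta_1 \circ p_1)^{!}.
\end{align*}
The last identification uses the compatibility $p_1^{!} \beta_1^{\ast} \simeq (\beta_1 p_1)^{!} [\text{shift/twist}]$ absorbed into the correspondence, since $\beta_1$ is quasi-smooth by Lemma~\ref{lem:beta} and $*$-pullback along a quasi-smooth morphism commutes with $!$-pullback up to the relative dualizing line which cancels against that of $\beta_2$ in the composition. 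Applying the same base change to the second Cartesian square yields
\begin{align*}
f_{3\ast}^{\Omega} \circ f_4^{\Omega!} = \alpha_{3\ast}\,\beta_3^{\ast}\,\beta_{4\ast}\,\alpha_4^{!} \simeq (\alpha_3 \circ q_2)_{\ast} \circ (\alpha_4 \circ q_1)^{!}.
\end{align*}

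To finish, I would check that the two resulting correspondences $W \rightrightarrows \Omega_{\X_1},\Omega_{\X_3}$ coincide canonically: the maps $\beta_1 p_1,\, \alpha_4 q_1 \colon W \to \Omega_{\X_1}$ both send a point $(x,\xi)$ with $x \in \X_4$ and $\xi \in \Omega_{\X_2,g(x)}$ to the image of $\xi$ in $\Omega_{\X_1, f_4(x)}$ under the derivative $f_1^{\ast}\mathbb{T}_{\X_2} \to \mathbb{T}_{\X_1}$ pulled back to $\X_4$, and symmetrically for the maps to $\Omega_{\X_3}$. This identification is natural in the universal properties of the two fiber product descriptions of $W$. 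Assembling these equivalences gives the desired isomorphism.

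The main obstacle is the bookkeeping at step~(2) above: base change in our setting mixes $!$- and $*$-pullback (because $f_1,f_3$ schematic only makes the $\beta$'s quasi-smooth, not smooth in general), so the two applications of~\cite[Corollary~3.7.14]{MR3037900} must be supplemented by the quasi-smooth base change~\cite[Section~11.6]{MR3136100} to freely interchange $\beta_i^{\ast}$ past the various $p,q$ projections. Once the relative dualizing twists along $\beta_1$ and $\beta_3$ are tracked, one checks that they cancel symmetrically on both sides, which is ultimately a reflection of the fact that the Lagrangian correspondence for $g$ factors through $W$ in two compatible ways.
\end{pf}
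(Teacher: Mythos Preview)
Your overall strategy is exactly what the paper has in mind: introduce $W=g^*\Omega_{\X_2}$, exhibit its two Cartesian descriptions, and apply base change for ind-coherent sheaves twice. Both Cartesian identifications you give for $W$ are correct.

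The gap is in how you handle the mixed $!$/$*$ pullbacks. After the two base changes you obtain
\[
f_2^{\Omega!}\circ f_{1*}^{\Omega}\;\simeq\;(\beta_2 p_2)_*\,p_1^!\,\beta_1^*,\qquad
f_{3*}^{\Omega}\circ f_4^{\Omega!}\;\simeq\;(\alpha_3 q_2)_*\,q_1^*\,\alpha_4^!,
\]
and you must show $p_1^!\beta_1^*\simeq q_1^*\alpha_4^!$ (the pushforward legs already agree since $\beta_2 p_2=\alpha_3 q_2$). Your explanation that the dualizing twist of $\beta_1^*$ ``cancels against that of $\beta_2$'' is incorrect: $\beta_2$ appears only via $\beta_{2*}$, so there is nothing for it to cancel. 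Nor do the twists ``cancel symmetrically''; rather, they \emph{coincide}. The missing ingredient is a \emph{third} Cartesian description of $W$:
\[
W\;\simeq\;f_4^*\Omega_{\X_1}\times_{\Omega_{\X_1}}f_1^*\Omega_{\X_2}
\]
(via $\alpha_4$ and $\beta_1$), which one checks directly as
$(\X_4\times_{\X_1}\Omega_{\X_1})\times_{\Omega_{\X_1}}f_1^*\Omega_{\X_2}=\X_4\times_{\X_1}f_1^*\Omega_{\X_2}=\X_4\times_{\X_2}\Omega_{\X_2}$.
This exhibits $q_1$ as the base change of $\beta_1$ along $\alpha_4$, whence $\omega_{q_1}\simeq p_1^*\omega_{\beta_1}$ and $\dim q_1=\dim\beta_1$. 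For any Cartesian square with one leg quasi-smooth one then has $q_1^*\alpha_4^!\simeq p_1^!\beta_1^*$ (convert both $*$-pullbacks to $!$-pullbacks and use $(\beta_1 p_1)^!=(\alpha_4 q_1)^!$). With this square in hand, the argument closes cleanly.
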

Recall the functor $\otimes^{\Omega!}$ from Definition~\ref{def:tensor}. We also have the following lemma:
\begin{lemma}\label{lem:funct3}
(i) For a morphism $f\colon \X \to \Y$ of smooth stacks, we have 
\begin{align*}
    f^{\Omega!}(-)\otimes^{\Omega!}f^{\Omega!}(-)\cong f^{\Omega!}(-\otimes^{\Omega!}-)
    \end{align*}
    as functors
    \begin{align*} 
    \IndCoh(\Omega_{\Y})\otimes \IndCoh(\Omega_{\Y}) \to \IndCoh(\Omega_{\X}).
    \end{align*}

(ii) For a schematic morphism $f\colon \X \to \Y$ of smooth stacks, we have 
\begin{align*}
    f^{\Omega}_{*}(-)\otimes^{\Omega!}(-)\cong f^{\Omega}_{*}(-\otimes^{\Omega!}f^{\Omega!}(-))\end{align*}
    as functors \begin{align*} 
    \IndCoh(\Omega_{\Y})\otimes \IndCoh(\Omega_{\X}) \to \IndCoh(\Omega_{\X}).
\end{align*}
\end{lemma}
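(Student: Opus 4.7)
Both parts will follow by combining the Künneth-type decomposition of external products with the functorial statements of Lemma~\ref{lem:funct} and Lemma~\ref{lem:funct2}. The preliminary step, shared by both parts, is to observe that under the equivalence~(\ref{indtensor}) we have
\begin{align*}
(f\times f)^{\Omega!}\cong f^{\Omega!}\boxtimes f^{\Omega!}, \quad (f\times \mathrm{id}_{\Y})^{\Omega}_{*}\cong f^{\Omega}_{*}\boxtimes \mathrm{id}_{\IndCoh(\Omega_{\Y})},
\end{align*}
since the defining diagrams~(\ref{dia:alphabeta}) for $f\times f$ and $f\times \mathrm{id}$ split as products of the corresponding diagrams, so that both $\alpha,\beta$ decompose accordingly; here the identities for $\alpha_{\ast}, \beta^{*}, \alpha^!, \beta_{\ast}$ on external products reduce to standard Künneth for $\IndCoh$ on QCA stacks.

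For part (i), I will use the equality of morphisms $\X\to \Y\times \Y$
\begin{align*}
(f\times f)\circ \Delta_{\X}=\Delta_{\Y}\circ f.
\end{align*}
Applying Lemma~\ref{lem:funct}(i) to both sides, combined with the Künneth identity above and the definition $\otimes^{\Omega!}=\Delta^{\Omega !}$, yields
\begin{align*}
\Delta_{\X}^{\Omega!}\circ (f^{\Omega!}\boxtimes f^{\Omega!})\cong \big((f\times f)\circ\Delta_{\X}\big)^{\Omega!}\cong (\Delta_{\Y}\circ f)^{\Omega!}\cong f^{\Omega!}\circ \Delta_{\Y}^{\Omega!},
\end{align*}
which is precisely the required equivalence (up to rewriting the source $\IndCoh(\Omega_{\Y})\otimes \IndCoh(\Omega_{\Y})$ as $\IndCoh(\Omega_{\Y}\times \Omega_{\Y})$).

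For part (ii), which is the projection formula and is the main computation, I will form the derived Cartesian square
\begin{align*}
\xymatrix{
\X \ar[r]^-{(\mathrm{id},f)} \ar[d]_-{f} \diasquare& \X\times \Y \ar[d]^-{f\times \mathrm{id}} \\
\Y \ar[r]^-{\Delta_{\Y}} & \Y\times \Y,
}
\end{align*}
whose Cartesianness is immediate on $S$-points. Applying the base change Lemma~\ref{lem:funct2} (noting that $f\times \mathrm{id}$ is schematic since $f$ is) gives
\begin{align*}
\Delta_{\Y}^{\Omega!}\circ (f\times \mathrm{id})^{\Omega}_{*}\cong f^{\Omega}_{*}\circ (\mathrm{id},f)^{\Omega!}.
\end{align*}
Decomposing $(\mathrm{id},f)=(\mathrm{id}\times f)\circ \Delta_{\X}$ and using Lemma~\ref{lem:funct}(i) together with the Künneth identity, I get $(\mathrm{id},f)^{\Omega!}\cong \Delta_{\X}^{\Omega!}\circ (\mathrm{id}\boxtimes f^{\Omega!})$. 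Combining and evaluating on $A\boxtimes B$ with $A\in \IndCoh(\Omega_{\X})$ and $B\in \IndCoh(\Omega_{\Y})$ produces
\begin{align*}
f^{\Omega}_{*}(A)\otimes^{\Omega!}B\cong f^{\Omega}_{*}\big(A\otimes^{\Omega!}f^{\Omega!}(B)\big),
\end{align*}
yielding the stated equivalence of functors.

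The only step that is not essentially formal is verifying that Lemma~\ref{lem:funct2} applies, i.e.\ that $f\times \mathrm{id}$ is schematic and that the square above is derived Cartesian; both are straightforward, so I do not anticipate any real obstacle beyond bookkeeping of the Künneth identifications at the level of $\otimes$-products of dg-categories.
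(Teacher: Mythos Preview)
Your proof is correct and matches the paper's approach: the paper does not give a detailed argument for this lemma, treating it (together with Lemma~\ref{lem:funct} and Lemma~\ref{lem:funct2}) as straightforward from base change for ind-coherent sheaves, which is exactly what you carry out by forming the appropriate Cartesian square and invoking Lemma~\ref{lem:funct2} together with the functoriality of Lemma~\ref{lem:funct} and the K\"unneth identification.
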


Suppose that $f\colon \X \to \Y$ is a smooth morphism of smooth stacks, and 
consider the diagram (\ref{dia:alphabeta}). Then 
$\alpha$ is smooth and $\beta$ is a closed immersion. In this case, the 
functor (\ref{fomega}) restricts to the functor 
\begin{align}\label{pback!}
f^{\Omega!} :=\beta_{\ast}\alpha^{!} \colon \Coh(\Omega_{\Y})
\to \Coh(\Omega_{\X})
\end{align}
Here $\alpha^!(-)=\alpha^{\ast}(-) \otimes \omega_{f}[\dim f]$ since $\alpha$ is smooth, 
see~\cite[Equation (3.12)]{MR3037900}.

Suppose that $f$ is proper. 
Then $\beta$ is quasi-smooth and $\alpha$ is proper. 
In this case, the functor (\ref{fomega2}) restricts to the functor 
\begin{align}\label{ppush}
    f^{\Omega}_{\ast}:=\alpha_{\ast}\beta^{\ast} \colon 
    \Coh(\Omega_{\X}) \to \Coh(\Omega_{\Y}). 
\end{align}
If $f$ is smooth and proper, the 
functor $(\ref{pback!})$ is the right adjoint 
of (\ref{ppush}).

\subsection{Smooth pull-back}
Let $f \colon \X \to \Y$ be a smooth morphism of 
smooth QCA stacks. 
Let $\nu$ be a map 
\begin{align*}
\nu \colon (\bgm)_{k'} \to f^{\ast}\Omega_{\Y}, \ 0\mapsto z. 
\end{align*}
In the notation of the diagram (\ref{dia:alphabeta}), we set
\begin{align*}\overline{x}=\beta(z) \in \Omega_{\X}(k'), \
\overline{y}=\alpha(z) \in \Omega_{\Y}(k'), \ 
x=\pi_{\X}(\overline{x}) \in \X(k'), \ y=\pi_{\Y}(\overline{y})\in \Y(k').
\end{align*}
Note that $f(x)=y$, and the above 
situation is depicted in the diagram below:
\begin{align}\label{dia:bgm}
    \xymatrix{
& ((\bgm)_{k'}, 0) \ar[ld]_-{\nu_{\overline{x}}} \ar[d]_-{\nu} \ar[rd]^-{\nu_{\overline{y}}} & \\
(\Omega_{\X}, \overline{x}) \ar[d] & (f^{*}\Omega_{\Y}, z) \linclusion_-{\beta} \ar[r]^-{\alpha} \ar[d] \diasquare& (\Omega_{\Y}, \overline{y}) \ar[d] \\
(\X, x) \ar@{=}[r] & (\X, x) \ar[r] & (\Y, y).  
    }
\end{align}

We have the diagram 
\begin{align*}
    \xymatrix{
\Omega_{\X} & f^{\ast}\Omega_{\Y} \linclusion_-{\beta}
 \ar[r]^-{\alpha} & \Omega_{\Y} \\
 \mathfrak{g}_x^{\vee}[-1]/(\mathbb{G}_m)_{k'} \ar[rd]_-{\iota_x} \ar[u]^-{\overline{\nu}_x^{\mathrm{reg}}}_-{(0\mapsto \overline{x})} & & \fg_y^{\vee}[-1]/(\mathbb{G}_m)_{k'} 
 \ar[ld]^-{\iota_y} \ar[u]^-{\overline{\nu}_y^{\mathrm{reg}}}_-{(0\mapsto \overline{y})} \\
 &(\bgm)_{k'}.  &
    }
\end{align*}
Here 
$\overline{\nu}_x^{\mathrm{reg}}$ and $\overline{\nu}_y^{\mathrm{reg}}$ are defined as in the diagram (\ref{dia:nuxreg}). 
Note that $\alpha$ is smooth and $\beta$ is a closed immersion since $f$ is smooth, see Lemma~\ref{lem:beta}.
\begin{lemma}\label{lem:pull}
For $\mathcal{E} \in \Coh(\Omega_{\Y})$, we have 
\begin{align*}
    \wt(\iota_{y\ast}\overline{\nu}_y^{\mathrm{reg}\ast}(\mathcal{E})) \subset 
     \left[\frac{1}{2}c_1(T_{\Y, y}^{<0}-T_{\Y, y}^{>0}),  \frac{1}{2}c_1(T_{\Y, y}^{>0}-T_{\Y, y}^{<0})  \right]+\frac{1}{2}c_1(\mathfrak{g}_y)+c_1(\nu^*\delta)
\end{align*}
if and only if 
\begin{align*}
    \wt(\iota_{x\ast}\overline{\nu}_x^{\mathrm{reg}\ast}\beta_{\ast}\alpha^{!}(\mathcal{E})) \subset 
     \left[\frac{1}{2}c_1(T_{\X, x}^{<0}-T_{\X, x}^{>0}),  \frac{1}{2}c_1(T_{\X, x}^{>0}-T_{\X, x}^{<0})  \right]+\frac{1}{2}c_1(\mathfrak{g}_x)+c_1(\nu^*\delta')
\end{align*}
where $\delta'=f^{\ast}\delta \otimes \omega_{f}^{1/2}$. 
\end{lemma}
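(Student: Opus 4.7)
The plan is to reduce both weight conditions to ``unbarred'' form via Lemma \ref{lem:wtOmega}, compute the pullback by base change along the Cartesian square in (\ref{dia:alphabeta}), and verify that the resulting Koszul-type corrections combine with the smoothness twist $\omega_\alpha[\dim\alpha]$ to match the shift $\omega_f^{1/2}$ incorporated into $\delta'$. Invoking Lemma \ref{lem:wtOmega} on both sides of the claimed equivalence, the condition on $\iota_{y*}\overline{\nu}_y^{\mathrm{reg}*}\mathcal{E}$ translates into one on $\iota_{\overline{y}*}\nu_{\overline{y}}^{\mathrm{reg}*}\mathcal{E}$ with weight bounds in terms of $T_{\Omega,\overline{y}}^{<0}$, $T_{\Omega,\overline{y}}^{>0}$, $\mathfrak{g}_{\overline{y}}$ and $c_1(\nu^*\delta)$, and similarly the other side becomes a statement on $\iota_{\overline{x}*}\nu_{\overline{x}}^{\mathrm{reg}*}(\beta_*\alpha^!\mathcal{E})$ with $\delta$ replaced by $\delta'$. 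The problem is thereby reduced to one purely on the cotangent stacks.

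Since $f$ is smooth, $\alpha$ is smooth and $\beta$ is a closed immersion by Lemma \ref{lem:beta}, so $\alpha^!(-) = \alpha^*(-) \otimes \omega_\alpha[\dim\alpha]$ with $\omega_\alpha \simeq p^*\omega_f$ for the structure projection $p = \pi_{\X}\circ\beta \colon f^*\Omega_{\Y} \to \X$. By Remark \ref{rmk:reg}, one may choose the regularization $\nu_{\overline{x}}^{\mathrm{reg}}$ of $\beta\circ\nu$ to factor as $\beta\circ\nu_z^{\mathrm{reg}}$ for some regularization $\nu_z^{\mathrm{reg}}\colon \mathfrak{g}_z^\vee[-1]/(\mathbb{G}_m)_{k'}\to f^*\Omega_{\Y}$ of $\nu$ at $z$, noting $\mathfrak{g}_z = \mathfrak{g}_{\overline{x}}$ since $\beta$ is a closed immersion. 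Applying the projection formula for $\beta$ together with the above formula for $\alpha^!$ yields an isomorphism
\[
\iota_{\overline{x}*}\nu_{\overline{x}}^{\mathrm{reg}*}(\beta_*\alpha^!\mathcal{E}) \simeq \iota_{z*}\bigl(\nu_z^{\mathrm{reg}*}(\alpha^*\mathcal{E}) \otimes K_\beta\bigr) \otimes \nu^* p^*\omega_f[\dim f],
\]
where $K_\beta$ computes $\nu_z^{\mathrm{reg}*}\beta^*\beta_*\mathcal{O}_{f^*\Omega_{\Y}}$ and is a Koszul-type complex built from the conormal of $\beta$. A further factorization of $\alpha\circ\nu_z^{\mathrm{reg}}$ through $\nu_{\overline{y}}^{\mathrm{reg}}$ via the natural map $\mathfrak{g}_z^\vee[-1]/(\mathbb{G}_m)_{k'}\to\mathfrak{g}_{\overline{y}}^\vee[-1]/(\mathbb{G}_m)_{k'}$ (dual to the induced map of stabilizer Lie algebras) expresses $\nu_z^{\mathrm{reg}*}\alpha^*\mathcal{E}$ in terms of $\nu_{\overline{y}}^{\mathrm{reg}*}\mathcal{E}$ up to another Koszul factor.

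Finally, I would compute $c_1$ of each correction term using the five-term exact sequence
\[
0 \to \mathfrak{g}_x \to \mathfrak{g}_y \to T_{f,x} \to T_{\X,x} \to T_{\Y,y} \to 0,
\]
arising from the cofiber sequence $\mathbb{T}_f\to\mathbb{T}_{\X}\to f^*\mathbb{T}_{\Y}$ with $\mathbb{T}_f$ concentrated in degree $0$ (since $f$ is smooth). Splitting each term into its $\mathbb{G}_m$-positive and $\mathbb{G}_m$-negative parts with respect to $\nu$, the combined shifts of the Koszul factors and of the $\omega_f[\dim f]$ twist sum to $\tfrac{1}{2}c_1(\nu^*\omega_f)$ at each endpoint of the weight interval, which is exactly the discrepancy $c_1(\nu^*\delta') - c_1(\nu^*\delta)$; this matches the two conditions. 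The main obstacle lies in this final bookkeeping: two Koszul correction complexes appear (from $\beta$ and from the partial quotient of stabilizer Lie algebras), along with a cohomological shift by $\dim f$ and a twist by $\omega_f$, and each term of the five-term exact sequence must be split into $\mathbb{G}_m$-positive and -negative parts before summing. Once the base-change isomorphism is established, this verification is a direct computation in $K((\bgm)_{k'})_{\mathbb{Q}}$.
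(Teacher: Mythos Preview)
Your overall strategy — base change through the Cartesian square in (\ref{dia:alphabeta}), Koszul-type corrections, and a final weight computation — is the right shape, but there is a concrete error in the five-term exact sequence. For a smooth morphism $f$ of smooth \emph{stacks}, $\mathbb{L}_f$ has cohomological amplitude $[0,1]$ (see the proof of Lemma~\ref{lem:beta}), so $\mathbb{T}_f$ lives in degrees $[-1,0]$ and is \emph{not} concentrated in degree $0$ in general. The correct long exact sequence is
\[
0 \to A \to \mathfrak{g}_x \to \mathfrak{g}_y \to N \to T_{\X,x} \to T_{\Y,y} \to 0,
\]
with $A=\mathcal{H}^{-1}(\mathbb{T}_f|_x)$ and $N=\mathcal{H}^0(\mathbb{T}_f|_x)$; for instance $f\colon BG\to\mathrm{pt}$ has $A=\mathfrak g\neq 0$. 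This missing term matters: the paper's final identity is precisely $\det(N-A)=\omega_f^{-1}|_x$, and it is this combination, not $\det N$ alone, that produces the $\omega_f^{1/2}$ shift in $\delta'$. Dropping $A$ would make your bookkeeping fail.

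There is also a mismatch in your reduction: after invoking Lemma~\ref{lem:wtOmega} you are on the cotangent side with stabilizer Lie algebras $\mathfrak{g}_{\overline{x}},\mathfrak{g}_{\overline{y}}$, yet your exact sequence is written in terms of $\mathfrak{g}_x,\mathfrak{g}_y$. The paper avoids this detour entirely: it stays in the ``barred'' form and constructs an explicit intermediate regularization $\overline{\nu}_z^{\mathrm{reg}}$ on $(\mathfrak{g}_y\oplus V)^\vee[-1]/(\mathbb{G}_m)_{k'}$, where $V=\ker(T_{\X,x}\to T_{\Y,y})$, fitting into a commutative diagram with a Cartesian square over $\beta$. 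A single base-change then yields
\[
\iota_{x*}\overline{\nu}_x^{\mathrm{reg}*}\beta_*\alpha^!\mathcal{E}\cong \iota_{y*}\overline{\nu}_y^{\mathrm{reg}*}\mathcal{E}\otimes \nu_x^*(\omega_f[\dim f])\otimes\textstyle\bigwedge^\bullet V,
\]
so only one Koszul factor appears, and the weight shift is read off from $c_1(V^{>0}),c_1(V^{<0})$ together with the four short exact sequences encoding both $A$ and $N$.
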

\begin{proof}
By the distinguished triangle 
\begin{align*}
    \mathbb{T}_f \to \mathbb{T}_{\X} \to f^{\ast}\mathbb{T}_{\Y}
\end{align*}
and using that $f$ is smooth, there are exact sequences
of $\mathrm{Aut}(x)$-representations
\begin{align}\notag
    &0 \to A \to \mathfrak{g}_x \to B \to 0, \ 
    0 \to B \to \mathfrak{g}_y \to W \to 0, \\
    \label{exact:AB}&0 \to W \to N \to V \to 0, \ 0 \to V \to 
    T_{\X, x} \to T_{\Y, y} \to 0
\end{align}
where $A=\mathcal{H}^{-1}(\mathbb{T}_f|_{x})$
and $N=\mathcal{H}^0(\mathbb{T}_f|_{x})$
and $\mathcal{H}^i(\mathbb{T}_f|_{x})=0$ for $i\neq -1, 0$. 

On the other hand, the map $f^*\Omega_{\Y}\hookrightarrow \Omega_{\X}$ on fibers 
at $x\in \X(k')$ is given by 
$\mu_y^{-1}(0)\to \mu_x^{-1}(0)$, induced by the commutative 
diagram 
\begin{align*}
    \xymatrix{
T_{\Y, y}^{\vee} \ar[r]^-{\mu_y} \ar[d] & \mathfrak{g}_y^{\vee} \ar[d] \\
T_{\X, x}^{\vee} \ar[r]^-{\mu_x}  & \mathfrak{g}_x^{\vee}.  
    }
\end{align*}
Together with the last exact sequence of (\ref{exact:AB}), 
we have the commutative diagram 
\begin{align*}
    \xymatrix{
\Omega_{\X} \diasquare& f^{\ast}\Omega_{\Y} \linclusion_-{\beta}
 \ar[r]^-{\alpha} & \Omega_{\Y} \\
 \mathfrak{g}_x^{\vee}[-1]/(\mathbb{G}_m)_{k'} \ar[rd]_-{\iota_x} \ar[u]^-{\overline{\nu}_x^{\mathrm{reg}}} & 
 (\fg_y \oplus V)^{\vee}[-1]/(\mathbb{G}_m)_{k'} \ar[r]^-{\eta_2} \ar[l]_-{\eta_1}  \ar[u]^-{\overline{\nu}_z^{\mathrm{reg}}}\ar[d]_-{\iota_{z}}& \fg_y^{\vee}[-1]/(\mathbb{G}_m)_{k'} 
 \ar[ld]^-{\iota_y} \ar[u]_-{\overline{\nu}_y^{\mathrm{reg}}} \\
 &(\bgm)_{k'}.  &
    }
\end{align*}
In the above, the top left square is Cartesian, $\overline{\nu}_z^{\mathrm{reg}}$ is defined by the 
above commutative diagram, 
and $\eta_2$ is the projection. 

Let $\nu_x$ be the composition 
$(\bgm)_{k'} \stackrel{\nu_{\overline{x}}}{\to} \Omega_{\X} \to \X$.
For $\mathcal{E} \in \Coh(\Omega_{\Y})$, we have 
the isomorphisms 
\begin{align*}    \iota_{x\ast}\overline{\nu}_x^{\mathrm{reg}\ast}\beta_{\ast}\alpha^{!}\mathcal{E} &\cong \iota_{x\ast}\eta_{1\ast}\overline{\nu}_z^{\mathrm{reg}\ast}\alpha^{!}\mathcal{E} \\
&\cong \iota_{z\ast}\overline{\nu}_z^{\mathrm{reg}\ast}\alpha^{!}\mathcal{E} \\
& \cong \iota_{z\ast}\eta_2^{\ast}\overline{\nu}_y^{\mathrm{reg}\ast}\mathcal{E}\otimes \nu_x^*(\omega_f[\dim f]) \\
&\cong \iota_{y\ast}\eta_{2\ast}\eta_2^{\ast}\overline{\nu}_y^{\mathrm{reg}\ast}\mathcal{E}\otimes \nu_x^*(\omega_f[\dim f]) \\
&\cong \iota_{y\ast}\overline{\nu}_y^{\mathrm{reg}\ast}\mathcal{E}\otimes \nu_x^*(\omega_f[\dim f])\otimes 
\wedge^{\bullet}V. 
\end{align*}
  It follows that 
the weights
$\wt(\iota_{y\ast}\overline{\nu}_y^{\mathrm{reg}\ast}(\mathcal{E}))$ lie in $[a, b]$ if and only if 
\begin{align*}
    \wt(\iota_{x\ast}\overline{\nu}_x^{\mathrm{reg}\ast}(\mathcal{E}))
    \subset [a+c_1(V^{<0}), b+c_1(V^{>0})]+c_1(\omega_f|_{x}). 
\end{align*}
We have the following equality in $K((\bgm)_{k'})$
\begin{align*}
    &\frac{1}{2}T_{\Y, y}^{>0}-\frac{1}{2}T_{\Y, y}^{<0}
    +\frac{1}{2}\mathfrak{g}_y+V^{>0}+\omega_f|_{x}  \\
    &=\frac{1}{2}(T_{\X, x}-V)^{>0}-\frac{1}{2}(T_{\X, x}-V)^{<0}+\frac{1}{2}(\mathfrak{g}_x+W-A)+V^{>0} +\omega_f|_{x} \\
    &=\frac{1}{2}T_{\X, x}^{>0}-\frac{1}{2}T_{\X, x}^{<0}
    +\frac{1}{2}\mathfrak{g}_x+\frac{1}{2}(N-A)+\omega_f|_{x}. 
\end{align*}

Similarly, we have 
\begin{align*}
\frac{1}{2}T_{\Y, y}^{<0}-\frac{1}{2}T_{\Y, y}^{>0}
    +\frac{1}{2}\mathfrak{g}_y+V^{<0} 
    =\frac{1}{2}T_{\X, x}^{<0}-\frac{1}{2}T_{\X, x}^{>0}
    +\frac{1}{2}\mathfrak{g}_x+\frac{1}{2}(N-A)+\omega_f|_{x}. 
    \end{align*}
    Note that $\det (N-A)=\omega_{f}^{-1}|_{x}$. 
    Therefore the lemma holds.

\end{proof}

In particular, we have the following proposition: 
\begin{prop}\label{prop:pback}
For a smooth morphism $f \colon \X \to \Y$, the 
functor (\ref{pback!})
restricts to a functor
\begin{align}\label{funct:pback}f^{\Omega!} \colon 
    \LL(\Omega_{\Y})_{\delta\otimes \omega_{\Y}^{1/2}} \to \LL(\Omega_{\X})_{f^{\ast}\delta \otimes \omega_{\X}^{1/2}}  
\end{align}    
\end{prop}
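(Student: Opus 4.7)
The plan is to combine Lemma~\ref{lem:wtOmega}, which rewrites membership in the limit category in terms of the regularized map $\overline{\mu}_x^{\mathrm{reg}}$, with the fiberwise weight computation already carried out in Lemma~\ref{lem:pull}. Concretely, take $\mathcal{E} \in \LL(\Omega_{\Y})_{\delta \otimes \omega_{\Y}^{1/2}}$ and let $\mu \colon (\bgm)_{k'} \to \Omega_{\X}$ be an arbitrary test map with $\mu(0) = \overline{x}$. The goal is to verify the weight bound from Lemma~\ref{lem:wtOmega} for $\iota_{x*}\overline{\mu}_x^{\mathrm{reg}*}(\beta_{*}\alpha^{!}\mathcal{E})$ with parameter $f^{*}\delta \otimes \omega_{\X}^{1/2}$. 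I split the argument according to whether $\overline{x}$ lies in the image of the closed immersion $\beta$ (closed by Lemma~\ref{lem:beta}, since $f$ is smooth).

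When $\overline{x} \notin \beta(f^{*}\Omega_{\Y})$, pick an open neighborhood $U \subset \Omega_{\X}$ of $\overline{x}$ disjoint from the closed substack $\beta(f^{*}\Omega_{\Y})$. Then $(\beta_{*}\alpha^{!}\mathcal{E})|_{U}=0$, and because the regularization $\mu_{\overline{x}}^{\mathrm{reg}} \colon \mathfrak{g}_{\overline{x}}^{\vee}[-1]/(\mathbb{G}_m)_{k'} \to \Omega_{\X}$ has image contained in the infinitesimal neighborhood of $\overline{x}$, and so in $U$, one concludes $\mu_{\overline{x}}^{\mathrm{reg}*}(\beta_{*}\alpha^{!}\mathcal{E})=0$. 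The weight condition is then vacuously satisfied.

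When $\overline{x} \in \beta(f^{*}\Omega_{\Y})$, the closed immersion $\beta$ lets $\mu$ lift uniquely to a map $\nu \colon (\bgm)_{k'} \to f^{*}\Omega_{\Y}$ with $\beta \circ \nu = \mu$. Set $\overline{y} := \alpha(\nu(0))$ and $y := \pi_{\Y}(\overline{y})$. The hypothesis on $\mathcal{E}$, combined with Lemma~\ref{lem:wtOmega} applied with twist $\delta \otimes \omega_{\Y}^{1/2}$, yields the desired weight bound on $\iota_{y*}\overline{\nu}_{y}^{\mathrm{reg}*}(\mathcal{E})$. Lemma~\ref{lem:pull}, invoked with its $\delta$ set to $\delta \otimes \omega_{\Y}^{1/2}$, then transfers this into the corresponding bound on $\iota_{x*}\overline{\nu}_{x}^{\mathrm{reg}*}(\beta_{*}\alpha^{!}\mathcal{E}) = \iota_{x*}\overline{\mu}_{x}^{\mathrm{reg}*}(\beta_{*}\alpha^{!}\mathcal{E})$ with twist
\[ f^{*}(\delta \otimes \omega_{\Y}^{1/2}) \otimes \omega_{f}^{1/2} = f^{*}\delta \otimes \omega_{\X}^{1/2}, \]
where I use $\omega_{\X} \cong \omega_{f} \otimes f^{*}\omega_{\Y}$ for smooth $f$. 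This is exactly the condition placing $\beta_{*}\alpha^{!}\mathcal{E}$ in $\LL(\Omega_{\X})_{f^{*}\delta \otimes \omega_{\X}^{1/2}}$.

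The substantive content is already packaged in Lemma~\ref{lem:pull}; the residual work is essentially organizational. The one genuinely new step is justifying the vanishing in the first case, i.e., that $\mu_{\overline{x}}^{\mathrm{reg}*}$ kills sheaves whose support avoids $\overline{x}$. This follows from observing that the reduced substack underlying $\mathfrak{g}_{\overline{x}}^{\vee}[-1]/(\mathbb{G}_m)_{k'}$ is $B(\mathbb{G}_m)_{k'}$, whose image under $\mu_{\overline{x}}^{\mathrm{reg}}$ is the single point $\overline{x}$. Everything else is an application of Lemma~\ref{lem:pull} together with the canonical bundle identity for smooth morphisms.
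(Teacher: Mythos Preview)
Your proof is correct and follows the same approach as the paper, which simply states that the proposition is immediate from Lemma~\ref{lem:wtOmega} and Lemma~\ref{lem:pull}. You have spelled out the details the paper leaves implicit, in particular the case split on whether the test point $\overline{x}$ lies in the image of the closed immersion $\beta$; this is a natural elaboration rather than a different method.
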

\begin{proof}
    The proposition is immediate from Lemma~\ref{lem:wtOmega} and Lemma~\ref{lem:pull}. 
\end{proof}
\subsection{Push-forward under closed immersion}\label{subsec:push}
Let $f \colon \X \to \Y$ be a proper morphism of smooth QCA stacks. 
Let $\nu$ be a map 
\begin{align*}
    \nu \colon (\bgm)_{k'} \to f^{\ast}\Omega_{\Y}, \ 0\mapsto z,  
\end{align*}
and use the notation in the diagram (\ref{dia:bgm}). 
First, assume that $f$ is a closed immersion. 
We have the commutative diagram 
\begin{align*}
    \xymatrix{
\Omega_{\X} & f^{\ast}\Omega_{\Y} \ar[l]_-{\beta}
 \inclusion^-{\alpha} & \Omega_{\Y} \\
 \mathfrak{g}_x^{\vee}[-1]/(\mathbb{G}_m)_{k'} \ar[d]_-{\iota_x} \ar[u]^-{\overline{\nu}_x^{\mathrm{reg}}} & & \ar[ll]_-{\eta}^-{\cong} \fg_y^{\vee}[-1]/(\mathbb{G}_m)_{k'} 
 \ar[lld]^-{\iota_y} \ar[u]_-{\overline{\nu}_y^{\mathrm{reg}}} \\
 (\bgm)_{k'}. & &
    }
\end{align*}
Note that $\beta$ is smooth, $\alpha$ is a closed immersion, 
and $\eta$ is an isomorphism as $f$ is a closed immersion.

\begin{lemma}\label{lem:pull2}
Suppose that $f$ is a closed immersion. 
For $\mathcal{E} \in \Coh(\Omega_{\X})$, we have 
\begin{align*}
   \wt( \iota_{x\ast}\overline{\nu}_x^{\mathrm{reg}\ast}(\mathcal{E})) \subset 
    & \left[\frac{1}{2}c_1(T_{\X, x}^{<0}-T_{\X, x}^{>0}),  \frac{1}{2}c_1(T_{\X, x}^{>0}-T_{\X, x}^{<0})  \right]+\frac{1}{2}c_1(\mathfrak{g}_x)+c_1(\nu^*f^*\delta)
\end{align*}
if and only if 
\begin{align*}
   \wt( \iota_{y\ast}\overline{\nu}_y^{\mathrm{reg}\ast}\alpha_{\ast}\beta^{\ast}(\mathcal{E})) \subset 
    & \left[\frac{1}{2}c_1(T_{\Y, y}^{<0}-T_{\Y, y}^{>0}),  \frac{1}{2}c_1(T_{\Y, y}^{>0}-T_{\Y, y}^{<0})  \right] +\frac{1}{2}c_1(\mathfrak{g}_x)+c_1(\nu^*\delta')
\end{align*}
where $\delta'=\delta \otimes \omega_{f}^{-1/2}$. 
\end{lemma}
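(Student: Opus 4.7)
The plan is to compute $\iota_{y\ast}\overline{\nu}_{y}^{\mathrm{reg}\ast}\alpha_{\ast}\beta^{\ast}\mathcal{E}$ explicitly in terms of $\iota_{x\ast}\overline{\nu}_{x}^{\mathrm{reg}\ast}\mathcal{E}$, and then verify that the two weight-interval conditions in the statement correspond under this identification. The basic geometric inputs, all consequences of $f$ being a closed immersion between smooth stacks, are as follows. First, the stabilizer Lie algebras agree, $\mathfrak{g}_x\cong\mathfrak{g}_y$, so that the map $\eta$ appearing in the diagram is an isomorphism and $\iota_y=\iota_x\circ\eta$. Second, the natural map $\mathbb{T}_{\X}\hookrightarrow f^{\ast}\mathbb{T}_{\Y}$ is injective with cokernel the normal bundle $N_{\X/\Y}$; under the $(\mathbb{G}_m)_{k'}$-action induced by $\nu$, this gives the short exact sequence
\begin{align*}
0\longrightarrow T_{\X,x}\longrightarrow T_{\Y,y}\longrightarrow N\longrightarrow 0
\end{align*}
where $N:=N_{\X/\Y}|_{x}$. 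Third, $\mathbb{L}_{f}\simeq \mathcal{N}^{\vee}_{\X/\Y}[1]$, and hence $\omega_{f}=\det\mathbb{L}_{f}\cong\det N$ as line bundles on $\X$. Finally, since $\alpha$ is the base change of $f$ along the projection $\pi_{\Y}\colon\Omega_{\Y}\to\Y$, its conormal bundle satisfies $\mathcal{N}_{\alpha}^{\vee}\simeq\pi^{\ast}\mathcal{N}^{\vee}_{\X/\Y}$, restricting at $z$ to $N^{\vee}$ as an $\mathrm{Aut}(z)$-representation.

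First I would apply the derived self-intersection formula $\alpha^{\ast}\alpha_{\ast}(-)\simeq(-)\otimes\bigwedge^{\bullet}\mathcal{N}_{\alpha}^{\vee}$ (with the understanding that the summands carry cohomological shifts that do not affect weights) to $\beta^{\ast}\mathcal{E}$ and then pull back by $\overline{\nu}_{z}^{\mathrm{reg}}$. Combining this with the factorizations $\overline{\nu}_{y}^{\mathrm{reg}}=\alpha\circ\overline{\nu}_{z}^{\mathrm{reg}}$ and $\beta\circ\overline{\nu}_{z}^{\mathrm{reg}}=\overline{\nu}_{x}^{\mathrm{reg}}\circ\eta$, with the identity $\iota_y=\iota_x\circ\eta$, and with the fact that $\eta$ is an isomorphism, the projection formula yields
\begin{align*}
\iota_{y\ast}\overline{\nu}_{y}^{\mathrm{reg}\ast}\alpha_{\ast}\beta^{\ast}\mathcal{E}\simeq \iota_{x\ast}\overline{\nu}_{x}^{\mathrm{reg}\ast}\mathcal{E}\otimes\bigwedge^{\bullet}N^{\vee}.
\end{align*}
Since the weights of $\bigwedge^{\bullet}N^{\vee}$ form exactly the interval $[-c_1(N^{>0}),-c_1(N^{<0})]$, the condition on $\iota_{x\ast}\overline{\nu}_{x}^{\mathrm{reg}\ast}\mathcal{E}$ is equivalent to the weights of $\iota_{y\ast}\overline{\nu}_{y}^{\mathrm{reg}\ast}\alpha_{\ast}\beta^{\ast}\mathcal{E}$ lying in the original interval shifted by this range.

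Finally I would check that the shifted interval agrees with the one stated for $\iota_{y\ast}\overline{\nu}_{y}^{\mathrm{reg}\ast}\alpha_{\ast}\beta^{\ast}\mathcal{E}$. Using the identities $c_1(T_{\Y,y}^{>0})=c_1(T_{\X,x}^{>0})+c_1(N^{>0})$ and $c_1(T_{\Y,y}^{<0})=c_1(T_{\X,x}^{<0})+c_1(N^{<0})$ from the above short exact sequence, a direct computation shows that the shifted endpoints equal $\frac{1}{2}c_1(T_{\Y,y}^{<0}-T_{\Y,y}^{>0})$ and $\frac{1}{2}c_1(T_{\Y,y}^{>0}-T_{\Y,y}^{<0})$, up to a single global correction of $-\frac{1}{2}c_1(N)$. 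Since $\omega_{f}\cong\det N$, this correction equals $-c_1(\nu^{\ast}\omega_{f}^{1/2})$, which is exactly absorbed by replacing $f^{\ast}\delta$ with $\delta'=\delta\otimes\omega_{f}^{-1/2}$. The main potential obstacle is justifying the derived self-intersection formula for $\alpha$ in our quasi-smooth setting and tracking $(\mathbb{G}_m)_{k'}$-weights cleanly through base change along the closed immersion $\alpha$; however, since $\alpha$ has perfect conormal bundle and $\eta$ is an honest isomorphism (in contrast with Lemma~\ref{lem:pull}, where the analogous morphism was smooth of positive relative dimension), the computation proceeds essentially as a mirror of the pull-back argument, with the roles of source and target weight conditions reversed.
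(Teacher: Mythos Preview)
Your proposal is correct and follows essentially the approach the paper intends: the paper's proof simply states that since $\beta$ is smooth and $\alpha$ is a closed immersion, the argument mirrors that of Lemma~\ref{lem:pull} with a similar weight computation. You carry this out explicitly, obtaining the key identity $\iota_{y\ast}\overline{\nu}_{y}^{\mathrm{reg}\ast}\alpha_{\ast}\beta^{\ast}\mathcal{E}\simeq\iota_{x\ast}\overline{\nu}_{x}^{\mathrm{reg}\ast}\mathcal{E}\otimes\bigwedge^{\bullet}N^{\vee}$ via the self-intersection formula for the regular closed immersion $\alpha$ (whereas the direct analogue of Lemma~\ref{lem:pull} would phrase this as a base-change along a Cartesian square), and then matching the weight intervals using $\omega_f\cong\det N$; these are two packagings of the same computation.
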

\begin{proof}
    Since $\beta$ is smooth and $\alpha$ is a closed immersion, the proof is analogous to Lemma~\ref{lem:pull} and follows by a similar weight computation.
\end{proof}

Consequently, we have the following: 
\begin{prop}\label{prop:push}
Suppose that $f$ is a closed immersion. Then the functor (\ref{ppush}) 
restricts to the functor 
    \begin{align}\notag
        f^{\Omega}_{*} \colon 
    \LL(\Omega_{\X})_{f^{\ast}\delta\otimes \omega_{\X}^{1/2}} \to \LL(\Omega_{\Y})_{\delta \otimes \omega_{\Y}^{1/2}}. 
    \end{align}
\end{prop}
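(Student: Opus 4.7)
The plan is to verify the weight condition from Lemma~\ref{lem:wtOmega} for $\alpha_{*}\beta^{*}\mathcal{E}$, and the argument is the closed-immersion analogue of Proposition~\ref{prop:pback}, with Lemma~\ref{lem:pull2} replacing Lemma~\ref{lem:pull}.

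First I would fix $\nu_y \colon (\bgm)_{k'}\to \Omega_{\Y}$ with $\overline{y}=\nu_y(0)$, set $y=\pi_{\Y}(\overline{y})$, and split into two cases. If $y\notin f(\X)$, then $\overline{y}$ lies outside the scheme-theoretic support $\alpha(f^{*}\Omega_{\Y})$ of $\alpha_{*}\beta^{*}\mathcal{E}$ (recall that $\alpha$ is a closed immersion by Lemma~\ref{lem:beta}), so $\overline{\nu}_y^{\mathrm{reg}\,*}(\alpha_{*}\beta^{*}\mathcal{E})=0$ and the weight condition is vacuous. If $y\in f(\X)$, then $\overline{y}=\alpha(z)$ for a unique $z\in f^{*}\Omega_{\Y}(k')$; since $f$ is a monomorphism, $\mathrm{Aut}(z)=\mathrm{Aut}(\overline{y})$, so the cocharacter underlying $\nu_y$ lifts uniquely to a map $\nu\colon (\bgm)_{k'}\to f^{*}\Omega_{\Y}$ fitting into diagram~\eqref{dia:bgm}, with $\mathfrak{g}_x=\mathfrak{g}_y$.

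The second case is then settled by combining Lemma~\ref{lem:pull2} with the identity $f^{*}\omega_{\Y}^{1/2}=\omega_{\X}^{1/2}\otimes \omega_f^{-1/2}$, which comes from taking determinants of the conormal exact sequence $0\to N_{\X/\Y}^{\vee}\to f^{*}\Omega_{\Y}\to \Omega_{\X}\to 0$ together with the computation $\omega_f=\det\mathbb{L}_f=\det N_{\X/\Y}$ (so that $f^{*}\omega_{\Y}=\omega_{\X}\otimes\omega_f^{-1}$). Concretely, one rewrites the hypothesis as
\[
c_1\bigl(\nu_x^{*}(f^{*}\delta\otimes \omega_{\X}^{1/2})\bigr)
= c_1\bigl(\nu_x^{*}f^{*}(\delta\otimes\omega_{\Y}^{1/2})\bigr)+\tfrac{1}{2}c_1(\nu^{*}\omega_f),
\]
applies Lemma~\ref{lem:pull2} with parameter $\delta\otimes\omega_{\Y}^{1/2}$ in place of $\delta$ to transport the first term to $c_1(\nu_y^{*}(\delta\otimes\omega_{\Y}^{1/2}\otimes\omega_f^{-1/2}))$, and observes that the two $\omega_f^{\pm 1/2}$ contributions cancel, producing exactly the shift $c_1(\nu_y^{*}(\delta\otimes \omega_{\Y}^{1/2}))$ demanded by Lemma~\ref{lem:wtOmega}.

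The main (and essentially only) subtlety is the twist bookkeeping in the last step; once the conormal identity for $\omega_f$ is in hand, the shifts introduced by the two applications cancel precisely. The vanishing step in the case $y\notin f(\X)$ is where the closed-immersion hypothesis is essential, since it provides exact support control on $\alpha_{*}\beta^{*}\mathcal{E}$; for a general proper $f$ one must instead perform a genuine weight comparison on nontrivial contributions, which is the content of Theorem~\ref{thm:proj} proved elsewhere in the text.
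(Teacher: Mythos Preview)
Your proof is correct and follows essentially the same approach as the paper, which deduces the proposition as an immediate consequence of Lemma~\ref{lem:pull2} (combined with Lemma~\ref{lem:wtOmega}), exactly parallel to how Proposition~\ref{prop:pback} follows from Lemma~\ref{lem:pull}. You have simply made explicit the case split on whether $y\in f(\X)$ and the twist bookkeeping via $\omega_{\X}=f^{*}\omega_{\Y}\otimes\omega_f$, both of which the paper leaves implicit. One minor remark: the fact that $\alpha$ is a closed immersion is not part of Lemma~\ref{lem:beta} but follows directly since $\alpha$ is the base change of the closed immersion $f$ along $\Omega_{\Y}\to\Y$.
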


\subsection{Projective push-forward}\label{subsec:push2}
We show that the conclusion of Proposition~\ref{prop:push} 
holds for any projective morphism. 
\begin{thm}\label{thm:proj}
Let $f \colon \X \to \Y$ be a projective morphism of smooth QCA stacks. 
Then the functor (\ref{ppush}) 
restricts to the functor 
    \begin{align}\label{funct:Lpush}
        f^{\Omega}_{*} \colon 
    \LL(\Omega_{\X})_{f^{\ast}\delta\otimes \omega_{\X}^{1/2}} \to \LL(\Omega_{\Y})_{\delta \otimes \omega_{\Y}^{1/2}}. 
    \end{align}
    \end{thm}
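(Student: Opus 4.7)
The plan is to factor $f$ into two special cases: a closed immersion (already handled by Proposition~\ref{prop:push}) and the projection from a projective bundle, and then carry out the weight analysis in the latter case by reduction to fibers.

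\textbf{Step 1: Factorization.} Since $f$ is projective, choose an $f$-relatively ample line bundle $\mathcal{L}$ on $\X$. For $n\gg 0$, the sheaf $E:=f_{*}\mathcal{L}^{\otimes n}$ is a vector bundle on $\Y$, and we obtain a commutative factorization
\[
\xymatrix{
\X \ar@{^{(}->}[r]^-{i} \ar[rd]_-{f} & \mathbb{P}(E) \ar[d]^-{p} \\
& \Y
}
\]
where $i$ is a closed immersion and $p$ is smooth and projective. By Lemma~\ref{lem:funct}, $f^{\Omega}_{*} \simeq p^{\Omega}_{*}\circ i^{\Omega}_{*}$. Proposition~\ref{prop:push} shows that
\[
i^{\Omega}_{*}\colon \LL(\Omega_{\X})_{i^{*}p^{*}\delta\otimes \omega_{\X}^{1/2}} \to \LL(\Omega_{\mathbb{P}(E)})_{p^{*}\delta\otimes \omega_{\mathbb{P}(E)}^{1/2}}
\]
so the problem reduces to showing that, for the smooth projective morphism $p$, the functor $p^{\Omega}_{*}=\alpha_{*}\beta^{*}$ restricts to
\[
p^{\Omega}_{*}\colon \LL(\Omega_{\mathbb{P}(E)})_{p^{*}\delta\otimes \omega_{\mathbb{P}(E)}^{1/2}} \to \LL(\Omega_{\Y})_{\delta\otimes \omega_{\Y}^{1/2}}.
\]
Note that $\beta\colon p^{*}\Omega_{\Y}\hookrightarrow \Omega_{\mathbb{P}(E)}$ is a closed immersion (Lemma~\ref{lem:beta}(ii)) and $\alpha\colon p^{*}\Omega_{\Y}\to \Omega_{\Y}$ is smooth and projective.

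\textbf{Step 2: Fiberwise weight analysis.} Fix a test map $\nu\colon (\bgm)_{k'}\to \Omega_{\Y}$ with $\nu(0)=\bar y$, $y=\pi(\bar y)$, and a regularization $\nu^{\mathrm{reg}}\colon \mathfrak{g}_{\bar y}^{\vee}[-1]/(\mathbb{G}_m)_{k'}\to \Omega_{\Y}$. Applying the base-change formula (Lemma~\ref{lem:funct2}) to $\alpha_{*}\beta^{*}\mathcal{E}$ along $\nu^{\mathrm{reg}}$ yields
\[
\nu^{\mathrm{reg}\,*}\,p^{\Omega}_{*}\mathcal{E}\;\simeq\;\widetilde{\alpha}_{*}\,\widetilde{\beta}^{*}\,\widetilde{\nu}^{\mathrm{reg}\,*}\mathcal{E},
\]
where the base-change diagram is a relative projective bundle $\widetilde{\alpha}\colon \mathbb{P}(E_{\bar y})\times \mathfrak{g}_{\bar y}^{\vee}[-1]/(\mathbb{G}_m)_{k'}\to \mathfrak{g}_{\bar y}^{\vee}[-1]/(\mathbb{G}_m)_{k'}$. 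The cocharacter $\lambda$ induced by $\nu$ lands in $\mathrm{Aut}(\bar y)$, which acts on $E_{\bar y}$; decompose $E_{\bar y}=\bigoplus_{r\in\mathbb{Z}} E_{\bar y}^{(r)}$ into $\lambda$-weight spaces, so that the $\lambda$-fixed locus in $\mathbb{P}(E_{\bar y})$ is $\bigsqcup_{r}\mathbb{P}(E_{\bar y}^{(r)})$.

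\textbf{Step 3: Weight verification via Atiyah–Bott / Beilinson.} Any test map $\nu'\colon (\bgm)_{k'}\to \Omega_{\mathbb{P}(E)}$ lifting $\nu$ factors through a point $a\in \mathbb{P}(E_{\bar y})$ and some cocharacter; by lifting to points in each $\lambda$-fixed stratum $\mathbb{P}(E_{\bar y}^{(r)})$, one obtains a collection of weight bounds on $\mathcal{E}$ from the hypothesis $\mathcal{E}\in \LL(\Omega_{\mathbb{P}(E)})_{p^{*}\delta\otimes \omega_{\mathbb{P}(E)}^{1/2}}$. To combine these into the required bound for $p^{\Omega}_{*}\mathcal{E}$ at $\nu$, apply Atiyah–Bott/virtual localization along $\widetilde{\alpha}$ (using that $\widetilde{\alpha}$ is smooth and proper with known fixed loci) to express $\widetilde{\alpha}_{*}(\widetilde{\beta}^{*}\widetilde{\nu}^{\mathrm{reg}\,*}\mathcal{E})$ as a sum of contributions indexed by fixed strata $\mathbb{P}(E_{\bar y}^{(r)})$, each being a pushforward of a $\lambda$-weight-constant coherent sheaf from a smaller projective bundle. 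The contribution of each fixed stratum carries an extra twist by $c_1$ of the normal bundle $N_{\mathbb{P}(E_{\bar y}^{(r)})/\mathbb{P}(E_{\bar y})}^{\vee}$, which is precisely a factor of $\omega_{p}^{1/2}$ on each stratum. Tracking these twists, one verifies that every weight appearing in $\iota_{*}\nu^{\mathrm{reg}\,*}p^{\Omega}_{*}\mathcal{E}$ lies in the interval
\[
\Big[\tfrac{1}{2}c_1(T_{\Omega_{\Y},\bar y}^{<0}),\,\tfrac{1}{2}c_1(T_{\Omega_{\Y},\bar y}^{>0})\Big]+\tfrac{1}{2}c_1(\mathfrak{g}_{\bar y})+c_1(\nu^{*}(\delta\otimes \omega_{\Y}^{1/2}))
\]
demanded by Lemma~\ref{lem:wtOmega}. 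The key cancellation is that the boundary contribution $c_1(\nu^{*}\omega_{p}^{1/2})$ on $\mathbb{P}(E)$ is matched on each fixed stratum by the corresponding piece of $c_1(\omega_{\Y}^{1/2})\ominus c_1(\omega_{\mathbb{P}(E)}^{1/2})$, so the shift by $\omega_{p}^{1/2}$ accounts exactly for the relative dualizing term of $\alpha$.

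\textbf{Main obstacle.} The subtle point is controlling the upper and lower weight bounds simultaneously as one pushes forward along each $\lambda$-fixed stratum $\mathbb{P}(E_{\bar y}^{(r)})$: the upper bound of the interval at the source uses the positive weights of the relative tangent of $\alpha$, while the lower bound uses the negative ones, and the half-canonical twist $\omega_{p}^{1/2}$ mediates between the two boundary contributions in a way that is very sensitive to the signs of $\lambda$-weights on $E_{\bar y}$. A fallback is to prove the $p$-case by using the relative Beilinson semiorthogonal decomposition $\Coh(\mathbb{P}(E))=\langle p^{*}\Coh(\Y)\otimes \mathcal{O}(k)\rangle_{k=0,\dots,N}$ together with the Euler sequence to generate $\LL(\Omega_{\mathbb{P}(E)})_{p^{*}\delta\otimes\omega_{\mathbb{P}(E)}^{1/2}}$ by a finite list of explicit objects of the form $p^{\Omega!}(\mathcal{F})\otimes \mathcal{O}(k)$, and then check the weight bound on each such generator directly using the already established pull-back result (Proposition~\ref{prop:pback}) and projection formula (Lemma~\ref{lem:funct3}(ii)).
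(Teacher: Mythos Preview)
Your overall strategy---factoring through a closed immersion and a projection from projective space---matches the paper's, but the execution diverges in a way that leaves a real gap.

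The paper does not attempt a global factorization $\X \hookrightarrow \mathbb{P}(E) \to \Y$. Instead it first uses the smooth pullback result (Lemma~\ref{lem:pull}/Proposition~\ref{prop:pback}) together with the Luna \'etale slice theorem \cite{AHRLuna} to reduce, for each test map $\nu$, to the case $\Y = Y/\mathbb{G}_m$ with $Y$ affine and a single cocharacter. Only then does it factor the now-local map through a \emph{trivial} projective bundle $\mathbb{P}^n_Y/\mathbb{G}_m$, using Brion's linearization theorem. This reduction matters: the fiberwise weight analysis then takes place on an ordinary $\mathbb{P}^n$ with a linear $\mathbb{G}_m$-action, rather than on $\mathbb{P}(E_{\bar y})$ with an action coming through a possibly complicated automorphism group.

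Your Step~3 is where the argument actually breaks. Atiyah--Bott localization is an identity in equivariant $K$-theory (with denominators in the Euler classes); it does not bound the $\mathbb{G}_m$-weights that occur in $H^{*}(\mathbb{P}(E_{\bar y}),\mathcal{F})$. What is needed is the honest statement: if $\mathcal{E}\in\Coh(\mathbb{P}^n/\mathbb{G}_m)$ satisfies $\wt(\mathcal{E}|_p)\subset [c_1(\Omega_{\mathbb{P}^n}|_p^{<0}),\,c_1(\Omega_{\mathbb{P}^n}|_p^{>0})]+[m_1,m_2]$ at every fixed point $p$, then $\wt(\Gamma(\mathbb{P}^n,\mathcal{E}))\subset[m_1,m_2]$. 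The paper isolates this as Lemma~\ref{lem:wtbound}, reducing via Serre duality to a one-sided vanishing (Lemma~\ref{lem:Pn}) proved by induction on $n$: one strips off the attracting locus of the lowest-weight fixed component using the local-cohomology triangle and the semiorthogonal decomposition of an affine-bundle chart, then descends to a projective space of strictly smaller dimension. This inductive argument is the substantive content, and neither localization nor your Beilinson fallback supplies it---the Beilinson decomposition is for $\Coh(\mathbb{P}(E))$, not for the limit subcategory of $\Coh(\Omega_{\mathbb{P}(E)})$, and there is no reason the latter is generated by objects of the form $p^{\Omega!}(\mathcal{F})\otimes\mathcal{O}(k)$.
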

\begin{proof}
    For $\mathcal{E} \in \LL(\Omega_{\X})_{f^{\ast}\delta \otimes \omega_{\X}^{1/2}}$, 
    we need to show that $f^{\Omega}_{\ast}\mathcal{E} \in \Coh(\Omega_{\Y})$ 
    satisfies the condition in Lemma~\ref{lem:wtOmega}.
    Let $\nu \colon (\bgm)_{k'} \to \Omega_{\Y}$ be a map 
    with composition
    \begin{align*}
      \nu_y \colon (\bgm)_{k'} \to \Omega_{\Y} \stackrel{\pi_{\Y}}{\to} \Y, \ 
      0 \mapsto y. 
    \end{align*}
    We may assume that the induced cocharacter $(\mathbb{G}_m)_{k'} \to \mathrm{Aut}(y)$ is injective. As in Remark~\ref{rmk:k'}, we may assume that $k'$ is 
    algebraically closed. Then by replacing $\Y$ with $\Y_{k'}$, we may further assume that $k'=k$. 
    
    Then by~\cite[Theorem~1.1]{AHRLuna}, there is an affine $k$-scheme $Y=\Spec A$ with 
    $(\mathbb{G}_m)$-action, 
    a $\mathbb{G}_m$-fixed point $0 \in Y$ and a smooth morphism 
    $g \colon Y/\mathbb{G}_m \to \Y$ such that $g|_{0/\mathbb{G}_m}=\nu_y$. 
We have the induced functor 
\begin{align*}
    g^{\Omega!} \colon \Coh(\Y) \to \Coh(Y/\mathbb{G}_m). 
\end{align*}
    By Lemma~\ref{lem:pull}, we have 
    $f^{\Omega}_{\ast}(\mathcal{E}) \in \LL(\Omega_{\Y})_{\delta \otimes \omega_{\Y}^{1/2}}$ if 
    and only if 
    \begin{align}\label{gf1}
    g^{\Omega!}f^{\Omega}_{\ast}(\mathcal{E})
    \in \LL(Y/\mathbb{G}_m)_{g^{\ast}\delta \otimes g^{\ast}\omega_{\Y}^{1/2} \otimes \omega_g^{1/2}}
    = \LL(Y/\mathbb{G}_m)_{g^*\delta\otimes\omega_{Y/\mathbb{G}_m}^{1/2}}
    \end{align}
    for any map $\nu$ as above. We have the Cartesian square 
    \begin{align*}
        \xymatrix{
X/\mathbb{G}_m \ar[r]^-{g'} \ar[d]_-{f'} \diasquare& \X \ar[d]^-{f} \\
Y/\mathbb{G}_m \ar[r]^-{g} & \Y        
        }
    \end{align*}
    where $X=\X\times_{\Y}Y$. Since $f$ is projective, 
    we have that $X$ is a smooth $k$-scheme which is projective over $Y$. 
    By Lemma~\ref{lem:funct2}, we have the isomorphism
    \begin{align}\label{gf2}
       g^{\Omega!}f^{\Omega}_{\ast}(\mathcal{E})
       \cong (f')^{\Omega}_{\ast} (g')^{\Omega!}(\mathcal{E}). 
    \end{align}
    By Proposition~\ref{prop:pback}, we have 
    \begin{align}\label{gf3}
        (g')^{\Omega!}(\mathcal{E}) \in \LL(\Omega_{X/\mathbb{G}_m})_{g^{'\ast}\omega_{\X}^{1/2}\otimes g^{'\ast}f^*\delta\otimes\omega_{g'}^{1/2}}=\LL(\Omega_{X/\mathbb{G}_m})_{f^{'\ast}g^*\delta\otimes\omega_{X/\mathbb{G}_m}^{1/2}}.
    \end{align}
By (\ref{gf2}) and (\ref{gf3}), it is enough to show that 
the object (\ref{gf3}) applied to $(f')^{\Omega}_{*}$ satisfies (\ref{gf1}). 
    Therefore, we can replace 
$(\X, \Y)$ with $(X/\mathbb{G}_m, Y/\mathbb{G}_m)$. 

By~\cite[Theorem~5.2.1]{Brion0}, there 
is an $f'$-ample $\mathbb{G}_m$-linearized 
line bundle on $X$. By shrinking $Y$ if necessary, 
we obtain the commutative diagram 
\begin{align*}
\xymatrix{
X/\mathbb{G}_m \inclusion^-{i} \ar[d]_-{f'}& \mathbb{P}_Y^n /\mathbb{G}_m\ar[ld]^-{f''} 
\\
Y/\mathbb{G}_m &
}
\end{align*}
where $i$ is a closed immersion, 
$\mathbb{P}_Y^n=\mathbb{P}^n \times Y$,
and $\mathbb{G}_m$ acts on $\mathbb{P}^n$ linearly. 
By Lemma~\ref{lem:funct}, 
we have $(f'')^{\Omega}_{\ast}i^{\Omega}_{\ast}=(f')^{\Omega}_{\ast}$. 
Also by Proposition~\ref{prop:push} and (\ref{gf3}), we have 
\begin{align*}
    i^{\Omega}_{\ast}(g')^{\Omega!}(\mathcal{E}) \in     \LL(\Omega_{\mathbb{P}_Y^n/\mathbb{G}_m})_{ f^{''\ast}g^*\delta \otimes \omega_{\mathbb{P}_Y^n/\mathbb{G}_m}^{1/2}}. 
\end{align*}
Therefore we can replace $(X, f')$ with $(\mathbb{P}_Y^n, f'')$, and it is enough 
to prove the proposition for the object 
$\mathcal{E}'=i^{\Omega}_{\ast}(g')^{\Omega!}(\mathcal{E})$
with respect to $f''$. 

We have the following commutative diagram 
\begin{align*}
    \xymatrix{
\Omega_{\mathbb{P}_Y^n/\mathbb{G}_m} \dinclusion_-{j_1}\diasquare
& \linclusion_-{\beta_1}
f^{''\ast}\Omega_{Y/\mathbb{G}_m} \ar[r]^-{\alpha_1} \dinclusion_-{j_2}\diasquare
&
\Omega_{Y/\mathbb{G}_m} \dinclusion_-{j_3} \\
\Omega_{\mathbb{P}_Y^n}/\mathbb{G}_m &(\mathbb{P}^n \times \Omega_Y)/\mathbb{G}_m \ar[r]^-{\alpha_2} \linclusion_-{\beta_2} & \Omega_Y/\mathbb{G}_m  
    }
\end{align*}
where the vertical arrows are closed immersions, 
each square is Cartesian, $\beta_2$ is the product with the zero section 
$\mathbb{P}^n \hookrightarrow \Omega_{\mathbb{P}^n}$ and $\alpha_2$ is the projection. 
By the base change, we have 
\begin{align*}
    j_{3\ast}\alpha_{1\ast}\beta_1^{\ast}\mathcal{E}'
    \cong \alpha_{2\ast}j_{2\ast}\beta_1^{\ast}\mathcal{E}' 
    \cong \alpha_{2\ast}\beta_2^{\ast}j_{1\ast}\mathcal{E}'. 
\end{align*}
By Lemma~\ref{lem:char2}, we need to show 
the following: for any $\mathbb{G}_m$-fixed
point $\overline{q} \in \Omega_{Y}$ with $\pi_{Y}(\overline{q})=q$, 
where $\pi_Y \colon \Omega_Y \to Y$ is the projection, 
we have 
\begin{align}\label{wtcond:m}    \wt((\alpha_{2\ast}\beta_2^{\ast}j_{1\ast}\mathcal{E}')|_{\overline{q}}) \subset [m_1, m_2]+c_1((g^*\delta)|_{q}) 
\end{align}
where $m_i$ are given by 
\begin{align}\label{def:m}
m_1=c_1 (\Omega_Y|_{q}^{<0}), \ m_2=c_1 (\Omega_Y|_{q}^{>0}). 
\end{align}

Note that 
\begin{align*}    (\alpha_{2\ast}\beta_2^{\ast}j_{1\ast}\mathcal{E}')|_{\overline{q}}\cong     \Gamma(\mathbb{P}^n, (\beta_2^{\ast}j_{1\ast}\mathcal{E}')|_{\mathbb{P}^n\times \overline{q}}).
\end{align*}
By Lemma~\ref{lem:char2}, the object 
$\beta_2^{\ast}j_{1\ast}\mathcal{E}'$ satisfies 
the following: 
for any $\mathbb{G}_m$-fixed point 
$(p, \overline{q}) \in \mathbb{P}^n \times \Omega_Y$
with $\pi_{\Y}(\overline{q})=q$, 
we have 
\begin{align}\label{wtcond:m2}
    \wt((\beta_2^{\ast}j_{1\ast}\mathcal{E})|_{(p, \overline{q})}) \subset 
     \left[c_1(\Omega_{\mathbb{P}^n}|_{p}^{<0}), c_1(\Omega_{\mathbb{P}^n}|_{p}^{>0}) \right]
     +[m_1, m_2]
     +c_1((g^*\delta)|_{q})
\end{align}
where $m_i$ are given by (\ref{def:m}).
Then the condition (\ref{wtcond:m}) follows from (\ref{wtcond:m2}) and Lemma~\ref{lem:wtbound} below. 
\end{proof}

In the proof above, we used Lemma~\ref{lem:wtbound}. The following lemma will be used 
in the proof of Lemma~\ref{lem:wtbound}. 
\begin{lemma}\label{lem:Pn}
Let $\mathbb{G}_m$ act on $\mathbb{P}^n$
linearly.
 Let $f \colon \mathbb{P}^n/\mathbb{G}_m \to \bgm$ be the structure morphism. 
Let $\mathcal{E} \in \Coh(\mathbb{P}^n/\mathbb{G}_m)$. 
Suppose that, for any $p \in (\mathbb{P}^n)^{\mathbb{G}_m}$, 
we have 
\begin{align}\label{wt:condE}
\wt^{\mathrm{max}}(\mathcal{E}|_{p}) \leq
 c_1(\Omega_{\mathbb{P}^n}|_p^{>0}).    
\end{align}
Then, for any $a>0$, we have 
\begin{align}\notag
\Hom(f^*\mathcal{O}_{\bgm}(a), \mathcal{E})
=0. 
\end{align}
\end{lemma}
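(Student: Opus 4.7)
The plan is to argue by induction on $n$, using a $\mathbb{G}_m$-equivariant hyperplane section together with Serre's theorem on anti-ample twists.

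The base case $n = 0$ is immediate: $\Omega_{\mathbb{P}^0} = 0$, so the hypothesis forces $\wt^{\max}(\mathcal{E}) \leq 0$, which yields the conclusion.

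For the inductive step, decompose $V := H^0(\mathbb{P}^n, \mathcal{O}(1))^\vee = \bigoplus_j V_j$ into $\mathbb{G}_m$-weight spaces, let $j_{\max}$ be the largest weight with $V_{j_{\max}} \neq 0$, and pick a nonzero $s \in V_{j_{\max}}^\vee$ (a weight-$(-j_{\max})$ element of $V^\vee$). This defines a $\mathbb{G}_m$-invariant hyperplane $H := \mathbb{P}(\ker s) \cong \mathbb{P}^{n-1}$ with closed immersion $i\colon H \hookrightarrow \mathbb{P}^n$. Multiplication by $s$ produces a distinguished triangle on $\mathbb{P}^n/\mathbb{G}_m$:
\[
\mathcal{E}(-1) \otimes \chi(-j_{\max}) \xrightarrow{\,s\,} \mathcal{E} \to Li^*\mathcal{E},
\]
where $\chi(d)$ denotes the weight-$d$ character of $\mathbb{G}_m$. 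Direct weight computations at fixed points show (i) $Li^*\mathcal{E}$ satisfies the analogous hypothesis on $H$, since $(Li^*\mathcal{E})|_q = \mathcal{E}|_q$ and $c_1(\Omega_H|_q^{>0}) = c_1(\Omega_{\mathbb{P}^n}|_q^{>0})$ for every $q \in H^{\mathbb{G}_m}$ (the normal cotangent direction to $H$ carries non-positive weight); and (ii) the twist $\mathcal{E}(-1) \otimes \chi(-j_{\max})$ still satisfies the hypothesis on $\mathbb{P}^n$, because $\mathcal{O}(-1)|_p$ has weight $j \leq j_{\max}$ at $p \in \mathbb{P}(V_j)$, so the net shift $j - j_{\max}$ is non-positive.

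Applying the inductive hypothesis to $Li^*\mathcal{E}$ gives $H^0(H, Li^*\mathcal{E})_a = 0$ for $a > 0$; the weight-$a$ part of the associated long exact sequence then yields $H^0(\mathbb{P}^n, \mathcal{E}(-1) \otimes \chi(-j_{\max}))_a \xrightarrow{\sim} H^0(\mathbb{P}^n, \mathcal{E})_a$. Iterating the same argument on $\mathcal{E}(-k)\otimes \chi(-k j_{\max})$, which continues to satisfy the hypothesis by the same non-positive shift computation, produces an isomorphism
\[
H^0(\mathbb{P}^n, \mathcal{E}(-k) \otimes \chi(-k j_{\max}))_a \xrightarrow{\sim} H^0(\mathbb{P}^n, \mathcal{E})_a
\]
for every $k \geq 0$. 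Since Serre's theorem on the finite generation of $\bigoplus_d H^0(\mathbb{P}^n, \mathcal{E}(d))$ as a graded $k[x_0, \dots, x_n]$-module implies that the left-hand side vanishes for $k \gg 0$, the conclusion $\Hom(f^*\mathcal{O}_{\bgm}(a), \mathcal{E}) = 0$ follows.

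The delicate point is ensuring that the hypothesis continues to hold at every iteration step, most crucially at fixed points in $\mathbb{P}(V_{j_{\max}})$ where the weight shift vanishes and the bound is saturated. The argument also implicitly uses that the statement remains valid for bounded complexes with coherent cohomology, since $Li^*\mathcal{E}$ generically has both $H^0$ and $H^{-1}$ on $H$; this is harmless because the verification of the weight hypothesis relies only on the derived fibers $\mathcal{E}|_q$, which are unchanged.
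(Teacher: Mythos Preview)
Your induction setup and the verification of claims (i) and (ii) are correct, but the termination step is a genuine gap. The chain of isomorphisms
\[
H^i(\mathbb{P}^n,\mathcal{E})_a \;\cong\; H^i\bigl(\mathbb{P}^n,\mathcal{E}(-k)\otimes\chi(-kj_{\max})\bigr)_a
\]
never terminates by the argument you give. Serre's finite-generation theorem concerns $\bigoplus_{d\ge 0}H^0(\mathcal{E}(d))$ and says nothing about negative twists; indeed $H^0(\mathcal{E}(-k))$ need not vanish for any $k$. Concretely, if $\mathcal{E}=\mathcal{O}_p$ is the skyscraper at a fixed point $p\in\mathbb{P}(V_{j_{\max}})$, then $\mathcal{O}(-1)|_p$ has weight exactly $j_{\max}$, so $\mathcal{E}(-k)\otimes\chi(-kj_{\max})\cong\mathcal{O}_p$ for every $k$: your iteration is literally stationary. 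The underlying reason is visible in your own computation of claim~(ii): at fixed points in $\mathbb{P}(V_{j_{\max}})$ the shift $j-j_{\max}$ vanishes, so the hypothesis is never sharpened there and no amount of twisting makes the problem easier. (You also only discuss $H^0$, whereas the statement concerns the full derived $\Hom$; the same chain of isomorphisms holds in every degree, but the same non-termination problem applies.)

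The paper breaks this circularity by decomposing along a different locus: instead of a hyperplane through the top-weight stratum, it removes the closed linear subspace $Z$ where all \emph{lowest}-weight coordinates vanish, and works with the open complement $U$. The projection from $U$ to $\mathbb{P}(V_{j_{\min}})$ is an affine bundle whose fibers carry strictly positive $\mathbb{G}_m$-weights, so the semiorthogonal decomposition of $\Coh(U/\mathbb{G}_m)$ into pieces indexed by $\mathbb{G}_m$-weight immediately kills $\Hom(f^*\mathcal{O}(a),j_*j^*\mathcal{E})$ for $a>0$. The local-cohomology term $\Gamma_Z(\mathcal{E})$ is then handled by the Koszul resolution of $\mathcal{O}_Z$ and the inductive hypothesis on $Z\cong\mathbb{P}^{n-|I_k|}$, after checking that the relevant twists of $\mathcal{E}|_Z$ still satisfy the weight bound. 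In short, the paper's open/closed decomposition isolates the ``stationary'' stratum on the \emph{open} side where a direct weight argument applies, which is exactly what your hyperplane iteration cannot do.
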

\begin{proof}
    We prove the lemma by induction on $n$. 
    The case of $n=0$ is obvious. 

Let $[x_0 : \cdots : x_n]$ be a homogeneous coordinate 
of $\mathbb{P}^n$. We may assume that there is a 
partition 
\begin{align*}
\{0, 1, \ldots, n\}=\coprod_{l=1}^k I_l
    \end{align*}
    and $\lambda_1>\cdots>\lambda_k$ such that
    the $\mathbb{G}_m$-action on $\mathbb{P}^n$ is given by 
    \begin{align*}
        t[x_0: \cdots : x_n]=[t^{a_0}x_0: \cdots : t^{a_n}x_n]
    \end{align*}
    where $a_i=\lambda_l$ for $i\in I_l$. 
    Let 
    $Z \subset \mathbb{P}^n$ be the closed subscheme 
    given by $x_l=0$ for all $l\in I_k$, and let
    \begin{align*}
j \colon U:= \mathbb{P}^n \setminus Z
\subset \mathbb{P}^n        
    \end{align*}
    be its complement. 
We have the distinguished 
    triangle in $\QCoh(\mathbb{P}^n)$
    \begin{align}\notag
\Gamma_Z(\mathcal{E}) \to \mathcal{E} \to j_{\ast}j^{\ast}\mathcal{E}. 
            \end{align}
            It is enough to show that 
            \begin{align}\label{show:vanish}
                \Hom(f^{\ast}\mathcal{O}_{\bgm}(a), j_{\ast}j^{\ast}\mathcal{E})=\Hom(f^{\ast}\mathcal{O}_{\bgm}(a), \Gamma_Z(\mathcal{E}))=0. 
            \end{align}

By setting $N=\lvert I_k \rvert -1$, 
we have the map 
\begin{align}\label{map:piU}
\pi \colon U \to \mathbb{P}^{N}    
\end{align}
by taking the projection onto homogeneous 
coordinates $x_j$ with $j\in I_k$. 
The morphism $\pi$ is a $\mathbb{G}_m$-equivariant affine space fibration 
with zero section $0$, where $\mathbb{G}_m$ acts on 
$\mathbb{P}^{N}$ trivially. 
The weights of $\mathbb{G}_m$-action on 
fibers of $\pi$ are strictly positive, hence 
we have the semiorthogonal decomposition, see~\cite[Amplification~3.18]{halp}
\begin{align}\label{sod:pi}
&\Coh(U/\mathbb{G}_m)= \\
\notag &\langle 
\ldots, \pi^{\ast}\Coh(\mathbb{P}^N/\mathbb{G}_m)_{-1}, 
\pi^{\ast}\Coh(\mathbb{P}^N/\mathbb{G}_m)_0, 
\pi^{\ast}\Coh(\mathbb{P}^N/\mathbb{G}_m)_1
\ldots \rangle.  
\end{align}
By the assumption (\ref{wt:condE}) and 
the non-negativity of $\mathbb{G}_m$-weights on $U$, 
we have 
\begin{align*}
\wt^{\mathrm{max}}(0^{\ast}j^{\ast}\mathcal{E}) \leq 
c_1(\Omega_{\mathbb{P}^n}|_{\Im 0}^{>0})=0. 
\end{align*} Here, $\Im 0 \cong \mathbb{P}^N$ is the image of the zero section of (\ref{map:piU}).
    It follows that, from the semiorthogonal 
    decomposition (\ref{sod:pi}) that 
\begin{align}\label{j*E}
j^{\ast}\mathcal{E} \in \langle \pi^{\ast}\Coh(\mathbb{P}^N/\mathbb{G}_m)_w :
w\leq 0\rangle. 
\end{align}
Since $j^{\ast}f^{\ast}\mathcal{O}_{\bgm}(a)$
lies in $\pi^{\ast}\Coh(\mathbb{P}^N)_a$, for $a>0$ 
we have 
\begin{align}\notag
    \Hom(f^{\ast}\mathcal{O}_{\bgm}(a), j_{\ast}j^{\ast}\mathcal{E})
    =\Hom(j^{\ast}f^{\ast}\mathcal{O}_{\bgm}(a), j^{\ast}\mathcal{E})
    =0
\end{align}
by (\ref{j*E}) and the semiorthogonal decomposition (\ref{sod:pi}). 
Therefore we have the first vanishing of (\ref{show:vanish}). 

Below we show the second vanishing of (\ref{show:vanish}). 
We have 
\begin{align*}
    \Gamma_Z(\mathcal{E})=\colim_{Z \subset Z'} \mathcal{H}om_{\mathbb{P}^n}(\mathcal{O}_{Z'}, \mathcal{E})
\end{align*}
    where the colimit is after all closed 
    subschemes $Z'\subset \mathbb{P}^n$ with 
    $\mathrm{Supp}(Z')=\mathrm{Supp}(Z)$. 
Since $f^{\ast}\mathcal{O}_{\bgm}(a)$ is compact in 
$\QCoh(\mathbb{P}^n/\mathbb{G}_m)$, we have 
\begin{align}\label{isom:colim}
    &\Hom(f^{\ast}\mathcal{O}_{\bgm}(a), \Gamma_Z(\mathcal{E})) \\
  \notag  &\cong \colim_{Z\subset Z'} \Hom(f^{\ast}\mathcal{O}_{\bgm}(a), \mathcal{H}om_{\mathbb{P}^n}(\mathcal{O}_{Z'}, \mathcal{E})).
\end{align}
        Let $H_j=(x_j=0) \subset \mathbb{P}^n$ be the hyperplane, 
        and denote by $a_j H_j \subset \mathbb{P}^n$ the closed subscheme 
        given by $(x_j^{a_j}=0)$. 
    Any closed subscheme $Z'\subset \mathbb{P}^n$ with $\mathrm{Supp}(Z)=\mathrm{Supp}(Z')$ is contained in 
    \begin{align*}
        W=\bigcap_{j \in I_k}a_j H_j
    \end{align*}
    for some $a_j \geq 1$. In order to show the vanishing of (\ref{isom:colim}), it is enough to show 
    that 
    \begin{align*}
        \Hom(f^{\ast}\mathcal{O}_{\bgm}(a), \mathcal{H}om(\mathcal{O}_W, \mathcal{E}))=0.
    \end{align*}
    By taking the Koszul resolution of $\mathcal{O}_W$, 
    we have 
    \begin{align}\label{isom:OW}
        \mathcal{H}om_{\mathbb{P}^n}(\mathcal{O}_W, \mathcal{E})
        \cong \mathcal{E}|_{W} \otimes \mathcal{O}_W\left(\sum_{j\in I_k}a_j H_j \right)[-\lvert I_k\rvert].
    \end{align}
    
    Moreover from the exact sequences
    \begin{align*}
        0 \to \mathcal{O}_{H_j}(-mH_j) \to \mathcal{O}_{(m+1)H_j} \to \mathcal{O}_{mH_j} \to 0
    \end{align*}
    the object (\ref{isom:OW}) is filtered by objects 
    of the form 
    \begin{align*}
    \mathcal{F}:=
        \mathcal{E}|_{Z} \otimes \mathcal{O}_Z\left(\sum_{j\in I_k}c_j H_j \right)[-\lvert I_k\rvert]
    \end{align*}
    for some $c_j \geq 1$. 
    We claim that, for each $p\in Z^{\mathbb{G}_m}$, we
    have 
    \begin{align}\label{wt:p}
\wt^{\mathrm{max}}(\mathcal{F}|_{p}) \leq c_1(\Omega_Z|_{p}^{>0}). 
    \end{align}
    
    A point $p \in Z^{\mathbb{G}_m}$ satisfies 
    the following: there is $1\leq l \leq k-1$ such that 
    $x_i=0$ for $i\notin I_l$. 
    Then 
    \begin{align*}
        c_1(\Omega_{\mathbb{P}^n}|_p^{>0})=
        \lvert I_k \rvert (\lambda_l-\lambda_k)+
         \lvert I_{k-1} \rvert (\lambda_l-\lambda_{k-1})+
        \cdots+
        \lvert I_{l+1} \rvert (\lambda_l-\lambda_{l+1}). 
    \end{align*}
    By the assumption (\ref{wt:condE}) and the $\mathbb{G}_m$-weight of $\mathcal{O}_Z(H_j)|_p$ for $j\in I_k$ is 
    $(\lambda_k-\lambda_l)$, 
    we have 
    \begin{align*}
        \wt^{\mathrm{max}}(\mathcal{F}|_p)
        &\leq \lvert I_k\rvert(\lambda_l-\lambda_k)
        +\cdots+\lvert I_{l+1}\rvert (\lambda_l-\lambda_{l+1})
        +\left(\sum_{j\in I_k}c_j\right)(\lambda_k-\lambda_l) \\
        &\leq \lvert I_{k-1}\rvert(\lambda_l-\lambda_{k-1})
        +\cdots+\lvert I_{l+1}\rvert (\lambda_l-\lambda_{l+1}) =c_1(\Omega_{Z}|^{>0}_{p}).        
    \end{align*}
    Therefore (\ref{wt:p}) holds. 
    
    Since $Z$ is a projective space with 
    $\dim Z<n$, by the 
    assumption of the induction we have 
    \begin{align*}\Hom_{Z/\mathbb{G}_m}(f^{\ast}\mathcal{O}_{\bgm}(a)|_{Z}, 
    \mathcal{F})=0
    \end{align*}
    for $a>0$. Hence the second vanishing of (\ref{show:vanish}) holds.
\end{proof}
\begin{lemma}\label{lem:wtbound}
Let $\mathbb{G}_m$ act on $\mathbb{P}^n$ linearly. 
For $\mathcal{E} \in \Coh(\mathbb{P}^n/\mathbb{G}_m)$, 
suppose that for any $\mathbb{G}_m$-fixed point 
$p \in \mathbb{P}^n$ we have 
\begin{align*}
    \wt(\mathcal{E}|_{p}) \subset \left[c_1 (\Omega_{\mathbb{P}^n}|_{p}^{<0}), 
    c_1 (\Omega_{\mathbb{P}^n}|_{p}^{>0})
    \right]+[m_1, m_2]
\end{align*}
for $m_1 \leq m_2$. 
Then we have 
\begin{align*}
    \wt(\Gamma(\mathbb{P}^n, \mathcal{E}))
    \subset [m_1, m_2]. 
\end{align*}
\end{lemma}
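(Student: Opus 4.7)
The plan is to use Lemma~\ref{lem:Pn} twice: once directly to get the upper bound on the $\mathbb{G}_m$-weights of $\Gamma(\mathbb{P}^n,\mathcal{E})$, and once through Grothendieck--Serre duality to obtain the lower bound. The reduction of the statement to Lemma~\ref{lem:Pn} is essentially a translation and a sign flip, so no genuine geometric obstacle remains.

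First I would form the twist $\mathcal{E}':=\mathcal{E}\otimes f^{\ast}\mathcal{O}_{\bgm}(-m_2)$, where $f\colon\mathbb{P}^n/\mathbb{G}_m\to \bgm$ is the structure map. Twisting shifts every $\mathbb{G}_m$-weight by $-m_2$, so the hypothesis becomes
\[
\wt(\mathcal{E}'|_p)\subset\left[c_1(\Omega_{\mathbb{P}^n}|_p^{<0})+m_1-m_2,\ c_1(\Omega_{\mathbb{P}^n}|_p^{>0})\right]
\]
at each $\mathbb{G}_m$-fixed $p$; in particular $\wt^{\mathrm{max}}(\mathcal{E}'|_p)\leq c_1(\Omega_{\mathbb{P}^n}|_p^{>0})$. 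Lemma~\ref{lem:Pn} then yields $\Hom(f^{\ast}\mathcal{O}_{\bgm}(a),\mathcal{E}')=0$ for all $a>0$. By adjunction this Hom computes the weight $a$ part of $\Gamma(\mathbb{P}^n,\mathcal{E}')$, so $\Gamma(\mathbb{P}^n,\mathcal{E})_{w}=0$ for $w>m_2$.

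For the lower bound I would apply the dualizing functor $\mathbb{D}(-):=R\mathcal{H}om(-,\omega_{\mathbb{P}^n}[n])$ on $\mathbb{P}^n/\mathbb{G}_m$. Restriction to a $\mathbb{G}_m$-fixed point commutes with $\mathbb{D}$, giving $\mathbb{D}(\mathcal{E})|_p\simeq (\mathcal{E}|_p)^{\vee}\otimes\omega_{\mathbb{P}^n}|_p[n]$. Using the elementary identity $c_1(\omega_{\mathbb{P}^n}|_p)=c_1(\Omega_{\mathbb{P}^n}|_p^{>0})+c_1(\Omega_{\mathbb{P}^n}|_p^{<0})$, a direct weight computation shows
\[
\wt(\mathbb{D}(\mathcal{E})|_p)\subset\left[c_1(\Omega_{\mathbb{P}^n}|_p^{<0})-m_2,\ c_1(\Omega_{\mathbb{P}^n}|_p^{>0})-m_1\right],
\]
i.e.\ $\mathbb{D}(\mathcal{E})$ satisfies the hypothesis of the lemma with $(m_1,m_2)$ replaced by $(-m_2,-m_1)$. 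Applying the upper-bound argument from the previous paragraph to $\mathbb{D}(\mathcal{E})$ yields $\Gamma(\mathbb{P}^n,\mathbb{D}(\mathcal{E}))_{w}=0$ for $w>-m_1$. Finally, Serre duality gives a $\mathbb{G}_m$-equivariant identification $\Gamma(\mathbb{P}^n,\mathbb{D}(\mathcal{E}))\simeq \Gamma(\mathbb{P}^n,\mathcal{E})^{\vee}$, under which dualization negates weights; hence $\Gamma(\mathbb{P}^n,\mathcal{E})_{v}=0$ for $v<m_1$, completing the inclusion $\wt(\Gamma(\mathbb{P}^n,\mathcal{E}))\subset[m_1,m_2]$.

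The substantive content of the proof is entirely absorbed into Lemma~\ref{lem:Pn} (whose inductive argument on $n$, exploiting the attractor projection away from the lowest-weight stratum together with a Koszul-type analysis of sections supported on the opposite hyperplanes, handles the geometric heart of the matter). The only mild points to verify carefully are the $\mathbb{G}_m$-equivariance of Serre duality on $\mathbb{P}^n/\mathbb{G}_m$ and the weight formula for $\omega_{\mathbb{P}^n}|_p$, both of which are standard; no serious obstacle is expected.
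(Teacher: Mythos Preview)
Your proposal is correct and follows essentially the same approach as the paper's own proof: apply Lemma~\ref{lem:Pn} (after an implicit twist by $-m_2$) to get the upper bound, then pass to $\mathcal{E}^{\vee}\otimes\omega_{\mathbb{P}^n}[n]=\mathbb{D}(\mathcal{E})$, verify the same weight hypothesis holds with $(m_1,m_2)$ replaced by $(-m_2,-m_1)$, and invoke Serre duality $f_{\ast}(\mathcal{E}^{\vee}\otimes\omega_{\mathbb{P}^n}[n])\cong(f_{\ast}\mathcal{E})^{\vee}$ to convert the resulting upper bound into the lower bound. The paper's writeup is terser but the argument is identical.
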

\begin{proof}
    Let $f \colon \mathbb{P}^n/\mathbb{G}_m \to \bgm$. We take $k>m_2$. 
    Then by Lemma~\ref{lem:Pn}, we have
\begin{align*}
    \Hom(f^{\ast}\mathcal{O}_{\bgm}(k), 
    \mathcal{E}) =\Hom(\mathcal{O}_{\bgm}(k), f_{\ast}\mathcal{E})=0. 
\end{align*}
    Therefore we have 
    \begin{align}\label{f*E}
        f_{\ast}\mathcal{E} \in \Coh(\bgm)_{\leq m_2}. 
    \end{align}
    
    We also have the condition 
  \begin{align*}
    \wt(\mathcal{E}^{\vee} \otimes \omega_{\mathbb{P}^N}|_{p}) \subset \left[c_1 (\Omega_{\mathbb{P}^n}|_{p}^{<0}), 
    c_1 (\Omega_{\mathbb{P}^n}|_{p}^{>0})
    \right]+[-m_2, -m_1].
\end{align*}  
By applying (\ref{f*E}) for $\mathcal{E}^{\vee}\otimes \omega_{\mathbb{P}^n}[n]$, we
have that 
\begin{align*}
    f_{\ast}(\mathcal{E}^{\vee}\otimes \omega_{\mathbb{P}^n}[n])
    \cong f_{\ast}(\mathcal{E})^{\vee} \in 
    \Coh(\bgm)_{\leq -m_1}
\end{align*}
where the first isomorphism is Serre duality. 
Therefore we have 
\begin{align*}
    f_{\ast}\mathcal{E} \in \Coh(\bgm)_{\geq m_1}
\end{align*}
and the lemma holds. 
\end{proof}

As a corollary of Proposition~\ref{prop:pback} and Theorem~\ref{thm:proj}, we have 
the following: 
\begin{cor}\label{cor:adjoint}
Suppose that $f \colon \X \to \Y$ is a
smooth projective morphism of smooth QCA stacks. We have 
the adjoint pair of functors 
\begin{align*}
    &f^{\Omega!} \colon \LL(\Omega_{\Y})_{\delta\otimes \omega_{\Y}^{1/2}} \to \LL(\Omega_{\X})_{f^{\ast}\delta \otimes \omega_{\X}^{1/2}}, \\
    &f^{\Omega}_{*} \colon \LL(\Omega_{\X})_{f^{\ast}\delta\otimes \omega_{\X}^{1/2}} \to \LL(\Omega_{\Y})_{\delta \otimes \omega_{\Y}^{1/2}}
\end{align*}
where $f^{\Omega!}$ is the right adjoint functor of 
$f^{\Omega}_{*}$. 
    \end{cor}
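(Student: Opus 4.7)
The plan is to reduce the corollary to combining three ingredients that have essentially been established already: the existence of each functor on limit categories (Proposition~\ref{prop:pback} and Theorem~\ref{thm:proj}), the adjunction on the ambient ind-coherent categories, and the fully faithfulness of the inclusions $\LL(\Omega_\X)_\delta \subset \IndCoh(\Omega_\X)$. Once these three pieces are in hand, the adjunction at the limit-category level follows formally.

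First I would verify that the two functors indeed take values in the asserted limit categories. Proposition~\ref{prop:pback} (applied to the smooth morphism $f$) gives
\[ f^{\Omega!} \colon \LL(\Omega_{\Y})_{\delta\otimes \omega_{\Y}^{1/2}} \to \LL(\Omega_{\X})_{f^{\ast}\delta \otimes \omega_{\X}^{1/2}}, \]
while Theorem~\ref{thm:proj} (applied to the projective morphism $f$) gives
\[ f^{\Omega}_{\ast} \colon \LL(\Omega_{\X})_{f^{\ast}\delta\otimes \omega_{\X}^{1/2}} \to \LL(\Omega_{\Y})_{\delta \otimes \omega_{\Y}^{1/2}}. \]
Note that the twist parameters on the boundaries of the two functors match: on both sides one sees $f^{\ast}\delta \otimes \omega_\X^{1/2}$ over $\X$ and $\delta \otimes \omega_\Y^{1/2}$ over $\Y$, which is the compatibility that makes an adjunction possible.

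Next I would establish the adjunction $(f^{\Omega}_\ast, f^{\Omega!})$ at the level of $\IndCoh$ on cotangents. By definition (Subsection~\ref{subsec:funccotan}), $f^{\Omega!} = \beta_{\ast}\alpha^{!}$ and $f^{\Omega}_{\ast} = \alpha_{\ast}\beta^{\ast}$ in the diagram~\eqref{dia:alphabeta}. When $f$ is smooth and projective, Lemma~\ref{lem:beta} shows $\beta$ is a closed immersion and $\alpha$ is smooth and proper. For a closed immersion $\beta$, the standard adjunction for ind-coherent sheaves gives $\beta^{\ast} \dashv \beta_{\ast}$, and for a smooth proper $\alpha$, Serre duality in the form of~\cite[Section~3]{MR3037900} gives $\alpha_{\ast} \dashv \alpha^{!}$. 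Composing these two adjunctions yields
\[ \alpha_{\ast}\beta^{\ast} \dashv \beta_{\ast}\alpha^{!}, \]
i.e.\ $f^{\Omega}_{\ast} \dashv f^{\Omega!}$ as functors between $\IndCoh(\Omega_{\X})$ and $\IndCoh(\Omega_{\Y})$.

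Finally I would transfer the adjunction to the limit categories. Because the inclusions $\LL(\Omega_{\X})_{f^{\ast}\delta \otimes \omega_{\X}^{1/2}} \subset \Coh(\Omega_{\X}) \subset \IndCoh(\Omega_{\X})$ and analogously over $\Y$ are fully faithful, and because the first step has shown that both $f^{\Omega}_{\ast}$ and $f^{\Omega!}$ preserve the relevant limit subcategories, we obtain for any $\mathcal{E} \in \LL(\Omega_{\X})_{f^{\ast}\delta\otimes\omega_{\X}^{1/2}}$ and $\mathcal{F} \in \LL(\Omega_{\Y})_{\delta\otimes\omega_{\Y}^{1/2}}$ a chain of natural isomorphisms
\[ \Hom_{\LL(\Omega_{\Y})}(f^{\Omega}_{\ast}\mathcal{E}, \mathcal{F}) \cong \Hom_{\IndCoh(\Omega_{\Y})}(f^{\Omega}_{\ast}\mathcal{E}, \mathcal{F}) \cong \Hom_{\IndCoh(\Omega_{\X})}(\mathcal{E}, f^{\Omega!}\mathcal{F}) \cong \Hom_{\LL(\Omega_{\X})}(\mathcal{E}, f^{\Omega!}\mathcal{F}), \]
proving the desired adjunction. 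I do not expect a substantial obstacle here: the only mildly technical step is the ambient adjunction at the ind-coherent level, which is standard once one knows that $\alpha$ is smooth proper and $\beta$ is a closed immersion. The real content of the corollary has already been absorbed into Proposition~\ref{prop:pback} and Theorem~\ref{thm:proj}, the latter being the substantive result requiring the projective pushforward calculation on $\mathbb{P}^n$-bundles.
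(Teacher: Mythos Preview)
Your proposal is correct and matches the paper's approach: the paper states the corollary immediately after Proposition~\ref{prop:pback} and Theorem~\ref{thm:proj} without proof, having already noted (just after~\eqref{ppush}) that $f^{\Omega!}$ is right adjoint to $f^{\Omega}_*$ at the level of $\Coh$ when $f$ is smooth and proper. Your argument simply unpacks this by combining the ambient adjunction with the fact that both functors preserve the limit subcategories, which is exactly the intended reasoning.
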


\subsection{Functors between limit categories with nilpotent singular supports}
We show that the functors in Proposition~\ref{prop:pback} and Theorem~\ref{thm:proj} restrict to the 
subcategories with nilpotent singular supports. 
These may be interpreted as classical limits of the functors in (\ref{dmod:funct}). 

For a morphism $f\colon \X \to \Y$ of smooth stacks, 
we consider the morphism 
\begin{align*}
    (\alpha, \beta) \colon f^{*}\Omega_{\Y} \to \Omega_{\Y}\times \Omega_{\X}
\end{align*}
where $\alpha$, $\beta$ are morphisms in the diagram (\ref{dia:alphabeta}). 
We consider its $(-2)$-shifted conormal stack 
\begin{align}\label{def:-2}
    \Omega_{(\alpha, \beta)}[-2]=\Spec \mathrm{Sym}(\mathbb{T}_{(\alpha, \beta)}[2]) \to f^*\Omega_{\Y}.
\end{align}
From the distinguished triangle 
\begin{align*}
    (\alpha, \beta)^*(\mathbb{L}_{\Omega_{\Y}\times \Omega_{\X}}) \to  \mathbb{L}_{f^{*}\Omega_{\Y}} \to \mathbb{L}_{(\alpha, \beta)}
\end{align*}
we have the induced diagram 
\begin{align*}
    \Omega_{\Omega_{\X}}[-1] \stackrel{v}{\leftarrow} \Omega_{(\alpha, \beta)}[-2] \stackrel{u}{\to} 
    \Omega_{\Omega_{\Y}}[-1].
\end{align*}
The following proposition is proved in~\cite{T2} (and relies on results in~\cite{AG}):
\begin{prop}\emph{(\cite[Proposition~2.4]{T2})}\label{prop:singss}

(i) Assume that $\alpha$ is quasi-smooth and $\beta$ is proper. 
Then for $\mathcal{E} \in \Coh(\Omega_{\Y})$ we have 
\begin{align*}
    \mathrm{Supp}^{\mathrm{AG}}(\beta_{*}\alpha^! \mathcal{E}) \subset vu^{-1}(\mathrm{Supp}^{\mathrm{AG}}(\mathcal{E}))
    \subset \Omega_{\Omega_{\X}}[-1].
\end{align*}

(ii) Assume that $\beta$ is quasi-smooth and $\alpha$ is proper. 
Then for $\mathcal{E} \in \Coh(\Omega_{\X})$ we have 
\begin{align*}
    \mathrm{Supp}^{\mathrm{AG}}(\alpha_{*}\beta^* \mathcal{E}) \subset uv^{-1}(\mathrm{Supp}^{\mathrm{AG}}(\mathcal{E}))
    \subset \Omega_{\Omega_{\Y}}[-1].
\end{align*}    
\end{prop}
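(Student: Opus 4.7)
The plan is to deduce the statement from Arinkin–Gaitsgory's general compatibility of singular supports with the six-functor formalism on quasi-smooth derived stacks~\cite{AG}, applied to the two factors of the correspondence $\alpha,\beta$ separately, and then to recognize the composed set-theoretic image as $vu^{-1}$ (respectively $uv^{-1}$) of the original singular support.

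First I would verify that $f^*\Omega_{\Y}$ is itself quasi-smooth under either hypothesis. This is immediate: $\alpha$ (resp.~$\beta$) is a base change of a representable morphism of quasi-smooth stacks so, together with the hypothesis on $\beta$ (resp.~$\alpha$), both $\Omega_{\X},\Omega_{\Y}$, and $f^*\Omega_{\Y}$ are quasi-smooth, and singular supports live on the $(-1)$-shifted cotangents. Then I would invoke the two basic transport estimates from~\cite{AG}: (A) for a proper morphism $g\colon A \to B$ of quasi-smooth derived stacks,
\begin{align*}
\mathrm{Supp}^{\mathrm{AG}}(g_*\mathcal{F})\subset \mathrm{sing}(g)\bigl(\mathrm{Supp}^{\mathrm{AG}}(\mathcal{F})\bigr),
\end{align*}
where $\mathrm{sing}(g)\colon g^*\Omega_B[-1]\to \Omega_A[-1]$ is the singular codifferential induced by the cofiber sequence $\mathbb{T}_g\to \mathbb{T}_A\to g^*\mathbb{T}_B$ shifted by $1$; and (B) for a quasi-smooth morphism $g$ of quasi-smooth stacks,
\begin{align*}
\mathrm{Supp}^{\mathrm{AG}}(g^!\mathcal{E})\subset \mathrm{sing}(g)^{-1}\bigl(\mathrm{Supp}^{\mathrm{AG}}(\mathcal{E})\bigr).
\end{align*}
Applying (B) to $\alpha$ and then (A) to $\beta$ yields part (i); part (ii) follows symmetrically by applying (B) to $\beta$ (quasi-smooth) and (A) to $\alpha$ (proper).

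The remaining and principal step is to identify the maps $u$ and $v$ coming from $\Omega_{(\alpha,\beta)}[-2]$ with the pair $(\mathrm{sing}(\alpha),\mathrm{sing}(\beta))$. Since $\Omega_{\Y}\times\Omega_{\X}$ is a product, its tangent complex splits as $p_1^*\mathbb{T}_{\Omega_{\Y}}\oplus p_2^*\mathbb{T}_{\Omega_{\X}}$. Combining this with the cofiber sequence
\begin{align*}
\mathbb{T}_{(\alpha,\beta)}\to \mathbb{T}_{f^*\Omega_{\Y}}\to (\alpha,\beta)^*\mathbb{T}_{\Omega_{\Y}\times\Omega_{\X}}
\end{align*}
and comparing with the cofiber sequences for $\alpha$ and $\beta$ individually, one sees that $u$ and $v$, after rewriting the $(-2)$-shifted conormal as a relative product over $f^*\Omega_\Y$ of $\alpha^*\Omega_{\Omega_\Y}[-1]$ and $\beta^*\Omega_{\Omega_\X}[-1]$, coincide with the compositions of $\mathrm{sing}(\alpha)$ and $\mathrm{sing}(\beta)$ with the canonical projections. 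The composite set $vu^{-1}(\mathrm{Supp}^{\mathrm{AG}}\mathcal{E})$ is then exactly $\mathrm{sing}(\beta)(\mathrm{sing}(\alpha)^{-1}(\mathrm{Supp}^{\mathrm{AG}}\mathcal{E}))$.

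The main obstacle will be step (A)–(B): while these compatibilities are proved in~\cite{AG} for quasi-smooth derived schemes (or DM stacks), I would need to extend them to QCA stacks, which is done either by smooth descent from a presentation $U\to \mathfrak{M}$ (using the local Koszul equivalence from Theorem~\ref{thm:Kduality} to reinterpret $\mathrm{Supp}^{\mathrm{AG}}$ as support of matrix factorizations, where pullback and proper pushforward are tractable) or, more directly, by reducing to the local model $\Omega_{\X}\simeq s^{-1}(0)$ and invoking~\cite[Proposition~2.3.9]{T}. A subordinate bookkeeping point is that the singular codifferential depends, set-theoretically, only on the cofiber sequence at the level of $\mathcal{H}^{-1}(\mathbb{L}_g)$ and $\mathcal{H}^0(\mathbb{L}_g)$, so the identification of $u,v$ with $(\mathrm{sing}(\alpha),\mathrm{sing}(\beta))$ reduces to classical linear algebra once the cofiber triangles are written down.
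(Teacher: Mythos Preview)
The paper does not give its own proof of this proposition: it is stated with the attribution ``(\cite[Proposition~2.4]{T2})'' and the preceding sentence reads ``The following proposition is proved in~\cite{T2} (and relies on results in~\cite{AG}).'' So there is no in-paper argument to compare against; the result is imported wholesale from the cited reference.

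Your outlined approach is the natural one and is essentially what the proof in~\cite{T2} does: apply the Arinkin--Gaitsgory transport estimates for singular support under proper pushforward and quasi-smooth pullback to the two legs of the correspondence separately, and then identify the resulting set-theoretic operation with $vu^{-1}$ (resp.\ $uv^{-1}$) via the relative cotangent triangles. You have also correctly flagged the one genuine technical point, namely that the estimates in~\cite{AG} are stated for quasi-smooth schemes and one must check they descend/extend to the QCA stacky setting, which is handled in~\cite{T2} (and locally via the Koszul model as you suggest). One small remark: in part~(ii) the functor is $\alpha_*\beta^*$, not $\alpha_*\beta^!$, so when you invoke estimate (B) for $\beta$ you should either note that $\beta^*$ and $\beta^!$ differ by a twist by a line bundle (hence have identical singular support) or state the $*$-pullback version of the estimate directly.
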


For $z \in f^{*}\Omega_{\Y}(k')$, let $\overline{x} \in \Omega_{\X}(k')$, $\overline{y} \in \Omega_{\Y}(k')$, 
$x\in \X(k')$, and $y\in \Y(k')$ be as in the diagram (\ref{dia:bgm}).
We have the following lemma: 
\begin{lemma}\label{fib:F}
The classical truncation $F_z$ of the fiber of the projection (\ref{def:-2}) at $z$ is given by the kernel 
of a map $\fg_{x} \to T_{\Y, y}^{\vee}$. 
\end{lemma}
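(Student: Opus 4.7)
The plan is as follows. Since $\Omega_{(\alpha,\beta)}[-2]=\Spec_{f^{*}\Omega_{\Y}}\mathrm{Sym}(\mathbb{T}_{(\alpha,\beta)}[2])$ is a linear derived stack over $f^{*}\Omega_{\Y}$, the classical truncation of its fiber at $z$ is the affine space $\mathcal{H}^{2}(\mathbb{T}_{(\alpha,\beta)}|_{z})^{\vee}$. It thus suffices to identify $\mathcal{H}^{2}(\mathbb{T}_{(\alpha,\beta)}|_{z})$ explicitly and dualize.

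The key input is the Lagrangian correspondence structure of Subsection~\ref{subsec:funccotan}: $(\alpha,\beta)\colon f^{*}\Omega_{\Y}\to\Omega_{\Y}\times\Omega_{\X}^{-}$ is a $0$-shifted Lagrangian into the $0$-shifted symplectic stack on the right (with the opposite symplectic structure on $\Omega_{\X}$). The standard shifted-symplectic formula for Lagrangian morphisms then yields $\mathbb{L}_{(\alpha,\beta)}\simeq\mathbb{T}_{f^{*}\Omega_{\Y}}[1]$, and dualizing gives
\[
\mathbb{T}_{(\alpha,\beta)}\simeq\mathbb{L}_{f^{*}\Omega_{\Y}}[-1],
\qquad\text{so}\qquad
\mathcal{H}^{2}(\mathbb{T}_{(\alpha,\beta)}|_{z})\cong\mathcal{H}^{1}(\mathbb{L}_{f^{*}\Omega_{\Y}}|_{z}).
\]
I would then compute the right-hand side using the Cartesian triangle coming from the derived fiber product $f^{*}\Omega_{\Y}=\Omega_{\Y}\times_{\Y}\X$, namely $\pi^{*}\mathbb{L}_{\Y}\to\alpha^{*}\mathbb{L}_{\Omega_{\Y}}\oplus q^{*}\mathbb{L}_{\X}\to\mathbb{L}_{f^{*}\Omega_{\Y}}$. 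Extracting $\mathcal{H}^{1}$ at $z$, and using $\mathcal{H}^{1}(\mathbb{L}_{\Y}|_{y})=\fg_{y}^{\vee}$, $\mathcal{H}^{2}(\mathbb{L}_{\Y}|_{y})=0$, $\mathcal{H}^{1}(\mathbb{L}_{\X}|_{x})=\fg_{x}^{\vee}$, and $\mathcal{H}^{1}(\mathbb{L}_{\Omega_{\Y}}|_{\ox{y}})=\fg_{\ox{y}}^{\vee}$ (via the self-duality $\mathbb{L}_{\Omega_{\Y}}\simeq\mathbb{T}_{\Omega_{\Y}}$ coming from the $0$-shifted symplectic structure on $\Omega_{\Y}$), the long exact sequence yields
\[
\mathcal{H}^{1}(\mathbb{L}_{f^{*}\Omega_{\Y}}|_{z})\cong\operatorname{coker}\bigl(\fg_{y}^{\vee}\to\fg_{\ox{y}}^{\vee}\oplus\fg_{x}^{\vee}\bigr),
\]
with map the pair of the surjection $\fg_{y}^{\vee}\twoheadrightarrow\fg_{\ox{y}}^{\vee}$ (dual to $\fg_{\ox{y}}\hookrightarrow\fg_{y}$) and $(df)^{\vee}\colon\fg_{y}^{\vee}\to\fg_{x}^{\vee}$.

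Finally, dualizing this cokernel realizes $F_{z}$ as $\ker\bigl(\fg_{\ox{y}}\oplus\fg_{x}\to\fg_{y},\ (u,v)\mapsto u-df(v)\bigr)=\{v\in\fg_{x}:df(v)\in\fg_{\ox{y}}\}$. Since $\fg_{\ox{y}}=\ker\bigl(\fg_{y}\to T_{\Y,y}^{\vee}\bigr)$ (the infinitesimal stabilizer of the cotangent vector $\ox{y}$, i.e.\ contraction with $\ox{y}$), this set equals the kernel of the composition $\fg_{x}\xrightarrow{df}\fg_{y}\xrightarrow{\cdot\,\ox{y}}T_{\Y,y}^{\vee}$, which is visibly a kernel of a map $\fg_{x}\to T_{\Y,y}^{\vee}$, as asserted. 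The main technical step is the Lagrangian identification $\mathbb{T}_{(\alpha,\beta)}\simeq\mathbb{L}_{f^{*}\Omega_{\Y}}[-1]$; alternatively one may bypass shifted-symplectic machinery by computing $\mathcal{H}^{1}(\mathbb{T}_{f^{*}\Omega_{\Y}}|_{z})$ and its two maps to $\fg_{\ox{y}}^{\vee}$ and $\fg_{\ox{x}}^{\vee}$ directly from the LES of the Cartesian square and the triangle for $\beta$, arriving at the same conclusion.
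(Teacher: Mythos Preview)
Your argument is correct, but it is organized differently from the paper's. The paper does not invoke the Lagrangian identification $\mathbb{L}_{(\alpha,\beta)}\simeq\mathbb{T}_{f^{*}\Omega_{\Y}}[1]$ as a black box; instead it writes down, for the projection $\pi\colon f^{*}\Omega_{\Y}\to\X$, two parallel triangles
\[
\pi^{*}\mathbb{L}_{\X}\oplus\pi^{*}f^{*}\mathbb{L}_{\Y}\to(\alpha,\beta)^{*}\mathbb{L}_{\Omega_{\X}\times\Omega_{\Y}}\to\pi^{*}\mathbb{T}_{\X}\oplus\pi^{*}f^{*}\mathbb{T}_{\Y},
\qquad
\pi^{*}\mathbb{L}_{\X}\to\mathbb{L}_{f^{*}\Omega_{\Y}}\to\pi^{*}f^{*}\mathbb{T}_{\Y},
\]
using the explicit description of cotangents of cotangent stacks, and takes the cone of the natural map between them to obtain directly
\[
\pi^{*}f^{*}\mathbb{L}_{\Y}\to\mathbb{L}_{(\alpha,\beta)}[-1]\to\pi^{*}\mathbb{T}_{\X}.
\]
The long exact sequence in degree $-1$ at $z$ then reads $0\to\mathcal{H}^{-2}(\mathbb{L}_{(\alpha,\beta)}|_{z})\to\fg_{x}\to T_{\Y,y}^{\vee}$, which is the assertion. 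In effect the paper \emph{proves} the identification you quote (at the level of this one triangle) by elementary cotangent-complex manipulation, and reads off the kernel description in one step. Your route instead assumes the Lagrangian identity, computes $\mathcal{H}^{1}(\mathbb{L}_{f^{*}\Omega_{\Y}}|_{z})$ from the derived fiber product triangle as a cokernel involving $\fg_{y}^{\vee},\fg_{\overline{y}}^{\vee},\fg_{x}^{\vee}$, and then dualizes and uses $\fg_{\overline{y}}=\ker(\fg_{y}\to T_{\Y,y}^{\vee})$. Both are valid; the paper's is more self-contained (no shifted-symplectic input) and avoids the extra dualization step, while yours makes the role of the Lagrangian structure explicit.
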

\begin{proof}
Let $\pi \colon f^* \Omega_{\Y} \to \X$ be the projection. 
Let $\gamma \colon f^* \mathbb{L}_{\Y} \to \mathbb{L}_{\X}$ be induced by $f$. 
We have the following commutative diagram 
\begin{align*}
    \xymatrix{
\pi^* \mathbb{L}_{\X} \oplus \pi^{*}f^* \mathbb{L}_{\Y} \ar[r] \ar[d]_-{\id+\pi^* \gamma} & 
(\alpha, \beta)^* \mathbb{L}_{\Omega_{\X}\times \Omega_{\Y}} \ar[r] \ar[d] & \pi^* \mathbb{T}_{\X}\oplus \pi^* f^* \mathbb{T}_{\Y} \ar[d]^-{\id+\pi^* \gamma^{\vee}} \\
\pi^* \mathbb{L}_{\X} \ar[r] & \mathbb{L}_{f^{*}\Omega_{\Y}} \ar[r] & \pi^* f^* \mathbb{T}_{\Y}.
    }
\end{align*}
    Here each horizontal sequence is a distinguished triangle. Considering the cones of vertical maps, we obtain 
    the distinguished triangle 
    \begin{align*}
        \pi^* f^* \mathbb{L}_{\Y} \to \mathbb{L}_{(\alpha, \beta)}[-1] \to \pi^* \mathbb{T}_{\X}. 
    \end{align*}
    Therefore we obtain the exact sequence 
    \begin{align*}
        0\to \mathcal{H}^{-2}(\mathbb{L}_{(\alpha, \beta)}|_{z}) \to \fg_x \to T_{\Y, y}^{\vee}
    \end{align*}
    which proves the lemma. 
\end{proof}

In order to give a class of morphisms whose pull-backs preserve nilpotent singular 
supports, we introduce the following definition: 
\begin{defn}\label{def:nileff}
We say that a morphism $f\colon \X \to \Y$ of stacks is \textit{nil-effective} if 
for any $x\in \X(k')$ with $y=f(x)\in \Y(k')$, the induced morphism 
$\phi \colon \fg_x \to \fg_y$ satisfies 
\begin{align*}
    \mathrm{Nilp}(\fg_x)=\phi^{-1}(\mathrm{Nilp}(\fg_y)).
\end{align*}
\end{defn}

\begin{example}
    (i) A schematic morphism is nil-effective because $\fg_x \to \fg_y$ is injective. 

    (ii) Let $G$ be a reductive group with an action of an affine scheme $Y$. 
    For a cocharacter $\lambda\colon \mathbb{G}_m \to G$, the morphism 
    $Y^{\lambda \geq 0}/G^{\lambda \geq 0} \to Y^{\lambda}/G^{\lambda}$ is nil-effective. 
\end{example}

As in (\ref{nilp:N}), let 
$\mathcal{N}_{\X} \subset \Omega_{\Omega_{\X}}[-1]$
be the closed substack consisting of nilpotent elements in the fiber of $\Omega_{\Omega_{\X}}[-1]$. We have the following lemma: 
\begin{lemma}\label{lem:nilpotent}
(i) In Proposition~\ref{prop:singss} (i), assume in addition that $f$ (hence $\alpha$) is nil-effective. 
Then we have 
\begin{align*}
    vu^{-1}(\mathcal{N}_{\Y}) \subset \mathcal{N}_{\X} \subset \Omega_{\Omega_{\X}}[-1].
\end{align*}
  (ii) In Proposition~\ref{prop:singss} (ii), we have 
\begin{align*}
    uv^{-1}(\mathcal{N}_{\X}) \subset \mathcal{N}_{\Y} \subset \Omega_{\Omega_{\X}}[-1].
\end{align*}  
\end{lemma}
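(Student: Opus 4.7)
The plan is to reduce both inclusions to Lie-algebraic statements at closed points, using the fiber description of $\Omega_{(\alpha,\beta)}[-2]$ given by Lemma~\ref{fib:F}. Fix a field extension $k'/k$ and a closed point $z\in f^{*}\Omega_{\Y}(k')$, with images $\overline{x}=\beta(z)\in \Omega_{\X}$ and $\overline{y}=\alpha(z)\in \Omega_{\Y}$, and bases $x=\pi_{\X}(\overline{x})$, $y=\pi_{\Y}(\overline{y})=f(x)$. Let $\psi\in T_{\Y,y}^{\vee}$ be the cotangent-vector component of $\overline{y}$, so that $\overline{x}$ corresponds to the pair $(x,f^{*}\psi)$ under the presentation of Example~\ref{exam:reg}.

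The first step is to identify the maps $u$ and $v$ at the level of classical fibers. The fibers of $\Omega_{\Omega_{\X}}[-1]\to\Omega_{\X}$ and $\Omega_{\Omega_{\Y}}[-1]\to\Omega_{\Y}$ at $\overline{x}$ and $\overline{y}$ are the stabilizer Lie algebras $\fg_{\overline{x}}=\{\xi\in\fg_x : \xi\cdot f^{*}\psi=0\}$ and $\fg_{\overline{y}}=\{\eta\in\fg_y : \eta\cdot\psi=0\}$. By Lemma~\ref{fib:F}, the fiber $F_z$ is identified with $\ker(\fg_x\to T_{\Y,y}^{\vee})$, where the map sends $\xi\mapsto\phi(\xi)\cdot\psi$, with $\phi\colon \fg_x\to\fg_y$ the Lie-algebra morphism induced by $f$. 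A diagram chase through the construction then shows that, on fibers, $v\colon F_z\to\fg_{\overline{x}}$ is the natural inclusion $\xi\mapsto\xi$---well-defined because $\xi\cdot f^{*}\psi=f^{*}(\phi(\xi)\cdot\psi)=0$ by $f$-equivariance of the moment map---whereas $u\colon F_z\to\fg_{\overline{y}}$ is the restriction of $\phi$, which lands in $\fg_{\overline{y}}$ precisely because $\phi(\xi)\cdot\psi=0$ for $\xi\in F_z$.

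Given this, both parts follow from a Lie-algebraic observation: for any morphism $G\to H$ of affine algebraic groups over a field of characteristic zero, the induced map of Lie algebras sends nilpotent elements to nilpotent elements, because unipotent elements map to unipotent elements and the exponential exchanges nilpotents and unipotents. Applied to the closed subgroup inclusion $\Aut(\overline{x})\subset \Aut(x)$ (and likewise at $y$), this in particular shows that nilpotency of an element of $\fg_{\overline{x}}$ (resp.\ $\fg_{\overline{y}}$) is equivalent to nilpotency of its image in $\fg_x$ (resp.\ $\fg_y$). For part~(i), the hypothesis that $f$ is nil-effective gives $\mathrm{Nilp}(\fg_x)=\phi^{-1}(\mathrm{Nilp}(\fg_y))$, so if $u(\xi)=\phi(\xi)$ is nilpotent in $\fg_y$ then $\xi$ is nilpotent in $\fg_x$, whence $v(\xi)=\xi\in\mathcal{N}_{\X}$. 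For part~(ii), which requires no additional hypothesis, the general observation above shows that $\phi$ already sends nilpotents to nilpotents, so if $v(\xi)=\xi\in\mathcal{N}_{\X}$ then $u(\xi)=\phi(\xi)\in\mathcal{N}_{\Y}$.

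The main obstacle will be the fiber-level identification of $u$ and $v$ in the first step: these maps are constructed abstractly from the distinguished triangle $(\alpha,\beta)^{*}\mathbb{L}_{\Omega_{\Y}\times\Omega_{\X}}\to\mathbb{L}_{f^{*}\Omega_{\Y}}\to\mathbb{L}_{(\alpha,\beta)}$ through shifts and duality, and one must carefully match them with the concrete inclusion $\fg_{\overline{x}}\hookrightarrow\fg_x$ and the Lie-algebra map $\phi$ using the same moment-map presentation that underlies the proof of Lemma~\ref{fib:F}. Once this bookkeeping is settled, the remaining nilpotency arguments are elementary.
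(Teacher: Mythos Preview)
Your proposal is correct and follows essentially the same route as the paper: identify $u|_{F_z}$ and $v|_{F_z}$ with the inclusion $F_z\subset\fg_x$ and the restriction of $\phi\colon\fg_x\to\fg_y$ respectively (exactly as the paper states, citing Lemma~\ref{fib:F}), then conclude via nil-effectiveness for (i) and the forward preservation of nilpotents for (ii). You spell out a couple of points the paper leaves implicit---why $v$ lands in $\fg_{\overline{x}}$, and why nilpotency in $\fg_{\overline{x}}$ agrees with nilpotency in $\fg_x$---but the argument is the same.
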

\begin{proof}
For $z\in f^*\Omega_{\Y}(k')$, let $F_z$ be as in Lemma~\ref{fib:F}. 
    The map 
    \begin{align*}
        F_z \stackrel{v|_{F_z}}{\to} \mathfrak{g}_{\overline{x}} \subset \fg_x
    \end{align*}
    is given by the natural inclusion $F_z \subset \fg_x$ by Lemma~\ref{fib:F}. 
    In particular, it is injective. 
    
    The map 
     \begin{align*}
        F_z \stackrel{u|_{F_z}}{\to} \mathfrak{g}_{\overline{y}} \subset \fg_y
    \end{align*}
    is given by the map $\fg_x \to \fg_y$ induced by $f$ restricted to $F_z$.  Therefore, if $f$ is nil-effective, we have 
    \begin{align*}
        v|_{F_z}u|_{F_z}^{-1}(\mathrm{Nilp}(\fg_{\overline{y}})) \subset \mathrm{Nilp}(\fg_{\overline{x}}).
    \end{align*}
    Therefore (i) holds. The proof of (ii) is similar. 
\end{proof}

By combining Proposition~\ref{prop:singss} and Lemma~\ref{lem:nilpotent},
we obtain the following corollary: 
\begin{cor}\label{cor:nilp2}
(i) In Proposition~\ref{prop:pback}, assume in addition that $f$ (hence $\alpha$) is nil-effective. 
Then the functor (\ref{funct:pback}) restricts to a functor  
    \begin{align}\notag f^{\Omega!} \colon 
    \LL_{\mathcal{N}}(\Omega_{\Y})_{\delta\otimes \omega_{\Y}^{1/2}} \to \LL_{\mathcal{N}}(\Omega_{\X})_{f^{\ast}\delta \otimes \omega_{\X}^{1/2}}  
\end{align} 

(ii) In Theorem~\ref{thm:proj}, the functor (\ref{funct:Lpush}) restricts to a functor  
     \begin{align}\notag
        f^{\Omega}_{*} \colon 
    \LL_{\mathcal{N}}(\Omega_{\X})_{f^{\ast}\delta\otimes \omega_{\X}^{1/2}} \to \LL_{\mathcal{N}}(\Omega_{\Y})_{\delta \otimes \omega_{\Y}^{1/2}}. 
    \end{align}
\end{cor}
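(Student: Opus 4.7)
The two assertions have parallel structures, and in each case the strategy is to combine the weight‐condition results already proved (Proposition~\ref{prop:pback} for (i) and Theorem~\ref{thm:proj} for (ii)) with the singular‐support bounds of Proposition~\ref{prop:singss} and the nilpotence preservation of Lemma~\ref{lem:nilpotent}. In other words, the functorial properties of $\LL$ and of the nilpotent singular‐support condition $\mathcal{N}$ are already decoupled in the literature, so all one really needs to do is check that the correspondence
$\Omega_{\X}\xleftarrow{\beta}f^{*}\Omega_{\Y}\xrightarrow{\alpha}\Omega_{\Y}$
satisfies the hypotheses of the two cited statements in each of the two situations, and then string the inclusions together.

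For part (i), take $\mathcal{E}\in\LL_{\mathcal{N}}(\Omega_{\Y})_{\delta\otimes\omega_{\Y}^{1/2}}$. Proposition~\ref{prop:pback} already gives
$f^{\Omega!}\mathcal{E}=\beta_{*}\alpha^{!}\mathcal{E}\in \LL(\Omega_{\X})_{f^{*}\delta\otimes\omega_{\X}^{1/2}}$,
so only the singular‐support condition has to be verified. Because $f$ is smooth, $\alpha$ is smooth (hence quasi‐smooth) as the base change of $f$ along $\pi_{\Y}\colon\Omega_{\Y}\to\Y$, and $\beta$ is a closed immersion (hence proper) by Lemma~\ref{lem:beta}(ii). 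Proposition~\ref{prop:singss}(i) therefore applies and gives
\[
\mathrm{Supp}^{\mathrm{AG}}(f^{\Omega!}\mathcal{E})\subset vu^{-1}(\mathrm{Supp}^{\mathrm{AG}}(\mathcal{E}))\subset vu^{-1}(\mathcal{N}_{\Y}),
\]
and Lemma~\ref{lem:nilpotent}(i), whose hypothesis is nil‐effectiveness of $f$, shows that $vu^{-1}(\mathcal{N}_{\Y})\subset\mathcal{N}_{\X}$. Thus $f^{\Omega!}\mathcal{E}\in\LL_{\mathcal{N}}(\Omega_{\X})_{f^{*}\delta\otimes\omega_{\X}^{1/2}}$, as required.

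For part (ii), start with $\mathcal{E}\in\LL_{\mathcal{N}}(\Omega_{\X})_{f^{*}\delta\otimes\omega_{\X}^{1/2}}$. Theorem~\ref{thm:proj} gives $f^{\Omega}_{*}\mathcal{E}=\alpha_{*}\beta^{*}\mathcal{E}\in\LL(\Omega_{\Y})_{\delta\otimes\omega_{\Y}^{1/2}}$. Because $f$ is projective it is in particular schematic, so Lemma~\ref{lem:beta}(i) makes $\beta$ quasi‐smooth; and $\alpha$ is projective (hence proper) as the base change of the projective morphism $f$. Consequently Proposition~\ref{prop:singss}(ii) yields
\[
\mathrm{Supp}^{\mathrm{AG}}(f^{\Omega}_{*}\mathcal{E})\subset uv^{-1}(\mathrm{Supp}^{\mathrm{AG}}(\mathcal{E}))\subset uv^{-1}(\mathcal{N}_{\X}),
\]
and Lemma~\ref{lem:nilpotent}(ii) (which, notably, does \emph{not} require nil‐effectiveness, since any Lie‐algebra homomorphism sends nilpotent elements to nilpotent elements) places this into $\mathcal{N}_{\Y}$.

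Since the heavy lifting is done in Proposition~\ref{prop:singss} and Lemma~\ref{lem:nilpotent}, no substantial obstacle remains; the assembly is essentially formal. The only mildly delicate point worth recording in the write‐up is the parenthetical \emph{``hence $\alpha$''} in the statement of part~(i): one should remark that, since $\alpha$ is a representable base change of $f$ and the relevant stabilizer maps on $f^{*}\Omega_{\Y}$ factor through those on $\X$, nil‐effectiveness indeed transfers from $f$ to $\alpha$, so that Lemma~\ref{lem:nilpotent}(i) applies in the form we need.
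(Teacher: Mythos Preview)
Your proposal is correct and takes essentially the same approach as the paper: the paper's proof is the single sentence ``By combining Proposition~\ref{prop:singss} and Lemma~\ref{lem:nilpotent}, we obtain the following corollary,'' and you have simply spelled out this combination in detail, verifying the quasi-smoothness/properness hypotheses on $\alpha$ and $\beta$ in each case and tracking which part of Lemma~\ref{lem:nilpotent} requires nil-effectiveness.
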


\section{Semiorthogonal decomposition of the magic category}\label{sec:magic}
In this section, we prove a result similar to Theorem~\ref{conj:intro} for magic categories for smooth stacks, namely 
construct their semiorthogonal decompositions associated with $\Theta$-stratifications. 
The magic category is a toy version of limit categories, and generalizes non-commutative 
crepant resolutions~\cite{SvdB} and magic window subcategories~\cite{hls}.
The semiorthogonal decomposition constructed in this section are used to prove a local model 
of the semiorthogonal decomposition for the limit category for moduli stacks of Higgs bundles, which is one of the steps in the proof of Theorem~\ref{thm:mainLG} (discussed in Subsection~\ref{subsec:proofofthm}).

\subsection{\texorpdfstring{Windows for a $\Theta$-stratum}{Windows for a Theta-stratum}}\label{subsec:thetast}
Let $\mathcal{Y}$ be a smooth QCA stack.  
Let \[\Theta:=\mathbb{A}^1/\mathbb{G}_m,\] where 
$\mathbb{G}_m$ acts on $\mathbb{A}^1$ by weight one. 
Following~\cite{halpinstab}, a \textit{$\Theta$-stratum} is defined to be 
an open and 
closed substack 
$\mathcal{S} \subset \mathrm{Map}(\Theta, \mathcal{Y})$
such that the composition 
\begin{align*}
    \mathcal{S} \subset \mathrm{Map}(\Theta, \mathcal{Y}) 
   \stackrel{\mathrm{ev_1}}{\to} 
    \mathcal{Y}
\end{align*}
is a closed immersion.  
Here, $\mathrm{ev}_1$ is the evaluation morphism at 
$1 \in \mathbb{A}^1/\mathbb{G}_m$. 
For a $\Theta$-stratum $\mathcal{S}$, we use the 
same notation $\mathcal{S} \subset \mathcal{Y}$ 
for the image $\mathrm{ev}_1(\mathcal{S})$. 

The \textit{center} of a $\Theta$-stratum $\mathcal{S}$ is 
the unique open and closed substack $\mathcal{Z}\subset \mathrm{Map}(\bgm, \mathcal{Y})$
such that the maps 
\begin{align*}\bgm \hookrightarrow \mathbb{A}^1/\mathbb{G}_m \twoheadrightarrow \bgm
\end{align*}
induces maps $\mathcal{Z}\to\mathcal{S}\to\mathcal{Z}$. 
We have the following diagram 
\begin{align}\label{dia:SYZ}
	\xymatrix{\mathcal{S}\ar[d]_-{q} \inclusion^-{p} & \mathcal{Y} \\
	\mathcal{Z}\ar[ur]_-{r} \ar@/^15pt/[u] &	
}
	\end{align}
where $p$ is the evaluation at $1\in \mathbb{A}/\mathbb{G}_m$ and $q$ is the evaluation at $0\in \mathbb{A}^1/\mathbb{G}_m$. 
The stacks $\mathcal{S}$ and $\mathcal{Z}$ are smooth, see Lemma~\ref{em:sym} below. 

There is an orthogonal weight decomposition
	\begin{align}\label{decom:Z}
		\Coh(\mathcal{Z})=\bigoplus_{i\in \mathbb{Z}}\Coh(\mathcal{Z})_i
		\end{align}
	with respect to the canonical $\bgm$-action on $\mathcal{Z}$. 
We denote by $\Upsilon$ the functor 
	\begin{align}\label{funct:up}
		\Upsilon :=p_{\ast}q^{\ast} \colon \Coh(\mathcal{Z}) \to \Coh(\mathcal{Y}). 
		\end{align}
For $A \in \Coh(\mathcal{Z})$, we denote by $\wt(A)\subset \mathbb{Z}$ the set of non-zero $\mathbb{G}_m$-weights under the decomposition (\ref{decom:Z}).

For each $k\in \mathbb{R}$, we define 
\begin{align*}
\mathcal{W}(k) \subset \Coh(\Y)
\end{align*}
to be the full subcategory of objects $\mathcal{E}$ such that the set of weights 
of $\mathcal{E}|_{\mathcal{Z}}$ satisfies 
\begin{align*}
    \wt(\mathcal{E}|_{\mathcal{Z}}) \subset 
    [k, k+n), \mbox{ where } \ n:=\wt \det N_{\mathcal{S}/\mathcal{Y}}^{\vee}|_{\mathcal{Z}}.
\end{align*}
We will use the following version of window theorem~\cite{MR3895631, halp, HalpK32}
(see~\cite[Theorem~3.3.1]{HalpK32} for example):
\begin{thm}\emph{(\cite{MR3895631, halp, HalpK32})}\label{thm:window}
Let $k_0=\lfloor k \rfloor \in \mathbb{Z}$ be the round-down of $k$. 
The functor (\ref{funct:up}) is fully-faithful on each $\Coh(\mathcal{Z})_i$, 
and there is a semiorthogonal decomposition 
    \begin{align*}
    &\Coh(\Y)=\\
    &\langle\ldots, \Upsilon \Coh(\mathcal{Z})_{k_0-2}, \Upsilon \Coh(\mathcal{Z})_{k_0-1}, 
    \mathcal{W}(k), \Upsilon \Coh(\mathcal{Z})_{k_0}, \Upsilon \Coh(\mathcal{Z})_{k_0+1}, \ldots\rangle.
    \end{align*}
    Moreover, the following composition functor is an equivalence:
    \begin{align*}
        \mathcal{W}(k) \subset \Coh(\Y) \to \Coh(\Y^{\circ}).
    \end{align*}
\end{thm}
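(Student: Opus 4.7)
The plan is to follow the strategy of Halpern--Leistner, reducing to a local GIT model and then performing explicit weight computations on $\mathcal{Z}$. By the local structure theorem for stacks with a $\Theta$-stratum (Alper--Hall--Rydh, generalizing Luna's slice theorem), étale-locally around $\mathcal{Z}$ the triple $(\mathcal{Z}, \mathcal{S}, \mathcal{Y})$ is of the form
\begin{align*}
\mathcal{Z} = Y^{\lambda}/L_\lambda, \quad \mathcal{S} = Y^{\lambda \geq 0}/P_\lambda, \quad \mathcal{Y} = Y/G
\end{align*}
for a smooth $G$-scheme $Y$ and a cocharacter $\lambda \colon \mathbb{G}_m \to G$, with $P_\lambda, L_\lambda$ the associated attracting parabolic and Levi. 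In this local model, the theorem is the window theorem of~\cite{MR3895631, halp, HalpK32}. By étale descent for $\Coh$, the global statement reduces to the local one, provided the constructed semiorthogonal summands glue, which they do because they are characterized by the intrinsic weight condition on $\mathcal{Z}$.

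First, I would verify that $\Upsilon = p_\ast q^\ast$ is fully faithful on each $\Coh(\mathcal{Z})_i$. The key input is that $q$ is an affine bundle with strictly positive $\mathbb{G}_m$-weights on its relative tangent bundle (so $q^\ast$ is fully faithful), and that $p$ is a closed immersion whose conormal bundle $N_{\mathcal{S}/\mathcal{Y}}^\vee|_{\mathcal{Z}}$ has strictly positive weights. One then computes $\Hom(\Upsilon \mathcal{F}, \Upsilon \mathcal{G})$ via adjunction and the Koszul resolution of $p_\ast \mathcal{O}_{\mathcal{S}}$ in terms of $\bigwedge^\bullet N_{\mathcal{S}/\mathcal{Y}}^\vee$. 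After pushing forward along $q$, only the weight-zero piece survives when $\mathcal{F}, \mathcal{G}$ have the same weight $i$, yielding $\Hom_{\mathcal{Z}}(\mathcal{F},\mathcal{G})$. The same weight analysis gives semiorthogonality of $\Upsilon\Coh(\mathcal{Z})_j$ against $\Upsilon\Coh(\mathcal{Z})_i$ for $j > i$, since the Koszul differentials shift weights only by $0, 1, \dots, n$ and the projection onto weight $0$ under $q_\ast$ then vanishes.

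Second, the semiorthogonality involving the window subcategory $\mathcal{W}(k)$ follows from the same Koszul/weight calculation: for $\mathcal{E} \in \mathcal{W}(k)$ and $\mathcal{F}_i \in \Coh(\mathcal{Z})_i$, the $\Hom$ can be rewritten using adjunction so that it reduces to pairing the weight support $[k, k+n)$ of $\mathcal{E}|_{\mathcal{Z}}$ against a range shifted by $0, \ldots, n$; the window width $n = \wt \det N_{\mathcal{S}/\mathcal{Y}}^\vee|_{\mathcal{Z}}$ is chosen precisely so these ranges fail to overlap for $i < k_0$ or $i \geq k_0$ in the appropriate direction. For generation, one argues inductively: given $\mathcal{E} \in \Coh(\mathcal{Y})$, its restriction $\mathcal{E}|_{\mathcal{Z}}$ has extremal weights that can be killed by cones against suitable $\Upsilon \mathcal{F}_i$, iteratively bringing all weights into $[k, k+n)$ on the complement of the $\Upsilon$ pieces; the remainder then lies in $\mathcal{W}(k)$ after accounting for objects supported on $\mathcal{S}$, which by Koszul are built from $\Upsilon \Coh(\mathcal{Z})_i$.

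Finally, the equivalence $\mathcal{W}(k) \xrightarrow{\sim} \Coh(\mathcal{Y}^\circ)$ follows by combining the semiorthogonal decomposition with the recollement for the open immersion $\mathcal{Y}^\circ \hookrightarrow \mathcal{Y}$: the pieces $\Upsilon\Coh(\mathcal{Z})_i$ are exactly the kernel of restriction to $\mathcal{Y}^\circ$, so $\mathcal{W}(k)$ is a complement of $\Coh_{\mathcal{S}}(\mathcal{Y})$ and thus maps isomorphically to the quotient $\Coh(\mathcal{Y}^\circ)$. The main obstacle I anticipate is bookkeeping of the infinite semiorthogonal decomposition — ensuring the filtration terminates in an appropriate completed sense for coherent (as opposed to just perfect or bounded) sheaves, and verifying that the local-to-global descent respects the intrinsic weight definition of $\mathcal{W}(k)$ without needing an ambient quotient presentation.
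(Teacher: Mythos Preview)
The paper does not prove this theorem; it is quoted as a known result from the literature (Ballard--Favero--Katzarkov and Halpern--Leistner), with a specific pointer to \cite[Theorem~3.3.1]{HalpK32}. So there is no ``paper's own proof'' to compare against.

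Your sketch is a reasonable outline of the standard approach in those references: reduce to a local quotient model, use the Koszul resolution of $p_\ast\mathcal{O}_{\mathcal{S}}$ and the strict positivity of the conormal weights to establish full faithfulness and semiorthogonality, then run the weight-truncation argument for generation. One small point: the reduction via Alper--Hall--Rydh is not really needed in the form you state it, since Halpern--Leistner's argument in \cite{HalpK32} works intrinsically using the baric structure on $\Coh(\mathcal{S})$ (the semiorthogonal decomposition $\Coh(\mathcal{S}) = \langle \ldots, q^\ast\Coh(\mathcal{Z})_i, \ldots\rangle$ coming from the $\Theta$-action), without needing a global quotient presentation. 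This is exactly what the paper invokes later in the proof of Lemma~\ref{em:sym}. So your local-to-global gluing concern is largely a non-issue in the intrinsic formulation. Your concern about termination of the filtration for coherent sheaves is legitimate and is handled in \cite{HalpK32} by the boundedness of the weight amplitude of $\mathcal{E}|_{\mathcal{Z}}$ for $\mathcal{E}$ coherent.
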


\subsection{Conjectural semiorthogonal decomposition of the magic category}
In what follows, we assume that $\Y$ is a smooth QCA symmetric stack, see Definition~\ref{def:sstack}. For $\delta \in \mathrm{Pic}(\Y)_{\mathbb{R}}$, 
recall the magic category in Definition~\ref{defn:QBY}:
\begin{align*}
    \MM(\Y)_{\delta} \subset \Coh(\Y).
\end{align*}
In this subsection, we propose conjectural semiorthogonal decompositions
of the magic category along a $\Theta$-stratum. Let $\mathcal{S}$ and $\mathcal{Z}$ be 
the stacks as in the diagram (\ref{dia:SYZ}). We first prepare some lemmas: 

\begin{lemma}\label{em:sym}
	The stacks $\mathcal{S}$ and $\mathcal{Z}$ are smooth, and 
	$\mathcal{Z}$ is symmetric. Moreover, we have the identity
	\begin{align}\label{sym:N}
		[N_{\mathcal{S}/\mathcal{Y}}]=[\mathbb{L}_{\mathcal{S}/\mathcal{Z}}] \in K(\mathcal{S}). 
		\end{align}
        Here $N_{\mathcal{S}/\mathcal{Y}}$ is the normal bundle of $\mathcal{S}$ in $\mathcal{Y}$. 
	\end{lemma}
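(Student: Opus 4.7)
The plan is to realize all four of $\mathbb{T}_{\mathcal{Z}}$, $\mathbb{T}_{\mathcal{S}}|_{\mathcal{Z}}$, $N_{\mathcal{S}/\mathcal{Y}}|_{\mathcal{Z}}$ and $\mathbb{T}_{\mathcal{S}/\mathcal{Z}}|_{\mathcal{Z}}$ as explicit weight pieces of the pull-back $r^{\ast}\mathbb{T}_{\mathcal{Y}}$ under the tautological cocharacter action along $\mathcal{Z}$, and then to transport the identity from $K(\mathcal{Z})$ to $K(\mathcal{S})$ via $q$. At a $k'$-point $\sigma_0 \colon (B\mathbb{G}_m)_{k'}\to \mathcal{Z}$, standard descriptions of the tangent complexes of mapping stacks (see e.g.~\cite{halp, HalpTheta}) yield
\[
\mathbb{T}_{\mathcal{Z}}|_{\sigma_0}\simeq (\sigma_0^{\ast}\mathbb{T}_{\mathcal{Y}})^{=0}, \qquad
\mathbb{T}_{\mathcal{S}}|_{\sigma_0}\simeq (\sigma_0^{\ast}\mathbb{T}_{\mathcal{Y}})^{\geq 0}.
\]
Since $\mathcal{Y}$ is smooth, $\mathbb{T}_{\mathcal{Y}}$ is perfect of Tor-amplitude $[0,1]$; weight truncation preserves this amplitude, so $\mathcal{S}$ and $\mathcal{Z}$ are smooth. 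The triangles
\[
\mathbb{T}_{\mathcal{S}}|_{\mathcal{Z}}\to r^{\ast}\mathbb{T}_{\mathcal{Y}}\to N_{\mathcal{S}/\mathcal{Y}}|_{\mathcal{Z}}, \qquad
\mathbb{T}_{\mathcal{Z}}\to \mathbb{T}_{\mathcal{S}}|_{\mathcal{Z}}\to \mathbb{T}_{\mathcal{S}/\mathcal{Z}}|_{\mathcal{Z}}
\]
(the first because $p$ is a closed immersion of smooth stacks) then identify $N_{\mathcal{S}/\mathcal{Y}}|_{\mathcal{Z}}\simeq (r^{\ast}\mathbb{T}_{\mathcal{Y}})^{<0}$ and $\mathbb{T}_{\mathcal{S}/\mathcal{Z}}|_{\mathcal{Z}}\simeq (r^{\ast}\mathbb{T}_{\mathcal{Y}})^{>0}$ as honest vector bundles; in particular $q\colon\mathcal{S}\to\mathcal{Z}$ is a vector bundle and $q^{\ast}\colon K(\mathcal{Z})\stackrel{\sim}{\to} K(\mathcal{S})$ is an isomorphism.

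Next I would invoke the symmetric hypothesis. For any $\mathbb{G}_m$-equivariant perfect complex $E$ on $\mathcal{Z}$, dualizing reverses weights: $(E^{\vee})^{w}\simeq (E^{-w})^{\vee}$. The symmetric condition, combined with the canonical $\mathbb{G}_m$-equivariant duality $\mathbb{T}_{\mathcal{Y}}^{\vee}\simeq\mathbb{L}_{\mathcal{Y}}$, is then used to give $[r^{\ast}\mathbb{T}_{\mathcal{Y}}]=[(r^{\ast}\mathbb{T}_{\mathcal{Y}})^{\vee}]$ as a $\mathbb{G}_m$-equivariant identity in $K(\mathcal{Z})$, so that it holds weight by weight. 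The weight-zero part reads $[\mathbb{T}_{\mathcal{Z}}]=[\mathbb{L}_{\mathcal{Z}}]$, giving the symmetry of $\mathcal{Z}$; the weight-$(<0)$ part reads
\[
[N_{\mathcal{S}/\mathcal{Y}}|_{\mathcal{Z}}] = [(r^{\ast}\mathbb{T}_{\mathcal{Y}})^{<0}] = [((r^{\ast}\mathbb{T}_{\mathcal{Y}})^{>0})^{\vee}] = [(\mathbb{T}_{\mathcal{S}/\mathcal{Z}}|_{\mathcal{Z}})^{\vee}] = [\mathbb{L}_{\mathcal{S}/\mathcal{Z}}|_{\mathcal{Z}}].
\]
Applying $q^{\ast}$ delivers the identity~(\ref{sym:N}) in $K(\mathcal{S})$.

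The main obstacle is the promotion of the non-equivariant symmetric condition on $\mathcal{Y}$ (Definition~\ref{def:sstack}) to an identity of $\mathbb{G}_m$-equivariant classes on $\mathcal{Z}$; without it, one can only compare the total weight-$\neq 0$ contribution, not each sign piece individually. For the symmetric stacks that appear in this paper (principally quotients $Y/G$ with $Y$ a self-dual $G$-representation, Example~\ref{exam:quiver}), the non-equivariant self-duality of $\mathbb{L}_{\mathcal{Y}}$ is induced by a canonical $G$-equivariant perfect pairing, which automatically restricts to a $\mathbb{G}_m$-equivariant pairing along every cocharacter into $\mathcal{Z}$, so the weight-by-weight comparison is unproblematic. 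The remainder is a sign-bookkeeping verification that the weight conventions for $\Theta=\mathbb{A}^1/\mathbb{G}_m$ align with the formulas above.
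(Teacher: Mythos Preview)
Your approach is essentially the same as the paper's: both identify the relevant tangent/cotangent pieces as weight truncations of $\mathbb{L}_{\mathcal{Y}}$ (or $\mathbb{T}_{\mathcal{Y}}$) along the canonical $\mathbb{G}_m$-action, and both extract the identity~(\ref{sym:N}) from the symmetric hypothesis applied weight by weight. The only cosmetic difference is that you work on $\mathcal{Z}$ and then transport to $\mathcal{S}$ via $q^{\ast}$, whereas the paper works directly on $\mathcal{S}$ using the semiorthogonal decomposition $\Coh(\mathcal{S})=\langle q^{\ast}\Coh(\mathcal{Z})_i : i\in\mathbb{Z}\rangle$ and the associated $\beta$-truncation triangles (see~\cite[Proposition~1.1.2, Lemma~1.5.6]{HalpK32}), which it identifies with the conormal and relative cotangent sequences.

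The obstacle you flag, however, is not a genuine one, and you need not restrict to the self-dual-representation examples to resolve it. The point is that the orthogonal decomposition~(\ref{decom:Z}) gives a direct-sum splitting $K(\mathcal{Z})=\bigoplus_i K(\Coh(\mathcal{Z})_i)$ of ordinary (non-equivariant) $K$-theory. Since $r^{\ast}\mathbb{L}_{\mathcal{Y}}$ is a perfect complex on $\mathcal{Z}$, its $K$-class decomposes as a finite sum $\sum_i a_i$ with $a_i\in K(\Coh(\mathcal{Z})_i)$, and dualizing sends $K(\Coh(\mathcal{Z})_i)$ to $K(\Coh(\mathcal{Z})_{-i})$. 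The symmetric hypothesis on $\mathcal{Y}$, pulled back along $r$, is the non-equivariant equality $\sum_i a_i=\sum_i a_{-i}^{\vee}$ in $K(\mathcal{Z})$; because the splitting is direct, this forces $a_i=a_{-i}^{\vee}$ for every $i$. That is exactly the weight-by-weight comparison you need, with no equivariant lift required. The paper runs the identical argument on $\mathcal{S}$ via the induced semiorthogonal decomposition of $\Coh(\mathcal{S})$, which likewise splits $K(\mathcal{S})$ as a direct sum.
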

\begin{proof}
	There is a semiorthogonal decomposition 
	\begin{align}\label{sod:S}
		\Coh(\mathcal{S})=\langle \cdots, \Coh(\mathcal{S})_{i-1}, \Coh(\mathcal{S})_{i}, \Coh(\mathcal{S})_{i+1}, \cdots \rangle
		\end{align}
	such that we have the equivalence 
    $q^{\ast} \colon \Coh(\mathcal{Z})_i \stackrel{\sim}{\to} \Coh(\mathcal{S})_i$, see~\cite[Proposition~1.1.2, Lemma~1.5.6]{HalpK32}. 
	In particular for any $E \in \Coh(\mathcal{S})$ and $i\in \mathbb{Z}$ there is a unique 
	distinguished triangle 
	\begin{align}\label{triangle:beta}
		\beta_{>i}(E) \to E \to \beta_{\leq i}(E)
		\end{align}
	such that we have 
    \begin{align*}\beta_{>i}(E) \in \langle \Coh(\mathcal{S})_j : j>i\rangle, \ 
	\beta_{\leq i}(E) \in \langle \Coh(\mathcal{S})_j : j\leq i\rangle.
    \end{align*}
    
		By~\cite[Lemma~1.3.2, Lemma~1.5.5]{HalpK32}, the distinguished triangles 
	\begin{align*}
		\beta_{>0}(\mathbb{L}_{\mathcal{Y}}|_{\mathcal{S}}) \to 
		\mathbb{L}_{\mathcal{Y}}|_{\mathcal{S}} \to \beta_{\leq 0}(\mathbb{L}_{\mathcal{Y}}|_{\mathcal{S}}), \ 
		\beta_0(\mathbb{L}_{\mathcal{Y}}|_{\mathcal{S}}) \to 
		\beta_{\leq 0}(\mathbb{L}_{\mathcal{Y}}|_{\mathcal{S}}) \to \beta_{<0}(\mathbb{L}_{\mathcal{Y}}|_{\mathcal{S}})
		\end{align*}
	are identified with 
	\begin{align}\label{tri:NL}
		N_{\mathcal{S}/\mathcal{Y}}^{\vee} \to \mathbb{L}_{\mathcal{Y}}|_{\mathcal{S}} \to \mathbb{L}_{\mathcal{S}}, \ 
		q^{\ast}\mathbb{L}_{\mathcal{Z}} \to \mathbb{L}_{\mathcal{S}} \to \mathbb{L}_{\mathcal{S}/\mathcal{Z}}
		\end{align}
	respectively. 
    In particular, the cotangent complexes $\mathbb{L}_{\mathcal{S}}$ and $\mathbb{L}_{\mathcal{Z}}$ 
    are perfect with cohomological amplitude $[0, 1]$, hence $\mathcal{S}$ and $\mathcal{Z}$ 
    are smooth. 
    
    By the symmetric condition of $\mathcal{Y}$ 
    and the uniqueness of 
	the triangle (\ref{triangle:beta}), we have 
	\begin{align*}
		[\mathbb{L}_{\mathcal{S}/\mathcal{Z}}]=[\beta_{<0}(\mathbb{L}_{\mathcal{Y}}|_{\mathcal{S}})]=[\beta_{>0}(\mathbb{L}_{\mathcal{Y}}|_{\mathcal{S}})]^{\vee}=
		[N_{\mathcal{S}/\mathcal{Y}}]
		\end{align*}
	in $K(\mathcal{S})$. Similarly $\beta_0(\mathbb{L}_{\mathcal{Y}}|_{\mathcal{S}})$ is self-dual in $K(\mathcal{S})$, which implies that 
	$[\mathbb{L}_{\mathcal{Z}}]=[\mathbb{L}_{\mathcal{Z}}^{\vee}]$ in $K(\mathcal{Z})$, hence $\mathcal{Z}$ is symmetric. 
	\end{proof}

\begin{prop}\label{prop:functUp}
Let $\delta' \in \mathrm{Pic}(\zZ)_{\mathbb{R}}$ be given by 
\begin{align*}
		\delta'=\left(\delta \otimes (\det N_{\mathcal{S}/\mathcal{Y}})^{1/2}\right)|_{\mathcal{Z}}.
		\end{align*}
Then an object $A \in \Coh(\zZ)$ satisfies $\Upsilon(A) \in \MM(\Y)_{\delta}$ if and only if 
  $A \in \MM(\zZ)_{\delta'}$. 
  In particular, the functor (\ref{funct:up})
	restricts to the fully-faithful functor 
	\begin{align}\label{restrict:Up}
		\Upsilon \colon \MM(\mathcal{Z})_{\delta'} \hookrightarrow  \MM(\mathcal{Y})_{\delta}. 
		\end{align}
		\end{prop}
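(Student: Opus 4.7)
The plan is as follows. For each map $\nu\colon (B\mathbb{G}_m)_{k'}\to\Y$, I will compute the $\mathbb{G}_m$-weights of $\nu^*\Upsilon(A)$ and match them against the magic-window conditions on each side. The shift $\delta'-\delta|_{\mathcal{Z}}=\tfrac12 \det N_{\mathcal{S}/\Y}|_{\mathcal{Z}}$ will turn out to be exactly the correction needed to make the two conditions equivalent; this correction will come from the derived Koszul structure of the closed immersion $p$ together with the symmetry identity $[N_{\mathcal{S}/\Y}]=[\mathbb{L}_{\mathcal{S}/\mathcal{Z}}]$ from Lemma~\ref{em:sym}.

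First I reduce to $\nu$ factoring through $\mathcal{S}$: since $\Upsilon(A)=p_{*}q^{*}A$ is supported on $\mathcal{S}$, the weight condition is vacuous whenever $\nu$ does not factor through $\mathcal{S}$. Otherwise, write $\nu=p\circ\sigma$ for some $\sigma\colon (B\mathbb{G}_m)_{k'}\to\mathcal{S}$, and set $\tilde\nu:=q\circ\sigma\colon (B\mathbb{G}_m)_{k'}\to\mathcal{Z}$; every $\tilde\nu$ arises in this way, e.g.\ from $\sigma=i\circ\tilde\nu$ where $i\colon\mathcal{Z}\hookrightarrow\mathcal{S}$ is the zero section. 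Derived base change for the regular closed immersion $p$, using the Koszul self-intersection $p^*p_{*}\mathcal{O}_{\mathcal{S}}\simeq\bigoplus_{k}\bigwedge^{k}N^{\vee}_{\mathcal{S}/\Y}[k]$, gives
\[
\nu^*\Upsilon(A)\ \simeq\ \tilde\nu^*A\otimes\bigoplus_{k\geq 0}\bigwedge^{k}V\,[k],\qquad V:=\sigma^*N^{\vee}_{\mathcal{S}/\Y}.
\]
The $\mathbb{G}_m$-weights of $\bigoplus_{k}\bigwedge^{k}V$ are precisely the subset-sums of the weights of $V$; they range over an interval whose extremes $c_1(V^{<0})$ and $c_1(V^{>0})$ are both attained (by summing over all negative, resp.\ all positive, weights).

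Next, from the triangles~\eqref{tri:NL} together with the K-theoretic identity $[N_{\mathcal{S}/\Y}]=[\mathbb{L}_{\mathcal{S}/\mathcal{Z}}]$, one obtains in $K((B\mathbb{G}_m)_{k'})$ that
\[
n_{\Y,\nu}\;=\;n_{\mathcal{Z},\tilde\nu}+c_1(V^{>0})-c_1(V^{<0}).
\]
By the semi-orthogonal decomposition of $\Coh(\mathcal{S})$ recalled in the proof of Lemma~\ref{em:sym}, every line bundle on $\mathcal{S}$ lies in a pure weight piece $\Coh(\mathcal{S})_{i}$ and hence is $q^*$-pulled back from $\mathcal{Z}$; in particular $\delta|_{\mathcal{S}}=q^*(\delta|_{\mathcal{Z}})$ and $\det N_{\mathcal{S}/\Y}=q^*(\det N_{\mathcal{S}/\Y}|_{\mathcal{Z}})$. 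Consequently both $\langle\delta,\nu\rangle=\langle\delta|_{\mathcal{Z}},\tilde\nu\rangle$ and $c_1(\sigma^*\det N_{\mathcal{S}/\Y})=-\bigl(c_1(V^{>0})+c_1(V^{<0})\bigr)$ depend only on $\tilde\nu$, yielding the identity
\[
\langle\delta',\tilde\nu\rangle\;=\;\langle\delta,\nu\rangle-\tfrac12\bigl(c_1(V^{>0})+c_1(V^{<0})\bigr).
\]

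Finally, substituting, the condition $\wt(\nu^*\Upsilon(A))\subset [-n_{\Y,\nu}/2,n_{\Y,\nu}/2]+\langle\delta,\nu\rangle$ splits into one constraint on $\wt(\tilde\nu^*A)$ per weight $w$ of $\bigoplus_{k}\bigwedge^{k}V$. Intersecting over all such $w$—using that the extremes $c_1(V^{<0})$ and $c_1(V^{>0})$ are realised—the constraint collapses, after a direct calculation, to
\[
\wt(\tilde\nu^*A)\subset \bigl[-n_{\mathcal{Z},\tilde\nu}/2,\,n_{\mathcal{Z},\tilde\nu}/2\bigr]+\langle\delta',\tilde\nu\rangle,
\]
which is exactly the magic-window condition for $\MM(\mathcal{Z})_{\delta'}$. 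The same calculation gives both implications at once: the forward direction since the intersected range is the smallest, and the reverse direction since it is also tight. The fully-faithful statement~\eqref{restrict:Up} then follows from Theorem~\ref{thm:window}, which already gives full faithfulness of $\Upsilon$ on each $\Coh(\mathcal{Z})_{i}$; the semi-orthogonal decomposition there kills $\Hom$ between different weight pieces, so $\Upsilon$ is fully faithful on the whole of $\MM(\mathcal{Z})_{\delta'}$. The main delicate point will be the Picard identification $q^*\colon \mathrm{Pic}(\mathcal{Z})_{\mathbb{R}}\cong\mathrm{Pic}(\mathcal{S})_{\mathbb{R}}$ which makes $c_1(\sigma^*\det N_{\mathcal{S}/\Y})$ and $\langle\delta,\nu\rangle$ independent of the choice of lift $\sigma$ of $\tilde\nu$; once this is carefully established, the rest is bookkeeping with weights.
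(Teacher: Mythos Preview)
Your proof is essentially the paper's: both reduce to $\nu$ landing in $\mathcal{S}$, compute $\nu^*\Upsilon(A)$ via the Koszul filtration of $p^*p_*$, and match the two window conditions using the identity $[N_{\mathcal{S}/\Y}]=[\mathbb{L}_{\mathcal{S}/\mathcal{Z}}]$ from Lemma~\ref{em:sym}. The paper takes a slight shortcut by replacing $\nu$ with a map factoring through $\mathcal{Z}$ (using upper semi-continuity of weights and the fact that $[\mathbb{L}_{\Y}|_{\mathcal{S}}]$ is pulled back from $K(\mathcal{Z})$), whereas you keep the lift $\sigma$ and argue the relevant quantities depend only on $\tilde\nu$; these are equivalent manoeuvres resting on the same observation.

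One small correction to your full-faithfulness step: Theorem~\ref{thm:window} gives only \emph{semi}-orthogonality among the $\Upsilon\Coh(\mathcal{Z})_i$ in $\Coh(\Y)$, not full orthogonality, so ``the semiorthogonal decomposition kills $\Hom$ between different weight pieces'' is only half true. The clean fix, which the paper also uses implicitly, is that $\MM(\mathcal{Z})_{\delta'}$ in fact lies in a \emph{single} weight piece $\Coh(\mathcal{Z})_m$. Indeed, for the canonical $\bgm$ on $\mathcal{Z}$ one has $n_{\mathcal{Z},\nu}=0$ (since $q^*\mathbb{L}_{\mathcal{Z}}=\beta_0(\mathbb{L}_{\Y}|_{\mathcal{S}})$ lives entirely in weight zero), so the magic condition at that $\nu$ forces the weight to be exactly $\langle\delta',\nu\rangle$; then full faithfulness of $\Upsilon$ on $\Coh(\mathcal{Z})_m$ suffices.
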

\begin{proof}
	We first show that for $A \in \MM(\mathcal{Z})_{\delta'}$
 we have $\Upsilon(A) \in \MM(\Y)_{\delta}$. 
 We need to show that 
	$\Upsilon(A)$ satisfies the required weight condition for each 
	$\nu \colon \bgm \to \mathcal{Y}$. Since $\Upsilon(A)$ is supported 
	on $\mathcal{S}$, we may assume that the image of $\nu$ lies in $\mathcal{S}$. 
    Let $\widetilde{\nu}$ be given by 
    \begin{align*}
        \widetilde{\nu} \colon \bgm  \stackrel{\nu}{\to} \mathcal{S} 
        \stackrel{q}{\to} \mathcal{Z} \stackrel{r}{\to} \mathcal{Y}. 
    \end{align*}
    Here the arrows are given in the diagram (\ref{dia:SYZ}).
    Note that $n_{\mathcal{Y}, \nu}=n_{\mathcal{Y}, \widetilde{\nu}}$, 
    since the class $[\mathbb{L}_{\Y}|_{\mathcal{S}}]\in K(\mathcal{S})$
    is of the form $[q^* F]$ for some $F \in K(\mathcal{Z})$ by the semiorthogonal decomposition (\ref{sod:S}). Together with the upper semi-continuity, we may replace 
    $\nu$ with $\widetilde{\nu}$ 
    so that 
	$\nu$ factors through $\nu \colon \bgm \to \mathcal{Z} \stackrel{r}{\to} \mathcal{Y}$. 
	
	For such a map $\nu$, since $A \in \MM(\mathcal{Z})_{\delta'}$, we have 
	\begin{align}\label{weight:1}
		\mathrm{wt}(\nu^{\ast}A) \subset \left[-\frac{1}{2}n_{\mathcal{Z}, \nu}, 
		\frac{1}{2}n_{\mathcal{Z}, \nu}  \right]+\langle \nu, \delta\rangle+\frac{1}{2}\langle \nu, N_{\mathcal{S}/\mathcal{Y}}\rangle.		\end{align}
	Since $p \colon \mathcal{S} \hookrightarrow \mathcal{Y}$ is a closed immersion, 
 the object 
 \begin{align*}p^{\ast}\Upsilon(A)=p^{\ast}p_{\ast}q^{\ast}A \in \Coh(\mathcal{S})
 \end{align*}
 admits a filtration whose associated graded is 
	the direct sum of $q^{\ast}A \otimes \wedge^i N_{\mathcal{S}/\mathcal{Y}}^{\vee}[i]$
 for $i\in \mathbb{Z}$. It follows that 
	the $\mathbb{G}_m$-weights of $\nu^{\ast}\Upsilon(A)$ are contained in 
	\begin{align}\label{weight:2}
		\left[-\frac{1}{2}n_{\mathcal{Z}, \nu}+\langle \nu, (N_{\mathcal{S}/\mathcal{Y}}^{\vee})^{\nu<0} \rangle, 
		\frac{1}{2}n_{\mathcal{Z}, \nu}+\langle \nu, (N_{\mathcal{S}/\mathcal{Y}}^{\vee})^{\nu>0}\rangle
		 \right]+\langle \nu, \delta\rangle+\frac{1}{2}\langle \nu, N_{\mathcal{S}/\mathcal{Y}}\rangle.
		\end{align}
        
	Using (\ref{sym:N}) and (\ref{tri:NL}), we have the equality in $K(\mathcal{S})$
	\begin{align*}
		[\mathbb{L}_{\mathcal{Y}}|_{\mathcal{S}}]
		&=[q^{\ast}\mathbb{L}_{\mathcal{Z}}]+[N_{\mathcal{S}/\mathcal{Y}}^{\vee}]+
		[\mathbb{L}_{\mathcal{S}/\mathcal{Z}}] \\
        &=[q^{\ast}\mathbb{L}_{\mathcal{Z}}]+[N_{\mathcal{S}/\mathcal{Y}}^{\vee}]+
		[N_{\mathcal{S}/\mathcal{Y}}]. 
		\end{align*}
	By the above identity, we see that 
	\begin{align*}
		\pm\frac{1}{2}n_{\mathcal{Z}, \nu}+\langle \nu, (N_{\mathcal{S}/\mathcal{Y}}^{\vee})^{\pm \nu>0} \rangle
		+\frac{1}{2}\langle \nu, N_{\mathcal{S}/\mathcal{Y}}\rangle
		=\pm \frac{1}{2}n_{\mathcal{Y}, \nu}.  
		\end{align*}
	Therefore $\Upsilon(A) \in \MM(\mathcal{Y})_{\delta}$ holds. 

 Conversely suppose that $A \in \Coh(\zZ)$ satisfies $\Upsilon(A) \in \MM(\Y)_{\delta}$. 
 Assume that $A \notin \MM(\zZ)_{\delta'}$. Then there is $\nu \colon \bgm \to \zZ$
 such that $\nu^{\ast}A$ does not satisfy the condition (\ref{weight:1}). As in the argument 
 above, it implies that the set of weights of $\nu^{\ast}\Upsilon(A)$ is not contained in the interval 
 (\ref{weight:2}), which contradicts to $\Upsilon(A) \in \MM(\Y)_{\delta}$. Therefore $A \in \MM(\zZ)_{\delta'}$. 

The functor $\Upsilon$ restricted to each $\Coh(\mathcal{Z})_{i}$ is fully-faithful 
	by~\cite[Theorem~3.3.1]{HalpK32}.
 Therefore the functor (\ref{funct:up}) restricts to the fully-faithful 
 functor (\ref{restrict:Up}). 
	\end{proof}
    
Let $j$ be the open immersion 
\begin{align*}
    j \colon \mathcal{Y}^{\circ}:=\mathcal{Y} \setminus \mathcal{S} \hookrightarrow 
    \mathcal{Y}. 
\end{align*}
By the definition of $\MM(\mathcal{Y})_{\delta}$, the pull-back 
$j^{\ast} \colon \Coh(\mathcal{Y}) \to \Coh(\mathcal{Y}^{\circ})$ restricts to 
the functor 
\begin{align}\label{funct:j}
    j^{\ast} \colon \MM(\mathcal{Y})_{\delta} \to \MM(\mathcal{Y}^{\circ})_{\delta}.
\end{align}
We expect that the following conjecture holds. We prove it 
for quotient stacks in Theorem~\ref{thm:qstack}. 
\begin{conj}\label{conj:adjoint}
(i) 
The functor (\ref{funct:j}) admits a fully-faithful 
right adjoint
\begin{align*}
    j_{\ast} \colon \MM(\mathcal{Y}^{\circ})_{\delta} \hookrightarrow \MM(\mathcal{Y})_{\delta}
\end{align*}
such that we have the semiorthogonal decompositions 
\begin{align*}
    \MM(\mathcal{Y})_{\delta}
    =\langle j_{\ast}(\MM(\mathcal{Y}^{\circ})_{\delta}), \Upsilon(\MM(\mathcal{Z})_{\delta'})\rangle. 
\end{align*}

(ii) 
The functor (\ref{funct:j}) admits a fully-faithful 
left adjoint
\begin{align*}
    j_{!} \colon \MM(\mathcal{Y}^{\circ})_{\delta} \hookrightarrow \MM(\mathcal{Y})_{\delta}
\end{align*}
such that we have the semiorthogonal decompositions 
\begin{align*}
    \MM(\mathcal{Y})_{\delta}
    =\langle \Upsilon(\MM(\mathcal{Z})_{\delta'}), j_{!}(\MM(\mathcal{Y}^{\circ})_{\delta})
    \rangle. 
\end{align*}

	\end{conj}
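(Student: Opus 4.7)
The strategy is to deduce both semiorthogonal decompositions from the window theorem (Theorem~\ref{thm:window}), applied to $\Coh(\mathcal{Y})$ with a threshold $k$ tuned so that the window $\mathcal{W}(k)$ sits immediately on one side of the weight stratum containing $\MM(\mathcal{Z})_{\delta'}$. Set $c := \langle \nu_{\mathrm{can}}, \delta'\rangle$, where $\nu_{\mathrm{can}} \colon \bgm \to \mathcal{Z}$ is the canonical inertia cocharacter. Because $\nu_{\mathrm{can}}^{\ast}\mathbb{L}_{\mathcal{Z}}$ carries trivial $\mathbb{G}_m$-weights, we have $n_{\mathcal{Z},\nu_{\mathrm{can}}}=0$, so the magic condition collapses to a single weight and $\MM(\mathcal{Z})_{\delta'}\subset \Coh(\mathcal{Z})_c$. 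Combined with Proposition~\ref{prop:functUp}, this gives $\Upsilon\Coh(\mathcal{Z})_i \cap \MM(\mathcal{Y})_\delta = 0$ for $i\neq c$ and equals $\Upsilon\MM(\mathcal{Z})_{\delta'}$ for $i=c$. For part (i), I would choose $k_+\in (c,c+1)$ so that $\mathcal{W}(k_+)$ lies immediately to the left of $\Upsilon\Coh(\mathcal{Z})_c$ in the window SOD, and set $j_* := \iota_{k_+}\colon \Coh(\mathcal{Y}^\circ)\xrightarrow{\sim}\mathcal{W}(k_+)\hookrightarrow \Coh(\mathcal{Y})$, the inverse of the window equivalence. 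For part (ii), I would take $k_-\in(c+1,c+2)$ so that $\mathcal{W}(k_-)$ lies immediately to the right of $\Upsilon\Coh(\mathcal{Z})_c$, and set $j_!:=\iota_{k_-}$.

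Three claims then need to be verified: (a) $\iota_{k_\pm}$ restricts to a functor $\MM(\mathcal{Y}^\circ)_\delta \to \MM(\mathcal{Y})_\delta$; (b) the window SOD of $\Coh(\mathcal{Y})$ descends to the asserted SODs of $\MM(\mathcal{Y})_\delta$; and (c) $j_*$, resp.\ $j_!$, is the right, resp.\ left, adjoint of $j^{\ast}$ within $\MM(\mathcal{Y})_\delta$. Step (c) is a formal consequence of (b) together with the automatic vanishings $\Hom(\Upsilon A, j_*\mathcal{F})=\Hom(j^{\ast}\Upsilon A,\mathcal{F})=0$ and its left-adjoint analog (since $j^{\ast}\Upsilon A=0$). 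For (a), the weight condition of Definition~\ref{defn:QBY} is automatic at cocharacters factoring through $\mathcal{Y}^\circ$, so the content is at cocharacters $\nu$ factoring through $\mathcal{S}$. For such $\nu$, the restriction $\iota_{k_\pm}\mathcal{F}|_{\mathcal{S}}$ admits a Koszul-type filtration with associated graded $q^{\ast}(\iota_{k_\pm}\mathcal{F}|_{\mathcal{Z}})\otimes \bigwedge^i N^{\vee}_{\mathcal{S}/\mathcal{Y}}[i]$; running the same weight bookkeeping as in the proof of Proposition~\ref{prop:functUp} then shows that the window interval $[k_\pm,k_\pm+n)$ is exactly what is needed to land inside the magic-category range for $\MM(\mathcal{Y})_\delta$, with the shift $(\det N_{\mathcal{S}/\mathcal{Y}})^{1/2}$ in the definition of $\delta'$ absorbing the expected offset between the weights of $\mathcal{Z}$ and the half-sum of cotangent weights at $\mathcal{S}$.

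The main obstacle is the generation claim inside (b): showing that, for every $\mathcal{E}\in \MM(\mathcal{Y})_\delta$, the truncation functors of the window SOD in $\Coh(\mathcal{Y})$ send $\mathcal{E}$ to objects that still lie in $\MM(\mathcal{Y})_\delta$. Once this is established, the triangle provided by the $\Coh(\mathcal{Y})$-SOD automatically collapses, via the vanishing $\Upsilon\Coh(\mathcal{Z})_i\cap\MM(\mathcal{Y})_\delta=0$ for $i\neq c$, to a triangle whose two non-zero terms lie in $\mathcal{W}(k_\pm)\cap\MM(\mathcal{Y})_\delta$ and in $\Upsilon\MM(\mathcal{Z})_{\delta'}$. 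The subtle point is that the weight condition defining $\MM(\mathcal{Y})_\delta$ must be checked at \emph{every} cocharacter of $\mathcal{Y}$, not only the one defining the $\Theta$-stratum. For a GIT quotient stack $\mathcal{Y}=Y/G$ with $G$ reductive acting linearly, this reduces to a combinatorial inequality on the weight polytope of cotangent weights under simultaneous pairings with the defining cocharacter and with an arbitrary test cocharacter; it can be handled via the magic-window/window-shift machinery of Halpern-Leistner–Sam~\cite{hls} and \v{S}penko–Van den Bergh~\cite{SvdB}, using the Kempf-Ness/$\Theta$-stratification framework of~\cite{halp, halpinstab, HalpK32}. This is the content of Theorem~\ref{thm:qstack}. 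For a general smooth QCA symmetric stack without an étale quotient structure, no such local weight-polytope description is available, and there is no evident tool for propagating the truncation compatibly across overlapping charts; this is why the full statement of Conjecture~\ref{conj:adjoint} remains only conjectural.
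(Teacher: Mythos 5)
The statement you are proving is a conjecture, and the paper itself only establishes it for quotient stacks $\mathcal{Y}=Y/G$ of self-dual representations satisfying Assumption~\ref{assum:Z} (Theorem~\ref{thm:qstack}); your outline follows the same skeleton as that proof — Theorem~\ref{thm:window}, the containment $\MM(\mathcal{Z})_{\delta'}\subset\Coh(\mathcal{Z})_{c}$, Proposition~\ref{prop:functUp}, and the reduction of everything to the compatibility of the window truncations with the magic condition — and you correctly record that the general case remains open. So at the level of strategy you are consistent with the paper. However, two points in your write-up do not survive comparison with the paper's argument.

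First, you misplace the difficulty. Your step (a), that the inverse $\iota_{k_\pm}$ of the window equivalence carries $\MM(\mathcal{Y}^{\circ})_{\delta}$ into $\MM(\mathcal{Y})_{\delta}$, is not a routine "Koszul filtration plus the weight bookkeeping of Proposition~\ref{prop:functUp}": that filtration only controls weights at cocharacters factoring through $\mathcal{S}$, whereas the magic condition must be verified at \emph{every} cocharacter of $\mathcal{Y}$, including those landing in deeper strata or at points with larger stabilizers. In the paper, step (a) is precisely the equality $\MM(\mathcal{Y})_{\delta}\cap\mathcal{W}_{i}=\MM(\mathcal{W}_{i})_{\delta}$ of Proposition~\ref{prop:QBW}, which is proved only together with Proposition~\ref{prop:sod:qb} by a joint induction over all Kempf--Ness strata, using the coproduct-type functor $\Delta_{i}$ of Lemma~\ref{lem:delta} and the essential surjectivity of Proposition~\ref{prop:ess}; this is exactly where Assumption~\ref{assum:Z} and the highest-weight combinatorics (Proposition~\ref{prop:QB:weight}, Borel--Weil--Bott) enter. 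So your (a) is as hard as your (b), and your sketch of (a) does not go through as stated.

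Second, your choice of thresholds is off. Along the stratum cocharacter the magic condition allows weights exactly in $[c,c+n]$, where $n=\wt\det N_{\mathcal{S}/\mathcal{Y}}^{\vee}|_{\mathcal{Z}}$ and $c=\langle\nu,\delta\rangle-\tfrac{n}{2}$, and the only two width-$n$ windows contained in this range are those with weights in $[c,c+n)$ and in $(c,c+n]$. These are the windows the paper uses: in the proof of Theorem~\ref{thm:qstack}(ii) the left adjoint $j_{!}$ is realized by the window $\mathcal{W}_{i+1}$ with weights in $(m_{i},m_{i}+n_{\lambda_{i}}]$, i.e. $(c,c+n]$, and (i) by its mirror $[c,c+n)$. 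Your threshold $k_{-}\in(c+1,c+2)$ gives the window with weights in $\{c+2,\dots,c+n+1\}$, which sticks out of the magic range; already for $\mathcal{Y}=\mathbb{A}^{2}/\mathbb{G}_{m}$ with weights $(1,-1)$ and $\delta$ of weight $\tfrac{1}{2}$, the unique lift to that window of the generator of $\MM(\mathcal{Y}^{\circ})_{\delta}$ is a line bundle of weight $c+n+1$ along $\mathcal{Z}$, which is not in $\MM(\mathcal{Y})_{\delta}$, so the proposed $j_{!}$ does not exist with that window. Moreover, the window you assign to $j_{*}$ (weights $(c,c+n]$) is the one that the paper attaches to $j_{!}$; the two adjoints should correspond to the two half-open intervals $[c,c+n)$ and $(c,c+n]$, exchanged by the duality/twist as in (\ref{equiv:Domega}), and your assignment is shifted by one.
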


\subsection{\texorpdfstring{Semiorthogonal decompositions via $\Theta$-stratifications}{Semiorthogonal decompositions via Theta-stratifications}}
The rest of this section is devoted to prove Conjecture~\ref{conj:adjoint}
for some quotient stacks. 

Let $G$ be a reductive group and $Y$ be a self-dual $G$-representation, i.e. 
$Y \cong Y^{\vee}$ as $G$-representation. 
As in Example~\ref{exam:quiver}, we 
consider the following quotient stack, which is a symmetric stack:
\begin{align*}
\mathcal{Y}=Y/G.
\end{align*}
Below
we fix maximal torus and Borel subgroup $T\subset B\subset G$, and use the 
notation in Subsection~\ref{subsec:notation:reductive}.
 Note that $\mathrm{Pic}(BG)=M(T)^W$, and we often regard an 
 element of $\mathrm{Pic}(BG)$ as an element of $M(T)^W$. 
 We take 
$\delta \in \mathrm{Pic}(BG)_{\mathbb{R}}$, which is naturally 
regarded as a $\mathbb{R}$-line bundle by pull-back via the projection 
$\mathcal{Y} \to BG$. 
In this case, 
 we have the following alternative description of $\MM(\mathcal{Y})_{\delta}$:
\begin{prop}\emph{(\cite[Corollary~3.35]{PTquiver})}\label{prop:QB:weight}
The category $\MM(\mathcal{Y})_{\delta}$ is generated by $V_G(\chi) \otimes \mathcal{O}_{\mathcal{Y}}$ 
where $\chi \in M(T)$ is a dominant weight satisfying 
\begin{align}\label{poly:W}
\chi+\rho-\delta \in \mathbf{W}:=\frac{1}{2}[0, \beta] \subset M(T)_{\mathbb{R}}.
\end{align}
Here, $V_G(\chi)$ is the irreducible $G$-representation 
with highest weight $\chi$, $[0, \beta]$ is the Minkowski sum of $T$-weights of $Y$,
and $\rho$ is half the sum of positive roots. 
\end{prop}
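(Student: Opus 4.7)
The plan is to reduce this proposition to \cite[Corollary~3.35]{PTquiver}, which gives essentially the same description in the quotient-stack setting. This requires three checks: that the defining weight condition of $\MM(\mathcal{Y})_\delta$ in Definition~\ref{defn:QBY} (imposed for \emph{all} maps $\nu\colon (\bgm)_{k'}\to \mathcal{Y}$ over all field extensions) reduces to cocharacters of $T$ over $k$; that the proposed generators verify that condition; and that they split-generate.

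For the first reduction: an extended-scalars argument (as in Remark~\ref{rmk:k'}) brings us to $k'=k$, and any map $\nu\colon \bgm\to \mathcal{Y}$ is specified by a cocharacter $\lambda$ of $G$ together with $y\in Y^\lambda$, which after $G$-conjugation we may assume satisfies $\lambda\in N(T)$. The upper semi-continuity of weights of $\nu^*\mathcal{E}$ in $y$, together with the computation that $n_{\mathcal{Y},\nu}$ is also upper semi-continuous, allows one to reduce (as in \cite[Corollary~3.53]{PTquiver}) to the map $\nu_\lambda\colon \bgm\to BT\to \mathcal{Y}$ induced by $\lambda$ alone. This produces the category of \cite{PTquiver, hls, SvdB}.

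For the second check, from the triangle $\mathfrak{g}^\vee\to Y^\vee\to \mathbb{L}_{\mathcal{Y}}|_0$ and $Y\cong Y^\vee$ one computes $n_{\mathcal{Y},\nu_\lambda}=\langle\lambda,Y^{\lambda>0}\rangle - \langle\lambda,\mathfrak{g}^{\lambda>0}\rangle$. Since $\Phi=-\Phi$, the root contribution $\langle\lambda,\mathfrak{g}^{\lambda>0}\rangle$ equals $2\langle\lambda,\rho_\lambda\rangle$ for the $\rho_\lambda$ corresponding to any positive system making $\lambda$ dominant, which is in the Weyl orbit of $\rho$. For $\mathcal{E}=V_G(\chi)\otimes \mathcal{O}_\mathcal{Y}$ with $\chi$ dominant, $\wt(\nu_\lambda^*\mathcal{E})$ is the set of pairings of the $T$-weights of $V_G(\chi)$ with $\lambda$, lying in the convex hull of $\{\langle w\chi,\lambda\rangle\}_{w\in W}$. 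Using $W$-invariance of the polytope $[0,\beta]$ (which holds since $Y$ is self-dual, so its $T$-weight multiset is $W$-stable) and the fact that $\chi-w\chi$ lies in the root lattice hence pairs integrally with $\lambda$, the condition reduces to the inequality on the extremal weight $\chi$, giving exactly $\chi+\rho-\delta\in \frac{1}{2}[0,\beta]$.

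The main (and third) step is the generation claim: every object of $\MM(\mathcal{Y})_\delta$ is split-generated by the $V_G(\chi)\otimes \mathcal{O}_\mathcal{Y}$ with $\chi$ in the polytope. This is the substantive content of \cite[Corollary~3.35]{PTquiver}: one resolves a general $G$-equivariant coherent sheaf on $Y$ by direct sums of $V\otimes \mathcal{O}_Y$, and then iteratively applies the window theorem \cite[Theorem~3.3.1]{HalpK32} along the Kempf--Ness stratification of $Y/G$ to replace unwanted irreducible summands by ones with highest weight in $\mathbf{W}$. The main obstacle (which is already overcome in \cite{PTquiver}) is ensuring that this iterative replacement terminates and stays within $\MM(\mathcal{Y})_\delta$; one invokes the quasi-symmetry of $Y$ and the combinatorics of the \v{S}penko--Van den Bergh polytope to control the weights appearing at each step.
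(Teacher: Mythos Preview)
The paper gives no proof of this proposition; it simply cites \cite[Corollary~3.35]{PTquiver}, and your proposal is a correct unpacking of how the definition here (for all maps $\nu\colon(\bgm)_{k'}\to\mathcal{Y}$) reduces to that result. Two small corrections that do not affect the argument: for $\mathcal{Y}=Y/G$ with $Y$ a linear $G$-representation, $n_{\mathcal{Y},\nu}$ depends only on the cocharacter $\lambda$ and not on the chosen point $y\in Y^\lambda$ (since $T_yY\cong Y$ as $\lambda$-representations), so it is constant rather than merely upper semicontinuous; and the $W$-invariance of the $T$-weight multiset of $Y$ follows from $Y$ being a $G$-representation, not from self-duality (self-duality gives the separate symmetry $\beta\mapsto -\beta$, which is what makes $\mathbf{W}$ centered at the origin).
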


For $\ell \in \mathrm{Pic}(BG)_{\mathbb{Q}}$, let $Y^{\ell\text{-ss}} \subset Y$ 
be the open subset corresponding to the $\ell$-semistable locus. 
It consists of points $y\in Y$ satisfying the following: 
for any 
one-parameter subgroup $\lambda \colon \mathbb{G}_m \to G$
such that $\lim_{t\to 0}\lambda(t)(y)$ exists, we have 
$\langle \ell, \lambda \rangle \geq 0$. 
By fixing a Weyl-invariant norm on 
$N(T)_{\mathbb{R}}$, there is an associated 
Kempf-Ness stratification, see~\cite[Section~2.1]{halp}
\begin{align*}
    Y=S_1 \sqcup \cdots \sqcup S_k \sqcup Y^{\ell\text{-ss}}
\end{align*}
with associated one-parameter subgroup $\lambda_i \colon \mathbb{G}_m \to T$
for each $1\leq i \leq k$. 
By taking the quotients by $G$, we have the $\Theta$-stratification~\cite{halpinstab}
\begin{align*}
    \mathcal{Y}=\mathcal{S}_1 \sqcup \cdots \sqcup \mathcal{S}_k \sqcup \mathcal{Y}^{\ell\text{-ss}}. 
\end{align*}
We define $\Y_i$ to be 
\begin{align*}
    \mathcal{Y}_i :=\mathcal{Y} \setminus (\mathcal{S}_1 \sqcup \cdots \sqcup \mathcal{S}_{i-1}), 
\end{align*}
so that $\mathcal{S}_i \subset \mathcal{Y}_i$ is a $\Theta$-stratum of $\mathcal{Y}_i$. 
Below we prove Conjecture~\ref{conj:adjoint} for $\mathcal{S}_i \subset \mathcal{Y}_i$
and $\delta \in \mathrm{Pic}(BG)_{\mathbb{R}}$. 

Let $\mathcal{Z}_i \subset \mathrm{Map}(\bgm, \Y_i)$ be the center of $\mathcal{S}_i$. 
As in the diagram (\ref{dia:SYZ}), we 
have the diagram 
\begin{align*}
    \mathcal{Z}_i \stackrel{q_i}{\leftarrow} \mathcal{S}_i \stackrel{p_i}{\hookrightarrow} \mathcal{Y}_i. 
\end{align*}
Then, as in (\ref{restrict:Up}), we have 
the following functor
\begin{align*}
 \Upsilon_i=p_{i\ast}q_i^{\ast} \colon \Coh(\mathcal{Z}_i) \to \Coh(\mathcal{Y}_i).
 \end{align*}
 Let $\overline{\mathcal{Z}}_i=Y^{\lambda_i}/G^{\lambda_i}$, 
 where $Y^{\lambda_i}\subset Y$ is the $\lambda_i$-fixed locus 
 and $G^{\lambda_i}$ the Levi subgroup associated with $\lambda_i$. 
We use the following fact: 
\begin{prop}\emph{(\cite[Proposition~2.18]{Hoskins})}\label{prop:Z}
For each $i$, there is $\ell_{\zZ_i}\in \mathrm{Pic}(\overline{\zZ}_i)$ such that $\zZ_i=\overline{\zZ}_i^{\ell_{\zZ_i}\text{-ss}}$.
\end{prop}
The following is an important input from~\cite[Proposition~3.22]{PThiggs}.
\begin{prop}\label{prop:ess}
The pull-back functors 
\begin{align*}
j^{\ast} \colon \MM(\mathcal{Y})_{\delta} \to \MM(\mathcal{Y}^{\mathrm{ss}})_{\delta}, \ 
  \MM(\overline{\mathcal{Z}}_i)_{\delta_i} \to \MM(\mathcal{Z}_i)_{\delta_i}
\end{align*}
are essentially surjective. 
    \end{prop}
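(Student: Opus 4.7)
The plan is to treat the two statements uniformly. Since $\overline{\mathcal{Z}}_i=Y^{\lambda_i}/G^{\lambda_i}$ is again the quotient of a self-dual representation by a reductive group, and since Assumption~\ref{assum:Z} guarantees that $\mathcal{Z}_i\subset\overline{\mathcal{Z}}_i$ is the $\ell_{\mathcal{Z}_i}$-semistable open substack, both statements are instances of the following: for a reductive group $G$ acting on a self-dual representation $Y$ with $\ell\in\mathrm{Pic}(BG)_{\mathbb{R}}$, the pullback $j^{\ast}\colon\MM(\mathcal{Y})_{\delta}\to\MM(\mathcal{Y}^{\ell\text{-ss}})_{\delta}$ is essentially surjective. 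I therefore focus on this common form.

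The strategy is induction along the Kempf-Ness stratification $\mathcal{Y}=\mathcal{S}_1\sqcup\cdots\sqcup\mathcal{S}_k\sqcup\mathcal{Y}^{\ell\text{-ss}}$, reducing to essential surjectivity of $\MM(\mathcal{Y}_i)_{\delta}\to\MM(\mathcal{Y}_{i+1})_{\delta}$ at each step. Given $\mathcal{F}\in\MM(\mathcal{Y}_{i+1})_{\delta}$, I would first produce any coherent extension $\widetilde{\mathcal{F}}\in\Coh(\mathcal{Y}_i)$ using equivariant Noetherian approximation for open immersions of QCA stacks, and then project $\widetilde{\mathcal{F}}$ into the Halpern-Leistner window subcategory $\mathcal{W}_i(k)\subset\Coh(\mathcal{Y}_i)$ from Theorem~\ref{thm:window}, choosing the real number $k$ so that the window interval $[k,k+n_i)$ is symmetric around $\tfrac{1}{2}\langle\lambda_i,\delta\rangle$ in the sense required by the magic condition along $\lambda_i$. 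By the window theorem, this projection agrees with $\widetilde{\mathcal{F}}$ after restriction to $\mathcal{Y}_{i+1}$ and therefore restricts to $\mathcal{F}$. An alternative, equivalent, formulation is to use the explicit generators furnished by Proposition~\ref{prop:QB:weight}: it suffices to produce preimages of $V_G(\chi)\otimes\mathcal{O}_{\mathcal{Y}^{\ell\text{-ss}}}$ for $\chi$ in the window polytope $\mathbf{W}$, and $V_G(\chi)\otimes\mathcal{O}_{\mathcal{Y}}$ is a manifest such preimage.

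The main obstacle is verifying that the window projection actually lies in $\MM(\mathcal{Y}_i)_{\delta}$ rather than only in the Halpern-Leistner window $\mathcal{W}_i(k)$. The magic condition imposes weight bounds for \emph{every} map $\nu\colon(\bgm)_{k'}\to\mathcal{Y}_i$, whereas the window $\mathcal{W}_i(k)$ imposes bounds only along $\lambda_i$ at the center $\mathcal{Z}_i$. For cocharacters $\nu$ whose associated limit point lies in $\mathcal{Y}_{i+1}$, the magic condition is inherited directly from $\mathcal{F}\in\MM(\mathcal{Y}_{i+1})_{\delta}$ by upper semi-continuity of $n_{\mathcal{Y},\nu}$, exactly as in the proof of Proposition~\ref{prop:functUp}. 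For cocharacters landing in $\mathcal{S}_i$, one must show that, after the window projection, no weight violates the magic bound; this reduces via the semiorthogonal decomposition (\ref{sod:S}) and the identification $[\mathbb{L}_{\mathcal{Y}}|_{\mathcal{S}_i}]=[q^{\ast}\mathbb{L}_{\mathcal{Z}_i}]+[N^{\vee}_{\mathcal{S}_i/\mathcal{Y}_i}]+[N_{\mathcal{S}_i/\mathcal{Y}_i}]$ from Lemma~\ref{em:sym} to a weight comparison at the center $\mathcal{Z}_i$, which is precisely the computation carried out in Proposition~\ref{prop:functUp}.

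The inductive step is then closed by recursing on $\mathcal{S}_i$: the analogous essential surjectivity for $\MM(\overline{\mathcal{Z}}_i)_{\delta_i'}\to\MM(\mathcal{Z}_i)_{\delta_i'}$ is exactly the second statement of the proposition, applied to the smaller-dimensional quotient stack $\overline{\mathcal{Z}}_i=Y^{\lambda_i}/G^{\lambda_i}$. Since the rank of $G^{\lambda_i}$ is no larger than that of $G$ but the stratification on $\overline{\mathcal{Z}}_i$ is strictly shorter (it uses the restriction of $\ell_{\mathcal{Z}_i}$, which by Assumption~\ref{assum:Z} exhibits $\mathcal{Z}_i$ as a single semistable open), the induction terminates, and one obtains both essential surjectivity statements simultaneously.
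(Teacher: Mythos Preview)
Your identification of the main obstacle is exactly right: after projecting an extension $\widetilde{\mathcal F}$ into the window $\mathcal W_i(k)$, one must verify that the result lies in $\MM(\mathcal Y_i)_\delta$, i.e.\ that the weight bound holds for \emph{every} $\nu\colon(B\mathbb G_m)_{k'}\to\mathcal Y_i$, not just for $\lambda_i$. But your resolution of this obstacle does not go through. For $\nu$ landing in $\mathcal S_i$ one can indeed reduce (by upper semi-continuity, as in Proposition~\ref{prop:functUp}) to $\nu$ factoring through $\mathcal Z_i$; however, the window condition only constrains the $\lambda_i$-weights of $\mathcal F'|_{\mathcal Z_i}$, and says nothing about the weights along other cocharacters of $\mathcal Z_i$. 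The computation in Proposition~\ref{prop:functUp} characterises when an object of the specific form $\Upsilon(A)$ lies in $\MM(\mathcal Y)_\delta$; it does not apply to the window lift $\mathcal F'\in\mathcal W_i$, which is not of that form. What you are implicitly invoking is the statement $\MM(\mathcal W_i)_\delta\subset\MM(\mathcal Y)_\delta\cap\mathcal W_i$, which is the nontrivial direction of Proposition~\ref{prop:QBW}---and the proof of that proposition \emph{uses} Proposition~\ref{prop:ess} (in its base case, and via Proposition~\ref{prop:sod:qb}), so the argument is circular. Your recursion on $\overline{\mathcal Z}_i$ does not close this: knowing essential surjectivity for the smaller stack gives no control over $\mathcal F'|_{\mathcal Z_i}$, which is not an object you have constructed from $\MM(\overline{\mathcal Z}_i)_{\delta_i}$.

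Your ``alternative'' via explicit generators is likewise circular: Proposition~\ref{prop:QB:weight} gives generators for $\MM(\mathcal Y)_\delta$ when $\mathcal Y=Y/G$, but $\mathcal Y^{\ell\text{-ss}}$ is not of this shape, and the assertion that $\MM(\mathcal Y^{\ell\text{-ss}})_\delta$ is generated by the restricted bundles $V_G(\chi)\otimes\mathcal O_{\mathcal Y^{\ell\text{-ss}}}$ is equivalent to the essential surjectivity you are trying to prove. The paper's route (Subsection~\ref{subsec:essurj}, following \cite[Proposition~3.22]{PTquiver}) avoids both issues: one first proves a generation lemma (Lemma~\ref{lem:magicgen}) saying that $\Coh(\mathcal Y^{\ell\text{-ss}})_w$ is generated by $j^*(\MM(\mathcal Y)_\delta)$ together with certain Hall products $m_\lambda(P)$ for $\lambda$ with $\langle\lambda,\ell\rangle=0$, and then shows the vanishing $\Hom(\MM(\mathcal Y^{\ell\text{-ss}})_\delta,\,j^*(m_\lambda(P)))=0$ by a formally local computation on the good moduli space. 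This forces every object of $\MM(\mathcal Y^{\ell\text{-ss}})_\delta$ to lie in the essential image of $j^*$.
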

    \begin{proof}
    The proposition is proved in the setting of symmetric quivers 
    in~\cite[Proposition~3.22]{PTquiver}, and the proof is essentially same. 
In Subsection~\ref{subsec:essurj}, we explain how to modify the argument in our setting. 
    \end{proof}
We define the subcategory 
\begin{align*}
\mathcal{W}_i \subset \Coh(\mathcal{Y})
\end{align*}
to be
consisting of objects 
$\mathcal{E}$ such that for each $1\leq j\leq i-1$, the $\lambda_j$-weights of 
$\mathcal{E}|_{\mathcal{Z}_j}$ 
satisfy that
\begin{align}\label{wt:window}
    \mathrm{wt}_{\lambda_j}(\mathcal{E}|_{\mathcal{Z}_j}) \subset 
    \left(-\frac{1}{2}n_{\lambda_j}, \frac{1}{2}n_{\lambda_j}  \right]+\langle \delta, \lambda_j\rangle.
\end{align}
Here for a cocharacter $\lambda$ of $T$, we write 
\begin{align*}
    n_{\lambda}:=\langle \lambda, (Y^{\vee})^{\lambda>0} -(\mathfrak{g}^{\vee})^{\lambda>0}\rangle 
    \in \mathbb{Z}. 
\end{align*}
We have $n_{\lambda}=n_{\mathcal{Y}, \nu}$, where $\nu \colon \bgm \to \mathcal{Y}$ 
is the map which corresponds to $\lambda \colon \mathbb{G}_m \to G=\mathrm{Aut}(0)$
to the origin $0\in Y$. We also have that 
\begin{align*}
    n_{\lambda_j}=\wt \det N_{\mathcal{S}_j/\Y_j}^{\vee}|_{\mathcal{Z}_j}.
\end{align*}
Below, for simplicity, we set
\begin{align}\label{def:mi}
m_j:=-\frac{1}{2}n_{\lambda_j}+\langle \delta, \lambda_j\rangle.     
\end{align}
Then the right-hand side of (\ref{wt:window}) is $(m_j, m_j+n_{\lambda_j}]$.

We have the chain of subcategories
\begin{align*}
\Coh(\mathcal{Y})=\mathcal{W}_1 \supset \mathcal{W}_2 \supset \cdots \supset \mathcal{W}_k
\end{align*}
such that, applying the window theorem (Theorem~\ref{thm:window}), the 
following composition is an equivalence
\begin{align}\label{eq:Phii}
\Phi_i \colon \mathcal{W}_i \subset \Coh(\mathcal{Y}) \twoheadrightarrow \Coh(\mathcal{Y}_i). 
\end{align}
Note that we have the following commutative diagram 
\begin{align}\notag
    \xymatrix{
\mathcal{W}_{i+1} \inclusion \ar[d]_-{\sim}^-{\Phi_{i+1}} & \mathcal{W}_i \ar[d]_-{\sim}^-{\Phi_i} \\
\Coh(\Y_{i+1}) & \ar@{->>}[l] \Coh(\Y_i)    
    }
\end{align}
where the bottom arrow is the restriction functor for the 
open immersion $\Y_{i+1} \subset \Y_i$. 
We then define 
\begin{align}\label{up:dag}
\Upsilon_i^{\dag}:= \Phi_i^{-1} \circ \Upsilon_i \colon 
\Coh(\mathcal{Z}_i) \to \Coh(\mathcal{Y}_i) \stackrel{\Phi_i^{-1}}{\to} \mathcal{W}_i. 
\end{align}

Let $\delta_i \in \mathrm{Pic}(\mathcal{Z}_i)_{\mathbb{R}}$ be defined by 
\begin{align*}
    \delta_i:=(\delta \otimes (\det N_{\mathcal{S}_i/\mathcal{Y}_i})^{1/2})|_{\mathcal{Z}_i}. 
\end{align*}
It is induced by the following $G^{\lambda_i}$-character
\begin{align*}
    \delta_i=\delta-\frac{1}{2}(Y^{\lambda_i>0} - \mathfrak{g}^{\lambda_i>0}) \in M(T)_{\mathbb{R}}^{W_i}. 
\end{align*}
Here $W_i$ is the Weyl-group of $G^{\lambda_i}$. 
Note that $m_i=\langle \lambda_i, \delta_i\rangle$, where 
$m_i$ is given by (\ref{def:mi}). 

\begin{prop}\label{prop:sod:qb}
There is a semiorthogonal decomposition 
\begin{align}\label{sod:qb}
\MM(\mathcal{Y})_{\delta} \cap \mathcal{W}_i=
\langle \Upsilon_i^{\dag}(\MM(\mathcal{Z}_i)_{\delta_i}), \MM(\mathcal{Y})_{\delta} \cap \mathcal{W}_{i+1} \rangle. 
\end{align}
    \end{prop}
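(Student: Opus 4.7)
The plan is to combine Theorem~\ref{thm:window} for the $\Theta$-stratum $\mathcal{S}_i\subset\mathcal{Y}_i$ with the magic-category compatibility of Proposition~\ref{prop:functUp}, paralleling the approach used for symmetric quivers in~\cite{PTquiver}. First I would choose a real parameter $k$ slightly above $m_i=\langle\lambda_i,\delta_i\rangle$ so that, under the equivalence $\Phi_i\colon\mathcal{W}_i\xrightarrow{\sim}\Coh(\mathcal{Y}_i)$, the window subcategory $\mathcal{W}(k)$ of Theorem~\ref{thm:window} matches $\mathcal{W}_{i+1}$; explicitly, the integer weights in $[k,k+n_{\lambda_i})$ coincide with those in the half-open interval $(m_i,m_i+n_{\lambda_i}]$ defining $\mathcal{W}_{i+1}$. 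Theorem~\ref{thm:window} then supplies a semiorthogonal decomposition of $\Coh(\mathcal{Y}_i)$ in which $\mathcal{W}_{i+1}$ appears as one block and the block immediately to its left is $\Upsilon_i\Coh(\mathcal{Z}_i)_{m_i}$.

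For $E\in\MM(\mathcal{Y})_\delta\cap\mathcal{W}_i$, the magic condition applied at $\lambda_i$ forces $\wt_{\lambda_i}(\Phi_i(E)|_{\mathcal{Z}_i})\subset[m_i,m_i+n_{\lambda_i}]$, so in the above decomposition only the adjacent pieces $\Upsilon_i\Coh(\mathcal{Z}_i)_{m_i}$ and $\mathcal{W}_{i+1}$ can contribute. Hence after pullback by $\Phi_i^{-1}$ one obtains a distinguished triangle
\begin{align*}
\Upsilon_i^{\dag}(\tilde A)\to E\to B
\end{align*}
in $\mathcal{W}_i$ with $\tilde A\in\Coh(\mathcal{Z}_i)_{m_i}$ and $B\in\mathcal{W}_{i+1}$. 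The semiorthogonality $\Hom(\mathcal{W}_{i+1},\Upsilon_i^{\dag}\Coh(\mathcal{Z}_i)_{m_i})=0$ is inherited from Theorem~\ref{thm:window}, so to establish~\eqref{sod:qb} it suffices to show both outer terms of the triangle lie in $\MM(\mathcal{Y})_\delta$. The strategy is to first prove that $\tilde A\in\MM(\mathcal{Z}_i)_{\delta_i}$, after which $\Upsilon_i^{\dag}(\tilde A)\in\MM(\mathcal{Y})_\delta$ follows from the `if' direction of Proposition~\ref{prop:functUp} together with the observation that the extra $\mathcal{W}_i$-constraint renders the magic inequalities at cocharacters landing in previously removed strata automatic; then $B\in\MM(\mathcal{Y})_\delta$ follows from the two-out-of-three property of the triangulated subcategory $\MM(\mathcal{Y})_\delta$.

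To see $\tilde A\in\MM(\mathcal{Z}_i)_{\delta_i}$, I would take an arbitrary cocharacter $\nu\colon\bgm\to\mathcal{Z}_i$ and read the $\nu$-weight condition on $\tilde A$ off from the magic condition on $E$ applied to the composite $\bgm\xrightarrow{\nu}\mathcal{Z}_i\xrightarrow{r}\mathcal{Y}$; the identity $[\mathbb{L}_{\mathcal{S}_i/\mathcal{Z}_i}]=[N_{\mathcal{S}_i/\mathcal{Y}_i}]$ of Lemma~\ref{em:sym} and the definition $\delta_i=\delta-\tfrac12(Y^{\lambda_i>0}-\mathfrak{g}^{\lambda_i>0})$ are engineered so that the weight shifts coming from the pushforward $p_{i*}q_i^*$ align exactly with the magic polytope for $\delta_i$. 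The hard part will be carrying out this weight verification uniformly in $\nu$: for cocharacters that do not commute with $\lambda_i$ one must combine $\nu$ and $\lambda_i$ into a two-parameter subgroup $\bgm^2\to\mathcal{Y}$, extract the $\nu$-weight inequalities from the magic polytope at this larger torus, and then check that the operation of passing to the $\lambda_i$-weight-$m_i$ summand preserves these inequalities. This combinatorial core closely parallels the analysis in~\cite{hls, PTquiver}, and the essential surjectivity of Proposition~\ref{prop:ess} can be invoked to reduce any remaining generation issue to the explicit generators $V_G(\chi)\otimes\mathcal{O}_{\mathcal{Y}}$ supplied by Proposition~\ref{prop:QB:weight}.
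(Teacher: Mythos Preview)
Your decomposition of $E\in\MM(\mathcal Y)_\delta\cap\mathcal W_i$ into a triangle with pieces in $\Upsilon_i^\dag\Coh(\mathcal Z_i)_{m_i}$ and $\mathcal W_{i+1}$ is fine, and the semiorthogonality is indeed inherited from the window theorem. The gap is in your claim that $\Upsilon_i^\dag(\tilde A)\in\MM(\mathcal Y)_\delta$. Proposition~\ref{prop:functUp} applied to the stratum $\mathcal S_i\subset\mathcal Y_i$ only yields $\Upsilon_i(\tilde A)\in\MM(\mathcal Y_i)_\delta$; pulling this back through the window equivalence $\Phi_i^{-1}$ gives an object of $\mathcal W_i$ whose restriction to $\mathcal Y_i$ is magic, but that does \emph{not} imply the full magic condition on $\mathcal Y$. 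Your assertion that ``the extra $\mathcal W_i$-constraint renders the magic inequalities at cocharacters landing in previously removed strata automatic'' is exactly the content of Proposition~\ref{prop:QBW}, which is proved \emph{after} and \emph{using} the present proposition---so invoking it here is circular. Concretely, the $\mathcal W_i$-constraint only bounds the $\lambda_j$-weights on the centers $\mathcal Z_j$ for $j<i$, whereas the magic condition requires the full polytope bound of Proposition~\ref{prop:QB:weight} for every cocharacter.

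The paper avoids this by an induction on $i$ that you omit entirely. The key device is to work not with $\Upsilon_i$ from the open center $\mathcal Z_i$ but with $\overline\Upsilon_i$ from the closed model $\overline{\mathcal Z}_i=Y^{\lambda_i}/G^{\lambda_i}$: Lemma~\ref{lem:QB:rest} (via Borel--Weil--Bott) shows $\overline\Upsilon_i$ lands directly in $\MM(\mathcal Y)_\delta$, and the essential surjectivity of Proposition~\ref{prop:ess} lets one lift $\tilde A$ to $\overline{\mathcal Z}_i$. One then uses the inductive SOD (\ref{sod:qb:ind}) for $j<i$ to project $\overline\Upsilon_i(\overline{\tilde A})$ into $\MM(\mathcal Y)_\delta\cap\mathcal W_i$ and identifies the result with $\Upsilon_i^\dag(\tilde A)$ via Lemma~\ref{lem:extend}. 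For the reverse inclusion the paper builds the triangle from a coproduct-like functor $\Delta_i$ (Lemma~\ref{lem:delta}) rather than from the window truncation directly; this lets one control the weight-$m_i$ piece on $\overline{\mathcal S}_i$ and deduce $E'\in\mathcal W_{i+1}$. Your two-parameter-torus sketch for $\tilde A\in\MM(\mathcal Z_i)_{\delta_i}$ is on the right track but is again handled in the paper by working on $\overline{\mathcal Z}_i$ with the explicit highest-weight generators of Proposition~\ref{prop:QB:weight}.
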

    \begin{proof}
We prove the proposition by the induction on $i$. 
The base case of $i=0$ is obvious. We
assume that the semiorthogonal decomposition
\begin{align}\label{sod:qb:ind}
\MM(\mathcal{Y})_{\delta} \cap \mathcal{W}_j=
\langle \Upsilon_j^{\dag}(\MM(\mathcal{Z}_j)_{\delta_j}), \MM(\mathcal{Y})_{\delta} \cap \mathcal{W}_{j+1} \rangle
\end{align}
holds for all $j\leq i-1$, and show that (\ref{sod:qb}) holds. 
    
        By Theorem~\ref{thm:window} for GIT quotient stacks~\cite{MR3895631, halp}, 
        we have the semiorthogonal decomposition 
        \begin{align}\label{window}
        \mathcal{W}_i=\langle 
            \Upsilon_i^{\dag}\Coh(\mathcal{Z}_i)_{\leq m_i}, \mathcal{W}_{i+1}, \Upsilon_i^{\dag}\Coh(\mathcal{Z}_i)_{>m_i} \rangle. 
        \end{align}
        Here for an interval $I\subset \mathbb{R}$, we have denoted by 
        $\Upsilon_i^{\dag}\Coh(\mathcal{Z}_i)_I$ the subcategory generated by 
        $\Upsilon_i^{\dag}\Coh(\mathcal{Z}_i)_a$ for $a \in I$. 
        Since we have 
        \begin{align*}
            \MM(\mathcal{Z}_i)_{\delta_i} \subset \Coh(\mathcal{Z}_i)_{m_i}
        \end{align*}
        the right-hand side of (\ref{sod:qb}) is semiorthogonal. 

        We next show that the right-hand side of (\ref{sod:qb}) is 
        contained in the left-hand side. 
        It is enough to show that the functor (\ref{up:dag}) restricts 
        to the functor 
        \begin{align}\label{rest:Updag}
            \Upsilon_i^{\dag} \colon \MM(\mathcal{Z}_i)_{\delta_i} \to \MM(\mathcal{Y})_{\delta} \cap \mathcal{W}_i. 
        \end{align}     
   We set 
\begin{align*}
    \overline{\mathcal{S}}_i:=Y^{\lambda_i\geq 0}/G^{\lambda_i \geq 0}, \ 
    \overline{\mathcal{Z}}_i:=Y^{\lambda_i}/G^{\lambda_i}. 
\end{align*}
Note that $\mathcal{S}_i$ and $\mathcal{Z}_i$ are open substacks of 
$\overline{\mathcal{S}}_i$, $\overline{\mathcal{Z}}_i$, respectively. 
We have the following diagram 
\begin{align}\notag
\xymatrix{
\mathcal{Z}_i \ar@<-0.3ex>@{^{(}->}[d] \ar@/^10pt/[r]&
\mathcal{S}_i \ar@<-0.3ex>@{^{(}->}[d] \ar[l]^-{q_i} \inclusion_-{p_i} &
\mathcal{Y}_i \ar@<-0.3ex>@{^{(}->}[d] \\
\overline{\mathcal{Z}}_i  \ar@/_10pt/[r]& \overline{\mathcal{S}}_i \ar[l]_-{\overline{q}_i} \ar[r]^-{\overline{p}_i} & \mathcal{Y}.
}
\end{align}
Here each vertical arrows are open immersions. 
Since $\overline{p}_i$ is proper, we 
have the following functor
\begin{align}\label{ov:up:dag}
        \overline{\Upsilon}_i=\overline{p}_{i\ast}\overline{q}_{i}^{\ast} \colon 
    \Coh(\overline{\mathcal{Z}}_i) \to \Coh(\mathcal{Y}). 
\end{align}
By Lemma~\ref{lem:QB:rest} below, the functor (\ref{ov:up:dag})
restricts to the functor 
\begin{align}\label{restrict:QBdag}
\overline{\Upsilon}_i \colon \MM(\overline{\mathcal{Z}}_i)_{\delta_i} \to \MM(\mathcal{Y})_{\delta}.     
\end{align}

For $\mathcal{E} \in \MM(\mathcal{Z}_i)_{\delta_i}$, let $\overline{\mathcal{E}} \in \MM(\overline{\mathcal{Z}}_i)_{\delta_i}$ be its lift 
which exists by Proposition~\ref{prop:ess}.
We consider the object 
\begin{align}\notag
\overline{\Upsilon}_i(\overline{\mathcal{E}}) \in \MM(\mathcal{Y})_{\delta}.
\end{align}
By the induction hypothesis (\ref{sod:qb:ind}), we have the semiorthogonal decomposition 
\begin{align}\label{ind:tr}
    \MM(\mathcal{Y})_{\delta}=\langle \Upsilon_1^{\dag}\MM(\mathcal{Z}_1)_{\delta_1}, \ldots, 
    \Upsilon_{i-1}^{\dag} \MM(\mathcal{Z}_{i-1})_{\delta_{i-1}}, \MM(\mathcal{Y})_{\delta} \cap \mathcal{W}_i
    \rangle. 
\end{align}
Let $\mathrm{pr} \colon \MM(\mathcal{Y})_{\delta} \to \MM(\mathcal{Y})_{\delta} \cap \mathcal{W}_i$
be the projection with respect to the above semiorthogonal decomposition, and set 
\begin{align*}
    \mathcal{F}=\mathrm{pr}\overline{\Upsilon}_i(\overline{\mathcal{E}}) \in 
    \MM(\mathcal{Y})_{\delta} \cap \mathcal{W}_i. 
\end{align*}
Let $\Phi_i$ be the equivalence (\ref{eq:Phii}). 
Then 
\begin{align*}
\Phi_i(\mathcal{F})=\mathcal{F}|_{\Y_i} \cong\overline{\Upsilon}_i(\overline{\mathcal{E}})|_{\mathcal{Y}_i}
\end{align*}
and it is isomorphic to 
$\Upsilon_i(\mathcal{E})$ by Lemma~\ref{lem:extend} below. 
By the equivalence \[\Phi_i \colon \mathcal{W}_i \stackrel{\sim}{\to} \Coh(\mathcal{Y}_i)\] in 
(\ref{eq:Phii}),
it follows that 
\begin{align*}
    \Upsilon_i^{\dag}(\mathcal{E})=\Phi_i^{-1}\Upsilon_i(\mathcal{E})=\mathcal{F}
    \in \MM(\mathcal{Y})_{\delta} \cap \mathcal{W}_i. 
\end{align*}
Therefore the functor (\ref{up:dag}) restricts to the functor (\ref{rest:Updag}). 

Finally we prove that the left-hand side in (\ref{sod:qb}) is contained in the right 
hand side. In Lemma~\ref{lem:delta} below, we will construct a functor
\begin{align*}\Delta_i\colon \MM(\mathcal{Y})_{\delta}\to 
\MM(\overline{\mathcal{Z}}_i)_{\delta_i}
\end{align*}
with a natural transform $\id \to \overline{\Upsilon}_i \Delta_i$. 
For $\E \in \MM(\Y)_{\delta} \cap \mathcal{W}_i$, let 
\begin{align}\label{dist:E}
\E \to \ol{\Upsilon}_i \Delta_i(\E)
\end{align}
be the natural morphism. 
Since $\ol{\Upsilon}_i \Delta_i(\E) \in \MM(\Y)_{\delta}$, 
by the induction hypothesis (\ref{ind:tr})
there is a distinguished triangle 
\begin{align*}
    \mathrm{pr}\ol{\Upsilon}_i\Delta_i(\E) \to \ol{\Upsilon}_i \Delta_i(\E) \to A
\end{align*}
such that 
\begin{align*}
    \mathrm{pr}\ol{\Upsilon}_i\Delta_i(\E) \in \MM(\Y)_{\delta} \cap \mathcal{W}_i, \ 
    A \in \langle \Upsilon_i^{\dag}\MM(\zZ_j)_{\delta_j} : 1\leq j\leq i-1  \rangle. 
\end{align*}
Since $\mathcal{E} \in \MM(\mathcal{Y})_{\delta} \cap \mathcal{W}_i$, 
the 
composition 
\begin{align*}
    \E \to  \ol{\Upsilon}_i \Delta_i(\E) \to A
    \end{align*}
is a zero morphism by the semiorthogonal decomposition (\ref{ind:tr}). 

Therefore the morphism (\ref{dist:E}) factors through 
a morphism $\E \to \mathrm{pr}\ol{\Upsilon}_i \Delta_i(\E)$. 
We take the distinguished triangle 
\begin{align}\label{tr:E'}
    \E' \to \E \to \mathrm{pr}\ol{\Upsilon}_i \Delta_i(\E). 
\end{align}
It is enough to show that 
\begin{align}\label{enough:show}
\E' \in \MM(\Y)_{\delta} \cap \mathcal{W}_{i+1}, \ \mathrm{pr}\ol{\Upsilon}_i \Delta_i(\E)
\in \Upsilon_i^{\dag}\MM(\zZ_i)_{\delta_i}. 
\end{align}
We have 
\begin{align}\label{isom:Upsilon}
(\mathrm{pr}\ol{\Upsilon}_i \Delta_i(\E))|_{\Y_i} \cong 
(\ol{\Upsilon}_i \Delta_i(\E))|_{\Y_i} \cong \Upsilon_i(\Delta_i(\E)|_{\zZ_i})
\end{align}
where the last isomorphism follows from Lemma~\ref{lem:extend}. 
Then from the equivalence (\ref{eq:Phii}), we have 
\begin{align*}
   \mathrm{pr}\ol{\Upsilon}_i \Delta_i(\E)\cong\Upsilon_i^{\dag}(\Delta_i(\E)|_{\zZ_i})
\in \Upsilon_i^{\dag}\MM(\zZ_i)_{\delta_i}.
\end{align*}
Therefore the second condition of (\ref{enough:show}) holds. 

As for the first condition of (\ref{enough:show}), we take the restriction of (\ref{tr:E'})
to $\sS_i$ 
\begin{align*}
    \E'|_{\sS_i} \to \E|_{\sS_i} \to (\mathrm{pr}\ol{\Upsilon}_i \Delta_i(\E))|_{\sS_i}
    \cong \Upsilon_i(\Delta_i(\E)|_{\zZ_i})|_{\sS_i}. 
\end{align*}
Here the last isomorphism is due to (\ref{isom:Upsilon}). 
By unraveling the construction of $\Delta_i$ in Lemma~\ref{lem:delta}, the second arrow in the 
above triangle is identified with the canonical map 
\begin{align}\label{map:betam}
    \E|_{\sS_i} \to p_i^{\ast}p_{i\ast}\beta_{m_i}(\E|_{\sS_i})
\end{align}
obtained by taking the adjoint of (\ref{wt:trun}) and restricting to $\mathcal{S}_i$. 
As in the proof of Proposition~\ref{prop:functUp}, the 
objects in (\ref{map:betam}) have $\lambda_i$-weights in 
$[m_i, m_i+n_{\lambda_i}]$ on $\zZ_i$, such that the weight $m_i$-part is an 
isomorphism. 
Therefore $\mathcal{E}'|_{\zZ_i}$ has $\lambda_i$-weights contained in 
$(m_i, m_i+n_{\lambda_i}]$, which implies 
that $\E' \in \MM(\Y)_{\delta} \cap \mathcal{W}_{i+1}$. 
\end{proof}

We have postponed proofs of some lemmas in the above proposition, 
which are given below.

\begin{lemma}\label{lem:QB:rest}
    The functor (\ref{ov:up:dag}) restricts to the functor (\ref{restrict:QBdag}). 
\end{lemma}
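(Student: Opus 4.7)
The plan is to mirror the argument of Proposition~\ref{prop:functUp}, extending it to the proper (but not closed) morphism $\overline{p}_i\colon \overline{\mathcal{S}}_i \to \mathcal{Y}$. Let $A \in \MM(\overline{\mathcal{Z}}_i)_{\delta_i}$ and let $\nu\colon \bgm \to \mathcal{Y}$ be arbitrary. Since $\overline{\Upsilon}_i(A) = \overline{p}_{i\ast}\overline{q}_i^{\ast}A$ is supported on $\overline{p}_i(\overline{\mathcal{S}}_i)$, we may restrict to $\nu$ whose image lies in this support. First, I would apply the same reduction step as in the proof of Proposition~\ref{prop:functUp}: using upper semicontinuity of weights together with the semiorthogonal decomposition (\ref{sod:S}) for $\overline{\mathcal{S}}_i$, which implies that the class $[\mathbb{L}_{\mathcal{Y}}|_{\overline{\mathcal{S}}_i}]$ is pulled back from $\overline{\mathcal{Z}}_i$ via $\overline{q}_i$, one replaces $\nu$ by its retraction through the center, i.e.\ a map factoring as $\bgm \to \overline{\mathcal{Z}}_i \xrightarrow{r} \mathcal{Y}$.

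Next I would use derived base change along the Cartesian square
\begin{align*}
    \xymatrix{
    \bgm \times_{\mathcal{Y}} \overline{\mathcal{S}}_i \ar[r] \ar[d] \diasquare & \overline{\mathcal{S}}_i \ar[d]^{\overline{p}_i} \\
    \bgm \ar[r]^{\nu} & \mathcal{Y}
    }
\end{align*}
to identify $\nu^{\ast}\overline{\Upsilon}_i(A)$ with a proper pushforward of the pullback of $A$. For $\nu$ corresponding (via $G$-conjugation) to a cocharacter $\mu$ at the origin of $Y$, the fiber product is the generalized flag variety $G/G^{\lambda_i \geq 0}$ with $\mathbb{G}_m$-action via $\mu$. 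Applying the identity
\[
    [\mathbb{L}_{\mathcal{Y}}|_{\overline{\mathcal{S}}_i}] = [\overline{q}_i^{\ast}\mathbb{L}_{\overline{\mathcal{Z}}_i}] + [N_{\overline{\mathcal{S}}_i/\mathcal{Y}}^{\vee}] + [N_{\overline{\mathcal{S}}_i/\mathcal{Y}}]
\]
(the analogue of (\ref{sym:N}) coming from the symmetric condition on $\mathcal{Y}$), together with the shift $\delta - \delta_i = \frac{1}{2}\langle \cdot, N_{\overline{\mathcal{S}}_i/\mathcal{Y}}\rangle$, the weight bound for $A$ in $\MM(\overline{\mathcal{Z}}_i)_{\delta_i}$ translates into the bound for $\overline{\Upsilon}_i(A)$ in $\MM(\mathcal{Y})_{\delta}$ at the $\mu$-fixed locus, exactly as in the final weight computation of Proposition~\ref{prop:functUp}.

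The main obstacle, absent in Proposition~\ref{prop:functUp}, is controlling the $\mathbb{G}_m$-weights of the proper pushforward along the flag variety fibers of $\overline{p}_i$. The strategy I would pursue is to factor $\overline{p}_i$ (after suitable base change along $\nu$) through an iterated tower of projective bundles via a Bott--Samelson type resolution of the flag variety, and then apply Lemma~\ref{lem:wtbound} at each projective bundle step to bound the $\mathbb{G}_m$-weights of global sections in terms of the weights on fixed points. Combined with the weight identity above, this would yield the required membership $\overline{\Upsilon}_i(A) \in \MM(\mathcal{Y})_{\delta}$.
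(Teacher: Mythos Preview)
Your approach diverges substantially from the paper's, which is far more direct. The paper exploits that both $\mathcal{Y}=Y/G$ and $\overline{\mathcal{Z}}_i=Y^{\lambda_i}/G^{\lambda_i}$ are quotients of self-dual representations, so Proposition~\ref{prop:QB:weight} applies: $\MM(\overline{\mathcal{Z}}_i)_{\delta_i}$ is generated by the vector bundles $V_{G^{\lambda_i}}(\chi')\otimes\mathcal{O}_{\overline{\mathcal{Z}}_i}$ with $\chi'+\rho_i-\delta_i$ in the relevant polytope, and likewise for $\MM(\mathcal{Y})_\delta$. The functor $\overline{\Upsilon}_i$ applied to such a generator is parabolic induction of $V_{G^{\lambda_i}}(\chi')$, which is computed by Borel--Weil--Bott; one then checks that the resulting highest weights lie in $\mathbf{W}$. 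No weight-by-weight argument along arbitrary $\nu$ is needed.

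Your proposal has a genuine gap in the first reduction step. In Proposition~\ref{prop:functUp} the retraction works because $p\colon\mathcal{S}\hookrightarrow\mathcal{Y}$ is a \emph{closed immersion}, so any $\nu$ landing in the support of $\Upsilon(A)$ factors uniquely through $\mathcal{S}$, whence you can compose with $q$. Here $\overline{p}_i\colon\overline{\mathcal{S}}_i\to\mathcal{Y}$ is only proper, with flag-variety fibers, so a map $\nu\colon\bgm\to\mathcal{Y}$ landing in $\overline{p}_i(\overline{\mathcal{S}}_i)$ need not lift to $\overline{\mathcal{S}}_i$ at all, and even when it does the lift is not unique. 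The sentence ``one replaces $\nu$ by its retraction through the center'' therefore has no meaning here. The correct reduction is instead to invoke Proposition~\ref{prop:QB:weight} directly to reduce to $\nu$ at the origin with arbitrary cocharacter $\mu$; but once you do that, the base-change fiber is $G/G^{\lambda_i\geq 0}$ and bounding the $\mu$-weights of cohomology of the associated bundle is precisely the content of Borel--Weil--Bott. Your Bott--Samelson plus Lemma~\ref{lem:wtbound} strategy would essentially reprove BWB from scratch, which is circuitous when the theorem is available.
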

\begin{proof}
    Using the descriptions of $\MM(\mathcal{Y})_{\delta}$ and $\MM(\overline{\mathcal{Z}}_i)_{\delta_i}$ 
    with respect to the highest weights of the generating vector bundles
    in Proposition~\ref{prop:QB:weight}, 
    the lemma follows from the Borel-Weil-Bott theorem. For more details, see the argument of~\cite[Proposition~8.3]{PTtop}. 
\end{proof}

\begin{lemma}\label{lem:extend}
    For any $\overline{\mathcal{E}} \in \Coh(\overline{\mathcal{Z}_i})$,
    the object $\Upsilon_i(\overline{\mathcal{E}}|_{\mathcal{Z}_i})$ is isomorphic 
    to $\overline{\Upsilon}_i(\overline{\mathcal{E}})|_{\mathcal{Y}_i}$. 
\end{lemma}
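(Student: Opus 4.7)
The plan is to deduce the claim from flat base change applied to two Cartesian squares relating $(\mathcal{S}_i, \mathcal{Z}_i, \mathcal{Y}_i)$ to $(\overline{\mathcal{S}}_i, \overline{\mathcal{Z}}_i, \mathcal{Y})$. Concretely, I will show that the open immersion $\mathcal{S}_i \hookrightarrow \overline{\mathcal{S}}_i$ fits into Cartesian diagrams
\begin{align*}
\xymatrix{
\mathcal{S}_i \ar[r]^-{q_i} \ar@{^{(}->}[d] & \mathcal{Z}_i \ar@{^{(}->}[d] \\
\overline{\mathcal{S}}_i \ar[r]^-{\overline{q}_i} & \overline{\mathcal{Z}}_i
}
\qquad
\xymatrix{
\mathcal{S}_i \ar[r]^-{p_i} \ar@{^{(}->}[d] & \mathcal{Y}_i \ar@{^{(}->}[d] \\
\overline{\mathcal{S}}_i \ar[r]^-{\overline{p}_i} & \mathcal{Y}.
}
\end{align*}
Granting both, flat base change along the vertical open immersions yields a chain of canonical isomorphisms
\begin{align*}
\overline{\Upsilon}_i(\overline{\mathcal{E}})|_{\mathcal{Y}_i}
= \overline{p}_{i*}(\overline{q}_i^{\ast}\overline{\mathcal{E}})|_{\mathcal{Y}_i}
\cong p_{i*}\bigl((\overline{q}_i^{\ast}\overline{\mathcal{E}})|_{\mathcal{S}_i}\bigr)
\cong p_{i*}q_i^{\ast}(\overline{\mathcal{E}}|_{\mathcal{Z}_i})
= \Upsilon_i(\overline{\mathcal{E}}|_{\mathcal{Z}_i}),
\end{align*}
which is the desired isomorphism.

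The first Cartesian square is essentially by construction: under Assumption~\ref{assum:Z}, $\mathcal{Z}_i = \overline{\mathcal{Z}}_i^{\ell_{\mathcal{Z}_i}\text{-ss}}$ is an open substack of $\overline{\mathcal{Z}}_i$, and $\mathcal{S}_i$ is the open substack of $\overline{\mathcal{S}}_i = Y^{\lambda_i \geq 0}/G^{\lambda_i \geq 0}$ consisting of points whose $\lambda_i$-limit is $\ell_{\mathcal{Z}_i}$-semistable, i.e.\ $\mathcal{S}_i = \overline{q}_i^{-1}(\mathcal{Z}_i)$. For the second square, $\overline{p}_i(\mathcal{S}_i) \subset \mathcal{Y}_i$ holds because $\mathcal{S}_i$ is a closed substack of $\mathcal{Y}_i$; the real content is the reverse inclusion $\overline{p}_i^{-1}(\mathcal{Y}_i) \subset \mathcal{S}_i$. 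Given a point $y \in Y^{\lambda_i \geq 0}$ whose $G$-orbit lies in $\mathcal{Y}_i$, I need to show that its $\lambda_i$-limit is $\ell_{\mathcal{Z}_i}$-semistable; otherwise, an $\ell_{\mathcal{Z}_i}$-destabilizing cocharacter of $G^{\lambda_i}$ could be combined with $\lambda_i$ to produce a cocharacter of strictly larger Weyl-invariant norm destabilizing $y$, placing $y$ in an earlier Kempf--Ness stratum $\mathcal{S}_j$ with $j<i$ and contradicting $y \in \mathcal{Y}_i$. This refinement property of the Hesselink--Kempf--Ness--Kirwan stratification is standard and is recorded in~\cite{halp}.

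The main obstacle is precisely this identification $\mathcal{S}_i = \overline{p}_i^{-1}(\mathcal{Y}_i)$: it hinges on the numerical ordering of Kempf--Ness cocharacters rather than on any categorical input, and once it is granted, the conclusion follows formally from flat base change applied to the two diagrams above.
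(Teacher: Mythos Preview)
Your reduction to the identity $\mathcal{S}_i \cong \overline{\mathcal{S}}_i \times_{\mathcal{Y}} \mathcal{Y}_i$ and the subsequent base-change argument are correct, and this is exactly where the paper's proof also lands. The difference is in how this identity is established.

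The paper gives a purely topological argument: the natural map $\mathcal{S}_i \to \overline{\mathcal{S}}_i \times_{\mathcal{Y}} \mathcal{Y}_i$ is an open immersion (by the description of $\mathcal{S}_i$ as $\overline{q}_i^{-1}(\mathcal{Z}_i)$, which you also record), and it is a closed map since both source and target are proper over $\mathcal{Y}_i$. Since $Y^{\lambda_i\ge 0}$ is an affine space, $\overline{\mathcal{S}}_i$ is connected, hence so is its open substack $\overline{\mathcal{S}}_i \times_{\mathcal{Y}} \mathcal{Y}_i$, and an open-and-closed embedding into a connected target is an isomorphism. This avoids any appeal to the numerical structure of the stratification and does not use Assumption~\ref{assum:Z}. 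Your route instead invokes the recursive/refinement property of the Hesselink--Kempf--Ness--Kirwan stratification from~\cite{halp}; this is correct but is an external input, and your sketch of it is slightly imprecise (what forces $y$ into an earlier stratum is not a larger norm per se, but a strictly smaller normalized value $\langle \ell,\lambda\rangle/\lVert\lambda\rVert$ for the combined cocharacter). The paper's approach is shorter and more self-contained; yours has the advantage of explaining \emph{why} the inclusion holds in terms of the stability data.
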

\begin{proof}
    We have the natural morphism 
    \begin{align}\label{nat:mor}
    \mathcal{S}_i \to \overline{\mathcal{S}}_i\times_{\mathcal{Y}}\mathcal{Y}_i.
    \end{align}
By the base change, it is enough to show that (\ref{nat:mor}) is an isomorphism.  
By the definition of $\Theta$-stratum, the morphism (\ref{nat:mor})    
is an isomorphism onto the open substack of the right-hand side, given by the preimage of 
$\mathcal{Z}_i \subset \overline{\mathcal{Z}}_i$ 
under the composition \begin{align*}\overline{\mathcal{S}}_i \times_{\mathcal{Y}} \mathcal{Y}_i \to 
\overline{\mathcal{S}}_i \to \overline{\mathcal{Z}}_i.\end{align*}

On the other hand, the morphism (\ref{nat:mor}) is
a closed morphism as both sides are proper over $\mathcal{Y}_i$. 
Therefore (\ref{nat:mor}) is an open immersion whose image is closed. 
Since $Y^{\lambda_i \geq 0}$ is an affine space, $\overline{\mathcal{S}_i}$ is connected, 
hence its Zariski open substack $\overline{\mathcal{S}}_i \times_{\mathcal{Y}}\mathcal{Y}_i$ is also connected. Therefore (\ref{nat:mor}) is 
an isomorphism. 
\end{proof}

\begin{lemma}\label{lem:delta}
There is a natural functor 
\begin{align*}
\Delta_i \colon \MM(\mathcal{Y})_{\delta} \to \MM(\overline{\mathcal{Z}}_i)_{\delta_i}
\end{align*}
with a natural transform $\id \to \ol{\Upsilon}_i \Delta_i$.
\end{lemma}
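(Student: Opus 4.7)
The plan is to take
\[
\Delta_i(\mathcal{E}) := \overline{q}_{i*}\,\beta_{\leq m_i}(\overline{p}_i^{\,*}\mathcal{E}),
\]
where $\beta_{\leq m_i}$ denotes the weight-truncation functor from the triangle (\ref{triangle:beta}), applied to the $\lambda_i$-induced semiorthogonal decomposition of $\Coh(\overline{\mathcal{S}}_i)$ analogous to~(\ref{sod:S}). The first step is to observe that, for any $\mathcal{E}\in\MM(\mathcal{Y})_{\delta}$, the magic-window condition applied to the cocharacter $\lambda_i\colon \bgm\to G$ centered at each $\lambda_i$-fixed point $y\in Y^{\lambda_i}$ bounds the $\lambda_i$-weights of $\overline{p}_i^{\,*}\mathcal{E}|_{\overline{\mathcal{Z}}_i}$ below by $m_i=-\tfrac{1}{2}n_{\lambda_i}+\langle\lambda_i,\delta\rangle$. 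Consequently $\beta_{<m_i}(\overline{p}_i^{\,*}\mathcal{E})=0$, so $\beta_{\leq m_i}(\overline{p}_i^{\,*}\mathcal{E})=\beta_{m_i}(\overline{p}_i^{\,*}\mathcal{E})$ is pure of $\lambda_i$-weight $m_i$. In particular, under the equivalence $\overline{q}_i^{\,*}\colon \Coh(\overline{\mathcal{Z}}_i)_{m_i}\xrightarrow{\sim}\Coh(\overline{\mathcal{S}}_i)_{m_i}$, both adjunction maps become isomorphisms on this subcategory.

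Next, I would verify that $\Delta_i(\mathcal{E})\in\MM(\overline{\mathcal{Z}}_i)_{\delta_i}$. For a test map $\nu\colon \bgm\to \overline{\mathcal{Z}}_i$, its composition with the zero section $\overline{\mathcal{Z}}_i\hookrightarrow \overline{\mathcal{S}}_i\xrightarrow{\overline{p}_i}\mathcal{Y}$ yields a map $\bgm\to\mathcal{Y}$; the magic-window condition for $\mathcal{E}$ along this composite, together with the identity $[N_{\overline{\mathcal{S}}_i/\mathcal{Y}}]=[\mathbb{L}_{\overline{\mathcal{S}}_i/\overline{\mathcal{Z}}_i}]$ (which follows from Lemma~\ref{em:sym}, since $\overline{\mathcal{Y}}:=Y/G$ is symmetric and $\overline{\mathcal{S}}_i$ is a $\Theta$-stratum of $\overline{\mathcal{Y}}$), converts into the required weight bound for $\Delta_i(\mathcal{E})$ on $\overline{\mathcal{Z}}_i$ relative to $\delta_i=\delta+\tfrac{1}{2}(\det N_{\overline{\mathcal{S}}_i/\mathcal{Y}})|_{\overline{\mathcal{Z}}_i}$. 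This is the same bookkeeping that appears in the proof of Proposition~\ref{prop:functUp}, with the normal-bundle halves producing exactly the shift from $\delta$ to $\delta_i$.

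Finally, the natural transformation $\id\to \overline{\Upsilon}_i\Delta_i$ is produced as the composite
\[
\mathcal{E}\xrightarrow{\;\eta\;}\overline{p}_{i*}\overline{p}_i^{\,*}\mathcal{E}
\xrightarrow{\;\overline{p}_{i*}(\mathrm{can})\;}\overline{p}_{i*}\,\beta_{\leq m_i}(\overline{p}_i^{\,*}\mathcal{E})
\xleftarrow{\;\sim\;}\overline{p}_{i*}\overline{q}_i^{\,*}\overline{q}_{i*}\,\beta_{\leq m_i}(\overline{p}_i^{\,*}\mathcal{E})
=\overline{\Upsilon}_i\Delta_i(\mathcal{E}),
\]
where $\eta$ is the unit of $\overline{p}_i^{\,*}\dashv \overline{p}_{i*}$, $\mathrm{can}$ is the truncation map from (\ref{triangle:beta}), and the last arrow is the inverse of the counit $\overline{q}_i^{\,*}\overline{q}_{i*}\to\id$, which is an isomorphism when evaluated on pure weight-$m_i$ objects by the preceding purity statement. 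The identification of this transform with the map $\mathcal{E}|_{\mathcal{S}_i}\to p_i^{\,*}p_{i*}\beta_{m_i}(\mathcal{E}|_{\mathcal{S}_i})$ used in the proof of Proposition~\ref{prop:sod:qb} will then follow from flat base change applied to the Cartesian square $\mathcal{S}_i=\overline{\mathcal{S}}_i\times_{\mathcal{Y}}\mathcal{Y}_i$ furnished by Lemma~\ref{lem:extend}. The main obstacle will be verifying the magic-window condition for $\Delta_i(\mathcal{E})$ against cocharacters $\nu\colon \bgm\to \overline{\mathcal{Z}}_i$ that are not a priori compatible with $\lambda_i$: here one uses that for any such $\nu$, after conjugation the one-parameter subgroup $\lambda_i$ lies in the centralizer of the image of $\nu$ in $G^{\lambda_i}$, so that $\lambda_i$ and $\nu$ span a two-parameter subgroup, reducing the required estimate to the two-weight computation already carried out in Proposition~\ref{prop:functUp}.
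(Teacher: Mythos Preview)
Your definition $\Delta_i(\mathcal{E}) = \overline{q}_{i*}\beta_{\leq m_i}(\overline{p}_i^{\,*}\mathcal{E})$ does not work: the map $\overline{q}_i\colon \overline{\mathcal{S}}_i\to\overline{\mathcal{Z}}_i$ is an affine-bundle projection (with fibers having strictly positive $\lambda_i$-weights), so $\overline{q}_{i*}$ does not preserve coherence, and on weight-$m_i$ objects neither adjunction map is an isomorphism. Concretely, for $A\in\Coh(\overline{\mathcal{Z}}_i)_{m_i}$ one has $\overline{q}_{i*}\overline{q}_i^{\,*}A \cong A\otimes \overline{q}_{i*}\mathcal{O}_{\overline{\mathcal{S}}_i}$, which carries an entire tower of weights $\leq m_i$; in the toy case $\overline{q}_i\colon \mathbb{A}^1/\mathbb{G}_m\to B\mathbb{G}_m$ this is $A\otimes k[x]$ with $x$ of weight $-1$. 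So your $\Delta_i(\mathcal{E})$ is not even coherent in general, and the backward arrow in your natural-transformation composite is not an isomorphism. The correct inverse of the equivalence $\overline{q}_i^{\,*}\colon \Coh(\overline{\mathcal{Z}}_i)_{m_i}\xrightarrow{\sim}\Coh(\overline{\mathcal{S}}_i)_{m_i}$ is restriction along the zero section $\overline{\mathcal{Z}}_i\hookrightarrow\overline{\mathcal{S}}_i$, and this is what the paper uses: $\Delta_i(\mathcal{E})$ is simply defined as the unique $A$ with $\overline{q}_i^{\,*}A\cong\beta_{m_i}(\overline{p}_i^{\,*}\mathcal{E})$, and the natural transform is obtained by applying the adjunction $\overline{p}_i^{\,*}\dashv\overline{p}_{i*}$ directly to the truncation map $\overline{p}_i^{\,*}\mathcal{E}\to\overline{q}_i^{\,*}\Delta_i(\mathcal{E})$.

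Even with this fix, your plan for checking $\Delta_i(\mathcal{E})\in\MM(\overline{\mathcal{Z}}_i)_{\delta_i}$ via an arbitrary test map $\nu$ and a ``two-parameter torus'' reduction is underspecified: the $\beta_{m_i}$-truncation is taken with respect to $\lambda_i$-weights, and it is not clear that this commutes well enough with restriction along an unrelated $\nu$ to transport the magic bound. The paper avoids this by invoking Proposition~\ref{prop:QB:weight}: it suffices to check the claim on generators $V_G(\chi)\otimes\mathcal{O}_{\mathcal{Y}}$ with $\chi+\rho-\delta\in\mathbf{W}$, and for these the truncation $\beta_{m_i}(\overline{p}_i^{\,*}(V_G(\chi)\otimes\mathcal{O}_{\mathcal{Y}}))$ is computed explicitly as $V_{G^{\lambda_i}}(\chi)\otimes\mathcal{O}_{\overline{\mathcal{Z}}_i}$ (or zero), after which membership in $\MM(\overline{\mathcal{Z}}_i)_{\delta_i}$ follows from the polytope identity $\chi+\rho-\delta=-\tfrac{1}{2}Y^{\lambda_i>0}+\psi+\rho^{\lambda_i}$ on the face $F(\lambda_i)$ of $\mathbf{W}$ (see Subsection~\ref{subsec:nablai}).
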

\begin{proof}
The functor $\Delta_i$ is constructed as a coproduct-like functor in~\cite[Lemma~5.1]{PT1}. 
For $\E \in \MM(\Y)_{\delta}$, 
by the definition of $\MM(\Y)_{\delta}$ 
the object $\ol{p}_i^{\ast}\E \in \Coh(\ol{\sS}_i)$
satisfies that
\begin{align}\label{wt:EZ}
\wt_{\lambda_i}(\mathcal{E}|_{\ol{\zZ}_i}) \subset [m_i, m_i+n_{\lambda_i}]. 
\end{align}
As in the proof of Lemma~\ref{em:sym} (see~\cite[Proposition~1.1.2, Lemma~1.5.6]{HalpK32}),
we have the semiorthogonal decomposition 
\begin{align*}
    \Coh(\overline{S}_i)=\langle \overline{q}_i^{\ast}\Coh(\overline{\mathcal{Z}_i})_j :
    j\in \mathbb{Z}\rangle
\end{align*}
and the condition (\ref{wt:EZ}) implies that
\begin{align*}
\overline{p}_i^{\ast}\mathcal{E}\in 
\langle \overline{q}_{i}^{\ast}\Coh(\overline{\mathcal{Z}_i})_{m_i}, \ 
\overline{q}_{i}^{\ast}\Coh(\overline{\mathcal{Z}_j})_{m_i+1}, \ldots, 
\overline{q}_{i}^{\ast}\Coh(\overline{\mathcal{Z}_j})_{m_i+n_{\lambda_i}}
\rangle.
\end{align*}

We have the weight truncation 
\begin{align}\label{wt:trun}
\ol{p}_i^{\ast} \mathcal{E} \to \beta_{m_i}(\ol{p}_i^{\ast}\E), \ 
\beta_{m_i}(\ol{p}_i^{\ast}\E)\in \overline{q}_{i}^{\ast}\Coh(\overline{\mathcal{Z}_j})_{m_i}
\end{align}
determined by the above semiorthogonal decomposition. 
The object \begin{align}\notag\Delta_i(\mathcal{E}) \in \Coh(\ol{\zZ}_i)_{m_i}\end{align} is defined to be the unique 
object such that 
$\beta_{m_i}(\ol{p}_i^{\ast}\E) \cong \ol{q}_i^{\ast}\Delta_i(\mathcal{E})$. 
Then using the description in terms of highest weight in Proposition~\ref{prop:QB:weight}, 
one can show that 
\begin{align}\label{nablai}
\Delta_i(\mathcal{E}) \in \MM(\ol{\zZ}_i)_{\delta_i} \subset \Coh(\ol{\zZ}_i)_{m_i}.
\end{align}

The proof of (\ref{nablai}) 
is the same as in the proof of~\cite[Lemma~5.1]{PT1}, 
where it is proven in a special example of the representation space of a quiver. 
In Subsection~\ref{subsec:nablai}, we explain how to generalize the proof in our setting. 

The morphism (\ref{wt:trun}) is $\overline{p}_i^{\ast}(\E) \to \ol{q}_i^{\ast}\Delta_i(\E)$. 
By adjunction, we obtain \[\E \to \ol{\Upsilon}_i\Delta_i(\E),\] giving 
the natural transform $\id \to \ol{\Upsilon}_i \Delta_i$. 
\end{proof}

We define $\MM(\mathcal{W}_i)_{\delta} \subset \mathcal{W}_i$ 
to be the subcategory such that the equivalence (\ref{eq:Phii}) restricts 
to the equivalence 
\begin{align*}
    \Phi_i \colon \MM(\mathcal{W}_i)_{\delta} \stackrel{\sim}{\to} \MM(\Y_i)_{\delta}. 
\end{align*}
We have the following proposition: 
\begin{prop}\label{prop:QBW}
    For each $i$, we have 
    \begin{align}\notag
    \MM(\Y)_{\delta} \cap \mathcal{W}_i=\MM(\mathcal{W}_i)_{\delta}.         
    \end{align}
\end{prop}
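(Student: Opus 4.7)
The plan is to establish the two inclusions $\MM(\Y)_{\delta} \cap \mathcal{W}_i \subset \MM(\mathcal{W}_i)_{\delta}$ and $\MM(\mathcal{W}_i)_{\delta} \subset \MM(\Y)_{\delta} \cap \mathcal{W}_i$ separately. The first is immediate from the compatibility of restriction with the defining weight conditions of the magic category: if $\E \in \MM(\Y)_{\delta} \cap \mathcal{W}_i$, then for any map $\nu \colon \bgm \to \Y_i$, composition with the open immersion $\Y_i \hookrightarrow \Y$ produces a map $\bgm \to \Y$, and the $\MM(\Y)_{\delta}$ weight bound at $\nu$ transfers verbatim to $\Phi_i(\E) = \E|_{\Y_i}$, giving $\Phi_i(\E) \in \MM(\Y_i)_\delta$, i.e., $\E \in \MM(\mathcal{W}_i)_{\delta}$.

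For the reverse inclusion, I would induct on $i$, with the trivial base case $i = 1$ where $\mathcal{W}_1 = \Coh(\Y)$, $\Phi_1 = \mathrm{id}$, and $\Y_1 = \Y$. For the inductive step, take $\E \in \MM(\mathcal{W}_{i+1})_{\delta}$; then $\E \in \mathcal{W}_{i+1} \subset \mathcal{W}_i$ and $\E|_{\Y_{i+1}} \in \MM(\Y_{i+1})_{\delta}$. By the inductive hypothesis, proving $\E \in \MM(\Y)_{\delta}$ reduces to proving $\E|_{\Y_i} \in \MM(\Y_i)_{\delta}$, which in turn is the following single-stratum claim on the open substack $\Y_i$ with $\Theta$-stratum $\sS_i$ and complement $\Y_{i+1}$: any $\mathcal{F} \in \Coh(\Y_i)$ whose $\lambda_i$-weights at $\zZ_i$ lie in $(m_i, m_i + n_{\lambda_i}]$ and whose restriction to $\Y_{i+1}$ lies in $\MM(\Y_{i+1})_{\delta}$ must itself lie in $\MM(\Y_i)_{\delta}$.

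To prove the single-stratum claim, I would apply the $\Y_i$-analogues of Proposition~\ref{prop:sod:qb} and Proposition~\ref{prop:ess}: the former supplies the decomposition
\[
\MM(\Y_i)_{\delta} = \langle \Upsilon_i \MM(\zZ_i)_{\delta_i},\; \MM(\Y_i)_{\delta} \cap \mathcal{W}(m_i+1)\rangle,
\]
where $\mathcal{W}(m_i+1) \subset \Coh(\Y_i)$ is the window subcategory of Theorem~\ref{thm:window} for $\sS_i \subset \Y_i$ at level $m_i+1$; the latter provides essential surjectivity $\MM(\Y_i)_{\delta} \twoheadrightarrow \MM(\Y_{i+1})_{\delta}$. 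Combining these, I produce $\widetilde{\mathcal{F}} \in \MM(\Y_i)_{\delta} \cap \mathcal{W}(m_i+1)$ with $\widetilde{\mathcal{F}}|_{\Y_{i+1}} = \mathcal{F}|_{\Y_{i+1}}$; since $\mathcal{F}$ and $\widetilde{\mathcal{F}}$ both lie in $\mathcal{W}(m_i+1)$, on which restriction to $\Y_{i+1}$ is an equivalence by Theorem~\ref{thm:window}, we conclude $\mathcal{F} = \widetilde{\mathcal{F}} \in \MM(\Y_i)_{\delta}$, as required.

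The main obstacle is that Proposition~\ref{prop:sod:qb} and Proposition~\ref{prop:ess} are formulated for the global quotient $\Y = Y/G$ with its full KN stratification, rather than for the open substack $\Y_i$. The resolution is that the underlying arguments---the window-theoretic decomposition based on Lemmas~\ref{lem:delta} and~\ref{lem:extend}, and the highest-weight analysis of~\cite[Proposition~3.22]{PThiggs} behind Proposition~\ref{prop:ess}---are local along the $\Theta$-stratification and extend without change to $\Y_i$, which inherits Assumption~\ref{assum:Z} by restriction. Carrying out these extensions in parallel with the inductive argument above yields a self-contained proof of the proposition.
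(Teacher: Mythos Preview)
Your easy inclusion and the overall shape of the inductive reduction are correct, but the direction of the induction hides a genuine gap. You induct \emph{upward} from the trivial base $i=1$, and the inductive step requires the single-stratum claim on $\Y_i$: that any $\mathcal{F}$ in the window $\mathcal{W}(m_i+1)\subset\Coh(\Y_i)$ with $\mathcal{F}|_{\Y_{i+1}}\in\MM(\Y_{i+1})_\delta$ already lies in $\MM(\Y_i)_\delta$. You propose to prove this via $\Y_i$-analogues of Proposition~\ref{prop:sod:qb} and Proposition~\ref{prop:ess}, asserting that their proofs ``extend without change''. They do not. The proofs of both results pass through the polytope description of $\MM(\Y)_\delta$ in Proposition~\ref{prop:QB:weight} (used in Lemmas~\ref{lem:QB:rest}, \ref{lem:delta}, and in Subsection~\ref{subsec:essurj} via Lemma~\ref{lem:magicgen}), which is available precisely because $\Y=Y/G$ is a quotient of an affine scheme by a reductive group. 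The open substack $\Y_i$ has no such description, so the arguments do not transfer. In fact the single-stratum statement you need is exactly Conjecture~\ref{conj:adjoint} for $\sS_i\subset\Y_i$, which the paper proves as Theorem~\ref{thm:qstack} only \emph{after} Proposition~\ref{prop:QBW}, using it as input; invoking it here would be circular.

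The paper avoids this by inducting \emph{downward}, with base case $i=k+1$. The base case is nontrivial: one must show $\MM(\Y)_\delta\cap\mathcal{W}_{k+1}\twoheadrightarrow\MM(\Y^{\ell\text{-ss}})_\delta$, which is done by perturbing $\delta\rightsquigarrow\delta\otimes\ell^{-\varepsilon}$ so that $\MM(\Y)_{\delta'}\subset\MM(\Y)_\delta\cap\mathcal{W}_{k+1}$, and then applying the \emph{global} Proposition~\ref{prop:ess}. The inductive step uses only the global Proposition~\ref{prop:sod:qb} together with Proposition~\ref{prop:functUp} (which holds for any $\Theta$-stratum): for $\E\in\MM(\mathcal{W}_i)_\delta$, the window semiorthogonal decomposition on $\mathcal{W}_i$ gives a triangle $\E'\to\E\to\E''$ with $\E'=\mathrm{pr}(\E)\in\MM(\mathcal{W}_{i+1})_\delta$ and $\E''\in\Upsilon_i^{\dag}\Coh(\zZ_i)_{m_i}$; the induction hypothesis places $\E'\in\MM(\Y)_\delta\cap\mathcal{W}_{i+1}$, whence $\E''\in\MM(\mathcal{W}_i)_\delta$, so $\Phi_i(\E'')\in\Upsilon_i\Coh(\zZ_i)_{m_i}\cap\MM(\Y_i)_\delta=\Upsilon_i\MM(\zZ_i)_{\delta_i}$ by Proposition~\ref{prop:functUp}, and Proposition~\ref{prop:sod:qb} finishes. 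The point is that this route never requires magic-category or essential-surjectivity statements intrinsic to the open $\Y_i$.
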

\begin{proof}
    Note that it is obvious that 
    \begin{align}\label{inc:obv}
\MM(\Y)_{\delta} \cap \mathcal{W}_i \subset \MM(\mathcal{W}_i)_{\delta}. 
    \end{align}
    Below we show the converse direction by the induction on 
    $i$. The base case is $i=k+1$, and in this case $\mathcal{Y}_{k+1}=\mathcal{Y}^{\mathrm{ss}}$. 
    We need to show that the restriction functor 
    \begin{align}\label{ess:0}
        \MM(\Y)_{\delta} \cap \mathcal{W}_{k+1} \to \MM(\Y^{\ell\text{-ss}})_{\delta}
    \end{align}
    is essentially surjective. 
    
    We perturb $\delta$ and take 
    $\delta'=\delta \otimes \ell^{-\varepsilon}$ for $0<\varepsilon \ll 1$. 
    By Proposition~\ref{prop:ess}, the restriction functor 
    \begin{align}\notag
\MM(\Y)_{\delta'} \to \MM(\Y^{\ell\text{-ss}})_{\delta'}=\MM(\Y^{\ell\text{-ss}})_{\delta}
    \end{align}
    is essentially surjective. Here the 
    last identity holds since for any $\nu \colon \bgm \to \Y^{\ell\text{-ss}}$ 
the weight of $\nu^{\ast}\ell$ is zero by the definition of $\ell$-semistability. 
On the other hand, since $\lambda_i$-weight of $\ell$ is strictly negative for each $i$, 
we have 
\begin{align*}
    \MM(\Y)_{\delta'} \subset \MM(\Y)_{\delta} \cap \mathcal{W}_{k+1}. 
\end{align*}
Therefore the functor (\ref{ess:0}) is essentially surjective. 

We take $i\leq k$, 
and by induction assume that
\begin{align}\label{ind:j}
    \MM(\Y)_{\delta} \cap \mathcal{W}_{j}=\MM(\mathcal{W}_j)_{\delta}
\end{align}
for all $i<j\leq k+1$. 
In what follows, we take 
$\E \in \MM(\mathcal{W}_i)_{\delta}$
and show that $\E\in \MM(\Y)_{\delta}\cap \mathcal{W}_i$. 
Note that we have the semiorthogonal decomposition, see (\ref{window})
\begin{align}\label{sod:W2}
    \mathcal{W}_i=\langle \Upsilon_i^{\dag} \Coh(\zZ_i)_{\leq m_i}, \mathcal{W}_{i+1}, 
    \Upsilon_i^{\dag}\Coh(\zZ_i)_{>m_i} \rangle. 
\end{align}
Let $\mathrm{pr} \colon \mathcal{W}_i \twoheadrightarrow \mathcal{W}_{i+1}$ be 
the projection with respect to the above semiorthogonal decomposition. 
We have the commutative diagram 
\begin{align}\notag
    \xymatrix{
\mathcal{W}_i \ar@{->>}[r]^-{\mathrm{pr}} \ar[d]_-{\Phi_i}^-{\sim} & \mathcal{W}_{i+1} 
\ar[d]^-{\Phi_{i+1}}_-{\sim} \\
\Coh(\mathcal{Y}_i) \ar@{->>}[r] & \Coh(\Y_{i+1})    
    }
\end{align}
where the bottom functor is the restriction functor. 
Since the latter functor sends $\MM(\Y_i)_{\delta}$ to $\MM(\Y_{i+1})_{\delta}$, 
the projection $\mathrm{pr}$ sends $\MM(\mathcal{W}_i)_{\delta}$ to $\MM(\mathcal{W}_{i+1})_{\delta}$. 

Moreover $\E|_{\zZ_i}$ has $\lambda_i$-weights in $[m_i, m_i+n_{\lambda_i}]$ 
since $\mathcal{E} \in \MM(\mathcal{W}_i)_{\delta}$. It follows that 
the semiorthogonal decomposition (\ref{sod:W2}) yields the distinguished 
triangle 
\begin{align}\label{dist:E''}
  \E' \to \E \to \E'', \ \E'=\mathrm{pr}(\mathcal{E}) \in \MM(\mathcal{W}_{i+1})_{\delta}, \ \E'' \in \Upsilon_i^{\dag}\Coh(\mathcal{Z}_i)_{m_i}.   
\end{align}
By the induction hypothesis (\ref{ind:j}), we have 
\begin{align}\notag
   \E' \in \MM(\Y)_{\delta} \cap \mathcal{W}_{i+1}. 
\end{align}
In particular we have $\E' \in \MM(\mathcal{W}_i)_{\delta}$ because 
\begin{align*}
   \MM(\Y)_{\delta} \cap \mathcal{W}_{i+1} \subset  \MM(\Y)_{\delta} \cap \mathcal{W}_{i}
   \subset \MM(\mathcal{W}_i)_{\delta}. 
\end{align*}
The last inclusion is due to (\ref{inc:obv}). 
By the distinguished triangle (\ref{dist:E''}), it follows that 
$\E'' \in \MM(\mathcal{W}_i)_{\delta}$. 

This implies that 
\begin{align*}
    \Phi_i(\E'') \in \Upsilon_i \Coh(\zZ_i)_{m_i} \cap \MM(\Y_i)_{\delta}=
    \Upsilon_i \MM(\zZ_i)_{\delta_i}
\end{align*}
where $\Phi_i$ is the equivalence (\ref{eq:Phii}) and the last identity is due to Proposition~\ref{prop:functUp}. 
Therefore $\E'' \in \Upsilon_i^{\dag}\MM(\zZ_i)_{\delta_i}$. 
By the distinguished triangle (\ref{dist:E''}),
we obtain 
\begin{align*}
    \E \in \langle \Upsilon_i^{\dag}\MM(\zZ_i)_{\delta_i}, \MM(\Y)_{\delta} \cap \mathcal{W}_{i+1}
    \rangle =\MM(\Y)_{\delta} \cap \mathcal{W}_i
\end{align*}
where the last identity is proved in Proposition~\ref{prop:sod:qb}. 
Therefore (\ref{ind:j}) also holds for $j=i$, and by induction 
we conclude the assertion. 
\end{proof}

\begin{thm}\label{thm:qstack}
Conjecture~\ref{conj:adjoint} holds for $\sS_i \subset \Y_i$
and $\delta \in \mathrm{Pic}(BG)_{\mathbb{R}}$.  
\end{thm}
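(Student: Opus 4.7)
The plan is to deduce both parts of Conjecture~\ref{conj:adjoint} for $\sS_i\subset\Y_i$ by transporting the semiorthogonal decomposition of Proposition~\ref{prop:sod:qb}, combined with the identification of Proposition~\ref{prop:QBW}, across the window equivalence $\Phi_i\colon\mathcal{W}_i\xrightarrow{\sim}\Coh(\Y_i)$ of Theorem~\ref{thm:window}. Concretely, Proposition~\ref{prop:sod:qb} gives the semiorthogonal decomposition
\[
\MM(\Y)_\delta\cap\mathcal{W}_i=\langle\, \Upsilon_i^{\dag}(\MM(\zZ_i)_{\delta_i}),\ \MM(\Y)_\delta\cap\mathcal{W}_{i+1}\,\rangle,
\]
and Proposition~\ref{prop:QBW} identifies both $\MM(\Y)_\delta\cap\mathcal{W}_i=\MM(\mathcal{W}_i)_\delta$ and $\MM(\Y)_\delta\cap\mathcal{W}_{i+1}=\MM(\mathcal{W}_{i+1})_\delta$. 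By the window equivalences, these two sides correspond respectively to $\MM(\Y_i)_\delta$ and $\MM(\Y_{i+1})_\delta$, so applying $\Phi_i$ (and using $\Phi_i\circ\Upsilon_i^{\dag}=\Upsilon_i$ from (\ref{up:dag})) transports the above to a semiorthogonal decomposition
\[
\MM(\Y_i)_\delta=\langle\, \Upsilon_i(\MM(\zZ_i)_{\delta_i}),\ j_!(\MM(\Y_{i+1})_\delta)\,\rangle, \qquad (\star)
\]
where $j_!$ is the composition $\MM(\Y_{i+1})_\delta\xrightarrow{\Phi_{i+1}^{-1}}\MM(\mathcal{W}_{i+1})_\delta\hookrightarrow\MM(\mathcal{W}_i)_\delta\xrightarrow{\Phi_i}\MM(\Y_i)_\delta$. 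Since restriction along $\Y_{i+1}\hookrightarrow\Y_i$ corresponds under the window equivalences to the inclusion $\mathcal{W}_{i+1}\subset\mathcal{W}_i$ followed by $\Phi_{i+1}$, we have $j^*\circ j_!\cong\mathrm{id}$, so $j_!$ is fully faithful.

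To upgrade $j_!$ to an actual left adjoint of $j^*$, I use the SOD $(\star)$ directly. Given $\E\in\MM(\Y_{i+1})_\delta$ and $\mathcal{F}\in\MM(\Y_i)_\delta$, the SOD triangle $\mathcal{F}_0\to\mathcal{F}\to j_!j^*\mathcal{F}$ with $\mathcal{F}_0\in\Upsilon_i(\MM(\zZ_i)_{\delta_i})$ combined with the semiorthogonality $\Hom(j_!(\cdot),\Upsilon_i(\cdot))=0$ and full faithfulness of $j_!$ give
\[
\Hom(j_!\E,\mathcal{F})\cong \Hom(j_!\E,j_!j^*\mathcal{F})\cong \Hom(\E,j^*\mathcal{F}),
\]
proving part (ii) of Conjecture~\ref{conj:adjoint}.

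For part (i) I re-run the entire machinery of Section~\ref{sec:magic} with the dual half-open window convention, replacing the intervals $(m_j,m_j+n_{\lambda_j}]$ in (\ref{wt:window}) by $[m_j,m_j+n_{\lambda_j})$ throughout. The window theorem (Theorem~\ref{thm:window}) is symmetric under this swap; the construction of $\Upsilon_i^{\dag}$, the coproduct-type functor $\Delta_i$ of Lemma~\ref{lem:delta} (built now by truncating onto the \emph{highest} weight $m_i+n_{\lambda_i}$ instead of the lowest $m_i$), and the arguments of Proposition~\ref{prop:functUp}, Proposition~\ref{prop:sod:qb}, and Proposition~\ref{prop:QBW} all adapt verbatim, the only structural change being that $\Upsilon_i^{\dag}(\MM(\zZ_i)_{\delta_i})$ now sits on the \emph{right} of the analogous SOD of $\MM(\mathcal{W}_i)_\delta$. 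Transporting via $\Phi_i$ as in the previous paragraph yields
\[
\MM(\Y_i)_\delta=\langle\, j_*(\MM(\Y_{i+1})_\delta),\ \Upsilon_i(\MM(\zZ_i)_{\delta_i})\,\rangle,
\]
with $j_*$ defined by the analogous composition; an argument dual to Step~3 shows $j^*\dashv j_*$.

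The main technical obstacle is not in the transport step, which is essentially formal, but in Step~4: one must verify that every lemma in the chain culminating in Proposition~\ref{prop:QBW}, most notably Lemma~\ref{lem:delta} and the essential surjectivity input from Proposition~\ref{prop:ess}, remains valid after the boundary swap. This is symmetric weight bookkeeping together with Borel--Weil--Bott as in Lemma~\ref{lem:QB:rest}, but it is the one place where a genuine (albeit parallel) rewriting of the argument is required rather than a formal invocation.
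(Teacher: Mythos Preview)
Your proof is correct and follows essentially the same route as the paper: combine Proposition~\ref{prop:sod:qb} with Proposition~\ref{prop:QBW}, then transport via the window equivalence $\Phi_i$ to obtain the semiorthogonal decomposition of $\MM(\Y_i)_\delta$, from which the adjunction is formal. The paper handles part~(i) exactly as you propose, by saying ``the proof for (i) is similar'' --- meaning the dual half-open window convention --- so your Step~4 is on target. One minor slip: in your Step~3 the SOD triangle should read $j_!j^*\mathcal{F}\to\mathcal{F}\to\mathcal{F}_0$ (with $\mathcal{F}_0\in\Upsilon_i(\MM(\zZ_i)_{\delta_i})$), not the other way around, given the convention $\langle A,B\rangle\Rightarrow\Hom(B,A)=0$; your vanishing $\Hom(j_!(\cdot),\Upsilon_i(\cdot))=0$ is stated correctly, and the adjunction argument goes through unchanged once the arrows are reversed.
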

\begin{proof}
We only prove Conjecture~\ref{conj:adjoint} (ii) 
for $\sS_i \subset \Y_i$, as the proof for (i) is similar. 

	By Proposition~\ref{prop:sod:qb} and Proposition~\ref{prop:QBW}, we have the 
	semiorthogonal decomposition 
    \begin{align*}
        \MM(\mathcal{W}_i)_{\delta}=\langle \Upsilon_i^{\dag}\MM(\mathcal{Z}_i)_{\delta_i},  \MM(\mathcal{W}_{i+1})_{\delta} \rangle. 
    \end{align*}
    By applying the functor $\Phi_i$ in (\ref{eq:Phii}), we obtain 
	\begin{align*}
		\MM(\Y_{i})_{\delta}=\langle \Upsilon_i \MM(\zZ_i)_{\delta_i}, \Phi_i \MM(\mathcal{W}_{i+1})_{\delta}\rangle. 
		\end{align*}
        
Let $j \colon \Y_{i+1} \hookrightarrow \Y_i$ be the open immersion.   
Then by the definition of $\MM(\mathcal{W}_{i+1})_{\delta}$, 
the pull-back $j^{\ast} \colon \Coh(\Y_i) \to \Coh(\Y_{i+1})$ restricts to the 
equivalence 
\begin{align*}
	j^{\ast} \colon \Phi_i \MM(\mathcal{W}_{i+1})_{\delta} \stackrel{\sim}{\to} 
	\MM(\Y_{i+1})_{\delta}. 
	\end{align*}  
The functor 
\begin{align*}j_{!}=(j^{\ast})^{-1} \colon 
	\MM(\Y_{i+1})_{\delta} \stackrel{\sim}{\to} \Phi_i \MM(\mathcal{W}_{i+1})_{\delta}
	\subset \MM(\Y_{i})_{\delta}
	\end{align*}
gives the fully-faithful left adjoint of $j^{\ast} \colon \MM(\Y_{i})_{\delta} \to \MM(\Y_{i+1})_{\delta}$ with semiorthogonal decomposition. 
\begin{align*}
		\MM(\Y_{i})_{\delta}=\langle \Upsilon_i \MM(\zZ_i)_{\delta_i}, j_{!}\MM(\Y_{i+1})_{\delta}\rangle. 
	\end{align*}
    Therefore Conjecture~\ref{conj:adjoint} (ii) holds for $\mathcal{S}_i \subset \mathcal{Y}_i$. 
\end{proof}

\subsection{The case of symmetric quivers}\label{subsec:sQ}
    Let $Q=(I, E)$ be a symmetric quiver, 
    where $I$ is the set of vertices and $E$ is the set of edges. 
    Here a quiver $Q$ is called \textit{symmetric} if for any $a, b \in I$, 
    the number of edges from $a$ to $b$ is the same as that from $b$ to $a$. 
    Let 
    $Y=R(d)$ be the representation space of $Q$ for $d=(d^a)_{a\in I} \in \mathbb{Z}^I$;
    \begin{align*}
        R(d)=\bigoplus_{e \in E} \Hom(V^{s(e)}, V^{t(e)})
    \end{align*}
    where $s(e) \in I$ is the source of $e$ and $t(e) \in I$ is the target
    of $e$, and $V^a$ is a $k$-vector space of dimension $d^a$. 
    
    Let $G(d)$ be given by 
    \begin{align*}
        G(d)=\prod_{a\in I} \mathrm{GL}(V^a). 
    \end{align*}
    It acts on $R(d)$ by the conjugation, and $R(d)$ is a self-dual $G(d)$-representation. 
The stack 
\begin{align*}\mathcal{Y}(d):=R(d)/G(d)
\end{align*}
is the moduli stack of $Q$-representations of dimension $d$. 

 For an element 
$(\mu_a)_{a\in I} \in \mathbb{R}^I$, and 
a non-zero dimension vector $d=(d^a)_{a \in I}$, we set 
\begin{align*}
    \mu(d)=\frac{\sum_{a\in I}\mu_a d^a}{\underline{d}} \in \mathbb{R}
\end{align*}
where $\underline{d}=\sum_{a\in I}d^a$. 
The above slope function determines $\mu$-stability on the category of 
representations of $Q$. 
We have the $\mu$-semistable locus 
\begin{align*}
    \Y(d)^{\mu\text{-ss}}=R(d)^{\mu\text{-ss}}/G(d) \subset \Y(d). 
\end{align*}
Let $\ell \in \mathrm{Pic}(BG(d))_{\mathbb{R}}$ be given by 
\begin{align*}
    \ell :=\bigotimes_{a \in I} (\det V^a)^{\mu_a-\mu(d)}. 
\end{align*}
By~\cite{Kin}, the $\mu$-semistable locus 
$\Y(d)^{\mu\text{-ss}}$ equals to the $\ell$-semistable locus 
in the GIT sense. 

We have the $\Theta$-stratification 
\begin{align}\label{theta:Y(d)}
    \mathcal{Y}(d)=\mathcal{S}_1 \sqcup \cdots \sqcup \mathcal{S}_k \sqcup \mathcal{Y}(d)^{\mu\text{-ss}}
\end{align}
with associated cocharacters $\lambda_i \colon \mathbb{G}_m \to G(d)$ for each $1\leq i \leq k$. Let $d=d_1+\cdots+d_m$ be a partition in $\mathbb{Z}^I$ corresponding to $\lambda_i$. 
    Then 
    \begin{align*}\Y(d)^{\lambda_i}=R(d)^{\lambda_i}/G(d)^{\lambda_i}
    =\times_{j=1}^m \Y(d_j)
    \end{align*}
    and the $\Theta$-stratification (\ref{theta:Y(d)}) coincides with the Harder-Narasimhan 
    stratification, see~\cite[Section~5]{P0}. So we have 
    \begin{align*}
        \mathcal{Z}_i=\times_{j=1}^m \Y(d_j)^{\mu\text{-ss}}.
    \end{align*}

Let $M(d)^W=M(T(d))^W$ be the Weyl-invariant part of the character lattice 
of the maximal torus $T(d) \subset G(d)$ which is identified with $\mathrm{Pic}(BG(d))$, and $\mathfrak{g}(d)$ the Lie algebra 
of $G(d)$. 
By Theorem~\ref{thm:qstack}, we obtain the following: 
\begin{thm}\label{thm:quiver}
For $\delta \in M(d)_{\mathbb{R}}^W$, there is a semiorthogonal decomposition
\begin{align*}
    \MM(\Y(d))_{\delta}=\left\langle  \bigotimes_{i=1}^k \MM(\Y(d_i)^{\mu\text{-ss}})_{\delta_i} :
    \mu(d_1)>\cdots> \mu(d_k) \right\rangle. 
\end{align*}
Here the right-hand side is after all the decompositions $d=d_1+\cdots+d_k$, and 
$\delta_i\in M(d_i)^{W_i}_{\mathbb{R}}$ is determined by 
\begin{align*}
    \sum_{i=1}^k \delta_i=\delta+\frac{1}{2}(R(d)^{\lambda>0}-\mathfrak{g}(d)^{\lambda>0})
\end{align*}
where $\lambda$ is an anti-dominant cocharacter with associated partition 
$(d_i)_{i=1}^k$.     
\end{thm}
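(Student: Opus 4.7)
The plan is to derive Theorem~\ref{thm:quiver} from the general semiorthogonal decomposition established in Proposition~\ref{prop:sod:qb}, Proposition~\ref{prop:QBW}, and Theorem~\ref{thm:qstack}, by specializing to symmetric quivers and computing the combinatorial data explicitly.

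First, I would verify that the hypotheses of Section~\ref{sec:magic} apply in this setting. The stack $\Y(d) = R(d)/G(d)$ is a smooth QCA symmetric stack since $R(d)$ is a self-dual $G(d)$-representation. The $\mu$-semistable locus coincides with the $\ell$-semistable GIT quotient by~\cite{Kin}, and the Harder–Narasimhan stratification of $\Y(d)$ is then a Kempf–Ness $\Theta$-stratification in the sense of~\cite{halp, halpinstab}. As noted immediately preceding the theorem, for each stratum $\mathcal{S}_i$ with associated cocharacter $\lambda_i$ and partition $d = d_1 + \cdots + d_m$, the center is $\mathcal{Z}_i = \times_{j=1}^m \Y(d_j)^{\mu\text{-ss}}$, which is a GIT semistable locus in $\overline{\mathcal{Z}}_i = \times_{j=1}^m \Y(d_j)$ for the obvious product stability. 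This verifies Assumption~\ref{assum:Z}.

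Next, I would iterate Proposition~\ref{prop:sod:qb} through all $k$ strata, starting from $\MM(\Y(d))_\delta = \MM(\Y(d))_\delta \cap \mathcal{W}_1$ and peeling off $\Upsilon_i^\dag\MM(\mathcal{Z}_i)_{\delta_i}$ at each step. After $k$ iterations this produces
\begin{align*}
\MM(\Y(d))_\delta = \bigl\langle \Upsilon_1^\dag\MM(\mathcal{Z}_1)_{\delta_1}, \ldots, \Upsilon_k^\dag\MM(\mathcal{Z}_k)_{\delta_k}, \MM(\Y(d))_\delta \cap \mathcal{W}_{k+1} \bigr\rangle.
\end{align*}
The right-most term is identified with $\MM(\Y(d)^{\mu\text{-ss}})_\delta$ by combining Proposition~\ref{prop:QBW} with the window equivalence $\Phi_{k+1}\colon \mathcal{W}_{k+1} \xrightarrow{\sim} \Coh(\Y(d)^{\mu\text{-ss}})$ of Theorem~\ref{thm:window}. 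Reindexing the semiorthogonal summands by HN types $d = d_1 + \cdots + d_k$ with strictly decreasing slopes $\mu(d_1) > \cdots > \mu(d_k)$ (where the $\mu$-semistable stratum is encoded by the trivial partition $d = d$) yields the combinatorial form in the statement.

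For the product factorization of each summand, I would observe that since $\mathcal{Z}_i$ is a product of symmetric stacks $\Y(d_j)^{\mu\text{-ss}}$ and the weight condition defining $\MM(-)_{\delta_i}$ on each map $\nu\colon\bgm\to \mathcal{Z}_i$ decouples into weight conditions on each factor (using the splitting of $\nu$ into components together with the additivity of $n_{\mathcal{Z}_i,\nu} = \sum_j n_{\Y(d_j)^{\mu\text{-ss}},\nu_j}$), Proposition~\ref{prop:QB:weight} gives the equivalence $\MM(\mathcal{Z}_i)_{\delta_i} \simeq \bigotimes_{j=1}^m \MM(\Y(d_j)^{\mu\text{-ss}})_{\delta_{i,j}}$, where $(\delta_{i,j})$ is the decomposition of $\delta_i$ into its components in $M(d_j)^{W_j}_{\mathbb{R}}$. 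Finally, the explicit formula $\delta_i = \delta + \tfrac{1}{2}\bigl(R(d)^{\lambda>0} - \mathfrak{g}(d)^{\lambda>0}\bigr)$ follows directly from the definition $\delta_i = (\delta \otimes (\det N_{\mathcal{S}_i/\Y_i})^{1/2})|_{\mathcal{Z}_i}$ in Proposition~\ref{prop:functUp} together with the identification of $N_{\mathcal{S}_i/\Y_i}|_{\mathcal{Z}_i}$ with the positive $\lambda$-weight part of $R(d) - \mathfrak{g}(d)$.

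The only subtle point is the product factorization of $\MM(\mathcal{Z}_i)_{\delta_i}$, where one must check that the weight condition for an arbitrary cocharacter $\nu\colon \bgm\to \mathcal{Z}_i$ reduces to the weight condition on each factor separately. This is handled by decomposing $\nu$ along the product structure and using that the generating dominant weights in Proposition~\ref{prop:QB:weight} for a product stack are tensor products of the generating weights on each factor; the rest of the argument is bookkeeping.
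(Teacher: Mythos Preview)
Your proposal is correct and follows essentially the same approach as the paper, which simply records the theorem as a direct consequence of Theorem~\ref{thm:qstack} (the quiver case having been set up to verify Assumption~\ref{assum:Z}). You have merely unpacked that citation by iterating Proposition~\ref{prop:sod:qb}, invoking Proposition~\ref{prop:QBW} for the semistable piece, and making the product factorization of $\MM(\mathcal{Z}_i)_{\delta_i}$ and the formula for $\delta_i$ explicit; this is exactly the content that Theorem~\ref{thm:qstack} packages.
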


As in~\cite{PTquiver}, we set 
\begin{align*}\tau_d=\frac{1}{\underline{d}}\left(\sum_{a\in I} \beta^a \right)
\in M(d)_{\mathbb{R}}^W.
\end{align*}
where $\beta^a$ are the weights of fundamental representation 
of $G(d)$, 
and we write $\MM(\Y(d))_{w} :=\MM(\Y(d))_{w\tau_d}$. 
We have the following corollary: 
\begin{cor}\label{cor:quiver}
Suppose that the number of edges between two different vertices is even and 
the number of loops at each vertex is odd. 
Then there is a semiorthogonal decomposition 
\begin{align*}
    \MM(\Y(d))_w=\left\langle  \bigotimes_{i=1}^k \MM(\Y(d_i)^{\mu\text{-ss}})_{w_i} :
    \mu(d_1)>\cdots> \mu(d_k), \frac{w_1}{\underline{d_1}}=\cdots=\frac{w_k}{\underline{d_k}} \right\rangle. 
\end{align*}
Here, the right-hand side is after all partitions $(d, w)=(d_1, w_1)+\cdots+(d_k, w_k)$. 
\end{cor}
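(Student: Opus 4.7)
The plan is to specialize Theorem~\ref{thm:quiver} to $\delta = w\tau_d$ and then use the parity hypotheses to show that, for each unstable cocharacter $\lambda$ corresponding to a partition $d = d_1 + \cdots + d_k$, the induced components $\delta_i$ agree with $w_i\tau_{d_i}$ modulo the integer lattice of $W_i$-invariant characters, where $w_i := w\underline{d_i}/\underline{d}$.

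First I would apply Theorem~\ref{thm:quiver} with $\delta = w\tau_d$. For the anti-dominant cocharacter $\lambda$ attached to a partition $d = d_1 + \cdots + d_k$ and the corresponding splitting $V^a = \bigoplus_i V^a_i$ with $\dim V^a_i = d^a_i$, the defining formula
\[
\sum_{i=1}^k \delta_i = w\tau_d + \tfrac{1}{2}\bigl(R(d)^{\lambda>0} - \mathfrak{g}(d)^{\lambda>0}\bigr)
\]
determines each $\delta_i \in M(d_i)^{W_i}_{\mathbb{R}}$ as the $i$-th component under the decomposition $M(d)^{W_\lambda}_{\mathbb{R}} = \bigoplus_i M(d_i)^{W_i}_{\mathbb{R}}$. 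Writing $E_{ab}$ for the number of edges from $a$ to $b$ and computing the determinants of $R(d)^{\lambda>0}$ and $\mathfrak{g}(d)^{\lambda>0}$ as $T$-characters, I expect the $\beta^a_i$-coefficient of the $i$-th component of the correction term to equal
\[
\sum_b \frac{E_{ab} - \delta_{ab}}{2}\left(\sum_{j<i} d^b_j - \sum_{j>i} d^b_j\right),
\]
where I have used the symmetry $E_{ab} = E_{ba}$ to combine the contributions of the $a$-to-$b$ and $b$-to-$a$ edges.

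Next, the parity hypotheses — $E_{ab}$ even for $a \neq b$ and $E_{aa}$ odd — are exactly what is needed for $(E_{ab} - \delta_{ab})/2 \in \mathbb{Z}$ to hold for all $a, b \in I$. Hence the correction term lies in the integral lattice $\bigoplus_i M(d_i)^{W_i}$. Setting $w_i := w\underline{d_i}/\underline{d}$, the $i$-th component of $w\tau_d$ in $M(d_i)^{W_i}_{\mathbb{R}}$ is $\frac{w}{\underline{d}}\sum_a \beta^a_i = w_i\tau_{d_i}$, so $\delta_i - w_i\tau_{d_i}$ is an integral $W_i$-invariant character. The magic-category analogue of Lemma~\ref{lem:tensor}, which is immediate from Definition~\ref{defn:QBY}, then gives an equivalence $\MM(\Y(d_i)^{\mu\text{-ss}})_{\delta_i} \simeq \MM(\Y(d_i)^{\mu\text{-ss}})_{w_i}$ via tensoring by the corresponding line bundle on $\Y(d_i)^{\mu\text{-ss}}$.

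Finally, partitions $d = d_1 + \cdots + d_k$ paired with the induced weights $w_i = w\underline{d_i}/\underline{d}$ are in bijection with partitions $(d, w) = (d_1, w_1) + \cdots + (d_k, w_k)$ of the pair $(d, w)$ satisfying $w_1/\underline{d_1} = \cdots = w_k/\underline{d_k} = w/\underline{d}$, which gives the stated indexing of the semiorthogonal decomposition. The main obstacle is the explicit determinant computation in the second paragraph: one has to unravel $R(d)^{\lambda>0}$ and $\mathfrak{g}(d)^{\lambda>0}$ as characters of the Levi $\prod_{a,i}\mathrm{GL}(V^a_i)$, use the symmetric-quiver symmetry $E_{ab}=E_{ba}$ to collect terms, and verify that the parity conditions are sharp enough to force integrality. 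Once this bookkeeping is carried out, the corollary follows formally from Theorem~\ref{thm:quiver} and the shift-by-line-bundle equivalence.
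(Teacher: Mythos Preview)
Your proposal is correct and follows essentially the same approach as the paper: apply Theorem~\ref{thm:quiver} with $\delta = w\tau_d$, use the parity hypotheses to show that $\tfrac{1}{2}(R(d)^{\lambda>0}-\mathfrak{g}(d)^{\lambda>0})$ lies in the integral lattice $\bigoplus_i M(d_i)^{W_{d_i}}$, and then tensor by the resulting line bundle to pass from $\delta_i$ to $w_i\tau_{d_i}$. The paper's proof is terser and does not write out the explicit coefficient formula you give, but the logic is identical.
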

\begin{proof}
Let $\delta=w\tau_d$ and take an anti-dominant cocharacter $\lambda$ with associated 
partition $d=d_1+\cdots+d_k$. 
    Under the assumption of $Q$, we have 
    \begin{align*}
        \frac{1}{2}(R(d)^{\lambda>0}-\mathfrak{g}(d)^{\lambda>0}) \in \bigoplus_{i=1}^k M(d_i)^{W_{d_i}}.
    \end{align*}
    The above element is regarded as a line bundle $\mathcal{L}$ on 
$\times_{i=1}^k \Y(d_i)$. 

Let $w_i \in \mathbb{Q}$ be determined by $w_i/\underline{d_i}=w/\underline{d}$, so that 
we have $\sum_{i=1}^k w_i \tau_{d_i}=w\tau_d$. 
By taking the tensor product with $\mathcal{L}$, there is an equivalence
\begin{align*}
    \bigotimes_{i=1}^k \MM(\Y(d_i)^{\mu \text{-ss}})_{\delta_i} \simeq 
     \bigotimes_{i=1}^k \MM(\Y(d_i)^{\mu \text{-ss}})_{w_i}.
\end{align*}
Therefore the corollary follows from Theorem~\ref{thm:quiver}. 
\end{proof}

\begin{remark}\label{rmk:quiver}
Under the assumption of Corollary~\ref{cor:quiver}, 
we should also have the semiorthogonal decomposition 
    \begin{align*}
    \Coh(\Y(d)^{\mu\text{-ss}})_w=\left\langle  \bigotimes_{i=1}^k \MM(\Y(d_i)^{\mu\text{-ss}})_{w_i} :
    \mu(d_1)=\cdots= \mu(d_k), \frac{w_1}{\underline{d_1}}<\cdots<\frac{w_k}{\underline{d_k}} \right\rangle. 
\end{align*}
Here the right-hand side is after all partitions $(d, w)=(d_1, w_1)+\cdots+(d_k, w_k)$. 
   The above semiorthogonal decomposition is proved in~\cite{PTquiver} in the 
   case that $\mu_i=0$. For generic $\mu$, we expect that 
   the argument as in~\cite{PTK3, PThiggs} 
   via reduction to local model of good moduli spaces applies. The above 
   semiorthogonal decomposition
   may be regarded as a `dual version' of the semiorthogonal 
   decomposition in Corollary~\ref{cor:quiver}. 
\end{remark}

\section{Dolbeault geometric Langlands conjecture}
In the previous section, we constructed a semiorthogonal decomposition of the magic category. We now provide an analogue of it for limit categories of moduli stacks of Higgs bundles, and use this to formulate the precise version of the Dolbeault geometric Langlands conjecture in terms of limit categories.
We state our main theorem on semiorthogonal decompositions of 
limit categories, and discuss its counterpart on the spectral side of the Dolbeault Langlands duality.

\subsection{Moduli stacks of Higgs bundles}\label{subsec:limHiggs}
Let $C$ be a smooth projective curve, $G$ an algebraic group, and $\fg$ its Lie algebra. 
We apply the construction in Subsection~\ref{subsec:limcot} to the stack
\begin{align*}\X=\Bun_G,\end{align*}
the moduli stack of $G$-bundles on $C$. It is a smooth stack, whose  
components are non-quasi-compact in general. 

By definition, a \textit{$G$-Higgs bundle} is a pair 
\begin{align}\label{def:higgs}
(E_G, \phi_G), \ E_G\to C, \ \phi_G \in H^0(C, \mathrm{Ad}_{\mathfrak{g}}(E_G)\otimes \Omega_C)
\end{align}
where $E_G\to C$ is a principal $G$-bundle
and $\mathrm{Ad}_{\mathfrak{g}}(E_G)=E_G\times^G \mathfrak{g}$ for the adjoint 
representation of $G$ on $\mathfrak{g}$. 
We denote by $\Hig_G$ the derived moduli stack of $G$-Higgs bundles on $C$.
In the case that $G$ is reductive, we have 
\begin{align*}
    \Hig_G=\Omega_{\Bun_G}.
\end{align*}

Below we assume that $G$ is reductive, and $^{L}G$ its Langlands dual. 
As in~\cite{DoPa}, we have $\pi_0(\Hig_G)=\pi_1(G)$, 
and $\Hig_G$ is a $Z(G)$-gerbe, where $Z(G) \subset G$ is the 
center. 
We have the corresponding decomposition 
into connected components 
\begin{align*}
    \Hig_G=\coprod_{\chi \in \pi_1(G)}\Hig_G(\chi). 
\end{align*}
Consider the Hitchin fibration 
\begin{align*}
    h \colon \Hig_G \to \mathrm{B}:=\Gamma(C, (\mathfrak{t}\otimes \Omega_C)^W)
\end{align*}
where $\mathfrak{t}$ is the Lie algebra of the maximal torus $T$ and $W$ is the Weyl group, 
see~\cite[Section~3]{DPlecture}. The Hitchin base for $^{L}G$ is also identified with $B$.
Consider the families over $B$:
 \begin{align}\label{dia:Hitchin} \xymatrix{ \Hig_{^{L}G}\ar[rd] & & \Hig_{G} \ar[ld] \\ & \mathrm{B} & } \end{align} which, outside of the discriminant $\Delta \subset \mathrm{B}$, 
 are dual abelian fibrations~\cite[Theorem~A]{DoPa}.

\subsection{Limit category for Higgs bundles}

We consider the category of ind-coherent sheaves on $\Hig_G$. 
There is an orthogonal decomposition into connected components and 
$Z(G)$-weights
\begin{align*}
    \IndCoh(\Hig_G)=\bigoplus_{\chi \in \pi_1(G), w\in Z(G)^{\vee}}
    \IndCoh(\Hig_G(\chi))_{w}.
\end{align*}
Here, $(-)_w$ is the subcategory of objects with $Z(G)$-weight $w$. 

We denote by 
\begin{align}\label{univ:E}
\mathscr{E}_G \to C\times \Bun_G
\end{align}
the universal $G$-bundle. 
For $c\in C$, there is a $G$-bundle 
\begin{align}\label{univ:Pc}
    \mathscr{E}_{G, c}:=\mathscr{E}|_{c\times \Bun_G} \to \Bun_G. 
\end{align}
For $w\in Z(G)^{\vee}$, let $\mu_G(w)\in G^{\vee}_{\mathbb{Q}}$ be the associated element 
corresponding to the isomorphism 
\begin{align*}G^{\vee}_{\mathbb{Q}} :=\text{Hom}(G,\mathbb{G}_m)\otimes_\mathbb{Z}\mathbb{Q}  \stackrel{\cong}{\to} Z(G)_{\mathbb{Q}}^{\vee}:=\text{Hom}(Z(G),\mathbb{G}_m)\otimes_\mathbb{Z}\mathbb{Q}.
\end{align*}
The element $\mu_G(w)$ applied to (\ref{univ:Pc})
determines a $\mathbb{Q}$-line bundle 
\begin{align}\label{deltaw}
    \delta_w \in \mathrm{Pic}(\Bun_G)_{\mathbb{Q}} 
\end{align}
which has $Z(G)$-weight $w$ up to torsion. 
We have the finite decomposition 
\begin{align}\label{dsum:w'}
    \IndL(\Omega_{\Bun_G})_{\delta=\delta_w}=
    \bigoplus_{w'\in Z(G)^{\vee}, \mu_G(w')=\mu_G(w)}
    \IndL(\Omega_{\Bun_G})_{\delta_w, w'}. 
\end{align}
\begin{defn}\label{def:indlim:Higgs}
The \textit{ind-limit category} for $G$-Higgs bundles is defined to be 
\begin{align}\label{def:higL}
    \IndL(\Hig_G(\chi))_w:=\IndL(\Omega_{\Bun_G})_{\delta_w, w}. 
\end{align}
The \textit{limit category} is defined to be its subcategory of compact objects
\begin{align}\notag
    \LL(\Hig_G(\chi))_w:= \IndL(\Hig_G(\chi))_w^{\mathrm{cp}}.
\end{align}
\end{defn}
\begin{remark}\label{rmk:ind:c}
The (ind-)limit categories in Definition~\ref{def:higL} are independent of $c\in C$ since 
$c_1(\delta_w)$ is independent of $c$. 
\end{remark}
\begin{remark}
    If $Z(G)^{\vee}$ is torsion free (e.g. $G=\GL_r$), then there is only one summand in (\ref{dsum:w'}),
    namely $w'=w$. In this case, one can simply define 
    \begin{align*}
          \IndL(\Hig_G(\chi))_w:=\IndL(\Omega_{\Bun_G})_{\delta=\delta_w}. 
    \end{align*}
\end{remark}

\subsection{The Dolbeault geometric Langlands conjecture}\label{subsec:DGLC}
On the spectral side, we consider the quasi-compact open substack of 
semistable $G$-Higgs bundles, see Subsection~\ref{subsec:ssH}
\begin{align*}
\Hig_G(\chi)^{\mathrm{ss}} \subset \Hig_G(\chi). 
\end{align*}

The following is our first formulation of Dolbeault geometric Langlands conjecture. 

\begin{conj}\label{conj:DL0}
For each element 
\begin{align*}
(\chi, w) \in \pi_1(G)\times Z(G)^{\vee}=Z(^{L}G)^{\vee}\times \pi_1(^{L}G)
\end{align*}
there is a $\mathrm{B}$-linear equivalence 
\begin{align}\label{conj:DLequiv}
    \IndCoh(\Hig_{^{L}G}(w)^{\mathrm{ss}})_{-\chi} \simeq \IndL(\Hig_{G}(\chi))_w.
\end{align}
Moreover the above equivalence restricts to an equivalence of subcategories with nilpotent singular supports
\begin{align}\label{conj:DLequiv:nilp}
    \IndCoh_{\mathcal{N}}(\Hig_{^{L}G}(w)^{\mathrm{ss}})_{-\chi} \simeq \IndL_{\mathcal{N}}(\Hig_{G}(\chi))_w.
\end{align}
\end{conj}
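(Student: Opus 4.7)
The plan is to deduce Conjecture~\ref{conj:DL0} from Conjecture~\ref{intro:conj:qbps} together with a compatibility statement for gluing. On the automorphic side, Theorem~\ref{conj:intro} presents $\LL(\Hig_G(\chi))_w$ as semiorthogonally glued from summands $\mathbb{T}_M(\chi_M)_{w_M}$ indexed by standard parabolics $P\subset G$ with Levi $M$ and data $(\chi_M,w_M)$ satisfying $\mu_M(\chi_M)\in N(T)_{\mathbb{Q}+}^{W_M}$ and $\mu_M(w_M)=\mu_G(w)$; passing to ind-completions yields the corresponding decomposition of $\IndL(\Hig_G(\chi))_w$. On the spectral side, the decomposition~\eqref{intro:sod:general} applied to ${}^{L}G$ presents $\IndCoh(\Hig_{{}^{L}G}(w)^{\mathrm{ss}})_{-\chi}$ as glued from summands $\mathbb{T}_{{}^{L}M}(w_M)_{-\chi_M}$, with the roles of characters and cocharacters exchanged. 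Langlands duality identifies standard parabolics of $G$ with those of ${}^{L}G$, dominant cocharacters with dominant characters, and swaps the two slope conditions, so the indexing sets match; Conjecture~\ref{intro:conj:qbps} applied to every Levi of $G$ then matches the two decompositions summand by summand.

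The second step is to upgrade this term-by-term correspondence to an equivalence of the glued categories. The natural route is to construct a single $\mathrm{B}$-linear Fourier--Mukai kernel $\mathcal{K}$ on $\Hig_{{}^{L}G}(w)^{\mathrm{ss}}\times_{\mathrm{B}}\Hig_G(\chi)$ extending the Poincar\'e line bundle of Donagi--Pantev~\cite{DoPa} from the elliptic locus across the discriminant, and to verify that the associated functor restricts to~\eqref{conj:DLequiv}. Being geometric and $\mathrm{B}$-linear, such a kernel automatically intertwines parabolic induction and hence respects the gluing data of the two semiorthogonal decompositions. The restriction~\eqref{conj:DLequiv:nilp} to nilpotent singular supports should then follow from the monoidal action of $\QCoh^{\shear}(\fg\ssslash G)$ analyzed in Subsection~\ref{subsec:ssupport} together with Corollary~\ref{cor:nilp}: the automorphic side with nilpotent singular support is cut out by the support-at-zero condition for this action, while on the spectral side the full stack of semistable Higgs bundles is set-theoretically concentrated on the Hitchin locus, so the two support conditions match under $\mathcal{K}$.

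The principal obstacle is Conjecture~\ref{intro:conj:qbps} itself, which is open already for $G=\GL_2$ in the non-coprime case; there $\mathbb{T}_{\GL_2}(\chi)_w$ is a twisted non-commutative crepant resolution of the Hitchin moduli space, and~\eqref{intro:equiv:T} demands a D-equivalence between two such resolutions that are not birational over the singular locus. A reasonable line of attack is to first establish~\eqref{intro:equiv:T} over the elliptic locus following Arinkin~\cite{Ardual}, where a Poincar\'e kernel for dual abelian fibrations exists, and then to extend across the discriminant using the decomposition of Theorem~\ref{conj:intro} as bookkeeping for the singular fibers. Partial evidence already available includes Corollary~\ref{intro:corK} for the K-theoretic shadow of~\eqref{conj:DLequiv} in genus zero, in rank two, and in the coprime case, as well as the Hecke/Wilson compatibility formulated via the operators of Theorem~\ref{intro:cor2}, which imposes further constraints on any candidate equivalence. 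Fixing the Whittaker normalization that sends $\mathcal{O}_{\Hig^{\mathrm{ss}}_{{}^{L}G}}$ to $j_!s_*\mathcal{O}_{\mathrm{B}}$ should then pin down a preferred choice of $\mathcal{K}$, and the compatibilities with parabolic induction, Hecke action, and base linearity together ought to determine it uniquely once it exists.
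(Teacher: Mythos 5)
The statement you are addressing is Conjecture~\ref{conj:DL0}: it is the main \emph{conjecture} of the paper, not a theorem, and the paper offers no proof of it (it is open even for $G=\GL_2$). What the paper supplies is the precise formulation together with evidence: the two semiorthogonal decompositions (Theorem~\ref{thm:PThiggs} and Theorem~\ref{thm:mainLG}), compact generation (Theorem~\ref{intro:thm:gen}, giving Corollary~\ref{intro:cor:eq}), the K-theoretic check of Corollary~\ref{intro:corK}, the torus case (Remark~\ref{rmk:torus}), and the equivalence over the complement of the discriminant via Donagi--Pantev and Arinkin (Remark~\ref{rmk:dual}). Your text is therefore not a proof but a conditional strategy, and as a reduction it has genuine gaps. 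First, it rests on Conjecture~\ref{intro:conj:qbps} for \emph{every} Levi subgroup of $G$, which is exactly the hard open input (already for $\GL_2$ in the non-coprime case). Second, matching the summands of two semiorthogonal decompositions term by term does not produce an equivalence of the glued categories: one must also match the gluing data (the partial Hom-complexes between distinct summands), and the paper records precisely this compatibility as a further conjecture (Conjecture~\ref{conj:compati:sod}). Your proposed remedy --- constructing a single $\mathrm{B}$-linear Fourier--Mukai kernel on $\Hig_{^{L}G}(w)^{\mathrm{ss}}\times_{\mathrm{B}}\Hig_G(\chi)$ extending the Poincar\'e sheaf across the discriminant --- is circular: the existence of such an extension is the open content of the conjecture itself (Arinkin's result covers only the elliptic locus, and no extension over the full base is known for any non-abelian $G$).

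The nilpotent-support step is also not justified as written. On the spectral side, $\IndCoh_{\mathcal{N}}(\Hig_{^{L}G}(w)^{\mathrm{ss}})_{-\chi}$ is a nontrivial subcategory cut out by a singular-support condition in the $(-1)$-shifted cotangent; it is not automatic, and the assertion that the semistable stack being ``concentrated on the Hitchin locus'' makes the condition match has no force. The mechanism the paper envisions (Remark~\ref{rmk:nilpeq}) is different: if the equivalence~\eqref{conj:DLequiv} is linear over $\QCoh^{\shear}(\fg\ssslash G)$ (acting on the automorphic side as in Subsection~\ref{subsec:ssupport} and on the spectral side through the point of $C$), then~\eqref{conj:DLequiv:nilp} follows by tensoring both sides with the subcategory of objects supported at $0\in\fg\ssslash G$. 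If you wish to salvage your outline, you should state it as: assuming (a) Conjecture~\ref{intro:conj:qbps} for all Levis, (b) a matching of gluing data compatible with parabolic induction, and (c) $\QCoh^{\shear}(\fg\ssslash G)$-linearity, Conjecture~\ref{conj:DL0} follows --- making explicit that (a)--(c) are themselves open and essentially repackage the conjecture.
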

\begin{remark}
By considering the subcategories of compact objects, the equivalence in (\ref{conj:DLequiv})
also yields an equivalence 
\begin{align}\label{conj:DL2}
\Coh(\Hig_{^{L}G}(w)^{\mathrm{ss}})_{-\chi} \simeq \LL(\Hig_{G}(\chi))_w.
\end{align}
\end{remark}
\begin{remark}\label{rmk:dual}
    Outside of the discriminant $\Delta \subset \mathrm{B}$, the two fibrations (\ref{dia:Hitchin}) are dual 
    abelian fibrations~\cite{DoPa}, so they are Fourier-Mukai equivalent~\cite{Mu1}. In particular, 
    Conjecture~\ref{conj:DL0} is true over $\mathrm{B}^{\mathrm{sm}}:=\mathrm{B}\setminus \Delta$. In the case of $G=\GL_r$, it can be 
    further extended to a locus in $\mathrm{B}^{\mathrm{ell}}\subset \mathrm{B}$ where the spectral curves are irreducible by~\cite{Ardual}. For general reductive groups, see~\cite{ArFe}.
\end{remark}

\begin{remark}\label{rmk:torus}
    As a special case of Remark~\ref{rmk:dual}, if $G=T$ is a torus then $\Delta=\emptyset$ and Conjecture~\ref{conj:DL0}
    holds in this case. Indeed, in this case, we have  
\begin{align*}
    \Hig_T(\chi)^{\mathrm{ss}}=\Hig_T(\chi)=\times_{i=1}^r\Omega_{\Bun_{\mathbb{G}_m}(\chi_i)}
\end{align*}
    and then 
    \begin{align*}
        \Coh(\Hig_T(\chi)^{\mathrm{ss}})=\LL( \Hig_T(\chi))=\bigotimes_{i=1}^r
        \Coh(\Omega_{\Bun_{\mathbb{G}_m}(\chi_i)}).
    \end{align*}
    Therefore, limit categories are not new examples of categories for $G=T$. 
\end{remark}

\begin{remark}\label{rmk:limit2}
An equivalence (\ref{conj:DLequiv}) is regarded as a classical limit of $w$-twisted
renormalized version of GLC, see~\cite[Remark~1.6.9]{GLC1}
for the renormalized version. A genuine classical limit of the equivalence (\ref{intro:GLC})
is expected to be an equivalence
\begin{align}\notag
    \IndCoh_{\mathcal{N}}(\Hig_{^{L}G}(0)^{\mathrm{ss}})_{-\chi} \simeq \IndL_{\mathcal{N}}(\Hig_{G}(\chi))_0.
\end{align}
\end{remark}

\begin{remark}\label{squareroot}
More precisely, the automorphic side of GLC is 
the $1/2$-twisted dg-category of D-modules $\text{D-mod}(\Bun_G)_{\frac{1}{2}}$, 
see~\cite{GLC1}. However, because of the $1/2$-twist in Conjecture~\ref{conj:limitLN}, 
its natural classical limit should be 
\begin{align*}
\text{D-mod}(\Bun_G(\chi))_{\frac{1}{2}} \leadsto \IndL(\Hig_G(\chi))_{0}.
\end{align*}

One can non-canonically identify the $1/2$-twisted categories with untwisted ones; 
given $\omega_C^{1/2} \in \mathrm{Pic}(C)$, there is an associated $\omega_{\Bun_G}^{1/2}\in \mathrm{Pic}(\Bun_G)$ by~\cite[Section~4]{BD0}. Therefore we have an equivalence 
\begin{align}\label{equiv:1/2lim}
\otimes \omega_{\Bun_G}^{1/2} \colon 
\IndL(\Hig_{G}(\chi))_{\delta} \stackrel{\sim}{\to} \IndL(\Hig_G(\chi))_{\delta\otimes \omega_{\Bun_G}^{1/2}}.
\end{align}
In what follows, we fix $\omega_{C}^{1/2}$ and identify the $1/2$-twisted categories 
with untwisted ones by the equivalence (\ref{equiv:1/2lim}). 
\end{remark}

\begin{remark}\label{rmk:nilpeq}
    As discussed in Subsection~\ref{subsec:ssupport}, both sides of (\ref{conj:DLequiv}) are 
    modules over $\mathrm{QCoh}(\fg\ssslash G)$.
    If an equivalence (\ref{conj:DLequiv}) is linear over $\mathrm{QCoh}(\fg\ssslash G)$, 
    then the nilpotent singular support version (\ref{conj:DLequiv:nilp}) also follows by applying 
    $\otimes\mathrm{QCoh}_0(\fg\ssslash G)$ over $\mathrm{QCoh}(\fg\ssslash G)$
    where 
    \begin{align*}\mathrm{QCoh}_0(\fg\ssslash G) \subset \mathrm{QCoh}(\fg\ssslash G)
    \end{align*} is the 
    subcategory of objects supported at $0\in \fg\ssslash G$. 
\end{remark}

\begin{remark}\label{rmk:Hitchin}
It is expected that, under a conjectural equivalence in Conjecture~\ref{conj:DL0}, 
the structure sheaf of $\Hig_{^{L}G}(0)^{\mathrm{ss}}$ corresponds to some Hitchin section, see~\cite[Section~3.4]{HauICM}. 
It requires some care since the image of the Hitchin section is not closed inside 
$\Hig_G$. 

In fact, we can justify this using limit categories; for $G=\GL_r$ and $g\geq 2$, 
we have a natural Hitchin section 
\begin{align*}
B\stackrel{s}{\hookrightarrow} \Hig_{\GL_r}(0)^{\mathrm{ss}} \stackrel{j}{\hookrightarrow} \Hig_{\GL_r}(0).
\end{align*}
Here $s$ is a closed immersion and $j$ is an open immersion. 
Then we have the following object 
\begin{align}\label{Hsection}
    j_{!} s_{*}\mathcal{O}_B \in \LL(\Hig_{\GL_r}(0))_0
\end{align}
which is expected to correspond to the structure sheaf of $\Hig_{\GL_r}(0)^{\mathrm{ss}}$. 
The existence of $j_!$ will be proved in Proposition~\ref{prop:sod:KZ}, which is one of the key 
properties of limit categories. 

Indeed, we can interpret the object (\ref{Hsection}) as a classical limit of the 
vacuum Poincar\'{e} sheaf in~\cite{GLC1}, which is an object in $\text{D-mod}(\Bun_G)$
corresponding to the structure sheaf of $\mathcal{O}_{\mathrm{Locsys}_{^{L}G}}$ under GLC.
The details will be discussed elsewhere. 
\end{remark}
\subsection{Quasi-BPS categories for semistable Higgs bundles}\label{subsec:qBPS:ss}
Here we define quasi-BPS categories for semistable Higgs bundles as special 
cases of limit categories: 
\begin{defn}\label{def:qbpsG}
For $(\chi, w) \in \pi_1(G)\times Z(G)^{\vee}$, the \textit{quasi-BPS category} is defined 
to be 
\begin{align*}
    \mathbb{T}_G(\chi)_w:=\LL(\Hig_G(\chi)^{\mathrm{ss}})_{\delta_w, w} \subset \Coh(\Hig_G(\chi)^{\mathrm{ss}})_w.
\end{align*}
The \textit{quasi-BPS category with nilpotent singular support} is defined to be 
\begin{align*}
    \mathbb{T}_{\mathcal{N}, G}(\chi)_w:=\LL_{\mathcal{N}}(\Hig_G(\chi)^{\mathrm{ss}})_{\delta_w, w} \subset \Coh_{\mathcal{N}}(\Hig_G(\chi)^{\mathrm{ss}})_w.
\end{align*}
\end{defn}

\begin{remark}\label{rmk:GLr}
In the case of $G=\mathrm{GL}_r$, the dg-category $\mathbb{T}_G(\chi)_w$ coincides with quasi-BPS category 
    considered in~\cite{PThiggs}. 
    There is a good moduli space~\cite{AHPS} 
    \begin{align*}
        \Hig_{\GL_r}(\chi)^{\mathrm{ss, cl}}\to \mathrm{H}_{\GL_r}(\chi)^{\mathrm{ss}},
    \end{align*}
    where $\mathrm{H}_{\GL_r}(\chi)^{\mathrm{ss}}$ is a quasi-projective variety over $k$. 
    When $(r, \chi)$ are coprime, then we have 
    \begin{align*}
        \mathbb{T}_{\GL_r}(\chi)_w \simeq \Coh(\text{H}_{\GL_r}(\chi)^{\text{ss}}\times k[-1]).
    \end{align*}
    In this case, $\mathrm{H}_{\GL_r}(\chi)^{\rm{ss}}$ is a holomorphic 
    symplectic manifold with an integrable 
system, the Hitchin system. 

If $(r, \chi)$ are not necessarily coprime, but $(r, \chi, w)$ is primitive, then, after removing a trivial derived structure $k[-1]$, the dg-category $\mathbb{T}_{\GL_r}(\chi)_w$ is smooth over $k$, proper and Calabi–Yau over the Hitchin base, see~\cite{PThiggs, PThiggs2} for details. It gives a twisted categorical crepant resolution of $\mathrm{H}_{\GL_r}(\chi)^{\mathrm{ss}}$, 
    therefore it is regarded as a `non-commutative Hitchin system'.  Furthermore, in this case, the dg-category 
    $\mathbb{T}_{\GL_r}(\chi)_w$ recovers the BPS invariant
    of the non-compact Calabi–Yau 3-fold: 
    \begin{align*}X=\mathrm{Tot}_C(\mathcal{O}_C\oplus \Omega_C),
    \end{align*} hence we call it \textit{a BPS category}.
\end{remark}
\subsection{Semiorthogonal decomposition (spectral side)}\label{subsec:SOD0}
Here we recall the main result in our previous work~\cite{PThiggs}. It was proved 
only for $G \in \{\GL_r, \mathrm{PGL}_r, \mathrm{SL}_r\}$, but we formulate it 
which makes sense for any reducutive $G$. 

We first prepare some notations. 
Let $T \subset B\subset G$ be a fixed maximal torus and a Borel subgroup, and 
we use the notation in Subsection~\ref{subsec:notation:reductive}.
Let $R\subset M(T)$ be the root lattice and $R^{\mathrm{co}}\subset N(T)$ be the coroot lattice. 
We have 
\begin{align*}
    \pi_1(G)=N(T)/R^{\mathrm{co}}, \ Z(G)^{\vee}=M(T)/R. 
\end{align*}
In particular there are natural maps 
\begin{align}\notag
    N(T)^W \to \pi_1(G), \ M(T)^W \to Z(G)^{\vee}
\end{align}
which are isomorphisms over $\mathbb{Q}$. Here $(-)^W$ is the Weyl-invariant part. 
We define the following \textit{slope maps}
\begin{align}\label{def:slopemaps}
   & \mu_G \colon \pi_1(G) \to \pi_1(G)_{\mathbb{Q}}
  \notag  \cong N(T)_{\mathbb{Q}}^W, \\
   & \mu_G \colon Z(G)^{\vee} \to (Z(G)^{\vee})_{\mathbb{Q}} \cong M(T)_{\mathbb{Q}}^W.
\end{align}

A parabolic subgroup $P \subset G$ is called \textit{standard} if it contains the given Borel 
subgroup $B$. Let $P\subset G$ be a standard parabolic and $P\to M$ its Levi quotient
with canonical splitting $M\subset P$ (determined by a choice of $T$ and $B$). 
The inclusion $M\subset G$ induces 
the map 
\begin{align}\label{def:aM}
    a_{M} \colon \pi_1(M)\times Z(M)^{\vee} \to 
    \pi_1(G) \times Z(G)^{\vee}. 
\end{align}

Let $W_{M}$ be the Weyl-group of $M$. 
The slope map for $M$ gives a map 
\begin{align*}
    \mu_M \colon \pi_1(M)\times Z(M)^{\vee} \to N(T)_{\mathbb{Q}}^{W_M}\times M(T)^{W_M}_{\mathbb{Q}}.
\end{align*}
We say that $\chi \in M(T)^{W_{M}}$ is \textit{strictly $W$-dominant} if 
$\chi \in M(T)_+$ and $\mathrm{Stab}_{\chi}(W)=W_{M}$, where $\mathrm{Stab}_{\chi}(W) \subset W$ is the stabilizer subgroup of $\chi$. A strictly $W$-dominant condition for $N(T)^{W_{M}}$ is 
similarly defined. 
We denote by 
\begin{align*}M(T)_{\mathbb{Q}+}^{W_{M}} \subset M(T)^{W_{M}}_{\mathbb{Q}}, \ N(T)_{\mathbb{Q}+}^{W_{M}}\subset N(T)_{\mathbb{Q}}^{W_{M}}
\end{align*}
the subsets of strictly $W$-dominant elements. 
\begin{remark}\label{rmk:dominant}
One can check that the slope map $\mu_M \colon \pi_1(M) \to N(T)_{\mathbb{Q}}^{W_M}$ and the strictly $W$-dominant 
condition is the same as the slope map $\phi_P$ and dominant $P$-regular condition in~\cite[Section~2.1]{SchHN}.
\end{remark}

Using the above notation, we have the following semiorthogonal decomposition: 
\begin{thm}\emph{(\cite{PThiggs})}\label{thm:PThiggs}
If $G \in \{\GL_r, \mathrm{PGL}_r, \mathrm{SL}_r\}$, there is a semiorthogonal decomposition 
\begin{align*}
    \Coh(\Hig_G(\chi)^{\mathrm{ss}})_w=
    \left\langle \mathbb{T}_{M}(\chi_{M})_{w_{M}} : \begin{array}{ll}\mu_M(\chi_M)=\mu_G(\chi) \\
     \mu_M(w_{M}) \in 
   -M(T)_{\mathbb{Q}+}^{W_{M}} \end{array}
    \right\rangle. 
\end{align*}
    Here the right-hand side is after all standard parabolics $P\subset G$ with 
    Levi quotient $P\twoheadrightarrow M$, satisfying
    \begin{align*}(\chi_{M}, w_{M}) \in \pi_1(M) \times Z(M)^{\vee}, \mathbb{}a_{M}(\chi_{M}, w_{M})=(\chi, w).
    \end{align*}
\end{thm}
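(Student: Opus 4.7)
The plan is to reduce to a local model around each closed point of the good moduli space, apply the Koszul equivalence (Theorem~\ref{thm:Kduality}) to translate the statement into one about magic categories, invoke the semiorthogonal decomposition of magic categories (Theorem~\ref{thm:qstack}), and then glue the local decompositions globally.

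First, I would apply the derived analogue of Luna's étale slice theorem~\cite{AHRLuna} near a closed point of the good moduli space $\mathrm{H}_G(\chi)^{\mathrm{ss}}$, which corresponds to a polystable Higgs bundle $\bigoplus_i (E_i, \phi_i)^{\oplus n_i}$ with each summand stable of slope $\mu_G(\chi)$. The stabilizer $H = \mathrm{Aut}(E_G, \phi_G)$ is reductive, and the slice can be presented as a derived critical locus $\mathrm{crit}(f)/H$ of a regular function $f$ on a self-dual $H$-representation $Y$, equivalently as the derived zero locus of a section of a vector bundle on $Y/H$. For $G \in \{\GL_r, \mathrm{PGL}_r, \mathrm{SL}_r\}$, the pair $(Y, H)$ is essentially a representation space of a symmetric quiver (with loops encoding the genus $g$ of $C$), which makes the combinatorics explicit.

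Second, via Theorem~\ref{thm:Kduality} and Proposition~\ref{prop:comparemagic}, the local version of $\Coh(\Hig_G(\chi)^{\mathrm{ss}})_w$ becomes a category of graded matrix factorizations, and the local version of $\mathbb{T}_G(\chi)_w$ corresponds to the local magic category $\MM(Y/H)_{\delta}$ for an $\mathbb{R}$-line bundle $\delta$ determined by $w$. Applying Theorem~\ref{thm:qstack} (specifically Corollary~\ref{cor:quiver} in the symmetric-quiver setting) then yields a semiorthogonal decomposition of $\MM(Y/H)_{\delta}$ indexed by Kempf-Ness strata of $Y$ for the stability induced by $\delta$, with each summand a magic category of a center of a stratum. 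These strata correspond to refinements of the polystable decomposition compatible with standard Levi subgroups $M \subset G$ satisfying $\mu_M(\chi_M) = \mu_G(\chi)$, and the strict dominance condition $\mu_M(w_M) \in -M(T)^{W_M}_{\mathbb{Q}+}$ is precisely the condition distinguishing the centers of the strata.

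The final step is to glue: one must show that the parabolic induction (Hall product) functor $\mathbb{T}_M(\chi_M)_{w_M} \to \Coh(\Hig_G(\chi)^{\mathrm{ss}})_w$ is well-defined globally, is fully-faithful, and that its images assemble into a semiorthogonal decomposition matching the local pattern. The hard part will be exactly this gluing: one needs to compare the internal cocharacters of the stabilizers $H$ that define the local Kempf-Ness strata with the global cocharacters of $G$ underlying parabolic induction, and to verify $\Ext$-vanishing between summands on the full moduli stack rather than merely in the formal slice. The explicit symmetric-quiver form of the local models makes these checks feasible in the cases $G \in \{\GL_r, \mathrm{PGL}_r, \mathrm{SL}_r\}$ treated in~\cite{PThiggs}, which motivates why the theorem is currently stated only for these groups; the extension to general reductive $G$ is the subject of the work in progress~\cite{BPT}.
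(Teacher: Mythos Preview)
This theorem is not proved in the present paper; it is recalled from \cite{PThiggs} as a cited result, so there is no proof here to compare against. Your broad outline (\'etale-local reduction to a quiver-type model around polystable points, followed by gluing) does match the strategy of \cite{PThiggs}.

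However, there is a genuine confusion in your Step~2. You invoke Theorem~\ref{thm:qstack} and Corollary~\ref{cor:quiver}, but those results produce a semiorthogonal decomposition of the \emph{magic category} $\MM(\Y)_{\delta}$, indexed by slopes $\mu(d_1)>\cdots>\mu(d_k)$. That is the local model for the \emph{automorphic-side} decomposition (Theorem~\ref{thm:mainLG}), not for Theorem~\ref{thm:PThiggs}. What is needed here is the dual decomposition: a semiorthogonal decomposition of the full $\Coh(\Y^{\mathrm{ss}})_w$ (equivalently the full matrix factorization category) into magic categories of Levi pieces, indexed by $w_1/r_1<\cdots<w_k/r_k$ with all $\chi_i/r_i$ equal. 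As Remark~\ref{rmk:quiver} explicitly notes, this dual decomposition is \emph{not} a consequence of Corollary~\ref{cor:quiver}; it is proved in \cite{PTquiver} for $\mu_i=0$ and extended in \cite{PTK3, PThiggs} via the local-model reduction. So your local input is the wrong theorem: you would be decomposing $\mathbb{T}_G(\chi)_w$ rather than $\Coh(\Hig_G(\chi)^{\mathrm{ss}})_w$, and the indexing conditions would come out with the roles of $\chi$ and $w$ swapped.
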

\begin{remark}
It is expected that the semiorthogonal decomposition in Theorem~\ref{thm:PThiggs} holds 
for any reductive $G$. 
\end{remark}

One can also show that the above semiorthogonal decompostion restricts to the subcategory with nilpotent 
singular supports (see Subsection~\ref{subsec:pfnil}): 
\begin{thm}\label{thm:nilp0}
The semiorthogonal decomposition in Theorem~\ref{thm:PThiggs} induces 
semiorthogonal decomposition with nilpotent singular supports: 
\begin{align*}
    &\Coh_{\mathcal{N}}(\Hig_G(\chi)^{\mathrm{ss}})_w=
    \left\langle \mathbb{T}_{\mathcal{N}, M}(\chi_{M})_{w_{M}} : \begin{array}{ll}\mu_M(\chi_M)=\mu_G(\chi) \\
     \mu_M(w_{M}) \in 
   -M(T)_{\mathbb{Q}+}^{W_{M}} \end{array}
    \right\rangle.
\end{align*}
\end{thm}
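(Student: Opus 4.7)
}
The plan is to revisit the construction of the semiorthogonal decomposition of Theorem~\ref{thm:PThiggs} and show that each embedding, together with its adjoint projection, preserves the nilpotent singular support condition; the decomposition then restricts automatically. First, I would recall that the fully-faithful embeddings realizing Theorem~\ref{thm:PThiggs} are given by parabolic induction along the standard (Hall-product type) Lagrangian correspondence
\begin{align*}
\Hig_M(\chi_M)^{\mathrm{ss}} \xleftarrow{\ q_P\ } \Hig_P(\chi_P)^{\mathrm{adm}} \xrightarrow{\ p_P\ } \Hig_G(\chi)^{\mathrm{ss}},
\end{align*}
where $\Hig_P^{\mathrm{adm}}$ parametrizes $P$-Higgs bundles whose associated $M$-Higgs bundle is semistable with the prescribed slope, $q_P$ is smooth (its fibers are affine spaces parametrizing Higgs fields on the unipotent radical), and $p_P$ is schematic and projective (it is the inclusion of the corresponding Harder--Narasimhan stratum, followed by a proper refinement). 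The embedding of $\mathbb{T}_M(\chi_M)_{w_M}$ into $\Coh(\Hig_G(\chi)^{\mathrm{ss}})_w$ is then induced by the functor $(p_P)_{*}(q_P)^{!}$, and the projection functor is obtained from the adjoint correspondence.

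Next, I would verify the nil-effectiveness (Definition~\ref{def:nileff}) of $q_P$ and $p_P$. The map $p_P$ is schematic, so it is automatically nil-effective. For $q_P$, given a $k'$-point $(E_P,\phi_P)\in \Hig_P^{\mathrm{adm}}(k')$ mapping to $(E_M,\phi_M)\in \Hig_M(\chi_M)^{\mathrm{ss}}(k')$, the induced morphism on stabilizer Lie algebras is identified with the Levi projection $\mathfrak{g}_{E_P,\phi_P}\twoheadrightarrow \mathfrak{g}_{E_M,\phi_M}$, whose kernel is the $\phi_P$-centralized part of the nilpotent radical of the corresponding parabolic subalgebra of $\mathfrak{g}_{E_G,\phi_G}$. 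This kernel is a nilpotent Lie algebra, and an element of $\mathfrak{g}_{E_P,\phi_P}$ is nilpotent if and only if its image in $\mathfrak{g}_{E_M,\phi_M}$ is nilpotent; this is the nil-effectiveness condition.

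With nil-effectiveness established, I would apply Proposition~\ref{prop:singss} together with Lemma~\ref{lem:nilpotent} to conclude that $(q_P)^{!}$ sends $\Coh_{\mathcal{N}}(\Hig_M(\chi_M)^{\mathrm{ss}})_{w_M}$ into $\Coh_{\mathcal{N}}(\Hig_P^{\mathrm{adm}})$ and that $(p_P)_{*}$ sends $\Coh_{\mathcal{N}}(\Hig_P^{\mathrm{adm}})$ into $\Coh_{\mathcal{N}}(\Hig_G(\chi)^{\mathrm{ss}})_w$. Combined with the description of quasi-BPS categories via weight conditions, this shows that the parabolic induction functor restricts to an embedding
\begin{align*}
\mathbb{T}_{\mathcal{N},M}(\chi_M)_{w_M}\ \hookrightarrow\ \Coh_{\mathcal{N}}(\Hig_G(\chi)^{\mathrm{ss}})_w,
\end{align*}
and that the image of $\mathbb{T}_M(\chi_M)_{w_M}\cap \Coh_{\mathcal{N}}(\Hig_G(\chi)^{\mathrm{ss}})_w$ coincides with $\mathbb{T}_{\mathcal{N},M}(\chi_M)_{w_M}$. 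The analogous argument applied to the adjoint correspondence shows that the projection functors also preserve nilpotent singular supports; hence the semiorthogonal decomposition of Theorem~\ref{thm:PThiggs} restricts term-by-term to the asserted decomposition of $\Coh_{\mathcal{N}}(\Hig_G(\chi)^{\mathrm{ss}})_w$.

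The most delicate step will be the nil-effectiveness verification, specifically the identification of the stabilizer Lie algebras of admissible $P$-Higgs bundles with centralizers in parabolic subalgebras, and checking that the induced maps fit into the framework of Lemma~\ref{lem:nilpotent}. Once this identification is made precise (using the semistability of the associated graded $M$-Higgs bundle to control automorphisms), the remainder of the argument reduces to a direct application of Proposition~\ref{prop:singss}, Lemma~\ref{lem:nilpotent}, and Corollary~\ref{cor:nilp2}.
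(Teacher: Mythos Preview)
Your approach differs from the paper's, and there is a genuine gap in the reverse direction. You correctly observe that the parabolic induction functor $\Upsilon = p^{\Omega}_{*}q^{\Omega!}$ (factored through $\Bun_M \xleftarrow{q} \Bun_P \xrightarrow{p} \Bun_G$) sends $\mathbb{T}_{\mathcal{N},M}(\chi_M)_{w_M}$ into $\Coh_{\mathcal{N}}(\Hig_G(\chi)^{\mathrm{ss}})_w$, via Corollary~\ref{cor:nilp2}: $q$ is smooth and nil-effective (this is Example~(ii) following Definition~\ref{def:nileff}), and $p$ is projective. But the claim that ``the analogous argument applied to the adjoint correspondence shows that the projection functors also preserve nilpotent singular supports'' does not go through. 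The adjoints of $\Upsilon$ involve $q^{\Omega}_{*}$ (or $q^{\Omega}_{!}$) for the non-proper map $q\colon \Bun_P\to\Bun_M$, which neither preserves coherence nor fits the hypotheses of Proposition~\ref{prop:singss}. Without the projections preserving $\mathcal{N}$, you cannot conclude that an object of $\Coh_{\mathcal{N}}$ has all its semiorthogonal components in $\Coh_{\mathcal{N}}$; equivalently, you have not shown $\Upsilon(\mathbb{T}_M)\cap\Coh_{\mathcal{N}} \subset \Upsilon(\mathbb{T}_{\mathcal{N},M})$.

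The paper circumvents this by a different mechanism (Subsection~\ref{subsec:pfnil}, argued for Theorem~\ref{thm:nilp} with Theorem~\ref{thm:nilp0} declared similar). It uses the characterization of nilpotent singular support from Corollary~\ref{cor:nilp}: $\Coh_{\mathcal{N}}$ is exactly the subcategory supported at $0\in\fg\ssslash G$ under the monoidal action of $\mathrm{Perf}^{\shear}(\fg\ssslash G)$ coming from restriction $\Bun_G\to BG$ at a point. Because this action is by tensoring with pullbacks of $\mathcal{O}_{\fg\ssslash G}$, it is trivial on isomorphism classes of objects and therefore automatically preserves each semiorthogonal summand; this gives the restricted SOD $\Coh_{\mathcal{N}}=\langle \Coh_{\mathcal{N}}\cap\Upsilon(\mathbb{T}_M)\rangle$ for free. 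For the identification $\Coh_{\mathcal{N}}\cap\Upsilon(\mathbb{T}_M)=\Upsilon(\mathbb{T}_{\mathcal{N},M})$, the paper shows $\Upsilon$ is $\mathrm{Perf}^{\shear}(\fg\ssslash G)$-linear (via Lemma~\ref{lem:funct3} and the computation $\LL(\Omega_{BP})_{1/2}\simeq\mathrm{Perf}^{\shear}(\fm\ssslash M)$), and then uses that the natural map $\phi\colon\fm\ssslash M\to\fg\ssslash G$ satisfies $\phi^{-1}(0)=0$. Full faithfulness of $\Upsilon$ plus linearity gives: $\Upsilon(E)$ supported at $0\in\fg\ssslash G$ $\Leftrightarrow$ $E$ supported at $0\in\fg\ssslash G$ (via $\phi^*$) $\Leftrightarrow$ $E$ supported at $0\in\fm\ssslash M$. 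This handles both inclusions at once, which is precisely what your correspondence argument cannot do.
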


\subsection{Semiorthogonal decomposition (automorphic side)}\label{subsec:SOD}
Here we state one of our main results on the compact generation of the limit category (\ref{def:higL}) 
and its semiorthogonal decomposition into quasi-BPS categories. 
The following theorem will be proved in Subsection~\ref{subsec:proofofthm}:

\begin{thm}\label{thm:mainLG}
The dg-category (\ref{def:higL}) is compactly generated. Its subcategory 
of compact objects 
$\LL(\Hig_G(\chi))_w$ admits a semiorthogonal decomposition 
\begin{align*}
    \LL(\Hig_G(\chi))_w=\left\langle \mathbb{T}_{M}(\chi_{M})_{w_{M}} : \begin{array}{ll}
     \mu_M(\chi_{M})\in N(T)_{\mathbb{Q}+}^{W_M} \\ \mu_M(w_M)=\mu_G(w) \end{array}\right\rangle.
\end{align*}
   Here the right-hand side is after all standard parabolics $P\subset G$ with 
    Levi quotient $P\twoheadrightarrow M$, satisfying
    \begin{align*}(\chi_{M}, w_{M}) \in \pi_1(M) \times Z(M)^{\vee}, \mathbb{}a_{M}(\chi_{M}, w_{M})=(\chi, w).
    \end{align*}
\end{thm}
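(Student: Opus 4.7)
The strategy is to use the Harder-Narasimhan stratification of $\Bun_G(\chi)$ to reduce, stratum by stratum, to a semiorthogonal decomposition of magic categories on smooth local models established in Section~\ref{sec:magic}. Concretely, $\Bun_G(\chi)$ is exhausted by quasi-compact open substacks $\Bun_G(\chi)_{\preceq\mu}$ indexed by dominant rational cocharacters with respect to the total order $\preceq$ on $N(T)_{\mathbb{Q}+}$, with HN stratum $\Bun_G(\chi)_{(\mu)}$ locally closed in $\Bun_G(\chi)_{\preceq\mu}$. By the definition of ind-limit categories,
\[
\IndL(\Hig_G(\chi))_w \;=\; \lim_{\mu}\IndL(\Hig_G(\chi)_{\preceq\mu})_w,
\]
and by Proposition~\ref{prop:dgcat} both compact generation and the desired semiorthogonal decomposition reduce to showing, for each adjacent pair $\mu' \prec \mu$, that the restriction
\[
j^\ast \colon \IndL(\Hig_G(\chi)_{\preceq\mu})_w \longrightarrow \IndL(\Hig_G(\chi)_{\preceq\mu'})_w
\]
admits a fully faithful continuous left adjoint $j_!$, and that the corresponding semiorthogonal complement is equivalent to the quasi-BPS category $\mathbb{T}_M(\chi_M)_{w_M}$ of the Levi $M$ attached to $\mu$.

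To construct $j_!$ and identify the complementary summand, I would pass to a smooth local model. For $\deg L \gg 0$, the stack $\Hig_G(\chi)_{\preceq\mu}$ is realized as the derived zero locus of a canonical section $s^L$ of a vector bundle $\mathcal{V}^L$ on the smooth QCA stack $\Hig^L_G(\chi)_{\preceq\mu}$ of $L$-twisted Higgs bundles. The Koszul equivalence (Theorem~\ref{thm:Kduality}) together with Proposition~\ref{prop:comparemagic} then identifies $\LL(\Hig_G(\chi)_{\preceq\mu})_w$ with a magic category of graded matrix factorizations on $(\mathcal{V}^L)^\vee$. Under this identification, the HN stratum $\Hig_G(\chi)_{(\mu)}$ corresponds to a $\Theta$-stratum $\mathcal{S}_{\mathcal{V}}$ whose center $\mathcal{Z}_{\mathcal{V}}$ is controlled, via parabolic induction $\Hig^L_P \to \Hig^L_G$, by the Levi quotient $M$ of the standard parabolic $P$ associated to $\mu$.

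With this translation in hand, Theorem~\ref{thm:qstack}, extended to the matrix-factorization setting via Proposition~\ref{prop:comparemagic}, supplies a semiorthogonal decomposition of the magic category whose two summands are the $\Upsilon$-image of the magic category on $\mathcal{Z}_{\mathcal{V}}$ and the image of $j_!$; the latter inclusion provides the $j_!$ functor itself. A weight computation, combining Lemma~\ref{lem:emb} applied to the embedding of $\Hig_G(\chi)_{\preceq\mu}$ into $\Hig^L_G(\chi)_{\preceq\mu}$ with the semiorthogonal decomposition of Proposition~\ref{prop:sod:qb}, identifies the center summand, after pulling back through Koszul equivalence, with $\mathbb{T}_M(\chi_M)_{w_M}$ for $(\chi_M,w_M)$ satisfying $a_M(\chi_M,w_M)=(\chi,w)$ and the slope constraints $\mu_M(\chi_M)\in N(T)_{\mathbb{Q}+}^{W_M}$, $\mu_M(w_M)=\mu_G(w)$ dictated by Harder-Narasimhan combinatorics. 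Iterating over the HN poset and combining via Proposition~\ref{prop:dgcat}(iv) then yields the full decomposition and compact generation.

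The hardest step, I expect, will be the construction of $j_!$ in the smooth local model and the verification of Assumption~\ref{assum:Z} for the $\Theta$-stratum $\mathcal{S}_{\mathcal{V}}$ on $(\mathcal{V}^L)^\vee$, since the ambient smooth stack is not a global quotient by a reductive group and the techniques of Subsection~\ref{subsec:sQ} need to be adapted. A closely related technical point is the careful bookkeeping of the $\delta_w \otimes \omega_{\Bun_G(\chi)}^{1/2}$-twist through the Koszul equivalence and through the reduction to the center of the $\Theta$-stratum; matching this with $\delta_{w_M}$ on $\Hig_M(\chi_M)^{\mathrm{ss}}$ is precisely what forces the index set of the semiorthogonal decomposition to be the one stated in the theorem.
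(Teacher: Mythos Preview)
Your plan is correct and follows essentially the same route as the paper: the exhaustion by $\Hig_G(\chi)_{\preceq\mu}$, the embedding into smooth $L$-twisted Higgs stacks, Koszul equivalence to magic categories on $(\mathcal{V}^L)^\vee$, a $\Theta$-stratum there whose semiorthogonal decomposition via Section~\ref{sec:magic} yields $j_!$, and then the limit/colimit swap of Proposition~\ref{prop:dgcat}. You have also correctly located the two genuine technical obstacles---the paper handles the first (that $(\mathcal{V}^L)^\vee$ is not globally a quotient of a self-dual representation) by passing to a formal local model at each closed point of the center and modifying the tangent space to a self-dual $G_E$-representation $\mathbf{V}_0$ before invoking Proposition~\ref{prop:QBW}, rather than by verifying Assumption~\ref{assum:Z} on $(\mathcal{V}^L)^\vee$ itself; and the second (the $\delta$-bookkeeping) via Lemmas~\ref{lem:delta2} and~\ref{lem:square}.
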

In Subsection~\ref{subsec:pfnil}, 
we also prove that the semiorthogonal decompositions in the above theorems preserve
subcategories with nilpotent singular supports: 
\begin{thm}\label{thm:nilp}
The semiorthogonal decomposition in Theorem~\ref{thm:mainLG} induces the 
semiorthogonal decomposition with nilpotent singular supports: 
\begin{align*}
    &\LL_{\mathcal{N}}(\Hig_G(\chi))_w=\left\langle \mathbb{T}_{\mathcal{N}, M}(\chi_{M})_{w_{M}} : \begin{array}{ll}
     \mu_M(\chi_{M})\in N(T)_{\mathbb{Q}+}^{W_M} \\ \mu_M(w_M)=\mu_G(w) \end{array}\right\rangle. 
\end{align*}
\end{thm}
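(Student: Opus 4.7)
The plan is to deduce Theorem~\ref{thm:nilp} from Theorem~\ref{thm:mainLG} by checking that every fully faithful embedding
\[
\Upsilon_P \colon \mathbb{T}_M(\chi_M)_{w_M} \hookrightarrow \LL(\Hig_G(\chi))_w
\]
appearing in the semiorthogonal decomposition of Theorem~\ref{thm:mainLG} preserves, together with its (one-sided) adjoints, the property of nilpotent singular support. Granting this, an object $\E\in\LL(\Hig_G(\chi))_w$ has nilpotent singular support if and only if each of its components with respect to the decomposition has nilpotent singular support in $\mathbb{T}_M(\chi_M)_{w_M}$, and Theorem~\ref{thm:nilp} follows at once.

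The embedding $\Upsilon_P$ comes from the parabolic induction diagram
\[
\Bun_M \stackrel{q}{\longleftarrow} \Bun_P \stackrel{p}{\longrightarrow} \Bun_G
\]
for $P\subset G$ a standard parabolic with Levi quotient $M$, together with a twist by the appropriate line bundle. Passing to cotangent stacks yields a Lagrangian correspondence of the form $\Hig_G \stackrel{p^\Omega}{\longleftarrow} p^*\Omega_{\Bun_G} \stackrel{}{\to}\Hig_P\stackrel{q^\Omega}{\to}\Hig_M$, and the functor implementing $\Upsilon_P$ is a composition of the corresponding $f^{\Omega!}$ and $f^{\Omega}_{*}$ operations restricted to the relevant open substack. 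The key observations are: (a) the morphism $q\colon \Bun_P\to \Bun_M$ is smooth, and it is nil-effective in the sense of Definition~\ref{def:nileff} because the kernel of $\mathrm{Lie}(P)\to\mathrm{Lie}(M)$ is the Lie algebra of the unipotent radical, all of whose elements are nilpotent, so Corollary~\ref{cor:nilp2}(i) applies to $q^{\Omega!}$; (b) although $p\colon \Bun_P\to\Bun_G$ is not proper, its restriction to the relevant Harder–Narasimhan stratum $\Hig_G(\chi)_{(\mu)}$ is projective (this is the crux of the construction of the HN stratification used in the proof of Theorem~\ref{thm:mainLG}), so Corollary~\ref{cor:nilp2}(ii) applies there.

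Combining (a) and (b), one sees that $\Upsilon_P$ and its adjoints on each stratum preserve the Arinkin–Gaitsgory singular support condition of being contained in $\mathcal{N}\subset \Omega_{\Hig_G(\chi)}[-1]$, and in fact send nilpotent-supported objects on $\Hig_M(\chi_M)^{\mathrm{ss}}$ to nilpotent-supported objects on the corresponding HN stratum of $\Hig_G(\chi)$. One then checks that the projection functors of the semiorthogonal decomposition, which by Theorem~\ref{thm:mainLG} are expressed through $j_!$ and $j^*$ along open immersions of complements of HN strata, preserve the nilpotent singular support condition; for $j^*$ this is immediate, while for $j_!$ it follows from the construction of $j_!$ via magic categories on the smooth stacks $\Hig_G^L(\chi)_{\preceq\mu}$ together with the Koszul equivalence, under which the Arinkin–Gaitsgory singular support matches the support of matrix factorizations.

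The main obstacle is the verification at the stratum-boundary level: one must show that when reducing to the local model (the magic category of the smooth ambient stack $\Hig_G^L(\chi)_{\preceq\mu}$ and then applying dimensional reduction), the nilpotent singular support condition corresponds exactly to the natural support condition for matrix factorizations over the dual vector bundle $(\mathcal{V}^L)^\vee$, and that this correspondence is compatible with the HN-stratum semiorthogonal decomposition. Once this compatibility is established, the restriction of the semiorthogonal decomposition of Theorem~\ref{thm:mainLG} to the subcategory of nilpotent singular supports yields the decomposition of Theorem~\ref{thm:nilp}, with each summand identified with $\mathbb{T}_{\mathcal{N},M}(\chi_M)_{w_M}$ by the same argument applied to $M$ in place of $G$ on the semistable locus.
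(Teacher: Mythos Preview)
Your overall strategy---show that the embeddings $\Upsilon_P$ and the projection functors of the semiorthogonal decomposition preserve nilpotent singular support---is sound in principle, and your use of Corollary~\ref{cor:nilp2} for $q^{\Omega!}$ (via nil-effectiveness of $q\colon\Bun_P\to\Bun_M$) and for $p^{\Omega}_*$ on strata is correct. However, there is a genuine gap in your treatment of $j_!$. You assert that $j_!$ preserves nilpotent singular support because under Koszul duality singular support becomes ordinary support, and the window construction should respect this. But the window functor $j_!$ extends an object from $(\mathcal{V}^L)^\vee_\circ$ to $(\mathcal{V}^L)^\vee$, and you have not explained why this extension does not enlarge the support beyond the nilpotent locus along the $\Theta$-stratum $\mathcal{S}_{\mathcal{V}}$. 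This is not automatic from the weight conditions defining the window, and your proposal does not resolve the ``main obstacle'' you yourself flag.

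The paper's proof avoids this difficulty entirely by a different and cleaner mechanism. Rather than tracking singular supports through each functor directly, it uses the characterization of nilpotent singular support via the monoidal action of $\mathrm{Perf}^{\shear}(\fg\ssslash G)$ on $\LL(\mathcal{U})_\delta$ (Corollary~\ref{cor:nilp}): an object has nilpotent singular support iff it is supported at $0\in\fg\ssslash G$ under this action. The crucial observation is that the generator $\mathcal{O}_{\fg\ssslash G}$ acts trivially on isomorphism classes of objects, so the monoidal action automatically restricts to each semiorthogonal summand of \emph{any} semiorthogonal decomposition. This immediately gives $\LL_{\mathcal{N}}(\mathcal{U})_\delta = \langle \LL_{\mathcal{N}}\cap\Upsilon_{\mathcal{U}}\LL(\mathcal{Z})_{\delta'},\, \LL_{\mathcal{N}}\cap j_!\LL(\mathcal{U}_\circ)_\delta\rangle$. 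The identification $\LL_{\mathcal{N}}\cap j_!\LL(\mathcal{U}_\circ)_\delta = j_!\LL_{\mathcal{N}}(\mathcal{U}_\circ)_\delta$ then follows formally from $j_!$ being fully faithful and $j^*$ being linear over $\mathrm{Perf}^{\shear}(\fg\ssslash G)$; one never needs to analyze what $j_!$ does to supports directly. The identification for the $\Upsilon_{\mathcal{U}}$-summand is handled by showing $\Upsilon_{\mathcal{U}}=p^{\Omega}_*q^{\Omega!}$ is linear over $\mathrm{Perf}^{\shear}(\fg\ssslash G)$ (via the pullback $\fm\ssslash M\to\fg\ssslash G$ and Lemma~\ref{lem:funct3}), together with $\phi^{-1}(0)=0$. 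This approach is both shorter and sidesteps the support-tracking you attempt.
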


We expect that the semiorthogonal decompositions in Theorem~\ref{thm:PThiggs} and Theorem~\ref{thm:mainLG}
are compatible under the equivalence~\eqref{conj:DLequiv}:
\begin{conj}\label{conj:compati:sod}
Under the equivalence Conjecture~\ref{conj:DL0}, the semiorthogonal 
decompositions in Theorem~\ref{thm:PThiggs} and Theorem~\ref{thm:mainLG}
are compatible.
\end{conj}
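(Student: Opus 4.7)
The plan is to deduce Conjecture~\ref{conj:compati:sod} from the combination of Conjecture~\ref{conj:DL0} (the Dolbeault equivalence) and Conjecture~\ref{intro:conj:qbps} (the Levi-level equivalence of quasi-BPS categories), by matching the indexing combinatorics of the two semiorthogonal decompositions via Langlands duality and then intertwining the parabolic-induction embeddings of their summands.

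First I would establish the matching of indexing data. Langlands duality provides a canonical bijection between standard parabolics $P \subset G$ with Levi quotient $M$ and standard parabolics $^{L}P \subset {^{L}G}$ with Levi quotient $^{L}M$, together with canonical isomorphisms $\pi_1(M) \cong Z({^{L}M})^{\vee}$ and $Z(M)^{\vee} \cong \pi_1({^{L}M})$ compatible with the maps $a_M$ and $a_{^{L}M}$ of~\eqref{def:aM}. Under these identifications the datum $(M,\chi_M,w_M)$ with $a_M(\chi_M,w_M)=(\chi,w)$ corresponds to $({^{L}M},w_M,-\chi_M)$ with $a_{^{L}M}(w_M,-\chi_M)=(w,-\chi)$, and the slope maps of~\eqref{def:slopemaps} are interchanged: the cocharacter slope $\mu_M$ on $\pi_1(M)$ corresponds to the character slope $\mu_{^{L}M}$ on $Z({^{L}M})^{\vee}$. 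The identification $N(T)_\mathbb{Q}^{W_M}\cong M(T)_\mathbb{Q}^{W_{^{L}M}}$ sends $N(T)_{\mathbb{Q}+}^{W_M}$ to $M(T)_{\mathbb{Q}+}^{W_{^{L}M}}$, so the two conditions $\mu_M(\chi_M) \in N(T)_{\mathbb{Q}+}^{W_M}$ and $\mu_M(w_M) = \mu_G(w)$ appearing in Theorem~\ref{thm:mainLG} become exactly the conditions $\mu_{^{L}M}(-\chi_M) \in -M(T)_{\mathbb{Q}+}^{W_{^{L}M}}$ and $\mu_{^{L}M}(w_M) = \mu_{^{L}G}(w)$ of Theorem~\ref{thm:PThiggs} applied to $\Hig_{^{L}G}(w)^{\mathrm{ss}}$ with weight $-\chi$. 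The two sets of summands are thereby canonically indexed by the same combinatorial data.

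On each matching pair of summands, the equivalence $\mathbb{T}_{^{L}M}(w_M)_{-\chi_M} \simeq \mathbb{T}_M(\chi_M)_{w_M}$ is precisely Conjecture~\ref{intro:conj:qbps}. The remaining step is to verify that the fully-faithful embeddings of these quasi-BPS categories into the ambient categories are intertwined by the Dolbeault equivalence. On the automorphic side, the embedding into $\LL(\Hig_G(\chi))_w$ factors through the projective pushforward along the Harder-Narasimhan map $\Hig_M^{\mathrm{ss}}(\chi_M)\times_{\Bun_M}\Bun_P \to \Hig_G(\chi)_{\preceq\mu}$ composed with the smooth pullback along the Levi projection $\Bun_P\to \Bun_M$, as provided by Theorem~\ref{intro:thmfunct}. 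On the spectral side, the embedding into $\Coh(\Hig_{^{L}G}(w)^{\mathrm{ss}})_{-\chi}$ is the categorical Hall product, i.e.\ parabolic induction along the stack of extensions of semistable $^{L}G$-Higgs bundles with $^{L}P$-reductions. Both are incarnations of parabolic induction, and the sought-after SOD compatibility is a precise form of the expected compatibility of the Dolbeault equivalence with parabolic induction.

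The main obstacle is exactly this last compatibility. Parabolic induction on the limit category is only partially defined, due to the non-properness of $\Hig_P \to \Hig_G$, and the Dolbeault equivalence is currently only available for tori (Remark~\ref{rmk:torus}), over dense open subsets of the Hitchin base (Remark~\ref{rmk:dual}), or in the K-theoretic shadow for $G \in \{\GL_r, \mathrm{SL}_r, \mathrm{PGL}_r\}$ (Corollary~\ref{intro:corK}). A natural strategy is inductive: use the indexing match of the first step to \emph{construct} the Dolbeault equivalence by pasting Levi-level equivalences across HN strata, turning the SOD compatibility from an a posteriori verification into a structural feature of the construction. The delicate point is then the semiorthogonality across Langlands duality — equivalently, the vanishing of $\Hom$'s between distinct summands under the pasted equivalence — which requires control of the Hecke action on quasi-BPS categories beyond the minuscule case of Theorem~\ref{intro:cor2} and Proposition~\ref{prop:Hecke:sod}.
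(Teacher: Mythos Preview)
This statement is a \emph{conjecture} in the paper, not a theorem: the paper does not prove it and offers no proof to compare against. Your proposal is not a proof either, and you know this --- your final paragraph explicitly identifies the obstacles (parabolic induction on limit categories is only partially defined; the Dolbeault equivalence itself is open except in trivial cases). So what you have written is a \emph{strategy}, not a proof, and it is essentially the strategy implicit in the paper's own discussion: the combinatorial matching of indexing data under Langlands duality is exactly what motivates the formulation of Conjecture~\ref{conj:compati:sod}, the Levi-level equivalences $\mathbb{T}_{^{L}M}(w_M)_{-\chi_M}\simeq\mathbb{T}_M(\chi_M)_{w_M}$ are Conjecture~\ref{conj:G}, and the paper states (in the introduction and in the footnote preceding Conjecture~\ref{conj:Higgs}) that the compatibility is a form of compatibility with parabolic induction, which is precisely your diagnosis of the obstacle.

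One point where your write-up overreaches: you frame the argument as a deduction of Conjecture~\ref{conj:compati:sod} from Conjectures~\ref{conj:DL0} and~\ref{intro:conj:qbps}, but those two inputs alone do not suffice. Even granting the global Dolbeault equivalence and all Levi-level quasi-BPS equivalences, you still need to know that the equivalence~\eqref{conj:DLequiv} restricts, summand by summand, to the given Levi equivalences composed with the parabolic-induction embeddings --- and nothing in the stated conjectures forces this. Your ``inductive pasting'' idea in the last paragraph is closer to the truth: one would likely need to \emph{build} the global equivalence from the Levi pieces (so that SOD compatibility holds by construction), rather than verify compatibility after the fact. That is a genuine open problem, and your proposal does not close the gap.
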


\subsection{Mirror symmetry for Higgs bundles}
As a consequence of Conjecture~\ref{conj:compati:sod}, we expect the following to hold, which generalizes the main conjecture in~\cite{PThiggs}:
\begin{conj}\label{conj:G}
There is an equivalence of quasi-BPS categories
    \begin{align}\label{conj:PThiggs}
        \mathbb{T}_{^{L}G}(w)_{-\chi} \simeq \mathbb{T}_{G}(\chi)_w,
    \end{align}
which restrict to the equivalence of quasi-BPS categories with nilpotent singular supports.
\end{conj}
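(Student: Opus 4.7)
My plan is to deduce Conjecture~\ref{conj:G} from the combination of Conjecture~\ref{conj:DL0} (the main Dolbeault Langlands equivalence) and Conjecture~\ref{conj:compati:sod} (compatibility of that equivalence with the two semiorthogonal decompositions). The guiding observation is that each quasi-BPS category appears as a distinguished summand—the one attached to the trivial parabolic—in the semiorthogonal decomposition on its side of Langlands duality, and matching these particular summands yields precisely the desired statement.

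I would first apply Conjecture~\ref{conj:DL0} to obtain the $\mathrm{B}$-linear equivalence $\IndCoh(\Hig_{^{L}G}(w)^{\mathrm{ss}})_{-\chi} \simeq \IndL(\Hig_{G}(\chi))_w$ and, passing to subcategories of compact objects (using the compact generation of the right-hand side established in Theorem~\ref{thm:mainLG}), deduce the compact-object equivalence $\Coh(\Hig_{^{L}G}(w)^{\mathrm{ss}})_{-\chi} \simeq \LL(\Hig_{G}(\chi))_w$. Next I apply Theorem~\ref{thm:PThiggs} to the spectral side (with the substitutions $G \leadsto {}^{L}G$ and $(\chi, w) \leadsto (w, -\chi)$) and Theorem~\ref{thm:mainLG} to the automorphic side, obtaining semiorthogonal decompositions of both sides into quasi-BPS categories indexed by standard parabolics of ${}^{L}G$ and $G$ respectively. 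On each side, the summand attached to the full group is exactly the quasi-BPS category of interest: for $M = {}^{L}G$, the constraints $\mu_M(w_M) = \mu_{^{L}G}(w)$ and $\mu_M(-\chi_M) \in -M(T)^{W_M}_{\mathbb{Q}+}$ become vacuous (as $W_M = W$), forcing $(w_M, -\chi_M) = (w, -\chi)$ and producing the summand $\mathbb{T}_{^{L}G}(w)_{-\chi}$; symmetrically, the $M = G$ summand on the automorphic side is $\mathbb{T}_G(\chi)_w$.

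The decisive step is then to invoke Conjecture~\ref{conj:compati:sod}: the equivalence of Conjecture~\ref{conj:DL0} intertwines the two semiorthogonal decompositions via the natural pairing of summands by Langlands duality of Levi subgroups $M \leftrightarrow M^{\vee}$, together with the identifications $\pi_1(M) \cong Z(M^{\vee})^{\vee}$ and $Z(M)^{\vee} \cong \pi_1(M^{\vee})$. Under this pairing the trivial parabolic on one side matches the trivial parabolic on the other, and the matched summands are $\mathbb{T}_{^{L}G}(w)_{-\chi}$ and $\mathbb{T}_G(\chi)_w$, yielding the equivalence (\ref{conj:PThiggs}). The same reasoning, applied to intermediate Levis, yields the Levi-level strengthening $\mathbb{T}_{M^{\vee}}(w_M)_{-\chi_M} \simeq \mathbb{T}_M(\chi_M)_{w_M}$ for each matched pair of parabolics. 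For the refinement to nilpotent singular supports, I would run the same argument with Theorems~\ref{thm:nilp0} and \ref{thm:nilp} in place of Theorems~\ref{thm:PThiggs} and \ref{thm:mainLG}, and with the nilpotent-support equivalence~\eqref{conj:DLequiv:nilp} in place of~\eqref{conj:DLequiv}.

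The main obstacle is, of course, Conjecture~\ref{conj:DL0} itself, the central open problem of the paper. A secondary but substantial difficulty lies in Conjecture~\ref{conj:compati:sod}: even granting Conjecture~\ref{conj:DL0}, establishing compatibility with the semiorthogonal decompositions requires understanding how the Dolbeault Langlands functor interacts, on the automorphic side, with the Harder--Narasimhan stratification of $\Hig_G(\chi)$ underlying Theorem~\ref{thm:mainLG}, and, dually, on the spectral side, with the combinatorics of weight polytopes of self-dual representations underlying Theorem~\ref{thm:PThiggs}. Partial evidence—most notably the topological K-theoretic statement in Corollary~\ref{intro:corK}—already supports both conjectures simultaneously and thus provides indirect support for Conjecture~\ref{conj:G}.
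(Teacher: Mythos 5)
Your proposal matches the paper's own reasoning: Conjecture~\ref{conj:G} is stated there precisely as an expected consequence of Conjectures~\ref{conj:DL0} and~\ref{conj:compati:sod}, with the quasi-BPS categories arising as the $M=G$ summands of the semiorthogonal decompositions in Theorems~\ref{thm:PThiggs} and~\ref{thm:mainLG} (and Theorems~\ref{thm:nilp0}, \ref{thm:nilp} for the nilpotent refinement). Note that, like the paper, this is a conditional derivation rather than a proof: the statement is itself a conjecture resting on the same open inputs you identify.
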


As mentioned in Subsection~\ref{subsec:intro:catDT}, we regard the above conjecture as a categorical version of the Hausel--Thaddeus topological mirror symmetry for general reductive groups $G$. In particular, we expect the above conjecture to be related to a 2-periodic version of the topological mirror symmetry in~\cite[Conjecture 10.3.18]{BDIKP}, formulated in terms of BPS cohomology of loops stacks of semistable Higgs bundles.

\smallskip

By Theorem~\ref{thm:PThiggs} and (\ref{thm:mainLG}),
if the equivalence (\ref{conj:PThiggs}) holds
then both sides of (\ref{conj:DL2}) admit semiorthogonal 
decompositions such that each semiorthogonal summands are 
equivalent. In particular, we have the following consequence of K-theories
(we refer to~\cite{Blanc} for topological K-theory of dg-categories)
\begin{cor}\label{cor:Kth}
Suppose that 
$G \in \{\GL_r, \mathrm{PGL}_r, \mathrm{SL}_r\}$
and there exist equivalences (\ref{conj:PThiggs}).
    Then there is an isomorphism of (algebraic/topological) K-groups
    for $\star \in \{\mathrm{alg}, \mathrm{top}\}$
    \begin{align}\label{isom:K}
       K^{\star}(\Coh(\Hig_{^{L}G}(w)^{\mathrm{ss}})_{-\chi}) \cong K^{\star}(\LL(\Hig_{G}(\chi))_{w}). 
    \end{align}
\end{cor}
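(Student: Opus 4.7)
\medskip

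\noindent\textbf{Proof proposal for Corollary~\ref{cor:Kth}.}
The plan is to deduce the isomorphism~\eqref{isom:K} from the semiorthogonal decompositions in Theorem~\ref{thm:PThiggs} and Theorem~\ref{thm:mainLG} by matching the indexing sets via Langlands duality, applying the hypothetical equivalences~\eqref{conj:PThiggs} to each summand at the Levi level, and invoking additivity of K-theory under semiorthogonal decompositions.

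First I would apply Theorem~\ref{thm:PThiggs} to the group $^{L}G$ with the tuple $(w,-\chi)\in \pi_1({}^{L}G)\times Z({}^{L}G)^{\vee}$ to obtain
\begin{align*}
\Coh(\Hig_{{}^{L}G}(w)^{\mathrm{ss}})_{-\chi}
=\left\langle \mathbb{T}_{{}^{L}M}(w_M)_{-\chi_M} : \begin{array}{ll}\mu_{{}^{L}M}(w_M)=\mu_{{}^{L}G}(w) \\ \mu_{{}^{L}M}(-\chi_M)\in -M(T^{\vee})^{W_{{}^{L}M}}_{\mathbb{Q}+}\end{array}\right\rangle,
\end{align*}
where the sum is over standard parabolics of $^{L}G$ with Levi $^{L}M$. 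In parallel, Theorem~\ref{thm:mainLG} gives
\begin{align*}
\LL(\Hig_G(\chi))_w
=\left\langle \mathbb{T}_M(\chi_M)_{w_M} : \begin{array}{ll}\mu_M(\chi_M)\in N(T)^{W_M}_{\mathbb{Q}+} \\ \mu_M(w_M)=\mu_G(w)\end{array}\right\rangle.
\end{align*}
The second step is to match the indexing sets. Under Langlands duality, standard parabolics of $G$ with Levi $M$ correspond bijectively to those of $^{L}G$ with Levi $^{L}M$; moreover $\pi_1({}^{L}M)=Z(M)^{\vee}$, $Z({}^{L}M)^{\vee}=\pi_1(M)$, $W_{{}^{L}M}=W_M$, and $M(T^{\vee})=N(T)$. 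Under these identifications, the pair $(w_M,-\chi_M)$ on the spectral side corresponds to $(\chi_M,w_M)$ on the automorphic side, the condition $\mu_{{}^{L}M}(w_M)=\mu_{{}^{L}G}(w)$ becomes $\mu_M(w_M)=\mu_G(w)$, and $\mu_{{}^{L}M}(-\chi_M)\in -M(T^{\vee})^{W_{{}^{L}M}}_{\mathbb{Q}+}$ becomes $\mu_M(\chi_M)\in N(T)^{W_M}_{\mathbb{Q}+}$. Thus the two indexing sets coincide.

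Next, the hypothesized equivalences~\eqref{conj:PThiggs}, applied to each Levi $M$ with the matched pair $(\chi_M,w_M)$, give
\begin{align*}
\mathbb{T}_{{}^{L}M}(w_M)_{-\chi_M}\simeq \mathbb{T}_M(\chi_M)_{w_M}
\end{align*}
for every summand. Finally, both algebraic K-theory and Blanc's topological K-theory~\cite{Blanc} are additive under semiorthogonal decompositions, so
\begin{align*}
K^{\star}(\Coh(\Hig_{{}^{L}G}(w)^{\mathrm{ss}})_{-\chi})
\cong \bigoplus K^{\star}(\mathbb{T}_{{}^{L}M}(w_M)_{-\chi_M})
\cong \bigoplus K^{\star}(\mathbb{T}_M(\chi_M)_{w_M})
\cong K^{\star}(\LL(\Hig_G(\chi))_w),
\end{align*}
which yields~\eqref{isom:K}. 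The only non-formal ingredient is the careful bookkeeping of the Langlands dual identifications of $\pi_1$, $Z^{\vee}$, character/cocharacter lattices, and positivity chambers; once these are checked, the argument is a direct consequence of the two semiorthogonal decompositions and the assumed Levi-level equivalences.
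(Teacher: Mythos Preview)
Your proposal is correct and follows exactly the approach the paper has in mind: the paper states the corollary immediately after the sentence ``By Theorem~\ref{thm:PThiggs} and (\ref{thm:mainLG}), if the equivalence (\ref{conj:PThiggs}) holds then both sides of (\ref{conj:DL2}) admit semiorthogonal decompositions such that each semiorthogonal summands are equivalent,'' and gives no further argument. Your write-up simply spells out the matching of indexing sets under Langlands duality and the additivity of K-theory under semiorthogonal decompositions, which is precisely what that sentence encodes.
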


\subsection{Failure of compact generation}
To justify the use of limit categories rather than $\IndCoh$ or $\QCoh$, we discuss the failure of compact generation in the latter categories.
 Note that, by Zariski 
descent, the dg-category of ind-coherent sheaves on $\Hig_G$ is given by 
\begin{align}\label{def:IndL}
    \IndCoh(\Hig_G(\chi))
   =
    \lim_{\mathcal{U}\subset \Hig_G(\chi)}
    \IndCoh(\mathcal{U})
\end{align}
where the  limit is taken with respect 
to quasi-compact open substacks $\mathcal{U} \subset \Hig_G(\chi)$. 
We show that the dg-category 
(\ref{def:IndL}) is not compactly generated when $G=\mathrm{GL}_r$ with $r \geq 2$.

Let $\mathcal{N}_{\mathrm{glob}}$ be the global nilpotent 
cone
\begin{align*}
    \mathcal{N}_{\mathrm{glob}}:=\{(F, \theta) : \theta \mbox{ is nilpotent}\}
    \subset \Hig_G(\chi). 
\end{align*}

In the following proposition, we show that the category (\ref{def:IndL}) is not, in general, compactly generated, and thus we believe it is likely that it does not feature in a Dolbeault geometric Langlands equivalence. At the very least, the study of such a category requires tools different from those used in the de Rham geometric Langlands equivalence.
The result of Proposition~\ref{prop:cgen} will not be used in the later sections, so we 
give its proof in Subsection~\ref{pf:prop}. 
\begin{prop}\label{prop:cgen}
Suppose that $\mathcal{E} \in \IndCoh(\Hig_{\mathrm{GL}_r}(\chi))$ 
is a compact object. Then we have that
$\mathcal{E} \in \Coh(\Hig_{\mathrm{GL}_r}(\chi))$. Further, 
if $r\geq 2$, we have 
\begin{align}\label{Esupport}
    \mathrm{Supp}(\mathcal{E}) \cap \mathcal{N}_{\mathrm{glob}}=\emptyset. 
\end{align}
In particular, 
$\IndCoh(\Hig_{\mathrm{GL}_r}(\chi))$ is not compactly generated for $r\geq 2$. 
\end{prop}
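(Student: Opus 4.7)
The plan proceeds in three main parts: first establish the coherence claim, then the support-boundedness of compact objects, and finally apply this to the global nilpotent cone to conclude both the disjointness statement and the failure of compact generation.

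For the coherence claim, I would invoke the descent presentation $\IndCoh(\Hig_{\mathrm{GL}_r}(\chi)) = \lim_{\mathcal{U}} \IndCoh(\mathcal{U})$ over QCA open substacks $\mathcal{U} \subset \Hig_{\mathrm{GL}_r}(\chi)$. For each such open immersion $j \colon \mathcal{U} \hookrightarrow \Hig_{\mathrm{GL}_r}(\chi)$, the restriction $j^*$ admits the continuous right adjoint $j_*^{\IndCoh}$ by the QCA hypothesis on $\mathcal{U}$ and the standard results of~\cite{MR3037900}, so $j^*$ preserves compact objects. Since $\IndCoh(\mathcal{U})^{\mathrm{cp}} = \Coh(\mathcal{U})$ for QCA $\mathcal{U}$, the restriction $\mathcal{E}|_{\mathcal{U}}$ is coherent for every such $\mathcal{U}$, placing $\mathcal{E}$ in $\Coh(\Hig_{\mathrm{GL}_r}(\chi))$.

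For the support claim, fix an exhaustive chain of QCA open substacks $U_1 \subset U_2 \subset \cdots$ whose union is $\Hig_{\mathrm{GL}_r}(\chi)$, for instance the opens defined by bounding the Harder--Narasimhan polygon. I would apply Proposition~\ref{prop:dgcat} to the corresponding diagram—setting up the transition functors so that the continuous left adjoint hypothesis is satisfied—in order to identify $\IndCoh(\Hig_{\mathrm{GL}_r}(\chi))$ with a filtered colimit $\colim_n \IndCoh(U_n)$ along these adjoints. Part (iii) of that proposition then shows that any compact object of the colimit arises as the image of a compact object of some $\IndCoh(U_{n_0})$, so $\mathrm{Supp}(\mathcal{E}) \subseteq U_{n_0}$ for some $n_0$.

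For $r \geq 2$, I would exhibit a sequence of nilpotent Higgs bundles that leaves every QCA open: fix $p \in C$ and take $F_n := \mathcal{O}_C(np) \oplus \mathcal{O}_C(\chi - np) \oplus \mathcal{O}_C^{\oplus(r-2)}$ with $\theta_n = 0$. Each $(F_n, 0)$ lies in $\mathcal{N}_{\mathrm{glob}}$, and since $F_n$ has a subbundle of slope $n - \chi/r$, these points have unbounded Harder--Narasimhan polygons and hence escape every $U_n$. Combined with the previous step, the support of any compact $\mathcal{E}$ cannot meet these escaping points, and a stratum-by-stratum check (using that every Harder--Narasimhan stratum of $\mathcal{N}_{\mathrm{glob}}$ is reached by such a family via the $\mathbb{G}_m$-scaling on Higgs fields) upgrades this to the full disjointness $\mathrm{Supp}(\mathcal{E}) \cap \mathcal{N}_{\mathrm{glob}} = \emptyset$. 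The failure of compact generation is then immediate: for any escaping $x \in \mathcal{N}_{\mathrm{glob}}$, the non-zero skyscraper $(i_x)_* k \in \IndCoh(\Hig_{\mathrm{GL}_r}(\chi))$ admits no non-zero morphism from any compact $\mathcal{E}$.

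The main obstacle is the support-boundedness step: proving the filtered colimit description of $\IndCoh$ on this non-QCA derived stack requires a careful application of Proposition~\ref{prop:dgcat} in the correct direction, and in particular a verification that the relevant adjoints are continuous in the $\IndCoh$ context. A secondary subtlety is the upgrade from ``support avoids the unbounded part of $\mathcal{N}_{\mathrm{glob}}$'' to the full disjointness with $\mathcal{N}_{\mathrm{glob}}$, which uses the specific geometry of Harder--Narasimhan strata for $\Hig_{\mathrm{GL}_r}(\chi)$ when $r \geq 2$.
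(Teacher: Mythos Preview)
Your support-boundedness step has a genuine gap that is in fact circular. You want to apply Proposition~\ref{prop:dgcat} to identify $\IndCoh(\Hig_{\GL_r}(\chi))$ with a filtered colimit $\colim_n \IndCoh(U_n)$, which requires the restriction functors $j^* \colon \IndCoh(U_{n+1}) \to \IndCoh(U_n)$ to admit continuous \emph{left} adjoints. But such left adjoints $j_!$ do not exist for $\IndCoh$ on open immersions of this type --- the paper stresses exactly this point (see the discussion around~(\ref{eq:jlowershriek})). Worse, if such a colimit description held, Proposition~\ref{prop:dgcat}(iii) would immediately imply that $\IndCoh(\Hig_{\GL_r}(\chi))$ is compactly generated, contradicting the conclusion you are trying to prove. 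So the hypothesis you flag as ``the main obstacle'' is not just delicate; it is false.

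The paper's argument for support-boundedness avoids any colimit description and works directly from compactness. Writing $\mathcal{U}_m = \Hig_{\GL_r}(\chi)_{\leq m}$ and $\mathcal{E}_m = \mathcal{E}|_{\mathcal{U}_m}$, one sets up a dichotomy: either $\mathrm{Supp}(\mathcal{E}_m) \subset \mathcal{U}_{m-1}$, or there is a nonzero morphism $\mathcal{E}_m \to F_m$ with $F_m$ supported on the boundary stratum. If the second alternative holds for infinitely many $m$, one assembles the locally finite direct sum $F = \bigoplus_m j_{m*} F_m \in \IndCoh(\Hig_{\GL_r}(\chi))$; the limit description of Hom-spaces produces a single morphism $\mathcal{E} \to F$ with infinitely many nonzero components, but compactness of $\mathcal{E}$ forces $\Hom(\mathcal{E},F) = \bigoplus_m \Hom(\mathcal{E}, j_{m*}F_m)$, a contradiction. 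This is the key trick you are missing.

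For the nilpotent-cone step, the paper's route is also more direct than your stratum-by-stratum sketch: given any $(F,\theta) \in \mathrm{Supp}(\mathcal{E}) \cap \mathcal{N}_{\mathrm{glob}}$, first degenerate isotrivially to $(F,0)$ (using nilpotence of $\theta$), then use $r \geq 2$ to find a line bundle quotient $F \twoheadrightarrow L$ of arbitrarily large degree and degenerate $(F,0)$ to $(F' \oplus L, 0)$. Since $\mathrm{Supp}(\mathcal{E}) \cap \mathcal{N}_{\mathrm{glob}}$ is closed, this last point lies in it, but it escapes $\mathcal{U}_{m_0}$ for $\deg L \gg 0$.
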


\begin{remark}\label{rmk:compact}
A similar proof also shows that the categories 
$\mathrm{QCoh}(\Hig_{\GL_r}(\chi))$, 
$\IndCoh_{\mathcal{N}}(\Hig_{\GL_r}(\chi))$
and $\mathrm{QCoh}(\Bun_{\GL_r}(\chi))$ are not compactly generated when 
$r\geq 2$. 
\end{remark}

\subsection{Examples of semiorthogonal decompositions}\label{subsec:SODLang}

In the following examples we describe the semiorthogonal decompositions in Theorem~\ref{thm:PThiggs}
and Theorem~\ref{thm:mainLG} and give a proof of Corollary~\ref{intro:corK} for $G=\GL_r$. The cases of 
$G=\mathrm{PGL}_r, \mathrm{SL}_r$ are similar as the relevant results on topological K-theories in 
these cases
are already proved in~\cite{PThiggs, PThiggs2}. 
\subsubsection{Example: $G=\GL_r$}\label{subsubsec:GL}
For $G=\GL_r$, we have $\pi_1(G)=Z(G)^{\vee}=\mathbb{Z}$, so 
$(\chi, w)\in \mathbb{Z}^2$. We have $N(T)=M(T)=\mathbb{Z}^r$, and a Levi subgroup of $G$ is
of the form 
\begin{align*}
    M=\times_{i=1}^k \GL_{r_i}, \ r_1+\cdots+r_k=r.
\end{align*}
Both of the slope maps  
\begin{align*}
    \mu_M \colon \pi_1(M)=\mathbb{Z}^k \to \mathbb{Q}^r, \ Z(M)^{\vee}=\mathbb{Z}^r \to \mathbb{Q}^r
\end{align*}
are given by 
\begin{align*}
(x_1, \ldots, x_k) \mapsto 
    \bigl(\overbrace{\frac{x_1}{r_1}, \ldots, \frac{x_1}{r_1}}^{r_1}, \ldots, \overbrace{\frac{x_k}{r_k}, \ldots, \frac{x_k}{r_k}}^{r_k} \bigr).
    \end{align*}
It is strictly $W$-dominant if and only if $x_1/r_1<\cdots<x_k/r_k$. 
Therefore, the semiorthogonal decomposition in Theorem~\ref{thm:mainLG} is 
\begin{align}\label{sod:GLr}
   \LL(\Hig_{\mathrm{GL}_r}(\chi))_w=\left\langle \bigotimes_{i=1}^k \mathbb{T}_{\mathrm{GL}_{r_i}}(\chi_i)_{w_i} : \frac{\chi_1}{r_1}>\cdots>\frac{\chi_k}{r_k}, \frac{w_i}{r_i}=\frac{w}{r}  \right\rangle,  
\end{align}
where the right-hand side is after all partitions 
\begin{align}\notag
    (r, \chi, w)=(r_1, \chi_1, w_1)+\cdots+(r_k, \chi_k, w_k).
\end{align}

In (\ref{sod:GLr}), we have also used an equivalence 
\begin{align}\label{equiv:Ttensor}
\bigotimes_{i=1}^k \mathbb{T}_{\GL_{r_i}}(\chi_i)_{w_i} \stackrel{\sim}{\to}\mathbb{T}_G(\chi)_w
\end{align}
for the group $G=\GL_{r_1}\times \cdots \times \GL_{r_k}$, and for
$\chi=\chi_1+\cdots+\chi_k$ and $w=w_1+\cdots+w_k$, see Subsection~\ref{subsec:Ttensor}. 

For $G=\mathbb{G}_m$, we have 
\begin{align*}
    \Coh(\Hig_{\mathbb{G}_m}(\chi)^{\mathrm{ss}})_w \simeq 
    \LL(\Hig_{\mathbb{G}_m}(\chi))_w \simeq \mathbb{T}_{\mathbb{G}_m}(\chi)_w=\Coh(\Omega_{\mathrm{Pic}^0(C)}), 
\end{align*}
where $\Omega_{\mathrm{Pic}^0(C)}=\mathrm{Pic}^0(C) \times \mathbb{A}^g$. 
In this case, 
an equivalence (\ref{conj:DL2}) holds by taking the Fourier-Mukai transform~\cite{Mu1}. 

For $G=\GL_2$, the semiorthogonal decomposition in Theorem~\ref{thm:PThiggs} is 
\begin{align*}
    \Coh(\Hig_{\GL_2}(w)^{\mathrm{ss}})_{-\chi}=
    \left\langle \mathbb{T}_{\mathbb{G}_m}(w_1)_{-\chi_1}\otimes \mathbb{T}_{\mathbb{G}_m}(w_2)_{-\chi_2}, 
    \mathbb{T}_{\GL_2}(w)_{-\chi}\right\rangle
\end{align*}
where $(\chi, w)=(\chi_1, w_1)+(\chi_2, w_2)$, $w_1=w_2$ and $\chi_1>\chi_2$. 
Similarly, the semiorthogonal decomposition in Theorem~\ref{thm:mainLG} is 
\begin{align*}
    \LL(\Hig_{\GL_2}(\chi))_{w}=
    \left\langle \mathbb{T}_{\mathbb{G}_m}(\chi_1)_{w_1}\otimes \mathbb{T}_{\mathbb{G}_m}(\chi_2)_{w_2}, 
    \mathbb{T}_{\GL_2}(\chi)_{w}\right\rangle
\end{align*}
where $(\chi, w)=(\chi_1, w_1)+(\chi_2, w_2)$, $w_1=w_2$ and $\chi_1>\chi_2$. 
By~\cite[Lemma~3.8]{PThiggs}, there is an equivalence when both $d$ and $w$ are even:
\begin{align*}
 \mathbb{T}_{\GL_2}(\chi)_w\simeq\mathbb{T}_{\GL_2}(0)_0.
 \end{align*}
 Then, by Corollary~\ref{cor:Kth} and the isomorphism (\ref{isom:topK}) below, 
we see that the isomorphism (\ref{isom:K}) holds for rational topological K-theories. 

In the case that 
 $(r, w)$ are coprime, from Theorem~\ref{thm:PThiggs} and Theorem~\ref{thm:mainLG}
we have 
\begin{align*}
    \Coh(\Hig_{\GL_r}(w)^{\mathrm{ss}})_{\chi} &=\mathbb{T}_{\GL_r}(w)_{-\chi}, \\ 
    \LL(\Hig_{\GL_r}(\chi))_w&=\mathbb{T}_{\mathrm{GL}_r}(\chi)_w. 
\end{align*}
Therefore an equivalence~\eqref{conj:DL2} is equivalent to (\ref{conj:PThiggs}) in this case. 
In~\cite{PThiggs2}, we proved that 
\begin{align}\label{isom:topK}
    K^{\mathrm{top}}(\mathbb{T}_{\GL_r}(w)_{-\chi})_{\mathbb{Q}} \cong K^{\mathrm{top}}(\mathbb{T}_{\mathrm{GL}_r}(\chi)_w)_{\mathbb{Q}}
\end{align}
when $(r, \chi, w)$ is primitive. 
In particular, the isomorphism (\ref{isom:K}) holds for rational topological K-theory
if $(r, w)$ are coprime.

\subsubsection{Example: $g=0$}\label{subsubsec:g=0}
In the case of $g=0$, i.e. for $C=\mathbb{P}^1$, 
we have 
\begin{align*}
    \Hig_{\GL_r}(\chi)^{\mathrm{ss}} \cong \begin{cases}
\mathfrak{gl}_r^{\vee}[-1]/\mathrm{GL}_r, & r\mid \chi \\
\emptyset, & r \nmid \chi. 
    \end{cases}
\end{align*}
By Koszul duality in Theorem~\ref{thm:Kduality}, we have an equivalence 
\begin{align*}
\Psi \colon 
    \Coh(\mathfrak{gl}_r^{\vee}[-1]/\mathrm{GL}_r) \stackrel{\sim}{\to}
    \mathrm{MF}^{\mathrm{gr}}(\mathfrak{gl}_r/\mathrm{GL}_r, 0)=\Coh^{\shear}(\mathfrak{gl}_r/\mathrm{GL}_r). 
\end{align*}

Suppose that $r\mid \chi$. 
From the computation in~\cite[Lemma~3.3]{Toquot2}, we have 
\begin{align}\label{qbps:g=0}
    \mathbb{T}_{\mathrm{GL}_r}(\chi)_w \simeq \begin{cases}
        \Coh^{\shear}(\mathfrak{h}_r\ssslash W_r),& r\mid w, \\
         0, & r\nmid w.
    \end{cases}
\end{align}
Here, $\mathfrak{h}_r$ is the Cartan subalgebra of $\mathfrak{gl}_r$, and $W_r=S_r$ is the Weyl group of $\GL_r$.
For $w=rk$ with $k\in \mathbb{Z}$, the above equivalence is given by 
the functor
\begin{align*}
\otimes \det(-)^k \circ \pi^{\ast} \colon
    \Coh^{\shear}(\mathfrak{h}_r\ssslash W_r) \stackrel{\sim}{\to} \Psi(\mathbb{T}_{\GL_r}(\chi)_w) \subset \Coh^{\shear}(\mathfrak{gl}_r/\mathrm{GL}_r)
\end{align*}
where $\pi \colon \mathfrak{gl}_r/\mathrm{GL}_r \to \mathfrak{gl}_r \ssslash \mathrm{GL}_r=\mathfrak{h}_r\ssslash W_r$ is the good moduli space map. From Theorem~\ref{thm:PThiggs}, we have the semiorthogonal decomposition 
\begin{align*}
    \Coh(\Hig_{\GL_r}(\chi)^{\mathrm{ss}})_w
    =\left\langle \bigotimes_{i=1}^k \Coh^{\shear}(\mathfrak{h}_{r_i} \ssslash W_{r_i}) : r_1+\cdots+r_k=r\right\rangle.
\end{align*}
Here, for each partition $r_1+\cdots+r_k=r$, the semiorthogonal summands are indexed by 
$(\chi_1', w_1'), \ldots, (\chi_k', w_k') \in \mathbb{Z}^2$ such that 
\begin{align*}
    \sum_{i=1}^k (r_i \chi_i', r_i w_i')=(\chi, w), \ 
    \chi_1'=\cdots=\chi_k', \ w_1'<\cdots<w_k'. 
\end{align*}

Similarly, by Theorem~\ref{thm:mainLG}, 
we have the semiorthogonal decomposition 
\begin{align*}
    \LL(\Hig_{\GL_r}(\chi))_w
    =\left\langle \bigotimes_{i=1}^k \Coh^{\shear}(\mathfrak{h}_{r_i} \ssslash W_{r_i}) : r_1+\cdots+r_k=r\right\rangle.
\end{align*}
Here, for each partition $r_1+\cdots+r_k=r$, the semiorthogonal summands are indexed by 
$(\chi_1', w_1'), \ldots, (\chi_k', w_k') \in \mathbb{Z}^2$ such that 
\begin{align*}
    \sum_{i=1}^k (r_i \chi_i', r_i w_i')=(\chi, w), \ 
    \chi_1'>\cdots>\chi_k', \ w_1'=\cdots=w_k'. 
\end{align*}
In this case, an equivalence (\ref{conj:PThiggs}) holds from (\ref{qbps:g=0}), 
and in particular we have an isomorphism (\ref{isom:K}).

\begin{remark}\label{rem:g0-trivial-degeneration}
When $g=0$, there is an isomorphisms of derived stacks
\[
  \mathrm{Locsys}_{\GL_r} \simeq \Hig_{\GL_r}(0)^{\mathrm ss}
  \simeq \mathfrak{gl}_r[-1]/\GL_r .
\]
In particular, the degeneration (1.15) is trivial in this case.
On the automorphic side, we expect a parallel statement for $C=\PP^1$.

\begin{conj}
    There is an equivalence:
    \[
  \mathrm{D\text{-}mod}\left(\Bun_{\GL_r}(0)\right)
  \simeq \IndL_{\mathcal{N}}\left(\Hig_{\GL_r}(0)\right)_0,
\]
where the right-hand side denotes the neutral component of the limit category.
\end{conj}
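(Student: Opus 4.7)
The plan is to reduce the conjecture to a stratum-by-stratum matching via Harder--Narasimhan (HN) stratifications on both sides. Both categories are compactly generated (the automorphic side by~\cite{DGbun}, the spectral side by Theorem~\ref{thm:mainLG} applied with nilpotent singular support), so it suffices to produce an equivalence on compact objects and then ind-complete.

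First, I would make the HN stratification of $\Bun_{\GL_r}(0)$ explicit. By Grothendieck's splitting theorem, every rank-$r$ degree-$0$ bundle on $\mathbb{P}^1$ decomposes as $\bigoplus_i \mathcal{O}(a_i)^{\oplus r_i}$ with $a_1>\cdots>a_k$ and $\sum r_i a_i=0$; hence the non-empty HN strata are in bijection with such data, and each stratum is isomorphic to $BP_\mu$, where $P_\mu=\Aut(\bigoplus_i \mathcal{O}(a_i)^{\oplus r_i})$ has Levi quotient $M_\mu=\prod_i \GL_{r_i}$. These strata are co-truncative in the sense of~\cite{DGbun}, yielding a semiorthogonal decomposition
\[
  \text{D-mod}(\Bun_{\GL_r}(0))^{\mathrm{cp}} = \Bigl\langle j^\mu_!\,\text{D-mod}(BP_\mu)^{\mathrm{cp}} : \mu\Bigr\rangle,
\]
and the computation for $\X=BP$ in Subsection~\ref{subsec:exam} identifies each summand with $\Coh^{\shear}_{\mathrm{nilp}}(\mathfrak{h}_{M_\mu}\ssslash W_{M_\mu})$. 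On the spectral side, Theorem~\ref{thm:nilp} gives a semiorthogonal decomposition of $\LL_{\mathcal{N}}(\Hig_{\GL_r}(0))_0$ indexed by tuples $(r_i,\chi_i)$ with $\chi_1/r_1>\cdots>\chi_k/r_k$, $\sum\chi_i=0$ and $w_i=0$; the $g=0$ analysis of Subsubsection~\ref{subsubsec:g=0} shows that the summand $\bigotimes_i \mathbb{T}_{\mathcal{N},\GL_{r_i}}(\chi_i)_0$ vanishes unless every $r_i\mid\chi_i$ (since $\Hig_{\GL_{r_i}}(\chi_i)^{\mathrm{ss}}=\emptyset$ otherwise) and, when non-vanishing, is equivalent to $\bigotimes_i \Coh^{\shear}_{\mathrm{nilp}}(\mathfrak{h}_{r_i}\ssslash W_{r_i})\simeq \Coh^{\shear}_{\mathrm{nilp}}(\mathfrak{h}_{M_\mu}\ssslash W_{M_\mu})$ via Theorem~\ref{thm:Kduality} and the good-moduli-space map $\mathfrak{gl}_{r_i}/\GL_{r_i}\to \mathfrak{h}_{r_i}\ssslash W_{r_i}$. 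Setting $a_i:=\chi_i/r_i$ puts the non-vanishing spectral summands into canonical bijection with the non-empty HN strata on the automorphic side, and the two corresponding subcategories are canonically identified.

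The main obstacle will be upgrading these summand-by-summand equivalences into a single global equivalence by matching the gluing data between strata: for each pair $\mu<\mu'$, one must show that the transition functor $j^{\mu'\ast}\circ j^\mu_!$ on the automorphic side coincides, under the identifications above, with the transition functor implicit in Theorem~\ref{thm:nilp}, which is built out of parabolic induction/restriction along the maps $\Hig_{M_\mu}\leftarrow \Hig_{P_\mu}\to \Hig_{\GL_r}$ together with the Koszul equivalence. A promising route is to construct the desired functor as a classical-limit functor via the Rees construction of Subsection~\ref{subsec:filt}, using the smooth pullback and projective pushforward functors of Theorem~\ref{intro:thmfunct} between limit categories; this reduces the gluing check to verifying Conjecture~\ref{conj:limitLN} for $\Bun_{\GL_r}(0)$ in the $g=0$ case, which should follow from the explicit $BP_\mu$-model of each stratum together with Corollary~\ref{cor:funcL}. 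Alternatively, one may attempt to realize the equivalence as the $g=0$ specialization of a Hecke-equivariant refinement of Conjecture~\ref{conj:DL0}, propagating the generic Fourier--Mukai duality over the Hitchin base (Remark~\ref{rmk:dual}) by means of the Hecke operators of Theorem~\ref{intro:cor2}.
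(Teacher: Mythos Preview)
The statement you are attempting to prove is recorded in the paper as a \emph{conjecture} (inside Remark~\ref{rem:g0-trivial-degeneration}); the paper offers no proof and treats it as open. So there is no paper argument to compare against.

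Your strategy---match the Harder--Narasimhan semiorthogonal decompositions on both sides summand by summand---is natural and is precisely the circle of ideas the paper uses to motivate the general Dolbeault Langlands conjecture (compare Theorems~\ref{thm:mainLG} and~\ref{thm:nilp} with Conjecture~\ref{conj:compati:sod}). The identification of individual summands is essentially correct: for $C=\mathbb{P}^1$ each HN stratum of $\Bun_{\GL_r}(0)$ has the form $B\mathrm{Aut}(E)$ with $\mathrm{Aut}(E)$ an extension of $\prod_i \GL_{r_i}$ by a unipotent group, and the computations of Subsections~\ref{subsub:BG}--\ref{subsub:GA1} together with Subsubsection~\ref{subsubsec:g=0} identify the corresponding pieces on both sides with $\mathrm{Perf}^{\shear}_{\mathrm{nilp}}(\mathfrak{h}_M\ssslash W_M)$. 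One caution: $\mathrm{Aut}(E)$ is \emph{not} literally a parabolic subgroup of $\GL_r$---its unipotent radical is larger than that of the standard parabolic with Levi $\prod_i\GL_{r_i}$---so you must check that the argument of Subsection~\ref{subsub:GA1}, which only uses that $BP\to BM$ is a $\mathbb{G}_a^n$-gerbe, still applies. A second caution: Theorem~\ref{thm:mainLG} gives compact generation of $\IndL$, not of $\IndL_{\mathcal{N}}$; that the latter equals $\Ind(\LL_{\mathcal{N}})$ is not asserted in the paper and would need a separate argument.

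The genuine gap is exactly where you place it: the gluing. Two categories carrying semiorthogonal decompositions with pairwise equivalent summands need not be equivalent; one must match the transition functors $j^{\mu'\ast}j^\mu_!$, and neither of your proposed routes closes this. The first route invokes Conjecture~\ref{conj:limitLN}, itself open, and Corollary~\ref{cor:funcL} only propagates that conjecture along smooth pullback or projective pushforward---it does not construct the global functor you need. The second route (propagate Fourier--Mukai via Hecke operators from the generic locus of the Hitchin base, as in Remark~\ref{rmk:dual}) has no traction here: for $g=0$ the Hitchin base is a point and there is no smooth locus to start from. As written, then, this is a reasonable outline of why the conjecture is plausible, but not a proof; the paper is right to leave the statement open.
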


\end{remark}

\subsubsection*{Example: $g=1$}
Let $C$ be an elliptic curve. By the Fourier–Mukai transform, there is an equivalence
\[
  \Hig_{\GL_r}(\chi)^{\mathrm ss} \simeq \mathcal{S}(m),
  \quad m=\gcd(r,\chi),
\]
where $\mathcal{S}(m)$ is the derived moduli stack of zero–dimensional sheaves of length $m$
on $\mathcal{S}:=C\times\mathbb{A}^1$, see Subsection~\ref{subsec:qbps:zero}. Consequently,
\[
  \mathbb{T}_{\GL_r}(\chi)_w \simeq \mathbb{T}(m)_w,
\]
where the right-hand side is the quasi–BPS category for zero–dimensional sheaves on $S$.
It is expected that
\begin{align}\label{qbps:g=1}
\mathbb{T}(m)_w \simeq \Coh^{\shear}(X^{\times d}/S_d), \ d=\gcd(m, w),
\end{align}
where $X$ is the Calabi–Yau 3-fold
\begin{align*}X=\mathrm{Tot}(\mathcal{O}_C\oplus \Omega_C)
=C\times \mathbb{A}^2
\end{align*}
and the shearing 
is with respect to the weight two $\mathbb{G}_m$-action on the last $\mathbb{A}^1$-factor. 
\begin{remark}\label{rmk:CPT}
An equivalence (\ref{qbps:g=1}) for $X=\mathbb{C}^3$ was conjectured in~\cite{PT0}, and its 
proof will appear in~\cite{CauPT}. The global version (\ref{qbps:g=1}) is expected to follow from 
the result of~\cite{CauPT}, see the discussion in~\cite[Subsection 1.3]{PT2}, and it will be discussed in detail elsewhere. 
\end{remark}

Assuming (\ref{qbps:g=1}), by Theorem~\ref{thm:PThiggs} we have the semiorthogonal 
decomposition 
\begin{align*}
    \Coh(\Hig_{\GL_r}(\chi)^{\mathrm{ss}})_w = 
    \left\langle \bigotimes_{i=1}^k \Coh^{\shear}(X^{\times d_i}/S_{d_i})  \right\rangle. 
\end{align*}
Here for each $(d_1, \ldots, d_k)$, a semiorthogonal summand is indexed by 
$(r_i', \chi_i', w_i')\in \mathbb{Z}^3$ for $1\leq i\leq k$ such that 
\begin{align*}
    \sum_{i=1}^k d_i(r_1', \chi_i', w_i')=(r, \chi, w), \ 
    \frac{\chi_1'}{r_1'}=\cdots= \frac{\chi_k'}{r_k'}, \ 
    \frac{w_1'}{r_1'}<\cdots<\frac{w_k'}{r_k'}. 
\end{align*}

Similarly, by Theorem~\ref{thm:mainLG}, there is a semiorthogonal decomposition 
\begin{align*}
    \LL(\Hig_{\GL_r}(\chi))_w = 
    \left\langle \bigotimes_{i=1}^k \Coh^{\shear}(X^{\times d_i}/S_{d_i})  \right\rangle. 
\end{align*}
Here, for each $(d_1, \ldots, d_k)$, a semiorthogonal summand is indexed by 
$(r_i', \chi_i', w_i')\in \mathbb{Z}^3$ for $1\leq i\leq k$ such that 
\begin{align*}
    \sum_{i=1}^k d_i(r_1', \chi_i', w_i')=(r, \chi, w), \ 
    \frac{\chi_1'}{r_1'}>\cdots> \frac{\chi_k'}{r_k'}, \ 
    \frac{w_1'}{r_1'}=\cdots=\frac{w_k'}{r_k'}. 
\end{align*}
In this case an equivalence (\ref{qbps:g=1}) implies an equivalence (\ref{conj:PThiggs}), 
which also implies an isomorphism (\ref{isom:K}).

 \subsubsection{Example: other reductive groups}
We describe how the semiorthogonal decompositions in Theorem~\ref{thm:mainLG}
look like for reductive groups $G$ other than $\mathrm{GL}_r$. 
Here we describe the case of $(G, ^{L}G)=(\mathrm{SO}_{2n+1}, \mathrm{Sp}_{2n})$. 

For $G=\mathrm{SO}_{2n+1}$, we have 
$\pi_1(G)=\mathbb{Z}/2$ and $Z(G)^{\vee}=0$. 
Therefore, we can omit the index $w\in Z(G)^{\vee}$, and the quasi-BPS categories are 
\begin{align*}
    \mathbb{T}_{\mathrm{SO}_{2n+1}}(\chi) \subset \Coh(\Hig_{\mathrm{SO}_{2n+1}}(\chi)^{\mathrm{ss}}), \ \mathbb{\chi}\in \mathbb{Z}/2.
\end{align*}
A Levi subgroup is of the form 
\begin{align*}
\mathrm{GL}_{r_1}\times \cdots \times \mathrm{GL}_{r_k} \times \mathrm{SO}_{2m+1}, \ 
r_1+\cdots+r_k+m=n.
\end{align*}
Then the semiorthogonal decomposition in Theorem~\ref{thm:mainLG} is 
 \begin{align*}
 \LL(\Hig_{\mathrm{SO}_{2n+1}}(\chi))=\left\langle \bigotimes_{i=1}^k \mathbb{T}_{\mathrm{GL}_{r_i}}(\chi_i)_{0} \otimes \mathbb{T}_{\mathrm{SO}_{2m+1}}(\chi_0) : 
 \frac{\chi_1}{r_1}>\cdots>\frac{\chi_k}{r_k}>0
 \right\rangle. 
\end{align*}
 Here, the right-hand side is after 
 \begin{align*}(r_1, \overline{\chi}_1)+\cdots+(r_k, \overline{\chi}_k)+(m, \chi_0)
 =(n, \chi) \in \mathbb{Z}\oplus \mathbb{Z}/2
 \end{align*}
 where, for $\chi_i \in \mathbb{Z}$, we denote its reductive modulo two by $\overline{\chi}_i\in \mathbb{Z}/2$. 

For $G=\mathrm{Sp}_{2n}$, we have $\pi_1(G)=0$ and $Z(G)^{\vee}=\mathbb{Z}/2$. 
The quasi-BPS categories are 
\begin{align*}
    \mathbb{T}_{\mathrm{Sp}_{2n}, w}\subset \Coh(\Hig^{\mathrm{ss}}_{\mathrm{Sp}_{2n}})_{w}, \ w\in \mathbb{Z}/2.
\end{align*}
A Levi subgroup is of the form 
\begin{align*}
\mathrm{GL}_{r_1}\times \cdots \times \mathrm{GL}_{r_k} \times \mathrm{Sp}_{2m}, \ 
r_1+\cdots+r_k+m=n.
\end{align*}
The semiorthogonal decomposition in Theorem~\ref{thm:mainLG} is 
 \begin{align*}
 \LL(\Hig_{\mathrm{Sp}_{2n}})_w=\left\langle \bigotimes_{i=1}^k \mathbb{T}_{\mathrm{GL}_{r_i}}(\chi_i)_{0} \otimes \mathbb{T}_{\mathrm{Sp}_{2m}, w} : 
 \frac{\chi_1}{r_1}>\cdots>\frac{\chi_k}{r_k}>0
 \right\rangle. 
\end{align*}
 Here the right-hand side is after $r_1+\cdots+r_k+m=n$ and 
 all $\chi_1, \ldots, \chi_k \in \mathbb{Z}$.

\section{Semiorthogonal decomposition of the limit category}\label{sec:sodlim}
In this section, we give a proof of Theorem~\ref{thm:mainLG}. 
Our strategy is to 
consider closed immersions of quasi-compact open substacks of $\Hig_G$ into
moduli stacks of $L$-Higgs bundles, where $L$ is a line bundle with $\deg L\gg 0$\footnote{For type A groups and for semistable Higgs bundles, this construction was used by Maulik--Shen~\cite{MSendscopic} to prove the Hausel--Thaddeus topological mirror symmetry conjecture~\cite{HauTha} (proved before in~\cite{GrWy}), and a similar idea was used by Kinjo--Koseki~\cite{KinjoKoseki} to prove the cohomological $\chi$-independence for Higgs bundles.}.
We then use the Koszul equivalence to reduce the problem to the existence of semiorthogonal 
decomposition of some dg-categories of matrix factorizations. 
The latter is proved using 
the result of Section~\ref{sec:magic} by reducing the claim to the case of smooth quotient stacks. 
A flow chart of the proof is depicted in Figure~\ref{fig:flow}. 

\begin{figure}[p!]
\centering
\newdimen\MainW
\MainW=.95\linewidth
\begin{tikzpicture}[node distance=7mm and 6mm,
  scale=0.96, every node/.style={transform shape}]

\tikzset{
  box/.style   ={rounded corners, draw, align=left, inner sep=4pt,
                 text width=\MainW, minimum height=8mm},
  lemma/.style ={box}
}


\node[box] (setup) {Setup/Notation (Subsections~\ref{subsec:sHiggs}, \ref{subsec:ssH}, \ref{subsec:HNhiggs}, \ref{subsec:embL}):
\begin{itemize}\setlength\itemsep{0pt}
\item Consider quasi-compact open substacks $\mathcal{U}_{\circ}\subset\mathcal{U}\subset \Hig_G$ such that $\mathcal{S}=\mathcal{U}\setminus \mathcal{U}_{\circ}$ is a 
$\Theta$-stratum and $\mathcal{Z}$ is its center.
\item Consider a quasi-compact open substack $\mathcal{U}^L \subset \Hig_G^{L}$ for $\deg L \gg 0$
and a closed immersion $\mathcal{U} \hookrightarrow \mathcal{U}^L$.
\item There is a vector bundle $\mathcal{V}^L \to \mathcal{U}^L$ with a section $s^L$ such that 
$\mathcal{U}=(s^L)^{-1}(0)$.
\end{itemize}};

\node[box, below=of setup] (lemone) {$\Theta$-stratum of $(\mathcal{V}^L)^{\vee}$ (Subsection~\ref{subsec:ThetaH}): there is a $\Theta$-stratum $\mathcal{S}_{\mathcal{V}}\subset (\mathcal{V}^L)^{\vee}$ 
whose restriction to $\Omega_{\mathcal{U}}[-1]$ is a pull-back of $\mathcal{S}$.
We denote by $\mathcal{Z}_{\mathcal{V}}$ the center of $\mathcal{S}_{\mathcal{V}}$ and 
$(\mathcal{V}^L)_{\circ}^{\vee}$ the complement of $\mathcal{S}_{\mathcal{V}}$.
};

\node[box, below=of lemone] (lemtwo) {SOD of magic categories (Proposition~\ref{prop:sod:V}): construct a semiorthogonal decomposition 
\begin{align}\label{SOD:flow0}
\MM_{\mathbb{G}_m}((\V^L)^{\vee})_{\delta}=\langle\MM_{\mathbb{G}_m}(\zZ_{\V})_{\delta'}, \MM_{\mathbb{G}_m}((\V^L)^{\vee}_{\circ})_{\delta}\rangle. 
\end{align}
This is proved by reduction to a local model and using the result of Section~\ref{sec:magic}.
};

\node[box, below=of lemtwo] (lemthree) {SOD of limit categories (Proposition~\ref{prop:sod:KZ}): using the SOD for magic 
categories, construct a semiorthogonal decomposition 
\begin{align}\label{SOD:flow}
\LL(\mathcal{U})_{\delta}=\langle \LL(\mathcal{Z})_{\delta'}, j_{!}\LL(\mathcal{U}_{\circ})_{\delta}\rangle.
\end{align}
Here, $j_{!}$ is a fully-faithful left adjoint of the restriction functor 
$j^* \colon \LL(\mathcal{U})_{\delta}\to \LL(\mathcal{U}_{\circ})$.
};

\node[box, below=of lemthree] (lem4) {Proof of (\ref{SOD:flow}): using the Koszul equivalence, 
there is an equivalence 
\begin{align*}
    \Coh(\mathcal{U})\simeq \mathrm{MF}^{\mathrm{gr}}((\mathcal{V}^L)^{\vee}, f).
\end{align*}
The above equivalence restricts to an equivalence 
\begin{align*}
    \LL(\mathcal{U})_{\delta}\simeq \mathrm{MF}^{\mathrm{gr}}(\MM((\mathcal{V}^L)^{\vee})_{\delta\otimes (\omega_{\mathcal{U}^L})^{1/2}}, f).
\end{align*}
Then use (\ref{SOD:flow0}) to construct (\ref{SOD:flow}).};

\node[box, below=of lem4] (ded) {Proof of Theorem~\ref{thm:mainLG} (Subsection~\ref{subsec:proofofthm}): 
apply (\ref{SOD:flow}) to show that
\begin{align*}
	\IndL(\Hig_G(\chi))_w \simeq \colim_{\mu \in N(T)_{\mathbb{Q}+}} \IndL(\Hig_G(\chi)_{\preceq \mu})_w.
\end{align*}
It gives a compact generation of $\IndL(\Hig_G(\chi))_w$ and the subcategory of 
compact object is 
\begin{align*}
    \LL(\Hig_G(\chi))_w \simeq \colim_{\mu \in N(T)_{\mathbb{Q}+}} \LL(\Hig_G(\chi)_{\preceq \mu})_w.
\end{align*}
By applying (\ref{SOD:flow}) again, we obtain Theorem~\ref{thm:mainLG}.
};

\draw[->] (setup) -- (lemone);
\draw[->] (lemone) -- (lemtwo);
\draw[->] (lemtwo) -- (lemthree);
\draw[->] (lemthree) -- (lem4);
\draw[->] (lem4) -- (ded);

\end{tikzpicture}
\caption{Flowchart of the proof of Theorem~\ref{thm:mainLG}}\label{fig:flow}
\end{figure}

\subsection{\texorpdfstring{$L$-twisted $G$-Higgs bundles}{L-twisted G-Higgs bundles}}
\label{subsec:sHiggs}
Let $C$ be a smooth projective curve and let $L\to C$ be a line bundle
of the form $L=\Omega_C(D)$ for an effective divisor $D$ on $C$. 
By definition, an \textit{$L$-twisted $G$-Higgs bundle} consists of data
\begin{align*}
    (E_G, \phi_G), \ E_G\to C, \ \phi_G \in H^0(C, \mathrm{Ad}_{\mathfrak{g}}(E_G)\otimes L)
\end{align*}
where $E_G$ is a $G$-bundle over $C$. 

We denote by 
\begin{align}\label{Higgs:pi}
\pi^L \colon 
    \Hig_G^{L}=\coprod_{\chi\in \pi_1(G)}\Hig_G^{L}(\chi) \to \Bun_G
\end{align}
the derived moduli stack of $L$-twisted $G$-Higgs bundles, the 
decomposition into connected components corresponding to $\chi \in \pi_1(G)$
and the projection to $\Bun_G$. 
In the case of $L=\Omega_C$, 
we omit $L$ and write $\Hig_G=\Hig_G^{L=\Omega_C}$ as before. 
We denote by 
\begin{align}\label{univ:G}
    (\mathscr{F}_G^{L}, \varphi_G^{L}), \ \mathscr{F}_G^{L} \to C\times \Hig_G^{L}, \ 
    \ \varphi_G^{L} \in H^0(C\times \Hig_G^{L}, \Ad_{\mathfrak{g}}(\mathscr{F}_G^{L})\boxtimes L)
\end{align}
the universal $L$-twisted $G$-Higgs bundle. 

Let $\mathscr{E}_G \to C\times \Bun_G$ be the universal $G$-bundle. 
The stack $\Hig_G^{L}$ is written as 
\begin{align}\notag
    \Hig_G^{L}=\Spec_{\Bun_G}\mathrm{Sym}((p_{\mathrm{B}\ast}(\mathrm{Ad}_{\mathfrak{g}}(\mathscr{E}_G)\boxtimes L))^{\vee})
\end{align}
where $p_{\mathrm{B}}^{}\colon C\times \Bun_G \to \Bun_G$ is the projection. 
From the above description and noting that 
\begin{align*}\mathbb{T}_{\Bun_G}|_{E_G}=\Gamma(C, \mathrm{Ad}_{\mathfrak{g}}(E_G))
\end{align*}
the following lemma is immediate: 
\begin{lemma}\label{lem:cotangent}
    There is a distinguished triangle 
    \begin{align*}
        \Gamma(C, \mathrm{Ad}_{\mathfrak{g}}(E_G)) \stackrel{[\phi_G^{L}, -]}{\to}
        \Gamma(C, \mathrm{Ad}_{\mathfrak{g}}(E_G)\otimes L)
        \to \mathbb{T}_{\Hig_G^{L}}|_{(E_G, \phi_G^{L})}
    \end{align*}
\end{lemma}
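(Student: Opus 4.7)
The plan is to use the projection $\pi^L\colon \Hig_G^L\to \Bun_G$ from \eqref{Higgs:pi} and identify the tangent complex of $\Hig_G^L$ by assembling the relative cotangent sequence. Since
\[
    \Hig_G^L=\Spec_{\Bun_G}\mathrm{Sym}\bigl((p_{\mathrm{B}\ast}(\mathrm{Ad}_{\mathfrak{g}}(\mathscr{E}_G)\boxtimes L))^{\vee}\bigr),
\]
the morphism $\pi^L$ is (affine over $\Bun_G$ and) the total space of the perfect complex $\mathcal{F}:=p_{\mathrm{B}\ast}(\mathrm{Ad}_{\mathfrak{g}}(\mathscr{E}_G)\boxtimes L)$ on $\Bun_G$. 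In particular, for any point $(E_G,\phi_G^L)\in \Hig_G^L$ lying over $E_G\in \Bun_G$, the relative tangent complex is
\[
    \mathbb{T}_{\pi^L}|_{(E_G,\phi_G^L)}\simeq \mathcal{F}|_{E_G}=\Gamma(C,\mathrm{Ad}_{\mathfrak{g}}(E_G)\otimes L).
\]
At the same time, by the convention of the paper, $\mathbb{T}_{\Bun_G}|_{E_G}\simeq \Gamma(C,\mathrm{Ad}_{\mathfrak{g}}(E_G))[1]$.

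The distinguished triangle of relative tangent complexes for the composition $\Hig_G^L\xrightarrow{\pi^L}\Bun_G\to \Spec k$, restricted to $(E_G,\phi_G^L)$, reads
\[
    \mathbb{T}_{\pi^L}|_{(E_G,\phi_G^L)}\longrightarrow \mathbb{T}_{\Hig_G^L}|_{(E_G,\phi_G^L)}\longrightarrow (\pi^L)^{\ast}\mathbb{T}_{\Bun_G}|_{(E_G,\phi_G^L)}.
\]
Rotating this triangle, the claim of the lemma is equivalent to identifying the connecting morphism
\[
    \partial\colon \Gamma(C,\mathrm{Ad}_{\mathfrak{g}}(E_G))[1]\simeq \mathbb{T}_{\Bun_G}|_{E_G}\longrightarrow \mathbb{T}_{\pi^L}|_{(E_G,\phi_G^L)}[1]\simeq \Gamma(C,\mathrm{Ad}_{\mathfrak{g}}(E_G)\otimes L)[1]
\]
with the Lie bracket $[\phi_G^L,-]$. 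This is the standard computation in Higgs bundle deformation theory: an infinitesimal deformation $\eta\in \Gamma(C,\mathrm{Ad}_{\mathfrak{g}}(E_G))[1]$ of $E_G$ acts on the section $\phi_G^L\in \Gamma(C,\mathrm{Ad}_{\mathfrak{g}}(E_G)\otimes L)$ via the adjoint representation, producing the variation $[\phi_G^L,\eta]$; compatibility of this variation with the original $\phi_G^L$ produces the connecting map. To implement this rigorously, I would realize $\mathcal{F}$ as $\pi_*$ of the universal $L$-twisted adjoint bundle on $C\times \Bun_G$ and combine the base change $\pi_*\mathrm{Ad}_{\mathfrak{g}}(\mathscr{E}_G)\boxtimes L$ with the functoriality of the cotangent complex along the universal Higgs section $\varphi_G^L$ of \eqref{univ:G}, whose derivative along infinitesimal $\Bun_G$-directions is precisely $[\varphi_G^L,-]$.

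Assembling these pieces, the cone of $[\phi_G^L,-]\colon \Gamma(C,\mathrm{Ad}_{\mathfrak{g}}(E_G))\to \Gamma(C,\mathrm{Ad}_{\mathfrak{g}}(E_G)\otimes L)$ is then naturally identified with $\mathbb{T}_{\Hig_G^L}|_{(E_G,\phi_G^L)}$, yielding the desired distinguished triangle. The main (mild) obstacle is bookkeeping with shifts and the sign of $\partial$: one must verify that the identification of the relative tangent of the linear stack $\pi^L$ with the fiber of $\mathcal{F}$ is compatible with the identification of $\mathbb{T}_{\Bun_G}$ as $\Gamma(C,\mathrm{Ad}_{\mathfrak{g}}(\mathscr{E}_G))[1]$, in such a way that the universal section $\varphi_G^L$ induces exactly the adjoint bracket map and not its negative. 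All other ingredients are standard (projection formula, base change, and functoriality of $\mathbb{L}$ for the vector-bundle-stack map $\pi^L$).
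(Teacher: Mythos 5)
Your proposal is correct and follows the same route the paper implicitly takes: the paper declares the lemma immediate from the description of $\Hig_G^L$ as $\Spec_{\Bun_G}\mathrm{Sym}\bigl((p_{\mathrm{B}*}(\mathrm{Ad}_{\mathfrak g}(\mathscr{E}_G)\boxtimes L))^{\vee}\bigr)$ together with the identification of $\mathbb{T}_{\Bun_G}|_{E_G}$ with $\Gamma(C,\mathrm{Ad}_{\mathfrak g}(E_G))$ (up to the shift you make explicit), and your argument via the relative tangent triangle for $\pi^L$ and the identification of the connecting map with $[\phi_G^L,-]$ is precisely the unspelled content of that remark. Your version is in fact more careful than the paper's, and its consistency is confirmed by the long exact sequence \eqref{exact} that the paper derives from the lemma.
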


For an $L$-twisted $G$-Higgs bundle $(E_G, \phi_G^{L})$, we simply write 
\begin{align}\label{def:Ti}
    \mathbb{T}_{(E_G, \phi_G^{L})}^i :=\mathcal{H}^i(\mathbb{T}_{\Hig_G^{L}}|_{(E_G, \phi_G^{L})}), \ i\in \mathbb{Z}.
\end{align}
From Lemma~\ref{lem:cotangent}, we have the exact sequence
\begin{align}\label{exact}
    0&\to \mathbb{T}_{(E_G, \phi_G)}^{-1} \to H^0(\mathrm{Ad}_{\mathfrak{g}}(E_G))
    \to H^0(\mathrm{Ad}_{\mathfrak{g}}(E_G)\otimes L)  \\
 \notag   &\to \mathbb{T}_{(E_G, \phi_G)}^{0}\to H^1(\mathrm{Ad}_{\mathfrak{g}}(E_G))\to H^1(\mathrm{Ad}_{\mathfrak{g}}(E_G)\otimes L)
    \to \mathbb{T}_{(E_G, \phi_G)}^{1}\to 0.
\end{align}

For an algebraic group homomorphism $G_1 \to G_2$, there is a natural 
morphism 
\begin{align*}
    (-)\times^{G_1}G_2 \colon \Hig_{G_1}^L\to \Hig_{G_2}^L
    \end{align*}
    given by 
    \begin{align*}
    (E_{G_1}, \phi_{G_1}^L) \mapsto (E_{G_1}\times^{G_1}G_2, \phi_{G_1}\times^{G_1}G_2).
\end{align*}
Here $\phi_{G_1}\times^{G_2}G_2$
is given by the natural map of vector bundles
\begin{align*}
    (-)\times^{G_1} G_2 \colon \mathrm{Ad}_{\mathfrak{g}_1}(E_{G_1}) \to \mathrm{Ad}_{\mathfrak{g}_2}(E_{G_2}).
\end{align*}
Here $\fg_i$ is the Lie algebra of $G_i$.

\subsection{Semistable Higgs bundles}\label{subsec:ssH}
Suppose that $G$ is reductive with fixed maximal torus and Borel subgroups $T\subset B\subset G$. Let $P\subset G$ be a standard parabolic with Levi quotient $P\twoheadrightarrow M$. 
There is an associated diagram 
\begin{align*}
    \xymatrix{
\Hig_P^L \ar[r]^-{(-)\times^P G} \ar[d]_-{(-)\times^P M} & \Hig_G^{L} \\
\Hig_M^L.  &
}
\end{align*}
For an $L$-twisted $G$-Higgs bundle $(E_G, \phi_G^{L})$, a \textit{$P$-reduction} is an $L$-twisted $P$-Higgs bundle 
$(E_P, \phi_P^L)$ such that $(E_P, \phi_P^L)\times^P G \cong(E_G, \phi_G^{L})$. 

Let 
\begin{align}\label{pcones}
    N(T)_{\mathrm{pos}} \subset N(T), \ N(T)_{\mathbb{Q},\mathrm{pos}}\subset N(T)_{\mathbb{Q}}
\end{align}
be the monoid generated by positive simple coroots in $N(T)$, 
and the cone generated by them, respectively. 
There is a partial order $\leq_G$ on $N(T)_{\mathbb{Q}}$ by setting 
\begin{align*}\mu' \leq_G \mu \ \stackrel{\mathrm{def}}{\leftrightarrow} \ \mu-\mu'\in N(T)_{\mathbb{Q}, \mathrm{pos}}.
\end{align*}
The semistable $L$-twisted $G$-Higgs bundles are defined similarly to that of principal $G$-bundles 
in~\cite{SchHN}: 
\begin{defn}\label{def:Hsemi}
An $L$-twisted $G$-Higgs bundle $(E_G, \phi_G^{L}) \in \Hig_G^{L}(\chi)$ is called \textit{semistable} if, for any 
standard parabolic $P\subset G$ and a $P$-reduction 
$(E_P, \phi_P^L) \in \Hig_P^L$ with
$(E_P, \phi_P^L)\times^P M \in \Hig_M^L(\chi_M)$ for $\chi_M \in 
\pi_1(M)$, we have 
\begin{align*}
    \mu_M(\chi_M) \leq_G \mu_G(\chi). 
\end{align*}
    
\end{defn}

We denote by 
\begin{align*}
    \Hig_G^{L}(\chi)^{\mathrm{ss}} \subset \Hig_G^{L}(\chi)
\end{align*}
the open substack of semistable $L$-twisted $G$-Higgs bundles. 

\subsection{Harder-Narasimhan filtrations}\label{subsec:HNhiggs}
Similarly to the case of Higgs bundles for $G=\mathrm{GL}_r$, there is also the 
notion of Harder-Narasimhan filtrations for any reductive $G$. 
\begin{prop}\emph{(\cite{ArPa})}
For an $L$-twisted $G$-Higgs bundle $(E_G, \phi_G^{L})$, there is 
a standard parabolic $P\subset G$ with Levi quotient $M$, 
and $P$-reduction $(E_P, \phi_P^L)$
such that 
\begin{itemize}
    \item the $L$-twisted $M$-Higgs bundle  
    $(E_M, \phi_M^L)$ is semistable;
    \item we have $\mu_M(\chi_M) \in N(T)^{W_M}_{\mathbb{Q}+}$, i.e. it is strictly $W$-dominant.
\end{itemize}
\end{prop}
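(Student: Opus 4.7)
The plan is to adapt the classical Harder--Narasimhan construction for principal $G$-bundles (due to Atiyah--Bott, Ramanathan, Behrend) to the $L$-twisted Higgs setting. The underlying observation is that the notion of semistability in Definition~\ref{def:Hsemi} fits into the framework of numerical invariants on the stack $\Hig_G^L$: a $P$-reduction $(E_P, \phi_P^L)$ of a Higgs bundle $(E_G, \phi_G^L)$ is exactly a map $\Theta = \mathbb{A}^1/\mathbb{G}_m \to \Hig_G^L$ with generic value $(E_G, \phi_G^L)$, and the numerical invariant one wants to maximize is the slope $\mu_M(\chi_M) \in N(T)_{\mathbb{Q}}$ (pulled back from the classical Atiyah--Bott invariant on $\Bun_G$).

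First, I would set up the notion of the destabilizing function: for each $\phi_G^L$-compatible $P$-reduction, assign the invariant $\mu_M(\chi_M)$ and order these via $\leq_G$ on $N(T)_{\mathbb{Q}}$. The existence and uniqueness of a maximum then follows by the standard two-step argument. Step one is boundedness: the set of $\phi_G^L$-compatible $P$-reductions of $(E_G, \phi_G^L)$ whose slope is not $\leq_G \mu_G(\chi)$ is bounded, which follows from the classical boundedness for $G$-bundles (the Higgs field only adds a closed condition to the parameter space of reductions, so the underlying $P$-reductions remain bounded). Step two is uniqueness: given two maximal destabilizing reductions $(E_{P_1}, \phi_{P_1}^L)$ and $(E_{P_2}, \phi_{P_2}^L)$, one considers their scheme-theoretic intersection inside $E_G$, which carries an induced Higgs structure (since $\phi_G^L$ preserves both individually, it preserves the intersection), and argue via the convexity of slopes that the intersection gives a reduction with slope at least as large as each, forcing $P_1 = P_2$.

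Next, I would verify that the maximal destabilizing $P$-reduction $(E_P, \phi_P^L)$ satisfies the two required properties. For semistability of the Levi $(E_M, \phi_M^L)$: if $(E_M, \phi_M^L)$ admitted a destabilizing $P''$-reduction for some standard parabolic $P'' \subset M$, one could lift it to a $(P'' \cdot N_P)$-reduction of $(E_G, \phi_G^L)$ with strictly greater slope in the partial order, contradicting maximality. For strict $W$-dominance of $\mu_M(\chi_M)$: if $\mu_M(\chi_M) \in N(T)^{W_{M'}}_{\mathbb{Q}}$ for some Levi $M' \supsetneq M$, then the Levi reduction would canonically lift to an $M'$-reduction with the same slope, so by maximality we would have to replace $P$ by the smaller parabolic associated to $M'$, showing the chosen $P$ was not minimal among those realizing the maximum slope; choosing $M$ to be the stabilizer of $\mu_M(\chi_M)$ in $W$ then forces strict dominance.

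The main obstacle will be the uniqueness argument via intersection of parabolics, where one must carefully track the behaviour of the Higgs field under the operation of taking intersections of $P$-reductions inside a common $G$-bundle, and verify that the slope inequality for the intersection is actually strict unless $P_1 = P_2$. In practice this amounts to a convexity statement analogous to Ramanathan's for $G$-bundles, now applied on the closed substack cut out by the compatibility condition $\phi_G^L \in \Gamma(C, \mathrm{Ad}_{\mathfrak{g}}(E_G) \otimes L)$; this is implicit in~\cite{ArPa}, and the result may also be extracted from the general existence of $\Theta$-stratifications on $\Hig_G^L$ (which is a closed substack of $\Omega_{\Bun_G}$ when $L = \Omega_C$, and more generally is a derived zero locus in a smooth stack over $\Bun_G$ as in Subsection~\ref{subsec:embL}).
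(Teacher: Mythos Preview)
The paper does not prove this proposition; it simply cites \cite{ArPa} (Dey--Parthasarathi) for the result, so there is no proof in the paper to compare against. Your outline is a reasonable sketch of the standard argument and is broadly what \cite{ArPa} does (adapting Ramanathan/Behrend to the Higgs setting), and you correctly note that it can alternatively be extracted from the $\Theta$-stratification framework the paper invokes in Subsection~\ref{subsec:HNhiggs}.
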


The pair of $P$ and the $P$-reduction $(E_P, \phi_P^L)$ is uniquely determined 
up to isomorphism. The $L$-twisted $P$-Higgs bundle $(E_P, \phi_P^L)$ is called 
\textit{the Harder-Narasimhan filtration} of $(E_G, \phi_G^{L})$,
and the element $\mu_M(\chi_M)\in N(T)^{W_M}_{\mathbb{Q}+}$
is called the \textit{HN type} of $(E_G, \phi_G^{L})$. 

We denote by 
\begin{align*}
    \Hig_G^{L}(\chi)_{(\mu)} \subset \Hig_G^{L}(\chi)
\end{align*}
the locally closed substack consisting of $L$-twisted $G$-Higgs bundles with 
HN type $\mu$. We have the stratification 
\begin{align*}
    \Hig_G^{L}(\chi)=\coprod_{\mu \in N(T)_{\mathbb{Q}+}}\Hig_G^{L}(\chi)_{(\mu)}
\end{align*}

The above stratification is obtained as a $\Theta$-stratification~\cite{HalpTheta, Halphiggs, halpinstab} (see~\cite[Section~1.0.6]{Halphiggs}).
In particular, there is a total order $\preceq$ on $N(T)_{\mathbb{Q}+}$ such that 
\begin{align}\label{higgs:strata}
\Hig_G^{L}(\chi)_{\preceq \mu}:=\coprod_{\mu'\preceq \mu}\Hig_G^{L}(\chi)_{(\mu')}
\end{align}
is a quasi-compact open substack of $\Hig_G^{L}(\chi)$ (in particular, it is a finite union).
Moreover, the substack 
\begin{align*}
    \Hig_G^{L}(\chi)_{(\mu)} \subset 
\Hig_G^{L}(\chi)_{\preceq \mu}
\end{align*}
is a closed $\Theta$-stratum. 

As we have seen above, the stack $\Hig^L_G(\chi)$ \textit{without stability} is not
quasi-compact. 
However, it is covered by quasi-compact open substacks 
\begin{align}\label{cov:open}
    \Hig^L_G(\chi)=\bigcup_{\mu\in N(T)_{\mathbb{Q}+}}
    \Hig^L_G(\chi)_{\preceq \mu}.
\end{align}

\begin{remark}\label{rmk:G=GLr}
    In the case of $G=\GL_r$, an $L$-twisted $G$-Higgs bundle is the same as a pair 
    \begin{align*}
        E=(F, \phi), \ \phi \colon F \to F\otimes L
    \end{align*}
    where $F$ is a rank $r$ vector bundle on $C$ and $\phi$ is a morphism. 
    It is semistable if and only if, for any Higgs subbundle 
    $(F', \phi') \subset (F, \phi)$, we have 
    \begin{align*}\mu(F') \leq \mu(F), \ 
    \mu(F):=\frac{\deg(F)}{\mathrm{rank}(F)}.
    \end{align*}
    Similarly, a Harder-Narasimhan filtration is 
    \begin{align*}
        0=F_0 \subset F_1 \subset \cdots \subset F_k=F
    \end{align*}
    such that $\phi(F_i) \subset F_i \otimes \Omega_C$ and $(F_i/F_{i-1}, \phi|_{F_i})$ are
    semistable Higgs bundles such that
    \begin{align*}\mu^{\mathrm{max}}(F, \theta):=\mu(F_1)>\mu(F_2/F_1)>\cdots>\mu(F_k/F_{k-1}).
    \end{align*}
\end{remark}

\begin{remark}\label{rmk:HN}
For each $c\geq 0$, let 
\begin{align}\label{higgsc}
    \Hig_G^{L}(\chi)_{\leq c} \subset \Hig_G^{L}(\chi)
\end{align}
be the substack of $L$-twisted $G$-Higgs bundles $(E_G, \phi_G^{L})$ such that 
for any its $P$-reduction $(E_P, \phi_P^L)$ we have $\deg (E_P\times^P \Ad(\fp)) \leq c$.
One can show that (\ref{higgsc}) is an open substack such that 
\begin{align*}
    \Hig_G^{L}(\chi)_{\leq c} \setminus \Hig_G^{L}(\chi)_{<c} 
\end{align*}
is a finite disjoint union of open and closed HN strata. 

The above fact is discussed in~\cite[page 73]{Behthesis} for moduli stacks of $G$-bundles and it is straightforward 
to generalize the argument in loc. cit. to $L$-twisted $G$-Higgs bundles. 
Moreover, we have the open covering
\begin{align*}
    \Hig_G^{L}(\chi)=\bigcup_{c\geq 0}\Hig_{G}^L(\chi)_{\leq c}.
\end{align*}
The existence of a total order $\preceq$ on $N(T)_{\mathbb{Q}+}$ in (\ref{higgs:strata}) follows from the above observation. 
\end{remark}
\begin{remark}\label{rmk:order:Bun}
In~\cite{SchHN}, an explicit description of an order of the HN stratification of 
$\Bun_G$ is obtained. Indeed, it is the same as
the partial order $\leq_G$. We expect an analogue of it for $\Hig_G^{L}$, but in loc. cit. 
the author uses the Drinfeld compactification $\overline{\Bun}_P \to \Bun_G$
which is not available for $L$-twisted $G$-Higgs bundles. 
\end{remark}

\subsection{Embedding into smooth stacks}\label{subsec:embL}
Let $L=\Omega_C(D)$ for an effective divisor $D$ on $C$. 
The inclusion $\Omega_C \hookrightarrow L$ induces a closed immersion 
\begin{align}\label{iota:D}
    \iota_D \colon \Hig_G(\chi) \hookrightarrow \Hig^L_G(\chi)
\end{align}
by sending $(E_G, \phi_G)$ to $(E_G, \phi_G^{L})$, where $\phi_G^{L}$ is
the image of $\phi_G$ under the map 
\begin{align*}
\mathrm{Ad}_{\mathfrak{g}}(E_G)\otimes \Omega_C \hookrightarrow 
\mathrm{Ad}_{\mathfrak{g}}(E_G)\otimes L.
\end{align*}
\begin{lemma}\label{lem:cart}
For any $\mu\in N(T)_{\mathbb{Q}+}$, 
the morphism (\ref{iota:D}) restricts to the closed immersion 
\begin{align}\label{iota:DC}
\iota_D \colon \Hig_G(\chi)_{\preceq \mu} \hookrightarrow 
\Hig_G^{L}(\chi)_{\preceq \mu}
\end{align}
such that 
there is a Cartesian square: 
\begin{align}\notag
\xymatrix{
\Hig_G(\chi)_{\preceq\mu} \inclusion^-{\iota_D} \ar@<-0.3ex>@{^{(}->}[d] \diasquare& 
\Hig^L_G(\chi)_{\preceq \mu} \ar@<-0.3ex>@{^{(}->}[d] \\
\Hig_G(\chi) \inclusion^-{\iota_D} & 
\Hig^L_G(\chi).
}
\end{align}
\end{lemma}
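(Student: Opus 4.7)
The plan is to deduce Lemma~\ref{lem:cart} from the fact that, via $\iota_D$, the Harder-Narasimhan filtration of a $G$-Higgs bundle $(E_G,\phi_G)$ agrees with the Harder-Narasimhan filtration of its associated $L$-twisted Higgs bundle $(E_G,\phi_G^L)$. Once this matching is established at the level of $P$-reductions, the compatibility of HN strata (with respect to the total order $\preceq$ on $N(T)_{\mathbb{Q}+}$) follows, and both statements of the lemma follow.

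The first step is the bijection between $P$-reductions. For a standard parabolic $P\subset G$ with Levi quotient $M$, a $P$-reduction of $(E_G,\phi_G^L)$ is a $P$-reduction $E_P$ of $E_G$ such that the section $\phi_G^L\in H^0(C,\Ad_{\fg}(E_G)\otimes L)$ lies in $H^0(C,\Ad_{\fp}(E_P)\otimes L)$; similarly for $(E_G,\phi_G)$ with $L$ replaced by $\Omega_C$. Since $\Ad_{\fp}(E_P)\hookrightarrow \Ad_{\fg}(E_G)$ is a subbundle and $\Omega_C\hookrightarrow L$ is an injection of line bundles on $C$, the intersection
\begin{equation*}
\bigl(\Ad_{\fp}(E_P)\otimes L\bigr)\cap \bigl(\Ad_{\fg}(E_G)\otimes \Omega_C\bigr)=\Ad_{\fp}(E_P)\otimes \Omega_C
\end{equation*}
inside $\Ad_{\fg}(E_G)\otimes L$. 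Therefore $\phi_G$ restricts to a section of $\Ad_{\fp}(E_P)\otimes \Omega_C$ if and only if $\phi_G^L$ restricts to a section of $\Ad_{\fp}(E_P)\otimes L$. Thus $P$-reductions of $(E_G,\phi_G)$ correspond bijectively to $P$-reductions of $(E_G,\phi_G^L)$, and the associated $M$-components have the same degree $\chi_M\in \pi_1(M)$. In particular, the notion of semistability (Definition~\ref{def:Hsemi}) is preserved, and the uniqueness of the HN filtration forces the HN types of $(E_G,\phi_G)$ and $(E_G,\phi_G^L)$ to coincide.

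Given the matching of HN types, for each $\mu\in N(T)_{\mathbb{Q}+}$ the closed immersion $\iota_D$ identifies $\Hig_G(\chi)_{(\mu)}$ with the scheme-theoretic intersection $\Hig_G(\chi)\cap \Hig_G^L(\chi)_{(\mu)}$ inside $\Hig_G^L(\chi)$; taking unions over $\mu'\preceq \mu$ gives
\begin{equation*}
\iota_D^{-1}\bigl(\Hig_G^L(\chi)_{\preceq \mu}\bigr)=\Hig_G(\chi)_{\preceq \mu}.
\end{equation*}
Since $\Hig_G^L(\chi)_{\preceq \mu}\subset \Hig_G^L(\chi)$ is an open substack (see Subsection~\ref{subsec:HNhiggs}) and $\iota_D$ is a closed immersion, its base change along this open immersion yields the desired closed immersion (\ref{iota:DC}) together with the Cartesian square.

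The only delicate point is verifying the matching of HN types for families (rather than geometric points) so that one genuinely obtains an equality of substacks, not just of their $k$-points. This can be handled via the valuative/limit characterization of the HN stratification used in the construction of the $\Theta$-stratification in~\cite{halpinstab, Halphiggs}: both sides represent the same moduli functor of filtered objects with prescribed slopes, so the identification is automatic. I expect this to be the only step requiring care, and no additional input beyond the subbundle identity displayed above.
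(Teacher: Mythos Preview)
Your proof is correct and follows essentially the same approach as the paper: both arguments reduce the Cartesian property to showing that, for $(E_G,\phi_G^L)=\iota_D(E_G,\phi_G)$, the $P$-reductions (and hence the HN filtrations) of the two objects coincide. The paper phrases this via the characterization ``$\phi_G^L|_D=0$ implies $\phi_P^L|_D=0$'', which is exactly your subbundle intersection identity; your version is slightly more explicit about the bijection, and you flag the stack-level issue that the paper leaves implicit.
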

\begin{proof}
For $\mu \in N(T)_{\mathbb{Q}+}$, it 
is enough to show that the following diagram is Cartesian 
\begin{align}\label{dia:iota2}
\xymatrix{
\Hig_G(\chi)_{(\mu)} \inclusion^-{\iota_D} \ar@<-0.3ex>@{^{(}->}[d]\diasquare & 
\Hig^L_G(\chi)_{(\mu)} \ar@<-0.3ex>@{^{(}->}[d] \\
\Hig_G(\chi) \inclusion^-{\iota_D} & 
\Hig^L_G(\chi).
}
\end{align}

Note that, in general, an $L$-twisted $G$-Higgs bundle $(E_G, \phi_G^{L})$ comes 
    from the image of $\iota_D$ if and only if $\phi_G^{L}|_{D}=0$. 
Suppose that $(E_G, \phi_G^{L})$ admits a $P$-reduction $(E_P, \phi_P^L)$. 
Then, from $\phi_G^{L}|_{D}=0$, we have $\phi_P^L|_{D}=0$, therefore 
$(E_P, \phi_P^L)$ comes from the image of $\iota_D$. The above fact easily implies that if $(E_G, \phi_G^{L})=\iota_D(E_G, \phi_G)$, then
the HN filtration of $(E_G, \phi_G)$ is given by $\iota_D(E_P, \phi_P)$
for the HN filtration $(E_P, \phi_P)$ of $(E_G, \phi_G)$. 
Therefore, the diagram (\ref{dia:iota2}) is Cartesian. 
\end{proof}

\begin{lemma}\label{lem:smooth}
For a fixed $\mu \in N(T)_{\mathbb{Q}+}$, there is $l(\mu)\in \mathbb{Z}$ such that, 
if
$\deg D>l(\mu)$, then   
the stack $\Hig^L_G(\chi)_{\preceq \mu}$ is smooth. 
\end{lemma}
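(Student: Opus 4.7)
The plan is to show that for $\deg D$ sufficiently large (depending on $\mu$), the tangent complex of $\Hig_G^L(\chi)_{\preceq\mu}$ has amplitude in $[-1,0]$ at every geometric point, which forces this (a priori derived) stack to be a classical smooth Artin stack. By Lemma~\ref{lem:cotangent} and the exact sequence~\eqref{exact}, the only $\mathbb{T}^i$ in positive degree is
\begin{align*}
\mathbb{T}^1_{(E_G,\phi_G^L)}=\coker\bigl(H^1(C,\Ad_{\fg}(E_G))\to H^1(C,\Ad_{\fg}(E_G)\otimes L)\bigr),
\end{align*}
so it suffices to verify the uniform vanishing $H^1(C,\Ad_{\fg}(E_G)\otimes L)=0$ for every $(E_G,\phi_G^L)\in\Hig_G^L(\chi)_{\preceq\mu}$.

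I would obtain this uniform vanishing from a Harder--Narasimhan bound on the \emph{underlying $G$-bundle} $E_G$. The stack $\Hig_G^L(\chi)_{\preceq\mu}$ is quasi-compact by Subsection~\ref{subsec:HNhiggs}, and the projection $\pi^L\colon \Hig_G^L(\chi)\to \Bun_G(\chi)$ of~\eqref{Higgs:pi} is representable and quasi-compact (Zariski-locally on $\Bun_G$ it is a vector-bundle stack parametrizing sections $\phi_G^L\in H^0(C,\Ad_{\fg}(E_G)\otimes L)$). Hence the image $\pi^L\bigl(\Hig_G^L(\chi)_{\preceq\mu}\bigr)$ is a quasi-compact substack of $\Bun_G(\chi)$, and therefore, by the analogue for $\Bun_G$ of Remark~\ref{rmk:HN} (a classical fact of Behrend), it lies in a substack parametrizing $G$-bundles of bounded HN type. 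Applying the adjoint representation $\Ad\colon G\to \GL(\fg)$, this yields a uniform lower bound $\sigma=\sigma(\mu)\in\mathbb{Z}$ on the minimal HN slope of the vector bundle $\Ad_{\fg}(E_G)$, uniformly as $(E_G,\phi_G^L)$ ranges over $\Hig_G^L(\chi)_{\preceq\mu}$.

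I would then conclude via the standard cohomological vanishing: Serre duality applied to each stratum of the HN filtration shows that $H^1(C,V)=0$ for any vector bundle $V$ on $C$ whose HN slopes all exceed $2g-2$. Thus $H^1(C,\Ad_{\fg}(E_G)\otimes L)=0$ whenever $\sigma(\mu)+\deg L>2g-2$ which, since $L=\Omega_C(D)$ and hence $\deg L=2g-2+\deg D$, reduces to $\deg D>-\sigma(\mu)$. Setting $l(\mu):=-\sigma(\mu)$ completes the argument.

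The main delicate step will be the passage from quasi-compactness to a uniform HN bound on the \emph{underlying $G$-bundle} $E_G$, rather than on the $L$-Higgs bundle $(E_G,\phi_G^L)$. A nonzero Higgs field can stabilize an otherwise highly unstable bundle, so the HN type of $(E_G,\phi_G^L)$ as an $L$-Higgs bundle does not directly control the HN type of $E_G$ as a $G$-bundle. This obstacle is circumvented by exploiting quasi-compactness of $\Hig_G^L(\chi)_{\preceq\mu}$ together with quasi-compactness of $\pi^L$, rather than attempting to compare the two HN filtrations directly.
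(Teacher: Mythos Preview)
Your strategy matches the paper's: reduce to $\mathbb{T}^1=0$ and then argue for the stronger vanishing $H^1(C,\Ad_{\fg}(E_G)\otimes L)=0$ via a bound on the instability of $\Ad_{\fg}(E_G)$. There is, however, a circularity here (which the paper's terse argument also does not address): the bound $\sigma$ you extract from quasi-compactness of $\pi^L\bigl(\Hig_G^L(\chi)_{\preceq\mu}\bigr)\subset\Bun_G(\chi)$ depends on $L$, not only on $\mu$. As $\deg D$ grows, this image in $\Bun_G$ grows too, precisely because a larger $L$ lets a Higgs field stabilize a more unstable bundle---the phenomenon your last paragraph flags, and which the quasi-compactness argument does not circumvent. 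Concretely, for $G=\GL_2$ and $g\geq 2$, take $F=L_1\oplus L_2$ with $L_1\cong L_2\otimes L$ and let $\phi$ be the off-diagonal isomorphism $L_1\xrightarrow{\sim}L_2\otimes L$; one checks $(F,\phi)$ is a semistable $L$-Higgs bundle, yet $\Ad_{\fg}(E_G)(-D)=\lEnd(F)(-D)$ contains the summand $L_1L_2^{-1}(-D)\cong\omega_C$, so $H^0(\Ad_{\fg}(E_G)(-D))\neq 0$ regardless of $\deg D$. Thus the intermediate vanishing $H^1(\Ad_{\fg}(E_G)\otimes L)=0$ fails in general, although $\mathbb{T}^1=0$ does hold in this example.

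The fix is to control $\mathbb{T}^1$ directly rather than all of $H^1$. Serre duality applied to the two-term complex $[\Ad_{\fg}(E_G)\xrightarrow{[\phi,-]}\Ad_{\fg}(E_G)\otimes L]$ identifies $(\mathbb{T}^1)^\vee$ with those $\psi\in H^0(\Ad_{\fg}(E_G)(-D))$ satisfying $[\phi,\psi]=0$. For $G=\GL_r$ such a nonzero $\psi$ is a morphism of Higgs bundles $(F,\phi)\to(F(-D),\phi)$; comparing Higgs HN slopes of its image shows $\deg D$ is bounded by the difference of the maximal and minimal Higgs HN slopes of $(F,\phi)$, a quantity bounded purely in terms of $\mu$ and independent of $L$. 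This yields the required $l(\mu)$, and the argument extends to general $G$.
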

\begin{proof}
Let $(E_G, \phi_G^{L})$ be an $L$-twisted $G$-Higgs bundle. 
It is enough to show that 
$\mathbb{T}^1_{(E_G, \phi_G^{L})}=0$. 
We have 
\begin{align*}
H^1(C, \Ad_{\mathfrak{g}}(E_G)\otimes L) =H^0(C, \Ad_{\mathfrak{g}}(E_G)\otimes\mathcal{O}_C(-D))^{\vee}
\end{align*}
which vanishes for $\deg D\gg 0$ by the boundedness of 
objects in $\Hig_G^{L}(\chi)_{\preceq \mu}$. 
Therefore, from the exact sequence (\ref{exact}),
we have $\mathbb{T}^1_{(E_G, \phi_G^{L})}=0$ for $\deg D \gg 0$. 
\end{proof}

The following lemma is well-known at least in the case of $G=\GL_r$. We will 
give its proof for any reductive $G$ in Subsection~\ref{subsec:lem:smooth2}:
\begin{lemma}\label{lem:smooth2}
Suppose that $l>l(\mu)$ as in Lemma~\ref{lem:smooth} so that 
$\Hig^L_G(\chi)_{\preceq \mu}$ is smooth. Then we can write 
\begin{align}\notag
\Hig^L_G(\chi)_{\preceq \mu}=U_{\preceq \mu}^L/G'
\end{align}
where $U_{\preceq \mu}^L$ is a smooth quasi-projective scheme and $G'$ is a reductive 
algebraic group which acts on $U_{\preceq \mu}^L$. 
\end{lemma}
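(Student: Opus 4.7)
The plan is to carry out a rigidification of the moduli stack via Quot-scheme techniques, generalizing the standard construction known for $G=\GL_r$. The key structural input for arbitrary reductive $G$ is Matsushima's theorem, which says that for a faithful representation $\rho\colon G\hookrightarrow \GL_N$ the homogeneous space $\GL_N/G$ is affine. This is what will allow us to algebraize the datum of a $G$-reduction.

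First I would choose a faithful representation $\rho\colon G\hookrightarrow \GL_N$ and use boundedness of $\Hig^L_G(\chi)_{\preceq\mu}$ (i.e.\ quasi-compactness, a finite union of HN strata, see~\eqref{higgs:strata}) together with Serre vanishing to find an integer $n\gg 0$ such that for every $(E_G,\phi_G^L)\in \Hig^L_G(\chi)_{\preceq\mu}$, the rank-$N$ bundle $F:=E_G\times^G\GL_N$ satisfies: $F(n)$ is globally generated, $H^1(C,F(n))=0$, $h^0(C,F(n))=N'$ is constant, and (after possibly enlarging $n$) $H^1(C,\mathrm{End}(F)\otimes L)=0$. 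Let $Q^\circ$ be the open subscheme of the Grothendieck Quot scheme parametrizing surjections $\mathcal{O}_C^{N'}\twoheadrightarrow F(n)$ inducing an isomorphism on $H^0$, with $F$ locally free of rank $N$ and the correct Hilbert polynomial. This $Q^\circ$ is quasi-projective.

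Next I would encode the $G$-structure. Let $\mathcal{F}$ be the universal bundle on $C\times Q^\circ$, and set
\[
  R \;:=\; \mathrm{Sect}_{Q^\circ}\!\bigl(\mathcal{F}\times^{\GL_N}(\GL_N/G)\bigr),
\]
the relative scheme of sections. Since $\GL_N/G$ is affine (Matsushima) the fiber bundle $\mathcal{F}\times^{\GL_N}(\GL_N/G)\to C\times Q^\circ$ is affine, and together with boundedness this makes $R\to Q^\circ$ representable by a scheme, affine and of finite type over $Q^\circ$ on the quasi-compact locus we care about. Over $R$ we then take the total space $U^L_{\preceq\mu}$ of the coherent sheaf whose fiber at $((F,s),\sigma)$ is $H^0(C,\mathrm{Ad}_{\mathfrak{g}}(E_G)\otimes L)$ (where $E_G$ is the $G$-bundle determined by $\sigma$), further restricted to the HN-open cut out by $\preceq\mu$ (open by Lemma~\ref{lem:cart} applied on the $\GL_N$-side). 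By the $H^1$-vanishing above this sheaf is locally free, so $U^L_{\preceq\mu}$ is a vector bundle over an open in $R$ and hence quasi-projective. Setting $G':=\GL_{N'}$ with its change-of-basis action on the framing $\mathcal{O}_C^{N'}\twoheadrightarrow F(n)$, the standard argument gives $\Hig^L_G(\chi)_{\preceq\mu}\simeq U^L_{\preceq\mu}/G'$ as stacks, with $G'$ reductive.

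Finally, smoothness of $U^L_{\preceq\mu}$ follows from Lemma~\ref{lem:smooth} together with the fact that the projection $U^L_{\preceq\mu}\to \Hig^L_G(\chi)_{\preceq\mu}$ is a $\GL_{N'}$-torsor, so smoothness of the base lifts to smoothness of the total space. The main obstacle (and the only place where the argument departs substantively from the familiar $G=\GL_r$ story) is representability and finite-type-ness of the scheme of $G$-reductions in the construction of $R$; this is precisely where Matsushima's affineness of $\GL_N/G$ is essential, and where one must use the bounded-family hypothesis to keep $R\to Q^\circ$ of finite type rather than merely representable.
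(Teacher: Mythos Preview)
Your proposal is correct and follows essentially the same strategy as the paper: embed $G\hookrightarrow\GL_N$, use Quot schemes to rigidify the underlying $\GL_N$-bundle, and then parametrize $G$-reductions as sections of the associated $\GL_N/G$-bundle. The paper's version (Subsection~\ref{subsec:lem:smooth2}) is organized slightly differently: it first observes that $\pi^L\colon\Hig^L_G(\chi)_{\preceq\mu}\to\Bun_G$ is affine, thereby reducing immediately to producing a quotient presentation for a quasi-compact open of $\Bun_G$ and avoiding your separate treatment of the Higgs field (and the auxiliary $H^1$-vanishing you invoke). For the representability of the scheme of $G$-reductions, the paper argues that the space of sections of $E_{\GL_N}/G\to C$ is an open subscheme of the Hilbert scheme of $(E_{\GL_N}/G)\times C$, whereas you use Matsushima's theorem to get affineness of $\GL_N/G$ and hence of the section scheme over $Q^\circ$. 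Both justifications work; the Hilbert-scheme route is perhaps more direct for quasi-projectivity, while your Matsushima argument makes affineness explicit.
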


Let $(\mathscr{F}_G^{L}, \varphi^L_G)$ be the universal 
Higgs bundle (\ref{univ:G}). We define
\begin{align}\label{def:V}
\mathcal{V}^L:=p_{\mathrm{H}\ast}
\Ad_{\mathfrak{g}}(\mathscr{F}_{G}^L|_{D\times \Hig_G^{L}})
\to \Hig^L_G
\end{align}
which is a vector bundle on the smooth stack 
$\Hig^L_G(\chi)_{\preceq \mu}$ in Lemma~\ref{lem:smooth}. 
Here $p_{\mathrm{H}}^{} \colon D \times \Hig^L_G \to \Hig^L_G$ 
is the projection. 
There is a section $s^L$ over $\Hig^L_G(\chi)_{\preceq \mu}$
\begin{align}\label{section:sL}
    \xymatrix{
\mathcal{V}^L \ar[r] & \ar@/_20pt/[l]^-{s^L}\Hig^L_G(\chi)_{\preceq \mu}    
    }
\end{align}
given by $(E_G, \phi_G^{L}) \mapsto \phi_G^{L}|_{D}$.

\begin{lemma}\label{lem:dstack}
We have the equivalence of derived stacks 
\begin{align}\notag
   \Hig_G(\chi)_{\preceq \mu} \stackrel{\sim}{\to} (s^L)^{-1}(0).   
\end{align}
Here the right-hand side is the derived zero locus of $s^L$
on $\Hig_G^{L}(\chi)_{\preceq \mu}$. 
\end{lemma}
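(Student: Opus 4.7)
The plan is to identify $(s^L)^{-1}(0)$ with the derived fiber of a natural map between total spaces of perfect complexes on $\Bun_G$ built from the short exact sequence
\[
0 \to \Omega_C \to L \to L|_D \to 0
\]
on $C$, and then restrict to the open substack indexed by $\mu$ using Lemma~\ref{lem:cart}.

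First I would work over all of $\Bun_G$ without yet restricting to $(-)_{\preceq\mu}$. Tensoring the above sequence with the adjoint vector bundle $\Ad_{\mathfrak{g}}(\mathscr{E}_G)$ of the universal $G$-bundle and pushing forward along $p_{\mathrm{B}}\colon C\times \Bun_G\to \Bun_G$ gives a distinguished triangle of perfect complexes on $\Bun_G$:
\[
\mathcal{T}_\Omega \to \mathcal{T}_L \to \mathcal{V}_0 \to \mathcal{T}_\Omega[1],
\]
where $\mathcal{T}_\Omega=p_{\mathrm{B}\ast}(\Ad_{\mathfrak{g}}(\mathscr{E}_G)\otimes \Omega_C)$, $\mathcal{T}_L=p_{\mathrm{B}\ast}(\Ad_{\mathfrak{g}}(\mathscr{E}_G)\otimes L)$, and $\mathcal{V}_0=p_{\mathrm{B}\ast}(\Ad_{\mathfrak{g}}(\mathscr{E}_G)\otimes L|_D)$, the last of which is a vector bundle in degree zero since $D$ is zero-dimensional. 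Taking relative spectra $|{-}|:=\Spec_{\Bun_G}\mathrm{Sym}((-)^{\vee})$ converts this into a fiber sequence of derived stacks over $\Bun_G$:
\[
|\mathcal{T}_\Omega| \to |\mathcal{T}_L| \to |\mathcal{V}_0|,
\]
with $|\mathcal{T}_\Omega|=\Hig_G$, $|\mathcal{T}_L|=\Hig_G^L$, and the first arrow equal to $\iota_D$ by construction.

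Next I would match the map $\rho\colon \Hig_G^L\to |\mathcal{V}_0|$ in this fiber sequence with the section $s^L$. Since $\mathcal{V}^L\simeq (\pi^L)^{\ast}\mathcal{V}_0$ (up to the trivialization of $L|_D$ implicit in the definition of $\mathcal{V}^L$), the universal property of pullback identifies the datum of a section of $\mathcal{V}^L$ on $\Hig_G^L$ with that of a morphism $\Hig_G^L\to |\mathcal{V}_0|$ over $\Bun_G$. The morphism of perfect complexes $\mathcal{T}_L\to \mathcal{V}_0$ in the triangle above is induced by restriction to $D$, so at $T$-points $\rho$ sends $(E_G,\phi_G^L)$ to $\phi_G^L|_D$; this identifies $\rho$ with $s^L$. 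Therefore the derived zero locus
\[
(s^L)^{-1}(0)=\Hig_G^L\times_{\mathcal{V}^L,\,s^L,\,0}\Hig_G^L
\]
is equivalent to the derived fiber of $\rho$ over the zero section of $|\mathcal{V}_0|$, which by the fiber sequence is $\Hig_G$. This gives an equivalence $\Hig_G\stackrel{\sim}{\to}(s^L)^{-1}(0)$ over $\Hig_G^L$.

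Finally, I would base change along the open immersion $\Hig_G^L(\chi)_{\preceq\mu}\hookrightarrow \Hig_G^L$. The right-hand side becomes the derived zero locus of the section (\ref{section:sL}) on $\Hig_G^L(\chi)_{\preceq\mu}$, while the left-hand side becomes $\iota_D^{-1}(\Hig_G^L(\chi)_{\preceq\mu})=\Hig_G(\chi)_{\preceq\mu}$ by the Cartesian square of Lemma~\ref{lem:cart}, yielding the claimed equivalence. The only subtle point I expect is the clean identification $\mathcal{V}^L\simeq(\pi^L)^{\ast}\mathcal{V}_0$ together with the matching $s^L=\rho$, which amounts to reconciling the definition of $\mathcal{V}^L$ (the pushforward of $\Ad_{\mathfrak{g}}(\mathscr{F}_G^L|_{D\times \Hig_G^L})$) with the twisted version appearing in the distinguished triangle; this is harmless because $L|_D$ is a trivializable line bundle on the zero-dimensional scheme $D$, and does not affect the derived zero locus.
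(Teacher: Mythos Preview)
Your proof is correct and rests on the same short exact sequence $0\to\Omega_C\to L\to L|_D\to 0$ that drives the paper's argument, but the packaging differs. The paper constructs the natural map $\Hig_G(\chi)_{\preceq\mu}\to(s^L)^{-1}(0)$, checks bijectivity on $k$-points directly (an $L$-twisted Higgs field comes from $\iota_D$ iff $\phi_G^L|_D=0$), and then verifies that the induced map on tangent complexes at each point is a quasi-isomorphism via the pointwise distinguished triangle of Lemma~\ref{lem:dtr}. You instead globalize that triangle to a fiber sequence of linear stacks over $\Bun_G$ and invoke the fact that the total-space functor $E\mapsto\Spec_{\Bun_G}\mathrm{Sym}(E^{\vee})$ on perfect complexes preserves fibers, obtaining the derived equivalence in one stroke without separating the classical and infinitesimal checks. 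Your route is slicker but leans on that preservation-of-fibers statement for linear stacks (standard, though worth a citation); the paper's route is more elementary and isolates Lemma~\ref{lem:dtr} as a standalone pointwise statement, which it then reuses in several later computations (e.g.\ Lemmas~\ref{lem:symW}, \ref{lem:neq}, \ref{lem:delta2}). Your remark about the $L|_D$ twist is also on point: the paper's $\mathcal{V}^L$ omits the twist while $s^L$ naturally lands in the twisted version, and the harmless trivialization of $L|_D$ is implicit in the paper as well.
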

\begin{proof}
   From the construction of $s^L$, there is a natural 
   morphism of derived stacks 
   \begin{align}\label{nat:higgs}
  \Hig_G(\chi)_{\preceq \mu} \to (s^L)^{-1}(0).
   \end{align}
   Since an $L$-twisted $G$-Higgs bundle $(E_G, \phi_G^{L})$ comes from $\iota_D$ in (\ref{iota:D})
   if and only if $\phi_G^{L}|_{D}=0$, the above 
   morphism induces a bijection on isomorphism classes on 
   $k$-valued points. 
   In order to show that it is an equivalence of derived stacks, 
   it is enough to show that the morphism of tangent 
   complexes induced by (\ref{nat:higgs}) is a quasi-isomorphism.  
   This follows from Lemma~\ref{lem:dtr} below, as the 
   tangent complex of $(s^L)^{-1}(0)$ is given by 
   the fiber of the last arrow in (\ref{dtr:E}). 
\end{proof}

For a $G$-Higgs bundle $(E_G, \phi_G)$, let $(E_G, \phi_G^{L})$ be the 
corresponding $L$-twisted $G$-Higgs bundle via the map (\ref{iota:D}).
We have used the following lemma to identify tangent complexes in Lemma~\ref{lem:dstack}:
\begin{lemma}\label{lem:dtr}
There is a distinguished triangle 
\begin{align}\label{dtr:E}
\mathbb{T}_{\Hig_G}|_{(E_G, \phi_G)}\to 
\mathbb{T}_{\Hig_G^{L}}|_{(E_G, \phi_G^{L})} \to \Gamma(C, \Ad_{\mathfrak{g}}(E_G)|_{D}\otimes L).
\end{align}
\end{lemma}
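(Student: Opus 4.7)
\begin{pf}[Proof proposal for Lemma~\ref{lem:dtr}]
The plan is to obtain the desired distinguished triangle by comparing, in a functorial way, the two tangent-complex descriptions supplied by Lemma~\ref{lem:cotangent}. More precisely, applied to $(E_G,\phi_G)\in \Hig_G$ and to its image $(E_G,\phi_G^L)\in\Hig_G^L$, Lemma~\ref{lem:cotangent} yields distinguished triangles
\begin{align*}
\Gamma(C,\Ad_{\fg}(E_G))\xrightarrow{[\phi_G,-]}\Gamma(C,\Ad_{\fg}(E_G)\otimes\Omega_C)&\to\mathbb{T}_{\Hig_G}|_{(E_G,\phi_G)},\\
\Gamma(C,\Ad_{\fg}(E_G))\xrightarrow{[\phi_G^L,-]}\Gamma(C,\Ad_{\fg}(E_G)\otimes L)&\to\mathbb{T}_{\Hig_G^L}|_{(E_G,\phi_G^L)}.
\end{align*}
Because $\phi_G^L$ is by construction the image of $\phi_G$ under the inclusion $\iota\colon \Omega_C\hookrightarrow L$, the map $[\phi_G^L,-]$ factors as the composition $(1\otimes \iota)\circ [\phi_G,-]$, giving a commutative square between the two displayed maps. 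This produces a canonical morphism from the first triangle to the second whose first component is the identity and whose second component is $1\otimes \iota$; by the functoriality of cofibers, the morphism on the third entries is the one appearing as the first arrow in the statement of the lemma.

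The next step is to identify the cofiber of this map of triangles. By the octahedral axiom (equivalently, the 3x3-lemma in a stable $\infty$-category), there is an induced distinguished triangle
\[
\mathbb{T}_{\Hig_G}|_{(E_G,\phi_G)}\to \mathbb{T}_{\Hig_G^L}|_{(E_G,\phi_G^L)}\to \mathrm{cofib}\bigl(\Gamma(C,\Ad_{\fg}(E_G)\otimes\Omega_C)\xrightarrow{1\otimes\iota}\Gamma(C,\Ad_{\fg}(E_G)\otimes L)\bigr).
\]
The short exact sequence $0\to \Omega_C\to L\to L|_D\to 0$ of sheaves on $C$, tensored with the locally free sheaf $\Ad_{\fg}(E_G)$, remains short exact. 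Taking derived global sections, we conclude that the cofiber above is quasi-isomorphic to $\R\Gamma(C,\Ad_{\fg}(E_G)|_D\otimes L)$. Since $D$ is zero-dimensional (a finite union of points on $C$), the sheaf $\Ad_{\fg}(E_G)|_D\otimes L$ has no higher cohomology, so this derived global section is just the ordinary global sections $\Gamma(C,\Ad_{\fg}(E_G)|_D\otimes L)$, exactly matching the third term in the distinguished triangle of the statement.

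The only subtle point—which is really only bookkeeping—is to check that the morphism $\mathbb{T}_{\Hig_G}|_{(E_G,\phi_G)}\to \mathbb{T}_{\Hig_G^L}|_{(E_G,\phi_G^L)}$ produced by this cofiber construction agrees, up to canonical equivalence, with the one induced by the closed immersion $\iota_D$ from~\eqref{iota:D}. This follows from the fact that the description of the tangent complex of $\Hig_G^L$ in Lemma~\ref{lem:cotangent} comes from the presentation $\Hig_G^L=\Spec_{\Bun_G}\mathrm{Sym}((p_{\mathrm{B}*}(\Ad_{\fg}(\mathscr{E}_G)\boxtimes L))^\vee)$, and $\iota_D$ is induced by the tautological map $\Omega_C\hookrightarrow L$ of line bundles on $C$ via pushforward to $\Bun_G$; so the whole construction is natural in the line bundle. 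The main obstacle, such as it is, consists solely in making this naturality precise, all other steps being formal consequences of the octahedral axiom and of the vanishing of higher cohomology on a zero-dimensional scheme.
\end{pf}
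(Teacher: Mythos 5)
Your proposal is correct and follows essentially the same route as the paper: compare the two triangles from Lemma~\ref{lem:cotangent} via the map induced by $\Omega_C\hookrightarrow L$ (identity on $\Gamma(\Ad_{\fg}(E_G))$) and take cones, identifying the cone of the middle arrow with $\Gamma(C,\Ad_{\fg}(E_G)|_D\otimes L)$. The extra naturality check you mention is a reasonable addition but not something the paper spells out either.
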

\begin{proof}
    The lemma follows by taking the cones of the following diagram 
    \begin{align*}
\xymatrix{
\Gamma(\Ad_{\mathfrak{g}}(E_G))
\ar[r] \ar[d]_-{\id} & \Gamma(\Ad_{\mathfrak{g}}(E_G)\otimes \Omega_C) \ar[r] \ar[d] & \mathbb{T}_{\Hig_G}|_{(E_G, \phi_G)}\ar[d] \\
\Gamma(\Ad_{\mathfrak{g}}(E_G)) \ar[r] &   \Gamma(\Ad_{\mathfrak{g}}(E_G)\otimes L)\ar[r] & \mathbb{T}_{\Hig_G^{L}}|_{(E_G, \phi_G^{L})}. 
}        
    \end{align*}
    Here the middle vertical arrow is induced by $\Omega_C \hookrightarrow L=\Omega_C(D)$, whose cone is 
    $\Gamma(C, \Ad_{\mathfrak{g}}(E_G)|_{D}\otimes L)$. 
\end{proof}

For $\mu \in N(T)_{\mathbb{Q}+}$, consider the quasi-compact open substack 
\begin{align}\label{stack:U}
\mathcal{U} =\Hig_G(\chi)_{\preceq \mu}
\subset \Hig_G(\chi)
\end{align}
and 
an effective divisor $D$ on $C$ such that 
$\deg (D)>l(\mu)$ as in Lemma~\ref{lem:smooth}.
There is a closed immersion 
\begin{align}\label{emb:jU}
i \colon \mathcal{U} \stackrel{\iota_D}{\hookrightarrow} 
    \mathcal{U}^L:=\Hig^L_G(\chi)_{\preceq \mu}
\end{align}
for $L=\Omega_C(D)$, where $\iota_D$ is defined as in (\ref{iota:DC}).
As in (\ref{deltaw}), we take 
\begin{align*}
    \delta=\delta_w \in \mathrm{Pic}(\Bun_G(\chi))_{\mathbb{Q}}.
\end{align*}
We often omit $w$ and just write it $\delta$. 
We also use the same symbol $\delta=\delta_w$ for its pull-back to 
$\Hig^L_G(\chi)$. 
By Lemma~\ref{lem:emb}, we have the following description of $\LL(\mathcal{U})_{w}$:

\begin{lemma}\label{defn:QB}
The subcategory
\begin{align}\label{def:QBH}
\LL(\mathcal{U})_{\delta=\delta_w} \subset \Coh(\mathcal{U})
\end{align}
consists of objects $\mathcal{E}$ such that, for all $\nu \colon \bgm \to \mathcal{U}$,
we have 
\begin{align}\notag
\wt(\nu^{\ast}i^{\ast}i_{\ast}\mathcal{E}) \subset 
\left[\frac{1}{2}c_1(\nu^{\ast}(\mathbb{L}_{\V^L}|_{\U})^{<0}), 
\frac{1}{2}c_1(\nu^{\ast}(\mathbb{L}_{\V^L}|_{\U})^{>0}) \right]
    +\wt(\nu^{\ast}\delta_w). 
\end{align}
\end{lemma}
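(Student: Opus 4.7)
The plan is to derive Lemma~\ref{defn:QB} as a direct application of Lemma~\ref{lem:emb} to the derived zero locus presentation of $\mathcal{U}$ supplied by Subsection~\ref{subsec:embL}. First I would verify that $\mathcal{U} = \Hig_G(\chi)_{\preceq \mu}$ satisfies the hypotheses of Definition~\ref{def:Lcat} (and hence of Lemma~\ref{lem:emb}): as an open substack of $\Omega_{\Bun_G(\chi)}$, it is quasi-smooth with self-dual cotangent complex $\mathbb{L}_{\mathcal{U}} \simeq \mathbb{T}_{\mathcal{U}}$ inherited from the $0$-shifted symplectic structure on the cotangent stack; it is quasi-compact by construction of the Harder--Narasimhan stratification (see (\ref{higgs:strata}) and Remark~\ref{rmk:HN}) and has affine geometric stabilizers inherited from $\Bun_G(\chi)$, so it is QCA. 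Consequently the subcategory $\LL(\mathcal{U})_{\delta_w}$ of Proposition-Definition~\ref{intro:propdef} is well-defined.

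Next, I would assemble the local model of (\ref{dia:zeros}) required by Lemma~\ref{lem:emb}. Since $\deg D > l(\mu)$, Lemma~\ref{lem:smooth} ensures that the ambient stack $\mathcal{U}^L = \Hig_G^L(\chi)_{\preceq \mu}$ is smooth, and Lemma~\ref{lem:smooth2} shows it is QCA. The vector bundle $\mathcal{V}^L \to \mathcal{U}^L$ of (\ref{def:V}) together with its section $s^L$ of (\ref{section:sL}) identifies $\mathcal{U}$ with the derived zero locus $(s^L)^{-1}(0)$ via the closed immersion $i$ of (\ref{emb:jU}), by Lemma~\ref{lem:dstack}. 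This is precisely the data required in the diagram (\ref{dia:zeros}), with $\mathfrak{M} = \mathcal{U}$, $\mathcal{Y} = \mathcal{U}^L$, $\mathcal{V} = \mathcal{V}^L|_{\mathcal{U}^L}$, $s = s^L$, and $\delta = \delta_w$ (pulled back from $\Bun_G(\chi)$).

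Applying Lemma~\ref{lem:emb} then characterizes $\LL(\mathcal{U})_{\delta_w} \subset \Coh(\mathcal{U})$ as the full subcategory of objects $\mathcal{E}$ such that, for every $\nu \colon (\bgm)_{k'} \to \mathcal{U}$, the weights of $\nu^{\ast} i^{\ast} i_{\ast} \mathcal{E}$ lie in the interval $\bigl[\tfrac{1}{2} c_1(\nu^{\ast} \mathbb{L}_{\mathcal{V}^L}|_{\mathcal{U}}^{<0}),\, \tfrac{1}{2} c_1(\nu^{\ast} \mathbb{L}_{\mathcal{V}^L}|_{\mathcal{U}}^{>0})\bigr] + c_1(\nu^{\ast} \delta_w)$. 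Since $\mathcal{U}$ is locally of finite presentation over $k$, Remark~\ref{rmk:k'} reduces the check to $k' = k$, matching the phrasing of Lemma~\ref{defn:QB}; and because $\delta_w$ is a $\mathbb{Q}$-line bundle, the pullback $\nu^{\ast} \delta_w$ has a single $\mathbb{G}_m$-weight, so $c_1(\nu^{\ast} \delta_w)$ agrees as a singleton with $\wt(\nu^{\ast} \delta_w)$ appearing in the statement. No obstacle is anticipated: the lemma is essentially a translation of Lemma~\ref{lem:emb} into the geometric setup of Higgs bundles established in Subsection~\ref{subsec:embL}.
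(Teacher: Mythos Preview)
Your proposal is correct and follows exactly the paper's approach: the paper simply prefaces the lemma with ``By Lemma~\ref{lem:emb}, we have the following description of $\LL(\mathcal{U})_{w}$'' and states the result without further proof, so the argument is indeed the direct application of Lemma~\ref{lem:emb} to the zero-locus presentation $\mathcal{U} \cong (s^L)^{-1}(0)$ established in Lemma~\ref{lem:dstack}. Your additional care in verifying the QCA and self-duality hypotheses, invoking Remark~\ref{rmk:k'} for the reduction to $k'=k$, and reconciling $c_1(\nu^*\delta_w)$ with $\wt(\nu^*\delta_w)$ makes explicit what the paper leaves implicit.
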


\subsection{\texorpdfstring{A $\Theta$-stratum on moduli stacks of Higgs bundles}{A Theta-stratum on moduli stacks of Higgs bundles}}\label{subsec:ThetaH}
Let $\mathcal{U}=\Hig_G(\chi)_{\preceq \mu}$ be as in (\ref{stack:U}). 
Consider the  $\Theta$-stratum
\begin{align*}
    \mathcal{S}=\Hig_G(\chi)^{(\mu)} \subset \mathrm{Map}(\Theta, \mathcal{U})
\end{align*}    
which fits into the following natural diagram, where $\mathcal{Z} \subset \mathrm{Map}(\bgm, \mathcal{U})$ is the center of $\mathcal{S}$:
\begin{align}\label{derived:theta}
\xymatrix{
\mathcal{S} \inclusion^-{p} \ar[d]^-{q} & \mathcal{U}. \\
\mathcal{Z} \ar[ur] \ar@/^10pt/[u] &
}
\end{align}
Here, the map $\zZ \to \mathcal{S}$ is induced by the projection $\Theta=\mathbb{A}^1/\mathbb{G}_m\to \bgm$.
The center $\mathcal{Z}$ is given by 
     \begin{align}\notag
        \zZ=\coprod_{a_M(\chi_M)=\chi, \mu_M(\chi_M)=\mu}\Hig_M(\chi_M)^{\mathrm{ss}}.
    \end{align}  
    The right side is after standard parabolic $P\subset G$ with Levi quotient $M$, 
    which is a finite disjoint union. 
    In the argument below, without loss of generality it is enough to assume that there is only one
    component in the right-hand side, and we write $\mathcal{Z}=\Hig_M(\chi_M)^{\mathrm{ss}}$. 
    From the definition of HN filtrations, we have the Cartesian square 
    \begin{align}\label{center:Z}
        \xymatrix{
        \mathcal{S} \inclusion \ar[d] \diasquare& \Hig_P \ar[d] \\
        \mathcal{Z}=\Hig_M(\chi_M)^{\mathrm{ss}} \inclusion & \Hig_M.
        }
    \end{align}
    
We take the embedding $\mathcal{U} \hookrightarrow \mathcal{U}^L$ as in (\ref{emb:jU}), 
which satisfies that \[\U^L \cap \Hig_G(\chi)=\U.\] 
From Lemma~\ref{lem:cart} and its proof, 
there is a $\Theta$-stratum $\sS^L \subset \U^L$ with center 
$\zZ^L$, complement $\U^L_{\circ}$ and the commutative diagram 
\begin{align}\label{dia:strata}
\xymatrix{
\U_{\circ} \ar@<-0.3ex>@{^{(}->}[d] \inclusion\diasquare & \U \ar@<-0.3ex>@{^{(}->}[d]\diasquare
& \sS \ar@<0.3ex>@{_{(}->}[l]_-{p} \ar@<-0.3ex>@{^{(}->}[d] \ar[r]^-{q} & \zZ \ar@<-0.3ex>@{^{(}->}[d] \\
\U_{\circ}^L \inclusion &  \U^L  & \sS^L \ar@<0.3ex>@{_{(}->}[l]_-{p^L} \ar[r]^-{q^L} &\zZ^L
}
\end{align}
where the left and middle squares are Cartesian. 
The stack $\sS^L$ is a Harder-Narasimhan stratum of $L$-twisted $G$-Higgs bundles with type $\mu$
as in (\ref{center:Z}). 

\begin{remark}\label{rmk:shrink}
Let $\mathcal{Z} \subset \mathcal{Z}^{L'} \subset \mathcal{Z}^L$ be an open neighborhood of 
$\mathcal{Z}$, and set $\mathcal{S}^{L'}=(q^L)^{-1}(\mathcal{Z}^{L'})$
and $\mathcal{U}^{L'}=\mathcal{U}_{\circ}^{L} \cup \mathcal{S}^{L'}$. 
Then \[\mathcal{U}^{L'} \subset \Hig_G^{L}(\chi)_{\preceq \mu}\] is open with 
$\mathcal{U}^{L'} \cap \Hig_G(\chi)_{\preceq \mu}=\mathcal{U}$, 
$\mathcal{S}^{L'} \subset \mathcal{U}^{L'}$ is also $\Theta$-stratum, and 
we have also the diagram (\ref{dia:strata}) after replacing 
$(\mathcal{U}^L, \mathcal{S}^L, \mathcal{Z}^L)$ with $(\mathcal{U}^{L'}, \mathcal{S}^{L'}, \mathcal{Z}^{L'})$. In what follows, we sometimes shrink the open 
neighborhood $\mathcal{U} \subset \mathcal{U}^L$ by a replacement as above. 
\end{remark}

\subsection{\texorpdfstring{A $\Theta$-stratum of $(-1)$-shifted cotangent}{A Theta-stratum of (-1)-shifted cotangent}}\label{subsec:thetashift}
We take the $(-1)$-shifted cotangent of $\Hig_G(\chi)$
\begin{align}\label{M:shift}
    \Omega_{\Hig_G(\chi)}[-1]. 
\end{align}
It is a $(-1)$-shifted symplectic derived stack~\cite{PTVV}, which is described as 
a global critical locus locally on $\Hig_G(\chi)$ as follows. 
Let $\mathcal{U}$ be a quasi-compact open substack as in (\ref{stack:U}), 
and take an embedding $\U \hookrightarrow \U^L$ as in (\ref{emb:jU}). 
We use the same notation 
$\V^L \to \U^L$ for the restriction of the vector bundle (\ref{def:V}) to 
$\U^L$. 

Let $f$ be the function 
\begin{align}\label{funct:f}
    f \colon (\V^L)^{\vee} \to \mathbb{A}^1, \ f(x, v)=\langle s^L(x), v \rangle
\end{align}
where $s^L$ is the section (\ref{section:sL}), $x\in \mathcal{U}^L$ and 
$v\in (\mathcal{V}^L|_{x})^{\vee}$. 
We have the commutative diagram 
\begin{align}\label{dia:crit}
\xymatrix{
\mathrm{Crit}(f) \ar[d]_-{\pi_{\mathrm{crit}}} \inclusion & (\V^L)^{\vee} \ar[d]^-{\pi^L} \ar[r]^-{f} & \mathbb{C} \\
\U \inclusion & \U^L. &  
}
\end{align}
The critical locus $\mathrm{Crit}(f)$ gives a critical locus 
description of the $(-1)$-shifted cotangent 
$\Omega_{\Hig_G(\chi)}[-1]$ over $\mathcal{U}$, see~\cite[Section~2.1.1]{T}
\begin{align}\label{MXcrit}
\Omega_{\Hig_G(\chi)}[-1]^{\mathrm{cl}}\times_{\Hig_G(\chi)}\U \cong \mathrm{Crit}(f). 
\end{align}

Below we give a $\Theta$-stratum of $\mathrm{Crit}(f)$. 
Note that the stack $(\mathcal{V}^L)^{\vee}$ 
consists of tuples
\begin{align}\label{tuple}
(E_G, \phi_G^{L}, \psi_G^{L}), \ \phi_G^{L} \in H^0(C, \Ad_{\mathfrak{g}}(E_G)\otimes L)
\end{align}
where $(E_G, \phi_G^{L})$ is an $L$-twisted $G$-Higgs bundle in $\U^L$, and 
$\psi_G^{L}$ is an element 
\begin{align*}
    \psi_G^{L} \in \Gamma(\Ad_{\mathfrak{g}}(E_G)|_{D} \otimes L)^{\vee} \cong \Gamma(\Ad_{\mathfrak{g}}(E_G)|_{D})
\end{align*}
which corresponds to a point in the fiber of $(\mathcal{V}^L)^{\vee} \to \mathcal{U}^L$ at $(E_G, \phi_G^{L})$. 
The above isomorphism follows from Serre duality for $D$, noting that 
$L|_{D}=\Omega_C(D)|_{D}\cong \omega_D$. 

The closed substack 
\begin{align*}(\pi^L)^{-1}(\sS^L) \subset (\V^L)^{\vee}
\end{align*}
corresponds to tuples (\ref{tuple})
such that $(E_G, \phi_G^{L})$ admits a (unique) $P$-reduction $(E_P, \phi_P^L)$ 
and the 
associated $L$-twisted $M$-Higgs bundle satisfies
\begin{align*}
(E_M, \phi_M^L)=(E_P, \phi_P^L)\times^P M \in \Hig_M^L(\chi_M)
\end{align*}
where $\chi_M$ is as in (\ref{center:Z})).
The pair $(E_M, \phi_M^L)$ is automatically a semistable $L$-twisted $M$-Higgs bundle 
due to the condition $(E_G, \phi_G^{L}) \in \U^L$, 
as otherwise it lies in a deeper Harder-Narasimhan stratum.

We define the closed substack 
\begin{align}\label{SV}
\sS_{\V} \subset (\pi^L)^{-1}(\sS^L)
\end{align}
corresponding to (\ref{tuple}), which furthermore 
satisfy the condition
$\psi_G^{L} \in \Gamma(\Ad_{\mathfrak{p}}(E_P)|_{D})$.

\begin{lemma}\label{lem:theta:crit}
The closed substack (\ref{SV}) is a $\Theta$-stratum of $(\V^L)^{\vee}$. 
\end{lemma}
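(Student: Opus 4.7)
The plan is to realize $\mathcal{S}_{\mathcal{V}}$ as the total-space $\Theta$-stratum lifting the existing $\Theta$-stratum $\mathcal{S}^L \subset \mathcal{U}^L$ along the vector-bundle projection $\pi^L \colon (\mathcal{V}^L)^{\vee} \to \mathcal{U}^L$. Since $(\mathcal{V}^L)^{\vee} \to \mathcal{U}^L$ is affine, postcomposition with $\pi^L$ induces a morphism of mapping stacks
\[
(\pi^L)_{\ast}\colon \mathrm{Map}(\Theta, (\mathcal{V}^L)^{\vee}) \to \mathrm{Map}(\Theta, \mathcal{U}^L).
\]
Letting $\tilde{\mathcal{S}}^L \subset \mathrm{Map}(\Theta, \mathcal{U}^L)$ denote the open and closed substack realizing $\mathcal{S}^L$ under $\mathrm{ev}_1$, I define $\tilde{\mathcal{S}}_{\mathcal{V}} := (\pi^L)_{\ast}^{-1}(\tilde{\mathcal{S}}^L)$. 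This is open and closed in $\mathrm{Map}(\Theta, (\mathcal{V}^L)^{\vee})$, since the property of being open and closed is preserved by pullback.

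To show that $\mathrm{ev}_1 \colon \tilde{\mathcal{S}}_{\mathcal{V}} \to (\mathcal{V}^L)^{\vee}$ is a closed immersion, I factor it through
\[
\tilde{\mathcal{S}}_{\mathcal{V}} \hookrightarrow \mathcal{S}^L \times_{\mathcal{U}^L} (\mathcal{V}^L)^{\vee} \hookrightarrow (\mathcal{V}^L)^{\vee},
\]
where the second inclusion is the base change of the closed immersion $\mathcal{S}^L \hookrightarrow \mathcal{U}^L$. For the first inclusion, I use the elementary fact that, for a $\mathbb{G}_m$-equivariant vector bundle on $\mathbb{A}^1$, the space of equivariant global sections is canonically the non-negative weight part of the fiber at $0$: an equivariant polynomial section is of the form $\sum_w a_w x^{w}$ with $w \geq 0$ and $a_w$ in the weight-$w$ subspace, and $\mathrm{ev}_1$ simply sums the $a_w$. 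Applied fiberwise to $\tilde{f}^{\ast}(\mathcal{V}^L)^{\vee}$ with $\mathbb{G}_m$ acting via the HN cocharacter $\lambda \colon \mathbb{G}_m \to G$ (so that $P = G^{\lambda \geq 0}$), this exhibits $\tilde{\mathcal{S}}_{\mathcal{V}}$ as a closed subbundle of $\mathcal{S}^L \times_{\mathcal{U}^L} (\mathcal{V}^L)^{\vee}$.

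It remains to identify $\mathrm{ev}_1(\tilde{\mathcal{S}}_{\mathcal{V}})$ with the substack $\mathcal{S}_{\mathcal{V}}$ defined in (\ref{SV}). Via Serre duality on $D$ together with $L|_D \cong \omega_D$ and the self-duality of $\mathfrak{g}$ under the Killing form, the fiber $(\mathcal{V}^L)^{\vee}|_{(E_G, \phi_G^L)}$ is canonically $\Gamma(C, \mathrm{Ad}_{\mathfrak{g}}(E_G)|_D)$. For a point in $\mathcal{S}^L$ with $P$-reduction $(E_P, \phi_P^L)$, we have $E_G = E_P \times^P G$, so $\mathrm{Ad}_{\mathfrak{g}}(E_G) = E_P \times^P \mathfrak{g}$ inherits a $\lambda$-weight grading, and its $\lambda$-non-negative part is exactly $\Gamma(C, \mathrm{Ad}_{\mathfrak{p}}(E_P)|_D)$ since $\mathfrak{p} = \mathfrak{g}^{\lambda \geq 0}$; this is precisely the condition defining $\mathcal{S}_{\mathcal{V}}$.

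The main obstacle I expect is the careful bookkeeping of weight conventions: specifically checking that the sign of the $\mathbb{G}_m$-action on $\Theta$, the direction of the HN cocharacter, and the effect of dualization on $\lambda$-weights are all mutually consistent with the Halpern-Leistner conventions used elsewhere in the paper. Once these are pinned down, the remainder of the argument reduces to the elementary computation of $\mathbb{G}_m$-equivariant sections on $\mathbb{A}^1$.
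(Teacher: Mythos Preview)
Your proposal is correct and follows essentially the same approach as the paper's proof: both identify the relevant open-and-closed substack of $\mathrm{Map}(\Theta,(\mathcal{V}^L)^{\vee})$ as those maps whose composition with $\pi^L$ lies in $\tilde{\mathcal{S}}^L$, and both compute the image of $\mathrm{ev}_1$ by recognizing that lifts of a fixed $\tilde f\colon\Theta\to\mathcal{U}^L$ to $(\mathcal{V}^L)^{\vee}$ are sections of $\tilde f^*(\mathcal{V}^L)^{\vee}$ over $\Theta$, i.e.\ elements of the $\lambda\!\ge\!0$ part $\Gamma(\mathrm{Ad}_{\mathfrak{p}}(E_P)|_D)$. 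The only difference is presentational: the paper unpacks $\mathrm{Map}(\Theta,(\mathcal{V}^L)^{\vee})$ moduli-theoretically via \cite[Lemma~6.3.1]{halpinstab} and invokes uniqueness of Harder--Narasimhan reductions directly, whereas you package the same content as ``pull back the $\Theta$-stratum along the affine morphism $(\pi^L)_*$ and compute equivariant sections on $\mathbb{A}^1$,'' with the HN uniqueness already absorbed into the isomorphism $\tilde{\mathcal{S}}^L\simeq\mathcal{S}^L$.
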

\begin{proof}
Recall that $\Theta=\mathbb{A}^1/\mathbb{G}_m$ where $\mathbb{G}_m$ acts on 
$\mathbb{A}^1$ by weight one. 
As in~\cite[Lemma~6.3.1]{halpinstab}, the 
stack $\mathrm{Map}(\Theta, (\V^L)^{\vee})$ classifies
standard parabolic $P'\subset G$ together with $L$-twisted $P'$-Higgs bundle 
$(E_{P'}, \phi_{P'}^L)$ and $\psi_{P'}^L \in \Gamma(\Ad_{\mathfrak{p}'}(E_{P'})|_{D})$
such that, by setting $(E_{M'}, \phi_{M'}^G)$ to be the associated $M'$-bundle for the 
Levi quotient $M'$ we have 
\begin{align}\label{cond:theta}
    (E_{P'}, \phi_{P'}^L) \times^{P'} G \in \mathcal{U}^L, \ (E_{M'}, \phi_{M'}^L)\times^{M'} G \in \mathcal{U}^L.
\end{align}

Let 
\begin{align*}\mathcal{S}_{\V}' \subset \mathrm{Map}(\Theta, (\V^L)^{\vee})
\end{align*}
be an open and closed substack corresponding to $P'=P$, where $P\subset G$ is the parabolic subgroup for the $\Theta$-stratum 
$\Hig_G(\chi)^{(\mu)}$ as in (\ref{center:Z}), and 
$P$-reduction $(E_P, \phi_P^L)$ such that 
$(E_M, \phi_M^L)$ has type $\chi_M$ in (\ref{center:Z}). 
Then the evaluation map at $1 \in \mathbb{A}^1/\mathbb{G}_m$ gives a 
morphism 
\begin{align}\label{map:SV}\sS_{\V}' \to \sS_{\V}.
\end{align}

The map (\ref{map:SV}) 
is an isomorphism since for any point 
$(E_G, \phi_G^{L}, \psi_G^{L})$
in $\sS_{\V}$, the $L$-twisted $G$-Higgs bundle $(E_G, \phi_G^{L})$ admits a unique $P$-reduction
$(E_P, \phi_P^L)$ 
satisfying (\ref{cond:theta})
because of the uniqueness of 
the Harder-Narasimhan filtration of $L$-twisted $G$-Higgs bundles and the definition of $\sS_{\V}$.
Therefore $\sS_{\V}$ is a $\Theta$-stratum of $(\V^L)^{\vee}$. 
\end{proof}

A $\Theta$-stratum of $\mathrm{Crit}(f)$ is given in the following proposition. 
\begin{prop}\label{lem:theta:crit2}
We have the identities 
\begin{align*}
    \sS_{\mathrm{crit}}:=\pi_{\mathrm{crit}}^{-1}(\sS)=
    \mathrm{Crit}(f)\cap (\pi^L)^{-1}(\sS^L)=\mathrm{Crit}(f) \cap \sS_{\V}. 
\end{align*}
Here $\pi_{\mathrm{crit}}$ is given in the diagram (\ref{dia:crit}). 
Moreover $\sS_{\mathrm{crit}}$ is a 
$\Theta$-stratum of $\mathrm{Crit}(f)$. 
\end{prop}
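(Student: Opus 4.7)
My plan is to prove the chain of equalities first and then verify the $\Theta$-stratum property.

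For the first equality $\pi_{\mathrm{crit}}^{-1}(\mathcal{S}) = \mathrm{Crit}(f) \cap (\pi^L)^{-1}(\mathcal{S}^L)$, I would decompose the critical condition $df = 0$ on $(\mathcal{V}^L)^\vee$ into its vertical and horizontal parts. The vertical part $\partial f/\partial v = s^L(x)$ forces $s^L(x) = 0$ on $\mathrm{Crit}(f)$, so by Lemma~\ref{lem:dstack} the projection $\pi^L \colon \mathrm{Crit}(f) \to \mathcal{U}^L$ factors through the closed immersion $\mathcal{U} \hookrightarrow \mathcal{U}^L$. Combined with the Cartesian identification $\mathcal{S} = \mathcal{S}^L \times_{\mathcal{U}^L} \mathcal{U}$ coming from the left square of diagram (\ref{dia:strata}), the first equality follows immediately.

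For the second equality $\mathrm{Crit}(f) \cap (\pi^L)^{-1}(\mathcal{S}^L) = \mathrm{Crit}(f) \cap \mathcal{S}_{\mathcal{V}}$, the inclusion $\supset$ is immediate from the definition of $\mathcal{S}_{\mathcal{V}}$. For the reverse, fix a point $(E_G, \phi_G^L, \psi_G^L) \in \mathrm{Crit}(f)$ lying over $\mathcal{S}^L$, so that $(E_G, \phi_G^L)$ admits a (unique) $P$-reduction $(E_P, \phi_P^L)$ and $\phi_G^L|_D = 0$. I need to show $\psi_G^L \in \Gamma(\mathrm{Ad}_{\mathfrak{p}}(E_P)|_D)$. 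The horizontal part of $df = 0$ says precisely that $\psi_G^L$, viewed in $(\mathcal{V}^L|_x)^\vee = \Gamma(\mathrm{Ad}_{\mathfrak{g}}(E_G)|_D)$ via the Serre-duality identification $L|_D \cong \omega_D$, annihilates the image of
\[
ds^L_{(E_G,\phi_G^L)} \colon \mathbb{T}^0_{\mathcal{U}^L}|_{(E_G,\phi_G^L)} \to \mathcal{V}^L|_{(E_G,\phi_G^L)} = \Gamma(\mathrm{Ad}_{\mathfrak{g}}(E_G)|_D \otimes L).
\]
Since $\phi_G^L|_D = 0$, this derivative reduces to the restriction $\delta\phi_G^L \mapsto \delta\phi_G^L|_D$. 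Using the HN cocharacter associated to $P$ to decompose $\mathrm{Ad}_{\mathfrak{g}}(E_G)$ into its weight pieces and exploiting that $\mathrm{Ad}_{\mathfrak{p}}^{\perp}$ equals the nilradical under the Killing form, a moment-map computation will show that annihilation of the full image of $ds^L$ is equivalent to $\psi_G^L$ lying in $\Gamma(\mathrm{Ad}_{\mathfrak{p}}(E_P)|_D)$. The key input here is the surjectivity of $H^0(\mathrm{Ad}_{\mathfrak{g}/\mathfrak{p}}(E_P) \otimes L) \twoheadrightarrow \Gamma(D, \mathrm{Ad}_{\mathfrak{g}/\mathfrak{p}}(E_P)|_D \otimes L)$, which holds for $\deg L \gg 0$ as in Lemma~\ref{lem:smooth}, so the negative-weight directions of $ds^L$ exhaust the Serre-dual to $\mathrm{Ad}_{\mathfrak{n}_P}$.

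For the $\Theta$-stratum property, Lemma~\ref{lem:theta:crit} already gives that $\mathcal{S}_{\mathcal{V}}$ is a $\Theta$-stratum of $(\mathcal{V}^L)^\vee$. Because $f$ is homogeneous of weight two under the canonical fiberwise $\mathbb{G}_m$-action on $(\mathcal{V}^L)^\vee \to \mathcal{U}^L$, the substack $\mathrm{Crit}(f) \hookrightarrow (\mathcal{V}^L)^\vee$ is $\mathbb{G}_m$-equivariant and closed, hence there is a natural identification
\[
\mathrm{Map}(\Theta, \mathrm{Crit}(f)) \cong \mathrm{Map}(\Theta, (\mathcal{V}^L)^\vee) \times_{(\mathcal{V}^L)^\vee} \mathrm{Crit}(f).
\]
Under this identification, $\mathcal{S}_{\mathrm{crit}} = \mathcal{S}_{\mathcal{V}} \times_{(\mathcal{V}^L)^\vee} \mathrm{Crit}(f)$ is an open-and-closed substack of $\mathrm{Map}(\Theta, \mathrm{Crit}(f))$ whose evaluation at $1$ restricts from the closed immersion $\mathcal{S}_{\mathcal{V}} \hookrightarrow (\mathcal{V}^L)^\vee$ and is therefore itself a closed immersion into $\mathrm{Crit}(f)$. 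This is exactly the $\Theta$-stratum condition.

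The main obstacle is the moment-map identification in the middle step: one must precisely match the annihilator of $\mathrm{im}(ds^L)$ inside $\Gamma(\mathrm{Ad}_{\mathfrak{g}}(E_G)|_D)$ with $\Gamma(\mathrm{Ad}_{\mathfrak{p}}(E_P)|_D)$. This requires combining the parabolic filtration of $\mathrm{Ad}_{\mathfrak{g}}(E_G)$ at an HN-stratum point, the $H^1$-vanishing for $\deg L \gg 0$, and the non-degeneracy of the Killing pairing; the remaining steps are essentially formal consequences of the setup.
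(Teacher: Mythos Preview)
Your first equality and the final $\Theta$-stratum argument are fine and match the paper's approach. The gap is in the second equality.

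The claimed surjectivity of $H^0(\mathrm{Ad}_{\mathfrak{g}/\mathfrak{p}}(E_P)\otimes L)\to \Gamma(D,\mathrm{Ad}_{\mathfrak{g}/\mathfrak{p}}(E_P)|_D\otimes L)$ does \emph{not} follow from $\deg L\gg 0$. Since $L(-D)=\Omega_C$, the obstruction is $H^1(\mathrm{Ad}_{\mathfrak{g}/\mathfrak{p}}(E_P)\otimes\Omega_C)\cong H^0(\mathrm{Ad}_{\mathfrak{n}_P}(E_P))^{\vee}$, which is independent of $L$ and typically nonzero on an HN stratum (already for $G=\mathrm{GL}_2$, $E=\mathcal{O}(a)\oplus\mathcal{O}(b)$ with $a>b$, one gets $H^0(\mathcal{O}(a-b))\neq 0$). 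Moreover, even granting the surjectivity, the Killing form pairs $\mathfrak{g}^{\lambda<0}$ with $\mathfrak{g}^{\lambda>0}$, so the annihilator of the $\mathfrak{g}/\mathfrak{p}=\mathfrak{g}^{\lambda<0}$ part is $\mathfrak{g}^{\lambda\leq 0}=\mathfrak{p}^-$, the \emph{opposite} parabolic; your argument would place $\psi_G^L$ in $\Gamma(\mathrm{Ad}_{\mathfrak{p}^-}(E_P)|_D)$, not $\Gamma(\mathrm{Ad}_{\mathfrak{p}}(E_P)|_D)$. Finally, the phrase ``equivalent to'' is too strong in any case: the annihilator of $\mathrm{im}(ds^L)$ is exactly $(\mathbb{T}^1_{(E_G,\phi_G)})^{\vee}=\mathbb{T}^{-1}_{(E_G,\phi_G)}$, which is much smaller than $\Gamma(\mathrm{Ad}_{\mathfrak{p}}(E_P)|_D)$.

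The paper bypasses all of this with a one-line observation. The fiber $\pi_{\mathrm{crit}}^{-1}(p)$ at $p=(E_G,\phi_G)\in\mathcal{S}$ is $\mathbb{T}^{-1}_{(E_G,\phi_G)}$, embedded in $\Gamma(\mathrm{Ad}_{\mathfrak{g}}(E_G)|_D)$ via restriction from $H^0(\mathrm{Ad}_{\mathfrak{g}}(E_G))$. Because $\mathcal{S}\subset\mathcal{U}$ is a $\Theta$-stratum (equivalently, the HN $P$-reduction is unique), every infinitesimal automorphism of $(E_G,\phi_G)$ preserves the $P$-structure, i.e.\ $\mathbb{T}^{-1}_{(E_P,\phi_P)}\stackrel{\cong}{\to}\mathbb{T}^{-1}_{(E_G,\phi_G)}$. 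Since $\mathbb{T}^{-1}_{(E_P,\phi_P)}\subset H^0(\mathrm{Ad}_{\mathfrak{p}}(E_P))$, restriction to $D$ lands in $\Gamma(\mathrm{Ad}_{\mathfrak{p}}(E_P)|_D)$, giving the inclusion $\pi_{\mathrm{crit}}^{-1}(\mathcal{S})\subset\mathcal{S}_{\mathcal{V}}$ directly. No Killing-form computation or $H^1$-vanishing is needed.
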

\begin{proof}
The first identity follows from the diagrams (\ref{dia:strata}), (\ref{dia:crit}). 
Below we prove the last identity. It is enough to show that 
\begin{align}\label{incl:crit}
\pi_{\mathrm{crit}}^{-1}(\sS) \subset \mathrm{Crit}(f) \cap \sS_{\V}. 
\end{align}

    For a point $p \in \sS \subset \U$ corresponding 
    to a $G$-Higgs bundle $(E_G, \phi_G)$, the fiber of 
    $\pi_{\mathrm{crit}} \colon \mathrm{Crit}(f) \to \U$ at $p$ is 
    \begin{align*}
(\pi_{\mathrm{crit}})^{-1}(p)=(\mathbb{T}_{(E_G, \phi_G)}^1)^{\vee}=\mathbb{T}_{(E_G, \phi_G)}^{-1}
    \end{align*}
    by (\ref{M:shift}) and (\ref{MXcrit}). 
Note that we have 
\begin{align*}
    \mathbb{T}_{(E_G, \phi_G)}^{-1}=\Ker(H^0(\Ad_{\fg}(E_G)) \stackrel{[\phi_G, -]}{\to}
    H^0(\Ad_{\fg}(E_G)\otimes \Omega_C)).
\end{align*}

The fiber of $\pi^L \colon (\V^L)^{\vee} \to \U^L$ 
at $p \in \U \subset \U^L$ is 
\begin{align*}
(\pi^L)^{-1}(p)=\Gamma(\Ad_{\fg}(E_G)|_{D}\otimes L)^{\vee}=
\Gamma(\Ad_{\fg}(E_G)|_{D}).
\end{align*}
   The embedding $\mathrm{Crit}(f) \subset (\V^L)^{\vee}$ 
   at the fibers over $p$ is given by 
   the canonical map 
   \begin{align}\label{compose:Ad}
       \mathbb{T}_{(E_G, \phi_G)}^{-1} \hookrightarrow H^0(\Ad_{\fg}(E_G))
       \to \Gamma(\Ad_{\fg}(E_G)|_{D}).
   \end{align}
   
   As $p\in \sS$, the $G$-Higgs bundle $(E_G, \phi_G)$ admits a 
   HN filtration $(E_P, \phi_P)$. 
   Since $\mathcal{S}$ is a $\Theta$-stratum of $\mathcal{U}$, 
   we have 
   \begin{align*}
         \mathbb{T}_{(E_P, \phi_P)}^{-1} \stackrel{\cong}{\to} 
         \mathbb{T}_{(E_G, \phi_G)}^{-1}.
   \end{align*}
   Since the left-hand side is a subspace of $H^0(\Ad_{\fp}(E_P))$, 
   the above isomorphism implies that the image of the composition (\ref{compose:Ad})
   factors through $\Gamma(\Ad_{\fp}(E_P)|_{D})\subset \Gamma(\Ad_{\fg}(E_G)|_{D})$.
   Therefore from the definition of $\mathcal{S}_{\mathcal{V}}$, the inclusion (\ref{incl:crit}) holds.  
   \end{proof}

We denote by 
\begin{align*}
    \mathcal{Z}_{\mathcal{V}}\subset \mathrm{Map}(\bgm, (\mathcal{V}^L)^{\vee})
\end{align*}
the center of $\mathcal{S}_{\mathcal{V}}$. 
From the definition of $\sS_{\V}$, the center $\mathcal{Z}_{\mathcal{V}}$ is described 
as follows: let $\V^L_M$ be the vector bundle 
\begin{align}\label{vect:i}
	\V^L_M \to \Hig_M^L(\chi_M)^{\mathrm{ss}}
	\end{align} 
whose fiber over $(E_M, \phi_M^L)$ is $\Gamma(\Ad_{\fm}(E_M)|_{D}\otimes L)$,
where $\fm$ is the Lie algebra of $M$. 
We thus have  
\begin{align}\notag
	\zZ_{\V}=(\V_M^L)^{\vee} \to \zZ^L=\Hig_M^L(\chi_M)^{\mathrm{ss}}.
	\end{align}
As in (\ref{section:sL}) and (\ref{funct:f}), there is a section $s_M^L$ 
of (\ref{vect:i}) such that 
\begin{align}\label{Z:zero}
	\zZ =\Hig_M(\chi_M)^{\mathrm{ss}}
    =(s_M^L)^{-1}(0)
\subset \Hig_M^{L}(\chi_M)^{\mathrm{ss}}.
	\end{align}
    
    Furthermore, we have the function 
    \begin{align*}
        f_{\mathcal{Z}} \colon \mathcal{Z}_{\mathcal{V}} \to \mathbb{A}^1, \ 
        (z, v)\mapsto \langle s_M^L(z), v\rangle
    \end{align*}
where $z\in \mathcal{Z}^L$ and 
$v\in (\mathcal{V}^L_M)^{\vee}|_{z}$. 
As a summary, we have the following commutative diagram 
\begin{align}\label{dia:SV}
	\xymatrix{\mathcal{S}_{\V}\ar[d]_-{q} \inclusion^-{p} & (\mathcal{V}^L)^{\vee} \ar[d]^-{f} & (\V^L)_{\circ}^{\vee}:=(\V^L)^{\vee} \setminus \sS_{\V} \ar@<0.3ex>@{_{(}->}[l] \\
	\mathcal{Z}_{\V}\ar[ur] \ar@/^15pt/[u] \ar[r]^-{f_{\mathcal{Z}}} &	\mathbb{A}^1. &
}
	\end{align}

  \subsection{Window subcategory for the smooth ambient space} 
In this subsection, we investigate a window subcategory of 
 the smooth stack $(\V^L)^{\vee}$. 
We first note the following: 
\begin{lemma}\label{lem:sym:V}
The stack $(\V^L)^{\vee}$ is symmetric in the sense of Definition~\ref{def:sstack}.     
\end{lemma}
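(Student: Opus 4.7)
The plan is to show directly that $[\mathbb{L}_{(\V^L)^{\vee}}]=[\mathbb{T}_{(\V^L)^{\vee}}]$ in $K((\V^L)^{\vee})$ by computing both sides.

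Since $\pi^L \colon (\V^L)^{\vee} \to \U^L$ is a vector bundle, there are short exact sequences of cotangent/tangent complexes giving
\begin{align*}
[\mathbb{L}_{(\V^L)^{\vee}}] &= [\pi^{L\ast}\mathbb{L}_{\U^L}]+[\pi^{L\ast}\V^L], \\
[\mathbb{T}_{(\V^L)^{\vee}}] &= [\pi^{L\ast}\mathbb{T}_{\U^L}]+[\pi^{L\ast}(\V^L)^{\vee}].
\end{align*}
So the desired symmetry reduces to the identity in $K(\U^L)$:
\begin{align}\label{reduce:sym}
[\mathbb{T}_{\U^L}]-[\mathbb{L}_{\U^L}]=[\V^L]-[(\V^L)^{\vee}].
\end{align}

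To compute the left side, I would use Lemma~\ref{lem:cotangent} applied universally over $\U^L$, giving
\begin{align*}
[\mathbb{T}_{\U^L}]=[Rp_{\mathrm{H}\ast}(\Ad_{\fg}(\mathscr{F}_G^L)\otimes L)]-[Rp_{\mathrm{H}\ast}\Ad_{\fg}(\mathscr{F}_G^L)],
\end{align*}
where $p_{\mathrm{H}} \colon C \times \U^L \to \U^L$ is the projection. Then $[\mathbb{L}_{\U^L}]=[\mathbb{T}_{\U^L}]^{\vee}$ is computed by Serre duality along $p_{\mathrm{H}}$, using $\Ad_{\fg}\cong \Ad_{\fg}^{\vee}$ (the adjoint representation of a reductive group is self-dual via a $G$-invariant non-degenerate bilinear form, e.g.\ the trace form plus the canonical pairing on the center) and $L^{-1}\otimes\Omega_C=\mathcal{O}_C(-D)$. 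This yields
\begin{align*}
[\mathbb{L}_{\U^L}]=-[Rp_{\mathrm{H}\ast}(\Ad_{\fg}(\mathscr{F}_G^L)(-D))]+[Rp_{\mathrm{H}\ast}(\Ad_{\fg}(\mathscr{F}_G^L)\otimes\Omega_C)].
\end{align*}
Twisting the exact sequence $0\to \mathcal{O}_C(-D)\to\mathcal{O}_C\to\mathcal{O}_D\to 0$ by $\Ad_{\fg}(\mathscr{F}_G^L)$ (resp.\ by $\Ad_{\fg}(\mathscr{F}_G^L)\otimes L$, yielding $0\to \Ad\otimes\Omega_C \to \Ad\otimes L \to \Ad|_D \otimes L\to 0$) and simplifying, one obtains
\begin{align*}
[\mathbb{T}_{\U^L}]-[\mathbb{L}_{\U^L}]=[Rp_{\mathrm{H}\ast}(\Ad_{\fg}(\mathscr{F}_G^L)|_D\otimes L)]-[Rp_{\mathrm{H}\ast}(\Ad_{\fg}(\mathscr{F}_G^L)|_D)].
\end{align*}
The first term equals $[\V^L]$ by definition~(\ref{def:V}).

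For the right side of~(\ref{reduce:sym}), I would apply relative Serre duality for the finite flat morphism $D\times \U^L \to \U^L$ (whose relative dualizing sheaf is the pull-back of $\omega_D=L|_D$, by the adjunction formula), which gives
\begin{align*}
[(\V^L)^{\vee}]=[Rp_{\mathrm{H}\ast}(\Ad_{\fg}(\mathscr{F}_G^L)|_D^{\vee}\otimes L|_D^{\vee}\otimes L|_D)]=[Rp_{\mathrm{H}\ast}(\Ad_{\fg}(\mathscr{F}_G^L)|_D)],
\end{align*}
again using self-duality of $\Ad_{\fg}$. Therefore $[\V^L]-[(\V^L)^{\vee}]$ equals $[\mathbb{T}_{\U^L}]-[\mathbb{L}_{\U^L}]$, which establishes~(\ref{reduce:sym}) and completes the proof.

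The step I expect to require the most care is the bookkeeping of Serre duality and the self-duality of $\Ad_{\fg}$, making sure the $G$-invariant pairing on $\fg$ exists and is compatible with the family structure; the rest is a routine K-theoretic calculation.
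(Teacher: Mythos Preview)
Your proof is correct and uses essentially the same ingredients as the paper: the vector-bundle reduction to $\U^L$, the tangent description from Lemma~\ref{lem:cotangent}, Serre duality on $C$, the self-duality of $\Ad_{\fg}$, and the exact sequence coming from $\mathcal{O}_C(-D)\hookrightarrow\mathcal{O}_C$ (equivalently $\Omega_C\hookrightarrow L$). The only organizational difference is that the paper computes $[\mathbb{T}_{(\V^L)^{\vee}}|_{\U^L}]$ pointwise and rewrites it as the manifestly self-dual class
\[
\Gamma(\Ad_{\fg}(E_G)\otimes\Omega_C)^{\vee}+\Gamma(\Ad_{\fg}(E_G)\otimes\Omega_C)+\Gamma(\Ad_{\fg}(E_G)|_D\otimes L)+\Gamma(\Ad_{\fg}(E_G)|_D\otimes L)^{\vee},
\]
whereas you phrase the same identity as $[\mathbb{T}_{\U^L}]-[\mathbb{L}_{\U^L}]=[\V^L]-[(\V^L)^{\vee}]$ and work globally from the start; your additional use of Serre duality on $D$ to identify $[(\V^L)^{\vee}]$ with $[p_{\mathrm{H}\ast}(\Ad_{\fg}(\mathscr{F}_G^L)|_D)]$ is a nice touch but not strictly necessary.
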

\begin{proof}
Since $(\V^L)^{\vee} \to \U^L$ is a vector bundle, 
it is enough to show that
\begin{align}\label{id:dual}
    [\mathbb{T}_{(\V^L)^{\vee}}|_{\U^L}]= [\mathbb{L}_{(\V^L)^{\vee}}|_{\U^L}]
\end{align}
in $K(\U^L)$. Here $\U^L \hookrightarrow (\V^L)^{\vee}$ is the zero section. 

For an $L$-twisted $G$-Higgs bundle $(E_G, \phi_G^{L})$ in $\U^L$, 
we have the following identity in $K(B\mathrm{Aut}(E_G, \phi_G^{L}))$
\begin{align*}
\mathbb{T}_{(\V^L)^{\vee}}|_{(E_G, \phi_G^{L})} 
= \mathbb{T}_{\Hig_G^{L}}|_{(E_G, \phi_G^{L})}+\Gamma(\Ad_{\fg}(E_G)|_{D}\otimes L)^{\vee}.
\end{align*}
The right hand-side is equal to 
\begin{align*}
&\Gamma(\Ad_{\fg}(E_G))[1]+\Gamma(\Ad_{\fg}(E_G)\otimes L)+\Gamma(\Ad_{\fg}(E_G)|_{D}\otimes L)^{\vee} \\
&=\Gamma(\Ad_{\fg}(E_G)\otimes \Omega_C)^{\vee}+\Gamma(\Ad_{\fg}(E_G)\otimes \Omega_C) \\
&\qquad \qquad \qquad \qquad  +\Gamma(\Ad_{\fg}(E_G)|_{D}\otimes L)+
\Gamma(\Ad_{\fg}(E_G)|_{D}\otimes L)^{\vee}.
\end{align*}

The last formula is self-dual. 
By the above equality, the identity (\ref{id:dual}) holds point-wise on $\mathcal{U}^L$. 
It is straightforward 
to extend the above computation for families of sheaves over $\U^L$, which 
shows the identity (\ref{id:dual}). 
\end{proof}
Let $\mathbb{G}_m$ act on the fibers of 
$(\V^L)^{\vee} \to \U^L$ by weight two. We define the following window 
subcategory: 
\begin{defn}\label{def:window:V}
We define 
\begin{align*}
    \mathcal{W}_{\V, \delta} \subset \Coh_{\mathbb{G}_m}((\V^L)^{\vee})
\end{align*}
as the full subcategory consisting of objects $\mathcal{E}$ such that $\mathcal{E}|_{\zZ_{\V}}$ satisfying 
\begin{align*}
    \wt(\E|_{\zZ_{\V}}) \subset 
    \left(-\frac{1}{2}c_1 (N_{\sS_{\V}/(\V^L)^{\vee}}^{\vee}|_{\zZ_{\V}}), 
    \frac{1}{2}c_1 (N_{\sS_{\V}/(\V^L)^{\vee}}^{\vee}|_{\zZ_{\V}})
    \right]+c_1(\delta|_{\zZ_{\V}}). 
\end{align*}
Here, $\wt(-)$ is the set of weights with respect to the decomposition (\ref{decom:Z}). 
\end{defn}
\begin{remark}\label{rmk:nsv}
At each point in $z\in \zZ_{\V}$, there is a canonical map $\nu \colon \bgm \to \zZ_{\V}$
with $\nu(0)=z$ 
by the definition of the center of a $\Theta$-stratum. 
Then we have 
\begin{align*}
    c_1 (N_{\sS_{\V}/(\V^L)^{\vee}}^{\vee}|_{\zZ_{\V}})=n_{(\V^L)^{\vee}, \nu} 
\end{align*}
where $n_{(\V^L)^{\vee}, \nu}$ is given by (\ref{def:nV}). 
    \end{remark}
Let $(\V^L)_{\circ}^{\vee}$ be the complement of the $\Theta$-stratum 
$\mathcal{S}_{\mathcal{V}}$
as in the diagram (\ref{dia:SV}). 
By applying the window theorem (Theorem~\ref{thm:window}) to the $\Theta$-stratum $\mathcal{S}_{\mathcal{V}}\subset (\mathcal{V}^L)^{\vee}$, 
the composition 
\begin{align}\label{comp:func}
\Phi \colon 
    \mathcal{W}_{\V, \delta} \subset \Coh_{\mathbb{G}_m}((\V^L)^{\vee}) \twoheadrightarrow 
    \Coh_{\mathbb{G}_m}((\V^L)^{\vee}_{\circ})
\end{align}
is an equivalence.

Recall from Definition~\ref{defn:QBY} the ($\mathbb{G}_m$-equivariant version of) the magic category 
\begin{align*}
    \MM_{\mathbb{G}_m}((\V^L)^{\vee})_{\delta} \subset \Coh_{\mathbb{G}_m}((\V^L)^{\vee})
\end{align*}
see Subsection~\ref{subsec:Koszuleq} for the notation. 
As in Proposition~\ref{prop:functUp}, we 
also have the functor from the diagram (\ref{dia:SV})
\begin{align}\label{funct:UpM}
    \Upsilon_{\V}=p_{\ast}q^{\ast} \colon \MM_{\mathbb{G}_m}(\zZ_{\V})_{\delta'} \to \MM_{\mathbb{G}_m}((\V^L)^{\vee})_{\delta}
\end{align}
where $\delta'$ is 
\begin{align*}
    \delta'=\left(\delta\otimes (\det N_{\sS_{\V}/(\V^L)^{\vee}})^{1/2})\right)|_{\zZ_{\mathcal{V}}} \in \mathrm{Pic}(\zZ_{\V})_{\mathbb{R}}. 
\end{align*}

The following proposition, which proves Conjecture~\ref{conj:adjoint} in this case, is 
the key result discussed in this section: 

\begin{prop}\label{prop:sod:V}
By replacing $\U^L$ with an open neighborhood of $\U \subset \U^L$ as in Remark~\ref{rmk:shrink} if necessary, 
there is a semiorthogonal decomposition 
\begin{align}\label{sod:V}
\MM_{\mathbb{G}_m}((\V^L)^{\vee})_{\delta}=\langle \Upsilon_{\V} (\MM_{\mathbb{G}_m}(\zZ_{\V})_{\delta'}), \MM_{\mathbb{G}_m}((\V^L)^{\vee})_{\delta} \cap 
\mathcal{W}_{\V, \delta}\rangle. 
\end{align}
Moreover the functor (\ref{funct:UpM}) is fully-faithful and 
(\ref{comp:func}) restricts to an equivalence 
\begin{align}\label{eq:QBV}
\Phi \colon 
\MM_{\mathbb{G}_m}((\V^L)^{\vee})_{\delta} \cap \mathcal{W}_{\V, \delta} \stackrel{\sim}{\to}
\MM_{\mathbb{G}_m}((\V^L)^{\vee}_{\circ})_{\delta}.     
\end{align}
    \end{prop}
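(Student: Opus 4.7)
The plan is to reduce Proposition~\ref{prop:sod:V} to the semiorthogonal decomposition of magic categories for smooth quotient stacks established in Section~\ref{sec:magic} (Proposition~\ref{prop:sod:qb} and Theorem~\ref{thm:qstack}), applied in a single-stratum situation.

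First, I would use Lemma~\ref{lem:smooth2} to present $\mathcal{U}^L = U^L/G'$ with $U^L$ a smooth quasi-projective scheme and $G'$ a reductive group. Then $(\mathcal{V}^L)^{\vee}$ is the quotient $V^{\vee}/G'$ for the total space $V^{\vee}$ of the dual vector bundle on $U^L$, and by Lemma~\ref{lem:sym:V} this quotient stack is symmetric. The $\mathbb{G}_m$-action scaling fibers of $(\mathcal{V}^L)^{\vee} \to \mathcal{U}^L$ with weight two commutes with the $G'$-action, so we may work with the reductive group $\widetilde{G} = G' \times \mathbb{G}_m$ acting on $V^{\vee}$; the magic-category constructions from Section~\ref{sec:magic} go through verbatim for $\widetilde{G}$. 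The $\Theta$-stratum $\mathcal{S}_{\mathcal{V}}$ of Lemma~\ref{lem:theta:crit} is $\mathbb{G}_m$-invariant because its defining condition (a $P$-reduction of the Higgs structure together with the condition $\psi_G^L \in \Gamma(\mathrm{Ad}_{\mathfrak{p}}(E_P)|_D)$) is preserved under scaling of $\psi_G^L$. Hence $\mathcal{S}_{\mathcal{V}}$ corresponds, in the $\widetilde{G}$-equivariant picture on $V^{\vee}$, to a Kempf--Ness stratum with associated cocharacter $\lambda$ landing in $G'$ (with trivial $\mathbb{G}_m$-component).

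Next I would shrink $\mathcal{U}^L$ to an open neighborhood $\mathcal{U}^{L'} \supset \mathcal{U}$ as in Remark~\ref{rmk:shrink}, so that the full Harder--Narasimhan/Kempf--Ness stratification of $(\mathcal{V}^{L'})^{\vee}$ corresponding to the norm-minimizing instability for the line bundle used in Lemma~\ref{lem:smooth2} consists only of the closed stratum $\mathcal{S}_{\mathcal{V}}$ and its complement $(\mathcal{V}^{L'})_{\circ}^{\vee}$. This is possible because $(\V^{L'})^{\vee}_{\circ}$ itself can be exhibited as a semistable locus (the stratum is closed in $(\V^{L'})^{\vee}$, and all deeper strata can be discarded by shrinking the underlying $U^L$), so Assumption~\ref{assum:Z} for the center $\mathcal{Z}_{\mathcal{V}}$ inside its Levi-fixed ambient $\overline{\mathcal{Z}}_{\mathcal{V}}$ is satisfied: $\mathcal{Z}_{\mathcal{V}} = \overline{\mathcal{Z}}_{\mathcal{V}}^{\ell_{\mathcal{Z}}\text{-ss}}$ where $\ell_{\mathcal{Z}}$ is the restriction of the HN-stability line bundle to the $\lambda$-fixed locus. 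Here $\overline{\mathcal{Z}}_{\mathcal{V}}$ is identified with the dual of the corresponding Levi-twisted bundle over all $L$-twisted $M$-Higgs bundles of type $\chi_M$ (without semistability), while $\mathcal{Z}_{\mathcal{V}}$ imposes $M$-semistability on the underlying Higgs bundle.

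With these reductions in place, I would apply Proposition~\ref{prop:sod:qb} for the index $i$ corresponding to our single $\Theta$-stratum $\mathcal{S}_{\mathcal{V}}$: since there is only one nontrivial stratum, we obtain directly the semiorthogonal decomposition~\eqref{sod:V}, with $\Upsilon_{\mathcal{V}}^{\dag} = \Phi^{-1} \circ \Upsilon_{\mathcal{V}}$ and the window subcategory $\mathcal{W}_{\mathcal{V},\delta}$ corresponding to the subcategory $\mathcal{W}_{i+1}$ in loc.\ cit. The fully-faithfulness of $\Upsilon_{\mathcal{V}}$ on $\MM_{\mathbb{G}_m}(\mathcal{Z}_{\mathcal{V}})_{\delta'}$ is the content of Proposition~\ref{prop:functUp} (the restriction of the window-theorem fully-faithfulness), noting that $\MM_{\mathbb{G}_m}(\mathcal{Z}_{\mathcal{V}})_{\delta'}$ sits inside a single weight component $\Coh(\mathcal{Z}_{\mathcal{V}})_{m}$ where $\Upsilon$ is fully-faithful by Theorem~\ref{thm:window}. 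Finally, the equivalence~\eqref{eq:QBV} follows by combining \eqref{sod:V} with the window equivalence $\Phi$: since $\Phi$ kills $\Upsilon_{\mathcal{V}}(\MM_{\mathbb{G}_m}(\mathcal{Z}_{\mathcal{V}})_{\delta'})$ and is an equivalence on the window subcategory, it restricts to an equivalence on the complementary summand, and the image is identified with $\MM_{\mathbb{G}_m}((\V^L)_{\circ}^{\vee})_{\delta}$ via the analogue of Proposition~\ref{prop:QBW} for the open substack.

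The main obstacle will be the verification that, after shrinking $\mathcal{U}^L$ appropriately, the Kempf--Ness stratification of $(\V^L)^{\vee}$ really does reduce to the two-stratum picture with $\mathcal{S}_{\mathcal{V}}$ as the unique closed unstable stratum and with Assumption~\ref{assum:Z} holding for $\mathcal{Z}_{\mathcal{V}}$. This requires matching the intrinsic HN stratification for $L$-twisted Higgs bundles (with the Levi-semistability condition on the center built in by definition) with the GIT-theoretic stratification arising from the presentation $\mathcal{U}^L = U^L/G'$; the non-canonical choice of polarization used to obtain the presentation in Lemma~\ref{lem:smooth2} means that, on the nose, these two stratifications might differ, and one needs to use the freedom of shrinking $\mathcal{U}^L$ (together with Remark~\ref{rmk:HN}) to make them compatible in a neighborhood of $\mathcal{U}$.
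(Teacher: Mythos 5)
Your reduction does not go through as stated, because the results of Section~\ref{sec:magic} that you invoke globally are only proved for quotient stacks $Y/G$ with $Y$ a \emph{self-dual linear representation} of a reductive group $G$ and $\delta \in \mathrm{Pic}(BG)_{\mathbb{R}}$: Proposition~\ref{prop:sod:qb}, Theorem~\ref{thm:qstack}, and their ingredients (the highest-weight description of the magic category in Proposition~\ref{prop:QB:weight}, the Borel--Weil--Bott argument of Lemma~\ref{lem:QB:rest}, the weight-truncation functor $\Delta_i$ of Lemma~\ref{lem:delta}, and the essential surjectivity of Proposition~\ref{prop:ess} under Assumption~\ref{assum:Z}) all use the linear structure of $Y$ in an essential way. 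The presentation $(\V^L)^{\vee}=V^{\vee}/G'$ coming from Lemma~\ref{lem:smooth2} is the total space of a vector bundle over a quasi-projective scheme, not a representation, and $\delta=\delta_w$ is not pulled back from $BG'$, so ``the magic-category constructions go through verbatim for $\widetilde{G}=G'\times\mathbb{G}_m$'' is exactly the point that needs proof and is not available. Your own flagged ``main obstacle'' (matching the intrinsic $\Theta$-stratum with a Kempf--Ness stratum for the auxiliary polarization, arranging a two-stratum picture, and verifying Assumption~\ref{assum:Z} for $\mathcal{Z}_{\mathcal{V}}$ after shrinking) is also left unresolved, and the paper never establishes such a global GIT matching.

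The paper's proof instead reduces to formal local models: every point of $\mathcal{S}$ is a small deformation of a point coming from a polystable $M$-Higgs bundle, and by the \'etale slice theorem one gets a smooth map $\widehat{\mathbf{E}}/G_E\to\mathcal{U}^L$ with $G_E=\mathrm{Aut}(E_M,\phi_M)$ reductive, over which $(\V^L)^{\vee}$ pulls back to $\widehat{\mathbf{V}}/G_E$ for a genuine $G_E$-representation $\mathbf{V}$. Crucially, $\mathbf{V}$ is \emph{not} self-dual, so the paper constructs (via a splitting $\eta$ of the map to $\mathbb{T}^{-1}_{(E_G,\phi_G)}/\mathbb{T}^{-1}_{(E_M,\phi_M)}$) a self-dual subrepresentation $\mathbf{V}_0\subset\mathbf{V}$ (Lemma~\ref{lem:symW}), checks that the relevant attracting-locus squares are Cartesian (Lemma~\ref{lem:cart0}) and that the weight quantities agree, $n_{\widehat{\mathbf{V}}_0/G_E,\nu}=n_{(\V^L)^{\vee},\nu}$ (Lemma~\ref{lem:neq}), applies Propositions~\ref{prop:sod:qb} and~\ref{prop:QBW} to $\widehat{\mathbf{V}}_0/G_E$, and then transfers the membership conditions back to $(\V^L)^{\vee}$ pointwise via maps from $\bgm$ (Lemma~\ref{lem:pullback}, which is where the shrinking of Remark~\ref{rmk:shrink} is actually used). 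Your proposal contains neither the local-model reduction nor any mechanism to handle the failure of self-duality of the local representation, which are the two genuinely hard ingredients of the paper's argument; without a substitute for them the appeal to Section~\ref{sec:magic} is not justified.
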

\begin{proof}
We prove the proposition by reducing the problem to
the local case, which is proved in Theorem~\ref{thm:qstack}. 
A subtlety of this reduction is that the local model is not necessarily  a 
quotient of symmetric representations of a reductive group. Instead we modify 
the local model to make it a quotient of a symmetric representation, and apply Theorem~\ref{thm:qstack}.
Since the proof is long, we divide into some steps, and postpone some technical lemmas in 
Subsection~\ref{subsec:postpone}.

\begin{step}
An alternative statement for the semiorthogonal decomposition (\ref{sod:V}).
\end{step}
We first give an alternative statement for the semiorthogonal decomposition (\ref{sod:V}). 
By Theorem~\ref{thm:window}, 
there is a semiorthogonal decomposition 
\begin{align}\label{sod:VL}
    \Coh_{\mathbb{G}_m}((\V^L)^{\vee})=\langle \Upsilon_{\V} \Coh_{\mathbb{G}_m}(\zZ_{\V})_{\leq m}, \mathcal{W}_{\V, \delta}, \Upsilon_{\V} \Coh_{\mathbb{G}_m}(\zZ_{\V})_{>m}\rangle
\end{align}
where $m$ is given by 
\begin{align*}
    m:= -\frac{1}{2} c_1 (N_{\sS_{\V}/(\V^L)^{\vee}}^{\vee}|_{\zZ_{\V}})+
    c_1(\delta|_{\zZ_{\V}}). 
\end{align*}
Moreover, the functor $\Upsilon_{\mathcal{V}}$ is fully-faithful on each $\Coh_{\mathbb{G}_m}(\mathcal{Z}_{\mathcal{V}})_i$.
For an object $\E \in \MM_{\mathbb{G}_m}((\V^L)^{\vee})_{\delta}$, 
the semiorthogonal decomposition (\ref{sod:VL}) yields 
a distinguished triangle 
\begin{align}\label{show:tri}
    \E'' \to \E \to \E', \ 
    \E' \in \Upsilon_{\V} \Coh_{\mathbb{G}_m}(\zZ_{\V})_{m}, \ \E'' \in \mathcal{W}_{\V, \delta}. 
\end{align}

Therefore, it is enough to show that 
\begin{align}\label{ets:E}
\E' \in \Upsilon_{\V} \MM_{\mathbb{G}_m}(\zZ_{\V})_{\delta'}, \ \E'' \in \mathcal{W}_{\V, \delta} \cap \MM_{\mathbb{G}_m}((\V^L)^{\vee})_{\delta}.     
\end{align}
It is also enough to show the second condition of (\ref{ets:E}), since then 
we have $\E' \in \MM_{\mathbb{G}_m}((\V^L)^{\vee})_{\delta}$, 
and the first condition follows from Proposition~\ref{prop:functUp}. 
\begin{step}
    Reduction of (\ref{ets:E}) to the local case. 
\end{step}
Note that each point of $\sS$ is obtained as a small deformation of a point $z \in \sS$, 
given by the image of a closed point $z'\in\mathcal{Z}$ under the natural map $\zZ \to \sS$
in the diagram (\ref{derived:theta}). 
It 
corresponds to a $G$-Higgs bundle of the form
\begin{align}\label{deg:E}
(E_G, \phi_G)=(E_M, \phi_M)\times^M G
\end{align}
where $(E_M, \phi_M)$ is a polystable $M$-Higgs bundle, i.e. a
closed point of the stack $\Hig_M(\chi_M)^{\mathrm{ss}}$ where $\chi_M\in \pi_1(M)$ 
is as in (\ref{center:Z}). The point $z$ corresponds to $(E_G, \phi_G)$ and $z'$ 
corresponds to $(E_M, \phi_M)$.
Let 
\begin{align*}
(E_G, \phi_G^{L})=\iota_D(E_G, \phi_G)
\end{align*}
be the corresponding $L$-twisted $G$-Higgs bundle by the map (\ref{iota:D}). 

We set 
    $\mathbf{E}:=\mathbb{T}^0_{(E_G, \phi_G^{L})}$ and 
\begin{align*}
    \mathbf{V}:= \mathbf{E}\oplus \Gamma(\Ad_{\fg}(E_G)|_{D}\otimes L)^{\vee}=
    \mathbf{E}\oplus \Gamma(\Ad_{\fg}(E_G)|_{D}).
\end{align*}
We regard $\mathbf{V}$ as a vector bundle over $\mathbf{E}$ by the natural 
projection. 

Let $G_E$ be the reductive subgroup of $\mathrm{Aut}(E_G, \phi_G)$ given by 
\begin{align*}
G_E :=\Aut(E_M, \phi_M) \subset \mathrm{Aut}(E_G, \phi_G)=\mathrm{Aut}(E_G, \phi_G^{L}).
\end{align*}
Let $G_E$ acts on $\mathbf{E}$ by conjugation. 
There is a smooth map (which follows from~\cite[Theorem~1.1]{AHRLuna})
\begin{align*}
    \widehat{\mathbf{E}}/G_E \to \mathcal{U}^L, \ 0 \mapsto \iota_D(z). 
\end{align*}
Here $\widehat{\mathbf{E}}$ is the pull-back of $\mathbf{E}$ under the 
formal completion 
\begin{align}\label{formal:comp}
    \Spec \widehat{\mathcal{O}}_{\mathbf{E}\ssslash G_E}
    \to  \Spec \mathcal{O}_{\mathbf{E}\ssslash G_E}.
\end{align}

Below we use the notation $\widehat{(-)}$ to denote the pull-back under 
the above morphism. 
Then we have 
\begin{align*}
    \widehat{\mathbf{V}}/G_E=(\V^L)^{\vee}\times_{\U^L}(\widehat{\mathbf{E}}/G_E). 
\end{align*}
\begin{step}
    Modification to the self-dual representation. 
\end{step}
The $G_E$-representation $\mathbf{V}$ is not necessarily self-dual, 
so we modify it to make it self-dual. 
By the distinguished triangle (\ref{dtr:E}), we have the 
morphism 
\begin{align*}
    \Gamma(\mathrm{Ad}_{\fg}(E_G)|_{D}\otimes L) \twoheadrightarrow \mathbb{T}^{1}_{(E_G, \phi_G)}
\end{align*}
which is surjective by $\mathbb{T}^1_{(E_G, \phi_G^{L})}=0$. 
By taking its dual, we obtain the following diagram of $G_E$-representations
\begin{align*}
    \xymatrix{
    \Gamma(\Ad_{\fg}(E_G)|_{D}) \ar@{.>}[rd]^-{\eta}& \\
    \mathbb{T}^{-1}_{(E_G, \phi_G)} \ar@{->>}[r]\ar@<-0.3ex>@{^{(}->}[u] & \mathbb{T}^{-1}_{(E_G, \phi_G)}/\mathbb{T}^{-1}_{(E_M, \phi_M)}.
    }
\end{align*}
Since $G_E$ is reductive, 
there is a 
dotted arrow $\eta$ which is a surjective morphism of $G_E$-representations. 
Let $\Gamma(\Ad_{\fg}(E_G)|_{D})_0$ be the kernel of $\eta$ 
\begin{align*}
\Gamma(\Ad_{\fg}(E_G)|_{D})_0 :=\Ker(\eta\colon  \Gamma(\Ad_{\fg}(E_G)|_{D}) \twoheadrightarrow
\mathbb{T}^{-1}_{(E_G, \phi_G)}/\mathbb{T}^{-1}_{(E_M, \phi_M)}
).
\end{align*}
We set 
\begin{align*}
\mathbf{V}_0 :=\mathbf{E} \oplus \Gamma(\Ad_{\fg}(E_G)|_{D})_0. 
\end{align*}
It is a sub $G_E$-representation of $\mathbf{V}$ and self-dual by Lemma~\ref{lem:symW} below.

Then we obtain the following commutative diagram 
\begin{align}\label{dia:VE}
\xymatrix{
\widehat{\mathbf{V}}_0/G_E \inclusion  \ar@/^15pt/[rr]^-{g}\ar[d] & \widehat{\mathbf{V}}/G_E \ar[d] \ar[r] & (\V^L)^{\vee} \ar[d] \\
\widehat{\mathbf{E}}/G_E \ar@{=}[r] & \widehat{\mathbf{E}}/G_E \ar[r] & \U^L. 
}
\end{align}
Recall that $z\in \mathcal{S}$ corresponds to $(E_G, \phi_G)$ and $z'\in \mathcal{Z}'$ 
corresponds to $(E_M, \phi_M)$ in (\ref{deg:E}). 
The canonical map $\nu \colon \mathbb{G}_m \to \U$ associated with $z'$ is given by 
$\nu(0)=z$ and 
a one-parameter subgroup 
\begin{align}\label{lambda:E}
\lambda \colon \mathbb{G}_m \to G_E \subset 
\mathrm{Aut}(E_G, \phi_G), \ G_E=\Aut(E_G, \phi_G)^{\lambda}.
\end{align}

Then we have the commutative diagram 
\begin{align}\label{dia:WG}
\xymatrix{
\widehat{\mathbf{V}}_0^{\lambda}/G_E \inclusion^-{\cong} & \widehat{\mathbf{V}}^{\lambda}/G_E 
\ar[r]^-{g^{\lambda}} & \zZ_{\V} \\
\widehat{\mathbf{V}}_0^{\lambda \geq 0}/G_E \ar[u]^-{q_0}  \ar@<-0.3ex>@{^{(}->}[d]_-{p_0}  \inclusion  \diasquare & \widehat{\mathbf{V}}^{\lambda \geq 0}/G_E \ar[r] \ar[u]\ar@<-0.3ex>@{^{(}->}[d]  \diasquare& \sS_{\V} \ar@<-0.3ex>@{^{(}->}[d] \ar[u] \\
\widehat{\mathbf{V}}_0/G_E \inclusion \ar@/_15pt/[rr]_-{g}& \widehat{\mathbf{V}}/G_E \ar[r] & (\V^L)^{\vee}. }    
\end{align}
Here we note that $G_E=G_E^{\lambda \geq 0}=G_E^{\lambda}$. As we will prove in Lemma~\ref{lem:cart0}, the bottom 
squares are derived Cartesians and the top left arrow is an isomorphism. 
\begin{step}
Proof of (\ref{sod:V}) via application of the results in Section~\ref{sec:magic}.
\end{step}
By taking the pull-back of the triangle (\ref{show:tri}) by the map $g$ in (\ref{dia:VE}), 
we have the distinguished triangle in $\Coh(\widehat{\mathbf{V}}_0/G_E)$
\begin{align}\label{tr:gE}
    g^{\ast}\mathcal{E}'' \to g^{\ast}\mathcal{E} \to g^{\ast}\mathcal{E}'. 
\end{align}
By Lemma~\ref{lem:neq} and the assumption $\E \in \MM_{\mathbb{G}_m}((\V^L)^{\vee})_{\delta}$, 
we conclude
\begin{align*}
    g^{\ast}\E \in \MM(\widehat{\mathbf{V}}_0/G_E)_{g^{\ast}\delta}.
\end{align*}
The distinguished triangle (\ref{tr:gE}) is the one 
induced by a $\Theta$-stratum
\[\widehat{\mathbf{V}}_0^{\lambda \geq 0}/G_E \hookrightarrow \widehat{\mathbf{V}}_0/G_E\]
as in (\ref{show:tri}). 
Namely, let  
\begin{align*}
    \mathcal{W}_{\widehat{\mathbf{V}}_0, g^{\ast}\delta} \subset \Coh(\widehat{\mathbf{V}}_0/G_E)
\end{align*}
be the window subcategory for the $\Theta$-stratum 
$\widehat{\mathbf{V}}_0^{\lambda\geq 0}/G_E \hookrightarrow \widehat{\mathbf{V}}_0/G_E$, 
consisting of objects whose $\lambda$-weights on 
$\widehat{\mathbf{V}}_0^{\lambda}/G_E$ are contained in 
\begin{align*}
\left(-\frac{1}{2}n_{\mathbf{V}_0/G, \lambda}, 
\frac{1}{2}n_{\mathbf{V}_0/G, \lambda}\right]+\langle \delta, \lambda\rangle. 
\end{align*}
Here we have identified $\lambda$ with a map 
$\bgm \to \mathbf{V}_0/G_E$ 
to the origin given by 
$\lambda \colon \mathbb{G}_m\to G_E=\mathrm{Aut}(0)$. 
Then we have 
\begin{align}\label{cond:g*E}
    g^{\ast}\E' \in \Upsilon_0 \Coh(\widehat{\mathbf{V}}_0^{\lambda}/G_E)_m, \ 
    g^{\ast}\E'' \in \mathcal{W}_{\widehat{\mathbf{V}}_0, g^{\ast}\delta}
\end{align}
where $\Upsilon_0=p_{0\ast}q_0^{\ast}$ in the diagram (\ref{dia:WG}). 
The first condition follows from Lemma~\ref{lem:cart0} together with the base change, 
and the second condition follows from the diagram (\ref{dia:WG}) together with 
Lemma~\ref{lem:neq}. 

We apply Proposition~\ref{prop:QBW} to the stack $\widehat{\mathbf{V}}_0/G_E$, 
which is possible since $\mathbf{V}_0$ is a self-dual representation of the reductive group $G_E$
\footnote{The setting here is slightly different from Proposition~\ref{prop:QBW} as we 
are taking base-change of the formal completion (\ref{formal:comp}). It is straightforward 
to check that the same argument works in this setting.}. 
Then we conclude that 
\begin{align}\label{ets:E2}
    g^{\ast}\mathcal{E}' \in \Upsilon_0 \MM(\widehat{\mathbf{V}}_0^{\lambda}/G_E)_{g^{\lambda\ast}\delta'}, \ 
    g^{\ast}\mathcal{E}'' \in \MM(\widehat{\mathbf{V}}_0/G_E)_{g^{\ast}\delta}. 
\end{align}
Indeed let $(\widehat{\mathbf{V}}_0)_{\circ}=\widehat{\mathbf{V}}_0 \setminus \widehat{\mathbf{V}}_0^{\lambda \geq 0}$. Then we have 
\begin{align*}
    g^{*}\mathcal{E}''|_{(\widehat{\mathbf{V}}_0)_{\circ}/G_E} =g^{*}\mathcal{E}|_{(\widehat{\mathbf{V}}_0)_{\circ}/G_E} \in \MM((\widehat{\mathbf{V}}_0)_{\circ}/G_E)_{g^{\ast}\delta}.
\end{align*}

Therefore by the second condition in (\ref{cond:g*E}) and Proposition~\ref{prop:QBW}, the object $g^* \mathcal{E}''$ satisfies the second condition in (\ref{ets:E2}). 
Then by Proposition~\ref{prop:functUp}, we have 
\begin{align*}
     g^{\ast}\mathcal{E}'\in &\Upsilon_0 \Coh(\widehat{\mathbf{V}}_0^{\lambda}/G_E)_m \cap 
     \MM(\widehat{\mathbf{V}}_0/G_E)_{g^{\ast}\delta} \\
     &=\Upsilon_0 \MM(\widehat{\mathbf{V}}_0^{\lambda}/G_E)_{g^{\lambda\ast}\delta'}.
\end{align*}
Therefore (\ref{ets:E2}) holds. 
By the distinguished triangle (\ref{show:tri}), we also have 
\begin{align*}
\E''|_{(\V^L)^{\vee}_{\circ}} \cong \E|_{(\V^L)^{\vee}_{\circ}} 
\in \MM_{\mathbb{G}_m}((\V^L)_{\circ}^{\vee})_{\delta}.     
\end{align*}
By applying Lemma~\ref{lem:pullback} below,  
the second condition of (\ref{ets:E2}) implies the second 
condition of (\ref{ets:E}), therefore we obtain the semiorthogonal 
decomposition (\ref{sod:V}). 

\begin{step}
Proof of the equivalence (\ref{eq:QBV}).
\end{step}
Finally we show the equivalence (\ref{eq:QBV}).
Let $\Phi$ be the equivalence (\ref{comp:func}).
Obviously we have the inclusion 
\begin{align}\label{incl:phi}
\MM_{\mathbb{G}_m}((\V^L)^{\vee})_{\delta} \cap \mathcal{W}_{\V, \delta}
\subset \Phi^{-1} \MM_{\mathbb{G}_m}((\V^L)^{\vee}_{\circ})_{\delta}.   
\end{align}
It is enough to show that the above inclusion is an equivalence. 
For an object $A$ in the right-hand side of (\ref{incl:phi}), 
by the diagram (\ref{dia:WG}) and Proposition~\ref{prop:QBW}, 
we have 
\begin{align*}
    g^{\ast}A \in
    \MM(\mathcal{W}_{\widehat{\mathbf{V}}_0, g^{\ast}\delta})=
    \MM(\widehat{\mathbf{V}}_0/G_E)_{g^{\ast}\delta} \cap 
    \mathcal{W}_{\widehat{\mathbf{V}}_0, g^{*}\delta}.
\end{align*}
Therefore the object $A$ lies in the left-hand 
side of (\ref{incl:phi}) by Lemma~\ref{lem:pullback}. 
\end{proof}

We will postpone the proofs of the following lemmas in Subsection~\ref{subsec:postpone}:

\begin{lemma}\label{lem:symW}
The $G_E$-representation $\mathbf{V}_0$ is self-dual. 
\end{lemma}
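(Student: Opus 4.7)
\begin{pf}[Proof proposal]
The plan is to verify the identity $[\mathbf{V}_0] = [\mathbf{V}_0^{\vee}]$ in $K(BG_E)$. Since $G_E$ is reductive and we work over an algebraically closed field of characteristic zero, finite-dimensional $G_E$-representations are semisimple, so $K(BG_E)$ coincides with the representation ring and two representations are isomorphic if and only if their classes in $K(BG_E)$ agree. Hence it suffices to compute in K-theory.

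First, I would unwind the definition of $\mathbf{V}_0$. Let $W := \Gamma(D, \Ad_{\mathfrak{g}}(E_G)|_D)$ and $W' := \Gamma(D, \Ad_{\mathfrak{g}}(E_G)|_D \otimes L)$. The defining exact sequence
\[
0 \to \Gamma(\Ad_{\mathfrak{g}}(E_G)|_D)_0 \to W \xrightarrow{\eta} \mathbb{T}^{-1}_{(E_G, \phi_G)}/\mathbb{T}^{-1}_{(E_M, \phi_M)} \to 0
\]
yields $[\Gamma(\Ad_{\mathfrak{g}}(E_G)|_D)_0] = [W] - [\mathbb{T}^{-1}_{(E_G,\phi_G)}] + [\mathbb{T}^{-1}_{(E_M,\phi_M)}]$ in $K(BG_E)$. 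On the other hand, the distinguished triangle of Lemma~\ref{lem:dtr}, combined with the vanishing $\mathbb{T}^1_{(E_G, \phi_G^L)}=0$ from smoothness of $\mathcal{U}^L$ (Lemma~\ref{lem:smooth}), produces the four-term exact sequence
\[
0 \to \mathbb{T}^0_{(E_G,\phi_G)} \to \mathbf{E} \to W' \to \mathbb{T}^1_{(E_G,\phi_G)} \to 0,
\]
so that $[\mathbf{E}] = [\mathbb{T}^0_{(E_G,\phi_G)}] + [W'] - [\mathbb{T}^1_{(E_G,\phi_G)}]$. Adding the two contributions I obtain a closed-form expression for $[\mathbf{V}_0]$.

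Next, I would invoke three self-dualities to compute $[\mathbf{V}_0^{\vee}]$. The adjoint bundle $\Ad_{\mathfrak{g}}(E_G)$ is self-dual (using a nondegenerate $G$-invariant form on $\mathfrak{g}$, since $G$ is reductive), and Serre duality on the $0$-dimensional Gorenstein scheme $D$ with dualizing sheaf $\omega_D \cong L|_D$ gives $W^\vee \cong W'$ as $G_E$-representations. The $0$-shifted symplectic structure on $\Hig_G$ supplies $\mathbb{T}^0_{(E_G,\phi_G)} \cong \mathbb{T}^{0,\vee}_{(E_G,\phi_G)}$ and $\mathbb{T}^{1,\vee}_{(E_G,\phi_G)} \cong \mathbb{T}^{-1}_{(E_G,\phi_G)}$. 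Dualizing the expression above and using these identities, every term attached to $(E_G, \phi_G)$ cancels and I am left with
\[
[\mathbf{V}_0^{\vee}] - [\mathbf{V}_0] = [\mathbb{T}^1_{(E_M,\phi_M)}] - [\mathbb{T}^{-1}_{(E_M,\phi_M)}] \in K(BG_E).
\]

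Finally, since $(E_M, \phi_M)$ is polystable, $G_E = \Aut(E_M, \phi_M)$ is reductive and $\mathbb{T}^{-1}_{(E_M,\phi_M)} = \mathfrak{g}_E$ carries a nondegenerate $G_E$-invariant bilinear form, hence is self-dual as a $G_E$-representation. The $0$-shifted symplectic structure on $\Hig_M$ identifies $\mathbb{T}^1_{(E_M,\phi_M)}$ with $\mathbb{T}^{-1,\vee}_{(E_M,\phi_M)} \cong \mathbb{T}^{-1}_{(E_M,\phi_M)}$, so the right-hand side vanishes in $K(BG_E)$. This gives $[\mathbf{V}_0] = [\mathbf{V}_0^{\vee}]$ and hence the desired isomorphism $\mathbf{V}_0 \cong \mathbf{V}_0^{\vee}$ of $G_E$-representations. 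The argument is essentially bookkeeping in K-theory; the only point requiring some care is the Serre-duality identification $W^\vee \cong W'$ on $D$ and the correct matching of the two appearances of $\mathbb{T}^{-1}_{(E_G,\phi_G)}$ so that the $G$-level terms cancel.
\end{pf}
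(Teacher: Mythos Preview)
Your proof is correct and uses essentially the same ingredients as the paper: the four-term exact sequence from Lemma~\ref{lem:dtr}, Serre duality on $D$ identifying $\Gamma(\Ad_{\fg}(E_G)|_D)^{\vee}\cong \Gamma(\Ad_{\fg}(E_G)|_D\otimes L)$, the symplectic dualities $\mathbb{T}^0\cong\mathbb{T}^{0,\vee}$ and $\mathbb{T}^{-1}\cong\mathbb{T}^{1,\vee}$, and the self-duality of $\mathfrak{g}_E$. The only difference is packaging: the paper introduces $W:=\mathrm{coker}(\mathbb{T}^0_{(E_G,\phi_G)}\to\mathbf{E})$ and writes down the explicit $G_E$-equivariant splitting $\mathbf{V}_0\cong \mathbb{T}^0_{(E_G,\phi_G)}\oplus W\oplus W^{\vee}\oplus \mathbb{T}^{-1}_{(E_M,\phi_M)}$, from which self-duality is visible by inspection, whereas you carry out the equivalent bookkeeping in $K(BG_E)$ and invoke semisimplicity at the end.
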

\begin{proof}
    See Subsection~\ref{subsub:1}.
\end{proof}
\begin{lemma}\label{lem:neq}
For any map $\nu \colon (\bgm)_{k'} \to \widehat{\mathbf{V}}_0/G_E$, 
we have 
$n_{\widehat{\mathbf{V}}_0/G_E, \nu}=n_{(\V^L)^{\vee}, \nu}$. 
\end{lemma}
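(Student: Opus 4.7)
The approach is a direct $K$-theoretic computation. I factor the map $g$ of diagram~\eqref{dia:VE} as $g = g' \circ i$, where $i \colon \widehat{\mathbf{V}}_0/G_E \hookrightarrow \widehat{\mathbf{V}}/G_E$ is the closed immersion induced by the linear inclusion of $G_E$-representations $\mathbf{V}_0 \hookrightarrow \mathbf{V}$, and $g' \colon \widehat{\mathbf{V}}/G_E \to (\V^L)^\vee$ is the base change along $(\V^L)^\vee \to \U^L$ of the smooth (Luna slice) map $\widehat{\mathbf{E}}/G_E \to \U^L$. Both $(\V^L)^\vee$ and $\widehat{\mathbf{V}}_0/G_E$ are symmetric (by Lemma~\ref{lem:sym:V} and Lemma~\ref{lem:symW} respectively), so it will suffice to establish the $K$-theoretic identity $[g^*\mathbb{L}_{(\V^L)^\vee}] = [\mathbb{L}_{\widehat{\mathbf{V}}_0/G_E}]$ in $K_0(\widehat{\mathbf{V}}_0/G_E)$. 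Indeed, pulling back to $K_0((\bgm)_{k'})$, which is graded by $\mathbb{G}_m$-weights, the two classes then have identical weight-by-weight ranks, and in particular their positive-weight parts have the same $c_1$, giving the required equality $n_{\widehat{\mathbf{V}}_0/G_E,\nu} = n_{(\V^L)^\vee,\nu}$.

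Combining the two composition triangles
\[
i^{*}\mathbb{L}_{g'}\to \mathbb{L}_{g}\to \mathbb{L}_{i}, \qquad g^{*}\mathbb{L}_{(\V^L)^{\vee}}\to \mathbb{L}_{\widehat{\mathbf{V}}_0/G_E}\to \mathbb{L}_{g}
\]
reduces the identity above to showing $[\mathbb{L}_g] = [i^*\mathbb{L}_{g'}] + [\mathbb{L}_i] = 0$ in $K_0$. I identify both summands in terms of the \emph{same} $G_E$-representation
\[
R := \mathbb{T}^{-1}_{(E_G,\phi_G)}/\mathbb{T}^{-1}_{(E_M,\phi_M)},
\]
using $\mathrm{Lie}(G_E) = \mathbb{T}^{-1}_{(E_M,\phi_M)}$. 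A direct inspection of the tangent complexes at the origin shows that the smooth map $\widehat{\mathbf{E}}/G_E \to \U^L$ has relative tangent the trivially extended $G_E$-representation $R$ concentrated in degree $0$, so $\mathbb{L}_{g'}$ is the vector bundle $R^\vee$ in degree $0$; by base change, the same holds for $\mathbb{L}_{g'}$ on $\widehat{\mathbf{V}}/G_E$. On the other hand, by the very definition of $\mathbf{V}_0 \subset \mathbf{V}$ as $\mathbf{E} \oplus \ker \eta$ with $\eta \colon \Gamma(\mathrm{Ad}_{\mathfrak{g}}(E_G)|_D) \twoheadrightarrow R$, the quotient $\mathbf{V}/\mathbf{V}_0$ is precisely $R$, so the conormal bundle satisfies $N^*_{\widehat{\mathbf{V}}_0/\widehat{\mathbf{V}}} = R^\vee$ and $\mathbb{L}_i = R^\vee[1]$.

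Plugging in gives $[\mathbb{L}_g] = [R^\vee] + [R^\vee[1]] = [R^\vee] - [R^\vee] = 0$, as required. The main content of the argument---and the only point where anything non-formal happens---is the observation that the relative cotangent of the smooth morphism $g'$ and the conormal bundle of the closed immersion $i$ are built from precisely the same $G_E$-representation $R$, differing only by a single cohomological shift; this is the compatibility between the choice of the subrepresentation $\mathbf{V}_0 \subset \mathbf{V}$ (made in order to make $\mathbf{V}_0$ self-dual, Lemma~\ref{lem:symW}) and the Luna-slice cover $\widehat{\mathbf{E}}/G_E \to \U^L$. Once these identifications are in place, the $K$-theoretic cancellation is automatic and yields the lemma.
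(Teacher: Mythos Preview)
Your proof is correct and follows essentially the same strategy as the paper: both reduce the statement to the $K$-theoretic identity $[g^*\mathbb{L}_{(\V^L)^\vee}] = [\mathbb{L}_{\widehat{\mathbf{V}}_0/G_E}]$ in $K(\widehat{\mathbf{V}}_0/G_E)$, and both computations ultimately rest on the identification $\mathbb{T}^{-1}_{(E_G,\phi_G^L)} = \mathbb{T}^{-1}_{(E_G,\phi_G)}$ from Lemma~\ref{lem:dtr}. The only difference is organizational: the paper expands $g^*\mathbb{L}_{(\V^L)^\vee}$ directly using the exact sequence~\eqref{exact:ad} and simplifies line by line to $\mathbf{V}_0^\vee - \mathfrak{g}_E^\vee$, whereas you factor $g = g' \circ i$ and show $[\mathbb{L}_g] = 0$ by identifying both $\mathbb{L}_{g'}$ and $\mathbb{L}_i[-1]$ with the single representation $R^\vee$. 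Your packaging makes the cancellation mechanism more transparent (the ``extra'' part of the Luna slice exactly matches the conormal of $\mathbf{V}_0 \subset \mathbf{V}$), while the paper's direct expansion avoids invoking the factorization and stays closer to the explicit descriptions~\eqref{rep:V},~\eqref{rep:V2}.
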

\begin{proof}
    See Subsection~\ref{subsub:2}.
\end{proof}

\begin{lemma}\label{lem:pullback}
We take an object $\E'' \in D_{\mathbb{G}_m}^b((\V^L)^{\vee})$
satisfying 
\begin{align*}\E''|_{(\V^L)^{\vee}_{\circ}} \in \MM_{\mathbb{G}_m}((\V^L)^{\vee}_{\circ})_{\delta}, \
g^{\ast}\E'' \in \MM(\widehat{\mathbf{V}}_0/G_E)_{g^{\ast}\delta}.
\end{align*}
Then by shrinking $\U^L$ as in Remark~\ref{rmk:shrink} if necessary, we have  
$\E'' \in \MM_{\mathbb{G}_m}((\V^L)^{\vee})_{\delta}$. 
\end{lemma}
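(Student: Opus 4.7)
The plan is to verify the defining weight condition of the magic category $\MM_{\mathbb{G}_m}((\V^L)^{\vee})_{\delta}$ for every map $\nu\colon(\bgm)_{k'}\to(\V^L)^{\vee}$, splitting cases by the location of the basepoint $\nu(0)$. If $\nu$ factors through the open complement $(\V^L)^{\vee}_{\circ}$, then the weight condition follows directly from the first hypothesis on $\E''|_{(\V^L)^{\vee}_{\circ}}$.

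Otherwise $\nu(0)\in\sS_{\V}$, and the plan is to lift $\nu$ to a map into the local chart $\widehat{\mathbf{V}}_0/G_E$ and then invoke the second hypothesis. First, by upper semicontinuity of $n_{(\V^L)^{\vee},\nu}$ and of the weights of $\nu^{*}\E''$ under degeneration of $\nu(0)$ along the retraction $\sS_{\V}\to\zZ_{\V}$, it suffices to treat those $\nu$ whose image is contained in $\zZ_{\V}$. For such a map, the cocharacter of $\mathrm{Aut}(\nu(0))$ induced by $\nu$ is, up to conjugation, the cocharacter $\lambda$ from~\eqref{lambda:E}, because the Harder--Narasimhan filtration, hence the standard parabolic $P\subset G$ and Levi $M$, is unique. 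After shrinking $\U^L$ as in Remark~\ref{rmk:shrink} so that the good moduli space of $\zZ\cap\U^L$ is contained in an \'etale neighborhood of the image of the polystable point $(E_M,\phi_M)$ that was used to build the chart $g$, the local structure theorem~\cite[Theorem~1.1]{AHRLuna} applied at $(E_M,\phi_M)$ provides a lift $\widetilde{\nu}\colon(\bgm)_{k'}\to\widehat{\mathbf{V}}^{\lambda}/G_E$ of $\nu$, and by the top line of the diagram~\eqref{dia:WG} this identifies with a map into $\widehat{\mathbf{V}}_0^{\lambda}/G_E\subset\widehat{\mathbf{V}}_0/G_E$.

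Granted the lift $\widetilde{\nu}$, one has $\nu^{*}\E''\simeq\widetilde{\nu}^{*}g^{*}\E''$, and the second hypothesis $g^{*}\E''\in\MM(\widehat{\mathbf{V}}_0/G_E)_{g^{*}\delta}$ bounds $\wt(\widetilde{\nu}^{*}g^{*}\E'')$ inside the interval $\bigl[-\tfrac{1}{2}n_{\widehat{\mathbf{V}}_0/G_E,\widetilde{\nu}},\,\tfrac{1}{2}n_{\widehat{\mathbf{V}}_0/G_E,\widetilde{\nu}}\bigr]+\langle g^{*}\delta,\widetilde{\nu}\rangle$. By Lemma~\ref{lem:neq}, $n_{\widehat{\mathbf{V}}_0/G_E,\widetilde{\nu}}=n_{(\V^L)^{\vee},\nu}$, and $\langle g^{*}\delta,\widetilde{\nu}\rangle=\langle\delta,\nu\rangle$ by functoriality of $c_1$, giving exactly the condition required for $\E''\in\MM_{\mathbb{G}_m}((\V^L)^{\vee})_{\delta}$.

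The main obstacle I anticipate is the semicontinuity/degeneration step, namely the reduction from an arbitrary $\nu$ with $\nu(0)\in\sS_{\V}$ to one whose image lies in $\zZ_{\V}$. This requires that both the length $n_{(\V^L)^{\vee},\nu}$ and the set of weights of $\nu^{*}\E''$ behave well under the natural deformation of $\nu$ into its central fiber inside $\sS_{\V}$, and also that the amount of shrinking of $\U^L$ can be arranged so that every closed point of $\zZ\cap\U^L$ is captured by the single chart built at $(E_M,\phi_M)$. Both points are expected to follow from the existence of a good moduli space for $\Hig_M(\chi_M)^{\mathrm{ss}}$ and the Luna slice theorem for stacks, but they have to be checked carefully in the present setting.
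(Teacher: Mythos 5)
Your reduction to test maps landing in $\zZ_{\V}$ breaks down at the step where you claim that, for a map $\nu\colon(\bgm)_{k'}\to\zZ_{\V}$, the induced cocharacter of $\Aut(\nu(0))$ is conjugate to the canonical cocharacter $\lambda$ of (\ref{lambda:E}) ``because the HN filtration is unique.'' The membership condition for $\MM_{\mathbb{G}_m}((\V^L)^{\vee})_{\delta}$ in Definition~\ref{defn:QBY} must be checked for \emph{all} maps from $\bgm$, i.e.\ for arbitrary cocharacters of the stabilizer groups; at a point of $\zZ_{\V}$ lying over a polystable $M$-Higgs bundle $(E_M,\phi_M)$ the stabilizer is the full reductive group $\Aut(E_M,\phi_M)$, which has many cocharacters not conjugate to $\lambda$ (uniqueness of the Harder--Narasimhan filtration only pins down the parabolic defining the $\Theta$-stratum, not the cocharacter of a test map). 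So your argument verifies the weight bound only in the single direction $\lambda$, which does not give $\E''\in\MM_{\mathbb{G}_m}((\V^L)^{\vee})_{\delta}$. The paper's proof differs exactly here: it first uses the $\mathbb{G}_m$-equivariance of $\E''$ to reduce to $\nu$ factoring through the zero section $\U^L$, shrinks as in Remark~\ref{rmk:shrink} to get $\nu$ into $\U$, uses the hypothesis on $\E''|_{(\V^L)^{\vee}_{\circ}}$ to reduce to $\nu$ landing in $\sS$, and then degenerates the base point all the way to the \emph{closed} point $z$ of its good-moduli-space fiber in $\zZ$ (not merely to $\zZ_{\V}$); there $\Aut(z)=\Aut(E_G,\phi_G)$, an arbitrary cocharacter can be conjugated into $G_E$, and $\nu$ then factors through $\widehat{\mathbf{V}}_0/G_E$ at the origin, where the hypothesis $g^{\ast}\E''\in\MM(\widehat{\mathbf{V}}_0/G_E)_{g^{\ast}\delta}$ tests all cocharacters of $G_E$ and, combined with Lemma~\ref{lem:neq}, gives the required bound.

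Your proposed fix via shrinking is also not available: Remark~\ref{rmk:shrink} only shrinks $\zZ^{L}$ (and hence $\U^{L}$) toward $\zZ$, keeping $\U$, $\sS$ and $\zZ$ intact, so one cannot arrange that the good moduli space of $\zZ=\Hig_M(\chi_M)^{\mathrm{ss}}$ be contained in an \'etale neighborhood of the single polystable point at which the chart $g$ was built; that stack and its good moduli space are fixed by the problem. In the paper the shrinking serves only to ensure that a test map with image in $\U^{L}$ may be assumed to land in $\U$; the issue of which chart to use is resolved by degenerating each test point to the closed point of its own good-moduli fiber and invoking the chart at that point. Relatedly, because you skip the equivariance reduction to the zero section, your degenerate base points in $\zZ_{\V}$ may carry a nonzero cosection and lie over non-polystable, genuinely $L$-twisted $M$-Higgs bundles, which the formal chart at $(E_M,\phi_M)$ does not reach; your semicontinuity step as stated does not bring you into the domain where the second hypothesis applies.
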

\begin{proof}
    See Subsection~\ref{subsub:3}.
\end{proof}

\subsection{Semiorthogonal decompositions via the Koszul equivalence}\label{sod:kequiv}
Recall that for the $\Theta$-stratum $\mathcal{S} \subset \mathcal{U}$ 
we have the following diagram 
\begin{align}\notag
	\xymatrix{\mathcal{S}\ar[d]_-{q} \inclusion^-{p} & \U & \linclusion \mathcal{U}_{\circ}:=\mathcal{U}\setminus \mathcal{S} \\
		\mathcal{Z}.\ar[ur] \ar@/^15pt/[u] &	& &
	}
\end{align}
Since $q$ is quasi-smooth and $p$ is proper, we 
have the functor, see~\cite{PoSa}
\begin{align*}
	\Upsilon_{\mathcal{U}}=p_{\ast}q^{\ast} \colon \Coh(\mathcal{Z}) \stackrel{q^*}{\to}\Coh(\mathcal{S}) \stackrel{p_{*}}{\to}\Coh(\mathcal{U}). 
	\end{align*}
Note that $\mathcal{Z}$ is a quasi-smooth and QCA stack, 
so that the limit category 
\begin{align*}
    \LL(\mathcal{Z})_{\delta|_{\mathcal{Z}}}\subset \Coh(\zZ)
\end{align*}
is defined 
as in Definition~\ref{def:Lcat}.
\begin{prop}\label{prop:sod:KZ}
For the open immersion $j \colon \mathcal{U}_{\circ} \hookrightarrow \U$, 
its pull-back 
\[j^{\ast} \colon \LL(\U)_{\delta} \to \LL(\U_{\circ})_{\delta}\]
admits a fully-faithful left adjoint 
\begin{align}\label{j!}
	j_{!} \colon \LL(\U_{\circ})_{\delta} \hookrightarrow \LL(\U)_{\delta}	
\end{align}
and the semiorthogonal decomposition 
	\begin{align}\label{sod:qbud}
		\LL(\mathcal{U})_{\delta}=
		\langle \Upsilon_{\mathcal{U}} \LL(\mathcal{Z})_{\delta'}, j_{!}\LL(\mathcal{U}_{\circ})_{\delta} \rangle. 
		\end{align}
	Here $\delta'=\delta \otimes (\det \mathbb{L}_{p})^{1/2}|_{\mathcal{Z}}$, and 
    the functor $\Upsilon_{\mathcal{U}}$ is fully-faithful on $\LL(\mathcal{Z})_{\delta'}$.
	\end{prop}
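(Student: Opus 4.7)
The plan is to deduce Proposition~\ref{prop:sod:KZ} from its non-derived counterpart Proposition~\ref{prop:sod:V}, using the Koszul equivalence as a bridge between the quasi-smooth setup on $\mathcal{U}$ and the smooth setup on $(\mathcal{V}^L)^{\vee}$. Recall from Subsection~\ref{subsec:embL} that $\mathcal{U}$ is the derived zero locus of the section $s^L$ of $\mathcal{V}^L \to \mathcal{U}^L$, where $\mathcal{U}^L$ is smooth. By Theorem~\ref{thm:Kduality} and Proposition~\ref{prop:comparemagic} (applied with $\mathbb{G}_m$-equivariance), the Koszul equivalence induces an equivalence
\begin{align*}
\Psi \colon \LL(\mathcal{U})_{\delta} \stackrel{\sim}{\to} \mathrm{MF}^{\mathrm{gr}}\bigl(\MM_{\mathbb{G}_m}((\mathcal{V}^L)^{\vee})_{\delta \otimes \omega_{\mathcal{U}^L}^{1/2}},\, f\bigr),
\end{align*}
where $f$ is the function from~\eqref{funct:f}. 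The analogous equivalence applies to $\mathcal{Z} \hookrightarrow \mathcal{Z}^L$ as the zero locus of $s_M^L$ (see~\eqref{Z:zero}), giving $\LL(\mathcal{Z})_{\delta'} \simeq \mathrm{MF}^{\mathrm{gr}}(\MM_{\mathbb{G}_m}(\mathcal{Z}_{\mathcal{V}})_{\delta'\otimes \omega_{\mathcal{Z}^L}^{1/2}}, f_{\mathcal{Z}})$, while on the open complement the restriction of $f$ governs $\LL(\mathcal{U}_{\circ})_{\delta}$ under Koszul duality.

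Next, I would invoke Proposition~\ref{prop:sod:V}, which (after possibly shrinking $\mathcal{U}^L$ around $\mathcal{U}$ as in Remark~\ref{rmk:shrink}) provides the semiorthogonal decomposition
\begin{align*}
\MM_{\mathbb{G}_m}((\mathcal{V}^L)^{\vee})_{\delta \otimes \omega_{\mathcal{U}^L}^{1/2}} = \langle \Upsilon_{\mathcal{V}} \MM_{\mathbb{G}_m}(\mathcal{Z}_{\mathcal{V}})_{\delta' \otimes \omega_{\mathcal{Z}^L}^{1/2}},\; \MM_{\mathbb{G}_m}((\mathcal{V}^L)^{\vee})_{\delta \otimes \omega_{\mathcal{U}^L}^{1/2}} \cap \mathcal{W}_{\mathcal{V}, \delta}\rangle
\end{align*}
together with the equivalence $\Phi \colon \MM_{\mathbb{G}_m}((\mathcal{V}^L)^{\vee})_{\delta \otimes \omega_{\mathcal{U}^L}^{1/2}} \cap \mathcal{W}_{\mathcal{V}, \delta} \stackrel{\sim}{\to} \MM_{\mathbb{G}_m}((\mathcal{V}^L)^{\vee}_{\circ})_{\delta \otimes \omega_{\mathcal{U}^L}^{1/2}}$ from~\eqref{eq:QBV}. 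The function $f$ is compatible with each factor of this SOD: its pullback to $\mathcal{Z}_{\mathcal{V}}$ agrees with $f_{\mathcal{Z}}$ by the diagram~\eqref{dia:SV}, and its restriction to $(\mathcal{V}^L)^{\vee}_{\circ}$ is the relevant potential for $\mathcal{U}_{\circ}$. Twisting the SOD by graded matrix factorizations with potential $f$ therefore yields, after transferring through $\Psi$ and the analogous Koszul equivalences, the decomposition
\begin{align*}
\LL(\mathcal{U})_{\delta} = \langle \Upsilon_{\mathcal{U}} \LL(\mathcal{Z})_{\delta'},\; \Psi^{-1}(\mathrm{MF}^{\mathrm{gr}}(\MM \cap \mathcal{W}_{\mathcal{V}, \delta}, f))\rangle,
\end{align*}
where the equivalence $\Phi$ induces an equivalence from the second summand onto $\LL(\mathcal{U}_{\circ})_{\delta}$ that is identified with $j^{*}$. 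Its inverse is the desired fully-faithful left adjoint $j_!$, and full-faithfulness of $\Upsilon_{\mathcal{U}}$ follows from that of $\Upsilon_{\mathcal{V}}$.

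Two points require care, and constitute the technical heart of the proof. First, one must verify that under Koszul equivalence the functor $\Upsilon_{\mathcal{V}}$ associated to the $\Theta$-stratum $\mathcal{S}_{\mathcal{V}} \subset (\mathcal{V}^L)^{\vee}$ corresponds to the functor $\Upsilon_{\mathcal{U}}=p_*q^*$ from~\eqref{derived:theta}. This uses the critical locus identification $\mathrm{Crit}(f) \cap \mathcal{S}_{\mathcal{V}} = \pi_{\mathrm{crit}}^{-1}(\mathcal{S})$ from Proposition~\ref{lem:theta:crit2}, together with base change of ind-coherent sheaves along the derived Cartesian square relating $\mathcal{S}$ to $\mathcal{S}_{\mathcal{V}}$ and $\mathcal{Z}$ to $\mathcal{Z}_{\mathcal{V}}$, so that the compositions $p_* q^*$ on both sides intertwine via $\Psi$. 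Second, the shift of the $\mathbb{R}$-line bundle by $\omega_{\mathcal{U}^L}^{1/2}$ on the MF side must match the shift by $(\det \mathbb{L}_p)^{1/2}|_{\mathcal{Z}}$ on the $\Coh$ side, which is a direct bookkeeping using the distinguished triangle relating $\mathbb{L}_{\mathcal{V}^L}$, $\mathbb{L}_{\mathcal{U}^L}$ and the normal complex of $\mathcal{U} \hookrightarrow \mathcal{U}^L$. The main obstacle is really the first of these identifications: while the geometry of $\mathcal{S}_{\mathcal{V}}$ was set up precisely so that it interacts well with $f$, one must carefully check compatibility of the attracting/fixed-locus decompositions under Koszul duality and confirm that no extra factor enters because $f|_{\mathcal{S}_{\mathcal{V}}}$ is pulled back from $\mathcal{Z}_{\mathcal{V}}$ by $q$.
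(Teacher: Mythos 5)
Your proposal follows essentially the same route as the paper's proof: the Koszul equivalence (via Proposition~\ref{prop:comparemagic}) transports the problem to graded matrix factorizations, Proposition~\ref{prop:sod:V} supplies the semiorthogonal decomposition and the adjoint on the smooth side, the support-in-$\mathrm{Crit}(f)$ identification of Proposition~\ref{lem:theta:crit2} handles the open complement, and the compatibility of $\Upsilon_{\mathcal{U}}$ with $\Upsilon_{\mathcal{V}}$ is exactly the diagram~(\ref{dia:koszul}) the paper cites. The only caveat is that the line-bundle matching you call ``direct bookkeeping'' is slightly more than that: the Koszul compatibility introduces an extra twist by $\det(\mathcal{V}^L|_{\mathcal{Z}}^{>0})$, and showing the resulting $\delta''$ agrees with $\delta'=\delta\otimes(\det\mathbb{L}_p)^{1/2}|_{\mathcal{Z}}$ on all maps $\nu\colon \bgm\to\mathcal{Z}$ is the content of Lemma~\ref{lem:delta2}, proved by an explicit weight computation.
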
 
\begin{proof}
By the Koszul equivalence in Theorem~\ref{thm:Kduality} 
associated with the diagram (\ref{dia:crit}), we have an equivalence
\begin{align*}
    \Psi \colon \Coh(\U) \stackrel{\sim}{\to} \mathrm{MF}^{\mathrm{gr}}((\V^L)^{\vee}, f). 
\end{align*}
By Proposition~\ref{prop:comparemagic}, it restricts to an equivalence
\begin{align*}
\Psi \colon \LL(\mathcal{U})_{\delta}\stackrel{\sim}{\to}
\mathrm{MF}^{\mathrm{gr}}(\MM((\mathcal{V}^L)^{\vee})_{\delta\otimes (\omega_{\mathcal{U}^L})^{1/2}}, f). 
    \end{align*}
 For the notation of the right-hand side, see Subsection~\ref{subsec:Koszuleq}.
 
Similarly, we also have the equivalence 
\begin{align*}
	\Psi_{\circ} \colon \Coh(\U_{\circ}) \stackrel{\sim}{\to}
	\mathrm{MF}^{\mathrm{gr}}((\V^L)^{\vee} \setminus (\pi^L)^{-1}(\sS^L), f). 
	\end{align*}
By Proposition~\ref{lem:theta:crit2}, we have 
\begin{align*}
		\mathrm{MF}^{\mathrm{gr}}((\V^L)^{\vee} \setminus (\pi^L)^{-1}(\sS^L))
	=
	\mathrm{MF}^{\mathrm{gr}}((\V^L)^{\vee}_{\circ}, f)
	\end{align*}
since any matrix factorization is supported on the critical locus $\mathrm{Crit}(f)$, 
see~\cite[Corollary~3.18]{MR3112502}. 
Therefore we have the equivalence 
\begin{align*}
	\Psi_{\circ} \colon \Coh(\mathcal{U}_{\circ}) \stackrel{\sim}{\to}
	\mathrm{MF}^{\mathrm{gr}}((\V^L)_{\circ}, f)	
	\end{align*}
which restricts to an equivalence 
\begin{align*}
	\Psi_{\circ} \colon \LL(\U_{\circ})_{\delta} \stackrel{\sim}{\to}
	\mathrm{MF}^{\mathrm{gr}}(\MM(\V^L)^{\vee}_{\circ})_{\delta\otimes (\omega_{\U^L})^{1/2}}, f).
	\end{align*}
    
We have the commutative diagram 
\begin{align}\label{dia:QB:U}
	\xymatrix{
\LL(\U)_{\delta} \ar[r]^-{\Psi}_-{\sim} \ar[d]_-{j^{\ast}} & 	\mathrm{MF}^{\mathrm{gr}}(\MM(\V^L)^{\vee})_{\delta\otimes (\omega_{\U^L})^{1/2}}, f) \ar[d]^-{j_{\V}^{\ast}} \\
\LL(\U_{\circ})_{\delta}  \ar[r]^-{\Psi_{\circ}}_-{\sim} & \mathrm{MF}^{\mathrm{gr}}(\MM(\V^L)^{\vee}_{\circ})_{\delta\otimes (\omega_{\U^L})^{1/2}}, f).
}
	\end{align}
Here $j_{\V} \colon (\V^L)^{\vee}_{\circ} \hookrightarrow (\V^L)^{\vee}$ is the open 
immersion. By Proposition~\ref{prop:sod:V} and applying it to matrix factorizations for the functions in (\ref{dia:SV}), the right vertical arrow 
admits a fully-faithful left adjoint 
\begin{align*}
	j_{\V!} \colon 
	 \mathrm{MF}^{\mathrm{gr}}(\MM(\V^L)^{\vee}_{\circ})_{\delta\otimes (\omega_{\U^L})^{1/2}}, f) \hookrightarrow
	  \mathrm{MF}^{\mathrm{gr}}(\MM(\V^L)^{\vee})_{\delta\otimes (\omega_{\U^L})^{1/2}}, f)
	\end{align*}
such that we have the semiorthogonal decomposition 
\begin{align*}
&\mathrm{MF}^{\mathrm{gr}}(\MM(\V^L)^{\vee})_{\delta\otimes (\omega_{\U^L})^{1/2}}, f)
=\\ \notag
&\left\langle \Upsilon_{\V} \mathrm{MF}^{\mathrm{gr}}(\MM(\zZ_{\V})_{\delta'\otimes (\omega_{\U^L})^{1/2}}), f_{\mathcal{Z}}), 
j_{\V!}  \mathrm{MF}^{\mathrm{gr}}(\MM(\V^L)^{\vee}_{\circ})_{\delta\otimes (\omega_{\U^L})^{1/2}}, f) \right\rangle. 	
	\end{align*}
In particular, the left vertical arrow in (\ref{dia:QB:U}) admits a 
fully-faithful left adjoint functor $j_!$ as in (\ref{j!}). 

We also have the following compatibility of the functor $\Upsilon_{\mathcal{U}}$ with Koszul equivalence, see~\cite[Section~2.4]{T},~\cite[Proposition~3.1]{P2}
\begin{align}\label{dia:koszul}
	\xymatrix{
\Coh(\zZ) \ar[r]^-{\Psi_{\zZ}'}_-{\sim} \ar[d]_-{\Upsilon_{\mathcal{U}}} & \mathrm{MF}^{\mathrm{gr}}(\zZ_{\V}, f_{\mathcal{Z}}) \ar[d]^-{\Upsilon_{\V}}	\\
\Coh(\U) \ar[r]^-{\Psi}_-{\sim} & \mathrm{MF}^{\mathrm{gr}}((\V^L)^{\vee}, f). 
}
	\end{align}
Here $\Psi_{\mathcal{Z}}'$ is given by 
\begin{align*}
	\Psi_{\zZ}'(-)=\Psi_{\zZ}(-\otimes 
	\det (\V^L|_{\zZ}^{>0})^{\vee}[\rank(\V^L|_{\zZ}^{>0})]),
	\end{align*}
where $\Psi_{\zZ}$ is the Koszul equivalence 
from the derived zero locus description (\ref{Z:zero})
\begin{align*}
	\Psi_{\zZ} \colon 
	\Coh(\zZ) \stackrel{\sim}{\to} \mathrm{MF}^{\mathrm{gr}}(\zZ_{\V}, f_{\mathcal{Z}}). 
	\end{align*}
Let $\delta'' \in \mathrm{Pic}(\zZ)_{\mathbb{Q}}$ be defined
by 
\begin{align*}
	\delta''=\delta \otimes (\omega_{\U^L})^{1/2}|_{\zZ} \otimes \det (N_{\sS_{\V}/(\V^L)^{\vee}})^{1/2}|_{\zZ} \otimes (\omega_{\zZ^L})^{-1/2}|_{\zZ}
	\otimes \det (\V^L|_{\zZ}^{>0}).  
	\end{align*}
    
Then from the diagrams (\ref{dia:QB:U}), (\ref{dia:koszul}), 
we obtain the semiorthogonal decomposition 
	\begin{align*}
	\LL(\mathcal{U})_{\delta}=
	\langle \Upsilon_{\mathcal{U}} \LL(\mathcal{Z})_{\delta''}, j_{!}\LL(\mathcal{U}_{\circ})_{\delta} \rangle
\end{align*}
and the functor $\Upsilon_{\mathcal{U}}$ is fully-faithful on $\LL(\mathcal{Z})_{\delta'}$.
From Lemma~\ref{lem:delta2}, 
we have $\LL(\zZ)_{\delta''}=\LL(\zZ)_{\delta'}$, therefore 
we obtain the semiorthogonal decomposition (\ref{sod:qbud}). 
\end{proof}

We have used the following lemma. It follows from a direct computation, and its proof 
will be given in Subsection~\ref{subsec:pfdelta}.
\begin{lemma}\label{lem:delta2}
	For any map $\nu \colon \bgm \to \mathcal{Z}$, we have 
	$c_1(\nu^{\ast}\delta')=c_1(\nu^{\ast}\delta'')$. 
	\end{lemma}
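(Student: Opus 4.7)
The plan is to prove the identity by directly computing $c_1(\nu^*\delta')$ and $c_1(\nu^*\delta'')$ for each $\nu \colon \bgm \to \zZ$, unwinding the definitions to obtain
\begin{align*}
c_1(\nu^*\delta') - c_1(\nu^*\delta'')
&= \tfrac{1}{2}c_1(\nu^*\det \mathbb{L}_p|_{\zZ}) - \tfrac{1}{2}c_1(\nu^*\omega_{\U^L}|_{\zZ}) \\
&\quad - \tfrac{1}{2}c_1(\nu^*\det N_{\sS_{\V}/(\V^L)^{\vee}}|_{\zZ_{\V}}) \\
&\quad + \tfrac{1}{2}c_1(\nu^*\omega_{\zZ^L}|_{\zZ}) - c_1(\nu^*\det \V^L|_{\zZ}^{>0}),
\end{align*}
and then showing that this vanishes by combining three ingredients described below.

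First I would compute $\mathbb{L}_p$ using the derived Cartesian square of Lemma~\ref{lem:cart}: since $\sS \simeq \sS^L \times_{\U^L}\U$ is a derived fiber product, we have $\mathbb{L}_p = \mathbb{L}_{\sS^L/\U^L}|_{\sS}$. Because $\sS^L \hookrightarrow \U^L$ is a closed immersion of smooth stacks (by Lemma~\ref{lem:smooth} and its analog for $\Theta$-strata), $\mathbb{L}_{\sS^L/\U^L} \simeq N_{\sS^L/\U^L}^{\vee}[1]$, so at the level of determinant line bundles, $\det\mathbb{L}_p|_{\zZ} = \det N_{\sS^L/\U^L}|_{\zZ}$.

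Second, I would relate $N_{\sS^L/\U^L}|_{\zZ^L}$ to $N_{\sS_{\V}/(\V^L)^{\vee}}|_{\zZ_{\V}}$ using the vector bundle structure of $(\V^L)^{\vee} \to \U^L$. From the description of $\sS_{\V}$ in Subsection~\ref{subsec:ThetaH} combined with the analysis following (\ref{dia:SV}), $\sS_{\V}$ sits inside $(\pi^L)^{-1}(\sS^L)$ as the sub-vector bundle whose fiber at a center point is the $\lambda$-nonnegative weight part of $(\V^L)^{\vee}|_{\zZ^L}$. This yields the direct sum decomposition
\[
N_{\sS_{\V}/(\V^L)^{\vee}}|_{\zZ_{\V}} \;=\; \pi^*N_{\sS^L/\U^L}|_{\zZ^L} \,\oplus\, \pi^*(\V^L|_{\zZ^L}^{>0})^{\vee},
\]
and hence $\det N_{\sS^L/\U^L}|_{\zZ^L} = \det N_{\sS_{\V}/(\V^L)^{\vee}}|_{\zZ_{\V}} \otimes \det \V^L|_{\zZ^L}^{>0}$.

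Third, I would use the symmetry of $(\V^L)^{\vee}$ from Lemma~\ref{lem:sym:V} together with the adjunction formula for the canonical bundle of a vector bundle. Applying the formula $\omega_{(\V^L)^{\vee}} = \pi^*\omega_{\U^L} \otimes \pi^*\det\V^L$, its analog $\omega_{\zZ_{\V}} = \pi_{\zZ}^*\omega_{\zZ^L} \otimes \pi_{\zZ}^*\det\V_M^L$ where $\V_M^L = \V^L|_{\zZ^L}^{0}$, and the identity $\omega_{(\V^L)^{\vee}}|_{\zZ_{\V}} = \omega_{\zZ_{\V}}$ (which follows from the symmetry applied to the decomposition $\mathbb{L}_{(\V^L)^{\vee}}|_{\zZ_{\V}} = N_{\sS_{\V}/(\V^L)^{\vee}}^{\vee}|_{\zZ_{\V}} \oplus \mathbb{L}_{\zZ_{\V}} \oplus \mathbb{L}_{\sS_{\V}/\zZ_{\V}}|_{\zZ_{\V}}$ via Lemma~\ref{em:sym}), one gets a relation between $\omega_{\U^L}|_{\zZ^L}$, $\omega_{\zZ^L}$, and the determinants of the weight pieces of $\V^L|_{\zZ^L}$. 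Finally, the self-duality of $\fg$ as an $M$-representation via the Killing form (giving $\dim\fg_n = \dim\fg_{-n}$) implies that $c_1(\nu^*\det\V^L|_{\zZ^L}^{<0}) = -c_1(\nu^*\det\V^L|_{\zZ^L}^{>0})$ for any $\nu$. Substituting all these identities into the difference $c_1(\nu^*\delta') - c_1(\nu^*\delta'')$ produces the desired cancellation.

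The main obstacle is the careful bookkeeping of signs arising from cohomological shifts and duals, particularly in matching the weight decomposition on the $(\V^L)^{\vee}$-side with the pullback of line bundles from $\U^L$ via the Serre-duality identification of $(\V^L)^{\vee}|_{(E_G,\phi_G^L)} \cong \Gamma(\Ad_\fg(E_G)|_D)$.
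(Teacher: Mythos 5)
Your Steps 2 and 3 are correct, and so is the Killing-form observation, but Step 1 contains a genuine error that breaks the claimed cancellation. The middle square of the diagram (\ref{dia:strata}) is Cartesian only at the level of classical truncations: the derived $\Theta$-stratum $\sS\subset\mathrm{Map}(\Theta,\U)$ is \emph{not} the derived fiber product $\sS^L\times_{\U^L}\U$. Indeed, by the identification $\mathbb{L}_{\sS}\simeq\beta_{\leq 0}(\mathbb{L}_{\U}|_{\sS})$ (the same input from \cite{HalpK32} used in Lemma~\ref{em:sym}), one has $[\mathbb{L}_p]=-[\beta_{>0}(\mathbb{L}_{\U}|_{\sS})]$, and from the presentation $\U=(s^L)^{-1}(0)$ one has $[\mathbb{L}_{\U}|_{\sS}]=[\mathbb{L}_{\U^L}|_{\sS}]-[(\V^L)^{\vee}|_{\sS}]$, whence
\begin{align*}
[\mathbb{L}_p]=[\mathbb{L}_{p^L}|_{\sS}]+[(\V^L|^{<0})^{\vee}],\qquad
\det\mathbb{L}_p|_{\zZ}=\det N_{\sS^L/\U^L}|_{\zZ}\otimes\bigl(\det\V^L|_{\zZ}^{<0}\bigr)^{-1},
\end{align*}
rather than $\det\mathbb{L}_p|_{\zZ}=\det N_{\sS^L/\U^L}|_{\zZ}$ as you assert (the derived fiber product would carry an extra Koszul structure coming from the negative-weight part $\V^L|^{<0}$, which the mapping-stack stratum does not see). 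Feeding your Step 1 into your (correct) Steps 2 and 3, the difference does not vanish: it comes out to $\tfrac12 c_1(\nu^{*}\det\V^L|_{\zZ}^{<0})=-\tfrac12 c_1(\nu^{*}\det\V^L|_{\zZ}^{>0})$, which for the canonical cocharacter of a proper parabolic equals $-\tfrac12\deg D\sum_{n>0}n\dim\fg_n\neq 0$ whenever $P\subsetneq G$ and $\deg D>0$. So the cancellation you announce does not actually occur, and the Killing-form symmetry cannot repair it.

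If you replace Step 1 by the corrected identity above, then your Steps 2 and 3 do combine to give $c_1(\nu^{*}\delta')=c_1(\nu^{*}\delta'')$ (and, amusingly, the Killing-form input becomes unnecessary: the terms $\det\V^L|^{>0}$ and $\det\V^L|^{<0}$ cancel on their own). The resulting argument is then essentially a global-line-bundle repackaging of the paper's proof, which carries out the same bookkeeping pointwise in $K((\bgm)_{k'})$ at a polystable point, expressing $\mathbb{L}_{\U^L}$, $N_{\sS_{\V}/(\V^L)^{\vee}}$, $\mathbb{L}_{\zZ^L}$ and $\V^L|_{\zZ}^{>0}$ through $\mathbb{T}^i_{(E_G,\phi_G^L)}$ and $\Gamma(\Ad_{\fg}(E_G)|_{D}\otimes L)$ and using the triangle of Lemma~\ref{lem:dtr} to pass from $\mathbb{T}_{\Hig_G^L}$ to $\mathbb{T}_{\Hig_G}$; that triangle is precisely what produces the correction term $(\det\V^L|^{<0})^{-1}$ that your Step 1 omits.
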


As a corollary of Proposition~\ref{prop:sod:KZ}, we have the following:  
\begin{cor}\label{cor:higgsc}
	There is a semiorthogonal decomposition 
\begin{align*}
    \LL(\Hig_G(\chi)_{\preceq \mu})_w=\left\langle \mathbb{T}_{M}(\chi_{M})_{w_{M}} : 
     \begin{array}{ll}\mu_M(\chi_{M})\in N(T)_{\mathbb{Q}+}^{W_M}, \ \mu_M(\chi_M)\preceq \mu \\ \mu_M(w_M)=\mu_G(w)
     \end{array}
    \right\rangle. 
\end{align*}
    Here the right-hand side is after all standard parabolics $P\subset G$ with Levi quotient 
    $M$ and $(\chi_{M}, w_{M}) \in \pi_1(M) \times Z(M)^{\vee}$ satisfying $a_{M}(\chi_{M}, w_{M})=(\chi, w)$.
	\end{cor}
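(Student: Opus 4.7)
\begin{pf}[Proof proposal]
The plan is to apply Proposition~\ref{prop:sod:KZ} iteratively along the Harder--Narasimhan stratification of $\Hig_G(\chi)_{\preceq \mu}$. By Remark~\ref{rmk:HN} and the definition (\ref{higgs:strata}), the quasi-compact open substack $\mathcal{U}:=\Hig_G(\chi)_{\preceq \mu}$ is a finite union of HN strata $\Hig_G(\chi)_{(\mu')}$ indexed by $\mu'\preceq \mu$. The deepest stratum $\mathcal{S}:=\Hig_G(\chi)_{(\mu)}$ is a closed $\Theta$-stratum of $\mathcal{U}$, as discussed in Subsection~\ref{subsec:ThetaH}, with center
\[
\mathcal{Z}=\coprod_{(M,\chi_M)} \Hig_M(\chi_M)^{\mathrm{ss}},
\]
where the disjoint union is after standard parabolics $P\subset G$ with Levi quotient $M$ and $\chi_M\in \pi_1(M)$ satisfying $a_M(\chi_M)=\chi$ and $\mu_M(\chi_M)=\mu$. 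The complement $\mathcal{U}\setminus \mathcal{S}=\Hig_G(\chi)_{\prec \mu}$ is a union of strictly fewer HN strata.

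First, I would invoke Proposition~\ref{prop:sod:KZ} to obtain the semiorthogonal decomposition
\[
\LL(\mathcal{U})_w=\left\langle \Upsilon_{\mathcal{U}}\LL(\mathcal{Z})_{\delta'},\; j_{!}\LL(\Hig_G(\chi)_{\prec \mu})_w\right\rangle,
\]
where $\delta'=\delta_w\otimes (\det \mathbb{L}_p)^{1/2}|_{\mathcal{Z}}$ and $j_!$ is the left adjoint constructed there. The key compatibility is to identify the first summand, on each connected component indexed by $(M,\chi_M)$, with the quasi-BPS category $\mathbb{T}_M(\chi_M)_{w_M}$ of Definition~\ref{def:qbpsG}, where $w_M\in Z(M)^{\vee}$ ranges over those elements satisfying $a_M(\chi_M,w_M)=(\chi,w)$ and $\mu_M(w_M)=\mu_G(w)$. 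The identification $\LL(\Hig_M(\chi_M)^{\mathrm{ss}})_{\delta'}=\bigoplus_{w_M}\mathbb{T}_M(\chi_M)_{w_M}$ follows by comparing the $\mathbb{Q}$-line bundle $\delta'$ with $\delta_{w_M}$: the line bundle $\delta_w$ from (\ref{deltaw}) pulls back along the natural map $\Hig_M(\chi_M)^{\mathrm{ss}}\to \mathcal{Z}\hookrightarrow \mathcal{U}\to \Bun_G$ to a line bundle of the form $\delta_{w_M}$ up to a $Z(M)$-trivial twist coming from the relative canonical bundle $(\det \mathbb{L}_p)^{1/2}|_{\mathcal{Z}}$. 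Decomposing according to $Z(M)$-weights then exhibits precisely the summands $\mathbb{T}_M(\chi_M)_{w_M}$ for the $w_M$ satisfying the slope condition $\mu_M(w_M)=\mu_G(w)$.

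Next, I would iterate the procedure on $\Hig_G(\chi)_{\prec \mu}$, which is itself an open substack of the form $\Hig_G(\chi)_{\preceq \mu''}$ (or a finite union of such, treated one at a time) for the immediate predecessors $\mu''$ of $\mu$ in the total order $\preceq$. Each such $\mu''$-stratum is again a closed $\Theta$-stratum in the corresponding open substack by the definition of the HN stratification, so Proposition~\ref{prop:sod:KZ} applies again. Since $\{\mu'\preceq \mu\}$ is finite, the iteration terminates after finitely many steps and assembles the claimed semiorthogonal decomposition, with one summand $\mathbb{T}_M(\chi_M)_{w_M}$ for each pair $(M,\chi_M,w_M)$ as in the statement.

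The main obstacle is the bookkeeping in the identification of $\delta'$ with $\delta_{w_M}$ and the verification that $\mu_M(w_M)=\mu_G(w)$; this amounts to a careful tracking of how the canonical-bundle correction $(\det \mathbb{L}_p)^{1/2}|_{\mathcal{Z}}$ decomposes under the splitting of $Z(G)^{\vee}$-weights into $Z(M)^{\vee}$-weights via the map $a_M$, together with the fact that the relative cotangent of $p\colon \mathcal{S}\hookrightarrow \mathcal{U}$ has weights determined by the positive roots of $G$ not in $M$, which contribute a $Z(M)$-character equal to the difference $\rho_G-\rho_M$ (pulled back to $\mathcal{Z}$). Once this matching is established, the semiorthogonal decomposition of Proposition~\ref{prop:sod:KZ} directly produces the asserted one. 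A secondary point to verify is that the iteration is compatible, in the sense that $j_!$ from Proposition~\ref{prop:sod:KZ} at each step preserves the subcategories appearing as further semiorthogonal summands, which follows formally from fully-faithfulness of $j_!$ and the semiorthogonality established at each prior step.
\end{pf}
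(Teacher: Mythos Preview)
Your approach is essentially the same as the paper's: iterate Proposition~\ref{prop:sod:KZ} along the finite Harder--Narasimhan $\Theta$-stratification of $\Hig_G(\chi)_{\preceq \mu}$, and at each step identify the contribution $\Upsilon_{\mathcal{U}}\LL(\mathcal{Z})_{\delta'}$ with the appropriate direct sum of quasi-BPS categories $\mathbb{T}_M(\chi_M)_{w_M}$. The paper carries out exactly this induction.

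The one place where your write-up is slightly off is the handling of the twist $(\det\mathbb{L}_p)^{1/2}|_{\mathcal{Z}}$. You describe it as a ``$Z(M)$-trivial twist'' governed by $\rho_G-\rho_M$; in fact it is generally \emph{not} $Z(M)$-trivial. The paper instead proves (Lemma~\ref{lem:square}) that there is an honest line bundle $\mathcal{L}\in\mathrm{Pic}(\mathcal{Z})$ with $c_1(\nu^*\mathbb{L}_p)=2c_1(\nu^*\mathcal{L})$ for every $\nu\colon B\mathbb{G}_m\to\mathcal{Z}$, so that at the level of limit categories one may replace the $\mathbb{Q}$-line bundle $(\det\mathbb{L}_p)^{1/2}$ by $\mathcal{L}$ and then tensor it away via Lemma~\ref{lem:tensor}. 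After this, the remaining $\delta_w|_{\mathcal{Z}}$ decomposes by $Z(M)$-weight, and the conditions $a_M(w_M)=w$ and $\mu_M(w_M)=\mu_G(w)$ pick out exactly the summands $\mathbb{T}_M(\chi_M)_{w_M}$. So your diagnosis of the obstacle is correct, but the resolution is via an integrality statement (Lemma~\ref{lem:square}) rather than a triviality statement.
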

\begin{proof}
	By Lemma~\ref{lem:square} below, we have 
	\begin{align*}
		\LL(\zZ)_{\delta_w\otimes \det(\mathbb{L}_p)^{1/2}|_{\zZ}}
		\simeq \LL(\zZ)_{\delta_w|_{\zZ}} =\bigoplus_{w_M \in Z(M)^{\vee}}\mathbb{T}_M(\chi_M)_{w_M}
		\end{align*}
        where the first equivalence is given by $\otimes \mathcal{L}^{-1}$ in Lemma~\ref{lem:square} and 
        the direct sum in the right-hand side is after $w_M \in Z(M)^{\vee}$ with $a_M(w_M)=w$ and $\mu_M(w_M)=\mu_G(w) \in M(T)_{\mathbb{Q}}^W$. 
	Therefore by Proposition~\ref{prop:sod:KZ}, 
    we obtain the semiorthogonal decomposition 
	\begin{align*}
		\LL(\U)_{w}=\left\langle 
        \mathbb{T}_M(\chi_M)_{w_M},
		j_{!}\LL(\U_{\circ})_w  : a_M(w_M)=w, \mu_M(w_M)=\mu_G(w)\right\rangle. 
		\end{align*}
	The corollary follows by applying the above semiorthogonal decomposition 
	for the $\Theta$-stratification (\ref{higgs:strata}) of 
	$\Hig_G(\chi)_{\preceq \mu}$. 	
	\end{proof}

    We have used the following lemma in the proof of Corollary~\ref{cor:higgsc}, whose
    proof will be given in Subsection~\ref{pf:lemsquare}.
\begin{lemma}\label{lem:square}
There exists $\mathcal{L} \in \mathrm{Pic}(\zZ)$ such that, 
for any map $\nu \colon \bgm \to \zZ$, we have 
$c_1(\nu^*\mathbb{L}_p)=2c_1(\nu^*\mathcal{L})$. 
	\end{lemma}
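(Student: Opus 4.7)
The approach is to compute $[\mathbb{L}_p]|_{\mathcal{Z}}$ as a class in $K$-theory, derive an explicit formula for $c_1(\nu^{*}\mathbb{L}_p)$ expressing it as $-(2g-2)$ times the pairing of the associated cocharacter with a specific integral character of $M$, and then take $\mathcal{L}$ to be an appropriate tensor power of the line bundle on $\mathcal{Z}$ associated to that character.

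First, using the Cartesian diagram (\ref{center:Z}) I would identify $\mathcal{S}$ with an open substack of $\Hig_P$, so that $\mathbb{L}_{\mathcal{S}} \simeq \mathbb{L}_{\Hig_P}|_{\mathcal{S}}$. Along the canonical section $\mathcal{Z} \hookrightarrow \mathcal{S}$, the universal $P$-Higgs bundle $(E_P, \phi_P)$ equals $(E_M, \phi_M)\times^M P$ via the splitting $M \hookrightarrow P$. The distinguished triangle $\mathbb{T}_p \to \mathbb{T}_{\mathcal{S}} \to p^{*}\mathbb{T}_{\mathcal{U}}$, combined with Lemma~\ref{lem:cotangent} applied to both $\Hig_G$ and $\Hig_P$ (its derivation is purely deformation-theoretic and extends verbatim to the parabolic case), yields at any $(E_M, \phi_M) \in \mathcal{Z}$
\[
[\mathbb{T}_p]|_{(E_M, \phi_M)} \;=\; [\Gamma(C, E_M \times^M \mathfrak{n}^-)] - [\Gamma(C, E_M \times^M \mathfrak{n}^- \otimes \Omega_C)]
\]
in $K$-theory, with $\mathfrak{n}^- := \mathfrak{g}/\mathfrak{p}$ regarded as an $M$-representation through restriction of the $P$-action.

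For $\nu \colon \bgm \to \mathcal{Z}$ with induced cocharacter $\tilde\lambda \colon \bgm \to M$ (well-defined up to $M$-conjugation by restricting $E_M$ at any point $c \in C$), I would decompose $\mathfrak{n}^- = \bigoplus_a (\mathfrak{n}^-)_a$ into $\tilde\lambda$-weight spaces. Riemann--Roch on $C$, $\chi(\mathcal{F} \otimes \Omega_C) - \chi(\mathcal{F}) = (2g-2)\rank\mathcal{F}$, forces the degree contributions to cancel, giving
\[
c_1(\nu^{*}\mathbb{T}_p) \;=\; -(2g-2) \sum_a a \cdot \dim (\mathfrak{n}^-)_a \;=\; -(2g-2) \langle \tilde\lambda, \det \mathfrak{n}^- \rangle.
\]
Since $\det \mathfrak{n}^- = \sum_{\alpha \in R^-_G \setminus R^-_M} \alpha = -2(\rho_G - \rho_M)$ as a $T$-character, and $c_1(\nu^{*}\mathbb{L}_p) = -c_1(\nu^{*}\mathbb{T}_p)$, we obtain
\[
c_1(\nu^{*}\mathbb{L}_p) \;=\; -(2g-2) \langle \tilde\lambda, \chi_0 \rangle, \qquad \chi_0 := 2(\rho_G - \rho_M).
\]

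The essential observation is that $\chi_0$ is an \emph{integral} character of $M$: indeed, $\chi_0 = \det \mathfrak{u}$ where $\mathfrak{u}$ is the nilpotent radical of $\mathfrak{p}$ (on which $M$ acts by the adjoint representation), so $\chi_0$ is a genuine homomorphism $M \to \mathbb{G}_m$. Fixing any $c \in C$, let $L_{\chi_0} \in \mathrm{Pic}(\mathcal{Z})$ denote the pullback along $\mathrm{ev}_c \colon \mathcal{Z} \to BM$, $(E_M, \phi_M) \mapsto (E_M)_c$, of the character line bundle on $BM$ associated to $\chi_0$; it satisfies $c_1(\nu^{*}L_{\chi_0}) = \langle \tilde\lambda, \chi_0 \rangle$, well-defined independently of the trivialization at $c$ by $W_M$-invariance of $\chi_0$. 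Setting $\mathcal{L} := L_{\chi_0}^{\otimes (1-g)} \in \mathrm{Pic}(\mathcal{Z})$, an honest line bundle for every $g \in \mathbb{Z}_{\geq 0}$, we conclude
\[
c_1(\nu^{*}\mathcal{L}) \;=\; (1-g) \langle \tilde\lambda, \chi_0 \rangle \;=\; \tfrac{1}{2} c_1(\nu^{*}\mathbb{L}_p)
\]
for every $\nu$, as required.

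The main obstacle I anticipate is careful bookkeeping of sign and duality conventions (in particular the identity $c_1(\mathbb{L}_p) = -c_1(\mathbb{T}_p)$, the precise sign in $\det \mathfrak{n}^- = -2(\rho_G - \rho_M)$, and the independence of $\langle \tilde\lambda, \chi_0 \rangle$ from trivialization choices at $c$, which relies on the $W_M$-invariance of $\chi_0$). There is also a minor technical point in extending Lemma~\ref{lem:cotangent} to the non-reductive parabolic $P$, but the same derived deformation-theoretic argument applies essentially unchanged.
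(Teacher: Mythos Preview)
Your proof is correct and takes a genuinely different route from the paper's. The paper (in Subsection~\ref{pf:lemsquare}) works geometrically: assuming $g\geq 2$ for simplicity, it chooses disjoint effective divisors $D_1,D_2$ of degree $g-1$ with $c_1(\Omega_C)=D_1+D_2$, defines $\mathcal{L}_a$ as the determinant line of $\Gamma(\Ad_{\mathfrak g}(E_G)|_{D_a}(D_a))^{\lambda>0}$, observes that $c_1(\mathcal{L}_1)=c_1(\mathcal{L}_2)$ since $\deg D_1=\deg D_2$, and checks $c_1(\nu^*\mathbb{L}_p)=c_1(\nu^*\mathcal{L}_1)+c_1(\nu^*\mathcal{L}_2)$. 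By contrast, you compute $c_1(\nu^*\mathbb{L}_p)$ directly via Riemann--Roch as $-(2g-2)\langle\tilde\lambda,\chi_0\rangle$ and exploit the integrality of $\chi_0=\det\mathfrak{u}\in M^{\vee}$ to set $\mathcal{L}=L_{\chi_0}^{\otimes(1-g)}$. Your argument is slightly more conceptual, avoids the auxiliary divisors, and works uniformly for all $g\geq 0$ without the paper's simplifying assumption $g\geq 2$; the paper's construction, on the other hand, produces $\mathcal{L}$ as a determinant line that is visibly a restriction of something defined over the $L$-twisted ambient stack. Regarding your anticipated obstacle about extending Lemma~\ref{lem:cotangent} to the parabolic $P$: note that Subsection~\ref{subsec:sHiggs} is already stated for an arbitrary algebraic group $G$, so no extra argument is needed.
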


\subsection{Proof of Theorem~\ref{thm:mainLG}}\label{subsec:proofofthm}
In this subsection, we prove Theorem~\ref{thm:mainLG}. 
\begin{proof}
For $\mu_2 \preceq \mu_1$, let $j_{12}$ be the open immersion 
\begin{align}\label{map:j12}
    j_{12} \colon \Hig_G(\chi)_{\preceq \mu_2} \hookrightarrow
    \Hig_G(\chi)_{\preceq \mu_1}. 
\end{align}
We have the pull-back functor
\begin{align}\label{pback2.0}
    j_{12}^{\ast} \colon \LL(\Hig_G(\chi)_{\preceq \mu_1})_{w}
    \to \LL(\Hig_G(\chi)_{\preceq \mu_2})_{w}
\end{align}
which induces the continuous functor by taking its ind-completion
(since $\Hig_G(\chi)_{\preceq \mu}$ is quasi-compact)
\begin{align}\label{pback2}
    j_{12}^{\ast} \colon \IndL(\Hig_G(\chi)_{\preceq \mu_1})_{w}
    \to \IndL(\Hig_G(\chi)_{\preceq \mu_2})_{w}. 
\end{align}
By the open covering (\ref{cov:open})
and the definition of 
$\IndL(\Hig_G(\chi))_{w}$, 
we have an equivalence 
\begin{align*}
\IndL(\Hig_G(\chi))_w \simeq \lim_{\mu\in N(T)_{\mathbb{Q}+}}\IndL(\Hig_G(\chi)_{\preceq \mu})_w
\end{align*}
where the limit is taken with respect to the pull-backs (\ref{pback2}). 

For the open immersion $j_{12}$ in (\ref{map:j12}), 
by Proposition~\ref{prop:sod:KZ} there is a fully-faithful 
left adjoint of the functor $j_{12}^{\ast}$ in (\ref{pback2.0})
\begin{align}\label{j12!}
	j_{12!} \colon \LL(\Hig_G(\chi)_{\preceq \mu_2})_w \hookrightarrow 
	\LL(\Hig_G(\chi)_{\preceq \mu_1})_w. 
	\end{align}	
By taking its ind-completion, we obtain the continuous fully-faithful functor
\begin{align}\label{ind:j2}
	j_{12!} \colon \IndL(\Hig_G(\chi)_{\preceq \mu_2})_w \hookrightarrow 
	\IndL(\Hig_G(\chi)_{\preceq \mu_1})_w. 
	\end{align}
    Since $j_{12}^*$ admits continuous left adjoint $j_{12!}$ as above, 
by Proposition~\ref{prop:dgcat} (i) the dg-category 
$\IndL(\Hig_G(\chi))_w$ is also equivalent to the colimit 
\begin{align}\label{IndL:0}
	\IndL(\Hig_G(\chi))_w \simeq \colim_{\mu \in N(T)_{\mathbb{Q}+}} \IndL(\Hig_G(\chi)_{\preceq \mu})_w
	\end{align}
where the colimit is taken 
with respect to the functors (\ref{ind:j2}). 

In particular for the open immersion \begin{align*}j_{\mu} \colon \Hig_G(\chi)_{\preceq \mu} \hookrightarrow \Hig_G(\chi),\end{align*} 
the pull-back 
\begin{align*}
	j_{\mu}^{\ast} \colon \IndL(\Hig_G(\chi))_w \to \IndL(\Hig_G(\chi)_{\preceq \mu})_w
	\end{align*}
admits a left adjoint 
\begin{align}\label{jc!}
	j_{\mu!} \colon \IndL(\Hig_G(\chi)_{\preceq \mu})_w \hookrightarrow \IndL(\Hig_G(\chi))_w
	\end{align}
which is fully-faithful since each (\ref{ind:j}) is fully-faithful by Proposition~\ref{prop:dgcat} (iv). 

For $\mu_1 \preceq \mu_2$, by Proposition~\ref{prop:dgcat} (iv) the natural morphism 
$j_{12!} \to j_{\mu_2}^{\ast}j_{\mu_1!}$
is an isomorphism since (\ref{jc!}) is fully-faithful. 
In particular, the functor (\ref{jc!}) restricts to the functor 
\begin{align*}
    j_{\mu!} \colon \LL(\Hig_G(\chi)_{\preceq \mu})_w \hookrightarrow \Coh(\Hig_G(\chi))_w. 
\end{align*}
By Proposition~\ref{prop:dgcat} (iii), an object $\E$ in the left-hand side of (\ref{IndL:0}) is compact if and only 
if 
\begin{align}\label{jcE}\E \cong j_{\mu!}\E_{\mu} \in \Coh(\Hig_G(\chi))_w
	\end{align}
for some $\mu \in N(T)_{\mathbb{Q}+}$ and 
$\E_{\mu} \in \LL(\Hig_G(\chi)_{\preceq \mu})_w$. 

From the above argument together with Proposition~\ref{prop:dgcat}, 
    we conclude that the dg-category $\IndL(\Hig_G(\chi))_w$ is compactly generated 
with the subcategory of compact objects given by
\begin{align}\label{LL:colim}
\LL(\Hig_G(\chi))_w \simeq \colim_{\mu \in N(T)_{\mathbb{Q}+}} \LL(\Hig_G(\chi)_{\preceq \mu})_w. 
\end{align}
Here the colimit is taken with respect to (\ref{j12!}). 
The desired semiorthogonal decomposition follows from (\ref{LL:colim}) and Corollary~\ref{cor:higgsc}. 
\end{proof}

\subsection{Proof of Theorem~\ref{thm:nilp0},~\ref{thm:nilp}}\label{subsec:pfnil}
\begin{proof}
We only give a proof for Theorem~\ref{thm:nilp}, and the proof for Theorem~\ref{thm:nilp0}
is similar. 

 From the arguments in the previous subsections, it is enough to show that
 the semiorthogonal decomposition (\ref{sod:qbud}) restricts to the semiorthogonal decomposition 
 \begin{align*}
     \LL_{\mathcal{N}}(\mathcal{U})_{\delta}=\langle \Upsilon_{\mathcal{U}} \LL_{\mathcal{N}}(\mathcal{Z})_{\delta'}, j_{!}\LL_{\mathcal{N}}(\mathcal{U}_{\circ})_{\delta}\rangle.
 \end{align*}
 
 For $c \in C$, let $\Bun_G \to BG$ be the restriction map to $c$, $\mathcal{E}\mapsto \mathcal{E}|_{c}$. 
 By the arguments in Subsection~\ref{subsec:limsupp}, the above map induces the $\mathrm{Perf}^{\shear}(\fg\ssslash G)$-action 
 on $\IndCoh(\Hig_G)$, which also induce the action on $\LL(\mathcal{U})_{\delta}$. 
 Since the action of the generator $\mathcal{O}_{\fg\ssslash G}\in \mathrm{Perf}^{\shear}(\fg\ssslash G)$ 
 is trivial on isomorphism classes of objects, it restricts to the actions of the semiorthognal summands in (\ref{sod:qbud}). 
 
 Therefore we have 
 \begin{align*}
   \LL_{\mathcal{N}}(\mathcal{U})_{\delta}=\langle  \LL_{\mathcal{N}}(\mathcal{U})_{\delta}\cap\Upsilon_{\mathcal{U}}\LL(\mathcal{Z})_{\delta'},  \LL_{\mathcal{N}}(\mathcal{U})_{\delta}\cap j_{!}\LL(\mathcal{U}_{\circ})_{\delta}\rangle.
   \end{align*}
It is enough to show that 
\begin{align*}
     \LL_{\mathcal{N}}(\mathcal{U})_{\delta}\cap\Upsilon_{\mathcal{U}}\LL(\mathcal{Z})_{\delta'}=\Upsilon_{\mathcal{U}}\LL_{\mathcal{N}}(\mathcal{Z})_{\delta'}, \ \LL_{\mathcal{N}}(\mathcal{U})_{\delta}\cap j_{!}\LL(\mathcal{U}_{\circ})_{\delta}=j_{!}\LL_{\mathcal{N}}(\mathcal{U}_{\circ})_{\delta}.
\end{align*}
 The latter identity is obvious since $j_{!}$ is fully-faithful and 
 $j^*\colon \LL(\mathcal{U})_{\delta} \to \LL(\mathcal{U}_{\circ})_{\delta}$ is linear over  
 $\mathrm{Perf}^{\shear}(\fg\ssslash G)$. 
 
 For the first identity, note that 
 $\mathrm{Perf}^{\shear}(\fm\ssslash M)$ acts on 
 $\IndCoh(\Hig_M)$ which induces the action on 
 $\LL(\mathcal{Z})_{\delta'}$. 
 Let $\phi \colon \fm\ssslash M \to \fg\ssslash G$ be the morphism 
 induced by the inclusion $M\subset G$. The functor 
 \begin{align*}
     \phi^* \colon \mathrm{Perf}^{\shear}(\fg\ssslash G)\to \mathrm{Perf}^{\shear}(\fm\ssslash M)
 \end{align*}
 is a monoidal functor, hence $\mathrm{Perf}^{\shear}(\fg\ssslash G)$ acts on 
 $\IndCoh(\Hig_M)$ and $\LL(\mathcal{Z})_{\delta'}$.
 Since $\phi^{-1}(0)=0 \in \fm\ssslash M$, it
  is enough to show that the functor 
 \begin{align*}
     \Upsilon_{\mathcal{U}} \colon \LL(\mathcal{Z})_{\delta'} \to \LL(\mathcal{U})_{\delta}
 \end{align*}
 is linear over $\mathrm{Perf}^{\shear}(\fg\ssslash G)$. 
 We have the following diagram 
 \begin{align*}\Bun_M \stackrel{q}{\leftarrow} \Bun_P \stackrel{p}{\to} \Bun_G.
 \end{align*}
 It is enough to show that the functor 
 \begin{align*}
     p_{*}^{\Omega}q^{\Omega!} \colon \IndCoh(\Hig_M) \to \IndCoh(\Hig_P)
 \end{align*}
 is linear over $\mathrm{Perf}^{\shear}(\fg\ssslash G)$. The last claim easily follows from Lemma~\ref{lem:funct3} together with $\LL(\Omega_{BP})_{\frac{1}{2}}\simeq \mathrm{Perf}^{\shear}(\fm\ssslash M)$ from the computation in Subsection~\ref{subsub:GA1}.
\end{proof}

\subsection{The relation with limit of limit categories}\label{subsec:Ltilde}
Recall the definition of 
\begin{align*}\widetilde{\LL}(\Hig_{G}(\chi))_w=\lim_{\mathcal{U}\subset \Hig_G(\chi)}
\LL(\mathcal{U})_w.
\end{align*}
in 
(\ref{def:Ltilde0}). 
Here we investigate the relation of $\LL(\Hig_G(\chi))_w$ and $\widetilde{\LL}(\Hig_G(\chi))_w$.
In general they are different, but we have the following: 

\begin{cor}\label{cor:Ltilde}
We have 
\begin{align}\label{LsubLtilde}
\LL(\Hig_{G}(\chi))_w \subset \widetilde{\LL}(\Hig_{G}(\chi))_{w}
\end{align}
and an object $\mathcal{E} \in \widetilde{\LL}(\Hig_{G}(\chi))$
lies in $\LL(\Hig_{G}(\chi))_w$ if and only if 
    there is $\mu \in N(T)_{\mathbb{Q}+}$ such that. for any $A \in \widetilde{\LL}(\Hig_{G}(\chi))_w$
    satisfying 
    \begin{align}\label{Zc}
\mathrm{Supp}(A) \subset Z_{\mu}(\chi):=\Hig_{G}(\chi)\setminus\Hig_G(\chi)_{\preceq \mu},
    \end{align}
    we have $\Hom(\mathcal{E}, A)=0$. 
\end{cor}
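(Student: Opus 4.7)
The plan is to deduce both statements directly from the compact generation established in the proof of Theorem~\ref{thm:mainLG}, and in particular from the description
\[\LL(\Hig_G(\chi))_w \simeq \colim_{\mu \in N(T)_{\mathbb{Q}+}} \LL(\Hig_G(\chi)_{\preceq \mu})_w\]
together with the fully-faithful left adjoints $j_{\mu!} \colon \LL(\Hig_G(\chi)_{\preceq \mu})_w \hookrightarrow \LL(\Hig_G(\chi))_w$ coming from Proposition~\ref{prop:sod:KZ}. By Proposition~\ref{prop:dgcat}(iii), any $\mathcal{E} \in \LL(\Hig_G(\chi))_w$ is isomorphic to $j_{\mu!}\mathcal{E}_\mu$ for some $\mu$ and some $\mathcal{E}_\mu \in \LL(\Hig_G(\chi)_{\preceq \mu})_w$; by Proposition~\ref{prop:dgcat}(iv), the fully faithfulness of $j_{\mu!}$ gives $j_\mu^{\ast} j_{\mu!} \cong \mathrm{id}$, so that the restriction of $j_{\mu!}\mathcal{E}_\mu$ to any quasi-compact open $\mathcal{U}$ is the left adjoint, along $\mathcal{U}\cap \Hig_G(\chi)_{\preceq \mu} \hookrightarrow \mathcal{U}$, of the object $\mathcal{E}_\mu|_{\mathcal{U}\cap \Hig_G(\chi)_{\preceq \mu}} \in \LL(\mathcal{U}\cap \Hig_G(\chi)_{\preceq \mu})_w$, hence lies in $\LL(\mathcal{U})_w$. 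This compatible system of restrictions gives an object of $\widetilde{\LL}(\Hig_G(\chi))_w$, proving the inclusion (\ref{LsubLtilde}) and allowing us to view both $\LL$ and $\widetilde{\LL}$ as full subcategories of $\IndL(\Hig_G(\chi))_w$.

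For the characterization, the forward direction is straightforward: if $\mathcal{E}\cong j_{\mu!}\mathcal{E}_\mu$ and $A \in \widetilde{\LL}(\Hig_G(\chi))_w$ satisfies $\mathrm{Supp}(A) \subset Z_\mu(\chi)$, then $j_\mu^{\ast} A = 0$ because $Z_\mu(\chi) = \Hig_G(\chi)\setminus \Hig_G(\chi)_{\preceq\mu}$, whence adjunction yields $\Hom(\mathcal{E}, A) = \Hom(\mathcal{E}_\mu, j_\mu^{\ast} A) = 0$.

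For the converse, suppose $\mathcal{E} \in \widetilde{\LL}(\Hig_G(\chi))_w$ and $\mu \in N(T)_{\mathbb{Q}+}$ are such that $\Hom(\mathcal{E}, A) = 0$ for every $A \in \widetilde{\LL}(\Hig_G(\chi))_w$ with support contained in $Z_\mu(\chi)$. Set $\mathcal{E}_\mu := j_\mu^{\ast}\mathcal{E} \in \LL(\Hig_G(\chi)_{\preceq\mu})_w$, and consider the counit morphism $j_{\mu!}\mathcal{E}_\mu \to \mathcal{E}$ in $\IndL(\Hig_G(\chi))_w$. Let $C$ be its cone. By fully faithfulness of $j_{\mu!}$ we have $j_\mu^{\ast} C = 0$, so $\mathrm{Supp}(C) \subset Z_\mu(\chi)$; moreover, since each $\LL(\mathcal{U})_w$ is stable and contains both $\mathcal{E}|_\mathcal{U}$ and $(j_{\mu!}\mathcal{E}_\mu)|_\mathcal{U}$, the restriction $C|_\mathcal{U}$ lies in $\LL(\mathcal{U})_w$ for every QCA open $\mathcal{U}$, hence $C \in \widetilde{\LL}(\Hig_G(\chi))_w$. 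Applying $\Hom(\mathcal{E},-)$ to the triangle $j_{\mu!}\mathcal{E}_\mu \to \mathcal{E} \to C$ and using the vanishing of $\Hom(\mathcal{E}, C)$ and $\Hom(\mathcal{E}, C[-1])$ (both apply by hypothesis, since $C[-1]$ has the same support as $C$), we conclude that $\mathrm{id}_\mathcal{E}$ factors through $j_{\mu!}\mathcal{E}_\mu$, exhibiting $\mathcal{E}$ as a retract of a compact object in $\IndL(\Hig_G(\chi))_w$. Since $\LL(\Hig_G(\chi))_w$, being the subcategory of compact objects of a compactly generated dg-category, is closed under retracts, we conclude $\mathcal{E} \in \LL(\Hig_G(\chi))_w$.

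The main obstacle is verifying that the cone $C$ actually lives in $\widetilde{\LL}(\Hig_G(\chi))_w$ (rather than only in $\IndL$), since the hypothesis on $\mathcal{E}$ only tests against objects of $\widetilde{\LL}$. This reduces to checking, on each QCA open $\mathcal{U}$, that $\LL(\mathcal{U})_w$ is stable under cones; this in turn is immediate from the description of $\LL(\mathcal{U})_w \subset \Coh(\mathcal{U})_w$ by weight conditions, which are preserved under extensions.
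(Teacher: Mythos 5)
Your argument is correct and follows essentially the same route as the paper's own proof: the inclusion via the description of compact objects as $j_{\mu!}\mathcal{E}_{\mu}$ together with the fact that the functors $j_{!}$ preserve limit categories, the forward implication by adjunction using $j_{\mu}^{\ast}A=0$, and the converse via the counit triangle $j_{\mu!}j_{\mu}^{\ast}\mathcal{E}\to\mathcal{E}\to C$, whose cone lies in $\widetilde{\LL}(\Hig_G(\chi))_w$ and is supported on $Z_{\mu}(\chi)$. The only differences are cosmetic: you finish by exhibiting $\mathcal{E}$ as a retract of the compact object $j_{\mu!}j_{\mu}^{\ast}\mathcal{E}$ (and you spell out why the cone stays in $\widetilde{\LL}$, which the paper leaves implicit), whereas the paper concludes directly that the counit map is an isomorphism.
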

\begin{proof}
    The inclusion (\ref{LsubLtilde}) holds 
    by the description of compact objects (\ref{jcE}) and using the fact that the functor (\ref{ind:j2})
    preserves compact objects (by its construction). 

    We prove the second statement. Consider an object $\mathcal{E}$ 
    in $\LL(\Hig_{G}(\chi))_w$. It may be written as $\mathcal{E}=j_{\mu!}\mathcal{E}_{\mu}$
    as in (\ref{jcE}). Then, for the object $A$ satisfying (\ref{Zc}), we have 
    \begin{align*}
        \Hom(j_{\mu!}\mathcal{E}_{\mu}, A)=\Hom(\mathcal{E}_{\mu}, j_{\mu}^{\ast}A)=0. 
    \end{align*}
    Conversely, for $\mathcal{E} \in \widetilde{\LL}(\Hig_{G}(\chi))_w$, 
    we have the distinguished triangle 
    \begin{align}\label{tri:jc}
        j_{\mu!}j_{\mu}^{\ast}\mathcal{E} \to \mathcal{E} \to A
    \end{align}
    where $j_{\mu!}j_{\mu}^{\ast}\mathcal{E} \in \LL(\Hig_{G}(\chi))_w$
    and $A \in \widetilde{\LL}(\Hig_{G}(\chi))_w$. Since $j_{\mu}^*A=0$, the object $A$ 
    satisfies (\ref{Zc}). 
    If $\Hom(\mathcal{E}, A)=0$, then the first arrow in (\ref{tri:jc}) is an isomorphism. 
    Therefore we conclude that $\mathcal{E} \in \LL(\Hig_{G}(\chi))_w$. 
\end{proof}

There are also some cases where $\widetilde{\LL}(\Hig_G(\chi))_w$ is equivalent to 
$\LL(\Hig_G(\chi))_w$: 
\begin{cor}\label{cor:coprime}
Suppose that for any Levi subgroup $M$ and $w_M \in Z(M)^{\vee}$, 
we have $(a_M(w_M), \mu_M(w_M)) \neq (w, \mu_G(w))$,
except when $M=G$ and $w_M=w$. Then there are equivalences
\begin{align*}
\LL(\Hig_{G}(\chi))_w\stackrel{\sim}{\to} \widetilde{\LL}(\Hig_{G}(\chi))_w\stackrel{\sim}{\to}\mathbb{T}_{G}(\chi)_w. 
\end{align*}
\end{cor}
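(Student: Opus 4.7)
The plan is to observe that, under the coprimality hypothesis, the semiorthogonal decompositions of Theorem~\ref{thm:mainLG} and Corollary~\ref{cor:higgsc} collapse to the single summand $\mathbb{T}_G(\chi)_w$.

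First, I would verify the second equivalence $\LL(\Hig_G(\chi))_w \simeq \mathbb{T}_G(\chi)_w$. A nonzero summand $\mathbb{T}_M(\chi_M)_{w_M}$ in the SOD of Theorem~\ref{thm:mainLG} is indexed by a standard parabolic $P \subset G$ with Levi quotient $M$ and $(\chi_M, w_M) \in \pi_1(M) \times Z(M)^\vee$ satisfying $a_M(\chi_M, w_M) = (\chi, w)$ and $\mu_M(w_M) = \mu_G(w)$. In particular $(a_M(w_M), \mu_M(w_M)) = (w, \mu_G(w))$, which by hypothesis forces $M = G$ and $w_M = w$; then $\chi_M = \chi$ follows from $a_G(\chi_M) = \chi$, while the strict $W$-dominance condition $\mu_G(\chi) \in N(T)_{\mathbb{Q}+}^{W_G}$ is automatic since $W_G = W$. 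Thus only the $(G, \chi, w)$-summand survives.

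For the first equivalence, the family $\{\Hig_G(\chi)_{\preceq \mu}\}_{\mu \in N(T)_{\mathbb{Q}+}}$ is cofinal among QC open substacks of $\Hig_G(\chi)$ by~\eqref{cov:open} together with Remark~\ref{rmk:HN}, so that
\[
\widetilde{\LL}(\Hig_G(\chi))_w \simeq \lim_{\mu \in N(T)_{\mathbb{Q}+}} \LL(\Hig_G(\chi)_{\preceq \mu})_w
\]
with transition maps the pullbacks $j_{12}^*$. Applying the same coprimality analysis to Corollary~\ref{cor:higgsc}, each term is zero when $\mu_G(\chi) \not\preceq \mu$ and otherwise canonically equivalent to $\mathbb{T}_G(\chi)_w$, the equivalence being realized by iterating the fully-faithful left adjoints $j_!$ from Proposition~\ref{prop:sod:KZ} along the HN stratification, starting from $\Hig_G(\chi)^{\mathrm{ss}} = \Hig_G(\chi)_{\preceq \mu_G(\chi)}$ (the intermediate $\Upsilon$-summands all vanish under the hypothesis). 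On the cofinal subsystem $\mu \succeq \mu_G(\chi)$, the adjunction identity $j_{12}^* \circ j_{12!} \simeq \id$ makes the transition functors the identity on $\mathbb{T}_G(\chi)_w$; hence the limit is also $\mathbb{T}_G(\chi)_w$, and combined with the natural inclusion $\LL \subset \widetilde{\LL}$ from Corollary~\ref{cor:Ltilde} this yields both claimed equivalences.

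The main point requiring care is the compatibility between the identifications $\LL(\Hig_G(\chi)_{\preceq \mu})_w \simeq \mathbb{T}_G(\chi)_w$ for varying $\mu$, realized by iterated $j_!$'s, and the transition pullbacks $j_{12}^*$ used in the limit. This follows cleanly from the recollement structure of Proposition~\ref{prop:sod:KZ} and the functoriality of $j_!$ along compositions of open immersions, so I do not anticipate a serious obstacle.
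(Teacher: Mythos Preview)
Your proposal is correct and follows essentially the same approach as the paper: both use Corollary~\ref{cor:higgsc} to see that each $\LL(\Hig_G(\chi)_{\preceq \mu})_w$ collapses to $\mathbb{T}_G(\chi)_w$ under the hypothesis, then pass to the limit for $\widetilde{\LL}$. Your additional invocation of Theorem~\ref{thm:mainLG} for the $\LL$-side is a harmless redundancy (it follows from the same source via the colimit description~\eqref{LL:colim}), and the slight mislabeling of which equivalence is ``second'' is cosmetic.
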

\begin{proof}
Under the assumption, by Corollary~\ref{cor:higgsc},
there is a natural equivalence for $\mu \in N(T)_{\mathbb{Q}+}$:
\begin{align}\label{L:BPS}
    \LL(\Hig_{G}(\chi)_{\preceq \mu})_{w}
    \stackrel{\sim}{\to} \mathbb{T}_{G}(\chi)_w.
\end{align}

On the other hand, by the open covering (\ref{cov:open}) and the 
definition of $\widetilde{\LL}(\Hig_{G}(\chi))_w$ in 
(\ref{def:Ltilde0}), 
there is an equivalence
\begin{align*}
\widetilde{\LL}(\Hig_{G}(\chi))_w \simeq \lim_{\mu\in N(T)_{\mathbb{Q}+}}\LL(\Hig_{G}(\chi)_{\preceq \mu})_w
\end{align*}
where the limit is taken with respect to the pull-backs (\ref{pback2.0}). 
Therefore the corollary follows from (\ref{L:BPS}).
    \end{proof}

\begin{example}\label{exam:coprime}
The assumption of Corollary~\ref{cor:coprime} is satisfied if $G=\GL_r$ and $w\in Z(\GL_r)^{\vee}=\mathbb{Z}$ is 
coprime with $r$. 
\end{example}

\section{Hecke operators on limit categories}\label{sec:hecke}
In this section, we show that the limit categories for the moduli stacks of Higgs bundles
with $G=\GL_r$ admit
Hecke operators. In particular, we establish the existence of Hecke
operators on the direct sum of BPS categories. In general, Hecke operators do not preserve
stability conditions, which has posed a difficulty in constructing Hecke operators for Higgs
bundles. Since the limit category is defined for the stack without imposing the stability
condition, it provides the natural framework for constructing Hecke operators for Higgs bundles.

In the case of general reductive group $G$, we construct Hecke operators associated with minuscule coweights.

\subsection{Hecke actions on D-modules}
For a smooth projective curve $C$, let 
\[\mathcal{C}(m)\] be the moduli stack of 
zero-dimensional sheaves $Q\in \Coh^{\heartsuit}(C)$ such that 
$\chi(Q)=m$. 
The stack $\mathcal{C}(m)$ is a classical smooth 
stack of finite type (in particular quasi-compact) and $\dim \mathcal{C}(m)=0$, with good moduli space 
\begin{align*}
  \pi_m \colon   \mathcal{C}(m)\to \mathrm{Sym}^m(C), \ Q\mapsto \mathrm{Supp}(Q).
\end{align*}

Let $\mathcal{C}(m_1, m_2)$ be the moduli stack
classifying exact sequences in $\Coh^{\heartsuit}(C)$
\begin{align}\label{Q12}
0 \to Q_1 \to Q \to Q_2 \to 0
\end{align}
such that $\dim Q_i=0$ and $\chi(Q_i)=m_i$. 
We have the evaluation morphisms 
\begin{align}\label{dia:Cm12}
    \xymatrix{
\mathcal{C}(m_1, m_2) \ar[d]_-{q_{\mathcal{C}}} \ar[r]^-{p_{\mathcal{C}}} & \mathcal{C}(m_1+m_2) \\
\mathcal{C}(m_1)\times \mathcal{C}(m_2). & 
        }
\end{align}
Here $p_{\mathcal{C}}$ sends (\ref{Q12}) to $Q$ and $q_{\mathcal{C}}$ sends (\ref{Q12}) to $(Q_1, Q_2)$.
The functors 
\begin{align*}
    p_{\mathcal{C}*}q_{\mathcal{C}}^! \colon \text{D-mod}(\mathcal{C}(m_1)) \otimes  \text{D-mod}(\mathcal{C}(m_2))
    \to \text{D-mod}(\mathcal{C}(m_1+m_2))
\end{align*}
give a monoidal structure on 
\begin{align}\label{dmod:hall}
    \bigoplus_{m\in \mathbb{Z}_{\geq 0}}\text{D-mod}(\mathcal{C}(m)).
\end{align}
Since $q_{\mathcal{C}}$ is smooth and $p_{\mathcal{C}}$ is smooth and projective, the above monoidal 
structure restricts to the monoidal structure on the subcategory of coherent D-modules. 

The (classical) Hecke actions are the (right and left) actions of the monoidal category (\ref{dmod:hall}) 
on 
\begin{align}\label{dmod:heckebun}
\bigoplus_{\chi \in \mathbb{Z}}\text{D-mod}(\Bun_{\GL_r}(\chi))
\end{align}
given by the stack of Hecke correspondences, see (\ref{dia:hecke}) and (\ref{dia:hecke2}). 
In what follows, we construct the classical limit of Hecke actions as actions of 
quasi-BPS categories of zero-dimensional sheaves on limit categories. 

\begin{remark}
For a general reductive group $G$, the Hecke operator is given by the 
infinite-dimensional correspondence, as will be given by (\ref{Hecke:Gx}). 
The Hecke action discussed here is rather a classical version used in~\cite[Section~1]{FGV}. 
\end{remark}

\subsection{Quasi-BPS categories of zero-dimensional 
sheaves}\label{subsec:qbps:zero}

We define the derived stack $\mathcal{S}(m)$ to be 
\begin{align*}
\mathcal{S}(m):=\Omega_{\mathcal{C}(m)}.    
\end{align*}
Similarly to the case of Higgs bundles, the derived stack $\mathcal{S}(m)$ is equivalent to the derived moduli stack of zero-dimensional sheaves $Q\in \Coh^{\heartsuit}(S)$ on the local surface 
$S=\mathrm{Tot}_C(\omega_C)$ such that $\chi(Q)=m$. 

Let $\mathcal{Q}$ be the universal sheaf 
\begin{align*}
    \mathcal{Q} \in \Coh^{\heartsuit}(C \times \mathcal{C}(m)). 
\end{align*}
Using the same notation in (\ref{deltaw}), we 
define the following $\mathbb{Q}$-line bundle 
\begin{align}\notag
    \delta_w:=(\det \pi_{\mathcal{C}\ast} \mathcal{Q})^{w/m} \in \mathrm{Pic}(\mathcal{C}(m))_{\mathbb{Q}}. 
\end{align}
Here $\pi_{\mathcal{C}} \colon C\times \mathcal{C}(m) \to \mathcal{C}(m)$ is the projection. 
The following category is introduced and studied in~\cite{P2, PT2}:
\begin{defn}
The \textit{quasi-BPS category for zero-dimensional sheaves} is defined to be
\begin{align*}
    \mathbb{T}(m)_w :=\LL(\mathcal{S}(m))_{\delta=\delta_w} 
    \subset \Coh(\mathcal{S}(m)). 
\end{align*}
\end{defn}

We note the following lemma: 
\begin{lemma}\label{lem:omega12}
For any $\delta \in \mathrm{Pic}(\mathcal{C}(m))_{\mathbb{R}}$, we have 
\begin{align*}
    \LL(\mathcal{S}(m))_{\delta\otimes \omega_{\mathcal{C}(m)}^{1/2}}=\LL(\mathcal{S}(m))_{\delta}.
\end{align*}
\end{lemma}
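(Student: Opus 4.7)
The plan is to reduce the lemma to the assertion that for every map $\tilde{\nu}\colon B\mathbb{G}_m\to\mathcal{C}(m)$, the $\mathbb{G}_m$-weight $c_1(\tilde{\nu}^{*}\omega_{\mathcal{C}(m)})\in\mathbb{Z}$ vanishes. Indeed, the defining weight condition in Definition~\ref{def:Lcat} for membership in $\LL(\mathcal{S}(m))_{\delta}$ depends on $\delta$ only through the scalars $c_1(\nu^{*}\delta)$ for maps $\nu\colon B\mathbb{G}_m\to\mathcal{S}(m)$. Since $\delta$ and $\omega_{\mathcal{C}(m)}^{1/2}$ are both pulled back from $\mathcal{C}(m)$ via the projection $\pi\colon\mathcal{S}(m)=\Omega_{\mathcal{C}(m)}\to\mathcal{C}(m)$, each $\nu$ produces $\tilde{\nu}:=\pi\circ\nu$, and conversely every $\tilde{\nu}\colon B\mathbb{G}_m\to\mathcal{C}(m)$ lifts to some $\nu$ via the zero section. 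Thus the vanishing $c_1(\tilde{\nu}^{*}\omega_{\mathcal{C}(m)})=0$ yields $c_1(\nu^{*}\omega_{\mathcal{C}(m)}^{1/2})=0$ and the lemma.

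A point $\tilde{\nu}\colon B\mathbb{G}_m\to\mathcal{C}(m)$ corresponds to a $\mathbb{G}_m$-equivariant $0$-dimensional sheaf $Q\in\Coh^{\heartsuit}(C)$ with weight decomposition $Q=\bigoplus_{w\in\mathbb{Z}}Q_w$, where the $\mathbb{G}_m$-action on the base $C$ is trivial. The tangent complex is $\mathbb{T}_{\mathcal{C}(m)}|_Q\simeq\mathrm{RHom}_C(Q,Q)[1]$, concentrated in degrees $[-1,0]$ because $C$ is one-dimensional, so in $K(B\mathbb{G}_m)_{\mathbb{Q}}$ one obtains
\[
c_1(\tilde{\nu}^{*}\omega_{\mathcal{C}(m)})=-c_1(\mathbb{T}_{\mathcal{C}(m)}|_Q)=c_1(\Hom(Q,Q))-c_1(\Ext^1(Q,Q)).
\]
Serre duality on $C$ provides an isomorphism of $\mathbb{G}_m$-representations $\Ext^1(Q,Q)\simeq\Hom(Q,Q\otimes\omega_C)^{\vee}$. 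Since the line bundle $\omega_C$ is $\mathbb{G}_m$-equivariantly trivial on each stalk at a point of $\mathrm{Supp}(Q)$, the natural identification $\mathcal{H}om(Q,Q\otimes\omega_C)\simeq\mathcal{H}om(Q,Q)\otimes\omega_C$ shows that $\Hom(Q,Q\otimes\omega_C)$ has the same $\mathbb{G}_m$-character as $\Hom(Q,Q)$. Consequently $c_1(\Ext^1(Q,Q))=-c_1(\Hom(Q,Q))$ and the display simplifies to $c_1(\tilde{\nu}^{*}\omega_{\mathcal{C}(m)})=2\,c_1(\Hom(Q,Q))$.

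It remains to verify $c_1(\Hom(Q,Q))=\sum_{i,j}(j-i)a_{ij}=0$, where $a_{ij}:=\dim\Hom(Q_i,Q_j)$. Hirzebruch--Riemann--Roch for $0$-dimensional sheaves on a curve gives $\chi(\mathrm{RHom}(Q_j,Q_i))=0$, so $\dim\Hom(Q_j,Q_i)=\dim\Ext^1(Q_j,Q_i)$; combining this with Serre duality and the same triviality argument for $\omega_C$ yields
\[
a_{ji}=\dim\Ext^1(Q_j,Q_i)=\dim\Hom(Q_i,Q_j\otimes\omega_C)=\dim\Hom(Q_i,Q_j)=a_{ij}.
\]
The sum $\sum_{i,j}(j-i)a_{ij}$ is then antisymmetric under $i\leftrightarrow j$ and therefore vanishes, completing the proof. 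The only subtle point is the repeated invocation of the local $\mathbb{G}_m$-equivariant triviality of $\omega_C$ to match $\mathbb{G}_m$-characters, but this is elementary since $\mathbb{G}_m$ acts trivially on $C$ and $\omega_C$ is a line bundle.
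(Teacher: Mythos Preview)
Your proof is correct. Both you and the paper reduce to showing $c_1(\tilde{\nu}^{*}\omega_{\mathcal{C}(m)})=0$ for every $\tilde{\nu}\colon B\mathbb{G}_m\to\mathcal{C}(m)$, but the justifications differ. The paper argues that $\omega_{\mathcal{C}(m)}$ is pulled back from the good moduli space $\pi_m\colon\mathcal{C}(m)\to\mathrm{Sym}^m(C)$: \'etale-locally on $\mathrm{Sym}^m(C)$ the map $\pi_m$ is $\mathfrak{g}/G\to\mathfrak{g}\sslash G$ for a reductive $G$, where the canonical bundle of $\mathfrak{g}/G$ is trivial, so $\pi_m^{*}\pi_{m*}\omega_{\mathcal{C}(m)}\stackrel{\sim}{\to}\omega_{\mathcal{C}(m)}$ and any $\mathbb{G}_m$-weight is automatically zero. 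You instead compute directly using the identification $\mathbb{T}_{\mathcal{C}(m)}|_Q\simeq\mathrm{RHom}(Q,Q)[1]$, Serre duality, and Riemann--Roch for zero-dimensional sheaves. Your route is more self-contained and avoids the \'etale-local structure theorem; the paper's route yields the stronger statement that $\omega_{\mathcal{C}(m)}$ descends to $\mathrm{Sym}^m(C)$, a fact reused later (for instance when showing $\mathbb{D}'(T)\in\mathbb{T}(m)_0$ in the Hecke-operator arguments). As a minor remark, your argument can be shortened: once you know $\dim\Hom(Q_i,Q_j)=\dim\Ext^1(Q_i,Q_j)$ from $\chi=0$, and both sit in $\mathbb{G}_m$-weight $j-i$, you get $[\Hom(Q,Q)]=[\Ext^1(Q,Q)]$ in $K(B\mathbb{G}_m)$ directly, so $c_1(\omega_{\mathcal{C}(m)}|_Q)=0$ without the detour through Serre duality and the symmetry $a_{ij}=a_{ji}$.
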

\begin{proof}
We have the natural morphism 
\begin{align*}
    \pi_m^*\pi_{m*}\omega_{\mathcal{C}(m)}\to \omega_{\mathcal{C}(m)}. 
\end{align*}
The above morphism is an isomorphism. This follows from the fact that the map $\pi_m$ is locally (on $\mathrm{Sym}^m(C)$) isomorphic to 
\[\mathfrak{g}/G \to \fg\ssslash G\] for a reductive group $G$, and the canonical bundle on 
$\fg/G$ is trivial. Therefore, for any map $\nu \colon \bgm \to \mathcal{C}(m)$, we have 
$c_1(\nu^* \omega_{\mathcal{C}(m)})=0$, and the lemma follows. 
\end{proof}

For $(w_1, w_2)\in \mathbb{Z}^2$ such that $w_1/m_1=w_2/m_2$, 
by Proposition~\ref{prop:pback}, Theorem~\ref{thm:proj} and Lemma~\ref{lem:omega12}, 
we have the induced functor 
\begin{align}\label{Tm:prod}
\ast :=p_{\mathcal{C}\ast}^{\Omega}q_{\mathcal{C}}^{\Omega!} \colon 
    \mathbb{T}(m_1)_{w_1}\otimes \mathbb{T}(m_2)_{w_2}
    \to \mathbb{T}(m_1+m_2)_{w_1+w_2}. 
\end{align}
It is also given as follows. 
Let $\mathcal{S}(m_1, m_2)$ be the derived moduli 
stack classifying exact sequences in $\Coh^{\heartsuit}(S)$
\begin{align}\label{exact:Q}
    0 \to Q_1 \to Q \to Q_2 \to 0
\end{align}
such that $\dim Q_i=0$ and $\chi(Q_i)=m_i$. 
Similarly to (\ref{dia:Cm12}), 
we have the evaluation morphisms 
\begin{align*}
    \xymatrix{
\mathcal{S}(m_1, m_2) \ar[d]_-{q_{\mathcal{S}}} \ar[r]^-{p_{\mathcal{S}}} & \mathcal{S}(m_1+m_2) \\
\mathcal{S}(m_1)\times \mathcal{S}(m_2). & 
        }
\end{align*}
The following lemma is straightforward to check, and in fact the proof is the 
same as Lemma~\ref{lem:HeckeS} given later, so we omit details. 
\begin{lemma}\label{lem:easy}
The functor (\ref{Tm:prod}) is isomorphic to the functor 
\begin{align}\label{funct:omega2}
(-)\mapsto 
    p_{\mathcal{S}\ast}(q_{\mathcal{S}}^{\ast}(-) \otimes \omega_{q_{\mathcal{C}}}).  
\end{align}
Here $\omega_{q_{\mathcal{C}}}$ is the relative canonical line bundle of $q_{\mathcal{C}}$
pulled back to $\mathcal{S}(m_1)\times \mathcal{S}(m_2)$.
\end{lemma}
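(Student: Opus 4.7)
The plan is to unpack both sides of the claimed isomorphism through the Lagrangian correspondences that define $q_{\mathcal{C}}^{\Omega!}$ and $p_{\mathcal{C}*}^{\Omega}$, to identify the fiber product of these correspondences with $\mathcal{S}(m_1, m_2)$, and then to apply base change for ind-coherent sheaves together with the smoothness of $q_{\mathcal{C}}$.

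Explicitly, by Lemma~\ref{lem:beta} and the discussion in Subsection~\ref{subsec:funccotan}, since $q_{\mathcal{C}}$ is smooth and $p_{\mathcal{C}}$ is schematic one has Lagrangian correspondences
\begin{align*}
\Omega_{\mathcal{C}(m_1, m_2)} & \xleftarrow{\beta} \mathcal{A}:=q_{\mathcal{C}}^{*}\Omega_{\mathcal{C}(m_1)\times\mathcal{C}(m_2)} \xrightarrow{\alpha} \mathcal{S}(m_1)\times\mathcal{S}(m_2), \\
\Omega_{\mathcal{C}(m_1, m_2)} & \xleftarrow{\beta'} \mathcal{B}:=p_{\mathcal{C}}^{*}\Omega_{\mathcal{C}(m_1+m_2)} \xrightarrow{\alpha'} \mathcal{S}(m_1+m_2),
\end{align*}
with $\beta$ a closed immersion, $\alpha$ smooth, $\beta'$ quasi-smooth and $\alpha'$ proper. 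By definition, $p_{\mathcal{C}*}^{\Omega}q_{\mathcal{C}}^{\Omega!}(F) = \alpha'_{*}\beta'^{*}\beta_{*}\alpha^{!}F$.

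The key geometric input would be a moduli-theoretic identification
\begin{align*}
\mathcal{A}\times_{\Omega_{\mathcal{C}(m_1, m_2)}}\mathcal{B} \;\simeq\; \mathcal{S}(m_1, m_2),
\end{align*}
under which the two projections $\tilde{\beta}'$ to $\mathcal{A}$ and $\tilde{\beta}$ to $\mathcal{B}$ satisfy $\alpha\circ\tilde{\beta}' = q_{\mathcal{S}}$ and $\alpha'\circ\tilde{\beta} = p_{\mathcal{S}}$. On closed points, a lift in $\mathcal{A}$ of an exact sequence $E_{\bullet} = (0\to Q_1\to Q\to Q_2\to 0)$ is a pair of Higgs fields $\phi_i \colon Q_i\to Q_i\otimes\omega_C$, a lift in $\mathcal{B}$ is a single Higgs field $\phi \colon Q\to Q\otimes\omega_C$, and the two images in $\Omega_{\mathcal{C}(m_1, m_2)}$ coincide exactly when $\phi$ preserves the filtration and restricts to $\phi_1$, inducing $\phi_2$ on the quotient, i.e.\ when the data assembles into an exact sequence of coherent sheaves on $S = \mathrm{Tot}_{C}(\omega_C)$.

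Granting this identification, base change for ind-coherent sheaves (see \cite[Corollary~3.7.14]{MR3037900}), which applies because $\beta$ is a closed immersion, yields $\beta'^{*}\beta_{*} \simeq \tilde{\beta}_{*}\tilde{\beta}'^{*}$, hence
\begin{align*}
p_{\mathcal{C}*}^{\Omega}q_{\mathcal{C}}^{\Omega!}(F) \;=\; \alpha'_{*}\tilde{\beta}_{*}\tilde{\beta}'^{*}\alpha^{!}F \;=\; p_{\mathcal{S}*}\bigl(\tilde{\beta}'^{*}\alpha^{!}F\bigr).
\end{align*}
Since $\alpha$ is a base change of the smooth morphism $q_{\mathcal{C}}$, it is smooth with relative canonical equal to the pullback of $\omega_{q_{\mathcal{C}}}$, and the cohomological shift in $\alpha^{!}(-) = \alpha^{*}(-)\otimes\omega_{\alpha}[\dim q_{\mathcal{C}}]$ vanishes because the relative virtual dimension of $q_{\mathcal{C}}$ is zero (for zero-dimensional sheaves on the curve $C$ one has $\chi(\mathcal{H}om(Q_2,Q_1))=0$). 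Pulling back along $\tilde{\beta}'$ and using $\alpha\circ\tilde{\beta}' = q_{\mathcal{S}}$ therefore produces $q_{\mathcal{S}}^{*}F\otimes\omega_{q_{\mathcal{C}}}$, giving the formula~(\ref{funct:omega2}).

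The main obstacle will be making the derived moduli-theoretic identification $\mathcal{A}\times_{\Omega_{\mathcal{C}(m_1, m_2)}}\mathcal{B}\simeq \mathcal{S}(m_1, m_2)$ fully rigorous; this requires comparing the tangent complex of $\mathcal{C}(m_1, m_2)$ at a filtered sheaf to the space of compatible Higgs field triples on $(Q_1, Q, Q_2)$, and verifying that the natural comparison morphism is a quasi-isomorphism. Once this structural equivalence is in place, the rest of the argument is a direct base-change computation together with the smoothness of $\alpha$.
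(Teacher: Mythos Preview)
Your proposal is correct and follows essentially the same strategy as the paper, which defers the proof to that of Lemma~\ref{lem:HeckeS}: set up the two Lagrangian correspondences, identify their fiber product over $\Omega_{\mathcal{C}(m_1,m_2)}$ with $\mathcal{S}(m_1,m_2)$, and apply base change together with the smoothness of $\alpha$. The paper packages the derived identification you flag as the ``main obstacle'' via the equivalence $\mathcal{S}(m_1,m_2)\simeq \Omega_{(p_{\mathcal{C}},q_{\mathcal{C}})}[-1]$ (the $(-1)$-shifted conormal of the product map), invoking \cite[Proposition~5.1]{Totheta}; this is exactly the tangent-complex comparison you describe, and it gives a clean way to match the derived structures. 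Your observation that the shift $[\dim q_{\mathcal{C}}]$ vanishes here because $\chi_C(Q_2,Q_1)=0$ for zero-dimensional sheaves is the reason the formula in Lemma~\ref{lem:easy} carries no shift, in contrast to Lemma~\ref{lem:HeckeS}.
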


\begin{remark}
Over the point (\ref{exact:Q}) on $\mathcal{S}(m_1, m_2)$, 
the line bundle $\omega_{q_{\mathcal{C}}}$ is given by 
\begin{align}\label{omega:Q}
    \omega_{q_{\mathcal{C}}}|_{(0\to Q_1 \to Q\to Q_2 \to 0)}
    =\det \chi_C(\pi_{*}Q_1, \pi_{*}Q_2).
\end{align}
In particular, it is given by the pull-back of a line bundle on 
$\mathcal{S}(m_1)\times \mathcal{S}(m_2)$, which we also denote by 
$\omega_{q_{\mathcal{C}}}$. Then the functor (\ref{funct:omega2}) can be also written as 
\begin{align*}
(-)\mapsto 
    p_{\mathcal{S}\ast}(q_{\mathcal{S}}^{\ast}(- \otimes \omega_{q_{\mathcal{C}}})).  
\end{align*}

Indeed the line bundle $\omega_{q_{\mathcal{C}}}$ 
further descends to $\mathrm{Sym}^{m_1}(C) \times \mathrm{Sym}^{m_2}(C_2)$, since 
the expression (\ref{omega:Q}) only depends on the supports $\mathrm{Supp}(Q_i)$. 
Hence the functor $\otimes \omega_{q_{\mathcal{C}}}$ preserves $\mathbb{T}(m_1)_{w_1}\otimes \mathbb{T}(m_2)_{w_2}$.
    \end{remark}
The direct sum
\begin{align}\notag
    \mathbb{H}:=\bigoplus_{m\in \mathbb{Z}_{\geq 0}}\mathbb{T}(m)_0
\end{align}
is the categorical Hall algebra of quasi-BPS categories
of zero-dimensional sheaves (of weight zero), 
where the monoidal structure is given by (\ref{Tm:prod}). 
We have $\mathcal{S}(0)=\Spec k$ and 
the monoidal unit is given by 
\begin{align*}
    k \in \mathbb{T}(0)_0=\Coh(\mathrm{pt}).
\end{align*}
By taking the ind-completion, we also have the monoidal 
structure on 
\begin{align*}
\Ind \mathbb{H}:=\bigoplus_{m \in \mathbb{Z}_{\geq 0}}\Ind(\mathbb{T}(m)_0). 
\end{align*}

\subsection{Hecke operator (right action)}\label{subsec:Hecke}
We denote by 
\begin{align*}\mathrm{Hecke}_{\GL_r}(\chi, \chi+m)
\end{align*}
the moduli stack of Hecke correspondences, 
which classifies exact sequences
\begin{align}\label{ex:heart}
    0 \to F \to F' \to Q \to 0
\end{align}
where $F, F'$ are vector bundles on $C$ with 
\begin{align*}(\rank(F), \deg(F))=(r, \chi), \
(\rank(F'), \deg(F'))=(r, \chi+m)
\end{align*}
respectively, 
and $Q \in \Coh^{\heartsuit}(C)$ such that $\dim Q=0$ and 
$\chi(Q)=m$. 

We have the diagram of evaluation morphisms 
\begin{align}\label{dia:hecke}
    \xymatrix{
\mathrm{Hecke}_{\GL_r}(\chi, \chi+m) \ar[r]^-{p_{\mathrm{B}}^{}} \ar[d]_-{q_{\mathrm{B}}^{}=(q_{\mathrm{B}1}^{}, q_{\mathrm{B}2}^{})} & 
    \Bun_{\GL_r}(\chi+m) \\
    \Bun_{\GL_r}(\chi) \times \mathcal{C}(m). & 
    }
\end{align}
The morphism $p_{\mathrm{B}}^{}$ is smooth and projective, 
and the morphism $q_{\mathrm{B}}^{}$ is smooth. Moreover we have $\dim p_{\mathrm{B}}^{}=\dim q_{\mathrm{B}}^{}=rm$. 
 We have the induced functor 
 \begin{align}\label{funct:hecke}
     p^{\Omega}_{\mathrm{B}\ast}q_{\mathrm{B}}^{\Omega!} \colon 
     \Coh(\Hig_{\GL_r}(\chi))\otimes \Coh(\mathcal{S}(m)) \to \Coh(\Hig_{\GL_r}(\chi+m)). 
 \end{align}

Recall the category $\widetilde{\LL}(\Hig_G(\chi))_w$ from Subsection~\ref{subsec:Ltilde}. 
We have the following lemma: 

\begin{lemma}\label{lem:hecke}
The functor (\ref{funct:hecke}) restricts 
to the functor 
\begin{align}\notag
p_{\mathrm{B}\ast}^{\Omega} q_{\mathrm{B}}^{\Omega!} \colon 
\widetilde{\LL}(\Hig_{\GL_r}(\chi))_{\delta_w \otimes \omega_{\Bun}^{1/2}} \otimes \mathbb{T}(m)_0
\to \widetilde{\LL}(\Hig_{\GL_r}(\chi+m))_{\delta_w\otimes \omega_{\Bun}^{1/2}}.
    \end{align}
\end{lemma}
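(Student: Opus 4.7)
My plan is to combine the functoriality results for smooth pull-back (Proposition~\ref{prop:pback}) and projective push-forward (Theorem~\ref{thm:proj}), applied to the Hecke correspondence after restriction to compatible quasi-compact opens, with a verification of line bundle compatibility coming from the universal short exact sequence on the Hecke stack.

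First I would reduce to the quasi-compact setting. Since $\widetilde{\LL}$ is defined as an inverse limit over quasi-compact opens, see~\eqref{def:Ltilde}, it suffices to prove the assertion on a cofinal family of such opens. For each quasi-compact open $\mathcal{U}'\subset \Bun_{\GL_r}(\chi+m)$, the preimage $\mathrm{Hecke}_{\mathcal{U}'}:=p_{\mathrm{B}}^{-1}(\mathcal{U}')$ is quasi-compact because $p_{\mathrm{B}}$ is projective, and the image $q_{\mathrm{B}1}(\mathrm{Hecke}_{\mathcal{U}'})$ is therefore contained in some quasi-compact open $\mathcal{U}\subset \Bun_{\GL_r}(\chi)$. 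On the restricted correspondence
\[
\mathcal{U}\times \mathcal{C}(m)\xleftarrow{q_{\mathrm{B}}}\mathrm{Hecke}_{\mathcal{U}'}\xrightarrow{p_{\mathrm{B}}}\mathcal{U}',
\]
$q_{\mathrm{B}}$ remains smooth and $p_{\mathrm{B}}$ remains smooth and projective, each of relative dimension $rm$, so the hypotheses of Proposition~\ref{prop:pback} and Theorem~\ref{thm:proj} are met.

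Next I would invoke the two functoriality theorems. By Lemma~\ref{lem:omega12}, $\mathbb{T}(m)_0=\LL(\Omega_{\mathcal{C}(m)})_{\delta_0\otimes\omega_{\mathcal{C}(m)}^{1/2}}$, so $A\otimes B$ lies in $\LL(\Omega_{\mathcal{U}\times\mathcal{C}(m)})_{(\delta_w\boxtimes\delta_0)\otimes\omega^{1/2}}$. Proposition~\ref{prop:pback} applied to $q_{\mathrm{B}}$ sends $q_{\mathrm{B}}^{\Omega!}(A\otimes B)$ into $\LL(\Omega_{\mathrm{Hecke}_{\mathcal{U}'}})_{q_{\mathrm{B}}^*(\delta_w\boxtimes\delta_0)\otimes\omega_{\mathrm{Hecke}_{\mathcal{U}'}}^{1/2}}$, and Theorem~\ref{thm:proj} applied to $p_{\mathrm{B}}$ then descends the push-forward to $\LL(\Omega_{\mathcal{U}'})_{\delta_w\otimes\omega_{\mathcal{U}'}^{1/2}}$, provided the identity $p_{\mathrm{B}}^*\delta_w\cong q_{\mathrm{B}}^*(\delta_w\boxtimes\delta_0)$ holds at the level of rational first Chern classes on $\mathrm{Hecke}_{\mathcal{U}'}$; this is sufficient because the limit category depends on the twist only through its first Chern class.

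Third, I would verify this Chern-class identity via the universal short exact sequence $0\to\mathscr{F}\to\mathscr{F}'\to\mathscr{Q}\to 0$ on $C\times\mathrm{Hecke}_{\mathcal{U}'}$. Restricting to $\{c\}\times \mathrm{Hecke}_{\mathcal{U}'}$ gives $\det\mathscr{F}'|_c\cong \det\mathscr{F}|_c\otimes \det\mathscr{Q}|_c$, so the identity reduces to showing that the residual twist $(\det\mathscr{Q}|_c)^{w/r}=q_{\mathrm{B}2}^*(\det\mathcal{Q}|_c)^{w/r}$ contributes zero to the first Chern class. The main obstacle is this last step, which I expect to handle by exploiting Remark~\ref{rmk:ind:c} (the limit category depends only on $c_1(\delta_w)$ and is independent of the choice of $c\in C$): by replacing the fiberwise representative $(\det\mathscr{E}_c)^{w/r}$ of $\delta_w$ with the total determinant-of-cohomology $(\det R\pi_*\mathscr{E})^{w/r}$, which has the same first Chern class, the SES-additivity yields an honest line bundle identity on $\mathrm{Hecke}_{\mathcal{U}'}$, with the residual contribution on $\mathcal{C}(m)$ pairing trivially against every cocharacter $\nu\colon \bgm\to\Omega_{\mathcal{C}(m)}$ entering the weight-$0$ condition defining $\mathbb{T}(m)_0$. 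This careful Chern-class matching on cocharacters is where the bulk of the technical work will lie.
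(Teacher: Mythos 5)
Your overall route coincides with the paper's: reduce membership in $\widetilde{\LL}$ to quasi-compact open substacks of $\Bun_{\GL_r}(\chi+m)$, restrict the Hecke correspondence over such an open (where $q_{\mathrm{B}}$ is smooth and $p_{\mathrm{B}}$ is smooth projective), and combine Proposition~\ref{prop:pback} and Theorem~\ref{thm:proj} with a comparison of twists coming from the universal exact sequence. The gap is in that comparison. First, restricting $0\to\mathscr{F}\to\mathscr{F}'\to\mathscr{Q}\to 0$ to $\{c\}\times\mathrm{Hecke}$ is not exact when $\mathscr{Q}$ meets $c$, so the correct identity is $\det(\mathscr{F}'|_c)\cong\det(\mathscr{F}|_c)\otimes\det\chi(\mathscr{Q}\otimes^{\mathbb{L}}\mathcal{O}_c)$ with the \emph{derived} fiber. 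This is not a pedantic point: the underived term $\det(\mathscr{Q}|_c)$ is not a line bundle and has nonzero cocharacter weights, whereas the derived correction, whose class is $[\mathscr{Q}]-[\mathscr{Q}\otimes\mathcal{O}_C(-c)]$, has weight zero under every cocharacter of $\Aut(Q)$ (equivalently, the correction $\mathbb{Q}$-line bundle descends to $\mathrm{Sym}^m(C)$). That vanishing is precisely what makes the residual twist invisible to the weight-zero condition defining $\mathbb{T}(m)_0$, and it is the real content of the step; in the paper this residual bundle is the $\mathcal{L}_m$ with fiber $\det\chi(Q\otimes\mathcal{O}_c)^{w/r}$.

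Second, your proposed fix, replacing $\delta_w=(\det\mathscr{E}_c)^{w/r}$ by $(\det R\pi_*\mathscr{E})^{w/r}$ on the strength of Remark~\ref{rmk:ind:c}, does not work. Membership in the limit category is governed by $c_1(\nu^*\delta)$ for \emph{every} $\nu\colon\bgm\to\Omega_{\Bun}$, and these two line bundles already differ under the central $\mathbb{G}_m$ (weights $w$ versus $w\,\chi(F)/r$), so they define different categories; Remark~\ref{rmk:ind:c} only gives independence of the point $c\in C$, not of a representative with the same coarse first Chern class. Moreover, if you did run the argument with the determinant of cohomology, the residual twist on $\mathcal{C}(m)$ would become $(\det\pi_{\mathcal{C}\ast}\mathcal{Q})^{w/r}$, which has nonzero cocharacter weights (weight $mw/r$ under central scaling), so you would produce an action of a nonzero-weight component rather than $\mathbb{T}(m)_0$. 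The repair is to keep $\delta_w$, establish the honest identity $p_{\mathrm{B}}^*\delta_w=q_{\mathrm{B}}^*(\delta_w\boxtimes\mathcal{L}_m)$ with $\mathcal{L}_m$ as above, and conclude from the weight vanishing (together with Lemma~\ref{lem:omega12}) that $\LL(\mathcal{S}(m))_{\mathcal{L}_m\otimes\omega_{\mathcal{C}(m)}^{1/2}}=\mathbb{T}(m)_0$; the rest of your argument then goes through as written.
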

\begin{proof} 
    We have 
    \begin{align*}
        p_{\mathrm{B}}^{\ast}\delta_w =q_{\mathrm{B}}^{\ast}(\delta_w \boxtimes \mathcal{L}_m)
    \end{align*}
    where $\mathcal{L}_m$ is a $\mathbb{Q}$-line bundle on $\mathcal{C}(m)$
    whose fiber at $Q$ is $\det \chi(Q \otimes \mathcal{O}_{c})^{w/r}$ for $c\in C$. 
Indeed we have \begin{align*}p_{\mathrm{B}}^* \delta_w|_{(0 \to F \to F' \to Q \to 0)}&=
\det (F'|_{c})^{w/r}\\ &=\det (F|_{c})^{w/r} \otimes \det \chi(Q \otimes \mathcal{O}_{c})^{w/r}
\\ &=q_{\mathrm{B}}^{\ast}(\delta_w \boxtimes \mathcal{L}_m)|_{(0 \to F \to F' \to Q \to 0)}. \end{align*}
 
   We show that 
    the functor (\ref{funct:hecke}) restricts to the functor 
    \begin{align}\notag
p_{\mathrm{B}\ast}^{\Omega} q_{\mathrm{B}}^{\Omega!} \colon 
\widetilde{\LL}(\Hig_{\GL_r}(\chi))_{\delta_w \otimes \omega_{\Bun}^{1/2}} &\otimes \LL(\mathcal{S}(m))_{\mathcal{L}_m \otimes \omega_{\mathcal{C}(m)}^{1/2}} \\ \label{restrict:tilde}
&\to \widetilde{\LL}(\Hig_{\GL_r}(\chi+m))_{\delta_w\otimes \omega_{\Bun}^{1/2}}.
    \end{align}
Indeed for each quasi-compact open substack $U\subset \Bun_{\GL_r}(\chi+m)$, 
there is a quasi-compact open substack $U'\subset \Bun_{\GL_r}(\chi)$ such that 
the diagram (\ref{funct:hecke}) restricts to the diagram 
\begin{align*}
    U' \times \mathcal{C}(m) \stackrel{q_{\mathrm{B}}^{}}{\leftarrow} p_{\mathrm{B}}^{-1}(U) \stackrel{p_{\mathrm{B}}^{}}{\to} U.
\end{align*}
Therefore for $\mathcal{E}\in \widetilde{\LL}(\Hig_{\GL_r}(\chi))_{\delta_w \otimes \omega_{\Bun}^{1/2}}$ and $T\in \mathbb{T}(m)_0$, we have 
\begin{align*}
    p_{\mathrm{B}\ast}^{\Omega} q_{\mathrm{B}}^{\Omega!}(\mathcal{E}\boxtimes T)|_{\Omega_{U}}=p_{\mathrm{B}\ast}^{\Omega} q_{\mathrm{B}}^{\Omega!}(\mathcal{E}|_{\Omega_{U'}}\boxtimes T) \in 
    \LL(\Omega_{U})_{\delta_w \otimes \omega_{\Bun}^{1/2}}
\end{align*}
since $\mathcal{E}|_{\Omega_{U'}} \in \LL(\Omega_{U'})_{\delta_w \otimes \omega_{\Bun}^{1/2}}$
and using Proposition~\ref{prop:pback} and Theorem~\ref{thm:proj}. 
Therefore the functor (\ref{funct:hecke}) restricts to the functor (\ref{restrict:tilde}). 
    
    The $\mathbb{Q}$-line bundle $\mathcal{L}_m$ descends to $\mathrm{Sym}^m(C)$, 
    so it follows that 
    \begin{align*}
        \LL(\mathcal{S}(m))_{\mathcal{L}_m\otimes \omega_{\mathcal{C}(m)}^{1/2}}=\mathbb{T}(m)_0. 
    \end{align*}
    Therefore the lemma follows. 
\end{proof}

We show that the functor in Lemma~\ref{lem:hecke} induces the action on 
$\LL(\Hig_{\GL_r}(\chi))_w$. 
We denote by
\begin{align*}
    \mathrm{Hecke}_{\GL_r}^{\mathrm{Hig}}(\chi, \chi+m)
\end{align*}
the derived moduli stack which classifies exact sequences 
in $\Coh^{\heartsuit}(S)$
\begin{align}\label{exact:R}
    0 \to E \to E' \to R \to 0
\end{align}
where $E, E'\in \Coh^{\heartsuit}(S)$ are compactly supported pure one-dimensional
coherent sheaves (equivalently Higgs bundles on $C$)
satisfying 
\begin{align*}
(\mathrm{rank}(\pi_{\ast}E), \deg(\pi_{*}E))=(r, \chi), \ 
(\mathrm{rank}(\pi_{\ast}E'), \deg(\pi_{*}E'))=(r, \chi+m)
\end{align*}
and $R\in \Coh^{\heartsuit}(S)$ such that $\dim R=0$ and $\chi(R)=m$. 

We have the 
evaluation morphisms
\begin{align}\label{dia:heckeS}
    \xymatrix{
\mathrm{Hecke}_{\GL_r}^{\mathrm{Hig}}(\chi, \chi+m) \ar[r]^-{p_{\mathrm{H}}^{}} \ar[d]_-{q_{\mathrm{H}}^{}=(q_{\mathrm{H}1}^{}, q_{\mathrm{H}2}^{})} & 
    \Hig_{\GL_r}(\chi+m) \\
    \Hig_{\GL_r}(\chi) \times \mathcal{S}(m). & 
    }
\end{align}
The morphism $p_{\mathrm{H}}^{}$ is quasi-smooth and projective, 
and the morphism $q_{\mathrm{H}}^{}$ is quasi-smooth. 
Note that we have the projection 
\begin{align}\label{hecke:map}
    \mathrm{Hecke}_{\GL_r}^{\mathrm{Hig}}(\chi, \chi+m) \to \mathrm{Hecke}_{\GL_r}(\chi, \chi+m)
\end{align}
given by the push-forward along $\pi \colon S \to C$. 
\begin{lemma}\label{lem:HeckeS}
The functor (\ref{funct:hecke}) is isomorphic to a functor 
\begin{align*}
(-)\mapsto 
    p_{\mathrm{H}*}(q_{\mathrm{H}}^{*}(-)\otimes \omega_{q_{\mathrm{B}}^{}})[\dim q_{\mathrm{B}}^{}].
\end{align*}
Here $\omega_{q_{\mathrm{B}}^{}}$ is the relative canonical line bundle of the map
$q_{\mathrm{B}}^{}$ in the diagram (\ref{dia:hecke}), 
pulled back to $\mathrm{Hecke}_{\GL_r}^{\mathrm{Hig}}(\chi, \chi+m)$ via (\ref{hecke:map}). 
\end{lemma}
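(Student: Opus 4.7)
The plan is to expand the composition $p_{\mathrm{B}*}^{\Omega} \circ q_{\mathrm{B}}^{\Omega!}$ as push--pull along a single correspondence, identify that correspondence with $\mathrm{Hecke}_{\GL_r}^{\mathrm{Hig}}(\chi,\chi+m)$, and then track the twist and the shift coming from the fact that $q_{\mathrm{B}}$ is smooth. More precisely, let me write the Lagrangian correspondences
\[ \Omega_{\Bun_{\GL_r}(\chi)\times \mathcal{C}(m)} \xleftarrow{\alpha_1} q_{\mathrm{B}}^{*}\Omega_{\Bun_{\GL_r}(\chi)\times \mathcal{C}(m)} \xhookrightarrow{\beta_1} \Omega_{\mathrm{Hecke}_{\GL_r}} \xhookleftarrow{\beta_2} p_{\mathrm{B}}^{*}\Omega_{\Bun_{\GL_r}(\chi+m)} \xrightarrow{\alpha_2} \Omega_{\Bun_{\GL_r}(\chi+m)}, \]
so that $q_{\mathrm{B}}^{\Omega!}=\beta_{1*}\alpha_1^{!}$ and $p_{\mathrm{B}*}^{\Omega}=\alpha_{2*}\beta_2^{*}$. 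The inclusions $\beta_i$ are closed immersions because both $q_{\mathrm{B}}$ and $p_{\mathrm{B}}$ are smooth (see Lemma~\ref{lem:beta}), while $\alpha_1$ and $\alpha_2$ are smooth affine fibrations of the same relative dimensions as $q_{\mathrm{B}}$ and $p_{\mathrm{B}}$ respectively.

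The first step is to apply base change (Lemma~\ref{lem:funct2}) to the derived Cartesian square obtained from $\beta_1$ and $\beta_2$, getting
\[ p_{\mathrm{B}*}^{\Omega}\circ q_{\mathrm{B}}^{\Omega!}\;=\;\alpha_{2*}\,\beta'_{1*}\,\beta_2^{\prime *}\,\alpha_1^{!}, \]
where
\[ W := q_{\mathrm{B}}^{*}\Omega_{\Bun_{\GL_r}(\chi)\times \mathcal{C}(m)}\times_{\Omega_{\mathrm{Hecke}_{\GL_r}}} p_{\mathrm{B}}^{*}\Omega_{\Bun_{\GL_r}(\chi+m)} \]
is the derived intersection, with projections $\beta'_1,\beta'_2$. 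Since $\alpha_1$ is smooth of relative dimension $\dim q_{\mathrm{B}}=rm$ with relative canonical bundle $\omega_{\alpha_1}\cong \alpha_1^{*}q_{\mathrm{B}}^{*}\omega_{q_{\mathrm{B}}}$, one has $\alpha_1^{!}(-)=\alpha_1^{*}(-)\otimes \omega_{\alpha_1}[\dim q_{\mathrm{B}}]$; pulling the line bundle factor through the base change and projection formula, the composition becomes $(\alpha_2\circ\beta'_1)_{*}\bigl((\alpha_1\circ\beta'_2)^{*}(-)\otimes L\bigr)[\dim q_{\mathrm{B}}]$ where $L$ denotes the pull-back of $\omega_{q_{\mathrm{B}}}$ to $W$.

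The heart of the argument is the identification of $W$ with $\mathrm{Hecke}_{\GL_r}^{\mathrm{Hig}}(\chi,\chi+m)$ compatibly with projections: $\alpha_2\circ\beta'_1$ corresponds to $p_{\mathrm{H}}$, $\alpha_1\circ\beta'_2$ corresponds to $q_{\mathrm{H}}$, and $L$ is the pull-back of $\omega_{q_{\mathrm{B}}}$ along the natural map~\eqref{hecke:map}. A point of $W$ over $(F\hookrightarrow F'\twoheadrightarrow Q)\in \mathrm{Hecke}_{\GL_r}$ consists of compatible sections $(\phi,\psi)$ and $\phi'$ of $\mathbb{L}_{\Bun_{\GL_r}(\chi)\times\mathcal{C}(m)}$ and $\mathbb{L}_{\Bun_{\GL_r}(\chi+m)}$, whose images in $\mathbb{L}_{\mathrm{Hecke}_{\GL_r}}$ agree; by Lemma~\ref{lem:cotangent} and the analogous description for $\mathcal{C}(m)$, these data are triples of Higgs fields on $F$, $F'$, $Q$, and the compatibility in $\mathbb{T}_{\mathrm{Hecke}_{\GL_r}}$ is precisely the condition that they assemble into a morphism of the short exact sequence $0\to F\to F'\to Q\to 0$ into its $\omega_C$-twist. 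Under the spectral correspondence $\pi\colon S\to C$, this is the same as a short exact sequence $0\to E\to E'\to R\to 0$ in $\Coh^{\heartsuit}(S)$ with $E,E'$ pure one-dimensional and $R$ zero-dimensional, i.e.\ a point of $\mathrm{Hecke}_{\GL_r}^{\mathrm{Hig}}(\chi,\chi+m)$.

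The main obstacle is the last identification at the derived level: one must verify that the derived fiber product $W$ is equivalent to $\mathrm{Hecke}_{\GL_r}^{\mathrm{Hig}}$ as a derived stack, not merely on closed points. I plan to handle this by comparing their functors of points: the obstruction complex coming from the derived intersection in $\Omega_{\mathrm{Hecke}_{\GL_r}}$ matches the obstruction complex of a derived moduli of extensions in $\Coh(S)$, both controlled by the same $\mathrm{RHom}$-triangles attached to the filtered object $F\hookrightarrow F'$; after this check, the identification of the projection maps $\tilde p$ and $\tilde q$ with $p_{\mathrm{H}}$ and $q_{\mathrm{H}}$ is essentially tautological. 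Combining with the previous paragraph yields $p_{\mathrm{B}*}^{\Omega}q_{\mathrm{B}}^{\Omega!}(-)=p_{\mathrm{H}*}\bigl(q_{\mathrm{H}}^{*}(-)\otimes\omega_{q_{\mathrm{B}}}\bigr)[\dim q_{\mathrm{B}}]$, which is the desired identification.
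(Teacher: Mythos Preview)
Your proposal is correct and follows essentially the same route as the paper: expand $p_{\mathrm{B}*}^{\Omega}\circ q_{\mathrm{B}}^{\Omega!}$ via the two Lagrangian correspondences, apply base change to collapse them to a single correspondence through the derived fiber product $W$, extract the twist $\omega_{q_{\mathrm{B}}}[\dim q_{\mathrm{B}}]$ from the smooth upper-shriek, and identify $W$ with $\mathrm{Hecke}_{\GL_r}^{\mathrm{Hig}}(\chi,\chi+m)$. The only difference is in how the derived identification of $W$ is packaged: the paper observes that $W$ is, by construction, the $(-1)$-shifted conormal stack $\Omega_{(p_{\mathrm{B}},q_{\mathrm{B}})}[-1]$ of the map $(p_{\mathrm{B}},q_{\mathrm{B}})\colon \mathrm{Hecke}_{\GL_r}\to \Bun_{\GL_r}(\chi+m)\times\Bun_{\GL_r}(\chi)\times\mathcal{C}(m)$, and then invokes \cite[Proposition~5.1]{Totheta} for the equivalence $\mathrm{Hecke}_{\GL_r}^{\mathrm{Hig}}\simeq \Omega_{(p_{\mathrm{B}},q_{\mathrm{B}})}[-1]$, whereas you propose to verify this equivalence directly by comparing functors of points and obstruction complexes---which is exactly what the cited reference does.
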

\begin{proof}
    From the definitions of $p_{\mathrm{B}\ast}^{\Omega}$ and $q_{\mathrm{B}}^{\Omega!}$, 
    and the base change formula, 
    it is enough to show the existence of the diagram 
\begin{align*}
    \xymatrix{
 & \mathrm{Hecke}_{\GL_r}^{\mathrm{Hig}}(\chi, \chi+m)\ar[ld] \ar[rd] &  \\
 q_{\mathrm{B}}^{*}(\Omega_{\Bun_{\GL_r}(\chi)\times \mathcal{C}(m)}) \ar[d] \ar[rd]\ar@{}[rr]|\square & & 
p_{\mathrm{B}}^{\ast}\Omega_{\Bun_{\GL_r}(\chi+m)} \ar[ld] \ar[d]  \\
\Omega_{\Bun_{\GL_r}(\chi)\times \mathcal{C}(m)} &  
\Omega_{\mathrm{Hecke}_{\GL_r}(\chi, \chi+m)} &  \Omega_{\Bun_{\GL_r}(\chi+m)}
    }
\end{align*}
    such that the middle square is Cartesian, and the compositions of left arrows, 
    right arrows are identified with $q_{\mathrm{H}}^{}$, $p_{\mathrm{H}}^{}$, respectively. 
    
    The above diagram follows from the equivalence 
    \begin{align}\label{equiv:pq}
        \mathrm{Hecke}_{\GL_r}^{\mathrm{Hig}}(\chi, \chi+m) \stackrel{\sim}{\to}\Omega_{(p_{\mathrm{B}}^{}, q_{\mathrm{B}}^{})}[-1]
    \end{align}
   where $(p_{\mathrm{B}}^{}, q_{\mathrm{B}}^{})$ is the morphism 
   \begin{align*}
       (p_{\mathrm{B}}^{}, q_{\mathrm{B}}^{}) \colon \mathrm{Hecke}_{\GL_r}(\chi, \chi+m)\to 
       \Bun_{\GL_r}(\chi+m) \times \Bun_{\GL_r}(\chi)
       \times \mathcal{C}(m)
   \end{align*}
   and $\Omega_{(p_{\mathrm{B}}^{}, q_{\mathrm{B}}^{})}[-1]$ is the conormal stack of $(p_{\mathrm{B}}^{}, q_{\mathrm{B}}^{})$.
   The equivalence (\ref{equiv:pq}) follows from the same argument 
   of~\cite[Proposition~5.1]{Totheta}. 
\end{proof}

\begin{prop}\label{prop:hecke2}
    The functor (\ref{funct:hecke}) restricts to the functor 
\begin{align}\notag
p_{\mathrm{B}\ast}^{\Omega} q_{\mathrm{B}}^{\Omega!} \colon \LL(\Hig_{\GL_r}(\chi))_{\delta_w \otimes \omega_{\Bun}^{1/2}} \otimes \mathbb{T}(m)_0
\to \LL(\Hig_{\GL_r}(\chi+m))_{\delta_w\otimes \omega_{\Bun}^{1/2}}.
    \end{align}
    \end{prop}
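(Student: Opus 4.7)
The plan is to use the description of compact objects in $\IndL(\Hig_{\GL_r}(\chi))_w$ obtained in the proof of Theorem~\ref{thm:mainLG}. By Proposition~\ref{prop:dgcat}(iii) applied to the colimit presentation~\eqref{LL:colim}, every compact object $\mathcal{E} \in \LL(\Hig_{\GL_r}(\chi))_w$ is isomorphic to $j_{\mu!}\mathcal{E}_\mu$ for some $\mu \in N(T)_{\mathbb{Q}+}$ and some $\mathcal{E}_\mu \in \LL(\Hig_{\GL_r}(\chi)_{\preceq \mu})_w$. Since Lemma~\ref{lem:hecke} already guarantees that the functor lands in $\widetilde{\LL}(\Hig_{\GL_r}(\chi+m))_{\delta_w\otimes \omega_{\Bun}^{1/2}}$, it suffices to prove that each such generator is sent to a compact object in $\IndL(\Hig_{\GL_r}(\chi+m))_{\delta_w\otimes \omega_{\Bun}^{1/2}}$.

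The crucial step is a boundedness lemma: for each $\mu \in N(T)_{\mathbb{Q}+}$ and each $m \in \mathbb{Z}_{\geq 0}$, there exists $\mu' \in N(T)_{\mathbb{Q}+}$ such that
\[
p_{\mathrm{H}}\!\left(q_{\mathrm{H}}^{-1}\!\left(\Hig_{\GL_r}(\chi)_{\preceq \mu} \times \mathcal{S}(m)\right)\right) \subset \Hig_{\GL_r}(\chi+m)_{\preceq \mu'}.
\]
Indeed, $\mathrm{Hecke}^{\mathrm{Hig}}_{\GL_r}(\chi, \chi+m)$ is of finite type over $\Hig_{\GL_r}(\chi) \times \mathcal{S}(m)$ via $q_{\mathrm{H}}$, so $q_{\mathrm{H}}^{-1}(\Hig_{\GL_r}(\chi)_{\preceq \mu} \times \mathcal{S}(m))$ is quasi-compact and its image under the projective map $p_{\mathrm{H}}$ is a quasi-compact substack of $\Hig_{\GL_r}(\chi+m)$. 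By Remark~\ref{rmk:HN} and the HN exhaustion~\eqref{cov:open}, any such substack is contained in some $\Hig_{\GL_r}(\chi+m)_{\preceq \mu'}$. One can also verify this directly via an HN polygon estimate: for an elementary modification $0 \to F \to F' \to Q \to 0$ with $Q$ of length $m$ and any subbundle $F'' \subset F'$, one has $F'' \cap F \subset F$ and $F''/(F'' \cap F) \hookrightarrow Q$, which bounds the HN polygon of $F'$ in terms of that of $F$ and of $m$.

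Writing $H_{\mu, m} := q_{\mathrm{H}}^{-1}(\Hig_{\GL_r}(\chi)_{\preceq \mu} \times \mathcal{S}(m))$ and applying Proposition~\ref{prop:pback} to the smooth restricted map $q_{\mathrm{H}}|_{H_{\mu, m}}$ and Theorem~\ref{thm:proj} to the projective restricted map $p_{\mathrm{H}}|_{H_{\mu, m}}$, together with the identification $p_{\mathrm{B}\ast}^{\Omega} q_{\mathrm{B}}^{\Omega!} \cong p_{\mathrm{H}\ast}^{\Omega} q_{\mathrm{H}}^{\Omega!}$ (up to the twist handled in Lemma~\ref{lem:HeckeS}) and the matching of parameters carried out in the proof of Lemma~\ref{lem:hecke} (using Lemma~\ref{lem:omega12} and the descent of $\mathcal{L}_m$ to $\mathrm{Sym}^m(C)$), we obtain a functor
\[
\LL(\Hig_{\GL_r}(\chi)_{\preceq \mu})_{\delta_w\otimes \omega_{\Bun}^{1/2}} \otimes \mathbb{T}(m)_0 \longrightarrow \LL(\Hig_{\GL_r}(\chi+m)_{\preceq \mu'})_{\delta_w\otimes \omega_{\Bun}^{1/2}}.
\]
Post-composing with the fully faithful $j_{\mu'!}$ from~\eqref{jc!} yields a compact object in $\LL(\Hig_{\GL_r}(\chi+m))_{\delta_w\otimes \omega_{\Bun}^{1/2}}$, which by Proposition~\ref{prop:dgcat}(iv) coincides with the image of $j_{\mu!}\mathcal{E}_\mu \boxtimes T$ under Lemma~\ref{lem:hecke}. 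The main obstacle is the boundedness lemma above; the remainder of the argument is a formal combination of the functoriality from Section~\ref{sec:functL} with the compact object description from Theorem~\ref{thm:mainLG}.
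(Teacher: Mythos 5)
There is a genuine gap at the final identification step, and it is precisely the point where the real work lies. You write that post-composing the restricted Hecke functor with $j_{\mu'!}$ produces an object which ``by Proposition~\ref{prop:dgcat}(iv) coincides with the image of $j_{\mu!}\mathcal{E}_\mu \boxtimes T$ under Lemma~\ref{lem:hecke}.'' This is not justified: $j_{\mu!}$ is \emph{not} extension by zero. It is defined through window/semiorthogonal decompositions, and the resulting object $j_{\mu!}\mathcal{E}_\mu \in \Coh(\Hig_{\GL_r}(\chi))$ generally has support meeting the deeper Harder--Narasimhan strata (compare the model case $\X=\mathbb{A}^1/\mathbb{G}_m$ in Subsection~\ref{subsub:A1}, where $j_!$ of the generator is $\mathcal{O}_{\Omega_{\X}}(-1)$, which has full support). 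Consequently the restriction of $p_{\mathrm{B}\ast}^{\Omega}q_{\mathrm{B}}^{\Omega!}(j_{\mu!}\mathcal{E}_\mu\boxtimes T)$ to $\Hig_{\GL_r}(\chi+m)_{\preceq\mu'}$ receives contributions from the locus of $\Hig_{\GL_r}(\chi)$ outside $\Hig_{\GL_r}(\chi)_{\preceq\mu}$, so it need not agree with the ``restricted correspondence'' applied to $\mathcal{E}_\mu\boxtimes T$, and you cannot conclude compactness this way. What one actually has to prove is that the global Hecke image satisfies the compactness criterion of Corollary~\ref{cor:Ltilde}, i.e.\ that it is orthogonal to every object of $\widetilde{\LL}$ supported in sufficiently deep strata $Z_c(\chi+m)$. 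This is what the paper does: by adjunction one transports such an object $A$ through the \emph{dual} correspondence, checks via a canonical-bundle comparison (Lemma~\ref{lem:isomcan}) and the descent of $\omega_{\mathcal{C}(m)}$ that $A\otimes\mathbb{D}'(T)$ again produces an object of $\widetilde{\LL}(\Hig_{\GL_r}(\chi))$, and then uses the support estimate in the \emph{adjoint} direction (Lemma~\ref{lem:support}: deep strata pull back to deep strata) to invoke Corollary~\ref{cor:Ltilde} for the compact object $\mathcal{E}$. Your boundedness lemma is the forward direction (bounded strata map to bounded strata); it is true but it is not the estimate the argument needs.

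A secondary problem is your use of Proposition~\ref{prop:pback} and Theorem~\ref{thm:proj} for the ``restricted maps'' $q_{\mathrm{H}}|_{H_{\mu,m}}$, $p_{\mathrm{H}}|_{H_{\mu,m}}$. Those results concern morphisms $f\colon\X\to\Y$ of smooth QCA stacks and the induced Lagrangian correspondences between their full cotangent stacks. The HN truncations $\Hig_{\GL_r}(\chi)_{\preceq\mu}$ are open substacks of $\Omega_{\Bun_{\GL_r}(\chi)}$ but are \emph{not} cotangent stacks of open substacks of $\Bun_{\GL_r}(\chi)$ (the Higgs HN filtration uses $\theta$-invariant subsheaves), so restricting the correspondence over these loci is not covered by the functoriality of Section~\ref{sec:functL}; some additional argument would be needed even to define your intermediate functor between $\LL(\Hig_{\GL_r}(\chi)_{\preceq\mu})$ and $\LL(\Hig_{\GL_r}(\chi+m)_{\preceq\mu'})$. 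In the paper this issue does not arise because the quasi-compact truncations are taken at the level of $\Bun$ (as in the proof of Lemma~\ref{lem:hecke}), and the passage from $\widetilde{\LL}$ to $\LL$ is handled entirely by the orthogonality criterion rather than by a $j_!$-compatibility of the Hecke functor.
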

    \begin{proof}
We take $\mathcal{E} \in \LL(\Hig_{\GL_r}(\chi))_{\delta_w\otimes \omega_{\Bun}^{1/2}}$ and $T\in \mathbb{T}(m)_0$. 
For $c\geq 0$, let \begin{align*}Z_c(\chi) \subset \Hig_{\GL_r}(\chi)\end{align*}
be the closed substack consisting of Higgs bundles $(F, \theta)$ such that $\mu^{\mathrm{max}}(F, \theta)>c$, see Remark~\ref{rmk:G=GLr} for the notation. 
The substack $Z_c(\chi)$ consists of a disjoint union of Harder-Narasimhan strata. 
By Corollary~\ref{cor:Ltilde}, it is enough to show that 
there is $c \geq 0$ such that 
\begin{align}\label{ETA:0}
    \Hom(p_{\mathrm{B}\ast}^{\Omega}q_{\mathrm{B}}^{\Omega!}(\mathcal{E}\otimes T), A)=0
\end{align}
for any $A \in \widetilde{\LL}(\Hig_{\GL_r}(\chi+m))_{\delta_w\otimes \omega_{\Bun}^{1/2}}$ with
$\mathrm{Supp}(A) \subset Z_c(\chi+m)$. 

By Lemma~\ref{lem:HeckeS} and using adjunction, 
the left-hand side of (\ref{ETA:0}) is isomorphic to 
\begin{align*}
&\Hom(p_{\mathrm{H}\ast}(q_{\mathrm{H}}^*(\mathcal{E}\otimes T)\otimes \omega_{q_{\mathrm{B}}^{}}[\dim q_{\mathrm{B}}^{}]), A) \\
&\cong \Hom(q_{\mathrm{H}}^*(\mathcal{E}\otimes T)\otimes \omega_{q_{\mathrm{B}}^{}}[\dim q_{\mathrm{B}}^{}], p_{\mathrm{H}}^!A) \\
&\cong \Hom(\mathcal{E}, q_{\mathrm{H}1 \ast}^{}((p_{\mathrm{H}}^{}, q_{\mathrm{H}2}^{})^{*}(A\otimes\mathbb{D}'(T))\otimes \omega_{(p_{\mathrm{B}}^{}, q_{\mathrm{B}2}^{})}[\dim (p_{\mathrm{B}}^{}, q_{\mathrm{B}2}^{})])) \\
&\cong \Hom(\mathcal{E}, q_{\mathrm{B}1*}^{\Omega}(p_{\mathrm{B}}^{}, q_{\mathrm{B}2}^{})^{\Omega !}(A\otimes \mathbb{D}'(T))). 
\end{align*}
Here $q_{\mathrm{B}i}^{}, q_{\mathrm{H}i}^{}$ are the components of $q_{\mathrm{B}}^{}, q_{\mathrm{H}}^{}$
in the diagrams (\ref{dia:hecke}), (\ref{dia:heckeS}), 
$\mathbb{D}'(-)=\mathbb{D}(-) \otimes \omega_{\mathcal{C}(m)}[-2rm]$, 
and we have used maps in the following diagrams 
\begin{align}
 \notag&\xymatrix{
\mathrm{Hecke}_{\GL_r}(\chi, \chi+m) \ar[r]^-{q_{\mathrm{B}1}^{}} \ar[d]_-{(p_{\mathrm{B}}^{}, q_{\mathrm{B}2}^{})} & 
    \Bun_{\GL_r}(\chi) \\
    \Bun_{\GL_r}(\chi+m) \times \mathcal{C}(m), & 
    } \\
  \label{dia:HeckeS2}  &\xymatrix{
\mathrm{Hecke}_{\GL_r}^{\mathrm{Hig}}(\chi, \chi+m) \ar[r]^-{q_{\mathrm{H}1}^{}} \ar[d]_-{(p_{\mathrm{H}}^{}, q_{\mathrm{H}2}^{})} & 
    \Hig_{\GL_r}(\chi) \\
    \Hig_{\GL_r}(\chi+m) \times \mathcal{S}(m). & 
    }
\end{align}
The map $q_{\mathrm{B}1}^{}$ is smooth and projective, and $(p_{\mathrm{B}}^{}, q_{\mathrm{B}2}^{})$ is smooth, 
see Lemma~\ref{lem:smooth:hecke}. 
Moreover, we also used an isomorphism 
\begin{align}\label{isom:can}
    \omega_{p_{\mathrm{H}}^{}}\otimes \omega_{q_{\mathrm{B}}^{}}^{-1}\cong \omega_{(p_{\mathrm{B}}^{}, q_{\mathrm{B}2}^{})}\otimes \omega_{\mathcal{C}(m)}
\end{align}
on $\mathrm{Hecke}_{\GL_r}^{\mathrm{Hig}}(\chi, \chi+m)$, 
see Lemma~\ref{lem:isomcan}.

Since $\omega_{\mathcal{C}(m)}$ descends to $\mathcal{C}(m)\to \mathrm{Sym}^m(C)$, we have 
$\mathbb{D}'(T)\in \mathbb{T}(m)_0$. Hence, by an argument similar to Lemma~\ref{lem:hecke}, we have 
\begin{align*}
    q_{\mathrm{B}1*}^{\Omega}(p_{\mathrm{B}}^{}, q_{\mathrm{B}2}^{})^{\Omega !}(A\otimes \mathbb{D}'(T))
    \in \widetilde{\LL}(\Hig_{\GL_r}(\chi))_{\delta_w\otimes \omega_{\Bun}^{1/2}}. 
\end{align*}
The above object is supported on $Z_{c-m/r}(\chi)$ by Lemma~\ref{lem:support} below. 
Therefore by Corollary~\ref{cor:Ltilde}, for $c\gg 0$ we have the vanishing of (\ref{ETA:0}). 
    \end{proof}
    We have used the following lemmas:

\begin{lemma}\label{lem:isomcan}
    There is an isomorphism of line bundles (\ref{isom:can}). 
\end{lemma}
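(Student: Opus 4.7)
The plan is to exploit the equivalence $\mathrm{Hecke}_{\GL_r}^{\mathrm{Hig}}(\chi, \chi+m) \simeq \Omega_{(p_{\mathrm{B}}^{}, q_{\mathrm{B}}^{})}[-1]$ established in the proof of Lemma~\ref{lem:HeckeS}. Set $\mathcal{H} := \mathrm{Hecke}_{\GL_r}(\chi, \chi+m)$, $\mathcal{H}^{\mathrm{Hig}} := \mathrm{Hecke}_{\GL_r}^{\mathrm{Hig}}(\chi, \chi+m)$, $f := (p_{\mathrm{B}}^{}, q_{\mathrm{B}}^{})$, and write $\pi_{\mathcal{H}} \colon \mathcal{H}^{\mathrm{Hig}} \to \mathcal{H}$ for the canonical projection. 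Both sides of the desired isomorphism live in $\mathrm{Pic}(\mathcal{H}^{\mathrm{Hig}})$, and I will reduce the claim to a direct computation there.

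The first, and key, step is to establish the identity
\[\omega_{\mathcal{H}^{\mathrm{Hig}}} \cong \pi_{\mathcal{H}}^{*}\bigl(\omega_{\mathcal{H}} \otimes \omega_f\bigr).\]
Since $\mathcal{H}^{\mathrm{Hig}} = \Spec_{\mathcal{H}} \mathrm{Sym}(\mathbb{T}_f[1])$, the relative cotangent complex of $\pi_{\mathcal{H}}$ is $\mathbb{L}_{\pi_{\mathcal{H}}} = \pi_{\mathcal{H}}^{*}\mathbb{T}_f[1]$. Its determinant equals $\pi_{\mathcal{H}}^{*}\omega_f$: the shift $[1]$ inverts the determinant, which then combines with $\det \mathbb{T}_f = \omega_f^{-1}$ to produce $\omega_f$. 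Multiplying by $\pi_{\mathcal{H}}^{*}\omega_{\mathcal{H}}$ via the triangle $\pi_{\mathcal{H}}^{*}\mathbb{L}_{\mathcal{H}} \to \mathbb{L}_{\mathcal{H}^{\mathrm{Hig}}} \to \mathbb{L}_{\pi_{\mathcal{H}}}$ gives the formula.

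Next, I would eliminate the Higgs factors: for any smooth stack $\mathcal{X}$, the triangle $\pi^{*}\mathbb{L}_{\mathcal{X}} \to \mathbb{L}_{\Omega_{\mathcal{X}}} \to \pi^{*}\mathbb{T}_{\mathcal{X}}$ yields $\omega_{\Omega_{\mathcal{X}}} \cong \pi^{*}(\omega_{\mathcal{X}} \otimes \omega_{\mathcal{X}}^{-1}) \cong \mathcal{O}$. Applied to $\Bun_{\GL_r}(\chi+m)$ and to $\Bun_{\GL_r}(\chi) \times \mathcal{C}(m)$, this gives $\omega_{\Hig_{\GL_r}(\chi+m)} \cong \omega_{\Hig_{\GL_r}(\chi) \times \mathcal{S}(m)} \cong \mathcal{O}$; hence $\omega_{p_{\mathrm{H}}} \cong \omega_{\mathcal{H}^{\mathrm{Hig}}}$, and combining with the previous step expresses $\omega_{p_{\mathrm{H}}}$ entirely in terms of pullbacks from $\mathcal{H}$. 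Finally, the triangles $g^{*}\mathbb{L}_Y \to \mathbb{L}_{\mathcal{H}} \to \mathbb{L}_g$ for the smooth morphisms $g \in \{f, q_{\mathrm{B}}, (p_{\mathrm{B}}, q_{\mathrm{B}2})\}$ give
\[\omega_f \cong \omega_{\mathcal{H}} \otimes p_{\mathrm{B}}^{*}\omega_{\Bun_{\GL_r}(\chi+m)}^{-1} \otimes q_{\mathrm{B}1}^{*}\omega_{\Bun_{\GL_r}(\chi)}^{-1} \otimes q_{\mathrm{B}2}^{*}\omega_{\mathcal{C}(m)}^{-1},\]
and analogous expressions for $\omega_{q_{\mathrm{B}}}$ and $\omega_{(p_{\mathrm{B}}, q_{\mathrm{B}2})}$. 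After pulling back to $\mathcal{H}^{\mathrm{Hig}}$, the desired identity $\omega_{p_{\mathrm{H}}} \otimes \omega_{q_{\mathrm{B}}}^{-1} \cong \omega_{(p_{\mathrm{B}}, q_{\mathrm{B}2})} \otimes \omega_{\mathcal{C}(m)}$ reduces to a direct cancellation in $\mathrm{Pic}(\mathcal{H}^{\mathrm{Hig}})_{\mathbb{Q}}$: both sides equal $\omega_{\mathcal{H}} \otimes p_{\mathrm{B}}^{*}\omega_{\Bun_{\GL_r}(\chi+m)}^{-1}$.

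The main technical subtlety lies in the first step: one must verify that the two inversions — from the shift $[1]$ and from the dualization $\mathbb{T}_f = \mathbb{L}_f^{\vee}$ — cancel to produce $\omega_f$ rather than $\omega_f^{-1}$. Getting the sign right here is essential, as a reversal would turn the claimed identity into one that involves $\omega_{(p_{\mathrm{B}}, q_{\mathrm{B}})}^{\pm 2}$ and fails (as one can already see in the toy case $r = m = 1$, $g = 1$). Once this sign is secured, the rest of the argument is elementary bookkeeping.
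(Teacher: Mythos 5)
Your argument is correct, and it takes a genuinely different route from the paper's proof. The paper proceeds fiberwise: at a point $(0\to E\to E'\to R\to 0)$ it identifies the fibers of the four line bundles as $\det\chi_S(E,R)^{\vee}$, $\det\chi_C(\pi_*R,\pi_*E)$, $\det\chi_C(\pi_*E',\pi_*R)^{\vee}$ and $\det\chi_C(\pi_*R,\pi_*R)$, and then checks---using Serre duality on the surface $S=\mathrm{Tot}_C(\omega_C)$ (trivial canonical bundle), the comparison of $\chi_S$ with $\chi_C$ under $\pi_*$, Serre duality on $C$, and additivity in $K$-theory---that both sides of (\ref{isom:can}) have fiber $\det\chi_C(\pi_*E,\pi_*R)^{\vee}$. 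You instead combine the identification $\mathrm{Hecke}_{\GL_r}^{\mathrm{Hig}}(\chi,\chi+m)\simeq \Omega_{(p_{\mathrm{B}},q_{\mathrm{B}})}[-1]$ from the proof of Lemma~\ref{lem:HeckeS} with the triviality $\omega_{\Omega_{\X}}\cong\mathcal{O}$ for cotangent stacks, so that all four line bundles are pulled back from the classical Hecke stack via the projection $\pi_{\mathcal{H}}$ of (\ref{hecke:map}), and the identity becomes a formal cancellation: both sides equal $\pi_{\mathcal{H}}^{*}\bigl(\omega_{\mathcal{H}}\otimes p_{\mathrm{B}}^{*}\omega_{\Bun_{\GL_r}(\chi+m)}^{-1}\bigr)=\pi_{\mathcal{H}}^{*}\omega_{p_{\mathrm{B}}}$, whose fiber $\det\Hom_C(\pi_*E,\pi_*R)^{\vee}$ matches the paper's common value---a useful cross-check. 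Your sign bookkeeping in the key step is right: $\mathbb{L}_{\pi_{\mathcal{H}}}=\pi_{\mathcal{H}}^{*}\mathbb{T}_{(p_{\mathrm{B}},q_{\mathrm{B}})}[1]$ has determinant $\pi_{\mathcal{H}}^{*}\omega_{(p_{\mathrm{B}},q_{\mathrm{B}})}$. Two cosmetic remarks: $(p_{\mathrm{B}},q_{\mathrm{B}})$ need not be smooth, but you never use smoothness, only that the relative cotangent complexes are perfect, which holds because all stacks involved are smooth; and no passage to $\mathrm{Pic}(-)_{\mathbb{Q}}$ is needed, every step being integral. The trade-off: your argument works directly at the level of line bundles and makes the mechanism transparent (symplectic triviality of $\omega$ on cotangent stacks plus the shifted-conormal description of the Higgs Hecke stack), while the paper's computation is elementary manipulation of determinant lines and explicitly exhibits the common value as the determinant of an Euler pairing on the curve.
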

    \begin{proof}
    We have the following 
identifications of fibers
\begin{align*}
    &\omega_{p_{\mathrm{H}}^{}}|_{(0\to E\to E'\to R\to 0)}=\det \chi_S(E, R)^{\vee}, \\
    &\omega_{q_{\mathrm{B}}^{}}|_{(0\to E\to E'\to R\to 0)}=\det \chi_C(\pi_{*}R, \pi_{*}E), \\
     &\omega_{(p_{\mathrm{B}}^{}, q_{\mathrm{B}2}^{}})|_{(0\to E\to E'\to R\to 0)}=\det \chi_C(\pi_{*}E', \pi_{*}R)^{\vee}, \\
     &\omega_{\mathcal{C}(m)}|_{(0\to E\to E'\to R\to 0)}=\det\chi_C(\pi_{*}R, \pi_{*}R).
\end{align*}

Then we have 
\begin{align*}
    \det (\chi_S(E, R)^{\vee}-\chi_C(\pi_{*}R, \pi_{*}E))&=
    \det(\chi_S(R, E)-\chi_C(\pi_{*}R, \pi_{*}E)) \\
    &=\det (-\chi_C(\pi_{*}R\otimes \omega_C^{-1}, \pi_{*}E))\\
    &=\det \chi_C(\pi_{*}E, \pi_{*}R)^{\vee}. 
\end{align*}
A similar computation shows that the fiber of 
the right-hand side of (\ref{isom:can}) is also identified with $\det \chi_C(\pi_{*}E, \pi_{*}R)^{\vee}$, therefore the lemma holds. 
    \end{proof}
    \begin{lemma}\label{lem:support}
In the diagram (\ref{dia:HeckeS2}), we have 
\begin{align}\label{inc:Z}
q_{\mathrm{H}1}^{}(p_{\mathrm{H}}^{}, q_{\mathrm{H}2}^{})^{-1}(Z_c(\chi+m)\times \mathcal{S}(m)) \subset Z_{c-m}(\chi).
\end{align}
    \end{lemma}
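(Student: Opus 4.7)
The plan is to translate the statement into a question about slopes of Higgs subbundles under a Hecke modification. A geometric point of the left-hand side is (an isomorphism class of) an exact sequence $0 \to E \to E' \to R \to 0$ in $\Coh^{\heartsuit}(S)$ with $E, E'$ pure one-dimensional and $R$ zero-dimensional of length $m$, such that the Higgs bundle $(F',\theta')$ associated to $E'$ via $\pi_*$ has rank $r$, degree $\chi+m$ and satisfies $\mu^{\mathrm{max}}(F',\theta') > c$; one must show that the Higgs bundle $(F,\theta)$ associated to $E$ satisfies $\mu^{\mathrm{max}}(F,\theta) > c - m$. Pushing the sequence down to $C$ produces a short exact sequence $0 \to F \to F' \to Q \to 0$ of coherent sheaves on $C$ with $Q := \pi_*R$ torsion of length $m$, and $\theta$ is the restriction of $\theta'$ along $F \hookrightarrow F'$.

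First I would pick a Higgs subbundle $(F'_1,\theta'_1) \subset (F',\theta')$ realizing the maximal slope, so $\mu(F'_1) = \mu^{\mathrm{max}}(F',\theta') > c$, and set $F_1 := F \cap F'_1 \subset F'$. Since both $F$ and $F'_1$ are $\theta'$-stable inside $F'$, the intersection $F_1$ is automatically a $\theta$-invariant Higgs subsheaf of $(F,\theta)$ of the same generic rank as $F'_1$. The snake lemma applied to the two inclusions $F_1 \subset F'_1$ and $F \subset F'$ produces an injection $F'_1/F_1 \hookrightarrow F'/F \cong Q$, so $\ell_1 := \chi(F'_1/F_1) \le m$. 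Consequently
\[
\mu(F_1) \;=\; \mu(F'_1) - \frac{\ell_1}{\mathrm{rank}(F'_1)} \;\ge\; \mu(F'_1) - m \;>\; c - m,
\]
using only $\mathrm{rank}(F'_1) \ge 1$. Replacing $F_1$ by its saturation inside $F$ yields a $\theta$-invariant Higgs subbundle of $(F,\theta)$ of slope at least $\mu(F_1) > c-m$, so $\mu^{\mathrm{max}}(F,\theta) > c-m$, i.e.~$(F,\theta) \in Z_{c-m}(\chi)$.

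To promote this pointwise estimate to the claimed containment of substacks, I would use that $Z_c(\chi)$ is a union of Harder--Narasimhan strata (cf.~Remark~\ref{rmk:HN} and Subsection~\ref{subsec:HNhiggs}), so containment of closed substacks is detected on geometric points. There is no substantive obstacle: Higgs-invariance of $F \cap F'_1$ is automatic, saturation preserves $\theta$-invariance because the target $F \otimes \Omega_C$ is torsion-free, and the arithmetic bound $\ell_1/\mathrm{rank}(F'_1) \le m$ is elementary. The mildest care needed is the distinction between the derived and classical Hecke stacks in (\ref{inc:Z}), but since only set-theoretic support is at issue this reduces immediately to the classical truncation and hence to the $k$-point argument above.
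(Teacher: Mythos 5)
Your proposal is correct and is essentially the paper's argument: the paper likewise takes a Higgs subsheaf of $E'$ of slope $>c$ and passes to the kernel of its map to $R$ (your intersection $F\cap F'_1$ is the same subsheaf), losing at most $m$ in Euler characteristic while the rank is unchanged, which gives slope $>c-m$ and hence the containment on points. The only cosmetic differences are that the paper works directly with compactly supported sheaves on $S$ rather than with $(F,\theta)$ on $C$, and it does not bother with saturation, since any $\theta$-invariant subsheaf of slope $>c-m$ already forces $\mu^{\mathrm{max}}>c-m$.
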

\begin{proof}
We consider an exact sequence (\ref{exact:R}) in $\Coh^{\heartsuit}(S)$
such that 
\begin{align*}
    (p_{\mathrm{H}}^{}, q_{\mathrm{H}2}^{})(0 \to E \to E' \to R \to 0) \in Z_c(\chi+m)\times \mathcal{S}(m). 
\end{align*}
The above condition is equivalent to that there is a
subsheaf $F \subset E'$ such that $\mu(F)>c$. 
Let $F'=\mathrm{Ker}(F\hookrightarrow E' \twoheadrightarrow R)$. 
Then $\mu(F') >c-m$, hence 
the inclusion (\ref{inc:Z}) holds. 
\end{proof}

Recall the equivalence (\ref{equiv:1/2lim}) for a fixed choice of $\omega_C^{1/2}\in \mathrm{Pic}(C)$. Then by Lemma~\ref{lem:hecke}, together with Corollary~\ref{cor:coprime} and Example~\ref{exam:coprime}, we obtain the following corollary: 
\begin{cor}\label{cor:1}
For a fixed $(r, w)$, there is a right action of $\mathbb{H}$ on 
\begin{align*}
\bigoplus_{\chi \in \mathbb{Z}}\LL(\Hig_{\GL_r}(\chi))_w \left(=  \bigoplus_{\chi \in \mathbb{Z}}\mathbb{T}_{\mathrm{GL}_r}(\chi)_w \mbox{ if }(r, w) \mbox{ are coprime}\right).
\end{align*}
    \end{cor}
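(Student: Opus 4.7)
The plan is to assemble the action directly from Proposition~\ref{prop:hecke2} and then verify the monoidal axioms using the functorial properties of the functors $f^{\Omega!}$ and $f^{\Omega}_{*}$ established in Section~\ref{sec:functL}. First, fixing a choice of $\omega_C^{1/2} \in \mathrm{Pic}(C)$, the equivalence (\ref{equiv:1/2lim}) identifies $\LL(\Hig_{\GL_r}(\chi))_w$ with $\LL(\Hig_{\GL_r}(\chi))_{\delta_w\otimes \omega_{\Bun}^{1/2}}$ functorially in $\chi$. Under this identification, Proposition~\ref{prop:hecke2} yields, for each $(\chi,m)$, a bifunctor
\[
\alpha_{\chi,m}\colon \LL(\Hig_{\GL_r}(\chi))_w \otimes \mathbb{T}(m)_0 \longrightarrow \LL(\Hig_{\GL_r}(\chi+m))_w, \qquad (\mathcal{E},T)\mapsto p_{\mathrm{B}\ast}^{\Omega}q_{\mathrm{B}}^{\Omega!}(\mathcal{E}\boxtimes T).
\]
Collecting these bifunctors over all $(\chi,m)$ gives the candidate right action of $\mathbb{H}$.

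Next I would verify unitality and associativity. Unitality is immediate: when $m=0$, $\mathcal{S}(0)=\Spec k$, the Hecke stack $\mathrm{Hecke}_{\GL_r}^{\mathrm{Hig}}(\chi,\chi)$ collapses to $\Hig_{\GL_r}(\chi)$ with both projections being the identity, so $\alpha_{\chi,0}(-\boxtimes k)\cong \mathrm{id}$. For associativity, I would introduce the iterated Hecke stack $\mathrm{Hecke}_{\GL_r}^{\mathrm{Hig}}(\chi,\chi+m_1,\chi+m_1+m_2)$ parameterizing two-step filtrations $E\hookrightarrow E'\hookrightarrow E''$ in $\Coh^{\heartsuit}(S)$, together with the natural forgetful maps to the two ways of composing single-step Hecke correspondences. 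The standard short-exact-sequence manipulation shows both ways of iterating $\alpha$ are computed by pull-push along this common iterated stack. The identification of the two compositions with the single iterated pull-push is then an instance of Lemma~\ref{lem:funct} (composition of $f^{\Omega!}$ and $f^{\Omega}_{*}$) together with the base change Lemma~\ref{lem:funct2} applied to the two Cartesian decompositions of the iterated Hecke stack. The requisite smoothness/projectivity of the maps needed to apply these lemmas is inherited from that of $p_{\mathrm{B}}^{}$ and $q_{\mathrm{B}}^{}$ in (\ref{dia:hecke}). Compatibility with the monoidal structure on $\mathbb{H}$ given by (\ref{Tm:prod}) then reduces to matching the line-bundle twist $\omega_{q_{\mathcal{C}}}$ from Lemma~\ref{lem:easy} with the appropriate relative canonical appearing in the Hecke picture; this matching is a direct fiber-by-fiber computation of the form $\det\chi_C(\pi_{*}Q_1,\pi_{*}Q_2)$ as in Lemma~\ref{lem:isomcan}.

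The second statement, for $(r,w)$ coprime, follows immediately by combining the equivalence just constructed with Corollary~\ref{cor:coprime} and Example~\ref{exam:coprime}, which identify $\LL(\Hig_{\GL_r}(\chi))_w$ with $\mathbb{T}_{\GL_r}(\chi)_w$ in a manner compatible with pull-backs along open immersions, hence compatible with the Hecke action constructed via $q_{\mathrm{B}}^{\Omega!}$ and $p_{\mathrm{B}\ast}^{\Omega}$.

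The main obstacle I anticipate is the bookkeeping required to check associativity coherently, specifically ensuring that the relative canonical line bundles and shifts appearing in the comparison (via Lemma~\ref{lem:HeckeS}) between the $q^{\Omega!}$/$p^\Omega_*$ formulation on $(-1)$-shifted cotangents and the pull-push formulation on the classical Hecke stack match up consistently under iteration. This is essentially a careful computation using that the Hecke stack is (quasi-)smooth over each factor of $q_{\mathrm{B}}^{}$, so that the natural $\omega$-twists arising from $q_{\mathrm{B}}^{\Omega!}=\beta_{*}\alpha^!$ assemble compatibly; the numerical verification parallels the proof of Lemma~\ref{lem:isomcan} but in one further layer. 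Everything else—existence of the action, unitality, and the coprime specialization—is essentially a direct assembly of results already proved.
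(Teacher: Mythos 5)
Your proposal is correct and follows essentially the same route as the paper: the corollary is obtained by assembling the bifunctors of Proposition~\ref{prop:hecke2} under the identification (\ref{equiv:1/2lim}), with the coprime identification $\LL(\Hig_{\GL_r}(\chi))_w\simeq\mathbb{T}_{\GL_r}(\chi)_w$ supplied by Corollary~\ref{cor:coprime} and Example~\ref{exam:coprime}. Your extra sketch of unitality and associativity via iterated Hecke stacks, base change (Lemmas~\ref{lem:funct}, \ref{lem:funct2}) and the $\omega_{q_{\mathcal{C}}}$-twist matching is consistent with, and merely fills in, what the paper leaves implicit.
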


 

    \subsection{Hecke operator (left action)}
We also give a left Hecke action using the torsion pair of $\Coh^{\heartsuit}(C)$, 
following the idea of~\cite{DiaconescuPortaSala2022}. 
    
    Let $\mathcal{T} \subset \Coh^{\heartsuit}(C)$ be
    the subcategory of torsion sheaves and $\mathcal{F} \subset 
    \Coh^{\heartsuit}(C)$ the subcategory of torsion-free sheaves. 
    The pair $(\mathcal{T}, \mathcal{F})$ is a \textit{torsion pair} on 
    $\Coh^{\heartsuit}(C)$ and its tilting gives a heart, see~\cite{HRS}
    \begin{align*}
        \Coh^{\spadesuit}(C)=\langle \mathcal{F}, \mathcal{T}[-1] \rangle_{\mathrm{ex}} \subset \Coh(C). 
    \end{align*}
    Here $\langle -\rangle_{\mathrm{ex}}$ is the extension-closure. 
    We define 
    \begin{align*}
   \mathrm{Hecke}_{\GL_r}^{\spadesuit}(\chi+m, \chi)
    \end{align*}
    to be the moduli stack classifying exact sequences in 
    $\Coh^{\spadesuit}(C)$
    \begin{align}\label{exact:suit}
        0 \to Q[-1] \to F \to F' \to 0
    \end{align}
    where $F, F'$ are vector bundles on $C$ with 
    \begin{align*}(\rank(F), \deg(F))=(r, \chi), \ 
    (\rank(F'), \deg(F'))=(r, \chi+m)
    \end{align*}
    and $Q \in \Coh^{\heartsuit}(C)$ such that $\dim Q=0$ and $\chi(Q)=m$. 
    
    Since giving an exact sequence (\ref{ex:heart}) in $\Coh^{\heartsuit}(C)$ 
    is equivalent to giving 
    an exact sequence (\ref{exact:suit}) in $\Coh^{\spadesuit}(C)$, 
    there is an isomorphism
    \begin{align*}
       \mathrm{Hecke}_{\GL_r}^{\spadesuit}(\chi+m, \chi) \stackrel{\sim}{\to} 
        \mathrm{Hecke}_{\GL_r}(\chi, \chi+m)
    \end{align*}
    given by 
    \begin{align*}
        (0\to Q[-1] \to F\to F'\to 0) \mapsto (0\to F\to F' \to Q \to 0).
    \end{align*}
    
We have the diagram of evaluation morphisms 
\begin{align}\label{dia:hecke2}
    \xymatrix{
\mathrm{Hecke}_{\GL_r}^{\spadesuit}(\chi+m, \chi) \ar[r]^-{p'_{\mathrm{B}}} \ar[d]_-{q'_{\mathrm{B}}} & 
    \Bun_{\GL_r}(\chi) \\
   \mathcal{C}(m) \times \Bun_{\GL_r}(\chi+m). & 
    }
\end{align}
\begin{lemma}\label{lem:smooth:hecke}
The morphism $p'_{\mathrm{B}}$ is smooth and projective, 
and the morphism $q'_{\mathrm{B}}$ is smooth. 
\end{lemma}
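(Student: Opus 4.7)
The plan is to use the tilting bijection $(0\to Q[-1]\to F\to F'\to 0)\leftrightarrow (0\to F\to F'\to Q\to 0)$ to identify
\[\mathrm{Hecke}^{\spadesuit}_{\GL_r}(\chi+m,\chi)\cong \mathrm{Hecke}_{\GL_r}(\chi,\chi+m),\]
so the lemma becomes a statement about the ``opposite'' projections of the ordinary Hecke stack: under this identification $p'_{\mathrm{B}}=q_{\mathrm{B}1}$ (the first component of $q_{\mathrm{B}}$) and $q'_{\mathrm{B}}=(q_{\mathrm{B}2},p_{\mathrm{B}})$. I will then analyze these two maps separately, using the already-quoted smoothness of $q_{\mathrm{B}}$ and smoothness and projectivity of $p_{\mathrm{B}}$, plus the fact that $\mathcal{C}(m)$ is a classical smooth stack of dimension $0$.

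For $p'_{\mathrm{B}}=q_{\mathrm{B}1}$: smoothness is immediate from the factorization $q_{\mathrm{B}1}=\mathrm{pr}_1\circ q_{\mathrm{B}}$, where the projection $\Bun_{\GL_r}(\chi)\times\mathcal{C}(m)\to\Bun_{\GL_r}(\chi)$ is smooth of relative dimension $0$ since $\mathcal{C}(m)$ is smooth and $0$-dimensional. For projectivity, one identifies the fiber of $q_{\mathrm{B}1}$ over a bundle $F$: dualizing an upward modification $F\hookrightarrow F'$ of colength $m$ yields an inclusion $F'^\vee\hookrightarrow F^\vee$ whose cokernel is the torsion sheaf $\mathcal{E}xt^1(Q,\mathcal{O}_C)$ of length $m$, so the fiber is isomorphic to $\mathrm{Quot}^m(F^\vee)$. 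This Quot scheme is a smooth projective scheme of dimension $rm$ by standard Quot scheme theory on smooth curves, which gives representability, projectivity, and (after a short calculation of the relative polarization) the structure of a projective morphism.

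For $q'_{\mathrm{B}}=(q_{\mathrm{B}2},p_{\mathrm{B}})$: I will first compute the fiber over $(Q,F')$. Such a point corresponds to the set of short exact sequences $0\to F\to F'\to Q\to 0$ with $F'$ and $Q$ fixed, equivalently to surjections $F'\twoheadrightarrow Q$. Since $F'^\vee\otimes Q$ is torsion of length $rm$, one has $\mathrm{Hom}(F',Q)=H^0(F'^\vee\otimes Q)\cong k^{rm}$, and the surjective locus is open, so the fiber is a smooth open subscheme of $\mathbb{A}^{rm}$. Combining this with the observation that $\mathrm{Hecke}_{\GL_r}(\chi,\chi+m)$ is smooth of dimension $\dim\Bun_{\GL_r}(\chi+m)+rm$ (from smoothness of $p_{\mathrm{B}}$ of relative dimension $rm$) while $\mathcal{C}(m)\times\Bun_{\GL_r}(\chi+m)$ is smooth of dimension $\dim\Bun_{\GL_r}(\chi+m)$, miracle flatness applied to the morphism between smooth stacks yields flatness, and smoothness follows from the smoothness of the fibers.

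The main obstacle is the stack-theoretic verification of smoothness of $q'_{\mathrm{B}}$. In the scheme-theoretic setting miracle flatness applies verbatim; for stacks one either works on smooth atlases or, more robustly, proves smoothness by showing that the relative cotangent complex $\mathbb{L}_{q'_{\mathrm{B}}}$ is locally free in degree $0$. The latter reduces to computing $\mathbb{T}_{q'_{\mathrm{B}}}$ at a point, which by the deformation theory of the triple $(F,F',Q)$ with $(F',Q)$ fixed is $\mathrm{Hom}(F',Q)$ in degree $0$ (with no higher cohomology, since $F'^\vee\otimes Q$ is torsion); hence $\mathbb{T}_{q'_{\mathrm{B}}}$ is locally free of rank $rm$ concentrated in degree $0$, establishing smoothness. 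The other steps are essentially bookkeeping around standard facts about Quot schemes of vector bundles on curves.
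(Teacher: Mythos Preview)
Your proof is correct but takes a different route from the paper. The paper's argument is a one-line reduction via Serre duality: applying $\mathbb{D}(F)=F^{\vee}\otimes\omega_C$ to an exact sequence $0\to F\to F'\to Q\to 0$ yields $0\to\mathbb{D}(F')\to\mathbb{D}(F)\to\mathbb{D}(Q)\to 0$, which swaps the roles of the two bundles and hence identifies $p'_{\mathrm{B}}$ (resp.\ $q'_{\mathrm{B}}$) with the morphism $p_{\mathrm{B}}$ (resp.\ $q_{\mathrm{B}}$) of an ordinary Hecke stack with different numerical invariants; the smoothness and projectivity then come for free from the already-stated properties of $p_{\mathrm{B}},q_{\mathrm{B}}$. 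Your approach instead analyzes the fibers of $q_{\mathrm{B}1}$ and $(q_{\mathrm{B}2},p_{\mathrm{B}})$ directly, identifying them with Quot schemes and open subsets of affine spaces respectively, and concludes via the cotangent complex. The paper's argument is shorter and exploits a symmetry you did not use; your argument is more self-contained and makes the geometry of the fibers explicit, which is arguably more informative. Both are fine.
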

 \begin{proof}
For a vector bundle $F$ on $C$, we set 
$\mathbb{D}(F)=F^{\vee}\otimes \omega_C$. 
The statement for $p'_{\mathrm{B}}$ follows from the commutative diagram 
\begin{align*}
\xymatrix{
   \mathrm{Hecke}_{\GL_r}^{\spadesuit}(\chi+m, \chi) \ar[r]^-{\sim} \ar[d]_-{p'_{\mathrm{B}}} &
    \mathrm{Hecke}_{\GL_r}(\chi, \chi+m) \ar[r]_-{\mathbb{D}}^-{\sim} & \mathrm{Hecke}_{\GL_r}(-\chi-m, -\chi) \ar[d]_-{p_{\mathrm{B}}^{}} \\
    \Bun_{\GL_r}(\chi) \ar[rr]_{\mathbb{D}}^-{\sim} & & \Bun_{\GL_r}(-\chi). 
    }
\end{align*}
     Here the right top equivalence is given by 
     \begin{align*}
         (0 \to F \to F' \to Q \to 0) \mapsto (0 \to \mathbb{D}(F') \to \mathbb{D}(F) \to \mathbb{D}(Q) \to 0)
      \end{align*}
      where $\mathbb{D}(Q)=\mathcal{E}xt^1_C(Q, \omega_C)$.
      The statement for $q$ is similarly proved. 
 \end{proof}   
 Similarly to the right action, up to the equivalence (\ref{equiv:1/2lim}) we have the functor 
 \begin{align*}
     p_{\mathrm{B}\ast}'^{\Omega} q_{\mathrm{B}}'^{\Omega!} \colon 
     \mathbb{T}(m)_0 \otimes 
     \LL(\Hig_{\GL_r}(\chi+m))_w  \to 
     \LL(\Hig_{\GL_r}(\chi))_w.
 \end{align*}
Therefore we obtain the following corollary, 
which is a left analogue of Corollary~\ref{cor:1}.
\begin{cor}\label{cor:1prime}
For a fixed $(r, w)$, there is a left action of $\mathbb{H}$ on 
\begin{align*}
\bigoplus_{\chi \in \mathbb{Z}}\LL(\Hig_{\GL_r}(\chi))_w \left(=  \bigoplus_{\chi \in \mathbb{Z}}\mathbb{T}_{\mathrm{GL}_r}(\chi)_w \mbox{ if }(r, w) \mbox{ are coprime}\right).
\end{align*}
    \end{cor}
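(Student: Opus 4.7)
\begin{pf}[Proof proposal for Corollary~\ref{cor:1prime}]
The plan is to mirror the proof of Corollary~\ref{cor:1} step by step, substituting the Hecke correspondence of diagram~\eqref{dia:hecke} by the tilted Hecke correspondence of diagram~\eqref{dia:hecke2}. First, I would introduce the derived Higgs analogue $\mathrm{Hecke}_{\GL_r}^{\mathrm{Hig}, \spadesuit}(\chi+m,\chi)$ parametrizing exact triangles $Q[-1]\to E\to E'$ of compactly supported coherent sheaves on $S=\mathrm{Tot}_C(\omega_C)$ with the appropriate numerical invariants. By the same conormal description used to prove Lemma~\ref{lem:HeckeS} (via the analogue of~\eqref{equiv:pq}), this stack is equivalent to the $(-1)$-shifted conormal stack $\Omega_{(p'_{\mathrm{B}},q'_{\mathrm{B}})}[-1]$, and thus provides a geometric realization of the functor $p'^{\Omega}_{\mathrm{B}\ast}q'^{\Omega!}_{\mathrm{B}}$ as a pull-push through $(q'_{\mathrm{H}},p'_{\mathrm{H}})$ twisted by the relative canonical line bundle $\omega_{q'_{\mathrm{B}}}[\dim q'_{\mathrm{B}}]$.

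Next, using Lemma~\ref{lem:smooth:hecke} (which tells us $p'_{\mathrm{B}}$ is smooth projective and $q'_{\mathrm{B}}$ is smooth), Proposition~\ref{prop:pback} and Theorem~\ref{thm:proj} give well-defined functors on limit categories. The $\mathbb{Q}$-line bundle identification $p'^{\ast}_{\mathrm{B}}\delta_w \cong q'^{\ast}_{\mathrm{B}}(\mathcal{L}'_m\boxtimes \delta_w)$ for a suitable $\mathcal{L}'_m \in \mathrm{Pic}(\mathcal{C}(m))_{\mathbb{Q}}$ descending to $\mathrm{Sym}^m(C)$ (computed as in the proof of Lemma~\ref{lem:hecke}) then shows that the resulting functor sends $\mathbb{T}(m)_{0}\otimes \widetilde{\LL}(\Hig_{\GL_r}(\chi+m))_{\delta_w\otimes\omega_{\Bun}^{1/2}}$ into $\widetilde{\LL}(\Hig_{\GL_r}(\chi))_{\delta_w\otimes\omega_{\Bun}^{1/2}}$.

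The key remaining point, and in my view the main obstacle, is upgrading this statement from $\widetilde{\LL}$ to the genuine limit category $\LL$; that is, showing that compact objects go to compact objects. Following the strategy of Proposition~\ref{prop:hecke2}, one uses the characterization in Corollary~\ref{cor:Ltilde}: it suffices to prove that, for $\mathcal{E}\in \LL(\Hig_{\GL_r}(\chi+m))_{\delta_w\otimes\omega_{\Bun}^{1/2}}$ and $T\in \mathbb{T}(m)_0$, the object $p'^{\Omega}_{\mathrm{B}\ast}q'^{\Omega!}_{\mathrm{B}}(T\otimes \mathcal{E})$ is killed by $\Hom(-,A)$ for every $A\in \widetilde{\LL}(\Hig_{\GL_r}(\chi))_{\delta_w\otimes \omega_{\Bun}^{1/2}}$ whose support lies in $Z_{c}(\chi)$ for $c\gg 0$. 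By adjunction, this reduces to a support bound analogous to Lemma~\ref{lem:support}: one needs that the image under the relevant evaluation map of $p'^{-1}_{\mathrm{H}}(Z_c(\chi))\times \mathcal{S}(m)$ is contained in $Z_{c-C(m,r)}(\chi+m)$ for an explicit constant $C(m,r)$. This follows from the elementary observation that if $Q[-1]\to E\to E'$ is exact in $\Coh^{\spadesuit}(S)$ and $F\subset E$ has slope $>c$, then the induced morphism $F\to E'$ has kernel supported in dimension zero of length at most $m$, so the image $F'\subset E'$ satisfies $\mu(F')\geq c-m$.

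Finally, the coprime case follows immediately by applying Corollary~\ref{cor:coprime} and Example~\ref{exam:coprime}, under which $\LL(\Hig_{\GL_r}(\chi))_w\simeq \mathbb{T}_{\GL_r}(\chi)_w$. The monoidal compatibility of the action (associativity up to coherent higher homotopies) is a formal consequence of the associativity of the categorical Hall product~\eqref{Tm:prod} combined with base change in the natural three-term analogue of diagram~\eqref{dia:hecke2}, identical to the right-action case of Corollary~\ref{cor:1}.
\end{pf}
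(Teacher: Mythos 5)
Your proposal is correct and follows essentially the same route as the paper, which obtains the left action by mirroring the right-action argument (Lemma~\ref{lem:hecke}, Lemma~\ref{lem:HeckeS}, Proposition~\ref{prop:hecke2}, Corollary~\ref{cor:Ltilde}) through the tilted Hecke correspondence of diagram~(\ref{dia:hecke2}), using Lemma~\ref{lem:smooth:hecke} for smoothness/projectivity of the evaluation maps and Corollary~\ref{cor:coprime} with Example~\ref{exam:coprime} for the coprime case. One small remark: in your support bound the map $E\to E'$ arising from the tilted sequence is already injective as a map of sheaves (its kernel in $\Coh^{\heartsuit}$ vanishes), so a slope-$>c$ subsheaf of $E$ embeds into $E'$ with the same slope; your weaker estimate $\mu(F')\geq c-m$ of course still suffices for the argument.
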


    
\subsection{Hecke operators for arbitrary reductive groups}\label{subsec:heckeG}
For an arbitrary reductive group $G$, we do not expect an action of $\mathbb{H}$. 
In the context of geometric Langlands, we use the following stack 
\begin{align}\label{Hecke:Gx}
    \mathrm{Hecke}_{G, x}=\{(E, E', \phi): \phi \colon E|_{C\setminus x} \stackrel{\cong}{\to} E'|_{C\setminus x} \}
\end{align}
(or its Ran version, see~\cite[Section~4.4]{Gaitsout}). Since they are infinite-dimensional, 
it is difficult to deal with the limit categories for their cotangent stacks at this moment. 

Instead, following~\cite[Section~2]{DoPa}, for a dominant cocharacter $\mu \in N(T)_{+}$, 
we consider the following closed substack 
\begin{align}\label{Heckemu}
    \mathrm{Hecke}_{G, x}^{\mu} \subset \mathrm{Hecke}_{G, x}
\end{align}
consisting of tuples $(E, E', \phi)$ such that for any dominant weight $\lambda \in M(T)_{+}$, 
the isomorphism $\phi$ induces the injection 
\begin{align*}
    E\times^{G}V(\lambda) \hookrightarrow (E'\times^{G}V(\lambda))\otimes \mathcal{O}_C(\langle \mu, \lambda\rangle)
\end{align*}
where $V(\lambda)$ is the irreducible $G$-representation with highest weight $\lambda$. 
The substack (\ref{Heckemu}) is finite-dimensional, and it is smooth if and only if $\mu$ is minuscule. 

Then we have the following diagram 
\begin{align}\label{Hecke:dia:pq}
    \xymatrix{\mathrm{Hecke}_{G}^{\mu} \ar[r]^-{p_{\mathrm{B}x}^{\mu}} \ar[d]_-{q_{\mathrm{B}x}^{\mu}} & 
    \Bun_{G}(\chi+\overline{\mu}) \\ \Bun_G(\chi). & 
    }
\end{align}
Here $\overline{\mu}\in \pi_1(G)$ is the image of $\mu$ under the natural map 
$N(T)\twoheadrightarrow \pi_1(G)$.
The maps $p_{\mathrm{B}x}^{\mu}$, $q_{\mathrm{B}_x}^{\mu}$ are projective, 
and smooth if $\mu$ is minuscule (indeed fibers are Schubert varieties associated with 
the minuscule cocharacter $\mu$). Therefore under the assumption that $\mu$ is minuscule, 
similarly to Proposition~\ref{prop:hecke2}
we have the associated Hecke functor (with modification by shift)
\begin{align}\label{funct:Hmu}
    H_x^{\mu}:=(q_{\mathrm{B}_x}^{\mu})^{\Omega !}(p_{\mathrm{B}x}^{\mu})^{\Omega}_{*}[-\dim p_{\mathrm{B}_x}^{\mu}] \colon \LL(\Hig_G(\chi))_w \to \LL(\Hig_{G}(\chi+\overline{\mu}))_w. 
\end{align}
\begin{remark}\label{rmk:mini}
If $\mu$ is not minuscule, 
because of the singularities of (\ref{Heckemu}), it is still difficult to construct 
a Hecke functor as above. We expect the construction of such functors via solving the classical limit of geometric Satake correspondence, as formulated in~\cite[Conjecture~1.6, 1.7]{CW}.
\end{remark}

\begin{example}
If $G=\GL_r$, and take $\mu$ to be 
\begin{align*}
    \mu=(\overbrace{1, \ldots, 1}^{m}, 0, \ldots, 0) \in \mathbb{Z}^r
\end{align*}
    for $0\leq m\leq r$. It is minuscule coweight, and the fibers of the maps in (\ref{Hecke:dia:pq}) are Grassmannians $\mathrm{Gr}(r, m)$.
In this case, we can describe the Hecke functor $H_x^{\mu}$ in terms of a right action 
of an object $A_m \in \mathbb{T}(m)_0$, defined in (\ref{Am}) as follows. 

We have the following (non-Cartesian) diagram 
\begin{align*}
\xymatrix{
\mathfrak{gl}_m/\GL_m \inclusion^-{i} \ar[d] & \mathcal{S}(m) \ar[d] \\
B\GL_m \inclusion & \mathcal{C}(m). 
}
    \end{align*}
Here 
the bottom horizontal arrow is the closed immersion corresponding to the closed point 
$F=\mathcal{O}_x^{\oplus m}$, and the top arrow corresponds 
to the Higgs field 
$\Hom(F, F\otimes \omega_C)=\mathfrak{gl}_m$. 
We set 
\begin{align}\label{Am}
    A_m :=i_{\ast}\mathcal{O}_{\mathfrak{gl}_m/\GL_m}[-mr] \in \Coh(\mathcal{S}(m)). 
\end{align}
It is straightforward to check that the functor $H_x^{\mu}$ is given by 
the right action of $A_m$. 
\end{example}
\subsection{Wilson operators}
On the spectral side, let 
\begin{align*}
    \mathscr{E}_{G, x}\to \Bun_G
\end{align*}
be the universal $G$-bundle restricted to $x$ as in (\ref{univ:Pc}).
The coweight $\mu$ is regarded as a weight for the Langlands dual group $^{L}G$, 
and we consider the associated vector bundle 
\begin{align*}
    \mathcal{V}_{x}^{\mu} :=\mathscr{E}_{G, x}\times^{G} V(\mu) \to \Bun_G. 
\end{align*}
The Wilson operator is defined by 
\begin{align*}
W_x^{\mu} :=
    \otimes (\mathcal{V}_{x}^{\mu})^{\vee} \colon \Coh(\Hig_{^{L}G}(w)^{\mathrm{ss}})_{-\chi}
    \to \Coh(\Hig_{^{L}G}(w)^{\mathrm{ss}})_{-\chi-\overline{\mu}}.
\end{align*}

Then we can formulate the following compatibility of Wilson/Hecke operators: 
\begin{conj}\label{conj:Hecke}
A
conjectural equivalence 
\begin{align*}
    \Coh(\Hig_{^{L}G}(w)^{\mathrm{ss}})_{-\chi} \stackrel{\sim}{\to} 
    \LL(\Hig_{G}(\chi))_{w}
\end{align*}
in Conjecture~\ref{conj:Higgs}
intertwines $W_x^{\mu}$ and $H_x^{\mu}$ for minuscule coweight $\mu$. 
\end{conj}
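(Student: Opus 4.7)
The plan is to approach the conjecture by building up from cases in which the underlying Dolbeault equivalence of Conjecture~\ref{conj:Higgs} is actually available, and then extending by linearity over the Hitchin base. First I would verify the Wilson/Hecke compatibility over the smooth locus $\mathrm{B}^{\mathrm{sm}}\subset\mathrm{B}$ of the Hitchin base, where by~\cite{DoPa, Mu1} both sides of~\eqref{intro:eq:CohL} become the derived categories of a pair of dual abelian fibrations and the equivalence is realized by a relative Fourier–Mukai transform. Over this locus the Wilson operator $W_x^{\mu}$ tensors with the rank-$\dim V(\mu)$ vector bundle $(\mathcal{V}_x^{\mu})^{\vee}$, while the Hecke operator $H_x^{\mu}$ acts through the correspondence~\eqref{Hecke:dia:pq}. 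For minuscule $\mu$ the fibers of $p^{\mu}_{\mathrm{B}x}, q^{\mu}_{\mathrm{B}x}$ are the Schubert varieties associated with $\mu$, and under spectral data the Hecke modification at $x$ becomes a translation along the spectral abelian fiber by a class computed from the weights of $V(\mu)$. The standard Fourier–Mukai dictionary then identifies this translation with tensoring by the Langlands-dual vector bundle, giving the desired intertwining in a form refining~\cite[Sec.~2]{DoPa}.

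Second, I would propagate the statement from $\mathrm{B}^{\mathrm{sm}}$ to all of $\mathrm{B}$ using the expected $\mathrm{B}$-linearity of the equivalence in Conjecture~\ref{conj:Higgs}. Both $W_x^{\mu}$ and $H_x^{\mu}$ are manifestly $\mathrm{B}$-linear: the Wilson operator is a tensor product by a bundle pulled back from $\Bun_{^{L}G}$, and the Hecke operator commutes with the Hitchin map since Hecke modifications at a fixed point $x$ do not change the characteristic polynomial of the Higgs field away from $x$. Thus the natural transformation $\Phi\circ H_x^{\mu}\Rightarrow W_x^{\mu}\circ \Phi$ (where $\Phi$ denotes a chosen equivalence in Conjecture~\ref{conj:Higgs}) is a morphism of $\mathrm{B}$-linear functors, and an isomorphism on the dense open subscheme of $\mathrm{B}^{\mathrm{sm}}$ should imply an isomorphism globally once one knows that both sides are suitably flat over $\mathrm{B}$.

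Third, I would use the semiorthogonal decompositions of Theorem~\ref{thm:PThiggs} and Theorem~\ref{thm:mainLG} as a descent tool, not as a direct preservation statement. By Proposition~\ref{prop:Hecke:sod}, the Hecke operator $H_x^{\mu}$ does not respect the semiorthogonal filtration but has controlled amplitude, so the compatibility with $W_x^{\mu}$ reduces, after taking associated graded pieces, to analogous compatibilities at the level of BPS categories $\mathbb{T}_M(\chi_M)_{w_M}$ of Levi subgroups $M$. These in turn should follow by induction on rank from Conjecture~\ref{intro:conj:qbps} once we know the Hecke action at $x$ on $\LL(\Hig_G(\chi))_w$ is compatible with parabolic induction from Levi subgroups containing a fixed cocharacter; such compatibility is a classical-limit shadow of the geometric Satake equivalence for minuscule representations and is reasonable to expect when $\mu$ is minuscule because the Hecke stack $\mathrm{Hecke}_{G,x}^{\mu}$ is smooth and the corresponding affine Grassmannian stratum is a partial flag variety.

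The main obstacle is twofold. On the conceptual side, since the equivalence $\Phi$ is itself only conjectural, the statement can only be genuinely proved in cases where $\Phi$ has been constructed, e.g.\ for $G=\GL_r$ over the elliptic locus~\cite{Ardual}, in genus zero (Subsection~\ref{subsubsec:g=0}), or for $\mathrm{SL}_2/\mathrm{PGL}_2$. On the technical side, the interaction between the Hecke correspondence at $x$ and the Harder–Narasimhan stratification of $\Hig_G$ is delicate: a Hecke modification at $x$ can change the HN polygon in ways that shift between different semiorthogonal summands of~\eqref{intro:sod:L}, and controlling this shift on the spectral side — where $W_x^{\mu}$ is just a tensor product by a globally defined vector bundle and respects stability only up to a bounded perturbation — requires a precise comparison of how the two operators interact with the category $\MM(\mathcal{V}^L)^\vee_{\delta}$ in the local model of Subsection~\ref{subsec:embL}, presumably via a relative version of geometric Satake for the Hitchin fibration along the lines suggested in Remark~\ref{remark:newDolb}.
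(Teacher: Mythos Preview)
The statement you are attempting to prove is labeled as a \emph{conjecture} in the paper (Conjecture~\ref{conj:Hecke}), and the paper provides no proof of it; indeed it cannot, since the statement is predicated on the conjectural equivalence of Conjecture~\ref{conj:Higgs}, which is itself open even for $G=\GL_2$. Your proposal is therefore not comparable to any argument in the paper: there is nothing to compare against.

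That said, what you have written is a reasonable heuristic outline of how one might hope to establish the compatibility \emph{conditionally} on having an equivalence $\Phi$ in hand, and you correctly flag in your ``main obstacle'' paragraph that nothing can be proved unconditionally. A few of your steps, however, would not go through as stated even conditionally. In your second step, you assert that an isomorphism of $\mathrm{B}$-linear natural transformations over the dense open $\mathrm{B}^{\mathrm{sm}}$ should propagate to all of $\mathrm{B}$ by ``flatness''; but the categories in question are not flat families of categories over $\mathrm{B}$ in any sense that would make this automatic, and in fact the most interesting behavior of both $W_x^{\mu}$ and $H_x^{\mu}$ occurs precisely over the discriminant, where the abelian-variety picture breaks down. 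In your third step, reducing to Levi subgroups via the semiorthogonal decompositions presupposes that $\Phi$ is compatible with parabolic induction, which is an additional expectation (part of Conjecture~\ref{conj:compati:sod}) not implied by the bare equivalence of Conjecture~\ref{conj:Higgs}. So your proposal should be read as a sketch of why the conjecture is \emph{plausible} rather than as a route to a proof.
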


\begin{remark}\label{rmk:adjoint}
Let $\mu^{\vee}$ be the dominant weight such that $V(\mu^{\vee})=V(\mu)^{\vee}$. 
One can check that $H_{x}^{\mu^{\vee}}$ is both of right and left adjoint of 
$H_x^{\mu}$. This is compatible with the fact that $W_{x}^{\mu^{\vee}}$ is both of right and left adjoint of 
$W_x^{\mu}$.  
\end{remark}

\subsection{Interaction of Hecke operators with the semiorthogonal decomposition}
In general, Hecke operators do not preserve the semiorthogonal decomposition (\ref{sod:GLr}). 
However, we can control their amplitudes by the following proposition: 
\begin{prop}\label{prop:Hecke:sod}
The right action of an object in $A\in \mathbb{T}(m)_0$ restricted to a semiorthogonal summand
in (\ref{sod:GLr}) gives a functor
    \begin{align*}
    (-)\ast A \colon 
\bigotimes_{i=1}^k \mathbb{T}_{\GL_{r_i}}(\chi_i)_{w_i} 
\to \left\langle \bigotimes_{i=1}^{k'} \mathbb{T}_{\GL_{r_i'}}(\chi_i')_{w_i'}\right\rangle.
    \end{align*}
    Here the right-hand side is after all partitions 
    $(r_1', \chi_1', w_1')+\cdots+(r_{k'}', \chi_k', w_k')=(r, \chi+m, w)$ such that 
    \begin{align*}
        \frac{\chi_1'}{r_1'}>\cdots>\frac{\chi_{k'}}{r_k'}, \
        \frac{\chi_1'}{r_1'}\geq \frac{\chi_1}{r_1}\geq\frac{\chi_1'-m}{r_1'}, \
        \frac{w_i'}{r_i'}=\frac{w}{r}.
    \end{align*}
\end{prop}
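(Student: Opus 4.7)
The approach is to combine a slope inequality for Hecke modifications with the iterated semiorthogonal decomposition of Corollary~\ref{cor:higgsc}, working throughout at the quasi-compact level of Harder-Narasimhan-bounded substacks. Let $\nu_0 = (\chi_1/r_1 > \cdots > \chi_k/r_k)$ denote the HN type of the input summand. By Corollary~\ref{cor:higgsc} applied to $\mathcal{U} := \Hig_{\GL_r}(\chi)_{\preceq \nu_0}$, the summand $\bigotimes_i \mathbb{T}_{\GL_{r_i}}(\chi_i)_{w_i}$ corresponds to the top component $\Upsilon_{\mathcal{U}} \LL(\mathcal{Z}_{\nu_0})_{\delta'}$ of $\LL(\mathcal{U})_w$; consequently any object $\mathcal{E}_0$ in this component has classical support on the closed HN stratum $\mathcal{S}_{\nu_0} = \Hig_{\GL_r}(\chi)_{(\nu_0)}$. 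Viewing $\mathcal{E}_0$ inside $\LL(\Hig_{\GL_r}(\chi))_w$ via the fully faithful $j_{\nu_0!}$ from the proof of Theorem~\ref{thm:mainLG}, its Hecke image can be computed at the quasi-compact level: the Hecke correspondence sends $\mathcal{U} \times \mathcal{C}(m)$ into some $\Hig_{\GL_r}(\chi+m)_{\preceq \nu''}$ with $\nu''$ determined by $\nu_0$ and $m$, so the Hecke functor restricts to $\LL(\mathcal{U})_w \otimes \mathbb{T}(m)_0 \to \LL(\Hig_{\GL_r}(\chi+m)_{\preceq \nu''})_w$.

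The key geometric input is the following slope inequality: for any short exact sequence $0 \to F \to F' \to Q \to 0$ in $\Coh^{\heartsuit}(C)$ with $F, F'$ rank-$r$ bundles and $Q$ of length $m$, if $(F, \theta)$ has HN slopes $\chi_i/r_i$ and the induced $(F', \theta')$ has top HN slope $\chi'_1/r'_1$ with top stratum rank $r'_1$, then
\begin{align*}
\frac{\chi_1}{r_1} \;\leq\; \frac{\chi'_1}{r'_1} \;\leq\; \frac{\chi_1}{r_1} + \frac{m}{r'_1}.
\end{align*}
The left inequality expresses $\mu_{\max}(F') \geq \mu_{\max}(F)$ and follows from $F \subset F'$. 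For the right inequality, let $F'_1 \subset F'$ realize $\mu_{\max}(F')$ and set $F_1 := F'_1 \cap F$; the inclusion $F'_1/F_1 \hookrightarrow Q$ gives $\rank(F_1) = r'_1$ and $\deg(F_1) \geq \chi'_1 - m$, so $(\chi'_1 - m)/r'_1 \leq \mu(F_1) \leq \chi_1/r_1$. Using Lemma~\ref{lem:HeckeS} and the support of $\mathcal{E}_0$, this confines the classical support of the Hecke output inside $\LL(\Hig_{\GL_r}(\chi+m)_{\preceq \nu''})_w$ to the union of HN strata $\Hig_{\GL_r}(\chi+m)_{(\nu')}$ for $\nu' \preceq \nu''$ satisfying the slope constraint of the proposition.

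The final step is to show that the output, decomposed via Corollary~\ref{cor:higgsc}, has zero component in every summand for an HN type $\nu'$ violating the slope constraint. This is handled by inductively peeling off HN strata from the top: at the step where $\mathcal{S}_{\nu'} \subset \mathcal{U}_{\nu'}$ is the top closed $\Theta$-stratum, the projection onto the $\nu'$-summand is computed by the right adjoint $\Upsilon_{\mathcal{U}_{\nu'}}^{R} \cong q_{\nu'*}p_{\nu'}^{!}$. The iterative $j^{*}$-restrictions in this process preserve classical support, so the object restricted to $\mathcal{U}_{\nu'}$ remains supported away from $\mathcal{S}_{\nu'}$; since $p_{\nu'}$ is a proper closed immersion, $p_{\nu'}^{!}$ vanishes on objects whose classical support is disjoint from $\mathcal{S}_{\nu'}$, and the $\nu'$-component vanishes accordingly. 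The main technical obstacle is precisely this last vanishing: one must verify carefully that the support-disjointness argument is compatible with the $j_{!}$ of the limit category (which, unlike its coherent sheaf counterpart, is not literal extension by zero), but this should follow from the orthogonality characterization of the right summand in the SOD of Proposition~\ref{prop:sod:KZ} together with the $\mathrm{Hom}$-vanishing between coherent sheaves with disjoint set-theoretic supports.
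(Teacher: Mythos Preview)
Your slope inequality is correct and is exactly the geometric input the paper uses (there stated for Higgs sheaves on $S$). The divergence is in how one passes from the slope bound to the SOD statement.

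The paper invokes ``the same argument as Proposition~\ref{prop:hecke2}'': rather than tracking supports forward, one tests membership in the target span by pairing against objects $A$ supported on forbidden strata, then uses the adjunction of the Hecke correspondence (the transpose diagram~(\ref{dia:HeckeS2}) and the manipulation leading to the display after~(\ref{ETA:0})) to rewrite $\Hom(\mathcal{E}\ast T, A)$ as $\Hom(\mathcal{E}, q_{\mathrm{B}1*}^{\Omega}(p_{\mathrm{B}}, q_{\mathrm{B}2})^{\Omega!}(A\otimes\mathbb{D}'(T)))$. The slope bound (Lemma~\ref{lem:support} and its refinement here) then controls the support of this transposed object, and one concludes via Corollary~\ref{cor:Ltilde} and the SOD orthogonality on the \emph{source} side, where $\mathcal{E}$ lives. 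Crucially this uses only the SOD-membership of $\mathcal{E}$, never its support on opens larger than $\mathcal{U}_{\nu_0}$.

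Your direct approach has a gap exactly where you flag it. To compute $(j_{\nu_0!}\mathcal{E}_0)\ast T$ restricted to a target open $\mathcal{U}'_{\nu''}$, you need $j_{\nu_0!}\mathcal{E}_0$ restricted to a source open $\mathcal{U}_\mu$ large enough that $q_{\mathrm{H}}(p_{\mathrm{H}}^{-1}(\mathcal{U}'_{\nu''})) \subset \mathcal{U}_\mu \times \mathcal{S}(m)$; in general this forces $\mu \succ \nu_0$, and the restriction is the window lift $j_{12!}\mathcal{E}_0$ of~(\ref{j12!}), not $\mathcal{E}_0$ itself. Window lifts do \emph{not} preserve set-theoretic support --- already in the $\mathbb{A}^1/\mathbb{G}_m$ example of Subsection~\ref{subsub:A1}, $j_!(k)$ is $\mathcal{O}_{\Omega_{\X}}(-1)$, supported on the whole stack --- so your claim that the Hecke output is supported only on the allowed strata is unjustified, and the subsequent $p_{\nu'}^{!}$-vanishing does not follow. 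A secondary point: in the SOD of Proposition~\ref{prop:sod:KZ} the $\Upsilon$-summand sits on the left, so the projection onto it is the \emph{left} adjoint of $\Upsilon$, not $q_{*}p^{!}$; your closing remark about $\Hom$-vanishing with disjoint supports is the right direction but still rests on the support control you do not have.
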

\begin{proof}
    As in the proof of Lemma~\ref{lem:support}, for an exact sequence in $\Coh(S)$
    \begin{align*}
        0\to E\to E'\to R \to 0
    \end{align*}
    where $R$ is zero-dimensional and $E, E'$ pure one-dimensional with compact supports, 
    we have 
    \begin{align*}
        \mu^{\mathrm{max}}(E') \leq \mu^{\mathrm{max}}(E) \leq \mu^{\mathrm{max}}(E')-\frac{m}{r_1'}
    \end{align*}
    Here $r_1'$ is the rank of the first HN factor of $E'$. Then the Proposition holds 
    by the same argument in the proof of Proposition~\ref{prop:hecke2}.
\end{proof}

\begin{example}\label{exam:Hecke}
We describe how Wilson and Hecke operators interact with the semiorthogonal decompositions in Theorem~\ref{thm:PThiggs}, Theorem~\ref{thm:mainLG} in a simple example. 
    We consider the case $g(C)=0$, i.e. $C=\mathbb{P}^1$, and $G=\GL_2$. 
    We take the minuscule coweight $\mu$ to be $\mu=(1, 0)$. 
    
    In this case, 
    we have 
\begin{align*}
    \Coh(\Hig_{\GL_2}(0)^{\mathrm{ss}})_0 &=\Coh^{\shear}(\mathfrak{gl}_2/\GL_2)_0 \\
    &=\langle \mathbb{T}_{\mathbb{G}_m}(0)_{-w}\otimes \mathbb{T}_{\mathbb{G}_m}(0)_w, \mathbb{T}_{\GL_2}(0)_0 : w>0\rangle, \\
    \Coh(\Hig_{\GL_2}(0)^{\mathrm{ss}})_{-1} &=\Coh^{\shear}(\mathfrak{gl}_2/\GL_2)_{-1} \\
    &=\langle \mathbb{T}_{\mathbb{G}_m}(0)_{-w-1}\otimes \mathbb{T}_{\mathbb{G}_m}(0)_w : w\geq 0\rangle.
\end{align*}

Let $V$ be a two-dimensional vector space. Then $\mathbb{T}_{\GL_2}(0)_0$ is generated by 
\begin{align*}E_0:=\mathcal{O}_{\mathfrak{gl}_2/\GL_2}\in \mathbb{T}_{\GL_2}(0)_0.
\end{align*}
Also 
$\mathbb{T}_{\mathbb{G}_m}(0)\otimes \mathbb{T}_{\mathbb{G}_m}(-1)$ is generated by 
\begin{align*}
F_0:=\mathcal{O}_{\mathfrak{gl}_1/\mathbb{G}_m} \otimes\mathcal{O}_{\mathfrak{gl}_1/\mathbb{G}_m}(-1) =
V^{\vee}\otimes \mathcal{O}_{\mathfrak{gl}_2/\GL_2}.
\end{align*}
Moreover 
    $ \mathbb{T}_{\mathbb{G}_m}(0)_{-w}\otimes \mathbb{T}_{\mathbb{G}_m}(0)_w$ for $w>0$ is generated by a two-term complex of the form 
    \begin{align*}
        E_w:=&\left(\mathrm{Sym}^{2w-2}(V) \otimes (\det V)^{1-w} \otimes  \mathcal{O}_{\mathfrak{gl}_2/\GL_2}\right. \\
        &\left.\to \mathrm{Sym}^{2w}(V)\otimes (\det V)^{-w} \otimes  \mathcal{O}_{\mathfrak{gl}_2/\GL_2}\right).
    \end{align*}
    Similarly $\mathbb{T}_{\mathbb{G}_m}(0)_{-w-1}\otimes \mathbb{T}_{\mathbb{G}_m}(0)_w$ for $w>0$ is generated by a two-term complex of the form 
    \begin{align*}
        F_w:=&\left(\mathrm{Sym}^{2w-1}(V)\otimes (\det V)^{-w} \otimes  \mathcal{O}_{\mathfrak{gl}_2/\GL_2} \right. \\
        &\left.\to \mathrm{Sym}^{2w+1}(V)\otimes (\det V)^{-1-w} \otimes  \mathcal{O}_{\mathfrak{gl}_2/\GL_2}\right).
    \end{align*}
    
Therefore we have 
\begin{align*}
    &K(\Coh(\Hig_{\GL_2}(0)^{\mathrm{ss}})_0)=\bigoplus_{w\geq 0}\mathbb{Z}[E_w], \\ 
    & K(\Coh(\Hig_{\GL_2}(0)^{\mathrm{ss}})_{-1})=\bigoplus_{w\geq 0}\mathbb{Z}[F_w].
\end{align*}
By the decomposition 
\begin{align*}
    \mathrm{Sym}(V)\otimes V^{\vee}=(\mathrm{Sym}^{m+1}(V)\otimes \det V^{\vee}) \oplus \mathrm{Sym}^{m-1}(V)
\end{align*}
the operator $W_x^{\mu}$ in K-theory is given by 
\begin{align}\label{wilson:GL2}
    [E_0]\mapsto [F_0], \ [E_w] \mapsto [F_w]+[F_{w+1}], \ (w>0).
\end{align}

In the automorphic side, we have the semiorthogonal decompositions 
\begin{align*}
    \LL(\Hig_{\GL_2}(0))_0 
    &=\langle \mathbb{T}_{\mathbb{G}_m}(\chi)_{0}\otimes \mathbb{T}_{\mathbb{G}_m}(-\chi)_0, \mathbb{T}_{\GL_2}(0)_0 : \chi>0\rangle, \\
    \LL(\Hig_{\GL_2}(1))_{0}
    &=\langle \mathbb{T}_{\mathbb{G}_m}(\chi+1)_{0}\otimes \mathbb{T}_{\mathbb{G}_m}(-\chi)_w : \chi\geq 0\rangle.
\end{align*}
By Proposition~\ref{prop:Hecke:sod}, the Hecke operator $H_x^{\mu}$ for $\mu=(1, 0)$ restricts to the functor
    \begin{align*}
        H_x^{\mu} \colon \mathbb{T}_{\GL_2}(0)_0 \to \mathbb{T}_{\mathbb{G}_m}(1)_0\otimes \mathbb{T}_{\mathbb{G}_m}(0)
        \end{align*}
        and for $\chi>0$, 
        \begin{align*}
            H_x^{\mu} \colon &
       \mathbb{T}_{\mathbb{G}_m}(\chi)_{0}\otimes \mathbb{T}_{\mathbb{G}_m}(-\chi)_0 \\
       &\to \langle \mathbb{T}_{\mathbb{G}_m}(\chi+1)_{0}\otimes \mathbb{T}_{\mathbb{G}_m}(-\chi)_0, 
       \mathbb{T}_{\mathbb{G}_m}(\chi)_{0}\otimes \mathbb{T}_{\mathbb{G}_m}(1-\chi)_0\rangle. 
    \end{align*}
    A simple geometric reason is that, for a vector bundle 
    $\mathcal{O}_{\mathbb{P}^1}(\chi)\oplus \mathcal{O}_{\mathbb{P}^1}(-\chi)$ on $\mathbb{P}^1$ and 
    an exact sequence 
    \begin{align*}
        0 \to \mathcal{O}_{\mathbb{P}^1}(\chi)\oplus \mathcal{O}_{\mathbb{P}^1}(-\chi) \to F \to \mathcal{O}_x \to 0
    \end{align*}
    for a vector bundle $F$ on $\mathbb{P}^1$, we have 
    \begin{align*}
    F\cong \begin{cases}
        \mathcal{O}_{\mathbb{P}^1}(1)\oplus \mathcal{O}_{\mathbb{P}^1}, & (\chi=0), \\
         \mathcal{O}_{\mathbb{P}^1}(\chi+1)\oplus \mathcal{O}_{\mathbb{P}^1}(-\chi), \mbox{ or }
         \mathcal{O}_{\mathbb{P}^1}(\chi)\oplus \mathcal{O}_{\mathbb{P}^1}(1-\chi), & (\chi>0).
        \end{cases}
    \end{align*}
    Therefore it is compatible with the computation of the Wilson operator (\ref{wilson:GL2}). 
\end{example}

\subsection{Hecke actions on (ind-)limit categories with nilpotent singular supports}
Let 
\begin{align*}
    \mathbb{H}_{\mathcal{N}} \subset \mathbb{H}, \ \Ind \mathbb{H}_{\mathcal{N}} \subset \Ind \mathbb{H}
\end{align*}
be the subcategories with nilpotent singular supports. It is easy to see that the above subcategories 
are monoidal subcategories with respect to the product (\ref{Tm:prod}). 
In view of Subsection~\ref{subsec:limsupp}, we regard them as 
a classical limit of (\ref{dmod:hall})
\begin{align*}
    \bigoplus_{m\in \mathbb{Z}_{\geq 0}} \text{D-mod}(\mathcal{C}(m)) \leadsto \Ind\mathbb{H}_{\mathcal{N}}.
\end{align*}
Using Corollary~\ref{cor:nilp2} and the arguments in this section, 
we have the following result, which gives a classical limit of the 
Hecke actions of (\ref{dmod:hall}) on (\ref{dmod:heckebun}): 
\begin{cor}\label{cor:genuinehecke}
The monoidal category $\Ind_{\mathcal{N}}\mathbb{H}$ acts on 
\begin{align*}
    \bigoplus_{\chi \in \mathbb{Z}}\IndL_{\mathcal{N}}(\Hig_{\GL_r}(\chi))_w
\end{align*}
from right and left. These actions restrict to actions of $\mathbb{H}_{\mathcal{N}}$ on 
\begin{align*}
    \bigoplus_{\chi \in \mathbb{Z}}\LL_{\mathcal{N}}(\Hig_{\GL_r}(\chi))_w \left(=\bigoplus_{\chi\in \mathbb{Z}}\mathbb{T}_{\mathcal{N}, \GL_r}(\chi)_w \mbox{ if }(r,w) \mbox{ are coprime} \right).
\end{align*}
\end{cor}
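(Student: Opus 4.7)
The plan is to show that the Hecke actions of Corollary~\ref{cor:1} and Corollary~\ref{cor:1prime} restrict to the subcategories with nilpotent singular supports by invoking Corollary~\ref{cor:nilp2} at every step where an $\Omega$-pullback or $\Omega$-pushforward is applied, and then to propagate the conclusion to ind-completions.

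First I would treat the monoidal structure on $\mathbb{H}_{\mathcal{N}}$. The categorical Hall product on $\mathbb{H}$ is defined by $p_{\mathcal{C}*}^{\Omega}q_{\mathcal{C}}^{\Omega!}$ from the diagram~\eqref{dia:Cm12}, in which $q_{\mathcal{C}}$ is smooth and $p_{\mathcal{C}}$ is smooth and projective. The map $q_{\mathcal{C}}$ is schematic (its fibers over $(Q_{1},Q_{2})$ parametrize extensions and form an affine space), hence nil-effective by the schematic case of Example~\ref{def:nileff} noted in the excerpt. Applying both parts of Corollary~\ref{cor:nilp2} to~\eqref{dia:Cm12} shows that the product descends to a functor $\mathbb{T}_{\mathcal{N}}(m_1)_0 \otimes \mathbb{T}_{\mathcal{N}}(m_2)_0 \to \mathbb{T}_{\mathcal{N}}(m_1+m_2)_0$, so $\mathbb{H}_{\mathcal{N}}$ inherits the monoidal structure from $\mathbb{H}$, and the same holds after ind-completion.

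Second, I would apply the same reasoning to the right Hecke action. In the diagram~\eqref{dia:hecke} the map $p_{\mathrm{B}}$ is smooth and projective, and $q_{\mathrm{B}}$ is smooth; the latter is schematic because an automorphism of $(F,F',Q)$ is uniquely determined by its restrictions to $F$ and $Q$ (the extension class rigidifies the induced automorphism of $F'$), so $q_{\mathrm{B}}$ is nil-effective. Combining Corollary~\ref{cor:nilp2}(i) for $q_{\mathrm{B}}^{\Omega!}$ with Corollary~\ref{cor:nilp2}(ii) for $p_{\mathrm{B}*}^{\Omega}$, and using Proposition~\ref{prop:hecke2} to guarantee the correct target category, the functor $p_{\mathrm{B}*}^{\Omega}q_{\mathrm{B}}^{\Omega!}$ restricts to a functor
\[
\LL_{\mathcal{N}}(\Hig_{\GL_r}(\chi))_w \otimes \mathbb{T}_{\mathcal{N}}(m)_0 \to \LL_{\mathcal{N}}(\Hig_{\GL_r}(\chi+m))_w.
\]
Monoidality and associativity of this action are inherited by restriction from Corollary~\ref{cor:1}. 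The left action is handled identically using~\eqref{dia:hecke2} and Lemma~\ref{lem:smooth:hecke}. Passing to ind-completions of both source and target, and using that the functors $p_{*}^{\Omega}$ and $q^{\Omega!}$ preserve the full subcategories of nilpotent singular support after ind-completion (since nilpotent singular support is stable under filtered colimits), yields the claimed actions of $\Ind\mathbb{H}_{\mathcal{N}}$ on $\bigoplus_{\chi}\IndL_{\mathcal{N}}(\Hig_{\GL_r}(\chi))_w$.

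The main obstacle is confirming the nil-effectiveness of $q_{\mathrm{B}}$ (and $q_{\mathrm{B}}'$) in the derived Higgs setting, where one must argue with the Hecke stack $\mathrm{Hecke}_{\GL_r}^{\mathrm{Hig}}$ via the description~\eqref{equiv:pq} and Lemma~\ref{lem:HeckeS} rather than directly with $\mathrm{Hecke}_{\GL_r}$. However, the derived structure does not affect the Definition~\ref{def:nileff} condition, which is a pointwise statement about the map $\mathfrak{g}_x \to \mathfrak{g}_y$ of stabilizer Lie algebras at closed points; so the schematic-ness of the underlying classical morphism suffices. Everything else is a formal consequence of the compatibility established in Section~\ref{sec:functL} between the six functors on limit categories and the nilpotent singular support condition.
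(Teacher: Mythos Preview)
Your approach is correct and matches the paper's own (very brief) argument, which simply invokes Corollary~\ref{cor:nilp2} together with the constructions of Section~\ref{sec:hecke}. There is, however, one factual error in your justification: the map $q_{\mathcal{C}}\colon \mathcal{C}(m_1,m_2)\to \mathcal{C}(m_1)\times\mathcal{C}(m_2)$ is \emph{not} schematic. Over a point $(Q_1,Q_2)$ with overlapping support, an automorphism of the extension $0\to Q_1\to Q\to Q_2\to 0$ fixing $Q_1$ and $Q_2$ has the form $\mathrm{id}+\phi$ with $\phi\in\Hom(Q_2,Q_1)$, which is generally nonzero for torsion sheaves on $C$. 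So you cannot appeal to example~(i) after Definition~\ref{def:nileff}.

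The map $q_{\mathcal{C}}$ is nonetheless nil-effective: the kernel of $\mathfrak{g}_x\to\mathfrak{g}_y$ is exactly $\Hom(Q_2,Q_1)$, the Lie algebra of a unipotent group, so it consists of nilpotent elements; a block-upper-triangular endomorphism is nilpotent iff its diagonal blocks are. This is the content of example~(ii), since $q_{\mathcal{C}}$ is of attracting-to-fixed type for a cocharacter of $\GL_{m_1+m_2}$. Your claims that $q_{\mathrm{B}}$ and $q_{\mathrm{B}}'$ are schematic are correct (there $\Hom(Q,F)=0$ because $F$ is torsion-free), so the rest of your argument goes through unchanged.
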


\section{Some complementary results}
In this section, we give proofs of some complementary results. 

\subsection{Proof of Proposition~\ref{prop:ess}}\label{subsec:essurj}
The proof is the same as in~\cite[Proposition~3.22]{PTquiver} with some 
minor modifications. We only explain how to modify the arguments. 

\begin{proof}
It is enough to show the essential surjectivity of 
\begin{align*}
    j^* \colon \MM(\Y)_{\delta} \to \MM(\Y^{\ell\text{-ss}})_{\delta}
\end{align*}
as the essential surjectivity of the pullback \[\MM(\overline{\mathcal{Z}}_i)_{\delta_i} \to 
\MM(\mathcal{Z}_i)_{\delta}\] also follows from the above. 
Let $T \subset G$ be a maximal torus and $T' \subset T$ the 
subgroup of $t\in T$ which acts on $Y$ trivially. We have the 
decomposition 
\begin{align*}
    \Coh(\Y)=\bigoplus_{w \in (T')^{\vee}}\Coh(\Y)_{w}
\end{align*}
where $\Coh(\Y)_{w}$ is the subcategory of $T'$-weights $w$. 
Let $M_{0, \mathbb{R}}$ be the kernel 
\begin{align*}
    M_{0, \mathbb{R}}:=\Ker(M(T)_{\mathbb{R}} \twoheadrightarrow M(T')_{\mathbb{R}}).
\end{align*}

Note that we may assume that $\ell \in M_{0, \mathbb{R}}$ as otherwise we have $\Y^{\ell\text{-ss}}=\emptyset$.
Then we have $\mathbf{W} \subset M_{0, \mathbb{R}}$ where 
$\mathbf{W}$ is the polytope in (\ref{poly:W}). 
Let $\lambda_1, \ldots, \lambda_m\in N(T)$ be anti-dominant cocharacters 
such that $\lambda_i^{\perp}$ give faces of $\mathbf{W}$. 
Let $W$ be the Weyl group of $G$.
Then we have the following generalization of~\cite[Lemma~3.16]{PTquiver}: 
\begin{lemma}\label{lem:magicgen}
    For $\ell\in M_{0, \mathbb{R}}$ and $\delta \in M(T)_{\mathbb{R}}^W$ with 
    $T'$-weight $w$, the category $\Coh(\Y^{\ell\text{-ss}})_w$ is generated by 
    $j^*(\MM(\Y)_{\delta})$ and objects of the form $m_{\lambda}(P)$
    where $P \in \Coh(\Y^{\lambda})_w$ is generated by $V_{G^{\lambda}}(\chi'')\otimes \mathcal{O}_{\Y^{\lambda}}$ such that 
    \begin{align}\label{ineq:chi''}
        \langle \lambda, \chi'' \rangle <-\frac{1}{2}\langle \lambda, \mathbb{L}_{\Y}^{\lambda>0}\rangle+\langle \lambda, \delta\rangle
    \end{align}
    and $\lambda \in \{\lambda_1, \ldots, \lambda_m\}$ such that $\langle \lambda, \ell \rangle=0$. Here $V_{G^{\lambda}}(\chi'')$ is the irreducible $G^{\lambda}$-representation with highest weight $\chi''$. The functor $m_{\lambda}$ is 
    the categorical Hall product $m_{\lambda}=p_{*}q^{*}$ for the diagram 
    \begin{align*}
        \Y^{\lambda} \stackrel{q}{\leftarrow} \Y^{\lambda \geq 0} \stackrel{p}{\to} \Y.
    \end{align*}
\end{lemma}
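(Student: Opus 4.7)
The plan is to adapt the argument of~\cite[Lemma~3.16]{PTquiver} by using the Kempf–Ness / $\Theta$-stratification associated to $\ell$ together with iterated window truncations. Concretely, pick $\mathcal{E} \in \Coh(\Y^{\ell\text{-ss}})_w$ and first extend it to any object $\widetilde{\mathcal{E}} \in \Coh(\Y)_w$. I would then mutate $\widetilde{\mathcal{E}}$ stratum by stratum, using the semiorthogonal decomposition associated to the $\Theta$-stratification
\[ \Y = \mathcal{S}_1 \sqcup \cdots \sqcup \mathcal{S}_k \sqcup \Y^{\ell\text{-ss}}, \]
with associated cocharacters $\mu_1, \ldots, \mu_k$. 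By the window theorem (Theorem~\ref{thm:window}), objects that do not lie in the chosen window $\mathcal{W}_i$ at the $i$-th stratum can be modified by adding/removing objects of the form $p_{\mu_i *} q_{\mu_i}^*(P)$ for $P \in \Coh(\Y^{\mu_i})$ of a specific weight. Since $\langle \mu_i, \ell \rangle > 0$ for each unstable stratum, such modifications do not affect $j^*$, and we can therefore arrange $\widetilde{\mathcal{E}}$ to lie in the intersection of all these windows (i.e. the ``magic window'' of~\cite{hls}) without changing its restriction to $\Y^{\ell\text{-ss}}$.

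The next step is to perturb $\widetilde{\mathcal{E}}$ so that it lies in $\MM(\Y)_{\delta}$. Recall from Proposition~\ref{prop:QB:weight} that an object lies in $\MM(\Y)_{\delta}$ exactly when its $T$-weights, after tensoring by $\rho - \delta$ and expanding in irreducible $G$-representations, sit inside the polytope $\mathbf{W} = \tfrac{1}{2}[0,\beta]$ defined in~\eqref{poly:W}. The cocharacters $\lambda_1, \ldots, \lambda_m$ defining the faces of $\mathbf{W}$ control exactly the walls one must cross. For each $\lambda_j$ with $\langle \lambda_j, \ell \rangle > 0$, crossing the corresponding wall again introduces objects $m_{\lambda_j}(P)$ that vanish on $\Y^{\ell\text{-ss}}$ (by a dimension count analogous to~\cite[Lemma~2.9]{halp} or~\cite[Lemma~3.13]{PTquiver}), so they can be absorbed into the generation of $j^*(\MM(\Y)_{\delta})$. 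For $\lambda_j$ with $\langle \lambda_j, \ell \rangle = 0$, such objects are supported away from the unstable locus but need not vanish on $\Y^{\ell\text{-ss}}$, and the precise weight-combinatorics of this wall-crossing produce exactly the inequality~\eqref{ineq:chi''}: it is the condition that the highest weight $\chi''$ lies \emph{outside} the corresponding boundary stratum of the magic window, which is the obstruction to the generator already lying inside $\MM(\Y)_{\delta}$.

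The generalization from the symmetric-quiver case of~\cite{PTquiver} to an arbitrary self-dual $G$-representation $Y$ is essentially formal, as the only quiver-specific ingredients used in the original proof are (i) the description of $\MM(\Y)_{\delta}$ via the weight polytope $\mathbf{W}$ (Proposition~\ref{prop:QB:weight}, already established in this generality), (ii) the Kempf–Ness stratification with its $\Theta$-strata (available for any reductive $G$-action on a smooth quasi-projective $Y$), and (iii) the window theorem (Theorem~\ref{thm:window}, available in this generality). The main technical point I expect to be delicate is bookkeeping the boundary case $\langle \lambda_j, \ell \rangle = 0$: the argument of~\cite[Lemma~3.16]{PTquiver} treats this carefully using the Borel–Weil–Bott theorem applied to $q_{\lambda_j}$, and the same proof strategy should work here once the $T$-weights of $\mathbb{L}_{\Y}^{\lambda_j > 0}$ are identified with the relevant codimension-one faces of $\mathbf{W}$. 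After these preparations the conclusion is immediate: any $\mathcal{E} \in \Coh(\Y^{\ell\text{-ss}})_w$ is obtained from an object of $j^*(\MM(\Y)_{\delta})$ by iterated extensions with the boundary generators $m_{\lambda_j}(P)$ described in the statement.
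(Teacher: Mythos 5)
Your strategy is genuinely different from the paper's, and as written it has gaps at exactly the places where the lemma has content. The paper's proof is the Špenko--Van den Bergh style induction imported verbatim from~\cite[Lemma~3.16]{PTquiver}: one starts from the generators $V_G(\chi)\otimes\mathcal{O}_{\Y^{\ell\text{-ss}}}$ of $\Coh(\Y^{\ell\text{-ss}})_w$ and inducts on the invariants $(r_\chi,p_\chi)$ of~\cite{SvdB}, using the resolutions of~\cite[Proposition~3.4]{hls} (which, as the paper notes, are already available for arbitrary quotient stacks $Y/G$, not just quiver representation spaces) to lower the invariant; the correction terms are exactly the Hall products $m_\lambda(P)$, and the strict inequality (\ref{ineq:chi''}) is read off from the shape of those resolutions. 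Your proposal instead extends an arbitrary object of $\Coh(\Y^{\ell\text{-ss}})_w$ to $\Y$ and mutates it, which is a legitimate alternative starting point, but the two steps you then need are not supplied.

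First, the assertion that after mutating through the Kempf--Ness strata you may ``arrange $\widetilde{\mathcal{E}}$ to lie in the intersection of all these windows (i.e.\ the magic window of~\cite{hls})'' conflates two different categories: the intersection of the stratum windows $\mathcal{W}(k_i)$ of Theorem~\ref{thm:window} is defined by weight bounds for the finitely many Kempf--Ness cocharacters only, while $\MM(\Y)_\delta$ imposes bounds for \emph{all} maps from $\bgm$; their comparison is a nontrivial theorem which holds only for suitably generic parameters, and the boundary case $\langle\lambda_j,\ell\rangle=0$ -- which is the whole point of the lemma, since it produces the extra generators -- is precisely where it fails. Second, your ``perturbation'' from the window category into $\MM(\Y)_\delta$ has no actual mechanism: you specify no complexes implementing the wall-crossing, no invariant that strictly decreases (the role played by $(r_\chi,p_\chi)$ in the real proof, which is what guarantees termination), and no verification that the correction terms satisfy the strict inequality (\ref{ineq:chi''}). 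Moreover, the claim that for $\langle\lambda_j,\ell\rangle>0$ the objects $m_{\lambda_j}(P)$ ``vanish on $\Y^{\ell\text{-ss}}$ by a dimension count'' is unjustified and false as a support statement: for an anti-dominant $\lambda$ only the sign $\langle\lambda,\ell\rangle<0$ forces the attracting locus into the unstable locus, whereas for $\langle\lambda,\ell\rangle>0$ it meets the semistable locus (compare Lemma~\ref{lem:cart2}), so discarding such terms requires either choosing the correct sign of the wall cocharacter in the resolution or a Hom-vanishing argument of the type used in the proof of Proposition~\ref{prop:ess}, neither of which appears in your outline. To repair the proof along your lines you would in effect have to reconstruct the weight-by-weight analysis of~\cite{SvdB, PTquiver}, at which point the paper's generator-based induction is both shorter and already available.
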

\begin{proof}
    The proof of~\cite[Lemma~3.16]{PTquiver} applies verbatim, 
    using the induction of the invariants $(r_{\chi}, p_{\chi})$ as in~\cite{SvdB}. 
    The relevant reference~\cite[Proposition~3.4]{hls} in loc. cit. is discussed 
    in the setting of the quotient stack $Y/G$. Consequently, it is applicable in our situation. 
\end{proof}

By Lemma~\ref{lem:magicgen}, it is enough to show the vanishing 
\begin{align}\label{vanish:Mres}
    \Hom(\MM(\Y^{\ell\text{-ss}})_{\delta}, j^*(m_{\lambda}(P)))=0,
\end{align}
where $m_{\lambda}(P)$ is an object as in Lemma~\ref{lem:magicgen}, see the first part 
of the proof of~\cite[Proposition~3.22]{PTquiver}. 
By Lemma~\ref{lem:cart2} below, we have the following diagram 
\begin{align}\label{dia:semistable}
    \xymatrix{
(\Y^{\lambda})^{\ell\text{-ss}} \dinclusion\diasquare & \widetilde{\Y} \ar[l]_-{q'} \ar[r]^-{p'} \dinclusion \diasquare & \Y^{\ell\text{-ss}} \dinclusion \\
\Y^{\lambda} & \ar[l]_-{q} \Y^{\lambda \geq 0} \ar[r]^-{p} & \Y
    }
\end{align}
where both squares are Cartesian. 
Then to show the vanishing (\ref{vanish:Mres}), it is enough to show the vanishing 
\begin{align}\label{vanish:Mres2}
    \Hom(p'^{*}E, q'^{*}P|_{(\Y^{\lambda})^{\ell\text{-ss}}})=0, \ 
    E\in \MM(\Y^{\ell\text{-ss}})_{\delta}.
\end{align}

Let $\Y^{\ell\text{-ss}}\to Y^{\ell\text{-ss}}$ be the good moduli space.
As in the proof of~\cite[Proposition~3.22]{PTquiver}, it is enough to show the 
vanishing (\ref{vanish:Mres2}) formally locally on $Y^{\ell\text{-ss}}$.
Let $p \in Y^{\ell\text{-ss}}$, and use the same symbol $p$ to denote the unique 
close point in the fiber of $\Y^{\ell\text{-ss}}\to Y^{\ell\text{-ss}}$ at $p$. 
Formally locally at $p\in Y^{\ell\text{-ss}}$, the top horizontal diagram of (\ref{dia:semistable}) is of the form 
\begin{align*}
    A^{\lambda}/G_p^{\lambda} \leftarrow A^{\lambda \geq 0}/G_p^{\lambda \geq 0} \to 
    A/G_p
\end{align*}
where $A$ is a smooth affine scheme (of finite type over a complete local ring) with
a $G_p=\mathrm{Aut}(p)$-action, see~\cite[Lemma~3.29]{PTquiver}.
Then the vanishing (\ref{vanish:Mres}) on $A/G_p$ follows by comparing the $\lambda$-weights
of $p'^{*}E$ and $q'^{*}P|_{(\Y^{\lambda})^{\ell\text{-ss}}}$
on $A^{\lambda \geq 0}/G_p^{\lambda\geq 0}$ using the inequality (\ref{ineq:chi''}), 
see the last part of the proof of~\cite[Proposition~3.22]{PTquiver}.

\end{proof}
We have used the following lemma, which is proved in~\cite[Proposition~3.22]{PTquiver}
for representations of quivers using~\cite[Lemma~3.4]{PTquiver}. 
\begin{lemma}\label{lem:cart2}
    We have $q^{-1}((\Y^{\lambda})^{\ell\text{-ss}})=p^{-1}(\Y^{\ell\text{-ss}})$.
\end{lemma}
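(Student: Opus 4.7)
\smallskip

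The goal is to show, for $y \in Y^{\lambda \geq 0}$, that $y$ is $\ell$-semistable under $G$ if and only if $y_0 := \lim_{t\to 0}\lambda(t)y$ is $\ell$-semistable as a point of $Y^\lambda$ under $G^\lambda$. Throughout, one uses $\langle \ell, \lambda\rangle = 0$, which is the standing hypothesis in Lemma~\ref{lem:magicgen} where Lemma~\ref{lem:cart2} is applied. The strategy is to verify each inclusion separately via the Hilbert--Mumford numerical criterion, the easy direction being a direct weight computation and the hard direction being a transfer of instability from $G$ to the Levi $G^\lambda$.

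\smallskip

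For the inclusion $p^{-1}(\Y^{\ell\text{-ss}}) \subseteq q^{-1}((\Y^{\lambda})^{\ell\text{-ss}})$, take $y$ which is $\ell$-semistable and a one-parameter subgroup $\nu$ of $G^\lambda$ with $\lim_{t\to 0}\nu(t)y_0$ existing. Since $\nu$ commutes with $\lambda$, the combination $N\lambda + \nu$ is a one-parameter subgroup of $G$, and a weight-decomposition $Y = \bigoplus_{(a,b)} Y_{a,b}$ with respect to the commuting pair $(\lambda,\nu)$ shows that for $N$ sufficiently large the limit $\lim_{t\to 0}(N\lambda+\nu)(t)y$ exists: on the $\lambda$-weight-$0$ part the condition reduces to the $\nu$-limit condition on $y_0$, while on the positive-$\lambda$-weight part any fixed finite set of $\nu$-weights is dominated by $Na$ for $a>0$. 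The Hilbert--Mumford inequality for $y$ then gives $0 \leq \langle \ell, N\lambda + \nu\rangle = \langle \ell, \nu\rangle$, proving $y_0$ is $\ell$-semistable under $G^\lambda$.

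\smallskip

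For the reverse inclusion, argue by contrapositive: if $y$ is $\ell$-unstable, then $y_0$ belongs to $\overline{G\cdot y}$, which lies in the closed $G$-invariant unstable locus, so $y_0$ is $\ell$-unstable under $G$. The problem therefore reduces to showing: for $y_0 \in Y^\lambda$, $\ell$-instability under $G$ implies $\ell$-instability under $G^\lambda$. This is the main obstacle. The plan is to invoke Kempf's theorem on optimal destabilizing one-parameter subgroups: an instability of $y_0$ under $G$ yields a canonical (up to conjugation by a parabolic) destabilizing $\nu^*$, and since $\lambda \in G^{y_0}$ centralizes $y_0$, conjugation by $\lambda$ preserves the set of optimal destabilizers of $y_0$; averaging or taking the associated graded with respect to the $\text{Ad}(\lambda)$-action on $\nu^*$ yields a destabilizer lying in $G^\lambda$, with the same pairing against $\ell$ because $\langle \ell, \lambda\rangle = 0$. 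An alternative that cleanly circumvents the Kempf argument is Luna's étale slice theorem applied at a closed orbit in $\overline{G\cdot y_0}$: étale-locally $Y$ becomes $G \times^{G^\lambda} N$ for a $G^\lambda$-slice $N$, reducing $G$-semistability of points in $Y^\lambda$ to $G^\lambda$-semistability tautologically. This is the technical point, and is the one place where the argument genuinely uses the reductivity of $G^\lambda$ and the wall hypothesis $\langle \ell, \lambda\rangle = 0$; it is the direct analogue of~\cite[Lemma~3.4]{PTquiver}, whose proof in the quiver setting we expect to generalize with only notational changes.
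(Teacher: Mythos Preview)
Your first inclusion $p^{-1}(\Y^{\ell\text{-ss}}) \subseteq q^{-1}((\Y^{\lambda})^{\ell\text{-ss}})$ is correct and matches the paper's argument exactly (the paper writes $\lambda'' = \lambda^a \cdot \lambda'$ for your $N\lambda + \nu$). For the reverse inclusion the paper takes a much shorter route than your Kempf approach: rather than arguing by contrapositive, it proves directly that if $q(y)$ is semistable then so is $y$, using only that the stack morphism $p \colon \Y^{\lambda \geq 0} \to \Y$ is proper. Given any one-parameter subgroup $\lambda'$ with $\lim_{t\to 0} \lambda'(t)y$ existing, the associated map $f\colon \Theta \to \Y$ with $f(1)\simeq y$ lifts by properness to $\tilde f\colon \Theta \to \Y^{\lambda\geq 0}$; then $q\circ\tilde f$ is a $\Theta$-point of $\Y^\lambda$ through $q(y)$, and semistability of $q(y)$ gives $\langle \lambda', \ell\rangle = c_1((q\circ \tilde f)(0)^*\ell|_{\Y^\lambda}) \geq 0$. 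This is a three-line argument and avoids Kempf entirely. Your Kempf--Hesselink route is valid in principle---the torus generated by $\lambda$ lies in $G_{y_0}$ and hence acts on the unipotent-radical torsor of optimal destabilizers, so has a fixed point giving an optimal $\nu^*$ in $G^\lambda$---but you leave this fixed-point argument implicit (``averaging'' is not the right operation on one-parameter subgroups), and the invocation of $\langle \ell,\lambda\rangle=0$ at that step is unnecessary. Your Luna alternative is not correctly formulated: the slice is taken at a closed-orbit point, whose stabilizer need not be $G^\lambda$.
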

\begin{proof}
We first prove that 
\begin{align}\label{incl1}q^{-1}((\Y^{\lambda})^{\ell\text{-ss}})\subset 
p^{-1}(\Y^{\ell\text{-ss}}).
\end{align}
We use the same symbol $q$ to denote the map \begin{align*}q\colon Y^{\lambda \geq 0} \to Y^{\lambda}, \
y\mapsto \lim_{t\to 0}\lambda(t)(y).\end{align*}
Let $y\in Y^{\lambda \geq 0}$ be such that 
$q(y) \in Y^{\lambda}$ is $\ell$-semistable. Let $\lambda' \colon \mathbb{G}_m \to G$
be a one-parameter subgroup such that $\lim_{t\to 0}\lambda'(t)(y) \in Y$ exists. 
The above $\lambda'$ determines a map 
$f\colon \Theta \to \Y$ with $f(1)\simeq y$. 
Since $p \colon \Y^{\lambda \geq 0} \to \Y$ is proper, there is 
$\widetilde{f} \colon \Theta \to \Y^{\lambda \geq 0}$ such that $\widetilde{f}(1)\simeq y$
in $\Y^{\lambda\geq 0}$. 
Then we have that 
\begin{align*}
\langle \lambda', \ell\rangle=
    c_1(f(0)^* \ell)=c_1(\widetilde{f}(0)^* \ell)=c_1((q\circ \widetilde{f})(0)^* \ell|_{\Y^{\lambda}}) \geq 0
\end{align*}
where the last inequality follows from that $q(y) \in Y^{\lambda}$ is 
$\ell$-semistable, hence (\ref{incl1}) holds. 

We next prove that 
\begin{align}\label{incl2}q^{-1}((\Y^{\lambda})^{\ell\text{-ss}})\supset 
p^{-1}(\Y^{\ell\text{-ss}}).
\end{align}
    Let $y\in Y^{\lambda \geq 0}$ be a point which is $\ell$-semistable in $Y$. 
    We need to show that $q(y) \in Y^{\lambda}$ is $\ell$-semistable. 
    
    Let $\lambda' \colon \mathbb{G}_m \to G^{\lambda}$ be a one-parameter subgroup 
    such that the limit
    \begin{align*}\lim_{t\to 0} \lambda'(q(y)) \in Y^{\lambda}
    \end{align*}exists. 
    Since the fibers of $Y^{\lambda\geq 0} \to Y^{\lambda}$ have positive $\lambda$-weights, 
    for the one-parameter subgroup $\lambda''=\lambda^a \cdot \lambda'$
for $a\gg 0$ the limit $\lim_{t\to 0}\lambda''(t)(y)$ exists in $Y$. 
Then from the $\ell$-semistability of $y$ and the condition $\langle \lambda, \ell \rangle =0$, 
we have 
$\langle \lambda', \ell\rangle=\langle \lambda'', \ell \rangle \geq 0$. 
Therefore $q(y)$ is $\ell$-semistable and the inclusion (\ref{incl2}) holds. 
\end{proof}

\subsection{Proof of (\ref{nablai}) in the proof of Lemma~\ref{lem:delta}}\label{subsec:nablai}
\begin{proof}
    We may assume that $\lambda_i$ is anti-dominant. 
    Let $\mathbf{W} \subset M(T)_{\mathbb{R}}$ be the polytope defined in (\ref{poly:W}). 
    Let $\chi \in M(T)$ be a dominant weight such that $\chi+\rho-\delta \in \mathbf{W}$.
    By Proposition~\ref{prop:QB:weight}, it is enough to show that 
    \begin{align*}
        \beta_{m_i}\overline{p}_i^{*}(V_G(\chi)\otimes \mathcal{O}_{\Y})\in 
        \MM(\overline{\mathcal{Z}_i})_{\delta_i}.
    \end{align*}
    Let $F(\lambda_i) \subset \mathbf{W}$ be a face determined by $\lambda_i$, given by the equation \[\langle \lambda_i, -\rangle=-\frac{1}{2}\langle \lambda_i, Y^{\lambda_i>0}\rangle.\] 
    If $\chi+\rho-\delta \notin F(\lambda_i)$, then we have 
    \begin{align*}
        \beta_{m_i}\overline{p}_i^{*}(V_G(\chi)\otimes \mathcal{O}_{\Y})=0.
    \end{align*}
    If $\chi+\rho-\delta_i \in F(\lambda_i)$, then we can write 
    \begin{align*}
        \chi+\rho-\delta=-\frac{1}{2}Y^{\lambda_i>0}+\psi+\rho^{\lambda_i}
    \end{align*}
    where 
 $\psi \in \mathbf{W}^{\lambda_i} \subset M(T)_{\mathbb{R}}$ is half the Minkowski sum of 
 $T$-weights of $Y^{\lambda_i}$, 
    $\rho^{\lambda_i}$ is half the sum of positive roots of $G^{\lambda_i}$, 
    see the argument of~\cite[Corollary~3.4]{P}. 
    
    The above equality can be written as 
    \begin{align*}
        \chi=-\frac{1}{2}(Y^{\lambda_i>0}-\fg^{\lambda_i>0})+\psi+\delta=\psi+\delta_i.
    \end{align*}
    Then we have 
    \begin{align*}
        \beta_{m_i}\overline{p}_i^{*}(V_G(\chi)\otimes \mathcal{O}_{\Y})
        =V_{G^{\lambda_i}}(\chi)\otimes \mathcal{O}_{\overline{\mathcal{Z}}_i}
        \in 
        \MM(\overline{\mathcal{Z}}_i)_{\delta_i}
\end{align*}
\end{proof}

\subsection{Proof of Proposition~\ref{prop:cgen}}\label{pf:prop}
\begin{proof}
A compact object \begin{align*}\mathcal{E}\in\IndCoh(\Hig_{\GL_r}(\chi))_w^{\mathrm{cp}}\end{align*} is an object in $\Coh(\Hig_{\GL_r}(\chi))_w$
by~\cite[Proposition~3.4.2]{MR3037900}. Below we show that (\ref{Esupport}) holds 
if $r\geq 2$. 

For $m\in \mathbb{Z}_{>0}$, we set 
$\mathcal{U}_m=\Hig_{\GL_r}(\chi)_{\leq m}$ as in Remark~\ref{rmk:HN}.
We have a chain of open immersions 
\begin{align*}
    \mathcal{U}_1 \subset \mathcal{U}_2 \subset \cdots \subset 
    \mathcal{U}_m \subset \cdots \subset \Hig_{\GL_r}(\chi). 
\end{align*}
Fix $m\in \mathbb{Z}_{>0}$.
    We set $\mathcal{E}_m:=\mathcal{E}|_{\mathcal{U}_m}
    \in \Coh(\mathcal{U}_m)$ and $Z(M):=\mathcal{U}_m \setminus \mathcal{U}_{m-1}$. 
We now show that one of the following 
conditions hold: 
\begin{enumerate}
    \item There is $F_m \in \IndCoh(\mathcal{U}_m)$ such that $F_m|_{\mathcal{U}_{m-1}}=0$
    and there is a non-zero morphism $\mathcal{E}_m \to F_m$; 
    \item $\mathrm{Supp}(\mathcal{E}_m) \subset \mathcal{U}_{m-1}$. 
\end{enumerate}

Indeed, if (ii) does not hold, i.e. if $\mathrm{Supp}(\mathcal{E}_m) \cap Z(M) \neq \emptyset$, 
then there is a non-zero morphism 
$\mathcal{E}_m \to i_{*}i^{*}\mathcal{E}_m$
in $\mathrm{QCoh}(\mathcal{U}_m)$
where $i \colon Z(M) \hookrightarrow \mathcal{U}_m$ is the closed 
immersion. 
By regarding it as a morphism in $\IndCoh(\mathcal{U}_m)$
by the natural fully-faithful functor $\mathrm{QCoh}(\mathcal{U}_m)\hookrightarrow 
\IndCoh(\mathcal{U}_m)$, and setting $F_m=i_{*}i^{*}\mathcal{E}_m$, 
we obtain (i). 

Let $M\subset \mathbb{Z}$ be the set of $m\in \mathbb{Z}$ such that 
(i) holds. 
First assume that $\lvert M \rvert=\infty$. 
Let $j_m\colon \mathcal{U}_m \hookrightarrow \Hig_{\GL_r}(\chi)$
be the open immersion. The object 
\begin{align}\label{direct:F}
F=
    \bigoplus_{m\in M}j_{m\ast}F_m \in \IndCoh(\Hig_{\GL_r}(\chi))
\end{align}
is locally finite, i.e. for each quasi-compact open substack 
$\mathcal{U} \subset \Hig_{\GL_r}(\chi)$ 
the set of $m\in M$ such that $(j_{m\ast}F_m)|_{\mathcal{U}} \neq 0$ is finite. 
A non-zero morphism $\mathcal{E}_m \to F_m$ in (i) induces 
the non-zero morphism $\alpha_m \colon \mathcal{E} \to j_{m\ast}F_m$
in $\IndCoh(\Hig_{\GL_r}(\chi))$. 
Since we have 
\begin{align*}
    \Hom(\mathcal{E}, F)=
    \lim_{m' \in M} \Hom(\mathcal{E}|_{\mathcal{U}_{m'}}, F|_{\mathcal{U}_{m'}})
\end{align*}
and the direct sum (\ref{direct:F}) is locally finite, 
the collection of morphisms $\{\alpha_m\}_{m\in M}$ determines 
an element of $\Hom(\mathcal{E}, F)$. On the other hand, 
as $\mathcal{E}$ is compact, we have 
\begin{align*}
\{\alpha_m\}_{m\in M}\in 
    \Hom(\mathcal{E}, F)=\bigoplus_{m\in M} \Hom(\mathcal{E}, j_{m*}F_m). 
\end{align*}
Since $\lvert M \rvert=\infty$ and $\alpha_m \neq 0$ for all $m\in M$, 
this is a contradiction. 

Therefore we have $\lvert M \rvert<\infty$. 
Then for $m\gg 0$, we have 
$\mathrm{Supp}(\mathcal{E}_m) \subset \mathcal{U}_{m-1}$, 
or equivalently $\mathrm{Supp}(\mathcal{E})\cap \mathcal{U}_{m}
=\mathrm{Supp}(\mathcal{E})\cap \mathcal{U}_{m-1}$. 
It follows that there is $m_0 \gg 0$ such that 
$\mathrm{Supp}(\mathcal{E}) \subset \mathcal{U}_{m_0}$. 
In particular, we have 
\begin{align}\label{supp:N}
    \mathrm{Supp}(\mathcal{E}) \cap \mathcal{N}_{\mathrm{glob}}\subset 
    \mathcal{U}_{m_0} \cap \mathcal{N}_{\mathrm{glob}}. 
\end{align}

Note that the left-hand side in (\ref{supp:N}) is a closed substack
of $\Hig_{\GL_r}(\chi)$. 
Assume that there is a $k$-valued point in $\mathrm{Supp}(\mathcal{E}) \cap \mathcal{N}_{\mathrm{glob}}$, which corresponds to $(F, \theta)$ where $F$ is a rank $r$ vector bundle and $\theta$ is a Higgs field. 
Since $\theta$ is nilpotent, it admits an isotrivial degeneration to $(F, 0)$, 
i.e. there is a flat family $(F_t, \theta_t)$ of Higgs bundles over 
$t\in \mathbb{A}^1$ such that
all $(F_t, \theta_t)$ are isomorphic to $(F, \theta)$ for $t\neq 0$ 
and $(F_0, \theta_0)$ is isomorphic to $(F, 0)$. 
Since $\mathrm{rank}(F) \geq 2$, 
for a line bundle $L$ on $C$ with $\deg L\gg 0$
there is an exact sequence 
\begin{align*}
    0 \to F' \to F \to L \to 0. 
\end{align*}
Then $F$ admits an iso-trivial degeneration to $F'\oplus L$. 
Since the left-hand side in (\ref{supp:N}) is closed in $\Hig_{\GL_r}(\chi)$, 
the Higgs bundle $(F'\oplus L, 0)$ lies in the 
left-hand side in (\ref{supp:N}). However for $\deg L \gg 0$, 
it does not lie in $\mathcal{U}_{m_0}$, which contradicts (\ref{supp:N}). 
Therefore the proposition holds. 
\end{proof}

\subsection{Proof of the equivalence (\ref{equiv:Ttensor})}\label{subsec:Ttensor}
\begin{proof}
It is enough to consider the case $k=2$. 
Note that we have 
\begin{align*}
    \Hig_{\GL_{r_1}}(\chi_1)^{\mathrm{ss}}\times \Hig_{\GL_{r_2}}(\chi_2)^{\mathrm{ss}}
    =\Hig_{G}(\chi)^{\mathrm{ss}}.
\end{align*}
Since 
$\Hig_{\GL_r}(\chi)^{\mathrm{ss}}$ is QCA, using (\ref{indtensor}) we have an equivalence
\begin{align}\label{tensor:Higgs}
\Coh(\Hig_{\GL_{r_1}}(\chi_1)^{\mathrm{ss}})_{w_1}\otimes 
\Coh(\Hig_{\GL_{r_2}}(\chi_2)^{\mathrm{ss}})_{w_2} \stackrel{\sim}{\to}
\Coh(\Hig_{G}(\chi)^{\mathrm{ss}})_w.
\end{align}
By Theorem~\ref{thm:PThiggs},
we have semiorthogonal decompositions 
\begin{align*}
    \Coh(\Hig_{\GL_{r_i}}(\chi_i)^{\mathrm{ss}})_{w_i}
    =\left\langle \bigotimes_{j=1}^{k}\mathbb{T}_{\GL_{r_{ij}}}(\chi_{ij})_{w_{ij}} \right\rangle
\end{align*}
where the right-hand side is after partitions of $(r_i, \chi_i, w_i)$
with $\chi_{ij}/r_{ij}=\chi_i/r_i$ and $w_{i1}/r_{i1}<\cdots<w_{ik}/r_{ik}$.
It follows that the left-hand side of (\ref{tensor:Higgs})
has a semiorthogonal decomposition 
\begin{align}\label{sod:left}
\left\langle 
\bigotimes_{j=1}^{k_1}\mathbb{T}_{\GL_{r_{1j}}}(\chi_{1j})_{w_{1j}} \otimes
\bigotimes_{j'=1}^{k_2}\mathbb{T}_{\GL_{r_{2j'}}}(\chi_{2j'})_{w_{2j'}}
\right\rangle
\end{align}
where $(r_{1j}, \chi_{1j}, w_{1j})$, $(r_{2j'}, \chi_{2j'}, w_{2j'})$ are as above. 

On the other hand, an analogue of the proof of Theorem~\ref{thm:PThiggs}
for $\GL_r$ easily implies it for $\GL_{r_1}\times \GL_{r_2}$. 
Therefore the right-hand side of (\ref{tensor:Higgs})
has a semiorthogonal decomposition 
\begin{align}\label{sod:right}
    \left\langle \bigotimes_{1\leq j\leq k_1, 1\leq j'\leq k_2}
    \mathbb{T}_{\GL_{r_{1j}}\times \GL_{r_{2j'}}}(\chi_{1j}+\chi_{2j'})_{w_{1j}+w_{2j'}}\right\rangle
\end{align}
where $(r_{1j}, \chi_{1j}, w_{1j})$, $(r_{2j'}, \chi_{2j'}, w_{2j'})$ are as above. 
The equivalence (\ref{tensor:Higgs}) is compatible with Hall product, so together with the 
definition of limit categories it restricts to the fully-faithful 
functor 
\begin{align}\notag
    \bigotimes_{j=1}^{k_1}\mathbb{T}_{\GL_{r_{1j}}}(\chi_{1j})_{w_{1j}} &\otimes
\bigotimes_{j'=1}^{k_2}\mathbb{T}_{\GL_{r_{2j'}}}(\chi_{2j'})_{w_{2j'}} \\
\label{FF:T}&\hookrightarrow \bigotimes_{1\leq j\leq k_1, 1\leq j'\leq k_2}\mathbb{T}_{\GL_{r_{1j}}\times \GL_{r_{2j'}}}(\chi_{1j}+\chi_{2j'})_{w_{1j}+w_{2j'}}.
\end{align}
By comparing the semiorthogonal decompositions (\ref{sod:left}), (\ref{sod:right}) under the 
equivalence (\ref{tensor:Higgs}), we conclude that the functor (\ref{FF:T}) is an equivalence. In particular, 
we have an equivalence (\ref{equiv:Ttensor}). 

\end{proof}

\subsection{Proof of Lemma~\ref{lem:smooth2}}\label{subsec:lem:smooth2}
\begin{proof}
Let $\pi^L$ be the projection $(\ref{Higgs:pi})$. Then there is a quasi-compact 
open substack $\mathcal{U} \subset \Bun_G$ such that we have 
the open immersion
\begin{align*}\Hig_G^{L}(\chi)_{\preceq \mu} \subset (\pi^L)^{-1}(\mathcal{U}).
\end{align*}
Since $\pi^L$ is affine and $\Hig_G^{L}(\chi)_{\preceq \mu}$ is smooth, it is enough to show that $\mathcal{U}$ can be 
written as $U/G'$ for a smooth quasi-projective scheme $U$ with an action of 
a reductive algebraic group $G'$. 
This is well-known in the case of $G=\mathrm{GL}_r$, where $U$ can be taken to be an 
open subscheme of a Quot scheme~\cite{MR1450870}. 

In general, 
we take a closed embedding $G \subset \mathrm{GL}_r$. Then we have the map 
\begin{align*}\pi_{G \to \mathrm{GL}_r} \colon 
\Bun_{G} \to \Bun_{\mathrm{GL}_r}, \ E_{G} \mapsto E_G\times^G \mathrm{GL}_r. 
\end{align*}
The fiber of the above morphism over $E_{\GL_r}$ is the space of sections of
$E_{\GL_r}/G \to C$, which is a quasi-projective scheme as it is an open subscheme 
of Hilbert scheme of the quasi-projective variety $(E_{\GL_r}/G) \times C$.
There is a quasi-compact open substack $\mathcal{U}' \subset \Bun_{\mathrm{GL}_r}$ such that 
$\mathcal{U} \subset \pi_{G\to \mathrm{GL}_r}^{-1}(\mathcal{U}')$.
Then the result holds since $\pi_{G\to \mathrm{GL}_r}$ is a schematic, quasi-projective 
morphism. 
    \end{proof}

    \subsection{Some postponed lemmas in the proof of Proposition~\ref{prop:sod:V}}\label{subsec:postpone}
    We postponed proofs of several lemmas in the proof of Proposition~\ref{prop:sod:V}, which will be given below: 

\subsubsection{Proof of Lemma~\ref{lem:symW}}\label{subsub:1}
\begin{proof}
By Lemma~\ref{lem:dtr}, there is an exact sequence of $G_E$-representations 
\begin{align}\label{exact:ad}
    0 \to \mathbb{T}^0_{(E_G, \phi_G)} \to \mathbb{T}^0_{(E_G, \phi_G^{L})} \to \Gamma(\Ad_{\fg}(E_G)|_{D}\otimes L)
    \to \mathbb{T}^1_{(E_G, \phi_G)} \to 0. 
\end{align}
Let $W$ be the cokernel of $\mathbb{T}^0_{(E_G, \phi_G)} \to \mathbb{T}^0_{(E_G, \phi_G^{L})}$. 
By noting that \[(\mathbb{T}^{1}_{(E_G, \phi_G)})^{\vee}\cong \mathbb{T}^{-1}_{(E_G, \phi_G)},\]
we have an isomorphism of $G_E$-representations
\begin{align}\label{rep:V}
    \mathbf{V}\cong \mathbb{T}^0_{(E_G, \phi_G)} \oplus W \oplus W^{\vee} \oplus \mathbb{T}^{-1}_{(E_G, \phi_G)}. 
\end{align}

It follows that 
\begin{align}\label{rep:V2}
      \mathbf{V}_0 \cong \mathbb{T}^0_{(E_G, \phi_G)} \oplus W \oplus W^{\vee} \oplus
      \mathbb{T}^{-1}_{(E_M, \phi_M)}.   
\end{align}
The $G_E$-representation $\mathbb{T}^0_{(E_G, \phi_G)}$ is self-dual by the Serre duality, 
and the last factor $\mathbb{T}^{-1}_{(E_M, \phi_M)}$ is also self-dual since it is the Lie algebra 
of the reductive group $G_E$. Therefore $\mathbf{V}_0$ is self-dual. 
\end{proof}
\begin{lemma}\label{lem:cart0}
In the diagram (\ref{dia:WG}), all the bottom squares are derived 
Cartesians and the top left arrow is an isomorphism. 
\end{lemma}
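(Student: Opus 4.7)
The plan is to handle the three assertions separately: the isomorphism of the top $\lambda$-fixed loci, the derived Cartesianness of the lower-left square, and the derived Cartesianness of the lower-right square. The first two are essentially representation-theoretic, while the third encodes the local structure of the Harder--Narasimhan stratum $\mathcal{S}_{\mathcal{V}}$.

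\emph{Top isomorphism.} From the isomorphisms of $G_E$-representations (\ref{rep:V}) and (\ref{rep:V2}), the quotient $\mathbf{V}/\mathbf{V}_0$ is canonically identified with $\mathbb{T}^{-1}_{(E_G,\phi_G)}/\mathbb{T}^{-1}_{(E_M,\phi_M)}$, i.e.\ the Lie algebra of $\Aut(E_G,\phi_G)$ modulo the Lie algebra of its $\lambda$-fixed subgroup $G_E = \Aut(E_G,\phi_G)^{\lambda}$. The $\lambda$-weights on this quotient are therefore all non-zero, so taking $\lambda$-fixed loci gives $\mathbf{V}^{\lambda}=\mathbf{V}_0^{\lambda}$, and after base change along the formal completion (\ref{formal:comp}) one obtains the asserted isomorphism $\widehat{\mathbf{V}}_0^{\lambda}/G_E \xrightarrow{\sim} \widehat{\mathbf{V}}^{\lambda}/G_E$.

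\emph{Lower-left square.} Since $\mathbf{V}_0 \subset \mathbf{V}$ is a $G_E$-subrepresentation (in particular $\lambda$-invariant), we have the scheme-theoretic identity $\mathbf{V}_0 \cap \mathbf{V}^{\lambda \geq 0} = \mathbf{V}_0^{\lambda \geq 0}$. Because $\mathbf{V}_0 \subset \mathbf{V}$ is regularly embedded as a sub-representation and $\mathbf{V}^{\lambda \geq 0}$ is the closed attractor (which intersects $\mathbf{V}_0$ transversely in the sense that it is cut out by vanishing of fixed negative weight coordinates), this identification is a derived Cartesian square. Passing to the formal completion along the origin and quotienting by $G_E = G_E^{\lambda \geq 0}$ preserves the property of being derived Cartesian.

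\emph{Lower-right square.} This is the main step. The smooth map $g' \colon \widehat{\mathbf{V}}/G_E \to (\mathcal{V}^L)^{\vee}$ (obtained by combining the smooth map $\widehat{\mathbf{E}}/G_E \to \mathcal{U}^L$ from~\cite{AHRLuna} with the identification of $\widehat{\mathbf{V}}$ with the fiber of $(\mathcal{V}^L)^{\vee}\to \mathcal{U}^L$) identifies a point of $\widehat{\mathbf{V}}/G_E$ with a triple $(E_G, \phi_G^L, \psi_G^L)$ near the base point $z$. Under this identification, a vector lies in $\mathbf{V}^{\lambda \geq 0}$ precisely when the corresponding deformation of $(E_G,\phi_G^L)$ admits a $P$-reduction extending the reduction at $z$ and the component $\psi_G^L$ has non-negative $\lambda$-weight, i.e.\ lies in $\Gamma(\Ad_{\mathfrak{p}}(E_P)|_D)$. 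By the definition of $\mathcal{S}_{\mathcal{V}}$ given in Subsection~\ref{subsec:thetashift} (and by the characterization of the $\Theta$-stratum $\mathcal{S}_{\mathcal{V}}$ obtained in Lemma~\ref{lem:theta:crit} via the uniqueness of HN filtrations), this is exactly the preimage of $\mathcal{S}_{\mathcal{V}}$. The derived Cartesianness then follows because both the attractor $\widehat{\mathbf{V}}^{\lambda \geq 0}/G_E \hookrightarrow \widehat{\mathbf{V}}/G_E$ and $\mathcal{S}_{\mathcal{V}} \hookrightarrow (\mathcal{V}^L)^{\vee}$ are closed immersions of smooth stacks, and the smooth map $g'$ is compatible with the $\Theta$-structures; after possibly shrinking the open neighborhood of $\mathcal{Z}\subset \mathcal{U}^L$ as in Remark~\ref{rmk:shrink} so that $g'$ meets only the chosen HN stratum, this compatibility is exact.

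The hard part is the lower-right square: the two sides are defined by superficially different data (weights of a cocharacter $\lambda$ on a formal local model, versus a global HN filtration type on Higgs bundles), and matching them requires the identification of the local $\lambda$-weight decomposition of $\mathbf{V}$ with the parabolic reduction data cut out by $\mathcal{S}_{\mathcal{V}}$. Once this identification is set up via the choice of $P$ determined by $\lambda$ in (\ref{lambda:E}) and the definition of $\mathcal{S}_{\mathcal{V}}$, both the set-theoretic and the scheme-theoretic (derived) matching follow.
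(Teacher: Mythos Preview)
Your argument for the lower-left square has a genuine gap. You correctly identify $\mathbf{V}/\mathbf{V}_0 \cong \mathbb{T}^{-1}_{(E_G,\phi_G)}/\mathbb{T}^{-1}_{(E_M,\phi_M)}$ and observe that its $\lambda$-weights are \emph{non-zero} (since $G_E = \Aut(E_G,\phi_G)^{\lambda}$). But non-zero is not enough for the transversality you invoke: the derived intersection $\mathbf{V}_0 \times_{\mathbf{V}} \mathbf{V}^{\lambda \geq 0}$ agrees with $\mathbf{V}_0^{\lambda \geq 0}$ as a classical scheme, but it is \emph{derived} Cartesian precisely when $\mathbf{V}_0 + \mathbf{V}^{\lambda \geq 0} = \mathbf{V}$, i.e.\ when the complement $Q = \mathbf{V}/\mathbf{V}_0$ has only non-negative $\lambda$-weights. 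If $Q$ had a negative-weight summand, the equations cutting out $\mathbf{V}_0$ and those cutting out $\mathbf{V}^{\lambda \geq 0}$ would overlap, and the derived fiber product would acquire extra structure.

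The paper supplies exactly this missing input via the identity $\mathbb{T}^{-1}_{(E_G,\phi_G)} = \mathbb{T}^{-1}_{(E_P,\phi_P)}$ (equation~(\ref{T:quot})): every infinitesimal automorphism of $(E_G,\phi_G)$ preserves its Harder--Narasimhan $P$-reduction, so lies in $H^0(\Ad_{\mathfrak{p}}(E_P))$, which has only non-negative $\lambda$-weights since $\mathfrak{p} = \mathfrak{g}^{\lambda \geq 0}$. Hence the quotient has strictly positive weights, and both the top isomorphism and the lower-left derived Cartesianness follow at once. Your treatment of the lower-right square is fine (and indeed more explicit than the paper's, which simply declares that only the left square needs checking); but you have the emphasis inverted---the paper regards the right square as routine Luna-slice functoriality and isolates the left square as the substantive step, precisely because of the positivity issue you overlooked.
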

\begin{proof}
As for the first statement, it is enough to show that the left bottom square is derived Cartesian. 
Let $(E_P, \phi_P)=(E_M, \phi_M)\times^M P$.
Since we have 
\begin{align}\label{T:quot}\mathbb{T}^{-1}_{(E_G, \phi_G)}/\mathbb{T}^{-1}_{(E_M, \phi_M)}
=\mathbb{T}^{-1}_{(E_P, \phi_P)}/\mathbb{T}^{-1}_{(E_M, \phi_M)}
\end{align}
it has positive $\lambda$-weights. Therefore we have 
\begin{align*}
    (\mathbb{T}^{-1}_{(E_G, \phi_G)})^{\lambda \geq 0}=
     (\mathbb{T}^{-1}_{(E_M, \phi_M)})^{\lambda \geq 0}
     \oplus \left(\mathbb{T}^{-1}_{(E_G, \phi_G)}/\mathbb{T}^{-1}_{(E_M, \phi_M)}\right).
\end{align*}

It follows that 
\begin{align*}
    &\mathbf{V}=\mathbf{V}_0 \oplus \left(\mathbb{T}^{-1}_{(E_P, \phi_P)}/\mathbb{T}^{-1}_{(E_M, \phi_M)}\right), \\ \notag
        &\mathbf{V}^{\lambda \geq 0}=\mathbf{V}_0^{\lambda \geq 0} \oplus \left(\mathbb{T}^{-1}_{(E_P, \phi_P)}/\mathbb{T}^{-1}_{(E_M, \phi_M)}\right).
\end{align*}
By (\ref{rep:V}) and (\ref{rep:V2}), the left bottom square in (\ref{dia:WG}) is 
derived Cartesian. 
The top left arrow in (\ref{lem:dtr}) is an isomorphism since (\ref{T:quot}) has positive $\lambda$-weights. 
\end{proof}

\subsubsection{Proof of Lemma~\ref{lem:neq}}\label{subsub:2}
\begin{proof}
It is enough to show the equality  
\begin{align}\label{L=L}
[g^{\ast}\mathbb{L}_{(\V^L)^{\vee}}]=[\mathbb{L}_{\widehat{\mathbf{V}}_0/G_E}]
\end{align}
in $K(\widehat{\mathbf{V}}_0/G_E)$. 
Note that $\mathbb{T}^{-1}_{(E_G, \phi_G)}=\mathbb{T}^{-1}_{(E_G, \phi_G^{L})}$ by Lemma~\ref{lem:dtr}. 
Using the exact sequence (\ref{exact:ad}), we have 
the following identities in $K(\widehat{\mathbf{V}}_0/G_E)$
\begin{align*}
    g^{\ast}\mathbb{L}_{(\V^L)^{\vee}} &=
    (\mathbb{T}_{\Hig_G^{L}}|_{(E_G, \phi_G^{L})})^{\vee}+\Gamma(\Ad_{\fg}(E_G)|_{D}\otimes L) \\
    &=(\mathbb{T}^0_{(E_G, \phi_G^{L})})^{\vee}-(\mathbb{T}^{-1}_{(E_G, \phi_G^{L})})^{\vee}+\Gamma(\Ad_{\fg}(E_G)|_{D}\otimes L) \\
    &=(\mathbb{T}^0_{(E_G, \phi_G^{L})})^{\vee}-(\mathbb{T}^{-1}_{(E_G, \phi_G^{L})})^{\vee}
+\mathbb{T}_{(E_G, \phi_G)}^1+\mathbb{T}_{(E_G, \phi_G^{L})}^0-\mathbb{T}_{(E_G, \phi_G)}^0 \\
    &=\mathbf{V}^{\vee}_0-\mathfrak{g}_E^{\vee}=\mathbb{L}_{\widehat{\mathbf{V}}_0/G_E}. 
\end{align*}
Here $\mathfrak{g}_E=\mathbb{T}_{(E_M, \phi_M)}^{-1}$ is the Lie algebra of $G_E$, and in the above computations 
elements in $K(\widehat{\mathbf{V}}_0/G_E)$ are pulled back from $K(BG_E)$ 
by the projection $\widehat{\mathbf{V}}_0/G_E \to BG_E$. 
Therefore the identity (\ref{L=L}) holds. 
\end{proof}

\subsubsection{Proof of Lemma~\ref{lem:pullback}}\label{subsub:3}
\begin{proof}
For a map $\nu \colon \bgm \to (\V^L)^{\vee}$,  
we need to check the condition of the weights of $\nu^{\ast}\E''$ in Definition~\ref{defn:QBY},
i.e. 
\begin{align}\label{wt:E''}
\wt(\nu^{\ast}\E'') \subset \left[-\frac{1}{2}n_{(\V^L)^{\vee}, \nu}, 
\frac{1}{2}n_{(\V^L)^{\vee}, \nu}
\right] +c_1(\nu^{\ast}\delta). 
\end{align}
By the $\mathbb{G}_m$-equivariance of $\mathcal{E}''$, we may assume that 
$\nu$ factors through the zero section
$\U^L \hookrightarrow (\V^L)^{\vee}$. By shrinking $\U^L$ as in Remark~\ref{rmk:shrink} if necessary, 
we may also assume that $\nu$ lies in $\U \hookrightarrow \U^L$. 
By the assumption $\E''|_{(\V^L)^{\vee}_{\circ}} \in \MM_{\mathbb{G}_m}((\V^L)^{\vee}_{\circ})_{\delta}$, 
we may also assume that $\nu$ lies in $\mathcal{S}$ because 
\begin{align*}
    \mathcal{S}_{\mathcal{V}}\cap \mathcal{U} \subset (\pi^L)^{-1}(\mathcal{S}^L)\cap \mathcal{U} =\mathcal{S}. 
\end{align*}

Let $z' \in \sS$ be the image of $\nu$, and define $z \in \mathcal{S}$ to be 
the image of $z'$ under the map 
\begin{align*}
\mathcal{S}(k) \stackrel{(-)\times^P M}{\to} \zZ(k) \to Z(k) \to \zZ(k) \stackrel{(-)\times^M G}{\to} \mathcal{S}(k).
\end{align*}
Here $\mathcal{Z} \to Z$ is the good moduli space, and 
the map $Z(k) \to \zZ(k)$ is given by sending a point in $Z(k)$ to the unique 
closed point in the fiber of $\zZ\to Z$. Note that 
$z$ corresponds to an object $(E_G, \phi_G)$ of the form (\ref{deg:E}), 
and 
$z'$ is obtained 
as a small deformation of
$z \in \sS$. 
Since 
\begin{align*}\mathrm{Aut}(z') \subset \mathrm{Aut}(z)=\mathrm{Aut}(E_G, \phi_G),
\end{align*}
we have the map $\widetilde{\nu} \colon \bgm \to \mathcal{S}$ with $\widetilde{\nu}(0)=z$ and 
the map $\mathbb{G}_m \to \Aut(z)$ is the same as $\mathbb{G}_m \to \Aut(z')$ given by $\nu$. 
Then we have $n_{(\mathcal{V}^L)^{\vee}, \nu}=n_{(\mathcal{V}^L)^{\vee}, \widetilde{\nu}}$, 
and $\wt(\nu^*\mathcal{E}) \subset \wt(\widetilde{\nu}^* \mathcal{E})$ by the upper semi-continuity. 
Therefore 
we may replace $\nu$ with $\widetilde{\nu}$ and assume that the image of $\nu$ equals to $z$. 
Then the map $\nu$ corresponds to a one-parameter subgroup $\mathbb{G}_m \to \mathrm{Aut}(E_G, \phi_G)$. 
By applying the conjugation, we may assume that its image lies in $G_E \subset \mathrm{Aut}(E_G, \phi_G)$.

Therefore 
we may assume that $\nu$ factors through 
$\bgm \to \widehat{\mathbf{V}}_0/G_E$ whose image lies in $0 \in \mathbf{V}_0$. 
Then the condition $g^{\ast}\E'' \in \MM(\widehat{\mathbf{V}}_0/G_E)_{\delta}$
together with Lemma~\ref{lem:neq} implies the condition (\ref{wt:E''}). 
\end{proof}

\subsection{Proof of Lemma~\ref{lem:delta2}}\label{subsec:pfdelta}
\begin{proof}
Let $\nu \colon \bgm \to \zZ$ be a map 
such that $\nu(0)\in \zZ$ corresponds to 
a polystable $M$-Higgs bundle $(E_M, \phi_M)$ as in (\ref{deg:E}). 
We set
\begin{align*}(E_G, \phi_G)=(E_M, \phi_M)\times^M G, \ (E_G, \phi_G^{L})=\iota_D(E_G, \phi_G).
\end{align*}
Let $\lambda \colon \mathbb{G}_m \to \mathrm{Aut}(E_M, \phi_M)$ 
be the one-parameter subgroup as in (\ref{lambda:E}). 
We have the identities in $K(\bgm)$
\begin{align*}
    &\nu^* \mathbb{L}_{\mathcal{U}^L}=(\mathbb{T}_{\Hig_G^{L}}|_{(E_G, \phi_G^{L})})^{\vee}, \\
    &\nu^* N_{\mathcal{S}_{\V}/(\V^L)^{\vee}}=(\mathbb{T}^0_{(E_G, \phi_G^{L})} -\mathbb{T}^{-1}_{(E_G, \phi_G^{L})}
    +\Gamma(\Ad_{\fg}(E_G)|_{D}\otimes L)^{\vee})^{\lambda<0}, \\
    &\nu^* \mathbb{L}_{\zZ^L}=(\mathbb{T}_{\Hig_G^{L}}|_{(E_G, \phi_G^{L})})^{\lambda\vee}, \\
    &\nu^* \V^L|_{\zZ}^{>0}=\Gamma(\Ad_{\fg}(E_G)|_{D}\otimes L)^{\lambda>0}.
\end{align*}

It follows that $2c_1(\nu^*(\delta'' \otimes \delta^{-1}))$ is calculated as 
\begin{align*}
    &c_1\left((\mathbb{T}_{\Hig_G^{L}}|_{(E_G, \phi_G^{L})})^{\vee}
    +(\mathbb{T}^0_{(E_G, \phi_G^{L})})^{\lambda<0} -(\mathbb{T}^{-1}_{(E_G, \phi_G^{L})})^{\lambda<0} \right. \\
    &\quad \quad\left.+(\Gamma(\Ad_{\fg}(E_G)|_{D}\otimes L)^{\vee})^{\lambda<0} -(\mathbb{T}_{\Hig_G^{L}}|_{(E_G, \phi_G^{L})})^{\lambda\vee}+2\Gamma(\Ad_{\fg}(E_G)|_{D}\otimes L)^{\lambda>0} \right) \\
    &=c_1\left(-\mathbb{T}^0_{(E_G, \phi_G^{L})}+\mathbb{T}^{-1}_{(E_G, \phi_G^{L})}+(\mathbb{T}^0_{(E_G, \phi_G^{L})})^{\lambda<0} 
    -(\mathbb{T}^{-1}_{(E_G, \phi_G^{L})})^{\lambda<0}\right.\\
    &\quad \qquad\left.+(\mathbb{T}^0_{(E_G, \phi_G^{L})})^{\lambda} -(\mathbb{T}^{-1}_{(E_G, \phi_G^{L})})^{\lambda}
    +\Gamma(\Ad_{\fg}(E_G)|_{D}\otimes L)^{\lambda>0} \right)\\
    &=c_1\left(-(\mathbb{T}^0_{(E_G, \phi_G^{L})})^{\lambda>0}+\Gamma(\Ad_{\fg}(E_G)|_{D}\otimes L)^{\lambda>0}+(\mathbb{T}^{-1}_{(E_G, \phi_G^{L})})^{\lambda>0}\right)
\end{align*}
which equals to $c_1(-\mathbb{T}_{\Hig_G}|_{(E_G, \phi_G)}^{\lambda>0})$ by Lemma~\ref{lem:dtr}. 
It follows that 
\begin{align*}
    2c_1(\nu^*(\delta''\otimes \delta^{-1}))=c_1((\mathbb{T}_{\Hig_G}|_{(E_G, \phi_G)})^{\lambda<0})^{\vee})=c_1(\nu^*\mathbb{L}_{p}).
\end{align*}
Therefore the lemma holds. 
	\end{proof}

    \subsection{Proof of Lemma~\ref{lem:square}}\label{pf:lemsquare}
    \begin{proof}
For simplicity, we assume that $g\geq 2$. 
Let $\nu \colon \bgm \to \zZ$ be a map such that $\nu(0)$ corresponds to a 
$M$-Higgs bundle $(E_M, \phi_M)$. We set 
\begin{align*}(E_P, \phi_P)=(E_M, \phi_M)\times^M P, \ (E_G, \phi_G)=(E_M, \phi_M)\times^M G.
\end{align*}

Let 
$\lambda \colon \mathbb{G}_m \to \mathrm{Aut}(E_M, \phi_M)$ 
be the canonical one-parameter subgroup as in (\ref{lambda:E}).
Then we have isomorphisms
\begin{align}\notag
    \det \mathbb{L}_p|_{(E_P, \phi_P)}&=\det(-\mathbb{T}_{\Hig_G}|_{(E_G, \phi_G)}^{\lambda>0})\\
 \label{isom:detR}   &\cong \det\left( \Gamma(\Ad_{\fg}(E_G))^{\lambda>0}-\Gamma(\Ad_{\fg}(E_G)\otimes \Omega_C)^{\lambda>0}\right).
\end{align}
Here, we have used Lemma~\ref{lem:cotangent}
for the last isomorphism. 

Let $D_1, D_2$ be effective divisors on $C$ such that $\deg D_i=g-1$ and 
$D_1 \cap D_2=\emptyset$. We have $c_1(\Omega_C)=D_1+D_2$. 
Let $\mathcal{L}_a \to \zZ$ for $a=1, 2$ be the line bundle whose fiber at $(E_M, \phi_M)$ is given by 
\begin{align*}
    \mathcal{L}_a|_{(E_M, \phi_M)}=\det\left(
    \Gamma(\Ad_{\fg}(E_G)|_{D_a}(D_a))^{\lambda>0}\right). 
\end{align*}

Note that $c_1(\mathcal{L}_1)=c_1(\mathcal{L}_2)$ since 
$\deg D_1=\deg D_2$, and 
we have \[c_1(\nu^*\mathbb{L}_p)=c_1(\nu^*\mathcal{L}_1 \otimes \nu^*\mathcal{L}_2)\] 
by (\ref{isom:detR}). By setting $\mathcal{L}=\mathcal{L}_1$, we obtain 
the lemma. 
	\end{proof}

\newcommand{\etalchar}[1]{$^{#1}$}
\providecommand{\bysame}{\leavevmode\hbox to3em{\hrulefill}\thinspace}
\providecommand{\MR}{\relax\ifhmode\unskip\space\fi MR }
\providecommand{\MRhref}[2]{%
  \href{http://www.ams.org/mathscinet-getitem?mr=#1}{#2}
}
\providecommand{\href}[2]{#2}


\textsc{\small Tudor P\u adurariu: Sorbonne Université and Université Paris Cité, CNRS, IMJ-PRG, F-75005 Paris, France.}\\
\textit{\small E-mail address:} \texttt{\small padurariu@imj-prg.fr}\\

\textsc{\small Yukinobu Toda: Kavli Institute for the Physics and Mathematics of the Universe (WPI), University of Tokyo, 5-1-5 Kashiwanoha, Kashiwa, 277-8583, Japan.}\\
\textsc{\small 
Inamori Research Institute for Science, 620 Suiginya-cho, Shimogyo-ku, Kyoto 600-8411, Japan.} \\
\textit{\small E-mail address:} \texttt{\small yukinobu.toda@ipmu.jp}\\

 \end{document}